\documentclass[12pt]{amsart}
\usepackage{epsfig,color, amsmath}
\usepackage[normalem]{ulem} 
\usepackage{esint}
\usepackage{amssymb}
\usepackage{hyperref, mathrsfs}
\usepackage[dvipsnames]{xcolor}
\usepackage{bm}
\usepackage{enumitem}
\usepackage{mathtools, comment}
\usepackage{scalerel}
\usepackage{accents}

\usepackage{tikz}

\hypersetup{colorlinks}
\hypersetup{citecolor=blue}
\hypersetup{urlcolor=blue}

\makeatother

\theoremstyle{definition}

\theoremstyle{definition}

\def\fnum{equation} 
\newtheorem{Thm}[\fnum]{Theorem}
\newtheorem{Cor}[\fnum]{Corollary}

\newtheorem{Lem}[\fnum]{Lemma}

\newtheorem{Def}[\fnum]{Definition}
\newtheorem{Exa}[\fnum]{Example}
\newtheorem{Rem}[\fnum]{Remark}
\newtheorem{Pro}[\fnum]{Proposition}

\newtheorem{Cla}[\fnum]{Claim}

\numberwithin{equation}{section}
\renewcommand{\rm}{\normalshape}

\newcommand{\Vol}{{\text{Vol}}}

\newcommand{\eps}{{\varepsilon}}

 \newcommand{\N}{\ensuremath{\mathbb{N}}}
 
  \newcommand{\R}{\ensuremath{\mathbb{R}}}
  
 \newcommand{\Z}{\ensuremath{\mathbb{Z}}}

 \newcommand{\ba}{\begin{align*}}
 \newcommand{\ea}{\end{align*}}

\newcommand{\dd}{{\mathsf {d}}}

\newcommand{\veps}{\varepsilon}

\DeclareMathOperator{\RCD}{RCD}
\DeclareMathOperator{\Ric}{Ric}

\DeclareMathOperator{\MCP}{MCP}

\DeclareMathOperator{\Ext}{Ext}
\DeclareMathOperator{\Int}{Int}

\newcommand{\mm}{\mathfrak{m}}

\title{Topology of Two-Dimensional Collapsed Spaces with Lower Ricci
Bounds}
\author{Elia Bruè \and Qin Deng}
\address{\parbox{\linewidth}{Bocconi University, Department of Decision Sciences.\\
	Via Roentgen 1,
	20136 Milano -- Italy\\[-4pt]\phantom{a}}}
\email{elia.brue@unibocconi.it}
\address{\parbox{\linewidth}{Bocconi University, Department of Decision Sciences.\\
	Via Roentgen 1,
	20136 Milano -- Italy\\[-4pt]\phantom{a}}}
\email{qin.deng@unibocconi.it}
\date{}
\newcommand{\mrestr}{\mathbin{\vrule height 1.6ex depth 0pt width
0.13ex\vrule height 0.13ex depth 0pt width 1.3ex}}

\begin{document}

\begin{abstract}
    We prove that collapsed metric measure spaces with Ricci curvature bounded below and essential dimension two are topological surfaces, possibly with boundary.
\end{abstract}

\maketitle

\tableofcontents

\section{Introduction}
The study of collapsing and degeneration of Riemannian manifolds under various curvature constraints has been a central theme in geometric analysis for several decades \cite{Gro78,CG86,Y88,F87b,F88,F89,CG90,Y91,CFG92,CC96,CC97,CT06}. A major objective of the subject is to understand both the metric and topological properties of the resulting limit spaces, as well as the geometric structure underlying the collapse itself.

\medskip
In this work, we focus on {\it collapsing under uniform lower Ricci curvature bounds}, namely on pointed Gromov--Hausdorff limits of $N$-dimensional Riemannian manifolds
\begin{equation}
    (M_i^N,g_i,p_i) \xrightarrow{GH} (X,\dd_X,\mm_X,x)
\end{equation}
satisfying the uniform Ricci lower bound and the volume collapsing condition
\begin{equation}\label{eq:bounds}
    \Ric_{g_i}\ge -(N-1),
    \quad
    \Vol_{g_i}(B_1(p_i))\to 0.
\end{equation}
By Gromov's compactness theorem, the class of $N$-dimensional manifolds with uniform lower Ricci bounds is precompact in the Gromov--Hausdorff topology. Thus, after passing to a subsequence, one always obtains a limit metric measure space. Their systematic study began in the 1990s with the foundational work of Cheeger and Colding \cite{CC96,CC97,CC00,C97}, building on earlier contributions of Fukaya \cite{F87a} and Anderson \cite{A90}.

\medskip
Since \cite{CC96, CC97, CN12}, it has been understood that collapsed Ricci limit spaces carry a well-defined notion of dimension $n<N$, called the \emph{essential dimension} (see Definition~\ref{Def: essential dim}). This coincides with the {\it rectifiable dimension} of the regular set. In the special case where the limit space is itself a smooth manifold--for instance, a flat cylinder collapsing to a line, or Berger metrics on $S^3$ collapsing to the round metric on $S^2$--the essential dimension agrees with the usual manifold dimension.
An important open question in the field \cite[Open Problem~3.4]{N20} asks whether collapsed Ricci limit spaces admit a topological manifold structure, at least in a neighborhood of the regular set. It is well known that this holds in essential dimension $n=1$, as a consequence of the metric and topological characterization \cite{H11,KL16}. On the other hand, recent examples \cite{HNW25,Z24} show that collapsed limits of essential dimension $n\ge 3$ need not admit any topological manifold structure, thus leaving the case $n=2$ open.

\medskip
The main result of this paper shows that the two-dimensional case is indeed more rigid, and that the manifold structure survives collapsing. In fact, our theorem holds in the broader class of $\RCD$ spaces.

\begin{Thm}[Manifold Structure]\label{thm:mainRCD}
    Let $(X,\dd_X,\mm_X)$ be an $\RCD(-(N-1),N)$ space of essential dimension $2$. Then $(X,\dd_X)$ is a topological $2$-manifold, possibly with boundary.
\end{Thm}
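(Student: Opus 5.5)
The plan is to show that every point $x\in X$ has a neighborhood homeomorphic to an open subset of $\R^2$ or of the closed half-plane. We argue locally, using tangent cones and a topological characterization of surfaces.

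\textbf{Step 1: tangent cones.} Since $X$ has essential dimension $2$, every tangent cone at every point is an $\RCD(0,N)$ space of essential dimension $2$; essential dimension is lower semicontinuous, and it cannot drop along blow-ups in the rigid low-dimensional setting. Iterating splitting arguments, we first classify the possible tangent cones. They are metric cones $C(Z)$ over a one-dimensional cross-section $Z$, which must be either a circle of length at most $2\pi$ or an interval of length at most $\pi$. Heuristically, the cross-section of an essential-dimension-$2$ cone has essential dimension $1$, so it is a $1$-dimensional $\RCD$ space, hence a circle or an interval by the one-dimensional classification (Kitabeppu--Lakzian, Honda). The tangent cone is therefore homeomorphic to $\R^2$ or to a closed half-plane (possibly $\R\times[0,\infty)$ or a sector).

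\textbf{Step 2: uniform local contractibility / no small loops.} Knowing tangent cones alone is not enough, since topology can change across scales. We show a quantitative statement: if $B_r(x)$ is $\delta r$-GH close to a ball in a $2$-dimensional cone $C(Z)$, then the annuli $B_{r}(x)\setminus \bar B_{r/10}(x)$ are connected (interior case) and small circles are not contractible in a bad way. For this we use a Reifenberg-type scheme. By Step 1 and compactness, each ball at each scale is close to one of finitely many model types: a cone over a circle or over an interval, with a cone angle bounded below. This uses the volume-cone/Bishop--Gromov rigidity for the measure $\mm_X$, which replaces the missing Hausdorff volume. The main point is that in dimension $2$ the spaces are noncollapsed relative to $\mathcal H^2$: we would prove $\mathcal H^2$ is locally Ahlfors $2$-regular on $X$ by comparing with the cone models. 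Then $X$ is a locally compact, Ahlfors-regular, linearly locally contractible metric space.

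\textbf{Step 3: recognition.} With these properties, we apply a two-dimensional topological recognition theorem. A locally compact, locally connected metric space in which every point has arbitrarily small neighborhoods whose boundaries are simple closed curves (or arcs ending on a boundary) is a $2$-manifold with boundary; alternatively one can use Bing--Borsuk type characterization of $2$-manifolds, where ANR plus local homology of $\R^2$ suffices in dimension $2$. We construct such curves as level sets $\{\dd_x=r\}$ corrected by approximation: at scales where $B_r(x)$ is close to $C(S^1_\theta)$ the distance sphere is GH-close to a circle, and we perturb it to a Jordan curve. At boundary points the model is a cone over an interval, and we obtain arcs instead.

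\textbf{Main obstacle.} The hardest step is Step 2: controlling topology uniformly across all scales in the collapsed setting. There the measure $\mm_X$ is not $\mathcal H^2$, and apriori small-scale wild topology, as in the $n\ge3$ counterexamples, must be excluded. I would first try to prove a two-dimensional Gauss--Bonnet/angle-type estimate showing that cone angles at all points are bounded below uniformly and that curvature concentration is measured by a signed measure of finite local mass. This would force all but finitely many points in a ball to have tangent cones $\delta$-close to $\R^2$ or a half-plane, after which Reifenberg's theorem handles the rest and the finitely many conical points are treated by hand.
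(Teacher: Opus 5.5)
There is a genuine gap. Steps 1--2 follow the noncollapsed strategy (cone structure of blow-ups, volume control, Reifenberg), and in the collapsed setting each of their inputs is false. This is precisely why the paper avoids them.

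\textbf{Step 1.} Your heuristic that tangent cones are metric cones $C(Z)$ with a one-dimensional cross-section presupposes a cone structure. In the $\RCD$ setting that structure comes from volume-cone-implies-metric-cone, which is only available in the noncollapsed case. Lower semicontinuity gives only $\dim\le 2$ for tangent cones; nothing gives a cone. The Pan--Wei example recalled in the introduction refutes the classification directly: the tangent cones at its boundary points are isometric to the whole space, which has Hausdorff dimension $>2$, so they are not flat sectors.

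\textbf{Step 2.} The same example shows $\mathcal H^2$ is not even locally finite, let alone Ahlfors $2$-regular. Moreover, there is no $\veps$-regularity: $\veps$-closeness of $B_1(x)$ to a Euclidean ball need not propagate to smaller scales, even in essential dimension $2$. So the ``finitely many conical points plus Reifenberg'' picture fails: the bad set can be a whole curve along which no ball at any scale looks like a half-plane. No Gauss--Bonnet or curvature-measure estimate of the kind you propose is available in this generality.

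\textbf{Step 3.} This step has a problem independent of the above: the recognition criterion as stated is false. Take a planar disk with a sequence of ever smaller handles attached, accumulating at its center. The center has arbitrarily small connected neighborhoods bounded by round circles, but no disk neighborhood. You therefore need genuine local simple connectivity or Jordan--Schoenflies-type control, and your plan only extracts this from the flawed Step 2. Likewise, perturbing distance spheres into Jordan curves presupposes closeness to cone models at all small scales.

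The missing idea is to use metric rigidity only where splitting is available, namely along interiors of geodesics. There every blow-up is $\R\times Y$ with $\dim Y\le 1$, so the one-dimensional classification applies. This gives persistence of Euclidean closeness across scales and forces the tangent cones along a geodesic interior to be all $\R^2$ or all $\R^2_+$ (Lemma~\ref{Lem: R2 persistence}, Theorem~\ref{Thm: geodesic interior tangent}). From this the paper obtains local separation of small balls by regular geodesics, and everything afterwards is topological rather than metric:
\begin{enumerate}
\item Jordan theorems for polygonal loops, via Mayer--Vietoris, non-branching, and the semi-local simple connectivity of $\RCD$ spaces (Theorem~\ref{Thm: semi-locally simply connected}), which rules out M\"obius-type twists;
\item interior versus extremal points, defined by whether geodesics from the point disconnect small balls;
\item a cut-locus parametrization of the extremal set;
\item neighborhood bases of interior and boundary polygonal domains;
\item Moore-type grids, giving a homeomorphism of the closures of these domains with the disk.
\end{enumerate}
Your proposal has no substitute for these ingredients, so as it stands it does not prove the theorem.
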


We refer readers less familiar with the $\RCD$ formalism to the preliminary Section~\ref{Sec: prelim} and to the survey articles \cite{A18,G23,S23}. Here we only recall that $\RCD$ spaces are the counterpart of Alexandrov spaces in the setting of lower Ricci curvature bounds. They are metric measure spaces endowed with a synthetic curvature-dimension condition \cite{S06a,S06b,LV09}, together with the infinitesimal Hilbertianity condition \cite{AGS14, G15}, which rules out genuinely Finsler structures. This class includes Riemannian manifolds with Ricci curvature bounded below and dimension bounded above, as well as their measured Gromov--Hausdorff limits. In particular, {\it Theorem~\ref{thm:mainRCD} immediately yields the same conclusion for Ricci limit spaces of essential dimension 2.}

\medskip
Another key advantage of working in the $\RCD$ setting is that this class is stable under several natural operations, such as taking cones, spherical suspensions, and quotients by isometric group actions. It is also stable under splitting: if an $\RCD$ space splits isometrically off a Euclidean factor $\mathbb{R}^m$, then the remaining factor is again an $\RCD$ space, now of essential dimension $n-m$. This immediately yields the following  corollary.

\begin{Cor}\label{Cor:n-2symmetries}
    Let
    $(M_i^N,g_i,p_i) \xrightarrow{GH} (X,\dd_X,\mm_X,x)$ satisfy
    $\Ric_{g_i}\ge -(N-1)$. Assume that $(X,\dd_X,\mm_X)$ has essential dimension $n$ and splits isometrically as $X=\mathbb{R}^{n-2}\times Y$. Then $(Y,\dd_Y)$, and hence also $(X,\dd_X)$, is a topological manifold, possibly with boundary.
\end{Cor}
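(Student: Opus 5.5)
The plan is to reduce Corollary~\ref{Cor:n-2symmetries} to Theorem~\ref{thm:mainRCD} through the stability of the $\RCD$ class under splitting.

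\textbf{Step 1: $X$ is an $\RCD$ space.} Since $(M_i^N,g_i,p_i)$ have $\Ric_{g_i}\ge -(N-1)$, after passing to a subsequence the renormalized measures converge. The limit $(X,\dd_X,\mm_X)$ is then an $\RCD(-(N-1),N)$ space, because this class is closed under pointed measured Gromov--Hausdorff convergence.

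\textbf{Step 2: the factor $Y$.} Suppose $X=\mathbb{R}^{n-2}\times Y$ isometrically. We use the splitting stability recalled just before the corollary. It gives a measure $\mm_Y$ with $\mm_X=\mathcal{L}^{n-2}\otimes \mm_Y$ (up to a constant). Moreover, $(Y,\dd_Y,\mm_Y)$ is an $\RCD(-(N-1),N-(n-2))$ space; in particular it is $\RCD(-(N-1),N)$. Its essential dimension is $n-(n-2)=2$.

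The only point to check is the essential dimension. This holds because regular points of $X$, with tangent cone $\mathbb{R}^n$, correspond to products of $\mathbb{R}^{n-2}$ with points of $Y$ whose tangent cone is $\mathbb{R}^2$. Hence the regular set of full measure for $X$ comes from one of full $\mm_Y$-measure with dimension $2$. We also need to exclude essential dimension of $Y$ strictly below $2$. This follows from the constancy of dimension, applied to $\mm_Y$ through the product structure.

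\textbf{Step 3: apply Theorem~\ref{thm:mainRCD}.} The lower bound constant is harmless: it can be weakened, or rescaled to the form $-(N'-1)$ for a suitable $N'\ge N$. Theorem~\ref{thm:mainRCD} then shows that $Y$ is a topological $2$-manifold, possibly with boundary. Consequently $X=\mathbb{R}^{n-2}\times Y$ is a topological $n$-manifold with boundary $\mathbb{R}^{n-2}\times\partial Y$, since products of $\mathbb{R}^k$ with manifolds with boundary are manifolds with boundary.

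\textbf{Main obstacle.} The real content is Theorem~\ref{thm:mainRCD} itself. The only delicate point specific to the corollary is the essential-dimension bookkeeping in Step 2, and it follows from the product structure of tangent cones.
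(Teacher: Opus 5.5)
Your route is the paper's, which obtains the corollary in one line: the Ricci limit $X$ is $\RCD(-(N-1),N)$, and the factor $Y$ of a metric splitting $X=\mathbb{R}^{n-2}\times Y$ is again an $\RCD$ space of essential dimension $2$. Theorem~\ref{thm:mainRCD} then makes $Y$ a surface with boundary, so $\mathbb{R}^{n-2}\times Y$ is an $n$-manifold with boundary $\mathbb{R}^{n-2}\times\partial Y$.

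\textbf{One claim in your Step 2 is false, and your dimension count rests on it.} The identity $\mm_X=\mathcal L^{n-2}\otimes\mm_Y$ is what the splitting theorem (Theorem~\ref{Thm: splitting thm}) gives for $K=0$. With a negative lower bound, a metric splitting need not split the measure off Lebesgue measure.
\begin{itemize}
\item Collapse the torus fibres of $(\mathbb{R}\times_{\cosh t}T^k_{\epsilon})\times\mathbb{R}^2$, which has dimension $N=k+3$ and $\Ric\ge -k$. The limit is $X=\mathbb{R}^3$ with $\mm_X=c\cosh^k(t)\,dt\,dx\,dy$. This measure is not translation invariant along the factor $\mathbb{R}_t$.
\item Instead warp the torus over $\mathbb{R}^3_{x,y,z}$ by $\cosh x+\cosh z$. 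The limit $\mathbb{R}_x\times\mathbb{R}^2_{y,z}$ carries the measure $(\cosh x+\cosh z)^k\,dx\,dy\,dz$, which is not a product at all.
\end{itemize}

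\textbf{How to repair it.} Use the stability statement only in the form the paper invokes it, namely that $Y$ carries some $\RCD$ structure. Then compute the essential dimension metrically:
\begin{itemize}
\item The metric tangent cones of $\mathbb{R}^{n-2}\times Y$ at $(v,y)$ are exactly the spaces $\mathbb{R}^{n-2}\times C$, with $C$ a tangent cone of $Y$ at $y$. By Remark~\ref{Rem: metric Rk rigidity}, this gives $\mathcal R_k(X)=\mathbb{R}^{n-2}\times\mathcal R_{k-n+2}(Y)$.
\item The essential dimension of an $\RCD$ space equals the largest $k$ with $\mathcal R_k\neq\emptyset$. This holds because the top nonempty regular stratum has positive measure, so Theorem~\ref{Thm: constant dim} applies. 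In particular this quantity does not depend on the chosen measure.
\end{itemize}
This gives $\dim Y=2$, and the rest of your argument goes through.
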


This extension is particularly promising for applications, since Euclidean factors arise in many situations, typically as a consequence of splitting theorems \cite{CG71,G13} combined with additional metric or topological information. 

\medskip
Our main result, Theorem~\ref{thm:mainRCD}, is sharp in several respects. First, as already mentioned, it fails as soon as the essential dimension of the limit space is at least $3$ \cite{HNW25,Z24}. In fact, the failure is quite dramatic: these examples exhibit Ricci limit spaces with highly singular topology. Hupp--Naber--Wang \cite{HNW25} construct a sequence of six-dimensional manifolds with a uniform lower Ricci bound collapsing to a space of essential dimension $4$ with the property that every open subset of the limit has infinite second homology. In \cite{Z24}, the author obtains an analogous infinite-homology phenomenon for a sequence of five-dimensional manifolds collapsing to a limit of essential dimension $3$, by means of a completely different construction.

\medskip
Second, the appearance of boundary is unavoidable, and this phenomenon is already visible in essential dimension $n=1$. For instance, if one views the two-sphere as the metric suspension of $S^1$, one can consider a family of warped product metrics that shrinks the $S^1$ factor while keeping the curvature positive. The resulting limit space is a closed interval.
Moreover, even in essential dimension $2$, the metric structure of the boundary of collapsed Ricci limit spaces can be quite complicated. A key example to keep in mind is the construction of Pan--Wei \cite{PW22a}, which produces a Ricci limit space of essential dimension two whose Hausdorff dimension is strictly larger than $2$. Topologically, the space is a half-plane, and the singular set of large Hausdorff dimension coincides exactly with the boundary. 

\subsection{Nonnegative curvature}

We characterize the topology of surfaces admitting an $\RCD(K,N)$ structure of essential dimension $2$ in the cases of positive $K>0$ and nonnegative curvature $K=0$. In analogy with the theory of smooth surfaces with nonnegative Gauss curvature \cite{CV35}, the existence of such an $\RCD$ structure imposes topological restrictions. 

\begin{Thm}\label{Thm:top_characterization}
Let $(X,\dd_X,\mm_X)$ be an $\RCD(K,N)$ space of essential dimension $2$.

\begin{itemize}
    \item[(i)] If $K>0$, then $X$ is homeomorphic to either $S^2$, $\mathbb{RP}^2$, or the disk $D^2$.

    \item[(ii)] If $K=0$, $X$ is compact, and $X$ is not homeomorphic to one of the spaces in {\rm(i)}, then the universal cover $\widehat X$ is isometric to either $\mathbb{R}^2$ or the strip $\mathbb{R}\times [a,b]$ endowed with the product flat metric. In particular, $X$ is isometric to either a two-torus, a Klein bottle, a M\"obius strip, or a cylinder, each endowed with a flat metric.

    \item[(iii)] If $K=0$ and $X$ is noncompact, then the universal cover $\widehat X$ is either homeomorphic to $\mathbb{R}^2$ or to the half-plane $\mathbb{R}^2_+$, or isometric to the flat strip $\mathbb{R}\times [a,b]$. If $X$ is not simply connected, then it can only be isometric to an open flat cylinder or an open flat M\"obius strip, or homeomorphic to an open cylinder with one boundary component.
\end{itemize}
\end{Thm}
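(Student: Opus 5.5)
Theorem~\ref{thm:mainRCD} tells us that $X$ is a topological surface, possibly with boundary. The plan is to feed standard $\RCD$ structure theory into the classification of surfaces. We use three tools: Bonnet--Myers with a diameter bound, the splitting theorem, and the fact that the universal cover $\widehat X$ of an $\RCD(K,N)$ space is again $\RCD(K,N)$ (Mondino--Wei). We also use that essential dimension is preserved under the lift, since it is a local invariant.

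\emph{Case (i), $K>0$.} Bonnet--Myers gives $\operatorname{diam} X\le \pi\sqrt{(N-1)/K}$, so $X$ is compact. The universal cover $\widehat X$ is also $\RCD(K,N)$, hence compact, so $\pi_1(X)$ is finite. A compact surface with finite fundamental group is $S^2$, $\mathbb{RP}^2$ or $D^2$. The only other candidate would be the disk with crosscap removed, i.e.\ the M\"obius band, but its $\pi_1=\mathbb{Z}$ is infinite. Any surface with boundary and finite $\pi_1$ is the disk.

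\emph{Case (ii), $K=0$, $X$ compact.} If $\pi_1(X)$ is finite we are in the list of (i). So assume $\pi_1(X)$ is infinite. Then $\widehat X$ is a noncompact space on which $\pi_1(X)$ acts cocompactly by isometries. A standard argument then produces a line in $\widehat X$: take a sequence of geodesic segments whose midpoints are translated back into a fundamental domain, and pass to the limit. The splitting theorem gives $\widehat X=\mathbb{R}\times Y$ isometrically. Here $Y$ is an $\RCD(0,N-1)$ space of essential dimension $1$, and it is simply connected. By the one-dimensional classification of Kitabeppu--Lakzian, $Y$ is homeomorphic to $\mathbb{R}$ or to a closed interval $[a,b]$, and a circle is excluded by simple connectivity. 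If $Y$ is noncompact, $\mathbb{R}\times Y$ contains a line transverse to the first factor, so splitting again gives $\widehat X\cong\mathbb{R}^2$ as a metric space, with the flat metric. In the interval case, the metric on $[a,b]$ is the standard one (it is a length space), so $\widehat X=\mathbb{R}\times[a,b]$ is flat. The quotients of these by discrete cocompact isometry groups acting freely are the flat torus and Klein bottle in the first case. In the strip case the group acts on the $\mathbb{R}$-factor by translations, possibly composed with the reflection of $[a,b]$, which gives the cylinder and the M\"obius band.

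\emph{Case (iii), $K=0$, $X$ noncompact.} Here $\widehat X$ is a simply connected noncompact surface, possibly with boundary. A simply connected surface without boundary is $\mathbb{R}^2$ or $S^2$, and $S^2$ is excluded because a noncompact $X$ cannot have a compact cover. With boundary, a simply connected surface is a disk with some closed subset of boundary points removed. Using Theorem~\ref{thm:mainRCD}, the boundary of $\widehat X$ is a disjoint union of lines. If there are at least two boundary components, we join them by a minimizing segment and use a family of such segments to build a line. Splitting then forces $\widehat X=\mathbb{R}\times Y$ with $Y$ compact one-dimensional, i.e.\ $\widehat X$ is the strip. With exactly one boundary component, $\widehat X\cong \mathbb{R}^2_+$.

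For nontrivial $\pi_1(X)$: in the $\mathbb{R}^2$ case, a nontrivial deck group acting freely with a noncompact quotient produces a line by the same midpoint argument. This forces a splitting and hence flatness, and the quotients are the open flat cylinder and the open flat M\"obius band. In the half-plane case we expect to show that $\pi_1(X)\cong\mathbb{Z}$, using that a noncompact surface with boundary whose universal cover is the half-plane has free $\pi_1$ of rank at most one. This gives the open cylinder with one boundary component. The strip case yields the flat cylinder or M\"obius band, which are not in the compact list unless $X$ is compact.

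\textbf{Main obstacle.} We expect the hardest step to be the treatment of the boundary. Concretely, we must show that a second boundary component, or a rank $\ge 2$ group acting on the half-plane, produces a genuine line. The boundary of an $\RCD$ surface can be metrically wild (Pan--Wei), and geodesics may run along the boundary. We would first try to use that $\widehat X$ is geodesically convex and rely purely on the splitting theorem and the group action, avoiding any smoothness of the boundary.
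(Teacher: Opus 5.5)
Your architecture (surface structure, lift to $\widehat X$, splitting theorem, then surface topology) is the paper's, and part (i) is fine. ``A compact surface with finite $\pi_1$ is $S^2$, $\mathbb{RP}^2$ or $D^2$'' is equivalent to the paper's classification of free actions on $S^2$ and $D^2$.

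The real gap is in the case $\widehat X\cong\mathbb{R}^2$ of (iii). Your midpoint argument works only because a \emph{compact} fundamental domain lets you translate the midpoints of long segments back into a fixed compact set. When $X$ is noncompact these midpoints can escape to infinity in $X$, so a nontrivial free isometric action gives no line by itself. The paper supplies the needed topological input:
\begin{itemize}
\item open surfaces have free fundamental group (Johansson);
\item by Mondino--Wei, every finitely generated subgroup of $\pi_1(X)$ is virtually nilpotent;
\item hence $\pi_1(X)$ is trivial or $\mathbb{Z}$, and $X$ is an open cylinder or an open M\"obius band.
\end{itemize}
After passing to the orientation double cover in the M\"obius case, $X$ is a cylinder with two ends. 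Every segment joining points that escape into different ends must cross a compact separating loop, which yields a line. Its lift to $\widehat X$ gives the splitting and flatness. Alternatively, you can run this two-ends argument in the intermediate cover $\widehat X/\langle g\rangle$ for a nontrivial deck transformation $g$, but some argument of this kind is needed.

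There are further gaps.
\begin{itemize}
\item In (ii) you dropped $[0,\infty)$ from the Kitabeppu--Lakzian list. Your claim that a noncompact $Y$ yields a transverse line is false for $Y=[0,\infty)$. The half-plane must be excluded using cocompactness: deck transformations preserve $\partial\widehat X$, hence the function $\dd(\cdot,\partial\widehat X)$. That function therefore descends to an unbounded continuous function on the compact space $X$, a contradiction. The paper phrases this as: the deck group can only translate along the boundary line.
\item In (iii), the half-plane case is only announced. The missing point is that the deck group acts freely and properly discontinuously on $\partial\widehat X\cong\mathbb{R}$, so it is trivial or $\mathbb{Z}$. If it is nontrivial, $X$ has a single boundary circle and is a half-cylinder.
\item In (iii), for two or more boundary components, ``a family of segments'' does not identify the mechanism. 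Take a single compact arc $\gamma$ through the interior joining two distinct boundary components. It is a cross-cut of $D^2\setminus E$ with points of $E$ on both sides, so $\widehat X\setminus\gamma$ has two noncompact, hence unbounded, components. Segments between points diverging in the two components all meet $\gamma$, and Arzel\`a--Ascoli gives a line. This uses only properness and the topology of $D^2\setminus E$, so the wild metric structure of the boundary you worry about plays no role.
\item Every nontrivial free quotient of the strip is compact. So for noncompact $X$ the strip case forces $\pi_1(X)=1$, and your last sentence in (iii) should say exactly that.
\end{itemize}
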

The combination of Theorem~\ref{Thm:top_characterization} and Corollary~\ref{Cor:n-2symmetries} imposes strong topological restrictions on collapsed Ricci limit spaces with at least $n-2$ Euclidean symmetries arising as limits of manifolds satisfying $\Ric_{g_i}\ge -\eps_i$, with $\eps_i\to 0$. This situation appears naturally, for instance, in the study of universal covers of manifolds with {\it almost nonnegative Ricci curvature} \cite{Y88}, especially in low dimensions or under additional assumptions on the Betti numbers.

\medskip
Finally, it is a natural question whether the manifold boundary, and indeed the topology itself, is stable under Gromov--Hausdorff convergence for $\RCD$ spaces of essential dimension $2$. This will be addressed in a forthcoming paper. 

\subsection{Structure of limits and related results}
The starting point of our analysis is the structure theory for Ricci limit spaces and $\RCD$ spaces, see the preliminary section~\ref{Sec: prelim}. If $(X,\dd_X,\mm_X)$ is either the limit of a sequence of $N$-dimensional manifolds with $\Ric\ge K$, or more generally an $\RCD(K,N)$ space, we denote by $\mathcal{R}_n(X)$ its regular set, namely the set of points all of whose tangent cones are isometric to $\mathbb{R}^n$ such that $\mm_X(X\setminus \mathcal{R}_n(X))=0$. In this case, $n$ is the essential dimension. It is well known that $\mathcal{R}_n(X)$ is $n$-rectifiable \cite{CC96,CC97,MN19,BS20}.

\medskip
In the noncollapsed setting, namely when $n=N$, a much stronger regularity theory is available \cite{CC96,DG18,H20,BGHZ23}. Volume convergence \cite{C97,DG18}, together with volume monotonicity, yields an $\eps$-regularity theorem: for every $x\in \mathcal{R}_n$, there exists a neighborhood $B_r(x)$ such that every ball $B_s(y)\subset B_{2r}(x)$ is $\eps$-GH close to a Euclidean ball. This multiscale control is the key input in Reifenberg-type arguments, which lead to the construction of a bi-H\"older parametrization by a Euclidean ball. In particular, one obtains an open neighborhood of $\mathcal{R}_n$ which is homeomorphic to an open manifold. Moreover, by the more recent work \cite{CJN21}, the complement of the manifold part of $X$ is $(n-2)$-rectifiable and has finite $(n-2)$-dimensional Hausdorff measure.

\medskip
By contrast, the global topology may still be very poorly behaved in higher dimensions, even in the noncollapsed setting. For instance, Menguy \cite{M00} constructed a four-dimensional noncollapsed Ricci limit space with locally infinite topological type. Even in the Ricci-flat setting, manifold structure may break down on a set of codimension four, as illustrated by the Eguchi--Hanson example \cite{EH79}. It is conjectured by Cheeger, Colding, and Tian that, in general, manifold structure persists away from a singular set of codimension at least four \cite[Remark 1.19]{CCT02}, \cite[Conjecture 0.7]{CC97}; see also \cite[Conjecture 1.8 and Remark 1.9]{KM21} for the conjecture in the $\RCD$ setting. This conjecture has been confirmed in the setting of bounded Ricci curvature \cite{CN15,JN21}.

\medskip
In low dimensions, however, the picture is much more rigid. Two-dimensional noncollapsed $\RCD(K,2)$ spaces are known to be Alexandrov spaces \cite{LS23}. Perelman's conical neighborhood theorem implies that Alexandrov spaces are manifolds with conical singularities \cite{P93}; see also \cite{P91}. In particular, two-dimensional Alexandrov spaces and noncollapsed $\RCD(K,2)$ spaces are surfaces, possibly with boundary.
In dimension three, noncollapsed Ricci limit spaces are known to be bi-H\"older homeomorphic to smooth Riemannian manifolds \cite{S12,ST21,ST22}; see also \cite{Z93}. On the synthetic side, noncollapsed $\RCD(K,3)$ spaces without boundary are orbifolds with locally finite $C(\mathbb{RP}^2)$ singularities \cite{BPS24,DP25}. A classification of the topology of complete open Riemannian manifolds with $\Ric \ge 0$ was achieved in \cite{L13} using minimal surface techniques; see also \cite{SY82}. This was recently extended to noncollapsed $\RCD(0,3)$ spaces without boundary, using completely different methods, in \cite{CMS26}.

\medskip
When boundary is involved, the situation becomes even subtler. In the noncollapsed setting, boundary points can be characterized in terms of tangent cones \cite{DG18,KM21}. A possible definition of boundary is as the closure of the set of points whose tangent cone is the Euclidean half-space. In \cite{BNS22}, the first author, together with Naber and Semola, proved that this set has dimension $n-1$ and is locally homeomorphic to a manifold with boundary near regular boundary points. Once again, the key ingredient is an $\eps$-regularity theorem at regular boundary points, which relies heavily on the noncollapsed structure. It provides the necessary multiscale control and ultimately leads to a Reifenberg-type argument.

\subsection{Strategy of proof and new ideas}\label{sec:itro_strategy}

In the collapsed setting $n<N$, no comparable $\eps$-regularity theorem is available (see \cite{NZ16} for a conditional result under bounded Ricci curvature). Even in essential dimension $n=2$, it is easy to construct examples where the $\eps$-GH closeness of a ball $B_1(x)$ to a Euclidean ball in $\mathbb{R}^2$ does not propagate to smaller scales.
Moreover, the boundary cannot be characterized purely in terms of tangent cones. As already mentioned, the key example illustrating this difficulty is the construction of Pan--Wei \cite{PW22a}, where the Ricci limit space is topologically a half-plane, yet its boundary has several striking metric features: it has Hausdorff dimension strictly larger than $2$, every tangent cone is isometric to the space itself, and in particular no tangent cone splits off a Euclidean factor.

\medskip
In view of these difficulties, it is clear that a purely metric approach is not sufficient to recover the manifold structure in the collapsed setting. For this reason, we adopt a new strategy, combining metric rigidity with topological arguments. It relies on several new ideas, and its main steps can be summarized as follows:
\smallskip
\begin{itemize}[itemsep=\smallskipamount]
    \item[(i)] \textit{Jordan-type separation theorems.} We prove that simple piecewise-geodesic loops---polygonal loops, in our terminology---contained in sufficiently small balls disconnect the space into two components. These separation-type theorems are the main technical tool of the paper, and a substantial part of the work is devoted to their proof. 
    \item[(ii)] \textit{Characterization of interior and boundary points.} We characterize interior points and boundary points---called \emph{extremal points} in our terminology, see Definition~\ref{Def: ext-int points}---in terms of whether sufficiently small balls centered at the point are disconnected by geodesics emanating from it.  
    Using (i), we prove that the interior set is open, dense, and contains the regular set, see Theorem~\ref{Thm: int set open}.
    \item[(iii)] \textit{Parametrization of the boundary.} We construct a local parametrization of the extremal set around each of its points by means of a simple continuous curve. The argument is of cut-locus type: the extremal set is recovered through a maximal extendibility property of suitable families of geodesics.
    \item[(iv)] \textit{Polygonal neighborhood bases and grid construction.} We construct a special neighborhood basis consisting of interior or boundary polygonal domains, depending on whether the point is interior or extremal. These domains should be viewed as precompact regions bounded by suitable polygonal loops, whose existence is provided by the Jordan-type theorems. Finally, for each polygonal domain we construct a homeomorphism with the Euclidean disk, or with the half-disk in the boundary case. This is achieved through the construction of a family of finer and finer grids, inspired by the classical two-sphere recognition theorem \cite{M16}.
\end{itemize}
% \medskip
% The metric rigidity along geodesic (i) is proven in Section~\ref{Sec: geod} (see Theorem~\ref{Thm: geodesic interior tangent}). It is based on the metric classification of spaces of essential dimension one (Theorem~\ref{Thm: RCD dim 1 class}) with the splitting theorem (Theorem~\ref{Thm: splitting thm}), we prove via a bootstrap argument that if a sufficiently small ball centered at an interior point $p$ of $\gamma$ is sufficiently GH-close to a Euclidean ball, then every smaller ball centered at $p$ is also GH-close to a Euclidean ball. Moreover, the approximation improves at smaller scales due to the splitting induced by the geodesic. It follows that if one tangent space at $p$ is $\R^2$, then all tangent spaces at $p$ must be $\R^2$. Since the tangent space of a tangent space is again a tangent space, this excludes certain possibilities, such as $\R \times S^1$. An analogous analysis for Euclidean half-balls rules out all remaining cases except $\R^2$ and $\R^2_+$. The continuity of tangent spaces in the interior of a geodesic \cite{CN12, D25} then yields the claim. 

\subsection{Jordan-type theorems}
\label{intro:jordan}

The starting point of our analysis is the metric rigidity of geodesics, established in Theorem~\ref{Thm: geodesic interior tangent}. It asserts that every geodesic segment is of exactly one of two types: either it is a {\it regular geodesic}, meaning that its interior is contained in the regular set, or it is a {\it boundary geodesic}, meaning that at each interior point the tangent cone is the Euclidean half-space $\R^2_+$.
The key observation is the following. If a sufficiently small ball centered at an interior point $p$ of a geodesic $\gamma$ is sufficiently Gromov--Hausdorff close to a Euclidean ball, then every smaller ball centered at $p$ is also Gromov--Hausdorff close to a Euclidean ball. Moreover, because of the splitting induced by $\gamma$, the approximation improves at smaller scales. As a consequence, if one tangent cone at $p$ is $\R^2$, then every tangent cone at $p$ must be $\R^2$. Since tangent cones of tangent cones are again tangent cones, this rules out intermediate possibilities such as $\R\times S^1$. A similar argument for Euclidean half-balls excludes all remaining cases except $\R^2$ and $\R^2_+$. Finally, the continuity of tangent cones along the interior of a geodesic \cite{CN12,D25} yields the conclusion.
\medskip

The next step is a {\it local disconnection theorem for regular geodesics}. We prove that a regular geodesic $\gamma$ disconnects sufficiently small balls centered at an interior point $p\in \gamma$ into exactly two path-connected components, see Theorem~\ref{Thm: good components existence}. This statement is partly inspired by \cite{C24}, although our conclusion is stronger and the proof is completely different.
The basic idea is the following. If a sufficiently small ball centered at $p$ is sufficiently close to $\mathbb{R}^2$, then the points outside a small tubular neighborhood of $\gamma$ can be canonically divided into two sides. The metric rigidity described above allows one to zoom in and perform the same construction in smaller balls centered at nearby points of $\gamma$. By proving that these local partitions are compatible across scales and centers, we obtain a global partition of all points in a small ball $B$ around $p$, away from $\gamma$, into two sides. One can then verify that these two sides are precisely the two path-connected components of $B\setminus \gamma$.
In Section~\ref{Sec: corner}, we extend this local disconnection result to balls centered at an intersection point of two regular geodesics, see Theorem~\ref{Thm: good components existence corner}.

\medskip

We then turn to {\it polygonal loops} (see Definition~\ref{Def: poly loop}), namely closed curves obtained by concatenating finitely many regular geodesics. Given a simple polygonal loop $\alpha$, and under a suitable $1$-contractibility assumption (see Remark~\ref{Rem:simply-connectedness}), we construct a path-connected neighborhood $U$ of $\alpha$ such that $U\setminus \alpha$ has exactly two connected components, see Proposition~\ref{Pro: poly loop neigh}. The neighborhood $U$ is built as a union of small balls centered along $\alpha$. By the local disconnection results, each such ball carries a canonical partition of the points away from $\alpha$ into two sides. The main point is then to show that these local sides can be matched consistently as one moves along $\alpha$. The only obstruction would be a global exchange of the two sides after one full turn, and this is ruled out by the $1$-contractibility assumption in Section~\ref{subsec:Mobius}. As a consequence, $U\setminus \alpha$ has exactly two path-connected components.
Finally, an application of Mayer--Vietoris, again using the $1$-contractibility assumption, upgrades this local statement to our first Jordan-type theorem, Theorem~\ref{Thm: Jordan theorem}, which shows that $\alpha$ disconnects the whole space $X$ into two path-connected components.

\subsection{Extremal set and cut-locus argument}
As already pointed out, we use the separation properties of geodesics to distinguish interior points from extremal points. In the end, the extremal points will form the boundary of the surface. This is carried out in Section~\ref{Sec: ext-int points}. The main result there is that the extremal set is closed, while its complement, the interior set, is dense and contains the regular set.

\medskip
In order to show that the extremal set corresponds to the manifold boundary, a first step is to show that any extremal point lies in the interior of a curve contained in the extremal set. We achieve this by a cut-locus argument in Section~\ref{Sec: ext set top}. To illustrate our ideas, consider the following example: Let $\overline D$ be a closed $2$-disk equipped with a smooth metric $g$. Let $p \in D$ and consider the cut locus $C$ of $p$. Then $\partial \overline D \subset C$. For any $x \in C \setminus \partial \overline D$, it follows from topological considerations that there exists a point $y \in C$ and two distinct geodesic $\gamma_1, \gamma_2$ from $p$ to $y$ such that that $x$ is contained in the connected component of $\overline D \setminus (\gamma_1 \cup \gamma_2)$ that does not intersect $\partial \overline D$. Thus by sweeping out the cut locus of $p$ and removing any point $x$ meeting the criterion above, we can identify exactly $\partial \overline D$. Using technical ingredients like the non-branching property (Theorem~\ref{Thm: non-branching}) and maximal extensions of a geodesic (Definition~\ref{Def: max extension}), it is possible to replicate aspects of this construction.

\subsection{Polygonal domains and grid construction}
In Section~\ref{Sec: poly dom}, we introduce the notion of interior polygonal domains, which are open subsets of the interior set bounded by simple polygonal loops. We will show in Theorem~\ref{Thm: poly dom neigh basis} via an explicit construction that every interior point admits a neighborhood basis consisting of interior polygonal domains. In the proof of Theorem~\ref{thm:mainRCD}, these domains act exactly the coordinate neighborhoods for the manifold interior. 
In Section~\ref{Sec: bd poly dom}, we introduce the notion of boundary polygonal domains. As with interior polygonal domains, we show in Theorem~\ref{Thm: bd poly dom neigh basis} that every extremal point admits a neighborhood basis consisting of extremal polygonal domains. These domains will act as the coordinate neighborhoods of the manifold boundary in the proof of Theorem~\ref{thm:mainRCD}.

\medskip

The final step is to show that interior polygonal domains are homeomorphic to open disks, while boundary polygonal domains are homeomorphic to half-disks. This is carried out in Section~\ref{Sec: man struct}.
The idea is to refine a polygonal domain into increasingly finer grids by constructing horizontal and vertical polygonal curves in a grid-like pattern. The Jordan-type theorems then guarantee that the complement of these curves has exactly the expected number of connected components, namely the cells of the grid. The goal is to build a countable family of grids that separates points in the polygonal domain. This separation property is then used to define a homeomorphism from the closure of the polygonal domain onto the closed disk, or onto the closed half-disk in the boundary case. 

\subsection*{Acknowledgements}
The authors are grateful to Daniele Semola for useful conversations on the topics of the paper.
Part of this work was carried out while EB was a von Neumann Fellow at the Institute for Advanced Study, whose excellent working conditions and support he gratefully acknowledges. This material is based upon work supported by the National Science Foundation under Grant No.~DMS-2424441.

\section{Preliminaries}\label{Sec: prelim}
\subsection{Metric space preliminaries}\label{Subsec: metric space} In this paper, a \emph{metric space} $(X, \dd_X)$ is always assumed to be proper, meaning that closed and bounded sets are compact. For brevity, we will often denote a metric space by $X$ in place of $(X, \dd_X)$. A metric space is \emph{pointed} if it is equipped with a distinguished point $x_0 \in X$, and will be denoted by $(X, \dd_X, x_0)$, or simply $(X, x_0)$. 

A continuous curve $\gamma: [a,b] \to X$ with $b > a$ is called a \emph{geodesic} if there exists $c > 0$ such that for any $s, t \in [a,b]$, $\dd_X(\gamma(s),\gamma(t)) = c|t-s|$. The unique $c$ for which this holds is called the \emph{speed} of $\gamma$, and a geodesic is \emph{unit-speed} if $c=1$. In particular, all geodesics in this paper are length-minimizing and constant-speed, and point maps are not geodesics. We say that a metric space $X$ is a \emph{geodesic space} if any $x,y \in X$ can be connected by a geodesic. 

A continuous curve $\gamma: \R \to X$ is called a \emph{line} if $\gamma \lvert_{[-t,t]}$ is a geodesic for all $t > 0$. A continuous curve $\gamma:[a, \infty) \to X$ is called a \emph{ray} if $\gamma \lvert_{[a,t]}$ is a geodesic for all $t > a$. 

A geodesic $\rho:[c,d] \to X$ is a \emph{subsegment} of a geodesic $\gamma:[a,b] \to X$ if there exists some subinterval $I \subset [a,b]$ such that $\rho$ is equal to $\gamma \lvert_I$, up to a linear (possibly orientation-reversing) reparameterization. We say that $\rho$ is a \emph{proper subsegment} of $\gamma$ if the above holds with $I \neq [a,b]$. More generally, we allow $\gamma$ to be a line or a ray as well, with the obvious modifications in the definition. 

Given a metric space $(X, \dd_X)$ and $p \in X$, we denote by $B^{X}_{r}(p)$ and $\overline B^{X}_r(p)$ the open and closed balls of radius $r$ in $X$ centered at $p$, respectively. We will often use the notation $B_r(p)$ and $\overline B_r(p)$ in place of $B_r^X(p)$ and $\overline B^X_r(p)$ if there is no ambiguity in the ambient metric space. Both $B_r(p)$ and $\overline B_r(p)$ are endowed with the extrinsic metric when viewed as metric spaces. 

More generally, given $A \subset X$, we denote by $B^{X}_r(A) := \{x \in X : \dd_{X}(x, A) < r\}$ and $\overline B^X_r(A) := \{x \in X : \dd_{X}(x, A) \leq r\}$ the open and closed $r$-neighborhoods of $A$, respectively. We will use the notations $B_r(A)$ and $\overline B_r(A)$ if the ambient metric space is clear. 

Set $\mathbb R_+ := [0, \infty)$. As we will often work with the plane and the upper half-plane, we denote by $\R^2_+ := \R \times \R_+$ the closed upper half-plane and $0^2 := (0,0)$.

\begin{Def}[Gromov--Hausdorff distance and convergence]\label{Def: GH distance}
Let $(X, \dd_X)$ and $(Y, \dd_Y)$ be compact metric spaces. A triple $(Z, \iota_1, \iota_2)$, where $(Z, \dd_Z)$ is a metric space, and $\iota_1: X \to Z$ and $\iota_2: Y \to Z$ are isometric embeddings, is called \emph{admissible}. 
The \emph{Gromov--Hausdorff distance} between $X$ and $Y$ is
\begin{equation*}
    \dd_{GH}(X, Y) := \inf_{(Z, \iota_1, \iota_2)} \dd^Z_H(\iota_1(X), \iota_2(Y)),
\end{equation*}
where the infimum is taken over all admissible triples $(Z, \iota_1, \iota_2)$, and $\dd^Z_H$ is the Hausdorff distance in $Z$. Recall that given a metric space $(Z, \dd_Z)$ and bounded subsets $E, F \subset Z$, the Hausdorff distance between $E$ and $F$ is defined by
\begin{equation*}
    \dd^Z_H(E, F) := \inf \{r \ge 0 : E \subset B_r(F) \text{ and } F \subset B_r(E)\}.
\end{equation*}
Any admissible triple $(Z, \iota_1, \iota_2)$ such that $\dd^Z_H(X, Y) < \veps$ is called an \emph{$\veps$-realization} (of the Gromov--Hausdorff distance) between $X$ and $Y$. 

Let $(X_n, \dd_n)$, $n \in \N \cup \{\infty\}$, be compact metric spaces. We say that \emph{$(X_n)_{n \in \N}$ converges to $X_\infty$ in the Gromov--Hausdorff sense} (or simply $X_n \overset{GH}{\longrightarrow} X_\infty$) if
\begin{equation*}
    \dd_{GH}(X_n, X_\infty) \to 0 \quad \text{as $n \to \infty$}.
\end{equation*}
We say that $(Z, \{\iota_n : n \in \N \cup \{\infty\}\})$, where $(Z, \dd_Z)$ is a metric space and each $\iota_n: X_n \to Z$ is an isometric embedding, is a \emph{realization of the convergence} $X_n \overset{GH}{\longrightarrow} X_\infty$ if
\begin{equation*}
    \dd_H^Z(\iota_n(X_n), \iota_\infty(X_\infty)) \to 0 \quad \text{as $n \to \infty$}.
\end{equation*}
\end{Def}

%We recall the standard gluing construction for metric spaces (see, for example, \cite{BBI01}). Let $\{(X_n, \dd_n):n \in I\}$ be a finite or countably infinite family of metric spaces. Let $(Z, \dd_Z)$ be a metric space and let $f_n: Z \to X_n$ be isometric embeddings for each $n \in I$. Define $\tilde X := \sqcup_{n \in I} X_n$. Define a semi-metric (\cite[Definition 1.1.4]{BBI01}) $\tilde \dd$ on $\tilde X$ by
%\begin{align*}
%\tilde \dd(x,y):=\begin{cases}
    %\dd_n(x,y) &\text{if $x, y \in X_n$,}\\
	%\inf_{z \in Z} \dd_n(x,f_n(z))+\dd_m(f_m(z), y) &\text{if $x \in X_n$ and $y \in X_m$ for $n \neq m$}.
	%\end{cases}
%\end{align*}
%Let $X$ be the set of equivalence classes of $\tilde X$ with respect to the equivalence relation $\tilde \dd = 0$. We can define a metric $\dd$ on $X$ by setting $\dd([x], [y]) = \tilde \dd(x,y)$ for any $x, y \in X$. Denote by $\pi: \tilde X \to X$ the quotient map, and by $\iota_n: X_n \to \tilde X$ the inclusion map for each $n \in I$. It is straightforward to check that $\pi \circ \iota_n: (X_n, \dd_n) \to (X, \dd)$ is an isometric embedding. 

%The following are easy consequences of the definition. 
%\begin{Pro}[Triangle inequality]\label{Pro: GH triangle ineq}
%If $X, Y,$ and $Z$ are compact metric spaces, then 
%\begin{equation*}
%    \dd_{GH}(X, Z) \leq \dd_{GH}(X,Y) + \dd_{GH}(Y,Z).
%\end{equation*}
%\end{Pro}

\begin{Pro}\label{Pro: GH distance balls center}
Let $(X, \dd_X)$ be a locally compact geodesic space, and let $p, q \in X$. Then
\[
\dd_{GH}(\overline B_r(p), \overline B_r(q)) \leq \dd_X(p,q)
\]
for every $r > 0$.
\end{Pro}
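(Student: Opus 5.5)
The plan is to build an explicit admissible triple in which the two closed balls sit at Hausdorff distance at most $\dd_X(p,q)$. The natural choice is $Z = X$ itself, with $\iota_1$ and $\iota_2$ the inclusions of $\overline B_r(p)$ and $\overline B_r(q)$. Both balls carry the extrinsic metric, so these inclusions are isometric embeddings. Both balls are also compact, since $X$ is assumed proper, and so the Gromov--Hausdorff distance in Definition~\ref{Def: GH distance} makes sense. Writing $\delta := \dd_X(p,q)$, we then have
\[
\dd_{GH}(\overline B_r(p), \overline B_r(q)) \le \dd_H^X(\overline B_r(p), \overline B_r(q)),
\]
so it is enough to show that every point of either ball lies within $\delta$ of the other ball. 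By symmetry we only need one inclusion.

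Fix $x \in \overline B_r(p)$. If $\dd_X(x,q) \le r$, then $x$ already lies in $\overline B_r(q)$ and there is nothing to prove. Otherwise we use the geodesic hypothesis: choose a unit-speed geodesic $\sigma$ from $q$ to $x$, of length $L = \dd_X(q,x)$. The triangle inequality gives $r < L \le \dd_X(q,p) + \dd_X(p,x) \le \delta + r$. Set $y := \sigma(r)$. Then $\dd_X(q,y) = r$, so $y \in \overline B_r(q)$. Moreover
\[
\dd_X(x,y) = L - r \le \delta .
\]
Hence $\overline B_r(p) \subset \overline B_\delta(\overline B_r(q))$, and by symmetry also $\overline B_r(q) \subset \overline B_\delta(\overline B_r(p))$.

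One subtlety is that the definition of Hausdorff distance uses open neighborhoods $B_s$. Our inclusions are into closed $\delta$-neighborhoods, which are contained in the open $s$-neighborhoods for every $s > \delta$. Taking the infimum over such $s$ gives $\dd_H^X \le \delta$, and therefore $\dd_{GH} \le \delta$.

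There is no real obstacle in this argument. The only points that need care are the open-versus-closed neighborhood bookkeeping just described and the existence of the geodesic $\sigma$. The latter is exactly where the geodesic-space hypothesis is used; in a general space, points of $\overline B_r(p)$ could be far from $\overline B_r(q)$.
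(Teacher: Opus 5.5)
Your proof is correct and follows the same route as the paper: take $Z = X$ with the inclusion maps, and use a geodesic from $x$ to $q$ to find a point of $\overline B_r(q)$ within $\dd_X(p,q)$ of $x$. You handle the case $\dd_X(x,q) \le r$ and the open-versus-closed neighborhood bookkeeping a bit more explicitly than the paper does, but the argument is the same.
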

\begin{proof}
Set $s := \dd_X(p,q)$. It suffices to prove that $\dd_{H}^{X}(\overline B_r(p), \overline B_r(q)) \leq s$.
Let $x \in \overline B_r(p)$. Then there exists a geodesic from $x$ to $q$ of length at most $r + s$, and a point $y$ on this geodesic such that $\dd_X(x,y) \leq s$ and $\dd_X(y,q) \leq r$. Therefore, $x \in \overline B_{s}(\overline B_r(q))$. It follows that $\overline B_r(p) \subset \overline B_s(\overline B_r(q))$, and by symmetry, $\overline B_r(q) \subset \overline B_s(\overline B_r(p))$. 
\end{proof}

%Let $(X_n, \dd_n)$, where $n \in \N \cup \{\infty\}$, be compact metric spaces. Assume that $(X_n)_{n \in \N}$ converges to $X_\infty$ in the Gromov--Hausdorff sense, and let $\veps_n \to 0$ be such that $\dd_{GH}(X_n, X_\infty) < \veps_n$. For each $n \in \N$, let $(Z_n, \iota_1^n, \iota_2^n)$ be an $\veps_n$-realization between $X_n$ and $X_\infty$. Since the maps $\iota^n_2: X_\infty \to Z_n$ are isometric embeddings, we may glue the family of metric spaces $\{Z_n : n \in \N\}$ along the subspaces $\iota^n_2(X_\infty)$ to obtain a metric space $(Z, \dd_Z)$. It follows that for each $n \in \N \cup \{\infty\}$, there is a natural isometric embedding of $X_n$ into $Z$, and that $Z$ equipped with these embeddings is a realization of the convergence in the following sense.

The next lemma allows us to use Gromov--Hausdorff closeness at a given scale to obtain Gromov--Hausdorff closeness at smaller scales, with controlled error.
 
\begin{Lem}\label{Lem: realization scale change}
Let $(X, \dd_X)$ and $(Y, \dd_Y)$ be locally compact geodesic spaces, and let $p \in X$ and $q \in Y$. Assume that 
\[\dd_{GH}(\overline B_1(p), \overline B_1(q)) < \veps\]
for some $\veps > 0$, and let $(Z, \iota_1, \iota_2)$ be an $\veps$-realization between $\overline B_1(p)$ and $\overline B_1(q)$. Let $p' \in \overline B_s(p)$ and $q' \in \overline B_s(p)$ for some $s \in (0,1)$. If $\dd_Z(\iota_1(p'), \iota_2(q')) < \veps$, then \[\dd_{GH}(\overline B_r(p'), \overline B_r(q')) < 3\veps\]
for any $r \in (0, 1-s]$. Moreover, $(Z, \iota_1', \iota_2')$ is a $(3\veps)$-realization between $\overline B_r(p')$ and $\overline B_r(q')$, where $\iota_1'$ and $\iota_2'$ are the restrictions of $\iota_1$ and $\iota_2$ to $\overline B_r(p') \subset B_1(p)$ and $\overline B_r(q') \subset B_1(q)$.
\end{Lem}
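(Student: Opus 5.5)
Note that the statement has a typo: it should read $q' \in \overline B_s(q)$, which guarantees $\overline B_r(q') \subset \overline B_1(q)$ for $r \le 1-s$. The plan is to verify directly that the restricted embeddings give a Hausdorff distance at most $3\veps$ in $Z$; since $\dd_Z(\iota_1(p'),\iota_2(q'))$ is strictly less than $\veps$, we obtain a strict inequality. By symmetry it suffices to show that every $\iota_1(x)$ with $x \in \overline B_r(p')$ lies within $3\veps$ of $\iota_2(\overline B_r(q'))$. The extra factor in $3\veps$ is there to absorb both the shift of centers and a boundary correction.

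Fix $x \in \overline B_r(p')$. Because $x \in \overline B_1(p)$ and $Z$ is an $\veps$-realization, there is $y \in \overline B_1(q)$ with $\dd_Z(\iota_1(x),\iota_2(y)) < \veps$. The triangle inequality in $Z$ then gives
\[
\dd_Y(y,q') = \dd_Z(\iota_2(y),\iota_2(q')) < \veps + \dd_X(x,p') + \veps \le r + 2\veps .
\]
If $\dd_Y(y,q') \le r$ we are done. Otherwise, take a geodesic in $Y$ from $q'$ to $y$ and let $y'$ be the point on it at distance $r$ from $q'$. Then $y' \in \overline B_r(q')$ and $\dd_Y(y,y') < 2\veps$, so
\[
\dd_Z(\iota_1(x),\iota_2(y')) < \veps + 2\veps = 3\veps .
\]
Here the geodesic property of $Y$ is used, and $y' \in \overline B_1(q)$ since it lies in $\overline B_r(q')$.

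The symmetric argument, with the roles of $X$ and $Y$ exchanged, completes the Hausdorff bound. The restrictions $\iota_1'$ and $\iota_2'$ are isometric embeddings with respect to the extrinsic metrics, so $(Z,\iota_1',\iota_2')$ is an admissible triple. Compactness of the closed balls (by properness) gives that the Hausdorff distance is at most $\sup_x \dd_Z(\iota_1(x), \iota_2(\overline B_r(q')))$. The main obstacle is making the final inequality strict. Because the supremum is attained by compactness, and each pointwise distance is strictly less than $3\veps$, the bound $\dd_H < 3\veps$ follows.
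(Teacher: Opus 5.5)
Your proposal is correct and follows the paper's proof essentially step for step: choose $y$ with $\dd_Z(\iota_1(x),\iota_2(y))<\veps$, bound $\dd_Y(y,q')<r+2\veps$, move back along a geodesic to a point $y'\in\overline B_r(q')$ with $\dd_Y(y,y')<2\veps$, and conclude by symmetry. Your explicit compactness argument for the strict inequality $\dd_H<3\veps$, and your correction of the typo to $q'\in\overline B_s(q)$, supply details that the paper leaves implicit.
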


\begin{proof}
We first show that $\iota_1(\overline B_r(p')) \subset B_{3\veps}(\iota_2(\overline B_r(q')))$.
Let $x \in \overline B_r(p')$. Choose $y \in \overline B_{1}(q)$ such that $\dd_{Z}(\iota_1(x), \iota_2(y)) < \veps$. By the triangle inequality,
\[    
\dd_Y(y,q')=\dd_{Z}(\iota_2(y), \iota_2(q')) \leq \dd_{Z}(\iota_2(y), \iota_1(x))+\dd_{Z}(\iota_1(x), \iota_1(p'))+\dd_{Z}(\iota_1(p'), \iota_2(q')) < r + 2\veps. 
\]
Choose a point $y' \in Y$ on a geodesic from $y$ to $q'$ such that $\dd_Y(y', y) < 2\veps$ and $\dd_Y(y', q') \leq r$. Then $y' \in \overline B_{r}(q') \subset B_1(q)$ is in the domain of $\iota_2$. Therefore, 
\begin{equation*}
    \dd_{Z}(\iota_1(x), \iota_2(y'))  \leq \dd_{Z}(\iota_1(x), \iota_2(y))+\dd_{Z}(\iota_2(y), \iota_2(y')) < \veps+2\veps=3\veps.
\end{equation*}
As $x \in \overline B_r(p')$ was arbitrary, we obtain $\iota_1(\overline B_r(p')) \subset B_{3\veps}(\iota_2(\overline B_r(q')))$. The reverse inclusion holds by symmetry. Therefore, $(Z, \iota_1', \iota_2')$ is a $(3\veps)$-realization between $\overline B_r(p')$ and $\overline B_r(q')$. 
\end{proof}

\begin{Def}\label{Def: regular realization}
Let $\veps, r >0$. Let $(X, \dd_X)$ be a locally compact metric space, and let $\gamma:[-r,r] \to X$ be a unit-speed geodesic. An admissible triple $(Z, \iota_1, \iota_2)$ between $\overline B_r(\gamma(0))$ and $\overline B_r(0^2)$ is called an \emph{$(\veps r)$-regular realization of $(\overline B_r(\gamma(0)), \gamma)$} if the following hold:
\begin{enumerate}
    \item $(Z, \iota_1, \iota_2)$ is an $(\veps r)$-realization between $\overline B_r(\gamma(0))$ and $\overline B_r(0^2)$;
    \item $\dd_Z(\iota_1(\gamma(t)), \iota_2((t,0))) < \veps$ for all $t \in [-r,r]$.
\end{enumerate}

More generally, if $\gamma:[a,b] \to X$ has a subsegment $\bar \gamma$ of length $2r$ with midpoint $\gamma(t_0)$, then we say $(Z, \iota_1, \iota_2)$ is an \emph{($\veps r$)-regular realization} of $(\overline B_r(\gamma(t_0)), \gamma)$ provided that it is an $(\veps r)$-regular realization of $(\overline B_r(\tilde \gamma(0)), \tilde \gamma)$ as defined above, where $\tilde \gamma:[-r, r] \to X$ is the unit-speed reparameterization of the subsegment $\bar \gamma$.
\end{Def}

\begin{Lem}\label{Lem: regular realization exist}
For every $\veps \in (0, 1/10)$, there exists $\delta(\veps) > 0$ such that for every locally compact metric space $(X, \dd_X)$, and every unit-speed geodesic $\gamma:[-r,r] \to X$, if 
\[\dd_{GH}(\overline B_r(\gamma(0)), \overline B_r(0^2))< \delta r,\] 
then there exists an $(\veps r)$-regular realization of $(\overline B_r(\gamma(0)), \gamma)$. 
\end{Lem}

We omit the proof of the previous lemma, since it is a standard compactness argument. 

%\begin{proof}
%By rescaling, we may assume that $r = 1$. Suppose that the proposition does not hold for some $\veps > 0$. Then there exists a sequence of metric spaces $X_n$ and unit-speed geodesics $\gamma_n: [-1,1] \to X_n$ such that for some sequence $\delta_n \to 0$, it holds that $\overline B_1(\gamma_n(0))$ is $\delta_n$-GH close to $\overline B_1(0^2)$, but there does not exist an $\veps$-regular realization of $(\overline B_1(\gamma_n(0)), \gamma_n)$. Let $(Z, \{\iota_n:n \in \N \cup \{\infty\}\})$ be the realization of the convergence $\overline B_1(\gamma_n(0)) \overset{GH}{\longrightarrow} \overline B_1(0^2)$. Up to taking a subsequence, $\iota_n \circ \gamma_n$ converges uniformly to $\iota_\infty \circ \gamma_\infty$, where $\gamma_\infty:[-1,1] \to  \overline B_1(0^2)$ is a unit-speed geodesic. Therefore, for all sufficiently large $n$, $(Z, \iota_n, \iota_\infty)$ is an $\veps$-realization between $\overline B_1(\gamma_n(0))$ and $ \overline B_1(0^2)$ such that $\dd_Z(\iota_n(\gamma_n(t)), \iota_\infty(\gamma_\infty(t))) < \veps$ for all $t \in [-1,1]$. By composing $\iota_\infty$ with the linear map that takes $(t,0)$ to $\gamma_\infty(t)$, we obtain an $\veps$-regular realization of $(\overline B_1(\gamma_n(0)), \gamma_n)$, which yields a contradiction.
%\end{proof}

%We note that by a more direct geometric argument, $\delta = O(\sqrt \veps)$ in the previous lemma. 

Keeping the notation of Definition~\ref{Def: regular realization}, we will use the regular realization of $(B_r(\gamma(0)), \gamma)$ to divide $B_r(\gamma(0))$, excluding a small tubular neighborhood of $\gamma$, into two regions approximating the upper and lower half-balls, as follows: Fix $\veps \in (0, 1/10)$ and $r > 0$. Let $(Z, \iota_1, \iota_2)$ be an $(\veps r/100)$-regular realization of $(B_r(\gamma(0)), \gamma)$. We define $H^{Z}_{\veps r, +}(\gamma([-r,r]))$ by
\begin{equation*}
\begin{aligned}
H^{Z}_{\veps r,+}(\gamma([-r,r])) := 
\Big\{ x \in \overline B_r(\gamma(0)) \setminus B_{\veps r}(\gamma([-r,r]))& \;: \; 
\exists y \in \overline B_r(0^2) \text{ such that } \\
&\dd_Z(\iota_1(x), \iota_2(y)) < \frac{\veps r}{100} \text{ and } \pi_2(y) \ge \frac{\veps r}{2} 
\Big\},
\end{aligned}
\end{equation*}
where $\pi_2: \R^2 \to \R$ is the projection map onto the second coordinate. In other words, $H^{Z}_{\veps r, +}(\gamma([-1,1]))$ is an approximate upper half-ball, as determined by the regular realization $Z$. The approximate lower half-ball $H^{Z}_{\veps r,-}(\gamma([-r,r]))$ is defined in the same way, except we require that $\pi_2(y) \leq -\veps r/2$. 

\begin{Lem}\label{Lem: approx half balls exist}
The sets $H^Z_{\veps r, +}(\gamma([-r,r]))$ and $H^Z_{\veps r, -}(\gamma([-r,r]))$ are non-empty, disjoint, and closed in $\overline B_r(\gamma(0))$. Moreover, \[\overline B_r(\gamma(0)) \setminus B_{\veps r}(\gamma([-r,r])) = H^Z_{\veps r, +}(\gamma([-r,r])) \sqcup H^Z_{\veps r, -}(\gamma([-r,r])).\]
\end{Lem}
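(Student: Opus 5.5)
The argument is elementary and works directly inside the realization $Z$. Set $\eta:=\veps r/100$ and write $H_\pm$ for the two approximate half-balls. The key observation is that if $x$ lies outside $B_{\veps r}(\gamma([-r,r]))$ and $y\in\overline B_r(0^2)$ satisfies $\dd_Z(\iota_1(x),\iota_2(y))<\eta$, then $|\pi_2(y)|$ is at least of order $\veps r$. Indeed, suppose $|\pi_2(y)|<\veps r/2$. Then the point $(\pi_1(y),0)$ lies on the segment $[-r,r]\times\{0\}$ and satisfies $|y-(\pi_1(y),0)|<\veps r/2$. By property (2) of Definition~\ref{Def: regular realization}, applied to the $\eta$-regular realization, $\dd_Z(\iota_2((\pi_1(y),0)),\iota_1(\gamma(\pi_1(y))))<\eta$. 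The triangle inequality would then give
\[
\dd_X(x,\gamma(\pi_1(y)))<\eta+\tfrac{\veps r}{2}+\eta<\veps r,
\]
which is a contradiction. So every such $y$ has $|\pi_2(y)|\ge\veps r/2$.

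\textbf{Covering.} Any $x\in\overline B_r(\gamma(0))$ admits some $y$ with $\dd_Z(\iota_1(x),\iota_2(y))<\eta$, by property (1). For $x$ in the complement of the tube, the observation shows $y$ satisfies either $\pi_2(y)\ge\veps r/2$ or $\pi_2(y)\le -\veps r/2$. Hence $x\in H_+\cup H_-$.

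\textbf{Disjointness.} If $x$ had witnesses $y_+$ with $\pi_2(y_+)\ge\veps r/2$ and $y_-$ with $\pi_2(y_-)\le-\veps r/2$, then $|y_+-y_-|<2\eta=\veps r/50$. This is impossible, because their second coordinates differ by at least $\veps r$.

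\textbf{Nonemptiness.} Take $y=(0,\pm r/2)$ and a point $x$ with $\dd_Z(\iota_1(x),\iota_2(y))<\eta$. For every $t\in[-r,r]$, property (2) gives
\[
\dd_X(x,\gamma(t))>|y-(t,0)|-2\eta\ge r/2-2\eta>\veps r.
\]
So $x$ lies outside the tube, and by the witness $y$ it belongs to $H_\pm$ respectively.

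\textbf{Closedness.} This step needs care, since the defining condition on witnesses is an open one. Let $x_k\in H_+$ with $x_k\to x$. Then $x$ lies in the closed set $\overline B_r(\gamma(0))\setminus B_{\veps r}(\gamma)$, so by covering $x\in H_+\cup H_-$. Suppose $x\in H_-$, with witness $y$. For $k$ large, $\dd_Z(\iota_1(x_k),\iota_2(y))<\eta$, because the witness inequality is strict and $\dd_Z(\iota_1(x_k),\iota_1(x))\to 0$. So $y$ is also a witness for $x_k$, which puts $x_k\in H_-$. This contradicts disjointness. Hence $H_+$ is closed, and by symmetry so is $H_-$.

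The main (mild) obstacle is the quantitative step showing that a witness of a point away from $\gamma$ stays away from the axis. This is exactly where the constants $\veps r/100$ and $\veps r/2$ are used, and where the regularity condition (2) is essential.
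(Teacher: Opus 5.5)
Your proof is correct and follows the paper's argument essentially step for step. The four steps match: covering via the witness estimate $|\pi_2(y)|\ge \veps r/2$, disjointness from $|y_+-y_-|<\veps r/50$, nonemptiness by approximating a point on the vertical axis, and closedness from the openness of the strict witness condition. The differences are minor: you phrase closedness sequentially where the paper says $H_+$ is open in the closed set $\overline B_r(\gamma(0))\setminus B_{\veps r}(\gamma([-r,r]))$, you write out the triangle-inequality step the paper leaves to the reader, and you use $(0,\pm r/2)$ rather than $(0,r)$ as the witness for nonemptiness.
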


\begin{proof}
By rescaling, we may assume that $r = 1$. Set $H_\pm := H^Z_{\veps, \pm}(\gamma([-1,1]))$.

Let $x \in \overline B_1(\gamma(0))$. Then there exists a point $y \in \overline B_1(0^2)$ such that $\dd_Z(\iota_1(x), \iota_2(y)) < \veps/100$. If in addition $x \notin B_\veps(\gamma([-1,1]))$, one can check from the triangle inequality that $\dd_{\R^2}(y, [-1,1] \times \{0\}) \geq \veps/2$. This implies that $\pi_2(y) \geq \veps/2$ or $\pi_2(y) \leq -\veps/2$. In particular, $x$ is contained in at least one of $H_\pm$, and so $\overline B_1(\gamma(0)) \setminus B_\veps(\gamma([-1,1])) = H_+ \cup H_-$. 

Next, we show that $H_+$ is non-empty. Let $p:= (0,1) \in \overline B_1(0^2)$. Then there exists a point $x \in \overline B_1(\gamma(0))$ such that $\dd_Z(\iota_1(x), \iota_2(p)) < \veps/100$. Since $\veps < 1/10$, one can check from the triangle inequality that $x \notin B_\veps(\gamma([-1,1]))$. It follows that $x \in H_+$ and so $H_+$ is non-empty. A symmetric argument shows that $H_-$ is also non-empty. 

Next, we show that the two sets are disjoint. Assume otherwise. Then there exist $y, y' \in \overline B_1(0^2)$, both of which approximate $x$, such that $\pi_2(y) \geq \veps/2$ and $\pi_2(y') \leq -\veps/2$. We have
\begin{align*}
    \dd_{\R^2}(y, y') &= \dd_Z(\iota_2(y), \iota_2(y'))\\
              &\leq \dd_Z(\iota_2(y), \iota_1(x)) + \dd_Z(\iota_2(y'), \iota_1(x))\\
                &< \veps/100 + \veps/100 = \veps/50.
\end{align*}
This is impossible, since $\dd_{\R^2}(y, y') \geq \veps$. 

Finally, it is clear that $H_+$ is open in $\overline B_1(\gamma(0)) \setminus B_\veps(\gamma([-1,1]))$. We have shown that $H_-$ is the complement of $H_+$ in $\overline B_1(\gamma(0)) \setminus B_\veps(\gamma([-1,1]))$. Therefore, $H_-$ is closed in $\overline B_1(\gamma(0)) \setminus B_\veps(\gamma([-1,1]))$, and hence closed in $\overline B_1(\gamma(0))$. By symmetry, $H_+$ is also closed. 
\end{proof}

The following lemma shows that the sets $H^Z_{\veps r, +}(\gamma([-r,r]))$ and $H^Z_{\veps r, -}(\gamma([-r,r]))$ are, up to a permutation of $\pm$, independent of the $(\veps r/100)$-regular realization $(Z, \iota_1, \iota_2)$. 

\begin{Lem}\label{Lem: approx half balls unique}
Let $(Z, \iota_1, \iota_2)$ and $(Z', \iota_1', \iota_2')$ be two $(\veps r/100)$-regular realizations of $(\overline B_r(\gamma(0)), \gamma)$. Then, up to permuting $\pm$, we have $H^Z_{\veps r, \pm}(\gamma([-r,r])) = H^{Z'}_{\veps r, \pm}(\gamma([-r,r]))$.
\end{Lem}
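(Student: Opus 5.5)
We rescale to $r=1$, write $H_\pm := H^Z_{\veps,\pm}(\gamma([-1,1]))$ and $H'_\pm := H^{Z'}_{\veps,\pm}(\gamma([-1,1]))$, and set $A := \overline B_1(\gamma(0)) \setminus B_\veps(\gamma([-1,1]))$. By Lemma~\ref{Lem: approx half balls exist}, $A = H_+ \sqcup H_- = H'_+ \sqcup H'_-$, and all four sets are nonempty. Hence it suffices to prove the following: \emph{if two points $x, x' \in A$ lie in the same set $H_\sigma$, then they lie in the same set $H'_\tau$.} Once this holds, every $H_\sigma$ is contained in a single $H'_\tau$. Applying the same statement with the roles of $Z$ and $Z'$ exchanged, and using that both partitions have exactly two nonempty pieces, the two partitions coincide up to relabeling.

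\textbf{Step 1: a partial isometry of $\overline B_1(0^2)$.} We compose the two realizations. For $y \in \overline B_1(0^2)$, pick $x \in \overline B_1(\gamma(0))$ with $\dd_Z(\iota_1(x), \iota_2(y)) < \veps/100$, and then $\phi(y) \in \overline B_1(0^2)$ with $\dd_{Z'}(\iota_1'(x), \iota_2'(\phi(y))) < \veps/100$. The resulting map $\phi$ distorts distances by less than $4\veps/100$. Condition (2) in Definition~\ref{Def: regular realization} is applied in both realizations at $x = \gamma(t)$. This gives $|\phi((t,0)) - (t,0)| \lesssim \veps$ whenever $y$ is close to $(t,0)$, up to the requirement that $y$ be matched to a point of $\gamma$. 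To secure this, we choose $x = \gamma(t)$ whenever $y = (t,0)$.

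A map from the unit disk into itself with small distortion that almost fixes the diameter $[-1,1]\times\{0\}$ is close to either the identity or the reflection $(a,b)\mapsto(a,-b)$. This is a standard rigidity fact for almost-isometries of Euclidean domains, and the estimate can be made quantitative by distance comparison. For a point $y$ with $\pi_2(y) = b$, we compare $|y-(\pm1,0)|$ and $|y-(0,0)|$: these distances determine $a$ and $|b|$ up to errors of order $\sqrt{\veps}$ (where $|b|$ is small), and of order $\veps$ elsewhere.

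\textbf{Step 2: the sign of the second coordinate is locally constant, hence globally constant.} Fix $x \in A$, and let $y, y'$ be its approximants in $Z$ and $Z'$. By the triangle inequality, $|\pi_2(y)|, |\pi_2(y')| \ge \veps/2$. Step 1 shows that $|\pi_2(y')| \approx |\pi_2(y)|$, so the sign $s(x) := \operatorname{sign}(\pi_2(y))\cdot\operatorname{sign}(\pi_2(y'))$ is well defined; since $A = H_+ \sqcup H_-$, this sign is independent of the chosen approximants. It remains to show that $s$ is constant on $A$, and this is the main obstacle. The constancy cannot come from connectivity of $H_\pm$ in $X$, which is not known. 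Instead we argue through the model, using the fact that $\{y : \pi_2(y) \ge \veps/2\} \cap \overline B_1(0^2)$ is connected. The first route is to argue by contradiction. Suppose $x_1, x_2 \in H_+$ have $s(x_1)=1$ and $s(x_2)=-1$. Their $Z$-approximants $y_1, y_2$ lie in the upper region. We connect them by a chain of points of the upper region with spacing $\veps/10$ that stays at distance $\ge \veps/2$ from the axis, lift each point of the chain to $X$ through $Z$, and push it to the model through $Z'$. Consecutive images are $O(\veps/10)$ apart and stay at distance $\gtrsim \veps/2 - 4\veps/100$ from the axis, so they cannot jump across the strip $|\pi_2| < \veps/2$. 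Hence the sign of $\pi_2$ in $Z'$ is constant along the chain, which contradicts $s(x_1) \neq s(x_2)$.

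The remaining care concerns lifted points that fall into $B_\veps(\gamma)$ and therefore lie outside $A$. We avoid this by keeping the chain in the region $\{\pi_2 \ge 2\veps\}$, joined to $y_1$ and $y_2$ by short vertical segments. This completes the proof.
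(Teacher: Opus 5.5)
Your proof is correct and is essentially the paper's argument. You join the $Z$-approximants inside $\{\pi_2\ge\veps/2\}$ (the paper just uses the straight segment, which stays there by convexity), lift the chain to $X$ through $Z$, push it back to the model through $Z'$, and observe that consecutive images are too close to jump across the band around the axis. The almost-isometry rigidity in Step~1 and the final ``remaining care'' paragraph are not needed, because the distance-to-axis estimate for the pushed-forward points holds whether or not the lifted points lie in $A$ (and the vertical-segment detour would not guarantee that anyway).
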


\begin{proof}
By rescaling, we can assume that $r = 1$. Set $H_\pm := H^Z_{\veps, \pm}(\gamma([-1,1]))$ and $H'_\pm := H^{Z'}_{\veps, \pm}(\gamma([-1,1]))$. By symmetry, it suffices to prove the following claim: if $x_1, x_2 \in H_+$ and $x_1 \in H_+'$, then $x_2 \in H_+'$. Assume for the sake of contradiction that there exist $x_1 \in H_+ \cap H_+'$ and $x_2 \in H_+$ such that $x_2 \notin H_+'$. By Lemma~\ref{Lem: approx half balls exist}, we have $x_2 \in H_-'$. 

Since $x_1, x_2 \in H_+$, there exist $y_1, y_2 \in \overline B_1(0^2)$ such that $\pi_2(y_1), \pi_2(y_2) \geq \veps/2$, and 
\[
    \dd_Z(\iota_1(x_1), \iota_2(y_1)) < \veps/100 \quad \text{and} \quad \dd_Z(\iota_1(x_2), \iota_2(y_2)) < \veps/100.
\]
Consider the line segment joining $y_1$ and $y_2$ in $\overline B_{1}(0^2)$. Choose points $\{q_i\}_{1\leq i\leq k}$ on this segment such that $q_1 = y_1$, $q_k = y_2$, and $\dd_{\R^2}(q_i, q_{i+1}) < \veps/100$ for all $i = 1,\dots,k-1$. Since $\pi_2(y_1), \pi_2(y_2) \geq \veps/2$, we obtain $\pi_2(q_i) \geq \veps/2$ for all $i$. Let $\{p_i\}_{1 \leq i \leq k}$ be points in $\overline B_1(\gamma(0))$ such that $p_1 = x_1$, $p_k = x_2$, and $\dd_{Z}(\iota_1(p_i), \iota_2(q_i)) < \veps/100$ for all $i$. By the triangle inequality, $\dd_{X}(p_i, p_{i+1}) = \dd_Z(\iota_1(p_i), \iota_1(p_{i+1})) < 3\veps/100$ for $i = 1, \dots, k-1$. Moreover, we have
\begin{align*}
    &\dd_X(p_i, \gamma([-1,1]))\\
            \geq \; & \dd_{Z}(\iota_2(q_i), \iota_2([-1,1] \times \{0\})) - \dd_H^Z(\iota_2([-1,1]\times\{0\}), \iota_1(\gamma([-1,1]))) - \dd_Z(\iota_1(p_i), \iota_2(q_i))\\
            >\; & \veps/2 - \veps/100 - \veps/100
            > 2\veps/5.
\end{align*}

Since $x_1 \in H_+'$ and $x_2 \in H_-'$, there exist $y_1', y_2' \in \overline B_1(0^2)$ such that $\pi_2(y'_1) \geq \veps/2$, $\pi_2(y'_2) \leq -\veps/2$, and 
\begin{equation*}
    \dd_{Z'}(\iota'_1(x_1), \iota'_2(y_1')) < \veps/100 \quad \text{and} \quad \dd_{Z'}(\iota'_1(x_2), \iota'_2(y'_2)) < \veps/100. 
\end{equation*}
Let $\{q_i'\}_{1\leq i\leq k}$ be points in $\overline B_{1}(0^2)$ such that $q'_1 = y'_1$, $q_k' = y'_2$, and $\dd_{Z'}(\iota_1'(p_i), \iota_2'(q_i')) < \veps/100$. From the previous estimates, we obtain, for $i = 1, \dots, k-1$,
\begin{align*}
    \dd_{\R^2}(q_i', q_{i+1}') &= \dd_{Z'}(\iota_2'(q_i'), \iota_2'(q_{i+1}'))\\
                        &\leq \dd_{Z'}(\iota_2'(q_i'), \iota_1'(p_i))+\dd_{Z'}(\iota_1'(p_i), \iota_1'(p_{i+1})) + \dd_{Z'}(\iota_1'(p_{i+1}), \iota_2'(q_{i+1}'))\\
                        &< \veps/100+3\veps/100+\veps/100 = \veps/20,
\end{align*}
and for $i = 1, \dots, k$,
\begin{align*}
    &\dd_{\R^2}(q_i', [-1,1]\times\{0\})\\
    = \; &\dd_{Z'}(\iota'_2(q_i'), \iota'_2([-1,1] \times \{0\}))\\
    \geq \; & \dd_{Z'}(\iota'_1(p_i), \iota_1'(\gamma([-1,1]))) - \dd^{Z'}_H(\iota_1'(\gamma([-1,1])), \iota_2'([-1,1]\times\{0\})) - \dd_{Z'}(\iota'_1(p_i), \iota'_2(q_i'))\\
    >\; &2\veps/5 - \veps/100 - \veps/100 = \veps/5.
\end{align*}
The second inequality implies that $|\pi_2(q_i')| > \veps/5$ for all $i$. We show that $\pi_2(q_i') > \veps/5$ for all $i$ by induction. Indeed, as the base of induction, we have $\pi_2(q_1') \geq \veps/2 > \veps/5$. Assume for some $i = 1,\dots, k-1$ it holds that $\pi_2(q_i') > \veps/5$. Since $\dd_{\R^2}(q_i', q_{i+1}') < \veps/20$, we obtain $\pi_2(q_{i+1}') > \veps/5 - \veps/20 > 0$. Combined with $|\pi_2(q_{i+1}')| > \veps/5$, we conclude that $\pi_2(q_{i+1}') > \veps/5$, which finishes the induction step. In particular, $\pi_2(q_k') > \veps/5$. However, $\pi_2(q_k') = \pi_2(y_2') \leq -\veps/2$, giving a contradiction. 
\end{proof}

The previous lemma implies a more general consistency result, across two different scales. 
\begin{Lem}\label{Lem: approx half ball consistent}
Let $(X, \dd_X)$ be a locally compact geodesic space, and let $\gamma:[-1,1] \to X$ be a unit-speed geodesic. Let $t \in (-1,1)$ and let $0 < r \leq 1-|t|$. Assume that for some $\veps \in (0,1/10)$, there exist an $(\veps r/10^3)$-regular realization $(Z, \iota_1, \iota_2)$ of $(\overline B_1(\gamma(0)), \gamma)$, and an $(\veps r/100)$-regular realization $(\tilde Z, \tilde \iota_1, \tilde \iota_2)$ of $(\overline B_r(\gamma(t)), \gamma)$. Then, up to permuting $\pm$, we have
\[
H^{\tilde Z}_{\veps r, \pm}(\gamma([t-r, t+r])) = H^{Z}_{\veps r/10, \pm}(\gamma([-1,1])) \, \cap \, \bigr(\overline B_r(\gamma(t)) \setminus B_{\veps r}(\gamma([t-r,t+r]))\bigr)
\]
\end{Lem}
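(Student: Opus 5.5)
The plan is to reduce the statement to Lemma~\ref{Lem: approx half balls unique}. We build, from the big realization $(Z,\iota_1,\iota_2)$, a second $(\veps r/100)$-regular realization of $(\overline B_r(\gamma(t)),\gamma)$ whose approximate half-balls are exactly the right-hand side. Uniqueness up to permuting $\pm$ then gives the identity.

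\textbf{Step 1: restrict the big realization.} Restrict $\iota_1$ to $\overline B_r(\gamma(t))$. Restrict $\iota_2$ to $\overline B_r((t,0))$, and precompose it with the translation $y\mapsto y+(t,0)$ so that it becomes an embedding of $\overline B_r(0^2)$. We have $\dd_Z(\iota_1(\gamma(t)),\iota_2((t,0)))<\veps r/10^3$ and $|t|\le 1-r$. Lemma~\ref{Lem: realization scale change}, applied with error $\veps r/10^3$, shows this restricted triple $(Z,\iota_1',\iota_2')$ is a $(3\veps r/10^3)$-realization between $\overline B_r(\gamma(t))$ and $\overline B_r(0^2)$. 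The curve condition $\dd_Z(\iota_1'(\gamma(t+s)),\iota_2'((s,0)))<\veps r/10^3$ holds for $s\in[-r,r]$, inherited from the big realization. So $(Z,\iota_1',\iota_2')$ is an $(\veps r/100)$-regular realization, since $3/10^3<1/100$.

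\textbf{Step 2: compare the half-balls.} We must show that
\[
H^{Z,\prime}_{\veps r,+}(\gamma([t-r,t+r])) = H^Z_{\veps r/10,+}(\gamma([-1,1]))\cap\bigl(\overline B_r(\gamma(t))\setminus B_{\veps r}(\gamma([t-r,t+r]))\bigr),
\]
and likewise for $-$. Here $H^{Z,\prime}$ denotes the half-balls defined through $\iota_1',\iota_2'$. Since translating by $(t,0)$ does not change $\pi_2$, the two definitions differ only in three ways:
\begin{itemize}
\item the approximation tolerance: $\veps r/100$ versus $(\veps r/10)/100=\veps r/10^3$;
\item the height threshold: $\veps r/2$ versus $\veps r/20$;
\item the allowed range of $y$.
\end{itemize}

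Take $x$ in the right-hand side, with witness $y$ satisfying $\pi_2(y)\ge\veps r/20$ and $\dd_Z(\iota_1(x),\iota_2(y))<\veps r/10^3$. Since $x$ lies in $\overline B_r(\gamma(t))$ and stays $\veps r$ away from $\gamma([t-r,t+r])$, the triangle inequality puts $y$ within about $\veps r/100$ of $\overline B_r((t,0))$. It also forces $|\pi_2(y)|$ to be close to $\veps r$, or $y$ to lie far from the segment; in particular $\pi_2(y)\ge\veps r/2$ up to small error. Nudging $y$ into $\overline B_r((t,0))$ gives a witness for $H^{Z,\prime}_{\veps r,+}$. So the right-hand side is contained in $H^{Z,\prime}_{\veps r,+}$.

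\textbf{Step 3: close the argument.} The inclusions run the same way for $-$. Lemma~\ref{Lem: approx half balls exist} says the $H'_\pm$ partition $\overline B_r(\gamma(t))\setminus B_{\veps r}(\gamma([t-r,t+r]))$. The sets $H^Z_{\veps r/10,\pm}(\gamma([-1,1]))$ are disjoint, and together they cover this region, since it sits outside the $\veps r/10$-neighborhood of $\gamma([-1,1])$. Hence the two inclusions are equalities. Finally, Lemma~\ref{Lem: approx half balls unique}, applied to $(\tilde Z,\tilde\iota_1,\tilde\iota_2)$ and $(Z,\iota_1',\iota_2')$, identifies $H^{\tilde Z}_{\veps r,\pm}$ with $H^{Z,\prime}_{\veps r,\pm}$ up to permutation.

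The main obstacle is the triangle-inequality bookkeeping in Step 2. Its key claim is that a point of $\overline B_r(\gamma(t))$ at distance at least $\veps r$ from the short segment, but approximated by $y$ with $\pi_2(y)$ small, is impossible. The danger is near the ends $t\pm r$, where distance to $\gamma([-1,1])$ and distance to $\gamma([t-r,t+r])$ differ. There I would use that $x\in\overline B_r(\gamma(t))$ forces $y$ into $\overline B_{r+\veps r/100}((t,0))$. At such points the distance to the short segment of the line $\R\times\{0\}$ is comparable to $|\pi_2(y)|$, up to an $O(\veps r/100)$ error near the endpoints. I would handle this using the constant margins $1/2$ versus $1/20$.
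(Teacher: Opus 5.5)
Your proof is correct. Its skeleton matches the paper's: restrict $(Z,\iota_1,\iota_2)$, translate by $(t,0)$, check via Lemma~\ref{Lem: realization scale change} that this gives an $(\veps r/100)$-regular realization, and conclude with Lemma~\ref{Lem: approx half balls unique}. What differs is the direction of the comparison in Step~2.

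The paper proves $H'_\pm\subset H^Z_{\veps r/10,\pm}(\gamma([-1,1]))$. Given $x\in H'_+$ with a witness $y$ at height $\ge\veps r/2$, it uses Lemma~\ref{Lem: approx ball contain} to place $x$ in one of $H^Z_{\veps r/10,\pm}$. A witness $z$ for the minus side would then have $\pi_2(z)\le-\veps r/20$ and $|y-z|<\veps r/100+\veps r/10^3$, which is absurd. No witness is constructed and no endpoint geometry enters; the partition of the small region by $H'_\pm$ upgrades the inclusions to equalities.

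Your reverse inclusion instead requires building a witness that meets the stricter threshold $\veps r/2$ inside $\overline B_r((t,0))$. That is why you need the distance-to-segment estimate near $t\pm r$ and the nudge. It works because the tolerance budget is favorable, but only with the sharp estimate: $|y-(t,0)|<r+2\veps r/10^3$, so the nudged witness has tolerance $3\veps r/10^3<\veps r/100$. The ``about $\veps r/100$'' you state would overshoot.

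You can avoid both the nudge and the endpoint analysis. Take $y''\in\overline B_r((t,0))$ with $\dd_Z(\iota_1(x),\iota_2(y''))<3\veps r/10^3$, supplied by the restricted realization. Then $|y''-y|<4\veps r/10^3$, which fixes the sign of $\pi_2(y'')$. Since $y''$ lies over the segment, $|\pi_2(y'')|$ equals its distance to $[t-r,t+r]\times\{0\}$, and this exceeds $\veps r-4\veps r/10^3$.

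Finally, the covering claim in your Step~3 is exactly Lemma~\ref{Lem: approx ball contain}, which the paper also needs, and it deserves a line of proof. It is in fact elementary: for $x\in\overline B_r(\gamma(t))$ and $s\ge t+r$ one has $\dd_X(x,\gamma(t+r))\le \dd_X(x,\gamma(s))+(s-t-r)\le 2\,\dd_X(x,\gamma(s))$, and symmetrically for $s\le t-r$.
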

\begin{proof}
By Lemma~\ref{Lem: approx half balls exist}, we have
\[
H^{\tilde Z}_{\veps r,+}\bigl(\gamma([t-r,t+r])\bigr)\sqcup
H^{\tilde Z}_{\veps r,-}\bigl(\gamma([t-r,t+r])\bigr)
=
\overline B_r(\gamma(t))\setminus B_{\veps r}\bigl(\gamma([t-r,t+r])\bigr).
\]
Therefore, up to permuting $\pm$, it suffices to prove the inclusions
\[
H^{\tilde Z}_{\veps r,\pm}\bigl(\gamma([t-r,t+r])\bigr)
\subset
H^{Z}_{\veps r/10,\pm}\bigl(\gamma([-1,1])\bigr).
\]

Let $\iota_1':\overline B_r(\gamma(t))\to Z$ be the restriction of $\iota_1$ to $\overline B_r(\gamma(t))\subset \overline B_1(\gamma(0))$. 
Let $\tau_t:\R^2\to\R^2$ be the translation $\tau_t(x)=x+(t,0)$, and define
$\iota_2':=\iota_2\circ \tau_t\big|_{\overline B_r(0)}$.
By Lemma~\ref{Lem: realization scale change}, $(Z,\iota_1',\iota_2')$ is an $(\veps r/100)$-regular realization of $(\overline B_r(\gamma(t)),\gamma)$.

Let $H^{Z}_{\veps r,\pm}(\gamma([t-r,t+r]))$ denote the corresponding approximate half-balls associated to $(Z,\iota_1',\iota_2')$. 
By Lemma~\ref{Lem: approx half balls unique}, we have
\[
H^{Z}_{\veps r,\pm}(\gamma([t-r,t+r]))
=
H^{\tilde Z}_{\veps r,\pm}(\gamma([t-r,t+r])),
\]
up to permuting $\pm$. 
Thus it suffices to show that
\[
H^{Z}_{\veps r,\pm}(\gamma([t-r,t+r]))
\subset
H^{Z}_{\veps r/10,\pm}(\gamma([-1,1])).
\]

Let $x \in H^{Z}_{\veps r, +}(\gamma([t-r, t+r]))$. Thus there exists $y' \in \overline B_r(0^2)$ such that 
\[
\dd_{Z}(\iota_1'(x), \iota_2'(y')) < \veps r/100 \quad \text{and} \quad \pi_2(y') \geq \veps r/2.
\]
Let $y := y' + (t,0)$. The above conditions are equivalent to 
\[
\dd_{Z}(\iota_1(x), \iota_2(y)) < \veps r/100 \quad \text{and} \quad \pi_2(y) \geq \veps r/2.
\]
According to Lemma~\ref{Lem: approx ball contain} below, we have
\[
\overline B_{r}(\gamma(t)) \setminus B_{\veps r}(\gamma([t-r, t+r])) \subset \overline B_{1}(\gamma(0)) \setminus B_{\veps r/10}(\gamma([-1,1])).
\]
In particular, $x \in \overline B_{1}(\gamma(0)) \setminus B_{\veps r/10}(\gamma([-1,1]))$. Since $\overline B_{1}(\gamma(0)) \setminus B_{\veps r/10}(\gamma([-1,1]))$ is the disjoint union of $H^{Z}_{\veps r/10, +}(\gamma([-1,1]))$ and $H^{Z}_{\veps r/10, -}(\gamma([-1,1]))$ by Lemma~\ref{Lem: approx half balls exist}, the point $x$ must be contained in one of the two sets. Assume for the sake of contradiction that $x \in H^{Z}_{\veps r/10, -}(\gamma([-1,1]))$. By definition, this means that there exists $z \in \overline B_1(0^2)$ such that 
\[
\dd_{Z}(\iota_1(x), \iota_2(z)) < \veps r/1000 \quad \text{and} \quad \pi_2(z) \leq -\veps r/20.
\]
By the triangle inequality, 
\[\dd_{\R^2}(y, z) \leq \dd_{Z}(\iota_2(y), \iota_1(x)) + \dd_{Z}(\iota_1(x), \iota_2(z))< \veps r/100 + \veps r/1000 < \veps r/2.\] On the other hand, we have $\pi_2(y) \geq \veps r/2$ and $\pi_2(z) \leq -\veps r/20$, leading to a contradiction. 
\end{proof}

\begin{Lem}\label{Lem: approx ball contain}
Let $(X, \dd_X)$ be a locally compact geodesic space, and let $\gamma:[-1,1] \to X$ be a unit-speed geodesic. Let $t \in (-1,1)$ and let $0 < r \leq 1-|t|$. Assume that for some $\veps \in (0,1/10)$ there exists an $(\veps r/100)$-regular realization $(Z, \iota_1, \iota_2)$ of $(\overline B_1(\gamma(0)) ,\gamma)$. Then 
\[\overline B_{r}(\gamma(t)) \setminus B_{\veps r}(\gamma([t-r,t+r])) \subset \overline B_1(\gamma(0)) \setminus B_{\veps r/10}(\gamma([-1,1])).\]
\end{Lem}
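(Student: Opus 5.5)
The inclusion follows from the triangle inequality and the fact that $\gamma$ is a unit-speed minimizing geodesic. The regular realization $(Z,\iota_1,\iota_2)$ will not be needed. Fix $x \in \overline B_r(\gamma(t)) \setminus B_{\veps r}(\gamma([t-r,t+r]))$. Two things must be checked: $x \in \overline B_1(\gamma(0))$, and $\dd_X(x,\gamma([-1,1])) \geq \veps r/10$.

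\emph{Ball containment.} Since $\gamma$ is unit speed, $\dd_X(\gamma(t),\gamma(0)) = |t|$. Hence
\[
\dd_X(x,\gamma(0)) \leq r + |t| \leq 1,
\]
using $r \leq 1-|t|$. The same hypothesis guarantees $[t-r,t+r] \subset [-1,1]$, so the endpoints $\gamma(t\pm r)$ are defined.

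\emph{Distance to $\gamma$.} Argue by contradiction. Suppose there is $s \in [-1,1]$ with $\dd_X(x,\gamma(s)) < \veps r/10$. If $s \in [t-r,t+r]$, then $\dd_X(x,\gamma(s)) < \veps r/10 < \veps r$, contradicting the choice of $x$. So assume $s \notin [t-r,t+r]$; by symmetry take $s > t+r$. Because $\gamma$ is minimizing with unit speed,
\[
s - t = \dd_X(\gamma(s),\gamma(t)) \leq \dd_X(\gamma(s),x) + \dd_X(x,\gamma(t)) < \frac{\veps r}{10} + r .
\]
This gives $0 < s-(t+r) < \veps r/10$. Therefore
\[
\dd_X(x,\gamma(t+r)) \leq \dd_X(x,\gamma(s)) + |s-(t+r)| < \frac{\veps r}{5} < \veps r .
\]
Since $\gamma(t+r) \in \gamma([t-r,t+r])$, this again contradicts $x \notin B_{\veps r}(\gamma([t-r,t+r]))$.

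There is no genuine obstacle here. The only point needing care is the case where the nearby point of $\gamma$ lies outside the subsegment $[t-r,t+r]$. That case is handled by the identity $\dd_X(\gamma(s),\gamma(t))=|s-t|$, which forces such a point to lie within $\veps r/10$ of an endpoint $\gamma(t\pm r)$. The hypothesis on the regular realization is superfluous for this particular inclusion; it is present only because the lemma is applied in the proof of Lemma~\ref{Lem: approx half ball consistent}.
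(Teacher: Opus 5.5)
Your proof is correct, and it takes a different route from the paper's.

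\textbf{The paper's route.} The paper does use the realization. By Lemma~\ref{Lem: realization scale change}, the restriction of $(Z,\iota_1,\iota_2)$ is an $(\veps r/10)$-realization between $\overline B_r(\gamma(t))$ and $\overline B_r((t,0))$. The paper picks $y\in\overline B_r((t,0))$ with $\dd_Z(\iota_1(x),\iota_2(y))<\veps r/10$. It shows $\dd_{\R^2}(y,(s,0))>\veps r/2$ for all $s\in[t-r,t+r]$, hence $|\pi_2(y)|>\veps r/2$; this uses the fact that the foot of $y$ on the horizontal axis lies in $[t-r,t+r]\times\{0\}$. It then transfers back through condition~(2) of the regular realization to get $\dd_X(x,\gamma(s))>\veps r/10$ for every $s\in[-1,1]$.

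\textbf{Your route.} The paper thus uses the nearest-point structure of a segment in the Euclidean model. You use the same phenomenon intrinsically in $X$: the identity $\dd_X(\gamma(s),\gamma(t))=|s-t|$ forces any point of $\gamma$ outside $[t-r,t+r]$ that is close to $x$ to also be close to the endpoint $\gamma(t\pm r)$.

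\textbf{What each buys.} Your argument is more elementary and more general. It needs neither the realization, nor $\veps<1/10$, nor local compactness, and it works in any metric space. It also gives the stronger bound $\dd_X(x,\gamma([-1,1]))\ge \veps r/2$. You also check the containment in $\overline B_1(\gamma(0))$, which the paper leaves implicit. The paper's version only has the cosmetic advantage of staying inside the realization formalism of the surrounding lemmas. Nothing in Lemma~\ref{Lem: approx half ball consistent} depends on that.
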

\begin{proof}
Let $x \in \overline B_{r}(\gamma(t)) \setminus B_{\veps r}(\gamma([t-r,t+r]))$. We have to show that $x \notin B_{\veps r/10}(\gamma([-1,1]))$.

By Lemma~\ref{Lem: realization scale change}, $(Z, \iota_1', \iota_2')$, where $\iota_1'$ and $\iota_2'$ are the restrictions of $\iota_1$ and $\iota_2$ to $\overline B_r(\gamma(t))$ and $\overline B_r((t,0))$ respectively, is an $(\veps r/10)$-realization between $\overline B_r(\gamma(t))$ and $\overline B_r((t,0))$. Therefore, there exists $y \in \overline B_r((t,0))$ such that 
\[\dd_{Z}(\iota'_1(x), \iota'_2(y)) = \dd_{Z}(\iota_1(x), \iota_2(y)) < \veps r/10.\]
By the triangle inequality, for any $s \in [t-r, t+r]$, we have
\begin{align*}
    \dd_{\R^2}(y, (s,0)) &= \dd_{Z}(\iota_2(y), \iota_2((s,0)))\\
    &\geq \dd_{Z}(\iota_1(x), \iota_1(\gamma(s))) - \dd_{Z}(\iota_1(x), \iota_2(y)) - \dd_{Z}( \iota_2((s,0)), \iota_1(\gamma(s)))\\
    &> \veps r - \veps r/10 - \veps r/100 > \veps r/2.
\end{align*}
Since $y \in \overline B_r((t,0))$, this implies that $|\pi_2(y)| > \veps r/2$. Applying the triangle inequality again, for any $s \in [-1,1]$, we have
\begin{align*}
    \dd_{X}(x, \gamma(s)) &= \dd_{Z}(\iota_1(x), \iota_1(\gamma(s)))\\
    &\geq \dd_{Z}(\iota_2(y), \iota_2((s,0))) - \dd_{Z}(\iota_2(y), \iota_1(x)) - \dd_{Z}(\iota_1(\gamma(s)), \iota_2((s,0)))\\
    &> \veps r/2 - \veps r/10 - \veps r/100 > \veps r/10. 
\end{align*}
Therefore, $x \notin B_{\veps r/10}(\gamma([-1,1]))$. 
\end{proof}

\subsection{RCD space preliminaries}\label{Subsec: RCD space}
In this subsection, we collect some prerequisite results on $\RCD$ spaces. As the theory is at this stage very well-developed, we only present background relevant to our arguments, and will not attempt to be comprehensive, referring to \cite{S06a, S06b, LV09, BS10, G13, AGS14, EKS15, G15, K15, DG16, DG18, AMS19, MN19, BS20, KM21, CM21, BNS22, W24a, D25} for details.  

In the development of the theory, various results were proved assuming the $\RCD^{*}(K,N)$ condition. This has since been shown to be equivalent to the $\RCD(K,N)$ condition \cite{CM21, L24} and hence we will use the $\RCD(K,N)$ assumption when citing various results, even if they were originally stated for $\RCD^*(K,N)$ spaces. 

Let $(X, \dd, \mm)$ be an $\RCD(K,N)$ space. Given $x_0 \in X$ and $r \in (0,1)$, we define the rescaled measure
\begin{equation}\label{Eq: rescaled measure}
    \mm^{x_0}_r := \Big(\int_{B_{r}(x_0)} 1 - \frac{\dd(x, x_0)}{r} d\mm(x)\Big)^{-1} \mm. 
\end{equation}
We recall the definition of tangent spaces, referring to \cite{GMS15} for the notion of pointed measured Gromov--Hausdorff convergence used in the definition. 
\begin{Def}[Tangent spaces]\label{Def: tangent space} Let $x_0 \in X$. A pointed metric measure space $(Y, \dd_Y, \mm_Y, y_0)$ is called a \emph{tangent space of $X$ at $x_0$} if there exists a sequence of radii $r_i \to 0$ such that $(X, r_i^{-1}\dd, \mm_{r_i}^{x_0}, x_0)$ converges to $(Y, \dd_Y, \mm_Y, y_0)$ in the pointed measured Gromov--Hausdorff sense as $i \to \infty$. We call $y_0$ the \emph{designated origin} of the tangent space. The collection of all tangent spaces of $X$ at $x_0$ is denoted $\text{Tan}(X, \dd, \mm, x_0)$. 
\end{Def}

A standard compactness argument gives that $\text{Tan}(X, \dd, \mm, x_0)$ is non-empty. In general, $\text{Tan}(X, \dd, \mm, x_0)$ may contain more than one element. It follows from the rescaling and stability properties of $\RCD(K,N)$ spaces \cite{AGS14, GMS15} that every element of $\text{Tan}(X, \dd, \mm,  x_0)$ is a pointed $\RCD(0,N)$ space. The following proposition is immediate from the definition.
\begin{Pro}\label{Pro: tangent of tangent}
Let $(X, \dd, \mm)$ be an $\RCD(K,N)$ space and let $x_0 \in X$. If $(Y, \dd_Y, \mm_y, y_0)$ is a tangent space of $X$ at $x_0$, then any tangent space of $Y$ at $y_0$ is also (pointed metric measure isomorphic to) a tangent space of $X$ at $x_0$. 
\end{Pro}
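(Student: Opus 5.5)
The statement is Proposition~\ref{Pro: tangent of tangent}: a tangent of a tangent at the designated origin is itself a tangent. The plan is a diagonal argument in the pointed measured Gromov--Hausdorff topology. This topology is metrizable on the class of pointed $\RCD(0,N)$ spaces, for instance via the pmGH distance of \cite{GMS15}. Write $\dd_{pmGH}$ for such a metrization of convergence.

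First, fix the data. Let $r_i\to 0$ with $(X,r_i^{-1}\dd,\mm^{x_0}_{r_i},x_0)\to (Y,\dd_Y,\mm_Y,y_0)$. Let $s_j\to 0$ with $(Y,s_j^{-1}\dd_Y,(\mm_Y)^{y_0}_{s_j},y_0)\to (W,\dd_W,\mm_W,w_0)$.

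Second, fix $j$ and let $i\to\infty$. The space $(X, (r_is_j)^{-1}\dd, x_0)$ is the $s_j^{-1}$-rescaling of $(X,r_i^{-1}\dd,x_0)$. Metric rescaling by a fixed factor is continuous under pmGH convergence. For the measure, observe the identity
\[
\mm^{x_0}_{r_is_j}=\Big(\int_{B_{s_j}(x_0)}\big(1-\tfrac{r_i^{-1}\dd(x,x_0)}{s_j}\big)\,d\mm^{x_0}_{r_i}(x)\Big)^{-1}\mm^{x_0}_{r_i},
\]
where the ball is taken in the metric $r_i^{-1}\dd$. Hence $\mm^{x_0}_{r_is_j}$ is exactly the normalization at scale $s_j$ of the rescaled measure. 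The integrand is a continuous, compactly supported cutoff function of distance to the base point. So weak convergence of the measures, together with convergence of base points, gives convergence of the normalizing constants. The limit constant is positive because $\mm_Y$ has full support and $B_{s_j}(y_0)$ has positive measure. Therefore, for each fixed $j$,
\[
(X,(r_is_j)^{-1}\dd,\mm^{x_0}_{r_is_j},x_0)\longrightarrow (Y,s_j^{-1}\dd_Y,(\mm_Y)^{y_0}_{s_j},y_0) \qquad (i\to\infty).
\]

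Third, diagonalize. For each $j$ choose $i_j$ large enough that the two spaces above are $1/j$-close in $\dd_{pmGH}$, and also $r_{i_j}s_j<1/j$. Then $\rho_j:=r_{i_j}s_j\to0$. By the triangle inequality for $\dd_{pmGH}$, the sequence $(X,\rho_j^{-1}\dd,\mm^{x_0}_{\rho_j},x_0)$ converges to $(W,\dd_W,\mm_W,w_0)$, so $W$ is a tangent space of $X$ at $x_0$.

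The main obstacle is the measure normalization in the second step: one must check that the normalization constants pass to the limit. The only delicate point is the boundary of the ball. It is handled because the weight $1-\dd/r$ vanishes continuously there, which is exactly why the normalization in \eqref{Eq: rescaled measure} was chosen.
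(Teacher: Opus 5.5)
Your diagonal argument is correct and is exactly the routine verification the paper omits when it calls this proposition immediate from the definition (no proof is given there): the scaling identity for the normalized measures and the convergence of the normalizing constants are the only points that need checking, and you handle both. One minor imprecision is that for $K<0$ the rescaled spaces $(X,\rho^{-1}\dd,\mm^{x_0}_\rho,x_0)$ are $\RCD(K\rho^2,N)$ rather than $\RCD(0,N)$, so you should metrize pmGH convergence on a larger class, such as pointed $\RCD(-|K|,N)$ spaces (which contains them once $\rho\le 1$); the same metrization via \cite{GMS15} is available there.
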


Let $k$ be any integer so that $1 \leq k \leq N$. Set $c_k := \int_{B_1(0^k)} 1-|x| \, d\mathcal{H}^k(x)$, where $0^k \in \R^k$ is the origin.  The \emph{$k$-dimensional regular set $\mathcal{R}_k$} is defined by
\begin{equation}\label{Eq: reg set}
    \mathcal{R}_k := \Big\{x \in X: \text{Tan}(X, \dd, \mm, x) = \{(\R^k, \dd_{\R^k}, c_k^{-1}\mathcal{H}^k, 0^k)\}\Big\}
\end{equation}

\begin{Rem}\label{Rem: metric Rk rigidity}
We will mostly be interested in metric properties of tangent spaces. In particular, when we say that a tangent space at $x_0$ is $Z$, we usually only mean that it is isometric to $Z$, without any implications on the measure. We note that in the case where the tangent space is isometric to $\R^k$, the splitting theorem (see Theorem~\ref{Thm: splitting thm}) gives measure rigidity as well. In particular, if all tangent spaces at $x_0$ are isometric $\R^k$, then $x_0 \in \mathcal R_k$. 
\end{Rem}

We have the following theorem from \cite{MN19, BS20} (cf. \cite{CN12}).
\begin{Thm}[Constancy of the dimension]\label{Thm: constant dim} 
Let $(X, \dd, \mm)$ be an $\RCD(K,N)$ for some $K \in \R$ and $N \in [1,\infty)$. Assume that $X$ is not a point. Then there exists a unique $n \in \N$ with $1 \leq n \leq N$ such that $\mm(X \setminus \mathcal R_n) = 0$.
\end{Thm}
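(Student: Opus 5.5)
This is the constancy of dimension theorem of Mondino--Naber and Bruè--Semola, and I would prove it along the lines of those references.

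\emph{Step 1: full measure of regular points.} I would show that $\mm$-a.e.\ point has a unique tangent space of the form $\R^k$ for some integer $k$. The tool is the iterated tangent principle: for $\mm$-a.e.\ $x$, every tangent space of a tangent space at $x$ is itself a tangent space at $x$. This is a density/Lebesgue-point argument, made available by local doubling of $\mm$. It is the measure-theoretic counterpart of Proposition~\ref{Pro: tangent of tangent}, but it holds at points other than the designated origin.

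Each tangent space is $\RCD(0,N)$. If it is not $\R^k$, I would use the splitting theorem together with the Busemann function of a ray from a far-away tangent point to produce a further tangent that splits off an extra line. Iterating and bounding the number of lines by $N$ gives, for a.e.\ $x$, some tangent space isometric to $\R^{k}$. A further argument, either via the measure-theoretic Euclidean-factor count or via almost-splitting functions at all small scales, upgrades this to uniqueness of the tangent, i.e.\ $x\in\mathcal R_k$. Hence $\mm\bigl(X\setminus \bigcup_k \mathcal R_k\bigr)=0$.

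\emph{Step 2: only one $k$ has positive measure.} This is the main obstacle, and I would follow Bruè--Semola. Fix two candidate dimensions $k$ and $k'$, and suppose both $\mathcal R_k$ and $\mathcal R_{k'}$ have positive measure. I would use the regular Lagrangian flow of a suitable Sobolev vector field, namely the gradient of a truncated distance function from a density point of $\mathcal R_{k'}$. Its flow maps are measure-preserving up to bounded compression, thanks to Bakry--Émery bounds on $\Delta$ of the distance and the Laplacian comparison. One then shows the flow preserves dimension of tangents a.e., since the flow's differential is an a.e.\ isomorphism of the tangent modules. This transports a positive-measure piece of $\mathcal R_k$ into $\mathcal R_{k'}$, which is a contradiction. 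Alternatively, the Mondino--Naber route uses rectifiability: $\mathcal R_k$ is $k$-rectifiable with $\mm|_{\mathcal R_k}\ll\mathcal H^k$, and one compares along optimal transport interpolations from a $\mathcal R_k$-set to a $\mathcal R_{k'}$-set with $k<k'$. By essential non-branching and the $\mathrm{MCP}$ contraction, interpolated measures concentrate on regions of $\mathcal H^{k'}$-dimension, contradicting absolute continuity along the geodesic. I expect the hard part to be exactly this transport/flow regularity, i.e.\ showing that dimension is preserved along the flow.

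\emph{Step 3: conclusion.} Since $X$ is not a point, $\mm\ne0$, so some $\mathcal R_n$ has positive and therefore full measure. Uniqueness of $n$ follows from the disjointness of the sets $\mathcal R_k$. Finally, $n\le N$ because an $\RCD(0,N)$ structure on $\R^n$ with $\mathcal H^n$ forces $n\le N$ by Bishop--Gromov volume growth.
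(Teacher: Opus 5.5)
The paper gives no proof of this statement; it simply cites Mondino--Naber (a.e.\ existence and uniqueness of Euclidean tangents) and Bru\`e--Semola (only one $\mathcal R_k$ has positive measure). To the extent that you are invoking those results, your plan agrees with the paper. Steps~1 and~3 are also a fair outline of the first input and of the bound $n\le N$. As a proof sketch, however, Step~2 has a genuine gap, and Step~2 is the entire content of Bru\`e--Semola. Neither mechanism you propose supplies it.

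\emph{The flow route.} The gradient of a truncated distance function is not an admissible field. $\Delta\dd_p$ is only a measure with nonpositive singular part on the cut locus, and $\nabla\dd_p$ can even jump across the cut locus (already on a flat torus). So $\nabla\dd_p$ is neither $W^{1,2}$ nor of bounded divergence. Laplacian comparison is one-sided: at best the flow toward $p$ has bounded compression, while the flow away from $p$ concentrates mass on the cut locus. You therefore do not get an invertible flow. You need fields with $W^{1,2}$ regularity and bounded divergence, for example gradients of functions with bounded gradient and bounded Laplacian such as heat-flow regularizations. For these, Ambrosio--Trevisan give unique Regular Lagrangian Flows with two-sided compression bounds.

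More seriously, ``the flow's differential is an a.e.\ isomorphism of the tangent modules'' is an assertion, not an argument. A Regular Lagrangian Flow is a priori only a measurable map with bounded compression, with no pointwise differential and no blow-up. The main theorem of Bru\`e--Semola is exactly that such flows are Lusin--Lipschitz, i.e.\ Lipschitz on sets of almost full measure, proved via a delicate heat-kernel based estimate. Only with this can you blow up $X_t$ and $X_t^{-1}$ at density points of the Lipschitz sets, obtain a bi-Lipschitz map $\R^k\to\R^{k'}$, and conclude $k=k'$. You correctly flag this as the hard part, but nothing in the proposal proves it.

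\emph{The alternative route.} This fails as well. Mondino--Naber did not prove constancy of the dimension; they left it open. More to the point, no contradiction comes out of the argument you sketch. $\MCP$ and essential non-branching ensure that interpolants between $\mm$-absolutely continuous measures stay $\mm$-absolutely continuous. But $\mm$ charges both $\mathcal R_k$ and $\mathcal R_{k'}$, so nothing prevents the interpolants from passing from one region to the other. Being concentrated on a $k'$-rectifiable set is perfectly compatible with $\mm$-absolute continuity. Since the density of $\mm|_{\mathcal R_k}$ with respect to $\mathcal H^k$ is uncontrolled, comparing $r^k$ and $r^{k'}$ volume scalings gives nothing either.

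A geodesic-based proof is possible in principle; this is how Colding--Naber obtained constancy for Ricci limits. It rests, however, on H\"older continuity of tangent cones along interiors of geodesics, a deep estimate that goes far beyond measure contraction.
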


The previous theorem motivates the following definition.
\begin{Def}[Essential dimension]\label{Def: essential dim}
Let $(X, \dd, \mm)$ be an $\RCD(K,N)$ for some $K \in \R$ and $N \in [1, \infty)$. We define the \emph{essential dimension of $X$}, denoted by $\dim(X, \dd, \mm)$ (or simply $\dim(X)$), as follows:
\[
\dim(X, \dd, \mm) :=
\begin{cases}
n, & \text{if $X$ is not a point, where $n$ is as in Theorem~\ref{Thm: constant dim}},\\
0, & \text{if $X$ is a point}.
\end{cases}
\]
\end{Def}

The $\RCD(K,N)$ spaces of essential dimension one are classified in \cite{KL16} (cf. \cite{H11}).
\begin{Thm}\label{Thm: RCD dim 1 class}
Let $(X, \dd, \mm)$ be an $\RCD(K,N)$ space for some $k \in \R$ and $N \in [1, \infty)$. If $\dim((X, \dd, \mm)) = 1$, then $X$ is isometric to $\R, \R_+$, a closed interval, or a circle. 
\end{Thm}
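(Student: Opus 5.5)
The plan is to show first that $X$ is a connected topological $1$-manifold, possibly with boundary, whose metric is its own length metric, and then to read off the isometry type. Once this is known, $X$ is homeomorphic to $\R$, $\R_+$, a closed interval, or $S^1$. Since $(X,\dd)$ is a proper geodesic space, parametrizing by arc length gives an isometry with $\R$, $\R_+$ or $[0,L]$ in the first three cases. In the circle case, $\dd$ is the length metric of a closed curve of some length $L$, hence isometric to the round circle of length $L$. So the whole content is the manifold statement.

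The manifold statement reduces to two facts:
\begin{enumerate}
\item $X$ contains no branch point, meaning no point $o$ from which three geodesics $\sigma_1,\sigma_2,\sigma_3$ emanate that pairwise meet only at $o$;
\item every point of $X$ lies on some geodesic.
\end{enumerate}
Given these, the relation ``$y$ lies between $x$ and $z$'' makes every compact ball a union of arcs through its center with no branching. Hence small balls are arcs (an interior point) or half-open arcs (an endpoint), which is the manifold structure. For step (2) I would use that $\mm$ has full support together with the measure contraction property. For any $x$, pick $z$ at distance $r$ from $x$ and contract a small ball around $x$ toward $z$. Geodesics from $z$ to points of positive measure near $x$ then accumulate, by compactness, onto a geodesic from $z$ through or ending at $x$. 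Extending in the reverse direction, using that $\mathcal R_1$ is dense and at regular points the tangent is $\R$, makes $x$ an interior point of a geodesic unless it is an endpoint of $X$.

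The main obstacle is step (1), ruling out branching; this is exactly where the $\RCD$ condition, rather than mere geodesic structure, must enter. Suppose $o$ is a branch point with arms $\sigma_1,\sigma_2,\sigma_3$, and fix $z=\sigma_1(s)$. Every geodesic from $z$ to a point near $o$ on $\sigma_2$ or $\sigma_3$ passes through $o$, by absence of shortcuts in the tripod configuration near $o$; I would verify this locally using the almost-$\R$ structure of balls at nearby regular points. The next step is to show that both $B_\delta(\sigma_2(\delta'))$ and $B_\delta(\sigma_3(\delta'))$ have positive $\mm$-measure; this is immediate from full support. Then the $W_2$-geodesic from $\delta_z$, or from a small ball around $z$ via $\MCP$, to the normalized measure on their union consists of geodesics that share the initial segment $z\to o$ and then split into two families. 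Each family has positive mass, so this contradicts the essential non-branching property of $\RCD(K,N)$ spaces.

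The delicate point is making ``all such geodesics pass through $o$'' rigorous without already knowing the local topology. I would try first to bootstrap from the fact that $\mm$-a.e. point has tangent $\R$, so that Bishop--Gromov forces small balls at almost every point to be GH-close to intervals. Failing that, I would argue by contradiction with a blow-up at $o$: the tangent cone would be an $\RCD(0,N)$ cone containing three rays, and the splitting theorem (Theorem~\ref{Thm: splitting thm}) applied to any two of the rays that form a line would force a Euclidean factor incompatible with the third ray.
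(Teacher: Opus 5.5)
Your reduction to excluding branch points is fine, and so is the passage from a connected $1$-manifold to the four isometry types. Step (2) is not even needed: in a geodesic space with no tripods, geodesics leaving a point fall into at most two germ classes, and geodesics in one class never diverge, which already gives arc or half-arc neighborhoods.

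The genuine gap is the step you flag yourself: that every geodesic from $z=\sigma_1(s)$ to a point of $\sigma_2$ or $\sigma_3$ near $o$ passes through $o$. This does not follow from the tripod configuration; in $\R^2$ every point is the vertex of a tripod and the claim fails. So it must come from $\dim(X)=1$, and your proposal gives no mechanism for that. Neither fallback works:
\begin{itemize}
\item Tangent spaces of collapsed $\RCD$ spaces need not be metric cones.
\item The blow-ups of the three arms may converge to the same ray, since nothing separates distinct geodesics at $o$ by an angle, and nothing forces two of the limit rays to form a line.
\item Identifying an $\RCD(0,N)$ tangent space of essential dimension $1$ is precisely the statement being proved, so the blow-up argument is circular.
\item Closeness of balls to intervals at $\mm$-a.e.\ point holds only at point-dependent scales. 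It says nothing along the given arms or at $o$.
\end{itemize}
The final contradiction is also misplaced. Essential non-branching concerns optimal plans between absolutely continuous marginals, so $\delta_z$ is not admissible. For a ball around $z$, the transport geodesics pass through $o$ at different times, exactly as in $\R$, and that is not branching.

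The missing idea is to apply almost splitting along the arms themselves. For a unit-speed geodesic $\gamma:[-1,1]\to X$, Theorem~\ref{Thm: almost splitting thm} gives a factor $Y$ with $\dim(Y)\le \dim(X)-1=0$, so $Y$ is a point. Hence there is $r>0$ such that for all $|t'|\le 1/2$ and $s\le r$, every point of $\overline B_s(\gamma(t'))$ lies within $2\veps s$ of $\gamma$. Suppose some $y\in B_{r/10}(\gamma(0))$ is not on $\gamma$, and let $\gamma(t')$ be a nearest point, at distance $\delta>0$. Applying the previous statement at $\gamma(t')$ with $s=2\delta$ gives $\dd_X(y,\gamma)<4\veps\delta<\delta$, a contradiction. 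Hence the interior of every geodesic is open in $X$.

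Now take a tripod and truncate its arms to a common length $\ell$. Each $\sigma_i((0,\ell))$ is open with frontier contained in $\{o,\sigma_i(\ell)\}$. A geodesic from $\sigma_2(t)$ to $\sigma_3(t)$ with $t<\ell/2$ must leave $\sigma_2((0,\ell))$, and leaving through $\sigma_2(\ell)$ would cost length at least $2(\ell-t)>2t$. So it passes through $o$, giving $\dd_X(\sigma_2(t),\sigma_3(t))=2t$, and likewise $\dd_X(\sigma_2(t),\sigma_1(t))=2t$. The two concatenations through $o$ are therefore geodesics that coincide up to $o$ and then separate. This contradicts Theorem~\ref{Thm: non-branching} directly, with no optimal transport needed.

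With this inserted, your outline becomes a complete argument. The paper itself does not prove the statement; it quotes it from \cite{KL16}.
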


The splitting theorem of Cheeger-Gromoll for Riemannian manifolds of nonnegative Ricci curvature \cite{CG71} was generalized to the $\RCD$ setting in \cite{G13} (cf. \cite{CC96}). 
\begin{Thm}[Splitting theorem \cite{G13}]\label{Thm: splitting thm}
Let $(X, \dd_X, \mm_X)$ be an $\RCD(0,N)$ space for some $N \in [2,\infty)$. Assume that $X$ contains a line. Then there exists an $\RCD(0,N-1)$ space $(Y, \dd_Y, \mm_Y)$ such that
\[
(X, \dd_X, \mm_X) \cong (\R \times Y, \dd_\R \times \dd_Y, \mathcal{L}^1 \otimes \mm_Y)
\]
as metric measure spaces. Moreover, the given line in $X$ is mapped to $\R \times \{y_0\}$ for some $y_0 \in Y$ under this isomorphism.
\end{Thm}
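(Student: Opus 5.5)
This is the nonsmooth Cheeger--Gromoll splitting theorem, and I would follow Gigli's strategy from \cite{G13}. Let $\gamma:\R\to X$ be the given line. Define the two Busemann functions
\[
b^\pm(x) := \lim_{t\to\infty} \bigl(t - \dd_X(x,\gamma(\pm t))\bigr),
\]
which are $1$-Lipschitz. The limits exist by monotonicity in $t$, and the triangle inequality gives $b^+ + b^- \le 0$, with equality along $\gamma$.

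\emph{Step 1: $b^\pm$ are harmonic.} The Laplacian comparison for distance functions on $\RCD(0,N)$ spaces gives, in the distributional sense of the measure-valued Laplacian, $\Delta \dd_X(\cdot,\gamma(\pm t)) \le (N-1)/\dd_X(\cdot,\gamma(\pm t))$. Letting $t\to\infty$ yields $\Delta b^\pm \ge 0$, so $b^\pm$ is subharmonic. Hence $b^++b^-$ is subharmonic, nonpositive, and vanishes on $\gamma$. The strong maximum principle for subharmonic functions, valid on $\RCD$ spaces via the Harnack/De Giorgi theory, forces $b^++b^-\equiv 0$. Then $b:=b^+$ satisfies $\Delta b = 0$. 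Moreover $|\nabla b|=1$ $\mm$-a.e.: we have $|\nabla b|\le 1$ from the Lipschitz bound, and through every point there are asymptotic rays on which $b$ grows with unit speed.

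\emph{Step 2: vanishing Hessian and the flow.} Apply the improved Bochner inequality (available on $\RCD(0,N)$ spaces) to $b$, using $\Delta b=0$ and $|\nabla b|^2\equiv 1$. This gives $\mathrm{Hess}\, b = 0$ in the sense of Gigli's second-order calculus. Now consider the regular Lagrangian flow $F_s$ of $-\nabla b$. Since $\Delta b=0$, the flow preserves $\mm$. Since $\mathrm{Hess}\, b=0$, the quantity $\int |\nabla f|^2$ is preserved along the flow, and one upgrades this to the statement that each $F_s$ is a measure-preserving isometry of $X$ with $b(F_s x) = b(x) - s$. This upgrade goes through the Sobolev-to-Lipschitz property and a duality argument on $W^{1,2}$ combined with optimal transport. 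It is the technical heart of the proof.

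\emph{Step 3: building the product.} Set $Y := b^{-1}(0)$, equipped with the restricted distance and a measure $\mm_Y$ obtained by disintegrating $\mm$ with respect to $b$. Define $\Phi:\R\times Y\to X$ by $\Phi(t,y) = F_{-t}(y)$. Then $\Phi$ is a bijection, since $F$ is a flow and $b$ is strictly monotone along flow lines. It remains to check that $\Phi$ is an isometry for the product distance $\sqrt{|t-s|^2+\dd_Y^2}$. I would prove this ``Pythagorean'' identity by comparing both sides first on the algebra of functions of the form $f(b)\,g(\pi)$, where $\pi$ is the projection to $Y$ along flow lines, and then using Cheeger energy / Sobolev-to-Lipschitz arguments. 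Measure-preservation of $F$ gives $\Phi_*(\mathcal L^1\otimes \mm_Y) = \mm$. The line $\gamma$ is a flow line of $-\nabla b$, hence maps to $\R\times\{y_0\}$ with $y_0=\gamma(0)$.

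\emph{Step 4: $Y$ is $\RCD(0,N-1)$.} Infinitesimal Hilbertianity of $Y$ follows from that of $X$ by tensorization of Cheeger energies. For the curvature-dimension part, I would use the Bakry--\'Emery formulation of $\RCD(0,N)$ for functions on $X$ depending only on $y$: such functions see exactly one fewer dimension, since the $\R$-factor contributes a flat direction that absorbs one unit of $N$.

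The main obstacle is Step 2 and its continuation into Step 3: passing from the infinitesimal statement $\mathrm{Hess}\, b=0$ to the conclusion that the flow acts by metric isometries, and that the quotient distance splits as an exact Pythagorean product. I would attack this by first proving that the flow preserves the Cheeger energy and the measure, and then invoking the Sobolev-to-Lipschitz property to conclude that the flow maps are isometries. Once this is in place, the rest is largely formal.
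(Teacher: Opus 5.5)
The paper does not prove this theorem; it quotes it from \cite{G13} and uses it as a black box. So the relevant comparison is with Gigli's argument, and your Steps 1--3 follow its architecture: Busemann functions, Laplacian comparison plus the strong maximum principle to get $b^++b^-\equiv 0$, Bochner to kill the Hessian, a measure-preserving flow upgraded to isometries via Sobolev-to-Lipschitz, and a Pythagorean identity. Two caveats apply there. First, the asymptotic rays only give $\mathrm{lip}\, b\equiv 1$. To get $|\nabla b|=1$ a.e.\ you also need Cheeger's theorem that $\mathrm{lip}\, f=|\nabla f|$ a.e.\ for Lipschitz $f$ on $\RCD$ spaces. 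Second, the Pythagorean identity in Step 3 is the genuinely hard part of the theorem, and it is only announced, not argued.

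The step that fails as written is Step 4. Take $u=f\circ\pi$ depending only on the $Y$-variable. Then $|\nabla u|^2=|\nabla f|_Y^2\circ\pi$, $\Delta_X u=(\Delta_Y f)\circ\pi$ and $\Gamma_2^X(u)=\Gamma_2^Y(f)\circ\pi$. So the Bochner inequality of $X$ tested on such functions gives exactly $\Gamma_2^Y(f)\ge \frac1N(\Delta_Y f)^2$, i.e.\ $\RCD(0,N)$ for $Y$, not $\RCD(0,N-1)$. Functions that do not see the $\R$-direction cannot detect that this direction uses up a unit of dimension. You must test with functions that move in that direction, e.g.\ $u=f\circ\pi+\tfrac c2 b^2$, localized by a cut-off in $b$. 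Since $\langle\nabla(f\circ\pi),\nabla b\rangle=0$, $|\nabla b|=1$ and $\Delta b=0$, one has $\Delta_X u=\Delta_Y f+c$ and $\Gamma_2^X(u)=\Gamma_2^Y(f)+c^2$; the mixed $\Gamma_2$-term vanishes. Hence
\[
\Gamma_2^Y(f)\ \ge\ \frac1N\bigl(\Delta_Y f+c\bigr)^2-c^2 \qquad\text{for every } c\in\mathbb{Q},
\]
as an inequality of measures on $Y$, with the right-hand side multiplied by $\mm_Y$. Taking the supremum over $c$ pointwise, with optimal choice $c=\Delta_Y f/(N-1)$, yields $\Gamma_2^Y(f)\ge\frac{1}{N-1}(\Delta_Y f)^2$. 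This optimization needs $N>1$, and $N\ge 2$ is what makes the target condition $\RCD(0,N-1)$ meaningful. To conclude via the Bakry--\'Emery characterization you also need the Sobolev-to-Lipschitz property and the volume growth condition for $Y$. Both follow from the product structure once Step 3 is established.
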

 
\begin{Thm}\label{Thm: geod splitting thm}
Let $(X, \dd_X, \mm_X)$ be an $\RCD(K,N)$ space for some $K \in \R$ and $N \in [2, \infty)$, and let $\gamma:[-1,1] \to X$ be a unit-speed geodesic. Then for every tangent space $(Z, \dd_Z, \mm_Z, z_0)$  of $X$ at $\gamma(0)$, there exists a pointed $\RCD(0,N-1)$ space $(Y, \dd_Y, \mm_Y, y_0)$ such that
\[
(Z, \dd_Z, \mm_Z, z_0) \cong (\R \times Y, \dd_\R \times \dd_Y, \mathcal{L}^1 \otimes \mm_Y, (0, y_0)).
\]
Moreover, $\dim((Y, \dd_Y, \mm_Y)) \leq \dim((X, \dd_X, \mm_X)) - 1$.  
\end{Thm}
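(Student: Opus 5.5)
The plan is to reduce to the splitting theorem by blowing up along $\gamma$. Fix a tangent space $(Z,\dd_Z,\mm_Z,z_0)$ at $\gamma(0)$, realized along radii $r_i\to 0$. The rescaled curves $\gamma_i(t):=\gamma(r_i t)$, $t\in[-1/r_i,1/r_i]$, are unit-speed geodesics in $(X,r_i^{-1}\dd)$ through the base point $\gamma(0)$. Using a realization of the pointed measured Gromov--Hausdorff convergence and Arzelà--Ascoli (the maps are $1$-Lipschitz), a diagonal subsequence of the $\gamma_i$ converges locally uniformly to a curve $\gamma_\infty:\R\to Z$ with $\gamma_\infty(0)=z_0$. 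Distances pass to the limit, so $\dd_Z(\gamma_\infty(s),\gamma_\infty(t))=|s-t|$ for all $s,t$. Hence $\gamma_\infty$ is a line through $z_0$.

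Since $Z$ is a pointed $\RCD(0,N)$ space (as recalled after Definition~\ref{Def: tangent space}) and $N\ge 2$, Theorem~\ref{Thm: splitting thm} applies. It gives an $\RCD(0,N-1)$ space $(Y,\dd_Y,\mm_Y)$ with $Z\cong \R\times Y$ as metric measure spaces, sending the line $\gamma_\infty$ to $\R\times\{y_0\}$. In particular $z_0=\gamma_\infty(0)$ corresponds to $(0,y_0)$, which gives the pointed isomorphism.

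For the dimension bound, first assume $Y$ is not a point and set $m:=\dim(Y)$. Then $\mm_Y$-a.e.\ point of $Y$ lies in $\mathcal R_m(Y)$, and for such $y$ every tangent space of $\R\times Y$ at $(t,y)$ is $\R^{m+1}$. So $\mathcal L^1\otimes\mm_Y$-a.e.\ point of $Z$ is $(m+1)$-regular, and $\dim(Z)=m+1$ by the uniqueness in Theorem~\ref{Thm: constant dim}. If $Y$ is a point, then $Z=\R$ has dimension $1$ and $\dim(Y)=0$, so this case is consistent.

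It remains to show $\dim(Z)\le \dim(X)$, which is the main obstacle. This does not follow formally from the definitions; it is the known lower semicontinuity of essential dimension under pmGH convergence of $\RCD(K,N)$ spaces. One route is via Proposition~\ref{Pro: tangent of tangent}, using that $\mm_Z$-a.e.\ point of $Z$ has tangent $\R^{\dim Z}$ and that iterated tangents of $X$ at $\gamma(0)$ are tangents of $X$. This is not sufficient by itself, however, since the base point $z_0$ need not be regular in $Z$. I would therefore cite the semicontinuity result of Kitabeppu (lower semicontinuity of essential dimension, based on \cite{BS20}). Its proof uses a $\delta$-splitting-map argument: if $Z$ had dimension $>n$, splitting maps of rank $>n$ on balls of $Z$ would lift to the approximating rescalings of $X$ on sets of positive measure, contradicting $\mm(X\setminus\mathcal R_n)=0$. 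This yields $\dim(Y)=\dim(Z)-1\le\dim(X)-1$.
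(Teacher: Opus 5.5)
Your proposal is correct and matches the paper's proof: a compactness argument along $\gamma$ produces a line through $z_0$, the splitting theorem gives $Z\cong\R\times Y$, and the dimension bound follows from the lower semicontinuity of essential dimension \cite[Theorem 1.5]{K19} together with additivity of essential dimension under products. The only addition is that you sketch the additivity step (via Fubini and the metric rigidity of Remark~\ref{Rem: metric Rk rigidity}), whereas the paper takes it as known.
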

\begin{proof}
The first conclusion follows immediately from the splitting theorem (Theorem~\ref{Thm: splitting thm}) and a compactness argument. To prove the second conclusion, we note that the lower semi-continuity of the essential dimension under pmGH convergence \cite[Theorem 1.5]{K19} gives that $\dim(Z) \leq \dim(X)$. Since the essential dimension is additive under metric measure product for $\RCD$ spaces and $Z \cong \R \times Y$, we conclude. 
\end{proof}

The following theorem is immediate from Theorem~\ref{Thm: geod splitting thm} and a compactness argument. 
\begin{Thm}[Almost-splitting theorem]\label{Thm: almost splitting thm}
Let $(X, \dd_X, \mm_X)$ be an $\RCD(K,N)$ space for some $K \in \R$ and $N \in [2, \infty)$, and let $\gamma:[-1,1] \to X$ be a unit-speed geodesic. For any $\veps > 0$, there exists $r(K, N, \veps) > 0$ such that for any $s \leq r$, there exists a pointed $\RCD(0, N-1)$ space $(Y, y_0)$, and an $(\veps s)$-realization $(Z, \iota_1, \iota_2)$ between $\overline B_s(\gamma(0))$ and $\overline B^{\R \times Y}_s((0,y_0))$ such that $\dd_Z(\iota_1(\gamma(t)), \iota_2((t,y_0))) < \veps s$
for all $t \in [-s,s]$. In particular, 
\[\dd_{GH}(\overline B_s(\gamma(0)), \overline B^{\R \times Y}_s((0, y_0))) < \veps s.\] Moreover, $\dim((Y, \dd_Y, \mm_Y)) \leq \dim((X, \dd_X, \mm_X)) - 1$. 
\end{Thm}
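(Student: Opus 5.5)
The plan is a contradiction-and-compactness argument built on Theorem~\ref{Thm: geod splitting thm}. By rescaling, it is enough to work at unit scale around $\gamma(0)$. Fix $\veps>0$ and suppose the claim fails. Then there is a sequence $s_i\to 0$ (with $s_i\le 1$) such that, for every pointed $\RCD(0,N-1)$ space $(Y,y_0)$ of essential dimension at most $\dim(X)-1$, there is no $(\veps s_i)$-realization between $\overline B_{s_i}(\gamma(0))$ and $\overline B^{\R\times Y}_{s_i}((0,y_0))$ that keeps $\gamma(t)$ within $\veps s_i$ of $(t,y_0)$ for all $t\in[-s_i,s_i]$.

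Next consider the rescaled spaces $(X,s_i^{-1}\dd,\mm^{\gamma(0)}_{s_i},\gamma(0))$, which are $\RCD(K s_i^2,N)$. By Gromov precompactness and stability of the $\RCD$ condition, a subsequence converges in the pointed measured Gromov--Hausdorff sense to a tangent space $(Z,\dd_Z,\mm_Z,z_0)$ of $X$ at $\gamma(0)$. In the rescaled metric, the curves $\tilde\gamma_i(t):=\gamma(s_i t)$, $t\in[-s_i^{-1},s_i^{-1}]$, are unit-speed geodesics. Fix a realization of the convergence. By Arzel\`a--Ascoli and a diagonal argument, the $\tilde\gamma_i$ converge locally uniformly to a line $\gamma_\infty:\R\to Z$ with $\gamma_\infty(0)=z_0$; the limit is a line because each restriction to $[-T,T]$ is a limit of geodesics of length $2T$.

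Theorem~\ref{Thm: geod splitting thm} then gives $Z\cong\R\times Y$ with $\dim(Y)\le\dim(X)-1$. By the last clause of Theorem~\ref{Thm: splitting thm}, applied to the line $\gamma_\infty$, this isomorphism sends $\gamma_\infty(t)$ to $(t,y_0)$. The only subtlety is that the splitting line must be the limit of $\gamma$ itself, so that the statement about $\gamma$ in the conclusion follows. I would rerun the argument of Theorem~\ref{Thm: geod splitting thm} with this specific line, or equivalently use a shift of the $\R$-coordinate, to guarantee this.

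To finish, restrict the realization to unit balls. For large $i$, the unit ball $\overline B^{s_i^{-1}\dd}_1(\gamma(0))$ is within $\veps/3$ of $\overline B^Z_1(z_0)$, and $\tilde\gamma_i(t)$ is within $\veps/3$ of $\gamma_\infty(t)=(t,y_0)$ uniformly for $t\in[-1,1]$. Pointed GH convergence of proper geodesic spaces gives Hausdorff convergence of closed unit balls, using an adjustment like the one in Lemma~\ref{Lem: realization scale change}. Scaling back by $s_i$ produces exactly the forbidden $(\veps s_i)$-realization, which is a contradiction.

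The constant depends only on $K$ and $N$, not on $X$. To see this, run the same contradiction over sequences of spaces $X_i$, each carrying a geodesic $\gamma_i:[-1,1]\to X_i$. The uniform bound on the rescaled curvature makes the limit an $\RCD(0,N)$ space carrying a line, so the splitting theorem still applies. The dimension bound is where the independent-space version needs care. There I would use lower semicontinuity of the essential dimension under pmGH convergence (\cite[Theorem 1.5]{K19}) together with additivity of dimension, as in the proof of Theorem~\ref{Thm: geod splitting thm}, to get $\dim(Y)\le\limsup\dim(X_i)-1$. I expect this uniformity-in-$X$ bookkeeping, rather than the single-space limit, to be the main technical point. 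If it cannot be closed, the dependence $r=r(K,N,\veps,X,\gamma)$ is what the argument above honestly delivers.
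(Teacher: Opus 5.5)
Your proposal is correct and follows the paper's route. The paper records the statement as immediate from Theorem~\ref{Thm: geod splitting thm} together with a compactness argument, and that is your blow-up/contradiction scheme: the splitting theorem is applied to the limit line $\gamma_\infty$, and lower semicontinuity plus additivity of the essential dimension give the dimension bound.

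The uniformity in $X$ that you were unsure about does close, but not via $\limsup$. First pass to a subsequence on which $\dim(X_i)=n$ is constant; this is possible because the dimension takes only finitely many integer values. Lower semicontinuity then gives $\dim(Z)\le\liminf_i\dim(X_i)=n$, hence $\dim(Y)\le n-1=\dim(X_i)-1$ along that subsequence. So $r$ depends only on $K$, $N$, $\veps$.
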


A topological space $X$ is \emph{semi-locally simply connected} if for every $x \in X$, there is a neighborhood $U$ of $x$ such that the inclusion $U \hookrightarrow X$ induces the trivial map on $\pi_1$. It is known that $\RCD(K,N)$ spaces have the following stronger property which in particular implies the semi-locally simply connected property \cite{W24a} (see also \cite{PW22b, W24b}). 
\begin{Thm}\label{Thm: semi-locally simply connected}
Let $(X, \dd, \mm)$ be an $\RCD(K,N)$ space for some $K \in \R$ and $N \in (1, \infty)$. Then for any $x \in X$ and $R > 0$, there exists $r > 0$ such that the inclusion $B_r(x) \hookrightarrow B_R(x)$ induces the trivial map on $\pi_1$. 
\end{Thm}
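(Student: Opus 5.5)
This statement is quoted from \cite{W24a}; here is how I would reconstruct its proof. I would argue by contradiction, combining equivariant Gromov--Hausdorff convergence of covering spaces with the Lie group structure of isometry groups of $\RCD$ spaces. Suppose that for some $x\in X$ and $R>0$ no such $r$ exists. Then there are radii $r_i\to 0$ and loops $\sigma_i\subset B_{r_i}(x)$ that are not null-homotopic in $B_R(x)$.

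The first step is to reduce to a problem about small group elements. Let $\widetilde{B}$ be the covering of $B_{R}(x)$ corresponding to the universal (or a suitable relative $\delta$-) cover, as in Sormani--Wei and Pan--Wei, with deck group $\Gamma$. Each $\sigma_i$ determines a nontrivial $g_i\in\Gamma$ that moves a lift $\tilde x$ by at most $2r_i$. Consider the subgroups $G_i(s)\subset \Gamma$ generated by elements displacing $\tilde x$ by at most $s$.

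The second step is to pass to a limit. After rescaling by suitable scales $\lambda_i^{-1}$, the triples $(\widetilde B,\lambda_i^{-1}\dd,\tilde x,G_i)$ subconverge in the equivariant pmGH sense of Fukaya--Yamaguchi to $(\widetilde Y,\tilde y,G_\infty)$. Here $\widetilde Y$ is an $\RCD(0,N)$ space, since local covers of $\RCD$ spaces are again $\RCD$. Its isometry group is a Lie group by Guijarro--Santos and Sosa, so $G_\infty$ is a closed Lie subgroup and has no small subgroups. I would choose $\lambda_i$ by a ``critical scale'' argument: take the largest scale at which some loop based near $x$ is nontrivial in $B_R(x)$ but becomes trivial after being enlarged by a factor $10$. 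This normalization forces the nontrivial elements to have displacement comparable to $1$ in the limit. Since the radius $R/\lambda_i\to\infty$, the limit is a complete space and the groups survive.

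The third step is to derive the contradiction. In $G_\infty$ the limits of the generators give a nontrivial subgroup generated by elements of displacement $\le 1$. The nontriviality of $\sigma_i$ at all intermediate scales, iterated along a chain of scales as in Pan--Wei, then produces arbitrarily small nontrivial subgroups in $G_\infty$ or in a further tangent limit. This contradicts the no-small-subgroups property of Lie groups.

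The main obstacle is this third step. Triviality of loops in the limit does not automatically transfer back to the approximating spaces, because the limit may collapse, and the covering groups can degenerate: small elements might converge to the identity component. Handling this requires the quantitative ``no escape of loops'' estimate, namely that loops that are small at every scale remain contractible. I would prove this by a scale-by-scale induction on the critical scales described above. Along the way I would use that $\RCD$ spaces are non-branching and have a local doubling measure, which gives uniform control on the number of generators at each scale.
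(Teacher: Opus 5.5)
The paper does not prove this statement; it is quoted from \cite{W24a}. Your outline (critical rescaling, equivariant limits of local covers, Lie structure of isometry groups of $\RCD(0,N)$ spaces) is in the spirit of that literature. As written, however, the two steps that carry the content are ill-posed or deferred, so there is a genuine gap.

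\textbf{Step~1: there is no covering in which the $\sigma_i$ are visible.} The universal cover of $B_R(x)$ is not available a priori. In this very paper, even the simply connected universal cover of $X$ (Theorem~\ref{Thm: simply connected univ cover}) is \emph{deduced} from the statement you are proving.

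A relative $\delta$-cover with $\delta$ fixed does not help. Once $r_i<\delta$, the loop $\sigma_i$ lies in a ball of radius $\delta$, so it belongs to the normal subgroup generated by $\delta$-lassos. It therefore lifts to a closed loop and gives the trivial deck transformation, not a nontrivial $g_i$ of displacement $\le 2r_i$.

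Letting $\delta=\delta_i\to0$ does not repair this either. A loop can lie in the $\delta$-subgroup for every $\delta>0$ and still be essential. Such loops are invisible to all $\delta$-covers, and excluding them is the heart of the theorem; that is exactly the extra input needed on top of the Mondino--Wei universal cover.

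Your normalization is also inconsistent as stated. If a loop in $B_s(y)$ is null-homotopic in $B_{10s}(y)\subset B_R(x)$, then it is null-homotopic in $B_R(x)$. Hence the scales $\lambda_i$ are never actually defined, and neither are the properties Step~3 is supposed to contradict.

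\textbf{Step~3: there is no contradiction with the absence of small subgroups.} Deck transformations whose displacement at $\tilde x$ tends to $0$ converge into the isotropy group of $\tilde y$. The groups they generate, however, can converge to a nontrivial connected group. For example, $\mathbb{Z}$ acting on $\R^3$ by a translation of length $\veps_i\to0$ converges to the translation group $\R$, which has no small subgroups. A small subgroup would have to consist of elements moving every point of a large ball very little; small displacement at the single point $\tilde y$ gives nothing of the kind.

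What the structure of $G_\infty$ does give is weaker: its identity component and the isotropy group of $\tilde y$ lie in the subgroup generated by elements of arbitrarily small displacement. To turn this into a contradiction you would need at least two things.
\begin{itemize}
\item[(a)] A transfer back to the approximating balls: show that the scale-one loop is homotopic, inside a ball of comparable size, to a product of lassos around loops at a definitely smaller scale.
\item[(b)] A mechanism that assembles infinitely many such reductions, with geometrically shrinking supports, into an honest null-homotopy in $B_R(x)$.
\end{itemize}
You replace both by a ``no escape of loops'' estimate, namely that loops small at every scale remain contractible, to be proved by an unspecified scale-by-scale induction. That estimate is essentially the theorem itself, so the argument is circular exactly where its content lies.
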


It follows from Theorem~\ref{Thm: semi-locally simply connected} and \cite[Theorem 1.1 and Remark 2.2]{MW19} that any $\RCD(K,N)$ space admits a simply connected universal cover. 
\begin{Thm}[Existence of simply connected universal cover]\label{Thm: simply connected univ cover}
Let $(X, \dd, \mm)$ be an $\RCD(K,N)$ space for some $K \in \R$ and $N \in (1, \infty)$. Then $X$ admits a simply connected universal cover $(\widehat X, \widehat \dd, \widehat \mm)$ which is itself an $\RCD(K,N)$ space. 
\end{Thm}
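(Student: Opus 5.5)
The statement is a combination of two cited results, so the proof will only assemble them. The plan has three steps.

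First, I would apply Theorem~\ref{Thm: semi-locally simply connected} to $X$. Fix $x\in X$ and any $R>0$. The theorem gives $r>0$ such that the inclusion $B_r(x)\hookrightarrow B_R(x)$ is trivial on $\pi_1$. Composing with the inclusion $B_R(x)\hookrightarrow X$, the open neighborhood $U:=B_r(x)$ has trivial image of $\pi_1(U)$ in $\pi_1(X)$. This is exactly the semi-local simple connectivity of $X$.

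Second, I would check the remaining hypotheses for covering space theory. By the standing convention, $X$ is a proper geodesic metric space. Hence it is connected, locally path-connected (balls are path-connected via geodesics) and semi-locally simply connected. Classical covering space theory then yields a simply connected universal cover $\pi:\widehat X\to X$.

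Third, I would equip $\widehat X$ with a metric and measure. The metric $\widehat\dd$ is the lifted length metric, which makes $\pi$ a local isometry. The measure $\widehat\mm$ is the lift of $\mm$, defined on evenly covered sets by pulling back along local inverses. It remains to show that $(\widehat X,\widehat\dd,\widehat\mm)$ is $\RCD(K,N)$. This is the content of \cite[Theorem 1.1 and Remark 2.2]{MW19}. That result shows that, for an $\RCD(K,N)$ space, the universal cover (or any cover with these lifted structures) is again $\RCD(K,N)$. The argument there uses local-to-global properties of the curvature-dimension condition together with the local isometry of $\pi$. The remark addresses the fact that the cover there is a priori the revised fundamental group cover, which coincides with the genuine universal cover once semi-local simple connectivity holds.

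The main point needing care is this identification in the third step. The result of \cite{MW19} produces a universal cover in the sense of Spanier, and its simple connectedness follows exactly from the first step. Everything else is quoting the cited theorems.
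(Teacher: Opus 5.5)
Your proposal is correct and is essentially the paper's own argument: the paper also just combines the semi-local simple connectedness of Theorem~\ref{Thm: semi-locally simply connected} (from \cite{W24a}) with \cite[Theorem 1.1 and Remark 2.2]{MW19}, whose universal cover (in Spanier's sense, with the lifted metric and measure) is $\RCD(K,N)$ and is simply connected once semi-local simple connectivity holds. Building the classical simply connected cover first and identifying it with the Mondino--Wei cover by uniqueness of universal covers is harmless. Only the statement of \cite{MW19} for the universal cover itself is needed, so your parenthetical claim about arbitrary covers can be dropped.
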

We refer to \cite{MW19} for the definition of the universal cover in the context of $\RCD(K,N)$ spaces (see also \cite{SW01, SW04} for earlier work on the universal cover of Ricci limit spaces). 

We say that a geodesic space $(X, \dd)$ has the \emph{non-branching property} if for any pair of geodesics $\gamma, \rho:[0,1] \to X$, it holds: if $\gamma(t_1) = \rho(t_1)$ for some $t_1 \in [0,1]$ and $\gamma(t_2) = \rho(t_2)$ for some $t_2 \in (0,1)$ such that $t_2 \neq t_1$, then $\gamma \equiv \rho$. 
\begin{Thm}[Non-branching property \cite{D25}]\label{Thm: non-branching}
Let $(X, \dd, \mm)$ be an $\RCD(K,N)$ space for some $K \in \R$ and $N \in (1, \infty)$. Then $X$ has the non-branching property. 
\end{Thm}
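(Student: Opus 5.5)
This result is quoted from \cite{D25}, and the plan below follows the strategy there rather than any new argument. The main analytic input is a quantitative continuity of small-scale geometry along the interior of a geodesic. For a unit-speed geodesic $\sigma:[0,\ell]\to X$ and interior parameters $s,s'\in[\delta\ell,(1-\delta)\ell]$, I want an estimate
\[
\dd_{GH}\bigl(\overline B_r(\sigma(s)),\overline B_r(\sigma(s'))\bigr)\le C(K,N,\delta,\ell)\,|s-s'|^{\alpha}\, r
\]
for some $\alpha(N)>0$ and all sufficiently small $r$. This is the $\RCD$ analogue of the Colding--Naber H\"older continuity. To prove it I would replace distance functions by heat-flow regularizations of $\dd(\sigma(0),\cdot)$ and $\dd(\sigma(\ell),\cdot)$, and use their Hessian bounds to show that the associated Regular Lagrangian flow moves balls near $\sigma(s)$ onto balls near $\sigma(s')$ with almost-isometric distortion. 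Integrating the Hessian estimates in scale then gives the power $|s-s'|^\alpha$.

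Next I reduce the statement to a pure branching configuration. Suppose $\gamma(t_1)=\rho(t_1)$ and $\gamma(t_2)=\rho(t_2)$ with $t_2\in(0,1)$ and $t_1\neq t_2$. Gluing $\gamma$ and $\rho$ at $\gamma(t_2)$, and using that the $t_2$-point is an interior point of both, the concatenations $\gamma|_{[0,t_2]}*\rho|_{[t_2,1]}$ and $\rho|_{[0,t_2]}*\gamma|_{[t_2,1]}$ have the same length as $\gamma$ and $\rho$. This should make them geodesics between appropriate endpoints. If $\gamma\not\equiv\rho$, one then obtains two geodesics that coincide on a nontrivial interval $[a,s]$ and separate immediately after $s$, with $s$ in the interior of both. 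So it suffices to exclude such a branching at $\gamma(s)=\rho(s)$.

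To exclude branching, I apply the H\"older estimate along both geodesics, which share the segment before $s$. Fix a small $\eta$ and consider the scale $r\approx\eta$ around $\gamma(s)$. Along $\gamma$, the ball at $\gamma(s-\eta)$ is $C\eta^{1+\alpha}$-close to the ball at $\gamma(s+\eta)$, and similarly along $\rho$, with the closeness realized by maps that send the geodesic to the geodesic. Comparing these two realizations through the common point $\gamma(s-\eta)=\rho(s-\eta)$, I aim to show
\[
\dd\bigl(\gamma(s+\eta),\rho(s+\eta)\bigr)\le C\eta^{1+\alpha},
\]
or more generally an improvement of the separation at scale $\eta$ by the factor $\eta^\alpha$. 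Iterating across dyadic scales while using the linear growth of the separation along geodesics forces $\dd(\gamma(s+\eta),\rho(s+\eta))=0$ for every small $\eta$. Hence $\gamma\equiv\rho$, which contradicts the assumed branching.

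The main obstacle is the first step: proving the H\"older continuity with constants that are uniform in the scale $r$ in the collapsed $\RCD$ setting. This requires sharp second-order estimates for the smoothed distance functions and the stability and almost-isometry properties of Regular Lagrangian flows. By comparison, the reduction step and the iteration step are essentially metric bookkeeping.
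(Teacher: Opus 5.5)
The paper does not prove this statement; it imports it from \cite{D25}. Your Step~1 is indeed the main theorem there: H\"older continuity of small-scale geometry along geodesics, obtained from heat-flow regularized distance functions and Regular Lagrangian flows. The gap is in Step~3, which you call bookkeeping; it does not follow from Step~1.

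\textbf{The comparison step.} Along $\gamma$ and along $\rho$ you get two almost isometries defined on the same ball $\overline B_r(\gamma(s-\eta))$, but they are unrelated maps. Knowing that $\overline B_r(\gamma(s+\eta))$ and $\overline B_r(\rho(s+\eta))$ are both close to that ball says nothing about $\dd(\gamma(s+\eta),\rho(s+\eta))$.

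\textbf{The iteration step.} There is no a priori ``linear growth of separation'' for branching geodesics. So even the bound you aim for would give no contradiction.

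\textbf{Why no metric argument can work.} Nothing using only the metric output of Step~1 can exclude branching. Take $(\R^2,\|\cdot\|_\infty)$ with Lebesgue measure, which is a branching $\mathrm{CD}(0,2)$ space. The curves $\gamma(t)=(t,0)$ and $\rho(t)=(t,t_+^2)$, $|t|\le 1/2$, are unit-speed geodesics that coincide for $t\le 0$ and separate for $t>0$. Translations are exact isometries between $\overline B_r(\rho(s))$ and $\overline B_r(\rho(s'))$, and likewise for $\gamma$. They track the geodesic up to an error of at most $2|s-s'|\,r$, since $t\mapsto 2t_+$ is $2$-Lipschitz. So every input of your Step~3 holds with $\alpha=1$. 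Even your intermediate bound holds: $\dd(\gamma(\eta),\rho(\eta))=\eta^2\le C\eta^{1+\alpha}$. Yet the space branches.

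\textbf{What is missing.} You need an ingredient beyond GH closeness of balls, one that uses the measure and the Hilbertian structure. In \cite{D25} the flow construction is used to control how geodesics toward a common endpoint $y$ fill small balls. Specifically, the map $\Psi(z)=\gamma_{z,y}(c\delta)$ sends a small ball near $\rho(t_0-\delta)$ onto an almost dense subset of the ball at $\rho(t_0)$; this is the output quoted in the proof of Lemma~\ref{Lem: geod no glance}. My understanding, which I am not quoting from the paper, is that this information is then combined with measure-theoretic properties of $\RCD$ spaces, such as measure contraction and essential non-branching, to exclude branching.

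\textbf{A smaller issue in Step~2.} Gluing $\gamma|_{[0,t_2]}$ to $\rho|_{[t_2,1]}$ at one common point does not give a geodesic in general, because $\gamma(0)$ and $\rho(1)$ may be closer than the length of the concatenation. The correct reduction replaces the portion of $\gamma$ between the two common parameters $t_1,t_2$ by that of $\rho$; this is a geodesic with the same endpoints as $\gamma$. Comparing it with $\rho$ then yields two geodesics that agree on a nontrivial interval and separate at a point interior to both.
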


We will be interested in geodesics that are extendible past their endpoints and in geodesics that are contained in the regular set, as these exhibit additional useful properties. 
\begin{Def}[Extendible geodesics]\label{Def: extend geod}
Let $(X, \dd)$ be a geodesic space. We say that $\gamma:[a,b] \to X$ is \emph{extendible past an endpoint $\gamma(a)$} (resp. $\gamma(b)$) if there exists $\veps > 0$ and a geodesic $\tilde \gamma:[a-\veps ,b] \to X$ (resp. $\tilde \gamma:[a, b+\veps] \to X$) such that $\tilde \gamma(t)=\gamma(t)$ for all $t\in[a,b].$ We say that $\gamma$ is \emph{extendible in both directions} if there exists $\veps>0$ and a geodesic $\tilde\gamma:[a-\veps,b+\veps]\to X$ such that $\tilde \gamma(t)=\gamma(t)$ for all $t\in[a,b]$.
\end{Def}

\begin{Def}[Regular geodesics]\label{Def: reg geod}
Let $(X, \dd, \mm)$ be an $\RCD(K,N)$ space of essential dimension $n$. We say that a geodesic $\gamma:[a,b] \to X$ is \emph{regular} if $\gamma((a,b)) \subset \mathcal R_n$, and \emph{strongly regular} if $\gamma([a,b]) \subset \mathcal R_n$. 
\end{Def}

The following propositions give the existence of many extendible regular geodesics. 
\begin{Pro}\label{Pro: geodesic a.e. extend}
Let $(X, \dd, \mm)$ be an $\RCD(K,N)$ space for some $K \in \R$ and $N \in (1, \infty)$. Then for $\mm$-a.e. $x \in X$, there is a unique geodesic from $x$ to $\mm$-a.e. $y$, and this geodesic is extendible in both directions and strongly regular. 
\end{Pro}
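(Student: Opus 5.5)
The plan is to combine three ingredients: the essential nonbranching and strong convexity results for $\RCD(K,N)$ spaces, the existence of optimal transport maps, and the interior regularity of $W_2$-geodesics. I would proceed in three steps.

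First, I fix $x$ outside an $\mm$-null set and consider $W_2$-geodesics from $\delta_x$ to the normalized restriction of $\mm$ to a ball $B_R(x)$. By the Gigli/Rajala--Sturm results, in an $\RCD(K,N)$ space, for $\mm$-a.e.\ $x$ and $\mm$-a.e.\ $y$ the geodesic from $x$ to $y$ is unique. This is the measurable selection of optimal maps: $(\mathrm{e}_0,\mathrm{e}_1)$ is injective on the support of the optimal dynamical plan when the marginals are absolutely continuous. For a Dirac initial measure I would instead use the version in which the target is absolutely continuous, which holds for $\mm$-a.e.\ base point. Exhausting $X$ by countably many balls then handles the "for $\mm$-a.e.\ $y$" part.

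Second, I prove extendibility in both directions. Uniqueness and extendibility are linked by a symmetry argument. Take $x$ and $y$ both typical, and consider the triple $(x,z,y)$ with $z=\gamma(1/2)$. Extending past $y$ amounts to $y$ lying in the interior of a geodesic from $x$ to some $y'$. By the measure contraction property $\MCP(K,N)$, the contraction $\Phi_t$ of $B_R(x)$ toward $x$ satisfies $\mm(\Phi_t(A))\ge c\,t^N\mm(A)$. Hence, for $t<1$, the set of points that are $t$-intermediate points of geodesics from $x$ to points of $B_R(x)$ has full measure in $B_{tR}(x)$. Taking $t\to1$ along a countable sequence, $\mm$-a.e.\ $y$ is an interior point of a geodesic from $x$, so $\gamma$ extends past $y$. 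Uniqueness plus nonbranching (Theorem~\ref{Thm: non-branching}) ensures that the extension restricted to $[x,y]$ is $\gamma$ itself. Exchanging the roles of $x$ and $y$ via Fubini, applied to the product measure $\mm\otimes\mm$, gives extendibility past $x$ for a.e.\ pair. We then fix $x$ outside a null set.

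Third, I establish strong regularity. The endpoints lie in $\mathcal R_n$ for a.e.\ choice by Theorem~\ref{Thm: constant dim}. For interior points, the set $\{(x,y):\gamma_{xy}(t)\notin\mathcal R_n\}$ is $\mm\otimes\mm$-null for each fixed $t$. This uses the fact that the interpolated measures $(\mathrm{e}_t)_\#\pi$ are absolutely continuous with bounded density, by the $\MCP$/$\mathrm{CD}$ density bounds, and that $\mm(X\setminus\mathcal R_n)=0$. This gives regularity only at countably many (e.g.\ rational) times.

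The main obstacle is upgrading from rational times to all $t$. Regularity at one interior point does not trivially propagate. Here I would invoke the continuity of tangent cones along the interior of geodesics \cite{CN12,D25}, i.e.\ the Hölder continuity of the geometry of small balls along the geodesic. Once $\gamma(t)\in\mathcal R_n$ on a dense set of times, the tangent cones at every interior time are $\R^n$. Since the geodesic is extendible, the original endpoints are themselves interior points of the extended geodesic, and the same argument places them in $\mathcal R_n$. This completes strong regularity.
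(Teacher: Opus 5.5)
Your Step~2 deduction of a.e.\ extendibility does not work as written, and Step~3 needs a precision fix; the rest of the outline is sound.

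\textbf{The gap in Step~2.} For a fixed $t<1$, the set of $t$-intermediate points of geodesics from $x$ to points of $B_R(x)$ need \emph{not} have full measure in $B_{tR}(x)$. If $X$ is compact and $tR>\operatorname{diam}X$, no point at distance $>t\operatorname{diam}X$ from $x$ is such a point. On a thin flat torus the same failure occurs at small scales. More generally, a lower bound $\mm(\Phi_t(A))\ge c\,\mm(A)$ can never by itself yield a full-measure statement.

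The standard argument, which the paper only alludes to, applies $\MCP$ to the bad set itself. Let $E$ be the set of $y\in B_R(x)\setminus\{x\}$ such that no geodesic from $x$ to $y$ extends past $y$; it is measurable by the arguments of \cite{C14}. A $t$-intermediate point ($t<1$) of any geodesic issuing from $x$ lies outside $E$. So for compact $E'\subset E$, the set $E'_t$ of $t$-intermediate points of geodesics from $x$ to $E'$ is disjoint from $E'$ and contained in $B_{(1-t)R}(E')$. $\MCP(K,N)$ gives $\mm(E'_t)\ge c(K,N,R)\,\mm(E')$ uniformly for $t\in[1/2,1)$. On the other hand, $\mm(E'_t)\le\mm(B_{(1-t)R}(E')\setminus E')\to 0$ as $t\uparrow 1$, since $E'$ is closed. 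Hence $\mm(E')=0$, and $\mm(E)=0$ by inner regularity. With this replacement, your symmetry/Fubini step for the other endpoint is exactly the paper's use of Tonelli on $A_1\cap A_2$.

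\textbf{Precision in Step~3.} Make sure the plan you use is the one issuing from $\delta_x$. For each fixed $x$ and $t\in(0,1)$, the push-forward of $\mm$ restricted to $B_R(x)$ under $y\mapsto\gamma_{xy}(t)$ is absolutely continuous. Combined with $\mm(X\setminus\mathcal R_n)=0$ and Fubini, this gives the $\mm\otimes\mm$-null statement. An optimal plan between two absolutely continuous marginals is concentrated on a graph, which is $\mm\otimes\mm$-null. Its interpolants therefore say nothing about $\mm\otimes\mm$-a.e.\ pairs, as you already noticed in Step~1.

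\textbf{Comparison with the paper.}
\begin{itemize}
\item Uniqueness needs no optimal transport. By non-branching, a geodesic extendible past one endpoint is the unique geodesic between its endpoints: concatenating a competitor with the extension gives a second geodesic agreeing at an interior point. So your Step~1 is redundant once extendibility is established.
\item For strong regularity, the paper simply cites the $\mm$-a.e.\ convexity of $\mathcal R_n$ \cite[Theorem 6.5]{D25}. You reprove it from regularity at rational times plus the Colding--Naber/Deng H\"older continuity of tangent cones. That is valid, and it is the same continuity the paper uses in Theorem~\ref{Thm: geodesic interior tangent}.
\item Your treatment of the endpoints via extension is unnecessary: $\mm(X\setminus\mathcal R_n)=0$ already puts both endpoints in $\mathcal R_n$ for a.e.\ pair.
\end{itemize}
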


\begin{proof}
By the non-branching property (Theorem~\ref{Thm: non-branching}), any geodesic that is extendible past at least one of its endpoints must be the unique geodesic between its endpoints. Let $A_1 \subset X \times X$ (resp. $A_2 \subset X \times X$) be the set of all pairs $(x,y) \in X \times X$ such that a geodesic from $x$ to $y$ is extendible past $x$ (resp. $y$). For each $x \in X$, we denote by $A(x)$ the set of all points $y \in X$ such that a geodesic from $x$ to $y$ is extendible past $y$. 

Using the arguments of \cite[Section 4]{C14}, $A_1$ and $A_2$ are $(\mm \times \mm)$-measurable and $A(x)$ is $\mm$-measurable. The $\MCP(K,N)$ property \cite{S06b, O07}, satisfied by $\RCD(K,N)$ spaces, then implies that $\mm(X \setminus A(x)) = 0$ for any $x \in X$ by a standard argument. By Tonelli's theorem, it follows that $(\mm \times \mm)(X \times X \setminus (A_1 \cap A_2)) = 0$. Let $R$ be the set of pairs $(x,y) \in (X \times X)$ such that the geodesic from $x$ to $y$ is contained in $\mathcal{R}_n$ (where $n = \dim(X)$). By Theorem~\ref{Thm: constant dim} and the $\mm$-a.e. convexity of $\mathcal{R}_n$ \cite[Theorem 6.5]{D25}, it follows that $(\mm \times \mm)((X \times X) \setminus R)=0$. Therefore, $(\mm \times \mm)((X \times X) \setminus (A_1 \cap A_2 \cap R)) = 0$. Since $A_1 \cap A_2 \cap R$ is exactly the set of pairs $(x,y)$ such that the geodesic between $x$ and $y$ is unique, extendible in both directions and strongly regular, we conclude by Fubini's theorem. 
\end{proof}

\begin{Pro}\label{Pro: every pt have extend reg geod}
Let $(X, \dd_X, \mm)$ be an $\RCD(K,N)$ space for some $K \in \R$ and $N \in (1, \infty)$, and let $\dim(X) = n$. Then for any $x \in X$ there exists a unique geodesic $\rho:[0,1] \to X$ from $x$ to $\mm$-a.e. $y$. Moreover, $\rho((0,1]) \subset \mathcal R_n$ and $\rho$ is extendible past $y$.
\end{Pro}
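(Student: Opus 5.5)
The plan is to deduce the statement from Proposition~\ref{Pro: geodesic a.e. extend} together with the non-branching property (Theorem~\ref{Thm: non-branching}), treating the fixed point $x$ as an interior point of geodesics issuing from nearby generic points. The difficulty is that Proposition~\ref{Pro: geodesic a.e. extend} only gives the conclusion for $\mm$-a.e. starting point, while $x$ is arbitrary. So I would avoid Fubini in the first variable and argue directly from the measure contraction property at $x$, reusing the sets introduced in that proof.

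\emph{Step 1: uniqueness and extendibility past $y$.} Let $A(x)$ be the set of $y$ such that some geodesic from $x$ to $y$ extends past $y$. As recalled in the proof of Proposition~\ref{Pro: geodesic a.e. extend}, the $\MCP(K,N)$ property gives $\mm(X\setminus A(x))=0$ for \emph{every} $x$. The standard argument: for $t<1$, the $t$-intermediate points of geodesics from $x$ to $\overline B_R(x)$ cover a set whose measure is at least roughly $t^N$ times that of $\overline B_R(x)$. Every such intermediate point lies in $A(x)$, and letting $t\to1$ shows the complement is null. By non-branching, a geodesic from $x$ to $y$ that extends past $y$ is the unique geodesic from $x$ to $y$. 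Indeed, any second geodesic $\sigma$ concatenated with the extension is a geodesic agreeing with the extended one at $x$ and on a nondegenerate interval, hence equal to it.

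\emph{Step 2: regularity of $\rho((0,1])$.} I want $\rho((0,1])\subset\mathcal R_n$ for a.e.\ $y$. By Theorem~\ref{Thm: constant dim}, $\mm(X\setminus\mathcal R_n)=0$, so $y\in\mathcal R_n$ for a.e.\ $y$. For interior points I would use the a.e.\ convexity of $\mathcal R_n$ from \cite[Theorem 6.5]{D25}, or more robustly the following argument. By Proposition~\ref{Pro: geodesic a.e. extend}, for $\mm$-a.e.\ $z$, the unique geodesic from $z$ to a.e.\ $w$ is strongly regular. Take $y$ generic and $z=\rho_{x,y}(s)$ with $s$ small. The subsegment $\rho|_{[s,1]}$ is then the unique geodesic from $z$ to $y$, and it is strongly regular if the point $z$ is generic.

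The issue is that $z$ is not a free parameter. To handle this I would apply the MCP contraction again: the map $y\mapsto \rho_{x,y}(s)$ pushes $\mm$ forward to a measure absolutely continuous with respect to $\mm$, with density bounded by about $s^{-N}$ on bounded sets. Consequently the bad set
\[
E_s:=\{y: \rho_{x,y}([s,1])\not\subset\mathcal R_n\}
\]
is null. The argument runs as follows. Consider the pairs $(z,y)$ for which the geodesic from $z$ to $y$ fails to be strongly regular; this set is $(\mm\times\mm)$-null. The pairs $(\rho_{x,y}(s),y)$ are parametrized by $y$, and I need the graph $\{(\rho_{x,y}(s),y)\}$ to meet a product-null set in a $y$-null set. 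This is the step I expect to be the main obstacle, since a graph is itself product-null.

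To get around it, I would instead intersect through the intermediate point. Fix a generic $z$ close to $x$ and use the extendibility of geodesics $z\to y$ past $z$, so that $x$-geodesics and $z$-geodesics coincide on tails. Alternatively, I would directly invoke Deng's result that for every $x$ and a.e.\ $y$ the interior of the geodesic lies in $\mathcal R_n$ \cite[Theorem 6.5]{D25}, which is stated for the measure-a.e.\ second endpoint with the first endpoint arbitrary.

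Finally, taking $s=1/k$ and a countable union of the null sets $E_{1/k}$, combined with Step~1, yields $\rho((0,1])\subset\mathcal R_n$ together with uniqueness and extendibility past $y$ for $\mm$-a.e.\ $y$.
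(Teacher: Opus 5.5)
This is essentially the paper's proof. Your Step~1 is exactly the argument recalled from the proof of Proposition~\ref{Pro: geodesic a.e. extend}: $\MCP(K,N)$ gives $\mm(X\setminus A(x))=0$ for every $x$, and non-branching turns extendibility past $y$ into uniqueness. Your Step~2, like the paper's, ultimately rests on citing Deng's fixed-first-endpoint result: for every $x$ and $\mm$-a.e.\ $y$, some geodesic from $x$ to $y$ satisfies $\rho((0,1])\subset\mathcal R_n$. Combined with uniqueness, this concludes.

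Two cautions.

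First, your self-contained detours for Step~2 do not work. The graph obstacle you flagged is real. The ``generic $z$'' alternative does not remove it, because geodesics from a fixed generic $z$ near $x$ need not pass through $x$, so they do not share tails with geodesics from $x$.

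Second, check which of Deng's results you cite. The paper uses \cite[Theorem 6.5]{D25} only as the $\mm\times\mm$-a.e.\ convexity of $\mathcal R_n$. Since $\{x\}\times X$ is $\mm\times\mm$-null, that statement gives nothing at a fixed $x$. The paper instead takes the fixed-$x$ statement from \cite[Theorem 6.2.5]{D21}. You should cite a result that genuinely allows an arbitrary first endpoint; once you do, the sets $E_{1/k}$ are unnecessary.
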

\begin{proof}
As already discussed in the proof of Proposition~\ref{Pro: geodesic a.e. extend}, it holds that, for $\mm$-a.e. $y \in X$, the geodesic from $x$ to $y$ is unique and extendible past $y$. By \cite[Theorem 6.2.5]{D21}, it also holds that for $\mm$-a.e. $y \in Y$, there exists a geodesic $\rho:[0,1] \to X$ such that $\rho((0,1]) \subset \mathcal R_n$. The proposition follows by combining these two facts. 
\end{proof}

\subsection{Maximal extension of geodesics} In this subsection, we introduce the notion of the maximal extension of a geodesic from an endpoint. This notion will be important when we use cut locus methods in Section~\ref{Sec: ext set top}.

\begin{Lem}\label{Lem: max extension}
Let $(X, \dd_X, \mm_X)$ be an $\RCD(K,N)$ space, and let $\gamma:[a,b] \to X$ be a geodesic. Then exactly one of the following holds:
\begin{enumerate}
    \item There exist a unique $c \geq b$ and a unique geodesic $\rho: [a,c] \to X$ extending $\gamma$ (i.e., $\rho|_{[a,b]} = \gamma$) such that for any geodesic $\nu: [a,d] \to X$ with $d \ge b$ satisfying $\nu|_{[a,b]} = \gamma$, we have $d \le c$ and $\nu = \rho|_{[a,d]}$.
    \item There exists a unique geodesic ray $\rho: [a, \infty) \to X$ extending $\gamma$ (i.e., $\rho|_{[a,b]} = \gamma$).
\end{enumerate}
\end{Lem}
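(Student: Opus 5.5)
The plan is to consider the set of all extensions and use non-branching to show they are totally ordered, then take a supremum. Define
\[
S := \{ d \ge b : \text{there is a geodesic } \nu:[a,d]\to X \text{ with } \nu|_{[a,b]}=\gamma\},
\]
which contains $b$. We will show two things: extensions are consistent with one another, and $S$ is closed whenever it is bounded. Case (2) will correspond to $S$ being unbounded, and case (1) to $S$ being bounded, in which case $S=[b,c]$ with $c=\sup S$.

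\textbf{Step 1 (consistency).} Let $\nu_1:[a,d_1]\to X$ and $\nu_2:[a,d_2]\to X$ be two extensions with $d_1\le d_2$. We show $\nu_2|_{[a,d_1]}=\nu_1$. Reparametrize both on $[0,1]$ after restricting $\nu_2$ to $[a,d_1]$; this keeps them geodesics with the same speed, since the speed is that of $\gamma$. They agree at time $a$. They also agree at an interior time $t_2\in(a,b)$, which exists because $b>a$. If $d_1>b$, or more generally $t_2$ is interior, the non-branching property (Theorem~\ref{Thm: non-branching}) gives $\nu_1=\nu_2|_{[a,d_1]}$. If $d_1=b$ there is nothing to prove.

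It follows that $S$ is an interval containing $b$: restricting an extension to a smaller interval is again an extension, so any $d'\in[b,d]$ lies in $S$ whenever $d\in S$. Moreover, all extensions are restrictions of a single map $\rho$ defined on $\bigcup_{d\in S}[a,d]$.

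\textbf{Step 2 (closedness).} Suppose $c:=\sup S<\infty$. We need $c\in S$. Choose $d_k\uparrow c$ in $S$. By Step 1, $\rho$ is defined on $[a,c)$ and satisfies $\dd(\rho(s),\rho(t))=\lambda|s-t|$, where $\lambda$ is the speed of $\gamma$. Hence $\rho$ is Lipschitz, and since $X$ is proper, hence complete, it extends continuously to $c$. The distance identity passes to the limit, so $\rho:[a,c]\to X$ is a geodesic extending $\gamma$, and $c\in S$.

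\textbf{Step 3 (uniqueness).} The two alternatives are exclusive, since (1) bounds $S$ while (2) makes it unbounded. If $S$ is unbounded, $\rho$ is defined on $[a,\infty)$ and every restriction of it is a geodesic, so it is a ray. It is unique because any other ray restricts to extensions, which coincide with $\rho$ by Step 1. In the bounded case, uniqueness of $c$ and $\rho$ and the maximality property also follow directly from Step 1.

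The only delicate point is the application of non-branching in Step 1. One must check that the two curves are genuinely geodesics parametrized on the same domain, and that the agreement time lies in the open interval as the definition of the non-branching property requires. Both are ensured by rescaling the common subinterval $[a,d_1]$ to $[0,1]$ and using a time in $(a,b)$.
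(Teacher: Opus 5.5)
Your proof is correct and follows essentially the route the paper indicates. The paper also uses non-branching (Theorem~\ref{Thm: non-branching}) to show that any two extensions of $\gamma$ are restrictions of one another, then a limiting argument to produce either the maximal extension or the ray. The only difference is minor: the paper takes a compactness limit of extensions, while you glue them into one curve on $[a,c)$ that is a scaled isometric embedding and use completeness to add the endpoint, which is slightly more direct.
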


%\begin{proof}
%Let $\mathcal I$ be the collection of geodesics 
%\[
%\rho:[a,c] \to X, \quad c \ge b, \quad \text{such that } \rho|_{[a,b]} = \gamma.
%\] 
%By the non-branching property (Theorem~\ref{Thm: non-branching}), for any two geodesics $\rho_1, \rho_2 \in \mathcal I$, one is a subsegment of the other. This ensures that a maximal element in $\mathcal I$ with respect to the subsegment relation, if it exists, is unique. We have two cases:

%\noindent \textbf{Case 1:} $\sup_{\rho \in \mathcal I} \ell(\rho) = L < \infty$, where $\ell(\rho)$ denotes the length of $\rho$. Then by taking the limit of a converging sequence of geodesics $\rho_i \in \mathcal I$ with $\ell(\rho_i) \to L$, we obtain a geodesic $\rho \in \mathcal I$. It is straightforward to check that $\rho$ satisfies the conclusions of (1) by the non-branching property. Moreover, (2) does not hold in this case. 

%\noindent \textbf{Case 2:} $\sup_{\rho \in \mathcal I} \ell(\rho) = \infty$. Then by taking the limit of a converging sequence of geodesics $\rho_i \in \mathcal I$ with $\ell(\rho_i) \to \infty$, we obtain a ray $\rho:[0, \infty) \to X$. It is straightforward to check that $\rho$ is the unique ray extending $\gamma$ as in (2) by the non-branching property. Moreover, (1) does not hold in this case. 
%\end{proof}
The previous lemma follows from the non-branching property (Theorem~\ref{Thm: non-branching}) and a standard compactness argument, and motivates the following definition. 

\begin{Def}\label{Def: max extension}
Let $(X, \dd_X, \mm_X)$ be an $\RCD(K,N)$ space, and let $\gamma:[a,b] \to X$ be a geodesic. The \emph{maximal extension} of $\gamma$ from the endpoint $\gamma(a)$ is defined as the unique geodesic or ray $\rho$ given by Lemma~\ref{Lem: max extension}.
\end{Def}

We now prove the following technical lemma, which in particular shows that any geodesic can be $\veps$-perturbed so as to intersect a given polygonal loop (see Definition \ref{Def: poly loop}) at only finitely many points. This will simplify several arguments later. 
\begin{Lem}\label{Lem: perturb generic int}
Let $(X, \dd_X, \mm_X)$ be an $\RCD(K,N)$ of essential dimension $2$. Let $p \in X$ and let $\alpha:[0,1] \to X$ be a polygonal loop. Then for $\mm_X$-a.e. $x \in X$, the geodesic from $p$ to $x$ is unique, regular, and intersects $\alpha$ at only finitely many points. 
\end{Lem}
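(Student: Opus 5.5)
The plan is to get uniqueness and regularity for free from Proposition~\ref{Pro: every pt have extend reg geod}, and then to show that finiteness of intersections fails only on a measure-zero set of endpoints.

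First, by Proposition~\ref{Pro: every pt have extend reg geod} there is a full-measure set $G\subset X$ with the following property. For each $x\in G$ the geodesic $\rho_x:[0,1]\to X$ from $p$ to $x$ is unique, satisfies $\rho_x((0,1])\subset\mathcal R_2$ (so it is in particular regular), and is extendible past $x$. It therefore remains to control the intersection of $\rho_x$ with $\alpha$. Write $\alpha$ as a concatenation of finitely many geodesics $\sigma_1,\dots,\sigma_k$. Since $\rho_x\cap\alpha=\bigcup_j(\rho_x\cap\sigma_j)$, it suffices to show that for a.e. $x$ and every $j$, the set $\rho_x\cap\sigma_j$ is finite. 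In fact it will have at most one point.

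Second, I will show that for $x\in G$ the set $\rho_x\cap\sigma_j$ is either empty, a single point, or a nondegenerate common subsegment. Suppose $\rho_x$ meets $\sigma_j$ at two distinct points $a,b$. The subsegment of $\rho_x$ between $a$ and $b$ is a geodesic that is extendible (it sits inside $\rho_x$, which extends past $x$). By the non-branching property (Theorem~\ref{Thm: non-branching}), as already used in the proof of Proposition~\ref{Pro: geodesic a.e. extend}, an extendible geodesic is the unique geodesic between its endpoints. Hence the subarc of $\sigma_j$ between $a$ and $b$ coincides, as a set, with that subarc of $\rho_x$. So any two intersection points force a nondegenerate overlap. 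Consequently, if no overlap occurs, each $\sigma_j$ meets $\rho_x$ in at most one point, and $\rho_x\cap\alpha$ has at most $k$ points.

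Third, I handle the bad set $E_j$ of $x\in G$ for which $\rho_x$ shares a nondegenerate subsegment with $\sigma_j$. Take $x,x'\in E_j$. Both $\rho_x$ and $\rho_{x'}$ start at $p$ and each contains a subsegment of $\sigma_j$. Applying non-branching to these geodesics (they agree at $p$ and along an interior piece once the overlaps are compared), one of $\rho_x,\rho_{x'}$ is an initial subsegment of the other. If the overlaps are disjoint pieces of $\sigma_j$, I would compare each $\rho$ with the geodesic through $p$ and the union of the overlaps, which is legitimate by uniqueness of geodesics between points of $\rho_x$. Hence all such $x$ lie on the image of a single maximal extension from $p$ (Definition~\ref{Def: max extension}), which is a geodesic or a ray. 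If there is an orientation issue, I allow at most two such curves per $j$. Thus $E_j$ is contained in a curve whose $\mathcal H^1$-measure is $\sigma$-finite. Since $\mm_X$ is concentrated on $\mathcal R_2$ and is absolutely continuous with respect to $\mathcal H^2$ there (the standard rectifiability structure, \cite{MN19,BS20} and references in Section~\ref{Subsec: RCD space}), we get $\mm_X(E_j)=0$. Removing $\bigcup_j E_j$ from $G$ concludes the proof.

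The main obstacle is the third step, namely ensuring that all exceptional $x$ really lie on a single geodesic curve. That is where non-branching must be applied carefully, including the case where overlaps occur on different pieces of $\sigma_j$. The measure-zero claim for a curve is then a standard input.
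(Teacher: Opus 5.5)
Your overall strategy matches the paper's. Non-branching forces a geodesic from $p$ that meets an edge twice to run along that edge, so the exceptional endpoints lie on finitely many images of maximal extensions, which are $\mm_X$-null. Steps~1 and~2 are correct, and using extendibility of $\rho_x$ past $x$ to make the subarc between two intersection points the unique geodesic neatly covers intersection points at $p$ or at $x$.

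The gap is in Step~3, which you rightly flag as the crux. Write $u,v$ for the endpoints of $\sigma_j$. Your pairwise claim that one of $\rho_x,\rho_{x'}$ is an initial subsegment of the other is false. On a flat cylinder $\R\times S^1$, take $p$ on a circle $\{0\}\times S^1$ and let $\sigma_j$ be a short arc of that circle centred at the point antipodal to $p$. For $x\in\sigma_j$ on the $u$-side of the antipode and $x'\in\sigma_j$ on the $v$-side, $\rho_x$ enters $\sigma_j$ through $u$ and $\rho_{x'}$ enters through $v$, and the two overlaps are disjoint. Likewise, when $p$ lies inside $\sigma_j$, the two geodesics can leave $p$ along $\sigma_j$ in opposite directions. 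In these situations there is no ``geodesic through $p$ and the union of the overlaps''. Your bound of at most two curves per edge is true, but it is only asserted, not derived.

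The missing idea is to place each exceptional $x$ directly, instead of comparing different exceptional points; this is exactly the paper's argument. Suppose $\rho_x$ shares a nondegenerate subsegment with $\sigma_j$. With parameterizations made compatible along the overlap, non-branching forces the two geodesics to agree on their entire common parameter range. Hence, followed backward along $\rho_x$, the overlap can only stop where one of the two curves ends: at $u$, at $v$, or at $p$ (the last only if $p\in\sigma_j$).
\begin{itemize}
\item If it stops at $u$, then from $u$ on $\rho_x$ either stays in $\sigma_j$ or extends it past $v$. By Lemma~\ref{Lem: max extension}, $x$ lies on the image of the maximal extension of $\sigma_j$ from $u$. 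The case of $v$ is symmetric.
\item If it stops at $p$, then $x$ lies on the maximal extension from $p$ of one of the (at most two) pieces into which $p$ cuts $\sigma_j$.
\end{itemize}
Thus $\bigcup_j E_j$ lies in a finite union of images of geodesics or rays. This is the paper's exceptional set, which it also calls $G$; do not confuse it with your full-measure set. This union is $\mm_X$-null directly by Lemma~\ref{Lem: geod meas 0}, so there is no need to re-derive this from rectifiability. With this replacement for Step~3, your proof is complete and agrees with the paper's.
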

\begin{proof}
Let $\gamma_i$, $i = 1,\dots,m$, be the edges of $\alpha$. By Lemma~\ref{Lem: max extension}, each $\gamma_i$ admits maximal extensions from its two endpoints, which we denote by $\gamma_i^\pm$. Let 
\[
G := \bigcup_{i=1}^m \big(\mathrm{Im}(\gamma_i^+) \cup \mathrm{Im}(\gamma_i^-)\big),
\]
where $\mathrm{Im}(\gamma_i^\pm)$ denotes the image of $\gamma_i^\pm$.

First, assume that $p \notin \alpha([0,1])$. Let $x \in X \setminus G$ and let $\rho$ be any geodesic from $p$ to $x$. We claim that $\rho$ intersects each edge $\gamma_i$ at most once. This immediately implies that $\rho$ intersects $\alpha$ at only finitely many points. Assume for contradiction that there exists $\gamma_i$ intersecting $\rho$ at least twice. Since $x \in X \setminus G$, we have $x \notin \alpha([0,1])$. In particular, the endpoints of $\rho$ do not lie in $\alpha([0,1])$. Since $\rho$ intersects $\gamma_i$ twice, it follows by the non-branching property that $\gamma_i$ is a subsegment of $\rho$. Thus $\rho$, and hence $x$, lies on one of $\gamma_i^\pm$, a contradiction. 

It now follows from Proposition~\ref{Pro: every pt have extend reg geod} and Lemma~\ref{Lem: geod meas 0} below that for $\mm_X$-a.e. $x \in X$, the geodesic from $p$ to $x$ is unique, regular, and intersects $\alpha$ at only finitely many points.

Next, assume that $p \in \alpha([0,1])$. For each $i$ such that $p$ lies in the interior of the edge $\gamma_i:[a_i,b_i]\to X$, subdivide $\gamma_i$ at $p$ into two subsegments. Take the maximal extensions of both subsegments from $p$ and add their images to $G$. By Lemma~\ref{Lem: geod meas 0}, the set $G$ remains $\mm_X$-null.
It is not difficult to check that the previous argument goes through for this $G$. 
\end{proof}

%If $p$ lies outside of the interior of $\gamma_i$, the previous argument applies. If $p$ lies in the interior, it follows by the non-branching property that one of the subsegments of $\gamma_i$ obtained by subdividing $\gamma_i$ at $p$ must be a subsegment of $\rho$. Therefore, $\rho$, and hence $x$, lies on the maximal extension of that subsegment from $p$. This implies $x \in G$, contradicting our assumption. The rest of the proof continues as in the previous case. 

\begin{Lem}\label{Lem: geod meas 0}
Let $(X, \dd_X, \mm_X)$ be an $\RCD(K,N)$ space of essential dimension $2$. Then the image $S$ of any geodesic, ray, or line is an $\mm_X$-null set. 
\end{Lem}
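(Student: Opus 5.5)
Since $\mm_X$ is $\sigma$-finite and $S$ is a countable union of images of compact geodesic segments (exhaust a ray or line by segments $[-k,k]$), it suffices to show $\mm_X(\gamma([a,b]))=0$ for a single geodesic $\gamma:[a,b]\to X$. By Theorem~\ref{Thm: constant dim} we have $\mm_X(X\setminus\mathcal R_2)=0$, so it is enough to prove $\mm_X(\gamma([a,b])\cap\mathcal R_2)=0$. The endpoints are two points; by the Bishop--Gromov/$\MCP(K,N)$ property of $\RCD$ spaces, points carry no mass, since $\mm_X(B_r(x))\to 0$ as $r\to 0$. So we may restrict to the interior $\gamma((a,b))$.

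For the interior I would use a density argument. Take $x=\gamma(t_0)\in\mathcal R_2$ with $t_0\in(a,b)$. The unique tangent space at $x$ is $(\R^2,c_2^{-1}\mathcal H^2,0^2)$, and under the rescaling the blow-up of $\gamma$ converges to a line through $0^2$, which carries zero $\mathcal H^2$-measure. Fix $\eta>0$. By measured GH convergence of the normalized measures $\mm^{x}_r$, for small $r$ we get
\[
\mm_X\bigl(B_{\eta r}(\gamma)\cap B_r(x)\bigr)\le C\eta\,\mm_X(B_r(x)),
\]
where $B_{\eta r}(\gamma)$ denotes the $\eta r$-tubular neighborhood of the image of $\gamma$. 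To make this rigorous I would use upper semicontinuity of measures on closed sets under weak convergence. Under the rescaling, $\overline B_{\eta}(\gamma)\cap\overline B_1(x)$ converges to a subset of the closed $\eta$-strip around the limit line, and that strip has $\mathcal H^2$-measure $O(\eta)$ in the unit ball. Hence the upper density $\limsup_{r\to0}\mm_X(\gamma\cap B_r(x))/\mm_X(B_r(x))=0$ for every $x\in\gamma\cap\mathcal R_2$.

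To conclude, I would apply the Lebesgue differentiation theorem, which is valid because $\mm_X$ is doubling on bounded sets. For $\mm_X$-a.e. point of the measurable (indeed compact) set $A=\gamma([a,b])\cap\mathcal R_2$ the density of $A$ equals $1$. By the previous step the density is $0$ at every point of $\gamma\cap\mathcal R_2$. Therefore $\mm_X(A)=0$.

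The main obstacle is the density step. It requires checking carefully that the blow-ups of the geodesic converge to a straight line in $\R^2$ along the same sequence, which holds by Arzel\`a--Ascoli together with the fact that limits of geodesics through the origin are lines. It also requires checking that measured convergence gives the strip estimate uniformly along the sequence. Since every subsequence has a further subsequence converging to the unique tangent cone, the estimate holds for all small $r$. An alternative route, if convenient, is a Fubini argument using Proposition~\ref{Pro: geodesic a.e. extend}, but the density argument seems cleaner.
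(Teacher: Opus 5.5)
Your argument is correct, but it follows a different route from the paper.

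\textbf{The paper's route.} The paper proves the lemma in two lines. By \cite[Theorem 1.2]{KM16}, $\mm_X\mrestr\mathcal R_2$ is absolutely continuous with respect to $\mathcal H^2$. The set $S$ is a countable union of rectifiable curves, so it has $\sigma$-finite $\mathcal H^1$-measure and hence $\mathcal H^2(S)=0$. Together with $\mm_X(X\setminus\mathcal R_2)=0$, this gives $\mm_X(S)=0$.

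\textbf{Your route.} You prove directly that $\gamma([a,b])$ has $\mm_X$-density zero at each of its points lying in $\mathcal R_2$. For this you use:
\begin{itemize}
\item that the tangent at such points is $(\R^2,c_2^{-1}\mathcal H^2,0^2)$ including the measure, which is built into the paper's definition of $\mathcal R_2$ and in any case follows from the splitting theorem;
\item Arzel\`a--Ascoli for the rescaled geodesics;
\item upper semicontinuity of weakly converging measures on compact sets;
\item the Lebesgue differentiation theorem for the locally doubling measure $\mm_X$.
\end{itemize}

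\textbf{Comparison.} The paper's route is shorter but relies on a deep structural theorem: absolute continuity of $\mm\mrestr\mathcal R_n$ with respect to $\mathcal H^n$, which rests on the rectifiability theory. Yours is longer but essentially self-contained. The same blow-up argument also shows, more generally, that any set whose blow-ups at $\mm$-a.e.\ point are $\mathcal H^n$-null in $\R^n$ is itself $\mm$-null.

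\textbf{A small slip.} The set $A=\gamma([a,b])\cap\mathcal R_2$ need not be compact, since $\mathcal R_2$ is not closed. The fix is to apply the density theorem to the compact set $\gamma([a,b])$ itself. It has density $1$ at $\mm_X$-a.e.\ of its points. By your blow-up step it has density $0$ at every point of $\gamma([a,b])\cap\mathcal R_2$, whose complement in $\gamma([a,b])$ is $\mm_X$-null. If $\mm_X(\gamma([a,b]))>0$ these two facts are incompatible, so $\mm_X(\gamma([a,b]))=0$.
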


\begin{proof}
By \cite[Theorem 1.2]{KM16}, the restricted measure $\mm_X \mrestr \mathcal R_2$ is absolutely continuous with respect to $\mathcal H^2$. Therefore, $\mm_X(S) = \mm_X(S\cap \mathcal R_2) = 0$.
\end{proof}

\section{Geodesics in spaces of essential dimension two}\label{Sec: geod}

In this section, we study the properties of geodesics in $\RCD(K,N)$ spaces of essential dimension $2$. The main result of the section is that, for any geodesic in such a space, either all tangent spaces along its interior are isometric to $\R^2$, or all are isometric to $\R^2_+$.

\begin{Lem}\label{Lem: R2 persistence}
For every $\veps > 0$, there exist $\bar r(N, \veps), \delta(\veps) > 0$ such that the following holds: Let $(X, \dd_X, \mm_X)$ be an $\RCD(-(N-1),N)$ space of essential dimension $2$, and let $\gamma: [-1,1] \to X$ be a unit-speed geodesic. If for some $0 < r \leq \bar r$ and some $t \in [-9/10, 9/10]$, it holds that
\[\dd_{GH}(\overline B_{r}(\gamma(t)), \overline B_{r}(0^2)) < \delta r,\]
then
\[\dd_{GH}(\overline B_{s}(\gamma(t)), \overline B_{s}(0^2)) < \veps s\]
for every $0 < s \leq r$. Moreover,
\[
\lim_{s \to 0^+}\frac{\dd_{GH}(\overline B_s(\gamma(t)), \overline B_s(0^2))}{s} = 0.
\]
\end{Lem}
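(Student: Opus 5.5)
The plan is to argue by contradiction using the almost-splitting theorem together with the classification of essential-dimension-one spaces. By Theorem~\ref{Thm: almost splitting thm}, applied to the rescaled subsegment of $\gamma$ centered at $\gamma(t)$ (of length at least $1/10$ on each side), for any $\eta>0$ there is $\bar r(N,\eta)$ such that every ball $\overline B_s(\gamma(t))$ with $s\le \bar r$ is $\eta s$-close to a ball $\overline B_s((0,y_0))$ in some $\R\times Y$. Here $Y$ is a pointed $\RCD(0,N-1)$ space with $\dim Y\le 1$. By Theorem~\ref{Thm: RCD dim 1 class}, $Y$ is then a point, $\R$, $\R_+$, a closed interval, or a circle. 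So at every small scale, the ball around $\gamma(t)$ looks like a ball in $\R$, $\R^2$, $\R\times\R_+$, $\R\times[a,b]$, or $\R\times S^1$, with the origin at $(0,y_0)$.

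The core step is a scale-doubling dichotomy: if $\overline B_{s}$ is $\delta s$-close to a Euclidean disk, then $\overline B_{s/2}$ is $\veps$-close as well. First, $\overline B_{s/2}$ is $\eta s$-close to a model ball $\overline B^{\R\times Y}_{s/2}((0,y_0))$. On the other hand, by Lemma~\ref{Lem: realization scale change} it is $3\delta s$-close to $\overline B_{s/2}(0^2)$. Hence the model ball of radius $s/2$ in $\R\times Y$ is $(3\delta+\eta)s$-close to a Euclidean disk. A compactness argument over the list of model spaces shows the following. If a ball $\overline B_{1}((0,y_0))\subset \R\times Y$ is $\theta$-close to $\overline B_1(0^2)$ with $\theta$ small, then $Y$ must be $\R$, or an interval/circle/half-line whose relevant scale is huge compared to $1$, i.e.\ $y_0$ is far from $\partial Y$ and the circle is long. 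In all these cases the ball is isometric to the Euclidean disk $\overline B_1(0^2)$ on the nose. A point $Y$ is excluded, since a segment is far from a disk. This is the step I expect to be the main obstacle: one must make the gap quantitative and uniform, namely that a model ball is either exactly Euclidean or at definite distance $c_0>0$ from Euclidean. The delicate cases are a short circle or a boundary at distance comparable to the radius, where the closeness degenerates continuously. I would handle these by noting that the model ball of radius $1$ is Euclidean iff $\dd(y_0,\partial Y)\ge 1$ and $\mathrm{length}(S^1)\ge 2$ (or the analogous injectivity condition). The resulting issue is then treated across scales rather than at one scale.

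To handle the degeneration, I would iterate with radii $s_k=2^{-k}r$. At each scale the almost-splitting model is exactly Euclidean on that ball, so $\overline B_{s_k}$ is $\eta s_k$-close to $\overline B_{s_k}(0^2)$ with error $\eta$ that does not accumulate. This works because closeness at scale $s_{k+1}$ is re-derived from the almost-splitting theorem, and the previous scale is used only to select which model occurs. Choosing $\eta\ll\delta\ll\veps$ closes the induction, and intermediate radii $s\in[s_{k+1},s_k]$ are handled by Lemma~\ref{Lem: realization scale change}. For the limit statement, letting $s\to0$ allows $\eta\to0$ by shrinking $\bar r(N,\eta)$, which gives $\dd_{GH}/s\to0$. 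Equivalently, every tangent space at $\gamma(t)$ splits as $\R\times Y$ and is $\veps$-close to $\R^2$ at all scales, hence equals $\R^2$.
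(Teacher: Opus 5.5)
Your ingredients are the right ones: uniform almost splitting along $\gamma$, Theorem~\ref{Thm: RCD dim 1 class}, and re-deriving closeness at each scale so that errors do not accumulate. However, the step that is supposed to stop the accumulation is false as you state it, and your remedy (``treated across scales'') does not supply a mechanism. A model ball $\overline B_1((0,y_0))\subset\R\times Y$ that is $\theta$-close to $\overline B_1(0^2)$ need not be Euclidean at radius $1$; take $Y=\R_+$ with $\dd(y_0,\partial Y)=1-\theta'$ and $\theta'$ small.

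In your iteration, the only information at scale $s_{k+1}$ is that the actual ball is close to Euclidean (inherited from scale $s_k$) and close to the $s_{k+1}$-model. So the $s_{k+1}$-model at radius $s_{k+1}$ is only \emph{approximately} Euclidean. The assertion that ``at each scale the almost-splitting model is exactly Euclidean on that ball'' is precisely the false single-scale rigidity. Without it you only get $\delta_{k+1}\le C(\delta_k+\eta)$ with $C>1$, and the error grows geometrically.

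The missing idea is to give up a fixed factor of the radius in the rigidity statement; this is exactly the role of $\bar\veps$ in the paper's proof. By compactness of the models and Theorem~\ref{Thm: RCD dim 1 class}, there is $\bar\veps>0$ with the following property. If $\overline B^{\R\times Y}_1((0,y_0))$ is $\bar\veps$-close to $\overline B_1(0^2)$, then every boundary point of $Y$ lies at distance at least $1/2$ from $y_0$ and any circle factor has length at least $2$. Hence $\overline B^{\R\times Y}_{1/2}((0,y_0))$ is isometric to $\overline B_{1/2}(0^2)$; the paper uses radius $1/10$.

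You should then apply almost splitting at the current scale $s_k$, not at $s_{k+1}$. Suppose $\overline B_{s_k}(\gamma(t))$ is $\delta_k s_k$-close to Euclidean and $\delta_k+\eta\le\bar\veps$. Then the $s_k$-model is exactly flat at radius $s_{k+1}=s_k/2$, and Lemma~\ref{Lem: realization scale change} gives $\delta_{k+1}\le 6\eta$, independently of $\delta_k$. Choosing $\delta+\eta\le\bar\veps$ and $7\eta\le\bar\veps$ closes the induction. Intermediate radii cost a factor $6$, and letting $\eta\to 0$ as $s\to 0$ gives the limit statement.

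With this correction, your argument becomes a direct, quantitative version of the paper's proof. The paper instead argues by contradiction: it takes the largest bad scale $s_i$ and rescales by $1/(10s_i)$. The limit is a split space $\R\times Y$ that is $\veps$-close to Euclidean at radius $1$ but at least $\veps/10$-far at radius $1/10$, which contradicts the choice of $\bar\veps$.
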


\begin{proof}
By the classification Theorem~\ref{Thm: RCD dim 1 class}, there exists $\bar\veps > 0$ such that if $(Y,y_0)$ is a pointed $\RCD(0,N)$ space of essential dimension $\le 1$ and 
\[\dd_{GH}(\overline B^{\R \times Y}_1((0,y_0)), \overline B_1(0^2)) \le \bar \veps,
\]
then $B^{\R \times Y}_{1/10}((0,y_0))$ is isometric to $B_{1/10}(0^2)$. 

It suffices to prove the lemma for any $\veps \le \bar \veps$. Choose $\delta > 0$ so that for any $x \in X$ and $r > 0$, if 
\begin{equation*}
\dd_{GH}(\overline B_{r}(x), \overline B_{r}(0^2)) < \delta r,
\end{equation*}
then 
\begin{equation*}
\dd_{GH}(\overline B_{s}(x), \overline B_{s}(0^2)) < \veps s,
\end{equation*}
for all $s \in [r/10, r]$.

Suppose that the first part of the lemma does not hold. Then there exists a sequence of spaces $X_i$ with geodesics $\gamma_i$, and sequences $t_i \in [-9/10, 9/10]$ and $r_i \to 0$ such that 
\[
\dd_{GH}(\overline B_{r}(\gamma_i(t_i)), \overline B_{r}(0^2)) < \delta r
\]
but
\[\dd_{GH}(\overline B_{s}(\gamma(t_i)), \overline B_{s}(0^2)) \ge \veps s\]
for some $0 < s \leq r$. Let $s_i$ be the largest scale $\le r_i$ for which the latter occurs. Then $s_i \to 0$ and it follows from our choice of $\delta$ that $s_i < r_i/10$.

Passing to a subsequence of the rescaled spaces $(\frac{1}{10s_i}X_i, \gamma_i(t_i))$, we obtain as a pointed Gromov--Hausdorff limit a space $(X_\infty, \gamma_\infty(0))$, where $\gamma_\infty$ is a line, such that 
\[
\dd_{GH}(\overline B_{1}(\gamma_\infty(0)), \overline B_{1}(0^2)) \le \veps \le \bar \veps
\]
and 
\[
\dd_{GH}(\overline B_{1/10}(\gamma_\infty(0)), \overline B_{1/10}(0^2)) \ge \veps/10.   
\]
On the other hand, it follows from Theorem~\ref{Thm: geod splitting thm} that $X_\infty$ splits as $\R \times Y$, where $Y$ is an $\RCD(0,N)$ space of essential dimension $\le 1$. Together with our choice of $\bar \veps$, this yields a contradiction with the two inequalities above. This proves the first part of the lemma.

Suppose that the second part of the lemma does not hold. Then there exist $s_i \to 0$ and $\veps' > 0$ such that
\[
\dd_{GH}(\overline B_{s_i}(x), \overline B_{s_i}(0^2)) \ge \veps' s_i. 
\]
for every $i$. On the other hand, it follows from the first part of the lemma that
\[
\dd_{GH}(\overline B_{10s_i}(x), \overline B_{10s_i}(0^2)) < 10\veps s_i \le 10 \bar \veps s_i
\]
for every $i$. It is now straightforward to obtain a contradiction by taking a limit of the rescaled spaces $(\frac{1}{10s_i}X_i, \gamma(t))$ and arguing as in the proof of the first part of lemma. 
\end{proof}

Lemma~\ref{Lem: R2 persistence} can be interpreted as follows. Suppose that a ball of radius $r$, centered in the interior a geodesic, is sufficiently Gromov--Hausdorff close to a Euclidean $r$-ball. Assume moreover that for all scales $s \leq r$, the geodesic provides sufficient almost splitting. Then any smaller ball of radius $s \leq r$ centered at the same point is also Gromov--Hausdorff close to a Euclidean $s$-ball, and the approximation improves as $s$ decreases. 

%\begin{Rem} 
%On a non-collapsed $\RCD(K,N)$ space $X$, it holds that for any $\veps > 0$, there exists $\delta(K, N, \veps) > 0$ such that for any $p \in X$, if 
%\[\dd_{GH}(\overline B_r(p), \overline B_r(0^2)) < \delta r,\] then
%\[\dd_{GH}(\overline B_s(p), \overline B_s(0^2)) < \veps s\]
%for any $0 < s \leq r$. We give a sketch of the argument. Assume that $\overline B_r(p)$ is Gromov--Hausdorff close to $\overline B_r(0^2)$. By the volume convergence theorem (\cite{DG18}, see also \cite{C97}), the volume of $\overline B_r(p)$ is close to that of $\overline B_r(0^2)$, and hence almost maximal. The Bishop-Gromov inequality then forces the volume of all smaller balls $\overline B_s(p)$ to be almost maximal as well. Finally, it follows from volume rigidity (\cite{DG18}, see also \cite{C96, CC96, CC97}) that $\overline B_s(p)$ must be Gromov--Hausdorff close to $B_s(0^2)$.
%This propagation of regularity property was important in the application of the Colding-Colding Reifenberg theorem \cite[Theorem A.1.1]{CC97} to establish a manifold structure on the almost-regular set (see \cite{CC97} for the Ricci limit case and \cite{MN19} for the non-collapsed $\RCD$ case). The first part of Proposition~\ref{Pro: R2 persistence} can be thought of as a weaker version of this property, holding only in the interior of geodesics. It will also be vital for the proof of the manifold structure in our setting. 
%\end{Rem}

A similar statement holds when the $r$-ball is close to a ball in the upper half-plane $\R^2_+$. In this case, it is not generally true that smaller balls of radius $s \leq r$ remain Gromov--Hausdorff close to $B_s^{\R^2_+}(0^2)$. For example, if $X = \R^2_+$ and the geodesic $\gamma$ lies slightly above the boundary, then the $r$-ball $B_r(\gamma(t))$ may be close to $B_r^{\R^2_+}(0^2)$, while at sufficiently small scales $s \ll r$, the balls $B_s(\gamma(t))$ are close to $B_s^{\R^2}(0^2)$ instead. We will account for this possibility and show that this is in fact the only other thing that can happen.

For $r > 0$, let $\mathcal D_r$ denote the collection of all (equivalence classes up to isometry of) closed balls of radius $r$ in $\R^2_+$. Notice that $\overline B_r^{\R^2}(0^2) \in \mathcal D_r$, since it is isometric to $\overline B^{\R^2_+}_{r}((0,r))$. We have the following replacement for Lemma~~\ref{Lem: R2 persistence}. 

\begin{Lem}\label{Lem: R2 persistence 2}
For every $\veps > 0$, there exist $\bar r(N, \veps), \delta(\veps) > 0$ such that the following holds: Let $(X, \dd_X, \mm_X)$ be an $\RCD(-(N-1),N)$ space of essential dimension $2$, and let $\gamma: [-1,1] \to X$ be a unit-speed geodesic. If for some $0 < r \leq \bar r$ and some $t \in [-9/10, 9/10]$, it holds that
\[\dd_{GH}(\overline B_{r}(\gamma(t)), \mathcal D_r) < \delta r,\]
then 
\[\dd_{GH}(\overline B_{s}(\gamma(t)), \mathcal D_s) < \veps s\]
for every $0 < s \leq r$. Moreover, 
\[
\lim_{s \to 0^+}\frac{\dd_{GH}(\overline B_s(\gamma(t)), \mathcal D_s)}{s} = 0.
\]
\end{Lem}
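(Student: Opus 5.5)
I would follow the proof of Lemma~\ref{Lem: R2 persistence} almost verbatim, replacing the single model ball $\overline B_r(0^2)$ by the family $\mathcal D_r$. The only genuinely new ingredient is a rigidity statement for $\mathcal D$, playing the role that the classification Theorem~\ref{Thm: RCD dim 1 class} played there.

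\emph{Rigidity step.} I claim there is $\bar\veps>0$ with the following property. Let $(Y,y_0)$ be a pointed $\RCD(0,N)$ space of essential dimension $\le 1$, and suppose
\[
\dd_{GH}\bigl(\overline B^{\R\times Y}_1((0,y_0)),\mathcal D_1\bigr)\le\bar\veps .
\]
Then $\overline B^{\R\times Y}_{1/10}((0,y_0))$ is isometric to an element of $\mathcal D_{1/10}$.

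To prove this, I use Theorem~\ref{Thm: RCD dim 1 class}: $Y$ is a point, $\R$, $\R_+$, a closed interval, or a circle. Hence $\R\times Y$ is a line, $\R^2$, a closed half-plane, a strip $\R\times[0,L]$, or a cylinder $\R\times S^1_L$. I then argue by contradiction and compactness. Take a sequence $(Y_i,y_i)$ that is closer and closer to $\mathcal D_1$; since $\mathcal D_1$ is compact, it converges to an exact ball of $\R^2_+$.

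The possible limits of $\overline B_1$ in $\R\times Y$ must themselves lie in $\mathcal D_1$. This rules out the line and the cylinder with $L$ bounded away from $0$, since their unit balls are not $2$-dimensional half-plane balls. It also rules out the strip with $L$ bounded away from $0$ and $L<2$. A short circle or a thin strip collapses to a line, which is also excluded. What remains is:
\begin{itemize}
\item the cylinder with $L\ge$ some fixed constant, whose $1/10$-ball is Euclidean once $L>1/5$;
\item the strip with $L$ large, whose $1/10$-ball is a half-plane ball or a plane ball;
\item $\R^2$ and $\R^2_+$ themselves, whose $1/10$-balls are trivially in $\mathcal D_{1/10}$ (with the basepoint at any height).
\end{itemize}
The delicate configurations are a strip of width slightly below $2$ and a cylinder of moderate length. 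For these I will check explicitly that the unit ball is a definite distance from $\mathcal D_1$: its boundary exhibits two or no ``edges'' where an $\R^2_+$-ball has one. This fixes $\bar\veps$.

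\emph{Propagation.} I choose $\delta$ so that $\dd_{GH}(\overline B_r(x),\mathcal D_r)<\delta r$ implies $\dd_{GH}(\overline B_s(x),\mathcal D_s)<\veps s$ for $s\in[r/10,r]$. This is a trivial scaling fact, since restricting an element of $\mathcal D_r$ to a concentric smaller ball gives an element of $\mathcal D_s$.

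Arguing by contradiction, I take $X_i,\gamma_i,t_i$ and $r_i\to0$, and let $s_i$ be the largest bad scale, so $s_i<r_i/10$. I rescale $(\frac1{10s_i}X_i,\gamma_i(t_i))$. The limit $X_\infty$ contains a line, because $t_i\in[-9/10,9/10]$ and the rescaled geodesic has length $\gtrsim 1/s_i\to\infty$. By Theorem~\ref{Thm: geod splitting thm} it splits as $\R\times Y$ with $\dim Y\le 1$ (I use the compactness/almost-splitting version along the sequence).

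The limit also satisfies $\dd_{GH}(\overline B_1,\mathcal D_1)\le\veps\le\bar\veps$ and $\dd_{GH}(\overline B_{1/10},\mathcal D_{1/10})\ge\veps/10$. This contradicts the rigidity step. The second claim, the limit $\to0$, follows exactly as before: apply the first part at scale $10s_i$ and blow up to reach the same contradiction.

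\emph{Main obstacle.} The main obstacle is the rigidity step. Unlike $\R^2$, the family $\mathcal D$ contains balls whose centre is at any height above $\partial\R^2_+$. So I must verify by hand that no strip or cylinder ball lies in a small uniform neighbourhood of $\mathcal D_1$ without its $1/10$-ball lying exactly in $\mathcal D_{1/10}$. The split line direction must be compatible with the boundary direction; here $\R\times Y$ forces the boundary, if any, to be parallel to the line. I would handle this by the compactness and contradiction argument described above, classifying the finitely many limit types.
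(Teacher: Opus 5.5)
Your overall architecture is the paper's. The paper proves this lemma in two sentences. It asserts your rigidity statement (there is $\bar\veps$ such that $\bar\veps$-closeness of $\overline B^{\R\times Y}_1((0,y_0))$ to $\mathcal D_1$ forces $\overline B^{\R\times Y}_{1/10}((0,y_0))\in\mathcal D_{1/10}$) as a consequence of Theorem~\ref{Thm: RCD dim 1 class}. It then reruns the largest-bad-scale blow-up of Lemma~\ref{Lem: R2 persistence}, as you do.

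The problem is your sketch of the rigidity step. It misidentifies which configurations must be excluded, so the explicit check you plan would fail.
\begin{itemize}
\item A strip $\R\times[0,L]$ with $L\in[1,2)$ and $y_0$ on an edge has unit ball \emph{exactly} the half-disk $\overline B^{\R^2_+}_1(0^2)\in\mathcal D_1$, so such strips are not ruled out.
\item With $y_0$ at height $L/2$, its unit ball is within Hausdorff distance $1-L/2$ of a full disk, which also lies in $\mathcal D_1$. So strips of width just below $2$ are not at a definite distance from $\mathcal D_1$.
\item Likewise, cylinders $\R\times S^1_L$ with $L$ just below $4$ have unit balls arbitrarily GH-close to a Euclidean disk.
\item Counting ``edges'' cannot separate anything, since the Euclidean disk (no edge) belongs to $\mathcal D_1$.
\item Your cylinder threshold is also off. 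The $1/10$-ball of $\R\times S^1_L$ is Euclidean only for $L\ge 2/5$: for $L\in(1/5,2/5)$ the points at vertical offset $\pm 1/10$ from the centre are at distance $L-1/5<1/5$.
\end{itemize}

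The fix is to stop trying to exclude these configurations, because the conclusion concerns only the $1/10$-ball. That ball already lies in $\mathcal D_{1/10}$ for $\R^2$, for $\R^2_+$, for strips of width $\ge 1/5$ (at most one edge is visible), and for cylinders of circumference $\ge 2/5$. So run your contradiction with a sequence $(Y_i,y_i)$ whose unit balls approach $\mathcal D_1$ \emph{and} whose $1/10$-balls are not in $\mathcal D_{1/10}$. Then each $Y_i$ is a point, an interval of length $<1/5$, or a circle of length $<2/5$. Any GH-limit of their unit balls is therefore one of: a segment, a strip piece of width in $(0,1/5]$, or a cylinder piece of circumference in $(0,2/5]$, which is not simply connected. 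None of these is isometric to an element of the compact family $\mathcal D_1$, all of whose elements are planar convex bodies of width at least $1$.

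One smaller point: your $[r/10,r]$ scale step is not purely a restriction argument, since unpointed GH-closeness must be shown to track the centre. It does, by compactness, because the centre of an element of $\mathcal D_r$ is its unique point of eccentricity $\le r$. With these corrections your argument goes through.
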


\begin{proof}
By the classification Theorem~\ref{Thm: RCD dim 1 class}, there exists $\bar\veps > 0$ such that if $(Y,y_0)$ is a pointed $\RCD(0,N)$ space of essential dimension $\le 1$ and 
\[\dd_{GH}(\overline B^{\R \times Y}_1((0,y_0)), \mathcal D_1) \le \bar \veps,
\]
then $B^{\R \times Y}_{1/10}((0,y_0)) \in \mathcal D_{1/10}$. The rest of the proof is as in Lemma~\ref{Lem: R2 persistence}.
\end{proof} 

\begin{Thm}\label{Thm: geodesic interior tangent}
Let $(X, \dd_X, \mm_X)$ be an $\RCD(K,N)$ space of essential dimension $2$. If $\gamma: [-1,1] \to X$ is a geodesic, then one of the following holds:
\begin{enumerate}
    \item for every $t \in (-1,1)$, every tangent space at $\gamma(t)$ is pointed isometric to $(\R^2, 0^2)$;
    \item for every $t \in (-1,1)$, every tangent space at $\gamma(t)$ is pointed isometric to $(\R^2_+, 0^2)$.
\end{enumerate}
\end{Thm}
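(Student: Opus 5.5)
The proof has two stages: a pointwise classification at each interior point $\gamma(t)$, and then a constancy argument along $(-1,1)$ using the known continuity of tangent cones in the interior of geodesics.

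\emph{Step 1: possible tangent spaces.} Fix $t\in(-1,1)$ and rescale so that $t$ lies in $[-9/10,9/10]$ of a subsegment and $K=-(N-1)$. By Theorem~\ref{Thm: geod splitting thm}, every tangent space at $\gamma(t)$ is $\R\times Y$ with $Y$ an $\RCD(0,N-1)$ space of essential dimension $\le 1$. Dimension $0$ is impossible. If $Y$ were a point, then $\R$ would be a tangent space, and by lower semicontinuity this should contradict the essential dimension being $2$ at points $\mm$-close to $\gamma(t)$; I expect this needs a short argument, for instance via \cite{K19} combined with Theorem~\ref{Thm: constant dim}. Hence $\dim Y=1$, and Theorem~\ref{Thm: RCD dim 1 class} gives $Y\in\{\R,\R_+,[0,\ell],S^1_\ell\}$ with some pointing $y_0$.

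\emph{Step 2: rule out the bad cases.} Suppose some tangent space at $\gamma(t)$ is $\R^2$. Then along a sequence $r_i\to0$ we have $\dd_{GH}(\overline B_{r_i}(\gamma(t)),\overline B_{r_i}(0^2))<\delta r_i$, and Lemma~\ref{Lem: R2 persistence} (its limit statement) shows that every tangent space at $\gamma(t)$ is $\R^2$.

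Now suppose some tangent space is $\R\times S^1_\ell$, $\R\times[0,\ell]$, or $\R\times\R_+$ with $y_0\neq0$. Its tangent space at the origin is $\R^2$; for $\R\times[0,\ell]$ this requires $y_0$ interior, and otherwise the tangent is $\R^2_+$. By Proposition~\ref{Pro: tangent of tangent}, $\R^2$ is then a tangent at $\gamma(t)$, so all tangents are $\R^2$, which is a contradiction.

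The remaining cases are $\R\times\R_+$ with $y_0=0$, and $\R\times[0,\ell]$ with $y_0$ an endpoint. In the latter case the tangent at the origin is $\R^2_+$, so $\R^2_+$ is a tangent space. Then Lemma~\ref{Lem: R2 persistence 2} gives $\dd_{GH}(\overline B_s(\gamma(t)),\mathcal D_s)=o(s)$ at all scales. Every tangent space is therefore a pointed limit of balls in $\mathcal D_s$ rescaled, that is, $\R^2$ or $\R^2_+$ with some basepoint at height $h\in[0,\infty]$. Since the space splits as $\R\times Y$ with the line through the basepoint, this is consistent. A strip $\R\times[0,\ell]$ is not such a limit, since its balls of radius $>\ell$ are not in $\mathcal D$.

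Conclusion of Step 2: at each $t$, either all tangents are $\R^2$, or all tangents are $\R^2_+$ pointed on the boundary (after excluding mixed $\R^2$ by the first case).

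\emph{Step 3: constancy along $\gamma$.} Let $A=\{t\in(-1,1):\text{tangents at }\gamma(t)\text{ are }\R^2\}$. Using the continuity of tangent cones along the interior of geodesics \cite{CN12,D25}, I would show $A$ is both open and closed in $(-1,1)$; connectedness then gives alternative (1) or (2).

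Openness: if $t\in A$, then for a small $r$ the ball $\overline B_r(\gamma(t))$ is $\delta r/2$-close to $\overline B_r(0^2)$. By Proposition~\ref{Pro: GH distance balls center}, for $|t'-t|<\delta r/2$ the ball $\overline B_r(\gamma(t'))$ is $\delta r$-close, and Lemma~\ref{Lem: R2 persistence} gives $t'\in A$.

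Closedness: this is the main obstacle, since GH-closeness to $\R^2$ at a fixed scale is open but not closed. Here I would invoke the Hölder continuity of the tangent cones along the interior of a geodesic from \cite{CN12,D25}. This says that for $t,t'$ in a compact subinterval and small $r$, the balls $\overline B_r(\gamma(t))$ and $\overline B_r(\gamma(t'))$ are GH-close up to an error $o(r)$ that is uniform once $|t-t'|$ is small relative to a power of $r$. Equivalently, tangent cones at nearby interior points can be matched. Concretely, if $t_k\to t$ with $t_k\in A$ and $t\notin A$, then tangents at $\gamma(t)$ are $\R^2_+$. Uniform $o(s)$-closeness to $\mathcal D_s$ near $t$ (Lemma~\ref{Lem: R2 persistence 2}, with uniform $\bar r$), combined with the continuity estimate at a suitable scale $s$, would force $\R^2_+$-balls centered at $\gamma(t_k)$ at scale $s$ that are also $\R^2$-balls. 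This is a contradiction for $\delta$ small, as $\overline B_s^{\R^2_+}(0^2)$ is far from $\overline B_s(0^2)$.
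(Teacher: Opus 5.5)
Your architecture is the paper's: a pointwise dichotomy at each $\gamma(t)$ obtained from the splitting $\R\times Y$, the classification of one-dimensional spaces, Lemmas~\ref{Lem: R2 persistence} and~\ref{Lem: R2 persistence 2}, and tangents of tangents, followed by continuity of tangent cones along the interior of $\gamma$. Your case analysis in Step~2 matches the paper's.

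The genuine gap is the exclusion of $Y$ a point in Step~1. Lower semicontinuity of the essential dimension \cite{K19} only gives $\dim(\text{tangent})\le\dim X$, which is the wrong direction. Theorem~\ref{Thm: constant dim} says nothing about an $\mm$-null set of points. So a tangent isometric to $\R$ is not ruled out by these tools. The paper treats this case last, and the order matters:
\begin{itemize}
\item By Step~2, any tangent at $\gamma(t)$ not isometric to $\R$ forces all tangents there to be $\R^2$, or all to be $\R^2_+$ pointed on the boundary.
\item Hence a single tangent $\R$ forces every tangent at $\gamma(t)$ to be $\R$, so $\gamma(t)\in\mathcal R_1$ by Remark~\ref{Rem: metric Rk rigidity}.
\item Then \cite[Theorem 1.1]{KL16} gives $\dim(X)=1$, a contradiction.
\end{itemize}
This external input is what your Step~1 lacks. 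You must therefore run Step~2 first, without assuming $\dim Y=1$; it goes through unchanged.

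In Step~3 your description of the cited estimate is off, and the closedness argument as phrased would not close. An estimate that is only effective when $|t-t'|$ is small compared to a power of $r$ does not let you send $r\to0$ with $t_k$ fixed. At any single fixed scale there is no contradiction. Lemma~\ref{Lem: R2 persistence 2} explicitly permits $\overline B_s(\gamma(t_k))$ to be close to a boundary half-ball at scale $s$ while every tangent at $\gamma(t_k)$ is $\R^2$, so invoking it adds nothing.

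What \cite{CN12,D25} actually provide is the scale-invariant bound $\dd_{GH}(\overline B_r(\gamma(t)),\overline B_r(\gamma(t')))\le C\,r\,|t-t'|^{\alpha}$. It holds for $t,t'$ in a compact subinterval and all sufficiently small $r$, with $C,\alpha$ independent of $r$. With this, fix $k$ such that $C|t_k-t|^{\alpha}$ is small and let $r\to0$. The rescaled balls at $\gamma(t_k)$ and $\gamma(t)$ converge to $\overline B_1(0^2)$ and $\overline B_1^{\R^2_+}(0^2)$ respectively, yet remain $C|t_k-t|^{\alpha}$-close, which is impossible. This gives closedness and openness at once, and it is all the paper invokes. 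Your separate openness argument via Lemma~\ref{Lem: R2 persistence} and Proposition~\ref{Pro: GH distance balls center} is correct but unnecessary.
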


\begin{proof}
It suffices to prove that for any $t \in (-1,1)$, the tangent spaces at $\gamma(t)$ are either all $(\R^2, 0^2)$ or all $(\R^2_+, 0^2)$. Indeed, once we know that at each $t \in (-1,1)$ all tangent spaces are of a single type (either $\R^2$ or $\R^2_+$), the continuity of tangent spaces from \cite{CN12, D25} implies the type cannot change with $t$ in the interior. Therefore, the tangent spaces are of the same type of all $t \in (-1,1)$, and are either all $\R^2$ or all $\R^2_+$, giving the desired conclusion.  

Fix $t \in (-1,1)$. Up to rescaling and restricting $\gamma$, we may assume that $(X, \dd, \mm)$ is an $\RCD(-(N-1), N)$ space, $\gamma$ is a unit-speed geodesic parameterized on $[-1, 1]$, and $t = 0$.

Let $(Z, \dd_Z, \mm_Z, z_0)$ be a tangent space of $X$ at $\gamma(0)$. It follows from Theorem~\ref{Thm: geod splitting thm} that $(Z, \dd_Z, \mm_Z, z_0)$ splits isometrically as a pointed metric measure product of $(\R, \dd_{\R}, \mathcal L^1, 0)$ and $(Y, \dd_Y, \mm_Y, y_0)$, where the latter is a pointed $\RCD(0,N-1)$ space of essential dimension $\leq 1$. By the classification result for such spaces (Theorem~\ref{Thm: RCD dim 1 class}), $Y$ must be isometric to $\R, \R_+$, a closed interval $I$, a circle, or a point. We will consider each possible case. 

First, assume that $Y = \R$. Then for any $\bar r, \delta > 0$, there exists $0 < r \leq \bar r$ such that
\[\dd_{GH}(\overline B_r(\gamma(0)), \overline B^{\R^2}_r(0^2)) < \delta r.\]
Together with Lemma~\ref{Lem: R2 persistence}, this implies that
\[
\lim_{s \to 0^+}\frac{\dd_{GH}(\overline B_s(\gamma(0)), \overline B_s(0^2))}{s} = 0.
\]
Therefore, for any tangent space $(Z', \dd_{Z'}, \mm_{Z'}, z_0')$ of $X$ at $\gamma(0)$, the ball $(B_1(z_0'), z_0')$ is pointed isometric to $(B^{\R^2}_1(0^2), 0^2)$. This implies that all tangent spaces at $\gamma(0)$ must be pointed isometric to $(\R^2, 0^2)$. 

Next, assume that we are in one of the following cases:
\begin{enumerate}
    \item $Y$ is isometric to a circle;
    \item $Y$ is isometric to $\R_+$ or an interval, and $y_0$ does not lie on the boundary of $Y$;
\end{enumerate}
In both cases, $(\R^2, \dd_{\R^2}, c\mathcal L^2, 0^2)$, where $c > 0$ is a normalization constant, is a tangent space of $Z \cong \R \times Y$ at $z_0 = (0, y_0)$. By Proposition~\ref{Pro: tangent of tangent}, $(\R^2, \dd_{\R^2}, c\mathcal L^2, 0^2)$ is also a tangent space of $X$ at $\gamma(0)$. The conclusion of the previous paragraph then gives that all tangent spaces at $\gamma(0)$ must then be $\R^2$, contradicting (1) or (2). 

Next, assume that $Y$ is isometric to $\R_+$ or an interval, and $y_0$ lies on the boundary of $Y$. This implies that for any $\bar r, \delta > 0$, there exists $0 < r \leq \bar r$ such that
\[\dd_{GH}(\overline B_r(\gamma(0)), \overline B^{\R_+^2}_r(0^2)) < \delta r.\]
Arguing as in the case $Y = \R$ and using Lemma~\ref{Lem: R2 persistence 2} in place of Lemma~\ref{Lem: R2 persistence}, we see that for any tangent space $(Z', \dd_{Z'}, \mm_{Z'}, z_0')$ of $X$ at $\gamma(0)$, $B_1(z_0') \in \mathcal D_1$, i.e., it is isometric to a closed ball in $\R_+^2$. If $B_1(z_0')$ is isometric to some ball in $\mathcal D_1$ that is not $B^{\R_+^2}_1(0^2)$, then $\R^2$ is a tangent space of $Z'$ at $z_0'$. This leads to a contradiction arguing as in the previous paragraph. Therefore, $(B_1(z_0'), z_0')$ is pointed isometric to $(B^{\R^2_+}_1(0^2), 0^2)$. This implies that all the tangent spaces at $\gamma(0)$ must be pointed isometric to $(\R^2_+, 0^2)$. In particular, the case where $Y$ is an interval cannot occur. 

Finally, assume that $Y$ is a singleton. Let $(Z' ,\dd_{Z'}, \mm_{Z'}, z_0')$ be any other tangent space at $\gamma(0)$. We have shown that
\begin{enumerate}
    \item $Z'$ cannot be isometric to $\R^2$, since then all tangent spaces are isometric $\R^2$;
    \item $Z'$ cannot be isometric to $\R^2_+$ with $z_0'$ on the boundary of $\R^2_+$, since then all tangent spaces are isometric $\R^2_+$;
    \item $Z'$ cannot be isometric to $\R^2_+$ with $z_0'$ not on the boundary of $\R^2_+$;
    \item $Z'$ cannot be isometric to the product of $\R$ with a closed interval or a circle. 
\end{enumerate}
This means that all the tangent spaces at $\gamma(0)$ must be isometric to $\R$, which implies $\gamma(0) \in \mathcal{R}_1$ (see Remark \ref{Rem: metric Rk rigidity}). It follows from \cite[Theorem 1.1]{KL16} that $\dim(X) = 1$, which contradicts our assumption that $\dim(X) = 2$. 
\end{proof}

\begin{Rem}
The above phenomenon is unique to spaces of low essential dimension. There are examples of noncollapsed Ricci limit spaces of dimension $5$ such that the tangent cones coming from the same sequence of rescalings along a geodesic are not constant \cite{CN12}. In the Alexandrov setting, it is known that tangent cones are constant along the interior of a geodesic in any dimension \cite{P98}. 
\end{Rem}

\section{Components of balls at the interior of geodesics}\label{sec:choice_sides_geo}

In this section, we consider geodesics with interiors supported in $\mathcal R_2$. The main result of this section is that any such geodesic disconnects a small ball centered at an interior point into two components. 

Let $X$ be a topological space. A subset $C \subset X$ is called a \emph{path-connected component} if $C$ is path-connected and $C$ is maximal with respect to inclusion, i.e., if $C \subset D \subset X$ and $D$ is path-connected, then $C = D$. Since all spaces considered in this paper are locally path-connected, any path-connected component is clopen and coincides with a connected components of the space. We will simply refer to a path-connected component as a \emph{component} for brevity. 

We wish to show that geodesics disconnect small balls centered at its interior into two components satisfying some additional properties. This motivates the following definition. To simplify notation, we will sometimes identify a curve $\gamma:[a,b]\to X$ with its support $\gamma([a,b])\subset X$. For instance, we write $B_r(\gamma(t)) \setminus \gamma$ to mean that $B_r(\gamma(t)) \setminus \gamma([a,b])$.

\begin{Def}[Good components]\label{Def: good comp}
Let $(X, \dd_X)$ be a geodesic space and let $\gamma:[a,b] \to X$ be a geodesic. For any $t \in [a,b]$ and $r > 0$, we say that $B_r(\gamma(t))$ has \emph{two good components with respect to $\gamma$} if there exist two disjoint components $A_\pm$ of $B_r(\gamma(t)) \setminus \gamma$ such that the following hold:
\begin{enumerate}
    \item $A_\pm$ are clopen in $B_r(\gamma(t)) \setminus \gamma$ and $B_r(\gamma(t)) \setminus \gamma = A_+ \sqcup A_-$. 
    \item The topological boundaries of $A_\pm$ in $B_r(\gamma(t))$ coincide with $\gamma \cap B_r(\gamma(t)).$
    \item Every continuous curve contained in $B_{10r}(\gamma(t))$ connecting a point $x \in A_+$ to a point $y \in A_-$ intersects $\gamma$.
\end{enumerate}
In this case, the two components $A_\pm$ are called \emph{good components} of $B_r(\gamma(t)) \setminus \gamma$.
\end{Def} 

%The following example shows that $B_r(\gamma(t))$ can have two good components with respect to $\gamma$, even when $\gamma(t)$ is an endpoint and $r$ is larger than the length of $\gamma$.
%\begin{Exa}
%Let $X = \R \times [0,1]$ and let $\gamma:[0,1] \to X$ be defined by $\gamma(t) := (0,t)$. Then $B_r(\gamma(0))$ has two good components with respect to $\gamma$ for every $r > 0$.
%\end{Exa}
In this section, we focus on proving that sufficiently small balls centered at an interior point of a geodesic $\gamma$ have two good components with respect to $\gamma$, leaving the case of balls centered at endpoints to Section~\ref{Sec: ext-int points}.

\begin{Thm}\label{Thm: good components existence}
Let $(X,\dd_X,\mm_X)$ be an $\RCD(-(N-1),N)$ space of essential dimension $2$, and let $\gamma:[-1,1]\to X$ be a unit-speed geodesic. There exist $\veps > 0$ and $0< \bar r(N) \leq 1/1000$ such that, if $0<r \le \bar r$ and
   \begin{equation}
       \dd_{GH}(\overline B_{1000 r}(\gamma(0)), \overline B_{1000r}(0^2)) < \veps\, r,
   \end{equation}
   then $B_r(\gamma(0))$ has two good components $A_\pm$ with respect to $\gamma$. Moreover, if $x$ and $y$ belong to the same set $A_\pm \cap B_{r/10}(\gamma(0))$, then every geodesic joining $x$ to $y$ is disjoint from $\gamma$.
\end{Thm}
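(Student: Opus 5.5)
The plan is to build a global two-sided partition of $B_r(\gamma(0))\setminus\gamma$ by gluing the local approximate half-balls of Lemma~\ref{Lem: approx half balls exist} across all scales and all centers along $\gamma$. Then we show the two pieces are exactly the good components.

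\textbf{Step 1: multiscale Euclidean control.} Apply Lemma~\ref{Lem: R2 persistence} at every center $\gamma(\tau)$ with $|\tau|\le 2r$. Proposition~\ref{Pro: GH distance balls center} and Lemma~\ref{Lem: realization scale change} transfer the hypothesis at scale $1000r$ from $\gamma(0)$ to $\gamma(\tau)$ at scale roughly $100r$. Choosing $\veps$ smaller than the $\delta(\eta/10^3)$ of Lemma~\ref{Lem: R2 persistence}, we get that for every such $\tau$ and every $s\le 100r$, the ball $\overline B_s(\gamma(\tau))$ is $(\eta s)$-close to $\overline B_s(0^2)$. By Lemma~\ref{Lem: regular realization exist} it then admits an $(\eta s/10^3)$-regular realization along $\gamma$.

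\textbf{Step 2: define the sides.} For $x\in B_{10r}(\gamma(0))\setminus\gamma$, let $d=\dd(x,\gamma)$ and let $\gamma(\tau_x)$ be a nearest point. Declare the side of $x$ using the half-balls $H_{\eta s,\pm}$ attached to $\overline B_s(\gamma(\tau))$ for any admissible pair $(\tau,s)$ with $x\in \overline B_s(\gamma(\tau))\setminus B_{\eta s}(\gamma)$, for example $s\approx 2d$.

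The main obstacle is making these labels coherent. Two facts are needed.
\begin{enumerate}
\item Lemma~\ref{Lem: approx half ball consistent} gives consistency between nested scales $s'<s$ at nearby centers. It is applied with the realization at the larger ball restricted, after translating the center.
\item For two centers $\tau,\tau'$ at the same scale whose balls overlap substantially, both sets of half-balls are compared inside a common ball of twice the radius via Lemma~\ref{Lem: approx half balls unique}.
\end{enumerate}
Starting from the top scale $\overline B_{100r}(\gamma(0))$, where a global $\pm$ is fixed, we propagate the choice of sign downward through dyadic scales and along chains of centers. The inclusion identities of Lemma~\ref{Lem: approx half ball consistent} ensure the sign is never flipped, since the top-level labeling of any point with $d\gtrsim r$ agrees with all refinements. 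This yields $B_{10r}(\gamma(0))\setminus\gamma=S_+\sqcup S_-$, with each $S_\pm$ open because the defining conditions are open. We then set $A_\pm:=S_\pm\cap B_r(\gamma(0))$.

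\textbf{Step 3: verify Definition~\ref{Def: good comp}.} Property (3) follows because a curve in $B_{10r}$ avoiding $\gamma$ stays in $S_+\sqcup S_-$, and these are disjoint open sets. Hence the curve cannot cross from one to the other. Clopenness in (1) follows the same way.

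For path-connectedness of $A_+$, we join any $x\in A_+$ to a fixed far point. Take a geodesic in the approximate Euclidean picture; at each scale $s$ the set $H_{\eta s,+}$ is close to an upper half-disk. We chain short geodesics between points at distance $\ll \dd(\cdot,\gamma)$, which therefore stay off $\gamma$ and in $S_+$. Climbing dyadically away from $\gamma$ until distance $\sim r/2$, we then move inside the top-scale half-ball.

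For (2), every point of $\gamma\cap B_r$ is a limit of points of both $A_+$ and $A_-$, by taking small-scale half-ball representatives at distance $\sim s\to0$. Conversely, the boundary of $A_\pm$ lies in $\gamma$ by clopenness.

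\textbf{Step 4: the geodesic claim.} Take $x,y\in A_+\cap B_{r/10}$ and a geodesic $\sigma$ between them, and suppose $\sigma$ meets $\gamma$ at $\gamma(\tau)$. Since both endpoints are in $A_+$, $\sigma$ must touch $\gamma$ without crossing it, or cross it an even number of times. Near the touching point, apply the splitting in the tangent cone $\R^2$ at $\gamma(\tau)$, where the picture is a half-line and a line. Alternatively, use non-branching (Theorem~\ref{Thm: non-branching}): if $\sigma$ met $\gamma$ at two points, $\sigma$ would contain a subsegment of $\gamma$. In the tangent plane, a limit of the rescaled geodesic $\sigma$ through $\gamma(\tau)$ is a line through $0^2$. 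Staying weakly on one side of the line $\gamma$ forces it to coincide with $\gamma$, so $\sigma$ and $\gamma$ would share directions, and non-branching then forces $\sigma$ to run along $\gamma$. That is impossible since the endpoints are off $\gamma$.

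I expect the coherence in Step 2 and this tangency exclusion in Step 4 to be the delicate points. For Step 4 I would first try the contradiction argument by blow-up at the first intersection point.
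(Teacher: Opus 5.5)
Steps 1--3 essentially reproduce the paper's construction. The multiscale labelling of approximate half-balls, with its cross-center consistency check, is Lemma~\ref{Lem: comp exist metric} and its claims. Building the partition on all of $B_{10r}(\gamma(0))\setminus\gamma$ is a legitimate shortcut that gives property (3) directly.

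Your chaining argument for path-connectedness is a different route from the paper's, which uses the no-glance statement instead. It works, but as written the climbing can leave $B_r(\gamma(0))$. This happens when $x$ is close to $\partial B_r(\gamma(0))$ and near $\gamma(\pm r)$, since the GH error $\eta s$ may exceed $r-\dd_X(x,\gamma(0))$. The fix is to first run along a geodesic from $x$ to $\gamma(0)$: it stays in $B_r(\gamma(0))$ and, by non-branching, meets $\gamma$ only at $\gamma(0)$.

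\textbf{The genuine gap is Step 4.} Your reduction is fine: non-branching leaves only the case of a single contact point $\sigma(t_0)=\gamma(\tau)$ with $\sigma\setminus\{\gamma(\tau)\}\subset A_+$. But the blow-up does not rule this out.

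\begin{itemize}
\item The rescalings of $\sigma$ at $\gamma(\tau)$ converging to the same line as those of $\gamma$ is exactly what a tangential contact looks like, so it contradicts nothing.
\item Theorem~\ref{Thm: non-branching} needs two common points, one of them interior. An $\RCD$ space has no ODE-type uniqueness saying that geodesics with the same tangent direction at a point coincide.
\item A curve that approaches $\gamma$ tangentially and leaves on the same side stays within $o(s)$ of $\gamma$ at every scale $s$. No tangent cone or GH approximation can detect it.
\end{itemize}

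What is missing is some input about how nearby geodesics spread out. The paper (Lemma~\ref{Lem: geod no glance}) takes it from the proof of \cite[Theorem 5.10]{D25}. Let $c$ be the speed of $\sigma$, and let $z$ range over a small ball $B_s(\sigma(t_0-\delta))\subset A_+$. The points at distance $c\delta$ from $z$ along geodesics from $z$ to $y$ form an $\veps(\delta)s$-dense subset of $B_s(\sigma(t_0))$, with $\veps(\delta)\to0$.

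Every geodesic from such a $z$ to $y$ avoids $A_-$, since entering $A_-$ and returning to $y\in A_+$ would force two intersections with $\gamma$. So these points miss the approximate lower half-ball of $\overline B_s(\gamma(\tau))$. That half-ball is contained in $A_-$ and occupies a definite portion of $B_s(\sigma(t_0))$, which contradicts the density.

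Without an ingredient of this kind, the ``Moreover'' part of the theorem is not proved.
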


We will prove that $B_r(\gamma(0)) \setminus \gamma$ decomposes as the disjoint union of two clopen subsets by an explicit construction in Subsection~\ref{Subsec: existence of comps}. We then verify that these two clopen subsets satisfy the additional properties in the definition of good components in Subsection~\ref{Subsec: properties of comps}. In Subsection~\ref{Subsec: geod neigh}, we will use good components to construct path-connected neighborhoods of a geodesic that is disconnected into exactly two components by the geodesic. 

We also record Theorem~\ref{Thm: good components existence} in the following non-quantitative form. 
\begin{Thm}\label{Thm: good components existence 2}
Let $(X, \dd_X, \mm_X)$ be an $\RCD(-(N-1),N)$ space of essential dimension $2$, and let $\gamma:[a,b] \to X$ be a regular geodesic. Then for any $a < c \leq d < b$, there exists some $\bar r$ such that for every $0 < r \leq \bar r$ and $t \in [c,d]$, the ball $B_r(\gamma(t))$ has two good components $A_\pm$ with respect to $\gamma$. Moreover, if $x$ and $y$ belong to the same set $A_\pm \cap B_{r/10}(\gamma(t))$, then every geodesic joining $x$ to $y$ is disjoint from $\gamma$.
\end{Thm}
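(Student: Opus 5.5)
We deduce Theorem~\ref{Thm: good components existence 2} from the quantitative Theorem~\ref{Thm: good components existence}, applied at each point $\gamma(t)$, $t\in[c,d]$. The only work is producing a scale $\bar r$ at which the hypothesis of Theorem~\ref{Thm: good components existence} holds uniformly in $t\in[c,d]$ and at every smaller scale.

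\emph{Step 1: normalization.} Choose $\eta>0$ with $[c-2\eta,d+2\eta]\subset(a,b)$ and $\eta$ small compared to the curvature scale. For each $t\in[c,d]$, restrict $\gamma$ to $[t-\eta,t+\eta]$ and rescale distances by $\eta^{-1}$. This gives a unit-speed geodesic $\gamma_t:[-1,1]\to X_\eta$ with $\gamma_t(0)=\gamma(t)$. The rescaled space $X_\eta$ is $\RCD(-(N-1)\eta^2,N)\subset\RCD(-(N-1),N)$ and still has essential dimension $2$. The good-components property is scale invariant: condition (3) refers to $B_{10r}$, which rescales consistently. So it suffices to find $\bar r$ such that, for all $t\in[c,d]$ and all $0<r\le\bar r$ in rescaled units,
\[
\dd_{GH}\bigl(\overline B_{1000r}(\gamma(t)),\overline B_{1000r}(0^2)\bigr)<\veps r,
\]
with $\veps$ the constant of Theorem~\ref{Thm: good components existence}, and with $\bar r$ also at most the $\bar r(N)$ given there.

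\emph{Step 2: uniform Euclidean closeness via persistence.} Let $\delta=\delta(\veps/1000)$ be given by Lemma~\ref{Lem: R2 persistence}; applying it with $\veps/1000$ absorbs the factor $1000$ in the scale. Since $\gamma$ is regular, every point $\gamma(t)$ with $t\in[c,d]$ lies in $\mathcal R_2$. Hence for each such $t$ there is a radius $\rho_t\le\bar r(N,\veps/1000)$ with
\[
\dd_{GH}\bigl(\overline B_{\rho_t}(\gamma(t)),\overline B_{\rho_t}(0^2)\bigr)<\tfrac{\delta}{2}\rho_t .
\]
By Proposition~\ref{Pro: GH distance balls center}, the same estimate with $\delta$ in place of $\delta/2$ holds at every $\gamma(t')$ with $|t'-t|<\delta\rho_t/2$. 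Compactness of $[c,d]$ gives finitely many such intervals covering it; let $\rho$ be the minimum of the corresponding $\rho_t$. Each $t'\in[c,d]$ then has a radius $\rho_{t'}\ge\rho$, namely the $\rho_t$ of an interval containing it, at which the hypothesis of Lemma~\ref{Lem: R2 persistence} holds. That lemma, whose conclusion holds at every scale below the given one, yields
\[
\dd_{GH}\bigl(\overline B_s(\gamma(t')),\overline B_s(0^2)\bigr)<\tfrac{\veps}{1000}\,s \quad\text{for all } 0<s\le\rho .
\]
Set $\bar r:=\min\{\rho/1000,\bar r(N)\}$. For $r\le\bar r$, taking $s=1000r$ gives the required bound $\veps r$. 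Theorem~\ref{Thm: good components existence}, applied to $\gamma_{t'}$, then gives the two good components and the geodesic-disjointness statement.

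\emph{Main obstacle.} The crux is Step 2: converting the pointwise, non-uniform tangent information at each $\gamma(t)$ into one scale valid for all $t\in[c,d]$ and all smaller radii. Pointwise regularity alone gives no uniformity, and persistence is what makes it work. Lemma~\ref{Lem: R2 persistence} is applicable because every $\gamma(t')$ is an interior point of the geodesic $\gamma_{t'}$, and it propagates Euclidean closeness downward in scale. The compactness step needs the openness in $t$ supplied by Proposition~\ref{Pro: GH distance balls center}, together with the gap between $\delta/2$ and $\delta$, which absorbs the error from moving the centre. The remaining bookkeeping is checking that $t'\in[c,d]$ stays inside the allowed range $[-9/10,9/10]$ after the rescaling in Step 1.
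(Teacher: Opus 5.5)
Your proposal is correct and follows essentially the same route as the paper. Both arguments obtain Euclidean closeness uniformly along $\gamma([c,d])$ via compactness and Proposition~\ref{Pro: GH distance balls center}, and then apply Theorem~\ref{Thm: good components existence} after rescaling. Your explicit use of Lemma~\ref{Lem: R2 persistence} to pass from one uniform scale to all smaller scales makes precise the step the paper compresses into ``a compactness argument''; compactness and the centre-moving estimate alone only control a fixed scale.
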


\begin{proof}
Since $\gamma$ is regular, a compactness argument (using Proposition~\ref{Pro: GH distance balls center}) gives that for every $\veps > 0$, there exists $\tilde r  >0$ such that for every $0 < r \leq  \tilde r$ and $t \in [c,d]$,
\[
\dd_{GH}(\overline B_r(\gamma(t)), \overline B_r(0^2)) < \veps r.
\]
The corollary now follows by Theorem~\ref{Thm: good components existence} and a straightforward rescaling argument.
\end{proof}

\subsection{Existence of components}\label{Subsec: existence of comps}
The following is the main result of this subsection.

\begin{Pro}\label{Pro: comp exist}
Let $(X,\dd_X,\mm_X)$ be an $\RCD(-(N-1),N)$ space of essential dimension $2$, and let $\gamma:[-1,1]\to X$ be a unit-speed geodesic. Then there exist $\veps > 0$ and $0< \bar r(N) \leq 1/1000$ such that, if $0<r \le \bar r$ and 
\[\dd_{GH}(\overline B_{100r}(\gamma(0)), \overline B_{100r}(0^2)) < \veps r,\]
then there exist two disjoint non-empty clopen subsets $A_\pm$ of $B_r(\gamma(0)) \setminus \gamma$ such that $B_r(\gamma(0)) \setminus \gamma = A_+ \sqcup A_-$. Moreover, the topological boundaries of $A_\pm$ in $B_r(\gamma(0))$ coincide with $\gamma((-r,r)) = \gamma \cap B_r(\gamma(0))$.
\end{Pro}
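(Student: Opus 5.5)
Fix $\veps$ small and $\bar r$ as in Lemma~\ref{Lem: R2 persistence}, so that the hypothesis at scale $100r$ propagates: for every $t\in(-r,r)$ and every $0<s\le 50 r$ the ball $\overline B_s(\gamma(t))$ is $\veps' s$-close to a Euclidean ball. To get this at the shifted centers $\gamma(t)$, first use Proposition~\ref{Pro: GH distance balls center} and Lemma~\ref{Lem: realization scale change} to pass closeness from $\gamma(0)$ to $\gamma(t)$ at a fixed scale comparable to $r$. Then apply Lemma~\ref{Lem: R2 persistence} along $\gamma$ to reach all smaller scales. Lemma~\ref{Lem: regular realization exist} then provides an $(\veps'' s/10^3)$-regular realization of $(\overline B_s(\gamma(t)),\gamma)$ for all such $t,s$. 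The plan is to define $A_\pm$ locally through the approximate half-balls $H_{\pm}$ and glue across centers and scales.

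\emph{Step 1 (local sides).} For $x\in B_r(\gamma(0))\setminus\gamma$ set $d(x):=\dd_X(x,\gamma)>0$. Let $t_x$ be a parameter with $\dd_X(x,\gamma(t_x))=d(x)$; since $x\in B_r(\gamma(0))$, we have $|t_x|<2r$. Choose the scale $s_x := c\,d(x)$ with $c$ a fixed constant (say $c=2$), so that $x\in \overline B_{s_x}(\gamma(t_x))\setminus B_{\veps s_x}(\gamma)$. By Lemma~\ref{Lem: approx half balls exist}, $x$ lies in exactly one of $H_\pm$ for a chosen realization at $(\gamma(t_x),s_x)$.

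\emph{Step 2 (coherent orientation).} Fix the labelling $\pm$ once, at the top scale $\overline B_{50r}(\gamma(0))$. Propagate it to every pair $(t,s)$ with $|t|\le 2r$, $s\le 50 r$ using Lemma~\ref{Lem: approx half ball consistent}, which compares a ball with a sub-ball at scale ratio $r$. The labels are the half-balls at $(t,s)$ that intersect the top-scale $H_\pm$ (taken with the small parameter $\veps s/10$) correctly. This is only possible if the top-scale realization is accurate to order $\veps s$ for all small $s$, which fails.

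So instead I would chain. Order pairs $(t,s)\to(t',s')$ with $s'\in[s/2,s]$ and $|t-t'|\le s/10$. At each link, apply Lemma~\ref{Lem: approx half ball consistent} to the bigger ball and the sub-ball, using Lemma~\ref{Lem: approx half balls unique} to make the result independent of the realization. A dyadic path from the top scale to $(t,s)$ then determines the label. Path-independence follows because any two chains can be connected through a common ball at a larger scale, and the consistency lemma forces agreement of labels on overlaps. This requires the overlaps to be nonempty, which follows from $H_\pm$ being nonempty and from the explicit Euclidean geometry.

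Define $A_+:=\{x: x\in H_+(\gamma(t_x),s_x)\}$, and $A_-$ similarly. The definition must not depend on the choice of $t_x$ (several closest points) or on $c$. Both choices give balls linked by one step of the chain, so the labels agree.

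\emph{Step 3 (clopen, nonempty, boundary).} Let $x\in A_+$ and $x'$ close to $x$. Then $x'$ lies in $\overline B_{s_x}(\gamma(t_x))\setminus B_{\veps s_x}(\gamma)$, and $t_{x'}$, $s_{x'}$ are comparable to $t_x$, $s_x$. Consistency then gives $x'\in A_+$, so $A_+$ is open; similarly $A_-$ is open. Since the two sets partition $B_r(\gamma(0))\setminus\gamma$, both are clopen. Each is nonempty because $H_\pm$ at the top scale are nonempty, by Lemma~\ref{Lem: approx half balls exist}.

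For the boundary claim, take $q=\gamma(t)$ with $|t|<r$ and any $s$ small. Lemma~\ref{Lem: approx half balls exist} gives points of both $H_\pm(\gamma(t),s)$ inside $B_s(q)$. These carry their coherent labels, so they lie in $A_+$ and in $A_-$ respectively. Hence $\gamma\cap B_r(\gamma(0))\subset\partial A_\pm$. The reverse inclusion is immediate from clopenness in the complement of $\gamma$.

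\emph{Main obstacle.} The main obstacle is Step 2: making the orientation globally well defined across all centers and scales, that is, ruling out a flip of sides along a chain. Lemma~\ref{Lem: R2 persistence} controls every scale at each center, and Lemma~\ref{Lem: approx half ball consistent} controls each individual link. The remaining difficulty is the bookkeeping of constants ($\veps r/10^3$ versus $\veps r/100$) so that each link stays within the lemma's hypotheses uniformly.
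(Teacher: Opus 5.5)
Your plan is essentially the paper's proof. The paper first upgrades the hypothesis, via Lemmas~\ref{Lem: regular realization exist}, \ref{Lem: realization scale change} and \ref{Lem: R2 persistence}, to regular realizations of $\overline B_s(\gamma(t))$ for all small $s$ and all $|t|\le 2r$. Lemma~\ref{Lem: comp exist metric} then labels the approximate half-balls $H^k_\pm(t)$ at scales $10^{-k}$, using a canonical chain in $k$ at each fixed center $t$ that starts from a top-scale realization; your ``path-independence'' is exactly Claims~\ref{Cla: comp exist metric Cla 1}--\ref{Cla: comp exist metric Cla 2}, proved by induction on $k$ by comparing two centers inside a common ball one scale up.

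One point to tidy is nonemptiness. The nonempty points of the top-scale $H_\pm$ produced by Lemma~\ref{Lem: approx half balls exist} lie near distance $50r$, outside $B_r(\gamma(0))$, so they do not show $A_\pm\neq\emptyset$. Nonemptiness instead follows from your small-scale boundary argument, as in the paper.
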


The proposition will be proved by an explicit construction of the two clopen subsets. The idea of the construction is as follows. 

Lemma~\ref{Lem: R2 persistence} allows us to use the regularity of $\overline B_{10r}(\gamma(0))$ to obtain regularity for $\overline B_{s}(\gamma(t))$, for any $s \leq r$ and any $t \in [-r,r]$. Using the approximate half-ball construction in Subsection~\ref{Subsec: metric space} and the regularity of $B_{r}(\gamma(0))$, we can decompose $B_{r}(\gamma(0))$ into two disjoint subsets $H_\pm$ away from the $(r/100)$-neighborhood of $\gamma$. Similarly, using the regularity of $\overline B_{r/10}(\gamma(t))$ for each $t \in [-r,r]$, we can decompose $B_{r/10}(\gamma(t))$ into two disjoint subsets $H_\pm(t)$ away from the $(r/1000)$-tubular neighborhood of $\gamma$. 

The sets $H_\pm$ and $H_\pm(t)$ are determined up to exchanging $\pm$. Fix $H_\pm$. This can now be used to determine a labeling $H_\pm(t)$ for each $t \in [-r,r]$. More precisely, if $B_r(\gamma(0))$ is sufficiently regular, then it in fact determines approximate half-balls for $B_{r}(\gamma(0))$ away from a much smaller (for instance, $r/10^4$) tubular neighborhood of $\gamma$. Using this, it can be checked that for each $t \in [-r,r]$, exactly one of the sets $H_\pm(t)$ intersects $H_+$, and the other intersects $H_-$. Therefore, we can fix $H_\pm(t)$ so that $H_\pm(t) \subset H_\pm$. It is also possible to check that our choices for $H_\pm(t)$ is consistent, in the sense that if $x$ is contained in one of $H_\pm(t_1)$ and one of $H_\pm(t_2)$ for $t_1 \neq t_2$, then $x$ is contained in $H_+(t_1) \cap H_+(t_2)$ or $H_-(t_1) \cap H_-(t_2)$. 

We can now divide $B_r(\gamma(0))$ into two disjoint subsets away from the $(r/1000)$-neighborhood of $\gamma$, one subset being the points contained in $H_+$ or $H_+(t)$ for some $t \in [-r,r]$, and the other being the points contained in $H_-$ or $H_-(t)$ for some $t \in [-r,r]$. By repeating this procedure inductively, checking consistency at every step, we can divide $B_r(\gamma(0)) \setminus \gamma$ into the disjoint union of two clopen subsets. The key point is that the check for consistency at each scale only uses the regularity of balls at a slightly larger scale. 

We proceed with our plan. The actual construction differs slightly from the outline, but the underlying idea remains the same. We first give our construction assuming a purely metric condition, which we will later check is satisfied under the assumptions of Proposition~\ref{Pro: comp exist}. 
\begin{Lem}\label{Lem: comp exist metric}
Let $(X, \dd_X)$ be a locally compact geodesic space, and let $\gamma:[-10,10] \to X$ be a unit-speed geodesic. There exists $\veps > 0$ such that if 
\[\dd_{GH}(\overline B_{10}(\gamma(0)), \overline B_{10}(0^2)) < 10\veps,\]
and for every $t \in [-2,2]$ and $0 < r \leq 1$,
\[\dd_{GH}(\overline B_{r}(\gamma(t)), \overline B_{r}(0^2)) < \veps r,\]
then there exist two disjoint non-empty clopen subsets $A_\pm$ of $B_1(\gamma(0)) \setminus \gamma$ such that $B_1(\gamma(0)) \setminus \gamma = A_+ \sqcup A_-$. Moreover, the topological boundaries of $A_\pm$ in $B_1(\gamma(0))$ coincide with $\gamma((-1,1))$.
\end{Lem}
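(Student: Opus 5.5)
We build the two sets multiscale, following the outline given before the statement. Fix dyadic scales $r_k := 10^{-k}$, $k\ge 0$. For every $k$ and every $t\in[-1,1]$, the hypothesis at scale $r_k$ together with Lemma~\ref{Lem: regular realization exist} (with $\veps$ small in terms of the constant $\veps'$ used below) gives an $(\veps' r_k/10^3)$-regular realization of $(\overline B_{r_k}(\gamma(t)),\gamma)$. We fix one such realization for each $t$ in a finite $r_k/100$-net $T_k\subset[-2,2]$. Through the construction of Subsection~\ref{Subsec: metric space}, each realization yields approximate half-balls
\[
H^{k,t}_\pm := H^{Z_{k,t}}_{\veps' r_k,\pm}\bigl(\gamma([t-r_k,t+r_k])\bigr).
\]
By Lemma~\ref{Lem: approx half balls exist}, these partition $\overline B_{r_k}(\gamma(t))\setminus B_{\veps' r_k}(\gamma)$ into two nonempty closed pieces.

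\textbf{Step 1 (labeling).} At $k=0$ we use the ball $\overline B_{10}(\gamma(0))$ and label its half-balls $H_\pm$ arbitrarily. Inductively, we label $H^{k,t}_\pm$ so that it is contained in the correspondingly signed set at the previous scale. This is possible by Lemma~\ref{Lem: approx half ball consistent}, applied with the realization at scale $r_{k-1}$ centered at a net point $t'$ with $|t-t'|\le r_{k-1}/100$. Here we use Lemma~\ref{Lem: realization scale change} and Proposition~\ref{Pro: GH distance balls center} to recenter, so that $\overline B_{r_k}(\gamma(t))\subset \overline B_{r_{k-1}}(\gamma(t'))$. Lemma~\ref{Lem: approx half ball consistent} is stated with a factor $10$ in scale and a shrinking of the tube by $1/10$, which fits exactly with the ratio $r_k/r_{k-1}=1/10$ provided $\veps'$ is fixed once and for all.

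\textbf{Step 2 (consistency).} This is the main obstacle. We must show that the labels do not depend on the chain of choices. If $x\in H^{k,t}_+\cap H^{j,s}_\epsilon$, then $\epsilon=+$. For the same scale and nearby centers $|t-s|\lesssim r_k$, both sets are compatible, via Lemma~\ref{Lem: approx half ball consistent}, with a single realization at scale $r_{k-1}$ whose ball contains both. The labels were inherited from that coarser ball (or from coarser balls that are themselves consistent by induction). Hence the two labels agree, because the half-balls of one realization are disjoint. Different scales reduce to this case by chaining through intermediate scales at the same center. The induction on $k$ works because overlapping balls at scale $r_k$ always sit inside a common ball at scale $r_{k-1}$, with center in $[-2,2]$; this is where the hypothesis on $[-2,2]$ rather than $[-1,1]$ is used.

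\textbf{Step 3 (definition and properties).} Define
\[
A_\pm := B_1(\gamma(0))\setminus\gamma \;\cap\; \bigcup_{k,t} H^{k,t}_\pm .
\]
Every $x\notin\gamma$ with $\dd(x,\gamma(0))<1$ lies at positive distance $\ge\veps' r_k$ from $\gamma$ for some $k$. It also lies within $r_k$ of some net point, via a near-closest point $\gamma(t)$ with $|t|\le 2$. Hence the union covers $B_1(\gamma(0))\setminus\gamma$, and disjointness follows from Step 2.

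Openness: a point in $H^{k,t}_+$ at distance $>2\veps' r_k$ from $\gamma$ has a neighborhood contained in $H^{k,t}_+\cup H^{k+1,\cdot}_+$, since the half-balls of a realization are relatively open away from the tube (the proof of Lemma~\ref{Lem: approx half balls exist}). Therefore $A_+$ and $A_-$ are complementary open sets, and hence clopen.

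Nonemptiness comes from Lemma~\ref{Lem: approx half balls exist} at scale $r_0$.

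Boundary: each $\gamma(t)$ with $|t|<1$ has, at every scale $r_k$, points of both $H^{k,t}_\pm$ within $2r_k$. These are images of $(t,\pm r_k/2)$ under the realization. So $\gamma((-1,1))$ lies in both boundaries, while points off $\gamma$ are interior to $A_+$ or $A_-$.
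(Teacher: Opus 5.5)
Your overall architecture is the paper's: approximate half-balls at scales $10^{-k}$, labels propagated down from the half-balls of $\overline B_{10}(\gamma(0))$, consistency proved by induction on the scale using a realization at the next coarser scale, and $A_\pm$ defined as the unions. Using finite nets and inheriting labels from a neighbouring coarse center, instead of all $t\in[-2,2]$ and the same center, is harmless. However, Step~1 contains a concrete error, and Step~2 relies on it.

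\textbf{Step~1: the tube widths do not match.} Lemma~\ref{Lem: approx half ball consistent} does not relate your fine half-balls to the coarse ones built with tube $\veps' r_{k-1}$. Normalize the coarse ball to radius $1$, so the fine ball has radius $1/10$ and tube $\veps'/10$. The lemma then identifies the fine half-balls with restrictions of the coarse half-balls with tube $\veps' r_k/10=\veps' r_{k-1}/100$. That is a hundred times thinner than the tube defining $H^{k-1,t'}_\pm$, not ``exactly'' matching. (It also needs a coarse realization of precision $\veps' r_k/10^3$, finer than the one you fixed.)

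Your claim that $H^{k,t}_\pm$ is contained in the correspondingly signed set at the previous scale is in fact false. $H^{k,t}_\pm$ contains points, e.g.\ ones approximating $(0,\pm 2\veps' r_k)$, at distance between $\veps' r_k$ and $\veps' r_{k-1}$ from $\gamma$. Such points lie in neither $H^{k-1,t'}_+$ nor $H^{k-1,t'}_-$. So the sentence in Step~2, ``the labels agree because the half-balls of one realization are disjoint'', has no disjoint pair to refer to.

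\textbf{The missing device.} At each scale you also need the paper's finer decomposition $\tilde H^{k}_\pm(t)$ of $\overline B_{10^{-k}}(\gamma(t))$, with tube $10^{-(k+4)}$, coming from a realization of precision $10^{-(k+6)}$. It contains $H^k_\pm(t)$ and, via Lemma~\ref{Lem: approx half ball consistent}, the two scale-$(k+1)$ sets, so labels are transported through $\tilde H$. In the consistency induction you must then transfer the inductive hypothesis, which concerns the thick-tube sets, to the thin-tube decompositions of the coarse realizations. The paper does this by exhibiting a point of both fine balls lying outside the thick tube, namely a point approximating $((t_2-t_1)/2,\,2\cdot 10^{-(k+2)})$. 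Your phrase ``coarser balls that are themselves consistent by induction'' needs exactly this step spelled out. Cross-scale consistency then follows by chaining through nearby net centers; ``the same center'' is not available with scale-dependent nets. This works because a point lying in annuli at scales $j<k$ lies in an annulus at every intermediate scale.

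\textbf{Openness.} Your openness argument also needs repair. The obstruction is not points close to $\gamma$. It is points of $H^{k,t}_+$ on or near the outer sphere $\{\dd(\cdot,\gamma(t))=r_k\}$: no neighbourhood of such a point lies in $H^{k,t}_+$, and nearby points outside that ball might a priori belong only to sets labelled $-$. The fix, as in Claim~\ref{Cla: comp exist metric Cla 3}(3), is as follows. For each $x\in B_1(\gamma(0))\setminus\gamma$, take a nearest point $\gamma(t_0)$ and a scale with strict inequalities $\veps' r_k<\dd(x,\gamma)<0.9\,r_k$. With a net point $t$ within $r_k/100$ of $t_0$, a whole ball around $x$ then lies in the relative interior of the annulus at $(k,t)$. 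There relative openness of the half-balls applies, and consistency yields openness (equivalently, closedness) of $A_\pm$.
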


\begin{proof}
By Lemma~\ref{Lem: regular realization exist}, we may choose $\veps > 0$ so that if the assumptions of the lemma hold, then there exists an $(10^{-99})$-regular realization (see Definition~\ref{Def: regular realization}) of $(\overline B_{10}(\gamma(0)), \gamma)$, and, for each $t \in [-1,1]$ and $0 < r \leq 1$, an $(10^{-100}r)$-regular realization of $(\overline B_{r}(\gamma(t)), \gamma)$

Fix a $(10^{-5})$-regular realization $(Z, \iota_1, \iota_2)$ of $(\overline B_{10}(\gamma(0)), \gamma)$. Applying Lemma~\ref{Lem: approx half balls exist} to $Z$, the set 
\[\overline B_{10}(\gamma(0)) \setminus B_{10^{-3}}(\gamma([-10,10]))\] 
decomposes as the disjoint union of two clopen subsets $H^{Z}_{10^{-3}, \pm}(\gamma([-10,10]))$. Set
\begin{equation}\label{Eq: comp exist metric eq 0}
H_{\pm} := H^{Z}_{10^{-3}, \pm}(\gamma([-10,10])).
\end{equation}

Let $t \in [-2,2]$. From our assumption, for each $k \in \N$ there exists a $(10^{-(k+4)})$-regular realizations of $(\overline B_{10^{-k}}(\gamma(t)), \gamma)$. Applying Lemma~\ref{Lem: approx half balls exist} to any such regular realization, we obtain a decomposition of \[\overline B_{10^{-k}}(\gamma(t)) \setminus B_{10^{-(k+2)}}(\gamma([t-10^{-k}, t+10^{-k}]))\] into the disjoint union of two clopen subsets. 

By Lemma~\ref{Lem: approx half balls unique}, these subsets are independent of the $(10^{-(k+4)})$-regular realization used. We will choose a labeling $H^k_{\pm}(t)$ for these two subsets inductively on $k$ as follows. 

For the base case $k=0$, Lemma~\ref{Lem: approx half ball consistent} implies that the two subsets in the decomposition of 
\[\overline B_{1}(\gamma(t)) \setminus B_{10^{-2}}(\gamma([t-1, t+1]))\]
coincide with
\[H_\pm \cap \bigr(\overline B_{1}(\gamma(t)) \setminus B_{10^{-2}}(\gamma([t-1, t+1]))\bigr).\]
We denote these by $H^0_\pm(t)$, respectively. 

For the induction step, assume that $H^k_\pm(t)$ have been fixed for some $k \in \N$. By Lemma~\ref{Lem: approx half balls exist}, since there exists a $(10^{-(k+6)})$-regular realization of $(\overline B_{10^{-k}}(\gamma(t)), \gamma)$, we can decompose
\[\overline B_{10^{-k}}(\gamma(t)) \setminus B_{10^{-(k+4)}}(\gamma([t-10^{-k}, t+10^{-k}]))\]
into the union of two disjoint clopen subsets. By Lemma~\ref{Lem: approx half ball consistent}, one of these subsets contains $H_+^k(t)$ and the other contains $H_-^k(t)$. We denote them by $\tilde H^{k}_+(t)$ and $\tilde H^{k}_-(t)$, respectively.

Applying Lemma~\ref{Lem: approx half ball consistent} again, we have that one of the sets in the decomposition of \[\overline B_{10^{-(k+1)}}(\gamma(t)) \setminus B_{10^{-(k+3)}}(\gamma([t-10^{-(k+1)}, t+10^{-(k+1)}]))\] is contained in $\tilde H_+^k(t)$ and other in $\tilde H_-^k(t)$. We set $H^{k+1}_+(t)$ to be the former and $H^{k+1}_-(t)$ to be the latter. This finishes the inductive labeling procedure.

\begin{Cla}\label{Cla: comp exist metric Cla 1}
For any $k_1, k_2 \in \N$ and $t_1, t_2 \in [-2,2]$, if $|k_2 - k_1| \geq 3$, then the sets $H^{k_1}_+(t_1)$ and $H^{k_2}_-(t_2)$ are disjoint. 
\end{Cla}
\begin{proof}
By symmetry, we may assume that $k_2 \geq k_1+3$ and $t_2 \geq t_1$. It suffices to show that
\begin{equation}\label{Eq: comp exist metric eq1}
\overline B_{10^{-k_1}}(\gamma(t_1)) \setminus B_{10^{-(k_1+2)}}(\gamma([t_1-10^{-k_1}, t_1+10^{-k_1}])) \quad \text{and} \quad \overline B_{10^{-k_2}}(\gamma(t_2))
\end{equation}
are disjoint, since $H^{k_1}_+(t_1)$ is contained in the former and $H^{k_2}_-(t_2)$ is contained in the latter. 

There are two subcases to consider:
\begin{enumerate}
    \item If $t_2 \leq t_1 + 10^{-k_1}$: Then any $x \in \overline B_{10^{-k_2}}(\gamma(t_2))$ is contained in the $10^{-k_2}$-neighborhood of $\gamma([t_1-10^{-k_1}, t_1+10^{-k_1}])$, since $t_2 \in [t_1-10^{-k_1}, t_1+10^{-k_1}]$. In particular, if $k_2 > k_1+2$, then any such $x$ cannot belong in the first set of \eqref{Eq: comp exist metric eq1}. 
    \item If $t_2 > t_1 + 10^{-k_1}$: Let $x \in \overline B_{10^{-k_2}}(\gamma(t_2))$. If $x \notin \overline B_{10^{-k_1}}(\gamma(t_1))$ then there is nothing to check. Thus assume that $x \in \overline B_{10^{-k_1}}(\gamma(t_1))$. By the triangle inequality,
    \[t_2 - t_1 = \dd_X(\gamma(t_2), \gamma(t_1)) \leq \dd_X(\gamma(t_2), x) + \dd_X(x, \gamma(t_1)) < 10^{-k_2} + 10^{-k_1}.\] 
    Therefore, $t_2 \in (t_1 + 10^{-k_1}, t_1+10^{-k_1}+10^{-k_2}]$. By the traingle inequality,
    \begin{align*}
    \dd_X(x, \gamma([t_1-10^{-k_1}, t_1+10^{-k_1}])) & \leq \dd_X(x, \gamma(t_1+10^{-k_1}))\\
    &\leq \dd_X(x, \gamma(t_2)) + \dd_X(\gamma(t_2), \gamma(t_1+10^{-k_1}))\\
    &< 10^{-k_2} + 10^{-k_2} < 2(10)^{-k_2} < 10^{-(k_1+2)}. 
    \end{align*}
    In particular, $x$ does not belong in the first set of \eqref{Eq: comp exist metric eq1}. \qedhere
\end{enumerate}

\end{proof}

\begin{Cla}\label{Cla: comp exist metric Cla 2}
For any $k_1, k_2 \in \N$ and $t_1, t_2 \in [-2,2]$, the sets $H^{k_1}_+(t_1)$ and $H^{k_2}_-(t_2)$ are disjoint.
\end{Cla}

\begin{proof}
We prove that for any $k \in \N$, if $k_1, k_2 \le k$ and $t_1, t_2 \in [-2,2]$, then $H^{k_1}_+(t_1)$ and $H^{k_2}_-(t_2)$ are disjoint. The proof is by induction on $k$. 

For the base case $k=0$, we must have $k_1 = k_2 = 0$. By definition, $H^{0}_+(t_1) \subset H_+$ and $H^{0}_-(t_2) \subset H_-$, where $H_\pm$ are as in \eqref{Eq: comp exist metric eq 0}. Since $H_\pm$ are disjoint, it follows immediately that $H^{0}_+(t_1)$ and $H^{0}_+(t_2)$ are disjoint as well. 

For the induction step, assume that the claim holds for some $k \in \N$. We show that it also holds for $k+1$. Let $k_1,k_2\le k+1$ and $t_1,t_2\in[-1,1]$. It suffices to consider the case where at least one of $k_1, k_2 = k+1$, since otherwise the conclusion follows directly from the induction hypothesis. Without loss of generality, assume that $k_1 = k+1$. 

By Claim~\ref{Cla: comp exist metric Cla 1}, we only need to check that $H^{k_1}_+(t_1)$ and $H^{k_2}_-(t_2)$ are disjoint for $k_2 = k+1, k,$ and $k-1$. We consider only the case $k_2 = k+1$. The cases $k_2=k$ and $k_2=k-1$ follow by straightforward modifications of the same argument.

Assume for the sake of contradiction that there exists $x \in H^{k+1}_+(t_1) \cap H^{k+1}_-(t_2)$. By our assumptions, there exists a $(10^{-(k+7)})$-regular realization of $(\overline B_{10^{-(k-1)}}(\gamma(t_1)), \gamma)$. (In the case $k = 0$, we instead work with $\overline B_{10}(\gamma(0))$; the argument is similar with some minor adjustments, so we do not treat it separately.) Applying Lemma~\ref{Lem: approx half balls exist} to any such regular realization, we obtain a decomposition of
\[
\overline B_{10^{-(k-1)}}(\gamma(t_1)) \setminus B_{10^{-(k+5)}}(\gamma([t_1 - 10^{-(k-1)}, t_1 + 10^{-(k-1)}]))
\]
into the disjoint union of two clopen subsets.

By Lemma~\ref{Lem: approx half ball consistent}, exactly one of these clopen sets contains $\tilde H^k_+(t_1)$ and the other contains $\tilde H^k_-(t_1)$. We denote them by $\hat H^{k-1}_\pm(t_1)$, respectively. Recall that $\tilde H^k_\pm(t_1)$ are the clopen sets appearing in the inductive definition of $H^k_\pm(t_1)$, namely in the decomposition of
\[
\overline B_{10^{-k}}(\gamma(t_1)) \setminus B_{10^{-(k+4)}}(\gamma([t_1 - 10^{-k}, t_1 + 10^{-k}])).
\]
Moreover, as observed in the construction, 
\[H^{k+1}_+(t_1), H^{k}_+(t_1) \subset \tilde H^{k}_+(t_1) \quad \text{and} \quad H^{k+1}_-(t_1), H^{k}_-(t_1) \subset \tilde H^{k}_-(t_1),\] and therefore, 
\[H^{k+1}_+(t_1), H^{k}_+(t_1) \subset \hat H^{k-1}_+(t_1) \quad \text{and} \quad H^{k+1}_-(t_1), H^{k}_-(t_1) \subset \hat H^{k-1}_-(t_1).\] 
In particular, since $x\in H^{k+1}_+(t_1)$, we obtain $x \in \hat H^{k-1}_+(t_1)$.

On the other hand, by the triangle inequality, we have
\begin{equation}\label{Eq: comp exist metric 4}
|t_2 - t_1| = \dd_X(\gamma(t_2), \gamma(t_1)) \leq \dd_X(\gamma(t_2), x)+ \dd_X(x, \gamma(t_1)) < 2(10^{-(k+1)}).
\end{equation}
It follows that
\[
\overline B_{10^{-k}}(\gamma(t_2)) \subset \overline B_{10^{-(k-1)}}(\gamma(t_1)).
\]
Applying Lemma~\ref{Lem: approx half ball consistent} again, we conclude that exactly one of the sets 
$\tilde H^k_\pm(t_2)$ is contained in $\hat H^{k-1}_+(t_1)$ and the other in $\hat H^{k-1}_-(t_1)$. Since $x \in \hat H^{k-1}_+(t_1)$ and $x \in H^{k+1}_-(t_2) \subset \tilde H^k_-(t_2)$, the sets $\hat H^{k-1}_+(t_1)$ and $\tilde H^k_-(t_2)$ intersect. Therefore, $\tilde H^k_-(t_2) \subset \hat H^{k-1}_+(t_1)$ and $\tilde H^k_+(t_2) \subset \hat H^{k-1}_-(t_1)$. 

It suffices now to prove that the sets
\begin{equation}\label{Eq: comp exist metric 2}
\overline B_{10^{-k}}(\gamma(t_1)) \setminus B_{10^{-(k+2)}}(\gamma([t_1-10^{-k}, t_1+10^{-k}]))
\end{equation}
and
\begin{equation}\label{Eq: comp exist metric 3}
\overline B_{10^{-k}}(\gamma(t_2)) \setminus B_{10^{-(k+2)}}(\gamma([t_2-10^{-k}, t_2+10^{-k}]))
\end{equation}
have non-empty intersection to obtain a contradiction. Indeed, if this is the case then any point $x'$ in the intersection must belong to $H^k_+(t_1) \cap H^k_+(t_2)$ or $H^k_-(t_1) \cap H^k_-(t_2)$ by the induction hypothesis. However, the first intersection is empty, since $H^k_+(t_1) \subset \hat H^{k-1}_+(t_1)$ and $H^k_+(t_2) \subset \hat H^{k-1}_-(t_1)$. Similarly, the second intersection is empty. This leads to a contradiction. 

To prove that the two sets in \eqref{Eq: comp exist metric 2} and \eqref{Eq: comp exist metric 3} intersect, we use the regularity of $\overline B_{10^{-(k-1)}}(\gamma(t_1))$. Let $(Z, \iota_1, \iota_2)$ be any $(10^{-(k+7)})$-regular realization of $(\overline B_{10^{-(k-1)}}(\gamma(t_1)), \gamma)$. In this realization, $\gamma(t_1)$ is approximated by $0^2$ and $\gamma(t_2)$ is approximated by $(t_2-t_1, 0)$. Moreover, $0^2$ and $(t_2-t_1,0)$ are close to each other, with $|t_2 - t_1| < 2(10)^{-(k+1)}$ by \eqref{Eq: comp exist metric 4}. A direct computation shows that a point in $\overline B_{10^{-(k-1)}}(\gamma(t_1))$ that approximates $((t_2-t_1)/2, 2(10)^{-(k+2)})$ under this realization must be contained in both of the sets in \eqref{Eq: comp exist metric 2} and \eqref{Eq: comp exist metric 3}. 
\end{proof}

We define the sets
\[
\overline A_\pm := \Bigg(\Bigg(\bigcup_{t \in [-2,2], k \in \N} H^k_\pm(t)\Bigg) \cup \gamma([-1,1]) \Bigg) \cap \overline B_1(\gamma(0)).
\]
\begin{Cla}\label{Cla: comp exist metric Cla 3}
The following hold:
\begin{enumerate}
    \item $\overline A_+ \cap \overline A_- = \gamma([-1,1])$;
    \item $\overline A_+ \cup \overline A_- = \overline B_1(\gamma(0))$;
    \item $\overline A_\pm$ are closed.
\end{enumerate}
\end{Cla}
\begin{proof}
Claim~(1) follows directly from Claim~\ref{Cla: comp exist metric Cla 2} and the definition of $A_\pm$. 

Next, we verify claim~(2). Let $x \in \overline B_1(\gamma(0))$. If $x \in \gamma([-1,1])$, then $x \in \overline A_+ \cup \overline A_-$ by the definition of $\overline A_\pm$. Therefore, assume that $x \notin \gamma([-1,1])$. Let $t_0 \in [-10,10]$ be such that $\gamma(t_0)$ is minimal in distance to $x$ among the points in $\gamma([-10,10])$. In particular, $\dd_X(x, \gamma(t_0)) \leq \dd_X(x, \gamma(0)) \leq 1$. By the triangle inequality, we have 
\[|t_0| = \dd_X(\gamma(t_0), \gamma(0)) \leq \dd_X(\gamma(t_0), x) + \dd_X(x, \gamma(0)) \leq 2,\]
and so $t_0 \in [-2,2]$. Since $x \notin \gamma([-1,1])$ and $\gamma([-10,10]) \cap \overline B_1(\gamma(0)) = \gamma([-1,1])$, we have $x \neq \gamma(t_0)$. In particular, $0 < \dd_X(x, \gamma(t_0)) \leq 1$. Therefore, we may choose $k \in \N$ such that $10^{-(k+2)} < \dd_X(x, \gamma(t_0)) \leq 10^{-k}$. Since $\gamma(t_0)$ is a distance minimizer to $x$ in $\gamma([-10,10])$ and $[t_0- 10^{-k}, t_0+10^{-k}] \subset [-10,10]$, we have
\[
\dd_X(x, \gamma([t_0- 10^{-k}, t_0+10^{-k}]))= \dd_X(x, \gamma(t_0)) > 10^{-(k+2)}. 
\]
It follows that $x \in \overline B_{10^{-k}}(\gamma(t_0)) \setminus B_{10^{-(k+2)}}(\gamma([t_0- 10^{-k}, t_0+10^{-k}]))$. By our inductive construction of $H^k_\pm(t_0)$, we have 
\[
\overline B_{10^{-k}}(\gamma(t_0)) \setminus B_{10^{-(k+2)}}(\gamma([t_0- 10^{-k}, t_0+10^{-k}])) = H^k_+(t_0) \sqcup H^k_-(t_0). 
\]
Since $H^k_\pm(t_0) \subset \overline A_\pm$, we have $x \in \overline A_+ \cup \overline A_-$.

Finally, we verify claim~(3). Let $(x_i)_{i \in \N}$ be a sequence converging to $x$ in $\overline B_1(\gamma(0))$. Suppose that $x_i \in \overline A_+$ for all $i \in \N$. We claim that $x \in \overline A_+$. 

If $x \in \gamma([-1,1])$ then we are finished, since $\gamma([-1,1]) \subset \overline A_+$. Therefore, assume that this is not the case. As before, let $t_0 \in [-10,10]$ be such that $\gamma(t_0)$ is minimal in distance to $x$ among the points in $\gamma([-10,10])$. We have shown that
$0 < \dd_X(x, \gamma(t_0)) \leq 1$ and $t_0 \in [-2,2]$.

There are two cases, depending on whether $\dd_X(x, \gamma(t_0))=1$.  

If $\dd_{X}(x, \gamma(t_0)) = 1$, then $\dd_{X}(x, \gamma(0)) \geq \dd_{X}(x, \gamma(t_0)) = 1$. 
Since $x \in \overline B_1(\gamma(0))$, we must have $\dd_X(x, \gamma(0))=1$ and so $\gamma(0)$ is a distance minimizer between $x$ and $\gamma([-10,10])$. It follows that $\dd_X(x, \gamma([-1,1])) = 1$, which implies that $\dd_X(x_i, \gamma([-1,1])) \to 1$. In particular, $x_i \in \overline B_{1}(\gamma(0)) \setminus B_{1/100}(\gamma([-1,1]))$ for sufficiently large $i$. By our construction, 
\[
\overline B_{1}(\gamma(0)) \setminus B_{1/100}(\gamma([-1,1])) = H^0_+(0) \sqcup H^0_-(0).
\]
Since $x_i \in \overline A_+$, it follows that $x_i \in H^0_+(0)$. Since $H^0_+(0)$ is closed by Lemma~\ref{Lem: approx half balls exist}, we see that $x \in H^0_+(0)$ and therefore belongs to $\overline A_+$.

On the other hand, if $\dd_{X}(x, \gamma(t_0)) < 1$, then we can find $k \in \N$ such that
\begin{equation}\label{Eq: comp exist metric 5}
10^{-(k+2)} < \dd_X(x, \gamma(t_0)) < 10^{-k}. 
\end{equation} 
As before, $x \in \overline B_{10^{-k}}(\gamma(t_0)) \setminus B_{10^{-(k+2)}}(\gamma([t_0- 10^{-k}, t_0+10^{-k}]))$ and so must belong to either $H^k_+(t_0)$ or $H^k_-(t_0)$. Assume for the sake of contradiction that $x \in H^k_-(t_0)$. Since $H^k_-(t_0)$ is open in $\overline B_{10^{-k}}(\gamma(t_0)) \setminus B_{10^{-(k+2)}}(\gamma([t_0- 10^{-k}, t_0+10^{-k}]))$ by Lemma~\ref{Lem: approx half balls exist}, we have 
\[
B_r(x) \cap \big(\overline B_{10^{-k}}(\gamma(t_0)) \setminus B_{10^{-(k+2)}}(\gamma([t_0- 10^{-k}, t_0+10^{-k}]))\big) \subset H^k_-(t_0)
\]
for all small $r > 0$. 
Since the inequalities in \eqref{Eq: comp exist metric 5} are strict, we have
\[
B_r(x) \subset \overline B_{10^{-k}}(\gamma(t_0)) \setminus B_{10^{-(k+2)}}(\gamma([t_0- 10^{-k}, t_0+10^{-k}]))
\]
for all small $r > 0$. Therefore, there exists $r > 0$ such that $B_r(x) \subset H^k_-(t_0)$. Since $x_i$ converges to $x$, it follows that $x_i \in B_r(x) \subset H^k_-(t_0)$ for sufficiently large $i$. On the other hand, $x_i \in \overline A_i$ by our assumption, and $\overline A_+ \cap H^k_-(t_0) = \emptyset$. This is a contradiction.
\end{proof}

Let $A_\pm := \overline A_\pm \cap \big(B_1(\gamma(0)) \setminus \gamma\big)$. By Claim~\ref{Cla: comp exist metric Cla 3}, the sets $A_\pm$ are disjoint, closed in $B_r(\gamma(0)) \setminus \gamma$, and satisfy $B_1(\gamma(0)) \setminus \gamma = A_+ \sqcup A_-$. It follows that they are both open. 

Next, we prove that $\gamma((-1,1))$ is contained in the topological boundaries $A_\pm$ in $B_1(\gamma(0))$. Let $t \in (-1,1)$. For all sufficiently large $k \in \N$, we have $\overline B_{10^{-k}}(\gamma(t)) \subset B_{1}(\gamma(0))$. It follows that $H^k_\pm(t) \subset A_+$. In particular, $A_\pm$ both intersect $\overline B_{10^{-k}}(\gamma(t))$, and so $\gamma(t)$ is contained in the topological boundaries of $A_\pm$ in $B_1(\gamma(0))$. We note that this argument also shows that $A_\pm$ are non-empty. The reverse inclusion is immediate since $A_\pm$ are clopen in $B_1(\gamma(0)) \setminus \gamma$.
\end{proof}

\begin{proof}[Proof of Proposition~\ref{Pro: comp exist}]
Let $\veps_0$ be the universal constant $\veps$ appearing in Lemma~\ref{Lem: comp exist metric}. 

Let $r, \veps> 0$ be such that 
\[
\dd_{GH}(\overline B_{100r}(\gamma(0)), \overline B_{100r}(0^2)) < \veps r.
\]
By Lemma~\ref{Lem: regular realization exist}, there exists a $(\delta(\veps) r)$-regular realization of $(\overline B_{100r}(\gamma(0)), \gamma)$, where $\delta(\veps) \to 0$ as $\veps \to 0$. It follows from Lemma~\ref{Lem: realization scale change} that for any $t \in [-2r,2r]$,
\begin{equation}\label{Eq: comp exist thm 1}
\dd_{GH}(\overline B_{10r}(\gamma(t)), \overline B_{10r}(0^2)) < 3\delta(\veps) r. 
\end{equation}
Lemma~\ref{Lem: R2 persistence} implies that if $\delta(\veps)$ is sufficiently small (depending only on $\veps_0$) and $r$ is sufficiently small (depending only on $N$ and $\veps_0$), then 
\[
\dd_{GH}(\overline B_{s}(\gamma(t)), \overline B_{s}(0^2)) < \veps_0 s,
\]
for any $s \leq r$. Finally, by requiring that $\delta(\veps) \leq 10\veps_0/3$, \eqref{Eq: comp exist thm 1} gives that
\[
\dd_{GH}(\overline B_{10r}(\gamma(0)), \overline B_{10r}(0^2)) < 10\veps_0 r.
\]

After rescaling $X$ by $1/r$ and reparameterize $\gamma$ to have unit speed accordingly, we see that the assumptions of Lemma~\ref{Lem: comp exist metric} are satisfied, and obtain the desired sets $A_\pm$. 

Throughout the proof, we only required $\delta(\veps)$ to be smaller than constants depending on $\veps_0$, and $r$ to be smaller than a constant only depending on $N$ and $\veps_0$. Therefore, the argument goes through for a universal constant $\veps$ and any $r \leq \bar r(N)$. 
\end{proof}

\subsection{Properties of components}\label{Subsec: properties of comps} In this subsection, we will prove that the clopen subsets $A_\pm \subset B_r(\gamma(0)) \setminus \gamma$ given by Proposition~\ref{Pro: comp exist} satisfy the additional properties in the definition of good components (Definition~\ref{Def: good comp}). This will allow us to finish the proof of Theorem~\ref{Thm: good components existence}.

The following simple lemma is a direct consequence of the connectedness of the interval.
\begin{Lem}\label{Lem: clopen path-connected}
Let $X$ be a topological space and let $A \subset X$ be non-empty and clopen. Let $x, y \in X$. If $x \in A$ and there exists a continuous curve from $x$ to $y$ in $X$, then $y \in A$. In particular, $A$ is the union of a collection of path-connected components of $X$. 
\end{Lem}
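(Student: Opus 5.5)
The statement follows from the connectedness of $[0,1]$. Since $A$ is clopen in $X$, both $A$ and $X\setminus A$ are open. Let $\sigma:[0,1]\to X$ be a continuous curve with $\sigma(0)=x\in A$ and $\sigma(1)=y$. Then $[0,1]=\sigma^{-1}(A)\sqcup\sigma^{-1}(X\setminus A)$ is a decomposition of $[0,1]$ into two disjoint open sets. The first set contains $0$, so it is non-empty. Because $[0,1]$ is connected, the second set must be empty, so $\sigma([0,1])\subset A$ and in particular $y\in A$.

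For the second assertion, let $C$ be any path-connected component of $X$ that meets $A$, say at a point $x$. Every $y\in C$ can be joined to $x$ by a continuous curve in $C\subset X$. By the first part, $y\in A$, so $C\subset A$. Each point of $A$ lies in some component, namely the component containing it, and that component meets $A$. Hence $A$ equals the union of the components of $X$ that meet $A$, which is a union of path-connected components of $X$.

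There is no real obstacle here. The only point to note is that the argument uses only that $A$ is clopen; it needs neither local path-connectedness nor the non-emptiness hypothesis. The non-emptiness assumption only makes the second assertion non-vacuous. In the later application, one will take $X=B_r(\gamma(0))\setminus\gamma$ with $A=A_\pm$ from Proposition~\ref{Pro: comp exist}. The lemma then shows that any curve in $X$ from $A_+$ to $A_-$ is impossible, and that each $A_\pm$ is a union of components. Showing that each $A_\pm$ is in fact a single component is a separate argument that is not part of this lemma.
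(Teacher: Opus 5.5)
Your proof is correct and follows the same approach as the paper, which offers no written argument beyond noting that the lemma is a direct consequence of the connectedness of the interval. Your preimage decomposition $[0,1]=\sigma^{-1}(A)\sqcup\sigma^{-1}(X\setminus A)$ is exactly that argument, and your observation that non-emptiness is not needed is also correct.
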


For the remainder of this subsection, assume that the hypotheses of Proposition~\ref{Pro: comp exist} hold, and let $A_\pm$ be the clopen subsets of $B_r(\gamma(0)) \setminus \gamma$ given by the proposition. More precisely, we require $A_\pm$ to be the sets obtained via the construction in Lemma~\ref{Lem: comp exist metric}.

\begin{Lem}[Key lemma]\label{Lem: geod no glance}
Let $x,y\in A_+\cap B_{r/10}(\gamma(0))$ or $x,y\in A_-\cap B_{r/10}(\gamma(0))$. Then every geodesic connecting $x$ and $y$ is disjoint from $\gamma$.
\end{Lem}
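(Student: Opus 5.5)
Assume, after rescaling as in the proof of Proposition~\ref{Pro: comp exist}, that $r=1$, so we are in the setting of Lemma~\ref{Lem: comp exist metric}. Let $x,y\in A_+\cap B_{1/10}(\gamma(0))$ and let $\sigma$ be a geodesic from $x$ to $y$. Suppose, for contradiction, that $\sigma$ meets $\gamma$ at some $\gamma(s_0)$. Then $\sigma\subset B_{3/10}(\gamma(0))$, since its length is less than $2/10$. Hence $|s_0|<3/10$.

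The plan is to use the non-branching property (Theorem~\ref{Thm: non-branching}) together with the almost-splitting structure to show that $\sigma$ must cross from $A_+$ to $A_-$ at $\gamma(s_0)$. A geodesic cannot then return to $A_+$ without meeting $\gamma$ a second time, which non-branching forbids.

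\textbf{Step 1.} The geodesic $\sigma$ meets $\gamma$ in a single point, or else it is a subsegment of the same geodesic line as $\gamma$. Suppose $\sigma$ meets $\gamma$ in two points. By non-branching, $\sigma$ and $\gamma$ then share the segment between those points. Since the endpoints $x,y$ of $\sigma$ do not lie on $\gamma$, $\sigma$ must branch off $\gamma$ at an interior point of $\sigma$, which non-branching forbids. Hence $\sigma\cap\gamma=\{\gamma(s_0)\}$, and $\gamma(s_0)$ is an interior point of $\sigma$.

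\textbf{Step 2.} This is the blow-up step at the crossing point. Near $\gamma(s_0)$, both $\gamma$ and $\sigma$ pass through the point, and all small balls $\overline B_\rho(\gamma(s_0))$ are $\veps_0\rho$-close to Euclidean discs, by Lemma~\ref{Lem: R2 persistence} and the hypotheses of Lemma~\ref{Lem: comp exist metric}. Choose scales $\rho=10^{-k}$ small. In a regular realization of $(\overline B_\rho(\gamma(s_0)),\gamma)$, the two halves $\sigma|_{[s_0-\rho/2]}$ and $\sigma|_{[s_0+\rho/2]}$ correspond to points $z_\pm$ at distance $\rho/2$ from the image of $\gamma(s_0)$. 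These two points are at mutual distance $\rho$ in $\mathbb R^2$, because $\sigma$ is a geodesic through the centre. So they are antipodal: $z_-=-z_+$.

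I must rule out the case where $z_\pm$ lie near the horizontal axis, that is, where $\sigma$ is almost tangent to $\gamma$ at this scale. For this I would argue by compactness across scales. If $\sigma$ were tangent to $\gamma$ at all small scales, then in a tangent cone the limits of $\sigma$ and $\gamma$ would be two lines through the origin that coincide. Then, going back to a fixed scale and using Lemma~\ref{Lem: R2 persistence} together with an angle comparison argument, one sees that the curve $\sigma$ would leave the region $B_{10^{-(k+2)}}(\gamma)$ on the same side from both directions, and $\gamma(s_0)$ would be a glancing point.

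\textbf{Step 3.} This is the main obstacle. It is cleanest to exclude glancing using the transversal structure of tangent cones along geodesics. The idea is to apply the splitting theorem (Theorem~\ref{Thm: geod splitting thm}) to $\sigma$ at its interior point $\gamma(s_0)$ as well. Every tangent cone at $\gamma(s_0)$ is then $\mathbb R^2$, containing two lines $L_\gamma,L_\sigma$ through the origin, the limits of $\gamma$ and $\sigma$. If $L_\sigma\neq L_\gamma$, then $\sigma$ crosses: its two ends lie in opposite open half-planes. By the consistency of labels (Lemma~\ref{Lem: approx half ball consistent} and Claim~\ref{Cla: comp exist metric Cla 2}), the two ends then lie in $H^k_+(s_0)$ and $H^k_-(s_0)$ respectively. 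Since $\sigma$ meets $\gamma$ only at $\gamma(s_0)$, Lemma~\ref{Lem: clopen path-connected} applied to the subarcs of $\sigma$ before and after $\gamma(s_0)$ shows that $x$ and $y$ lie in different sets $A_\pm$, a contradiction.

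If instead $L_\sigma=L_\gamma$ in some tangent cone, I would attempt the following. Take the midpoint construction from the proof of Claim~\ref{Cla: comp exist metric Cla 2}, and use the fact that $\sigma$, extended slightly, is a geodesic through $\gamma(s_0)$ whose direction agrees with that of $\gamma$. Combining the $\mm$-a.e.\ uniqueness of geodesics (Proposition~\ref{Pro: geodesic a.e. extend}) with an approximation of $x$ and $y$ by generic points, non-branching would force $\sigma$ to contain a subsegment of $\gamma$, contradicting Step 1. I expect this tangential case to require the most care.
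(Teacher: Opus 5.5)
There is a genuine gap, and it is the case you flagged yourself. The tangential (``glancing'') case is the entire content of the lemma, and the proposal does not handle it.

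\textbf{What works.} Your Step~1 is essentially the paper's first step. You should add why $\gamma$ cannot be a subsegment of $\sigma$: $\sigma\subset B_{3r/10}(\gamma(0))$, while $\gamma$ leaves $B_r(\gamma(0))$. The transversal alternative in Step~3 can also be made rigorous. By Lemma~\ref{Lem: R2 persistence}, $\dd_{GH}(\overline B_s(\gamma(s_0)),\overline B_s(0^2))=o(s)$. So at scales along which $\sigma$ and $\gamma$ converge to distinct lines, you can take realizations much finer than the angle between them. You can then match the two sides with $A_\pm$ via Lemma~\ref{Lem: approx half ball consistent}.

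\textbf{What is missing.} The remaining situation is this: in every tangent cone at $\gamma(s_0)$, $\sigma$ and $\gamma$ converge to the same line, and $\sigma$ leaves $\gamma(s_0)$ into $A_+$ on both sides. Step~2 only describes this scenario; it does not exclude it. The sketch at the end of Step~3 does not work, for two reasons.
\begin{itemize}
\item Non-branching only forbids two geodesics that share a nondegenerate segment from separating. A single point of tangency is not such a configuration.
\item Approximating $x,y$ by generic points produces nearby geodesics about which you need the very same statement.
\end{itemize}
In an $\RCD$ space there is no angle comparison that turns ``same limiting direction'' into ``common subsegment''.

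\textbf{The missing idea.} Replace the single geodesic $\sigma$ by a fan of geodesics, and use a quantitative almost-surjectivity of the geodesic map. The paper argues as follows.
\begin{enumerate}
\item Write $\gamma(s_0)=\sigma(t_0)$. Pick $\delta$ small, then $s=10^{-k}r$ small enough that $B_s(\sigma(t_0-\delta))\subset A_+$.
\item For each $z$ in this ball, any geodesic from $z$ to $y$ joins two points of $A_+$. By your Step~1 it meets $\gamma$ at most once, so by Lemma~\ref{Lem: clopen path-connected} it never enters $A_-$.
\item Set $\Psi(z):=\gamma_{z,y}(c\delta)$, where $c$ is the speed of $\sigma$. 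The estimates in the proof of \cite[Theorem 5.10]{D25} show that $\Psi(B_s(\sigma(t_0-\delta)))$ is $\veps s$-dense in $B_s(\sigma(t_0))$. Here $\veps\to0$ as $\delta\to0$, independently of $s$.
\item But $B_s(\gamma(s_0))$ contains the approximate lower half-ball $H^k_-(s_0)\subset A_-$. This set contains a ball of radius comparable to $s$. So no set avoiding $A_-$ can be that dense there, which is a contradiction.
\end{enumerate}
This argument never looks at angles, which is exactly why it covers the tangential case.
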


\begin{proof}
Let $x, y \in A_+\cap B_{r/10}(\gamma(0))$ (the argument for $A_-$ is identical), and let $\rho:[0,1] \to X$ be a geodesic between $x$ and $y$. Then $\rho \subset B_r(\gamma(0))$ by the triangle inequality. 

First, we show that $\rho$ intersects $\gamma$ at most once. Since $A_+ \subset B_r(\gamma(0)) \setminus \gamma$, we have $x, y \notin \gamma$. Therefore, if $\rho$ intersects $\gamma$ more than once, then it follows from the non-branching property (Theorem~\ref{Thm: non-branching}) that $\gamma$ is a subsegment of $\rho$. Since $\rho \subset B_r(\gamma(0))$ and $\gamma$ is not entirely contained in $B_{r}(\gamma(0))$ by the triangle inequality, this gives a contradiction. 

Next, we show that in fact $\rho$ is disjoint from $\gamma$. Assume for the sake of contradiction that $\rho$ intersects $\gamma$ at $\rho(t_0)$ for some $t_0 \in (0,1)$. Since this is the unique intersection, the curve $\rho \lvert_{[0, t_0)}$ is contained in $B_{r}(\gamma(0)) \setminus \gamma$. Since $\rho(0) = x \in A_+$ and $A_+$ is clopen in $B_r(\gamma(0))\setminus \gamma$, 
Lemma~\ref{Lem: clopen path-connected} implies that $\rho([0,t_0))\subset A_+$. Similarly, $\rho((t_0,1])\subset A_+$. 

Let $0 < \delta < \min(1-t_0, t_0)$ be a small constant to be fixed later. Since $\rho(t_0 -\delta) \in B_r(\gamma(0)) \setminus \gamma$, there exists $s_0 > 0$ (depending on $\delta$) such that for all $0 < s \leq s_0$, we have $B_{s}(\rho(t_0-\delta)) \subset B_r(\gamma(0)) \setminus \gamma$. Let $k \in \N$ be sufficiently large (to be fixed later) so that $s:=10^{-k}r \leq s_0$. Since $B_{s}(\rho(t_0-\delta))$ is path-connected and $\rho(t_0 - \delta) \in A_+$, Lemma~\ref{Lem: clopen path-connected} implies that $B_{s}(\rho(t_0-\delta)) \subset A_+$.

We claim that any geodesic from a point $z \in B_{s}(\rho(t_0-\delta))$ to $y$ is disjoint from $A_-$. Indeed, if this is not the case, then it is straightforward to check by Lemma~\ref{Lem: clopen path-connected} and the fact that $z, y \in A_+$ that such a geodesic must intersect $\gamma([-1,1])$ at a minimum of two points. Arguing as above using the non-branching property, this yields a contradiction. 

Let $t_0' \in [-r,r]$ be such that $\rho(t_0) = \gamma(t_0')$. Recall from the proof of Proposition~\ref{Pro: comp exist} that $\overline B_s(\gamma(t_0'))$ is Gromov--Hausdorff close to $\overline B_s(0^2)$, and this was used to decompose \[\overline B_s(\gamma(t_0')) \setminus B_{s/100}(\gamma([t_0' -s,t_0'+s]))\] into two approximate half-balls $H^k_\pm(t_0')$ (recall that $s = 10^{-k}r$) in Lemma~\ref{Lem: comp exist metric}. By the construction of $A_\pm$ in that lemma, we have $H^k_+ \subset A_+$ and $H^k_- \subset A_-$.

Let $c > 0$ be the speed of the geodesic $\rho$. Choose $\delta > 0$ sufficiently small so that $\dd_X(\rho(t_0-\delta), \rho(t_0)) = c\delta \ll \dd_X(\rho(t_0 - \delta), y)$. For each point $z \in B_{s}(\rho(t_0-\delta))$, choose a unit-speed geodesic $\gamma_{z ,y}: [0, \dd_X(z,y)] \to X$ from $z$ to $y$. Notice that in the case $z = \rho(t_0-\delta)$, the unique geodesic from $z$ to $y$ is the corresponding subsegment of $\rho$ by the non-branching property.

Define the map $\Psi: B_{s}(\rho(t_0-\delta)) \to X$ by $\Psi(z) := \gamma_{z, y}(c \delta)$. This is well defined for sufficiently small $s > 0$, since in that case $\dd_X(z, y) > c\delta$ for all $z \in B_{s}(\rho(t_0-\delta))$. It is immediate from the construction that $\Psi(\rho(t_0-\delta))=\rho(t_0)$.

By the proof of \cite[Theorem 5.10]{D25} (see conclusions (I) - (IV) on page 1107), for all sufficiently small $s > 0$ (depending on $t_0$, the speed $c$ of $\rho$, and $N$), the set $\Psi(B_{s}(\rho(t_0-\delta))) \cap B_s(\rho(t_0))$ is $(\veps s)$-dense in $B_s(\rho(t_0))$, where $\veps \to 0$ as $\delta \to 0$ (independent of $s$). 

On the other hand, we have already observed that any geodesic from a point $z \in B_{s}(\rho(t_0-\delta))$ to $y$ does not intersect $A_-$. Hence $\Psi(B_{s}(\rho(t_0-\delta))) \cap A_- = \emptyset$. In particular, $\Psi(B_{s}(\rho(t_0-\delta))) \cap H^k_-(t_0') = \emptyset$. This contradicts the conclusion that $\Psi(B_{s}(\rho(t_0-\delta)))$ is arbitrarily dense in $B_{s}(\rho(t_0))$ as $\delta \to 0$. More precisely, $\overline B_{s}(\rho(t_0))=\overline B_{s}(\gamma(t_0'))$ is Gromov--Hausdorff close to $\overline B_s(0^2)$ and $H^k_-(t_0')$ is an approximate half-ball. Therefore, it is impossible for any set that does not intersect $H^k_-(t_0')$ to be arbitrarily dense in $B_{s}(\rho(t_0))$.
\end{proof}

\begin{Pro}\label{Pro: comp path-connected}
The sets $A_\pm$ are the two components of $B_r(\gamma(0)) \setminus \gamma$. 
\end{Pro}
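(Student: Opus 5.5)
We need to show that each of $A_+$ and $A_-$ is path-connected. By Lemma~\ref{Lem: clopen path-connected}, each of them is already a union of components of $B_r(\gamma(0))\setminus\gamma$. It therefore suffices to connect an arbitrary point of $A_+$ by a curve inside $A_+$ to a fixed base point, and similarly for $A_-$. We treat $A_+$ only.

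The plan is to move every point into a small ball $B_{r/10}(\gamma(0))$ and then use geodesics there, invoking the Key Lemma~\ref{Lem: geod no glance}.

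\textbf{Step 1 (inside the small ball).} Fix a base point $x_0\in H^{1}_+(0)\cap B_{r/10}(\gamma(0))$, which is nonempty by the construction in Lemma~\ref{Lem: comp exist metric}. Take $x\in A_+\cap B_{r/10}(\gamma(0))$ and a geodesic $\rho$ from $x$ to $x_0$. By the triangle inequality $\rho\subset B_r(\gamma(0))$, and by Lemma~\ref{Lem: geod no glance} $\rho$ avoids $\gamma$. Since $\rho$ is a curve in $B_r(\gamma(0))\setminus\gamma$ starting in the clopen set $A_+$, Lemma~\ref{Lem: clopen path-connected} gives $\rho\subset A_+$. Hence $A_+\cap B_{r/10}(\gamma(0))$ lies in a single component.

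\textbf{Step 2 (reducing a general point to the small ball).} Take $x\in A_+$ with $\dd(x,\gamma(0))\ge r/10$. Let $\gamma(t_0)$ be a nearest point to $x$ on $\gamma([-10r,10r])$; as in Claim~\ref{Cla: comp exist metric Cla 3}, $|t_0|\le 2r$ and in fact $\gamma(t_0)\in\gamma((-r,r))$. I would now apply the same Key Lemma argument at the rescaled center $\gamma(t_0)$. The hypotheses of Proposition~\ref{Pro: comp exist} hold at all centers $\gamma(t)$, $t\in[-2r,2r]$, and at all smaller scales, by Lemma~\ref{Lem: R2 persistence} applied as in the proof of Proposition~\ref{Pro: comp exist}. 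Moreover, by Claim~\ref{Cla: comp exist metric Cla 2} the local half-balls $H^k_\pm(t)$ are labelled consistently with $A_\pm$.

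So I would run a chaining argument along $\gamma$. Choose $k$ such that $10^{-(k+2)}<\dd(x,\gamma(t_0))\le 10^{-k}$. Then $x\in H^k_+(t_0)$, and a geodesic from $x$ to a point of $H^{k}_+(t_0)$ that lies near $\gamma(t_0)$ avoids $\gamma$. This uses the no-glance argument of Lemma~\ref{Lem: geod no glance} at the ball $B_{10^{-k}}(\gamma(t_0))$, where the single-intersection step uses non-branching and the density argument from \cite{D25} uses the local half-ball $H^k_-(t_0)$. I will state this as a localized version of the Key Lemma, with the same proof.

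Next, points of $H_+^{k'}(t)$ close to $\gamma$ can be connected along $\gamma$ from $t_0$ to $0$. Cover $[t_0,0]$ by finitely many overlapping balls $B_{s}(\gamma(t_j))$ of a fixed small radius $s$. In each ball, join consecutive points chosen in $H_+$, in the overlap region, by geodesics avoiding $\gamma$, using the localized Key Lemma. Overlaps contain common $H_+$ points by the explicit computation at the end of Claim~\ref{Cla: comp exist metric Cla 2}. All these geodesics stay in $B_r(\gamma(0))$ provided $s$ is small relative to $r-|t_j|$. Lemma~\ref{Lem: clopen path-connected} again keeps the whole chain in $A_+$, and the chain ends in $B_{r/10}(\gamma(0))$, where Step 1 finishes.

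\textbf{Main obstacle.} The delicate point is Step 2 near the boundary sphere $\partial B_r(\gamma(0))$. There, points $x$ close to distance $r$, and balls around $\gamma(t_0)$ with $|t_0|$ close to $r$, force the local radii to degenerate, and we must make sure the connecting geodesics do not leave $B_r(\gamma(0))$. I would handle this by choosing the radius adapted to $\min(\dd(x,\gamma(t_0)),\,r-\dd(x,\gamma(0)))$ and arguing inductively through the scales $k$. Points near the outer sphere and far from $\gamma$ would instead be handled with the scale-$0$ half-ball $H^0_+(0)$, whose approximate half-disk structure lets one descend radially via geodesics to $\gamma(0)$, stopping before reaching $\gamma$.
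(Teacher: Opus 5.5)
Your Step 1 is correct and is how the paper finishes. Inside $B_{r/10}(\gamma(0))$, the Key Lemma~\ref{Lem: geod no glance} together with Lemma~\ref{Lem: clopen path-connected} puts any two points of $A_+$ in the same component. (Take the base point in the open ball $B_{r/10}(\gamma(0))$, not merely in $H^1_+(0)\subset\overline B_{r/10}(\gamma(0))$.)

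\textbf{The gap is Step 2.}
\begin{itemize}
\item Your chaining needs a localized Key Lemma in balls $B_s(\gamma(t_j))$ with the relevant points in $B_{s/10}(\gamma(t_j))$. For this you need $B_s(\gamma(t_j))\subset B_r(\gamma(0))$. Otherwise the sets $A_\pm\cap B_s(\gamma(t_j))$ do not partition $B_s(\gamma(t_j))\setminus\gamma$. Moreover, geodesics between points near $\partial B_r(\gamma(0))$ may leave $B_r(\gamma(0))$, since metric balls need not be convex.
\item This fails near the points where $\gamma$ exits the ball. Take $x$ with $|t_0|$ close to $r$ and $(r-|t_0|)/10 \lesssim \dd(x,\gamma) < r/100$. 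Since $s$ must be at least $10\,\dd(x,\gamma(t_0))$, no admissible $s$ exists.
\item Your fallback through $H^0_+(0)$ only covers points at distance at least $r/100$ from $\gamma$, so it does not reach these points either.
\item Saying you would choose the radius adapted to $\min(\dd(x,\gamma(t_0)),\, r-\dd(x,\gamma(0)))$ and argue inductively through the scales does not resolve this. You have located the obstacle but not overcome it.
\item Separately, $\gamma(t_0)\in\gamma((-r,r))$ does not follow from Claim~\ref{Cla: comp exist metric Cla 3}; only $|t_0|<2r$ does.
\end{itemize}

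\textbf{The missing idea} makes Step 2 a one-liner and removes all multi-scale bookkeeping. For arbitrary $x\in A_+$, let $c$ be a geodesic from $x$ to $\gamma(0)$.
\begin{itemize}
\item $c$ lies in $B_r(\gamma(0))$ and meets $\gamma$ only at $\gamma(0)$. Otherwise, since $x\notin\gamma$, non-branching (Theorem~\ref{Thm: non-branching}) would force $\gamma|_{[0,1]}$ or $\gamma|_{[-1,0]}$ to be a subsegment of $c$. That is impossible because $\gamma(\pm 1)\notin B_r(\gamma(0))$.
\item Hence $c([0,9/10])\subset B_r(\gamma(0))\setminus\gamma$, so $c([0,9/10])\subset A_+$ by Lemma~\ref{Lem: clopen path-connected}.
\item Finally $c(9/10)\in A_+\cap B_{r/10}(\gamma(0))$, where your Step 1 applies.
\end{itemize}
This is precisely the paper's proof.

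Your closing remark about descending radially to $\gamma(0)$ points the right way. However, you justify it through the approximate half-disk structure. That controls the geodesic only at scales much larger than the Gromov--Hausdorff error, and only for points far from $\gamma$. Non-branching gives the same conclusion for free, for every starting point.
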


\begin{proof}
We first prove that $A_+$ is path-connected; the argument for $A_-$ is the same. Let $x_1, x_2 \in A_+$. We explicitly construct a continuous curve from $x_1$ to $x_2$ contained in $A_+$.

For $i= 1,2$, let $c_i: [0,1] \to X$ be a geodesic from $x_i$ to $\gamma(0)$. Since $x_i \in B_r(\gamma(0))$, the geodesic $c_i$ is contained in $B_r(\gamma(0))$.

We show that $c_i$ intersects $\gamma$ only at $\gamma(0)$. Indeed, since $x_i \notin \gamma$, it follows from the non-branching property that if $c_i$ intersects $\gamma$ more than once then one of $\gamma \lvert_{[0,1]}$ or $\gamma \lvert_{[-1,0]}$ is a subsegment of $c_i$. This yields a contradiction since $c_i \subset B_r(\gamma(0))$ and $\gamma(-1), \gamma(1) \notin B_{r}(\gamma(0))$.

Therefore, $c_i([0,9/10]) \subset B_r(\gamma(0)) \setminus \gamma$. Since $A_+$ is clopen in $B_r(\gamma(0)) \setminus \gamma$ and $c_i(0) = x_i \in A_+$, Lemma~\ref{Lem: clopen path-connected} implies that $c_i([0,9/10]) \subset A_+.$

Let $c_3:[0,1] \to X$ be a geodesic from $c_1(9/10)$ to $c_2(9/10)$. Since $c_1(1/2), c_2(1/2) \in A_+ \cap B_{r/10}(\gamma(0))$, it follows from Lemma~\ref{Lem: geod no glance} and the triangle inequality that $c_3 \subset B_r(\gamma(0)) \setminus \gamma$. The endpoints of $c_3$ are contained in $A_+$, so Lemma~\ref{Lem: clopen path-connected} implies $c_3 \subset A_+.$

Finally, the continuous curve in $A_+$ from $x_1$ to $x_2$ can be obtained by concatenating $c_1$ from $x_1$ to $c_1(9/10)$, $c_3$ from $c_1(9/10)$ to $c_2(9/10)$, and $c_2$ (reversed) from $c_2(9/10)$ to $x_2$. 

By Lemma~\ref{Lem: clopen path-connected}, each of $A_\pm$ is a union of components of $B_r(\gamma(0))\setminus \gamma$. Since $A_\pm$ are both path-connected, they must each be a single component of $B_r(\gamma(0))\setminus \gamma$. Moreover, $B_r(\gamma(0))\setminus \gamma = A_+ \sqcup A_-$, so it has exactly two components.
\end{proof}

\begin{proof}[Proof of Theorem~\ref{Thm: good components existence}]
Let $\veps_0, \bar r_0$ be the constants $\veps, \bar r(N) > 0$ appearing in Proposition~\ref{Pro: comp exist}. 

Let $\veps, r > 0$ be such that 
\[
\dd_{GH}(\overline B_{1000r}(\gamma(0)), \overline B_{1000 r}(0^2)) < \veps r. 
\]
We will bound $\veps$ by universal constants and $r$ by constants depending only on $N$.

If $\veps$ is sufficiently small (depending only on $\veps_0$), then
\[
\dd_{GH}(\overline B_{100r}(\gamma(0)), \overline B_{100r}(0^2)) < \veps_0 r. 
\]
If in addition $r \leq \bar r_0$, then Proposition~\ref{Pro: comp exist} gives two clopen subsets $A_\pm$ of $B_r(\gamma(0)) \setminus \gamma$.

By further requiring that $\veps < 10\veps_0$ and $r \leq \bar r_0/10$, we can apply Proposition~\ref{Pro: comp exist} at scale $10r$ to obtain two (auxiliary) clopen subsets $B_\pm$ of $B_{10r}(\gamma(0)) \setminus \gamma$.

By Proposition~\ref{Pro: comp exist}, Lemma~\ref{Lem: geod no glance}, and Proposition~\ref{Pro: comp path-connected}, the components $A_\pm$ satisfy all the claimed properties in Theorem~\ref{Thm: good components existence}, other than property~(3) in Definition~\ref{Def: good comp}.

By Proposition~\ref{Pro: comp path-connected}, $B_\pm$ are the components of $B_{10r}(\gamma(0)) \setminus \gamma$. Therefore, any curve in $B_{10r}(\gamma(0))$ joining two points $x_\pm \in B_\pm$ must intersect $\gamma$. Thus, in order to verify property~(3), it suffices to show that, up to permuting $\pm$, $A_\pm \subset B_\pm$. By Proposition~\ref{Pro: comp exist}, $\gamma(0)$ lies in the topological boundaries of $B_\pm$, hence $B_\pm$ both intersect $B_{r}(\gamma(0))$. Lemma~\ref{Lem: big set small set comp} below with $U = B_{10r}(\gamma(0)), V = B_{r}(\gamma(0))$, and $C = \gamma \cap B_{10r}(\gamma(0))$ thus implies the claim. 
\end{proof}

\begin{Lem}\label{Lem: big set small set comp}
Let $U$ be a topological space. Let $V \subset U$ be open and $C \subset U$ be closed. Assume that $U \setminus C$ and $V \setminus C$ have exactly two path-connected components $U_\pm$ and $V_\pm$, respectively. If $U_\pm$ both intersect $V$, then $V_\pm = U_\pm \cap V$, up to a permutation of $\pm$.
\end{Lem}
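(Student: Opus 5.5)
The argument is purely point-set topological and uses only the elementary fact, Lemma~\ref{Lem: clopen path-connected}, that a clopen set is a union of path-connected components. The plan is to show that each $V_\epsilon$ (with $\epsilon\in\{+,-\}$) lies inside exactly one of $U_\pm$. Then the hypothesis that both $U_\pm$ meet $V$ forces the two components of $V\setminus C$ to go to different components of $U\setminus C$.

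First, note that $V\setminus C\subset U\setminus C$, since $V\subset U$. Since $V_\epsilon$ is path-connected and lies in $U\setminus C$, it is contained in a single path-component of $U\setminus C$, hence in $U_+$ or in $U_-$. Consequently each of $U_+\cap V$ and $U_-\cap V$ is a union of some of the sets $V_\pm$. This is because $V\setminus C=V_+\sqcup V_-$ and $U\setminus C=U_+\sqcup U_-$, so $(U\setminus C)\cap V=V\setminus C$.

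Next I need $U_\pm\cap V$ to be contained in $V\setminus C$, which is immediate since $U_\pm\cap C=\emptyset$. I also need the hypothesis that both $U_\pm$ intersect $V$ to yield points of $V\setminus C$ in each. This is automatic: a point of $U_\pm\cap V$ is not in $C$, so it lies in $V\setminus C$. Hence both $U_+\cap V$ and $U_-\cap V$ are nonempty unions of the sets among $V_+,V_-$. They are also disjoint, and together they cover $V\setminus C=V_+\sqcup V_-$. Two disjoint nonempty unions drawn from two sets covering both must be exactly $\{V_+\}$ and $\{V_-\}$ in some order. Therefore $U_+\cap V=V_{\sigma(+)}$ and $U_-\cap V=V_{\sigma(-)}$ for a permutation $\sigma$, which is the claim.

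I do not expect any real obstacle. The only point to check carefully is that path-connectedness of $V_\epsilon$ inside the smaller space $V\setminus C$ transfers to path-connectedness inside $U\setminus C$. This holds because the inclusion is continuous. Openness of $V$ and closedness of $C$ are not even needed beyond ensuring that the components are the clopen pieces used in the application.
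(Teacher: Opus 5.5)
Your proof is correct and follows the same route as the paper's: each $V_\pm$, being path-connected inside $U\setminus C$, lies in a single one of $U_\pm$, and the fact that both $U_\pm\cap V\subset V\setminus C$ are nonempty forces this assignment to be a bijection. The paper does this by explicit relabeling and you by a partition count; also, Lemma~\ref{Lem: clopen path-connected}, which you cite at the outset, is never actually used, and neither proof needs openness of $V$ or closedness of $C$.
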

\begin{proof}
Since $V_\pm$ are path-connected and contained in $U \setminus C$, they must each be contained entirely in one of the components $U_\pm$. Since $U_+$ intersects $V$ and is disjoint from $C$, it must intersect at least one of $V_\pm$. Up to relabeling $V_\pm$, we can assume that $U_+$ intersects $V_+$. Then 
$V_+ \subset U_+$ and $V_+ \cap U_- = \emptyset$.  Similarly, $U_-$ must intersect at least one of $V_\pm$. Since $V_+$ is disjoint from $U_-$, it follows that $U_-$ intersects $V_-$. Therefore, $V_- \subset U_-$ and $V_- \cap U_+ = \emptyset$. 
\end{proof}

\subsection{Neighborhoods of geodesics}\label{Subsec: geod neigh} In this subsection, we use the good components obtained in Theorem~\ref{Thm: good components existence 2} to construct path-connected neighborhoods of geodesics that are disconnected into exactly two components by the geodesic. 

\begin{Lem}\label{Lem: good comp side induct}
Let $(X, \dd_X)$ be a geodesic space. Let $\gamma:[a,b] \to X$ be a geodesic, and let $t, t' \in [a,b]$ and $r, r' > 0$. Assume that the balls $B_r(\gamma(t))$ and $B_{r'}(\gamma(t'))$ each have two good components with respect to $\gamma$, denoted by $A_\pm$ and $A'_\pm$, respectively. If $B_r(\gamma(t)) \cap B_{r'}(\gamma(t')) \neq \emptyset$, then, up to relabeling $A_\pm'$, we have
\[
A_+ \cap A_+' \neq \emptyset, \quad A_- \cap A_-' \neq \emptyset, \quad A_+ \cap A_-' = \emptyset, \quad A_- \cap A_+' = \emptyset,
\]
\end{Lem}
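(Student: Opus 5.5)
The plan is to reduce everything to two facts: the intersection $W := B_r(\gamma(t)) \cap B_{r'}(\gamma(t'))$ contains a point of $\gamma$, and property~(3) of Definition~\ref{Def: good comp} holds for the larger of the two balls. By symmetry of the statement (relabeling the two balls), I will assume $r \le r'$ throughout. Let $c>0$ be the speed of $\gamma$.

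\emph{Step 1: $W$ meets $\gamma$.} Since the balls intersect, the triangle inequality gives $c|t-t'| = \dd_X(\gamma(t),\gamma(t')) < r + r'$. Choose $s$ between $t$ and $t'$ with $c|s-t| = \frac{r}{r+r'}\,c|t-t'| < r$. Then
\[
c|s-t'| = \frac{r'}{r+r'}\,c|t-t'| < r'.
\]
Hence $\gamma(s) \in W$. By property~(2), $\gamma(s)$ lies in the topological boundary of each of $A_+$, $A_-$ (in $B_r(\gamma(t))$) and of $A'_+$, $A'_-$ (in $B_{r'}(\gamma(t'))$). Since $W$ is an open neighborhood of $\gamma(s)$, each of the four sets $A_\pm$, $A'_\pm$ meets $W$. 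Moreover, $W\setminus\gamma$ is contained both in $A_+\sqcup A_-$ and in $A'_+\sqcup A'_-$ by property~(1). Consequently, each of $A_\pm$ meets at least one of $A'_\pm$, and each of $A'_\pm$ meets at least one of $A_\pm$.

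\emph{Step 2: each of $A_\pm$ meets exactly one of $A'_\pm$.} Suppose, say, that $x \in A_+\cap A'_+$ and $y\in A_+\cap A'_-$. Since $A_+$ is a (path-connected) component of $B_r(\gamma(t))\setminus\gamma$, there is a continuous curve from $x$ to $y$ inside $A_+$. This curve avoids $\gamma$. It also lies in $B_r(\gamma(t))$, which is contained in $B_{r+c|t-t'|}(\gamma(t')) \subset B_{2r+r'}(\gamma(t')) \subset B_{3r'}(\gamma(t')) \subset B_{10r'}(\gamma(t'))$, using $r\le r'$. So the curve joins $A'_+$ to $A'_-$ inside $B_{10r'}(\gamma(t'))$ without meeting $\gamma$, contradicting property~(3) for the ball $B_{r'}(\gamma(t'))$. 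The same argument applies to $A_-$.

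\emph{Step 3: counting.} After relabeling $A'_\pm$, assume $A_+$ meets only $A'_+$. If $A_-$ also met only $A'_+$, then $A'_-$ would meet neither $A_+$ nor $A_-$. This contradicts Step~1, since $A'_-\cap W \ne\emptyset$ and $W\setminus\gamma\subset A_+\cup A_-$. Hence $A_-$ meets only $A'_-$, which gives all four assertions of the lemma.

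The only delicate point, and the reason for the asymmetric choice $r\le r'$, is that property~(3) can only be invoked for the larger ball: a curve inside the smaller ball is guaranteed to stay inside the $10$-fold enlargement of the larger one, but not conversely. The counting argument in Step~3 is what replaces the missing symmetric application of property~(3).
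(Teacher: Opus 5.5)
Your proof is correct and follows essentially the same route as the paper. Both arguments locate a point of $\gamma$ in the overlap and use property~(2) to see that all four components meet a neighborhood of it. Both then apply property~(3) only for the larger ball (the paper also assumes $r\le r'$) to show $A_+$ cannot meet both $A'_\pm$, and use the boundary property once more to force $A_-$ to meet $A'_-$ rather than $A'_+$. Your explicit choice of $s$, and your use of $W$ itself as the neighborhood, simply make precise the paper's assertion that some ball $B_{r_0}(\gamma(t_0))$ lies in the intersection.
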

\begin{proof}
Assume without loss of generality that $t \leq t'$ and $r \leq r'$. Since $B_r(\gamma(t)) \cap B_{r'}(\gamma(t')) \neq \emptyset$, the triangle inequality implies that $\dd_X(\gamma(t), \gamma(t')) < r + r'$. In particular, there exist $t_0 \in [t, t']$ and $r_0 > 0$ such that $B_{r_0}(\gamma(t_0)) \subset B_r(\gamma(t)) \cap B_{r'}(\gamma(t'))$.

Since $A_+$ is a good component of $B_r(\gamma(t)) \setminus \gamma$, the point $\gamma(t_0)$ lies in the topological boundary of $A_+$. In particular, $B_{r_0}(\gamma(t_0))$ intersects $A_+$. Fix a point 
\[x \in B_{r_0}(\gamma(t_0)) \cap A_+ \subset B_{r_0}(\gamma(t_0)) \setminus \gamma.\] 
Since $B_{r_0}(\gamma(t_0)) \setminus \gamma \subset B_{r'}(\gamma(t')) \setminus \gamma$,
the point $x$ lies in one of $A_\pm'$. Up to relabeling $A_\pm'$, we may assume that $x \in A_+'$. In particular, $A_+ \cap A_+' \neq \emptyset$

Next, we show that $A_+ \cap A_-' = \emptyset$. Suppose for the sake of contradiction that there exists a point $y \in A_+ \cap A_-'$. Since $x, y \in A_+$, they can be connected by a curve $\alpha$ contained in $A_+$. On the other hand, $\dd_X(\gamma(t), \gamma(t')) < r + r' \leq 2r'$, and so
\[
A_+ \subset B_r(\gamma(t))\setminus \gamma \subset B_{10r'}(\gamma(t')) \setminus \gamma.
\]
In particular, $\alpha$ is a curve connecting $x \in A_+'$ and $y \in A_-'$ that is contained entirely in $B_{10r'}(\gamma(t'))$ and does not intersect $\gamma$, contradicting property~(3) in Definition~\ref{Def: good comp}. 

Next, we show that $A_- \cap A_-' \neq \emptyset$. Arguing as above, we see that $B_{r_0}(\gamma(t_0))$ intersects $A_-$. Fix a point $z \in B_{r_0}(\gamma(t_0)) \cap A_-$. Then $z$ lies in one of $A_\pm'$. If $z \in A_+'$, then by applying the previous argument, with $z$ in place of $x$, we would obtain $A_- \cap A_-' = \emptyset$. Combined with $A_+ \cap A_-' = \emptyset$, this would imply that $A_-'$ and $B_{r_0}(\gamma(t_0))$ are disjoint, contradicting the fact that $\gamma(t_0)$ lies in the topological boundary of $A_-'$. Therefore, $z \in A_-'$ and $A_- \cap A_-' \neq \emptyset$.

Finally, using that $z \in A_- \cap A_-'$ and applying again the previous argument with $z$ in place of $x$, we obtain $A_- \cap A_+' = \emptyset$. This completes the proof. 
\end{proof}

The previous lemma shows that, once a labeling of the two good components of $B_r(\gamma(t))\setminus \gamma$ is fixed, there is a natural way to uniquely determine the labeling of the good components of any overlapping ball $B_{r'}(\gamma(t'))\setminus \gamma$. In particular, choosing which component corresponds to the ``$\pm$'' side for one ball induces a consistent choice of sides for all balls intersecting it. 

\begin{Lem}\label{Lem: geod neigh side choice}
Let $(X, \dd_X)$ be a geodesic space, and let $\gamma:[a,b] \to X$ be a geodesic. Let $I = (a,b), [a,b), (a,b]$, or $[a,b]$. Suppose that for every $t \in I$, there exists $r(t) > 0$ such that $B_{r(t)}(\gamma(t))$ has two good components with respect to $\gamma$. Then there exists a labeling of the good components of $B_{r(t)}(\gamma(t)) \setminus \gamma$ by $A_\pm(t)$ for all $t \in I$ such that the following hold:
\begin{enumerate}
    \item For all $t,t' \in I$, we have $A_+(t) \cap A_-(t') = \emptyset$ and $A_-(t) \cap A_+(t') = \emptyset$. 
    \item If $t, t' \in I$ are such that $B_{r(t)}(\gamma(t)) \cap B_{r(t')}(\gamma(t')) \neq \emptyset$, then $A_+(t) \cap A_+(t') \neq \emptyset$ and $A_-(t) \cap A_-(t') \neq \emptyset$.
\end{enumerate}
Moreover, this labeling is unique up to exchanging $A_\pm(t)$ for all $t \in I$.
\end{Lem}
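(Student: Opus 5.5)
We fix a base parameter and propagate the labeling along the interval $I$, using Lemma~\ref{Lem: good comp side induct} as the local compatibility rule and the connectedness of $I$ to globalize. Fix $t_0 \in I$ and an arbitrary labeling $A_\pm(t_0)$ of the good components of $B_{r(t_0)}(\gamma(t_0))\setminus\gamma$. For every other $t \in I$ we define the labeling of the good components of $B_{r(t)}(\gamma(t))\setminus\gamma$ by declaring $A_+(t)$ to be the good component that intersects $\bigcup_{s} A_+(s)$ for a chain of overlapping balls joining $t_0$ to $t$. To make this precise, let $J \subset I$ be the set of $t$ for which there is a labeling $A_\pm(t)$ such that (1) and (2) hold for every pair of parameters in $[t_0,t]$ (or $[t,t_0]$); we show $J$ is all of $I$.

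First we check that the labeling is forced locally. Since $t \mapsto \gamma(t)$ is continuous, for each $t\in I$ the ball $B_{r(t)}(\gamma(t))$ meets $B_{r(t')}(\gamma(t'))$ for all $t'$ in a neighborhood $U_t$ of $t$ in $I$. For such $t'$, Lemma~\ref{Lem: good comp side induct} says there is exactly one relabeling of $A_\pm(t')$ making $A_\pm(t)\cap A_\pm(t')\neq\emptyset$ and $A_\pm(t)\cap A_\mp(t')=\emptyset$. It is unique because both components of $B_{r(t')}(\gamma(t'))\setminus\gamma$ are nonempty near $\gamma$ by property~(2) of Definition~\ref{Def: good comp}. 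So on each $U_t$ the labeling is determined by the labeling at $t$. By compactness of subintervals and connectedness of $I$, the set of $t$ reachable from $t_0$ is open and closed in $I$, hence all of $I$.

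The main obstacle is well-definedness and property~(1) for far-apart, non-overlapping balls: we must show that the induced labels do not depend on the chain and that $A_+(t)\cap A_-(t')=\emptyset$ even when the balls at $t,t'$ are joined only through intermediate balls. Since $I$ is an interval, chains can be taken monotone, and two chains can be compared on a common refinement. Consistency along a monotone chain then follows from the transitivity argument below, so the only real content is property~(1).

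For property~(1), suppose $x\in A_+(t)\cap A_-(t')$ with $t<t'$. Then the two balls intersect, since $x$ lies in both. Applying Lemma~\ref{Lem: good comp side induct} directly to the pair gives a relabeling in which matching signs intersect and opposite signs are disjoint. We must show this agrees with the chain-induced labeling. Take the minimal-radius argument: pick $t_0'\in[t,t']$ and $r_0$ with $B_{r_0}(\gamma(t_0'))$ contained in both balls, as in the proof of Lemma~\ref{Lem: good comp side induct}. Then compare both labelings with the labeling at $t_0'$ (or at parameters of the chain near $t_0'$ whose balls lie in $B_{r_0}$). The key tool is property~(3): any curve in $B_{10r}$ from the $+$ to the $-$ side meets $\gamma$. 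Points of $A_+(t)$ and $A_+(t_0')$ near $\gamma(t_0')$ lie on the same side for $B_{r(t')}$. This yields agreement, and hence $x$ would lie in both $A_+(t')$ and $A_-(t')$, a contradiction.

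Uniqueness up to a global swap then follows from the local forcing: any two labelings satisfying (2) agree, or are globally swapped, on each $U_t$, and the agreement set is clopen in the connected set $I$.
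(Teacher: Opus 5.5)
Your skeleton is the paper's, and those parts are fine. You fix labels on one ball, propagate them along chains of overlapping balls with Lemma~\ref{Lem: good comp side induct}, get reachability of every $t\in I$ by an open--closed argument, and get uniqueness because the labels on one ball force the labels on every ball meeting it. The paper likewise propagates along monotone ``good sequences'' and leaves chain-independence to ``an inductive argument''.

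The gap is in the step you yourself single out as the real content. You must show that whenever $B_{r(t)}(\gamma(t))$ and $B_{r(t')}(\gamma(t'))$ meet, the chain-induced labels agree with the matching Lemma~\ref{Lem: good comp side induct} gives for that pair. (For disjoint balls (1) is trivial, so that is not where the difficulty lies.) Your argument does not establish this, for four reasons.
\begin{enumerate}
\item The assertion that points of $A_+(t)$ and of $A_+(t_0')$ near $\gamma(t_0')$ lie on the same side for $B_{r(t')}(\gamma(t'))$ presupposes that the chain-induced labels at $t$ and $t_0'$ are compatible. That is an instance of the claim being proved.
\item Property~(3) of Definition~\ref{Def: good comp} for $B_{r(t')}(\gamma(t'))$ only constrains curves inside $B_{10r(t')}(\gamma(t'))$. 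It says nothing when $r(t)$ or $r(t_0')$ is much larger than $r(t')$. Lemma~\ref{Lem: good comp side induct} itself works only because it invokes property~(3) for the \emph{larger} ball.
\item You cannot fall back on chain parameters near $t_0'$ whose balls lie in $B_{r_0}(\gamma(t_0'))$. The radii $r(\cdot)$ are prescribed, and such parameters need not exist.
\item Comparing two chains ``on a common refinement'' does not reduce anything. The sorted union of two chains need not have intersecting consecutive balls, and comparing two chains is the same consistency problem.
\end{enumerate}

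What is missing is a three-ball compatibility condition, plus a reduction of arbitrary chains to it.
\begin{itemize}
\item \emph{Three balls.} Suppose three balls of the family pairwise intersect. Let $R$ be the largest radius, attained at centre $\gamma(c)$, and let $G_\pm$ be that ball's good components. Each of the other two balls meets $B_R(\gamma(c))$ and has radius at most $R$, so it lies in $B_{3R}(\gamma(c))$. Label their components $C_i^\pm$ so that $C_i^\pm$ meets $G_\pm$. Suppose $C_1^+\cap C_2^-\neq\emptyset$. Run a curve inside $C_1^+$ from a point of $G_+$ to a point of $C_1^+\cap C_2^-$, then inside $C_2^-$ to a point of $G_-$. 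It stays in $B_{10R}(\gamma(c))\setminus\gamma$, contradicting property~(3) of the largest ball. So the three pairwise matchings are compatible.
\item \emph{Reduction.} Since $\gamma$ is a geodesic of speed $v$, two balls of the family intersect if and only if $v|t-t'|<r(t)+r(t')$. For `if', use a point $\gamma(u)$ with $u$ between $t$ and $t'$. Hence the overlap graph on $I$ is an interval graph, so it is chordal, and every closed chain is a mod-$2$ sum of triangles.
\end{itemize}
Therefore the mod-$2$ twist of the matchings vanishes around every closed chain. The propagated labeling is then independent of the chain, and (1) and (2) follow.
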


\begin{proof}
We prove only the case $I = (a,b)$; the other cases are similar. For simplicity, assume that $a = -1$ and $b = 1$. Fix a labeling $A_\pm(0)$ for the two components of $B_{r(0)}(\gamma(0)) \setminus \gamma$.

Let $t \in (0,1)$. By the compactness of $[0,t]$, there exists a finite sequence $0 = t_0 < \dots < t_k = t$ such that $B_{r(t_i)}(\gamma(t_i)) \cap B_{r(t_{i+1})}(\gamma(t_{i+1})) \neq \emptyset$
for any $0 \leq i \leq k-1$. By passing to a subsequence, we may in addition assume that each ball $B_{r(t_i)}(\gamma(t_i))$ only intersects its predecessor $B_{r(t_{i-1})}(\gamma(t_{i-1}))$ and its successor $B_{r(t_{i+i})}(\gamma(t_{i+1}))$. We call any such sequence $(t_i)$ a \emph{good sequence} from $0$ to $t$. The notion is defined analogously for $t\in(-1,0)$.

Fix $t\in(-1,0)\cup(0,1)$ and let $(t_i)$ be a good sequence from $0$ to $t$. Using Lemma~\ref{Lem: good comp side induct} and the fixed labeling $A_\pm(0)$ at $t_0=0$ as the base step, we label the good components of each
$B_{r(t_i)}(\gamma(t_i))\setminus \gamma([-1,1])$ inductively as follows: we define $A_\pm(t_i)$ to be the unique good component intersecting $A_\pm(t_{i-1})$. In this way, we obtain a labeling $A_{\pm}(t)$ of the good components of $B_{r(t)}(\gamma(t))$. It is not difficult to check, using Lemma~\ref{Lem: good comp side induct} and an inductive argument, that this labeling is independent of the good sequence from $0$ to $t$. In this way, we obtain a labeling $A_\pm(t)$ for every $t\in(-1,1)$.  Another induction argument using Lemma~\ref{Lem: good comp side induct} shows that this labeling satisfies the conclusions of the lemma.

Finally, we prove uniqueness. It suffices to show that any labeling $A_\pm(\cdot)$ satisfying the conclusions of the lemma is completely determined by the labeling $A_\pm(0)$. Let $A'_\pm(\cdot)$ be another labeling satisfying the conclusions of the lemma, and assume that $A'_\pm(0)=A_\pm(0)$. Let $t\in(-1,0)\cup(0,1)$ be arbitrary and let $(t_i)$ be a good sequence from $0$ to $t$. By induction on $i$ and the fact that both labelings satisfy the conclusions of the lemma, we obtain $A'_\pm(t_i)=A_\pm(t_i)$ for all $i$. In particular, $A'_\pm(t)=A_\pm(t)$. 
\end{proof}

%Before giving the proof, let us discuss the key difference compared to the previous proposition. As before, we would like to cover $\gamma((0,1))$ with small balls, each of which is contained in $V$ and has two good components with respect to $\gamma$. There are two issues:
%\begin{enumerate}
   % \item $\gamma(0)$ and $\gamma(1)$ are not regular, so we cannot use a compactness argument to obtain a uniform radius $r$ so that every ball of radius $r$ centered at $\gamma(t)$, $t \in (0,1)$ is $(\veps r)$-regular. In particular, we cannot obtain a uniform radius $r$ so that every ball of radius $r$ along $\gamma$ has two good components with respect to $\gamma$.  
   % \item $V$ is a neighborhood of $\gamma((0,1))$ instead of $\gamma([0,1])$. This means that for the balls in our cover whose center is close to one of the endpoints $\gamma(0)$ and $\gamma(1)$, the radius may have to be very small so that the ball is contained in $V$. 
%\end{enumerate}
%On the other hand, for any $0 < a < b < 1$, the set $\gamma((a,b))$ can still by covered by finitely many balls as in the proof of the previous proposition. Therefore, we can deal with both issues by first covering $\gamma((1/4,3/4))$ with balls of uniform radius with the desired properties, and then adding more (possibly smaller) balls to this collection to cover $\gamma((1/8, 7/8))$ and so on. This yields a countable collection of balls whose union gives the desired neighborhood $U$ of $\gamma((0,1))$. 

\begin{Pro}\label{Pro: geod neigh}
Let $\gamma:[0,1] \to X$ be a regular geodesic. Then for any open neighborhood $V$ of $\gamma((0,1))$, there exists an open path-connected neighborhood $U\subset V$ of $\gamma((0,1))$ such that $U \setminus \gamma$ has exactly two components $U_\pm$. Moreover, the topological boundaries of $U_\pm$ in $U$ coincide with $\gamma((0,1))$.
\end{Pro}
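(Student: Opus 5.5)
The plan is to build $U$ as a countable union of small balls centered on $\gamma((0,1))$, each having two good components, and then to glue the local sides using the consistent labeling from Lemma~\ref{Lem: geod neigh side choice}.

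\emph{Choice of radii.} By Theorem~\ref{Thm: good components existence 2} applied to $[c,d]=[2^{-j},1-2^{-j}]$ (after rescaling so that $X$ is $\RCD(-(N-1),N)$, which is harmless locally), for each $j$ there is $\bar r_j>0$ such that every ball $B_r(\gamma(t))$ with $t\in[2^{-j},1-2^{-j}]$ and $r\le \bar r_j$ has two good components. For each $t\in(0,1)$, let $j(t)$ be minimal with $t\in[2^{-j},1-2^{-j}]$. Choose
\[
r(t)\le\min\bigl(\bar r_{j(t)},\,\tfrac1{20}\min(t,1-t)\bigr)
\]
so small that $B_{10r(t)}(\gamma(t))\subset V$. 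Then set $U:=\bigcup_{t\in(0,1)}B_{r(t)}(\gamma(t))$. This set is open, is contained in $V$, and contains $\gamma((0,1))$. It is path-connected because each ball is path-connected (the space is geodesic) and meets $\gamma((0,1))$, which is connected.

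\emph{Sides.} Apply Lemma~\ref{Lem: geod neigh side choice} with $I=(0,1)$ to get labelings $A_\pm(t)$. Define $U_\pm:=\bigcup_t A_\pm(t)$. These sets are open, and they cover $U\setminus\gamma$. The one check needed is that $U\cap\gamma=\gamma((0,1))$. This follows since $r(t)<\min(t,1-t)$, so the balls avoid the endpoints and, by the geodesic property, avoid the other points of $\gamma$ outside $(0,1)$. Hence
\[
B_{r(t)}(\gamma(t))\setminus\gamma = A_+(t)\sqcup A_-(t).
\]
Conclusion (1) of the lemma gives $U_+\cap U_-=\emptyset$. So $U_\pm$ are disjoint, open, and cover $U\setminus\gamma$, which makes them clopen there.

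\emph{Path-connectedness of $U_+$.} Take $x\in A_+(t)$ and $y\in A_+(t')$ with $t<t'$. By compactness of $[t,t']$, choose a finite chain of balls $B_{r(t_i)}(\gamma(t_i))$ with consecutive balls intersecting. Conclusion (2) gives $A_+(t_i)\cap A_+(t_{i+1})\neq\emptyset$, and each $A_+(t_i)$ is path-connected. Chaining these yields a path in $U_+$, and the same argument works for $U_-$. So $U_\pm$ are exactly the two components of $U\setminus\gamma$.

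\emph{Boundary.} Each point $\gamma(t)$ with $t\in(0,1)$ lies in the boundary of $A_\pm(t)$ by Definition~\ref{Def: good comp}(2), and hence in the boundary of $U_\pm$. Conversely, boundary points of $U_\pm$ in $U$ must lie on $\gamma$, since $U_\pm$ are clopen in $U\setminus\gamma$.

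\emph{Main obstacle.} I expect the delicate step to be the consistency of the labeling across the whole open interval, in particular that $U_+\cap U_-=\emptyset$ even for non-overlapping balls. This is handled by Lemma~\ref{Lem: geod neigh side choice}(1) together with property (3) of good components. The only genuine care needed in the construction is making the radii shrink near the endpoints while still lying within the good-component range.
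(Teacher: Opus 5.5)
Your proposal is correct and follows essentially the same route as the paper: balls with two good components from Theorem~\ref{Thm: good components existence 2}, the consistent labeling of Lemma~\ref{Lem: geod neigh side choice}, $U_\pm$ defined as unions of the labeled sides, and path-connectedness obtained by chaining overlapping balls via property~(2) of that lemma. You also make explicit that the balls must avoid $\gamma(0)$ and $\gamma(1)$, which the paper leaves implicit and which is needed for $U\cap\gamma=\gamma((0,1))$; note, however, that this condition should read $r(t)<L\min(t,1-t)$ with $L$ the length of $\gamma$, because $\gamma$ is parametrized on $[0,1]$ with speed $L$.
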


\begin{proof}
By Theorem~\ref{Thm: good components existence 2}, for each $t \in (0,1)$, there exists $r(t) > 0$ such that $B_{r(t)}(\gamma(t))$ has two good components with respect to $\gamma$. Since $r(t)$ can be chosen arbitrarily small for each $t$, we may further assume that $B_{r(t)}(\gamma(t)) \subset V$.

Label the components of $B_{r(t)}(\gamma(t)) \setminus \gamma$ for $t \in (0,1)$ according to Lemma~\ref{Lem: geod neigh side choice} and let
\[
U := \bigcup_{t \in (0,1)} B_{r(t)}(\gamma(t)) \subset V \quad \text{and} \quad U_\pm := \bigcup_{t \in (0,1)} A_\pm(t).
\]
Clearly, $U$ is path-connected, since any $x \in U$ can be connected to $\gamma((0,1))$ by a curve in $U$. By property (1) of Lemma~\ref{Lem: geod neigh side choice}, the sets $U_\pm$ are disjoint. Since
$B_{r(t)}(\gamma(t)) \setminus \gamma = A_+(t) \sqcup A_-(t)$ for all $t \in (0,1)$, it follows that $U \setminus \gamma = U_+ \sqcup U_-$. The sets $U_\pm$ are both open and hence they must also closed in $U \setminus \gamma$, being complements of each other. By Lemma~\ref{Lem: clopen path-connected}, it follows that $U_\pm$ are each a union of components of $U$. On the other hand, since the union of two path-connected sets with non-empty intersection is path-connected, it is not difficult to use property (2) of Lemma~\ref{Lem: geod neigh side choice} and an inductive argument to show that both $U_\pm$ are path-connected. Therefore, $U_\pm$ are precisely the two components of $U$.

It remains to show that the topological boundaries of $U_\pm$ in $U$ coincide with $\gamma((0,1))$. It follows directly from the definition of good components and the construction of $U$ that $\gamma((0,1))$ is contained in the topological boundaries of $U_\pm$ in $U$. Since $U_\pm$ are clopen in $U \setminus \gamma$, the reverse inclusion is immediate.  
\end{proof}

\section{Corners in spaces of essential dimension two}\label{Sec: corner}
In this section, we introduce the notion of a corner, which is a curve obtained by concatenating two geodesics. We will show that any corner formed by concatenating two extendible geodesics disconnects all small balls centered at the point of intersection. This is a step towards proving a Jordan-type theorem for a class of simple loops obtained by concatenating geodesics, asserting that such loops disconnect the space $X$ into exactly two components. 

\begin{Def}\label{Def: corner}
Let $(X, \dd_X, \mm_X)$ be an $\RCD(K,N)$ space of essential dimension $2$. A continuous curve $\alpha:[a,b] \to X$ is called a \emph{corner} if there exists $c \in (a,b)$ such that
\begin{enumerate}
    \item the restrictions $\alpha \lvert_{[a,c]}$ and $\alpha \lvert_{[c,b]}$ are regular geodesics;
    \item for every $\veps > 0$, the restriction $\alpha \lvert_{[c-\veps, c+\veps]}$ is not a geodesic;
    \item $\alpha([a,c]) \cap \alpha([c,b]) = \{\alpha(c)\}$;
\end{enumerate}
The point $p := \alpha(c)$ is called the \emph{vertex} of $\alpha$. We call the two geodesics $\alpha \lvert_{[a,c]}$ and $\alpha \lvert_{[c,b]}$ (as well as their images) the \emph{edges} of $\alpha$. If the two edges of $\alpha$ are extendible past $p$, then we call $\alpha$ an \emph{extendible corner}.
\end{Def}
Condition~(2) ensures that the decomposition of a corner $\alpha$ into two edges is unique. As a convention, we always parameterize a corner so that its two edges have the same speed $c$. In this case, we call $c$ the \emph{speed} of $\alpha$. 

\subsection{Components of balls at extendible corners} In this subsection, we extend the notion of good components to corners and show that every extendible corner disconnects sufficiently small balls centered at its vertex into two good components. 

\begin{Def}[Good components at corners]\label{Def: good comp corner}
Let $(X, \dd_X, \mm_X)$ be an $\RCD(K,N)$ space of essential dimension $2$, and let $\alpha:[a,b] \to X$ be a corner. Let $p := \alpha(c)$ be the vertex of $\alpha$. For any $r > 0$, we say that $B_r(p)$ has \emph{two good components with respect to $\alpha$} if there exist two disjoint components $A_\pm$ of $B_r(p) \setminus \alpha$ such that the following hold:
\begin{enumerate}
    \item $A_\pm$ are clopen in $B_r(p) \setminus \alpha$ and $B_r(p) \setminus \alpha = A_+ \sqcup A_-$. 
    \item The topological boundaries of $A_\pm$ in $B_r(p)$ coincide with $\alpha \cap B_r(p).$
    \item Every continuous curve contained in $B_{10r}(p)$ connecting a point $x \in A_+$ to a point $y \in A_-$ intersects $\alpha$.
\end{enumerate}
In this case, the two components $A_\pm$ are called \emph{good components} of $B_r(p) \setminus \alpha$.
\end{Def}
 
\begin{Thm}[cf. Theorem~\ref{Thm: good components existence 2}]\label{Thm: good components existence corner}
Let $(X, \dd_X, \mm_X)$ be an $\RCD(K,N)$ space of essential dimension $2$, and let $\alpha:[-1,1] \to X$ be an extendible corner with vertex $p := \alpha(c)$. Then there exists $\bar r > 0$ such that for every $0 < r \leq \bar r$, the ball $B_r(p)$ has two good components with respect to $\alpha$. 
\end{Thm}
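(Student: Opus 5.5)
Let $\gamma_1 := \alpha|_{[-1,c]}$ and $\gamma_2 := \alpha|_{[c,1]}$ be the two edges, with speed normalized so the vertex is $p$. By extendibility, each edge extends a little past $p$. Call the extensions $\tilde\gamma_1$ and $\tilde\gamma_2$; each is a geodesic having $p$ as an interior point.

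The first step is to show that the extensions are regular. By Theorem~\ref{Thm: geodesic interior tangent}, $\tilde\gamma_i$ is either regular or a boundary geodesic along its interior. Its interior overlaps the interior of $\gamma_i$, which lies in $\mathcal R_2$, so $\tilde\gamma_i$ is regular. In particular $p \in \mathcal R_2$. Theorem~\ref{Thm: good components existence 2} then gives $\bar r_i>0$ such that, for $r \le \bar r_i$, the ball $B_r(p)$ has two good components $A^i_\pm$ with respect to $\tilde\gamma_i$. It also gives the geodesic no-crossing property of Lemma~\ref{Lem: geod no glance} in $B_{r/10}(p)$.

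Next, the non-branching property shows that the outgoing extension $\tilde\gamma_1|_{(c,c+\eta)}$ differs from $\gamma_2$ and meets it only at $p$. Indeed, two geodesics through $p$ that share a further point coincide, and condition (2) of Definition~\ref{Def: corner} rules this out. Likewise $\tilde\gamma_2$ extends backward past $p$ and meets $\gamma_1$ only at $p$. In $B_r(p)$ this gives four rays emanating from $p$: $\gamma_1^-$ (the edge), $\gamma_1^+$ (the extension), $\gamma_2^+$ (the edge) and $\gamma_2^-$ (the extension). The edge $\gamma_2^+$ minus $p$ lies in one component of $B_r(p)\setminus\tilde\gamma_1$, say $A^1_+$, because it is a path avoiding $\tilde\gamma_1$. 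The candidate good components are then
\[
A_- := A^1_- \cup \big(A^1_+\cap A^2_{\sigma}\big) \cup \gamma_1^+\setminus\{p\},\qquad A_+ := A^1_+\cap A^2_{-\sigma}.
\]
Here $\sigma$ is chosen so that $A^2_\sigma$ is the side of $\tilde\gamma_2$ containing $\gamma_1^+$. In other words, $\tilde\gamma_2$ cuts the half-ball $A^1_+$ into two pieces, and the extension ray $\gamma_1^+$ is glued onto the other side.

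To make this rigorous, the key geometric claim is that inside $A^1_+$, the relevant part $\gamma_2^+\setminus\{p\}$ of $\tilde\gamma_2$ separates $A^1_+\cap B_{r}(p)$ into exactly two pieces. I would prove this with the explicit approximate-half-ball bookkeeping of Lemma~\ref{Lem: comp exist metric}, run for $\tilde\gamma_2$, together with Lemma~\ref{Lem: big set small set comp} to match components across the two scales $r$ and $10r$. Path-connectedness of $A_\pm$ should follow as in Proposition~\ref{Pro: comp path-connected}: join points to $p$ by geodesics, then use the no-crossing property of Lemma~\ref{Lem: geod no glance} for both $\tilde\gamma_1$ and $\tilde\gamma_2$ near $p$. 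Openness and the boundary condition (2) come from the same properties for the $A^i_\pm$. Condition (3) follows by running the construction at scale $10r$ and applying Lemma~\ref{Lem: big set small set comp}, as in the proof of Theorem~\ref{Thm: good components existence}.

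The main obstacle is controlling how the two geodesics sit relative to each other at small scales. The tangent cone at $p$ is $\R^2$, and $\tilde\gamma_1$, $\tilde\gamma_2$ blow up to two lines through the origin. A priori these lines may coincide if the corner angle is $0$ or $\pi$. This happens when the edges are tangent but distinct, so the configuration need not be uniformly transversal across scales. I would handle this without angles. For each small $s$, apply the half-ball decomposition for $\tilde\gamma_1$ at $p$ and scale $s$, use the no-crossing property to show that $\gamma_2^+\cap(B_s(p)\setminus B_{s/2}(p))$ lies in a single side, and then argue purely topologically via Lemma~\ref{Lem: clopen path-connected}. The difficulty is to obtain the exact two-component count inside $A^1_+$ even when $\gamma_2^+$ hugs $\gamma_1^+$, which is where I would expect most of the work to go.
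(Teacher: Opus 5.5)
You have set up the right objects, and your candidate sets coincide with the paper's ($A_+=A^1_+\cap A^2_+$ and $A_-=A^1_-\cup A^2_-$ once the sides are labelled). However, the step that carries the proof is left open, and the route you sketch for it misses the real issue.

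The missing ingredient is an \emph{opposite-sides} fact. The two halves $\gamma_1^-\setminus\{p\}$ and $\gamma_1^+\setminus\{p\}$ of $\tilde\gamma_1$ lie in different components of $B_r(p)\setminus\tilde\gamma_2$. Symmetrically, $\gamma_2^\pm\setminus\{p\}$ lie in different components of $B_r(p)\setminus\tilde\gamma_1$. This follows at once from the ``moreover'' clause of Theorem~\ref{Thm: good components existence 2}, i.e.\ Lemma~\ref{Lem: geod no glance}. If both halves lay in one component, then for $0<s<r/10$ the subsegment of $\tilde\gamma_1$ between its two points at distance $s$ from $p$ would be a geodesic joining two points of that component inside $B_{r/10}(p)$. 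It would also pass through $p\in\tilde\gamma_2$, which the no-crossing clause forbids.

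Without this fact your decomposition is not even exhaustive. The points of $\gamma_2^-\setminus\{p\}$ belong to neither of your sets unless $\gamma_2^-\setminus\{p\}\subset A^1_-$, and a priori $A^2_{-\sigma}$ could also contain $\gamma_1^-$.

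Once the opposite-sides fact is available, your ``key geometric claim'' becomes immediate, with no need for approximate half-ball bookkeeping.
\begin{itemize}
\item Since $A^1_+\cap\tilde\gamma_2=\gamma_2^+\setminus\{p\}$, we get $A^1_+\setminus\gamma_2^+=(A^1_+\cap A^2_+)\sqcup(A^1_+\cap A^2_-)$.
\item Nonemptiness of the pieces and the boundary condition follow from the boundary property of the $A^i_\pm$ at points of $\gamma_1^\pm$ and $\gamma_2^\pm$.
\item $A_+$ is path-connected via the curve of Proposition~\ref{Pro: comp path-connected}: geodesics to $p$ followed by a short geodesic near $p$. Non-branching and no-crossing make this curve avoid both $\tilde\gamma_1$ and $\tilde\gamma_2$.
\item $A_-=A^1_-\cup A^2_-$ is path-connected as a union of two path-connected sets that meet. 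That curve construction does not work for $A_-$ directly, contrary to what you suggest, because $A_-$ contains points on both sides of $\tilde\gamma_1$.
\end{itemize}

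The argument is purely topological, so the degeneracy you worry about (tangent cones, corner angles $0$ or $\pi$, edges hugging each other) plays no role. Your proposed multiscale observation, that $\gamma_2^+\cap(B_s(p)\setminus B_{s/2}(p))$ lies on one side of $\tilde\gamma_1$, holds trivially by connectedness. It says nothing about where $\gamma_2^-$ or $\gamma_1^-$ sit, and that is the actual point.
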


\begin{proof}
Up to rescaling, reparameterizing, and shortening the edges of $\alpha$, we may assume that we are in the following situation:
\begin{enumerate}
    \item $\alpha:[-1,1] \to X$ is a unit-speed extendible corner with vertex $p := \alpha(0)$.
    \item There exist two unit-speed regular geodesics $\gamma_1, \gamma_2:[-1,1] \to X$ with $\gamma_1 \neq \gamma_2$ and
     \[
    \alpha(t)=
    \begin{cases}
        \gamma_1(t), & t\in[-1,0],\\
        \gamma_2(t), & t\in[0,1].
    \end{cases}
    \]
\end{enumerate}
Clearly, this does not change the set $B_r(p) \setminus \alpha$ for $r$ sufficiently small.

By Theorem~\ref{Thm: good components existence 2}, there exists $\bar r>0$ such that for every $0<r\le 10\bar r$ and each $i\in\{1,2\}$, the ball $B_r(p)$ has two good components $A_\pm$ with respect to $\gamma_i$. Fix $0<r\le \bar r$, and for each $i\in\{1,2\}$, let $A_i^\pm$ denote the two good components of $B_r(p)\setminus \gamma_i$ as above.

Since $\gamma_1$ and $\gamma_2$ are unit-speed, extendible, intersect at $p$, and satisfy $\gamma_1 \neq \gamma_2$, the non-branching property (Theorem~\ref{Thm: non-branching}) implies that they intersect only at $p$. In particular, $\gamma_2((0,r)) \subset B_r(p) \setminus \gamma_1.$ This implies that $\gamma_2((0,r))$ is contained entirely in one of the components $A_1^\pm$. We fix the labeling of $A_1^\pm$ so that 
$\gamma_2((0,r)) \subset A_1^+$. Similarly, we fix the labeling of $A_2^\pm$ so that $\gamma_1((-r,0)) \subset A_2^+$.

We claim that $\gamma_1((0,r)) \subset A_2^-$ and $\gamma_2((-r,0)) \subset A_1^-$. We prove the first inclusion; the second follows by symmetry. Suppose for the sake of contradiction that $\gamma_1((0,r)) \subset A_2^-$ does not hold. Since $\gamma_1((0,r))$ is path-connected and contained in $B_r(p) \setminus \gamma_2$, it must be contained entirely in the other component. In other words, $\gamma_1((0,r)) \subset A_2^+$.
In particular, both $\gamma_1(-r/10)$ and $\gamma_1(r/10)$ lie in $A_2^+$. Therefore, the geodesic $\gamma_1 \lvert_{[-r/10, r/10]}$ joins two points in $A_2^+ \cap B_{r/10}(\gamma_2(0))$ and intersects $\gamma_2$ at $p = \gamma_2(0)$. This contradicts Theorem~\ref{Thm: good components existence 2}.

Let 
\[
A_+ := A^+_1 \cap A^+_2 \quad \text{and} \quad A_- := A^-_1 \cup A^-_2.
\]
We will show that $A_\pm$ are two good components of $B_r(p)$ with respect to $\alpha$.

First, we verify property~(1) in Definition~\ref{Def: good comp corner}. Clearly, $A_\pm$ are both open. Since $B_r(p) \setminus \alpha = A_+ \sqcup A_-$, the sets $A_\pm$ are also closed in $B_r(p) \setminus \alpha$.

Next, we check that $A_\pm$ are non-empty. The set $A_-$ is clearly non-empty so we focus on $A_+$. Since $\gamma_2(r/2) \in A_1^+$ and $A_1^+$ is open, there exists $s > 0$ such that $B_s(\gamma_2(r/2)) \subset A_1^+$. On the other hand, $\gamma_2(r/2)$ lies in the topological boundary of $A_2^+$. In particular, $B_s(\gamma_2(r/2))$ intersects $A^2_+$. Therefore, $A_+ = A^+_1 \cap A^+_2$ is non-empty.

Next, we check that $A_\pm$ are path-connected. We begin with $A_+$. Let $x, y \in A_+ = A_1^+ \cap A_2^+$. The curves from $x$ to $y$ constructed in Proposition~\ref{Pro: comp path-connected} to show the path-connectedness of $A_1^+$ and $A_2^+$ are the same, and is thus contained in $A_+$. Therefore, $A_+$ is path-connected. 

We now turn to $A_-$. Since the union of two path-connected sets with non-empty intersection is path-connected, it suffices to check that $A_1^- \cap A_2^- \neq \emptyset.$ This follows from the same argument used to show that $A_+ = A_1^+ \cap A_2^+$ is non-empty. 

It follows now from Lemma~\ref{Lem: clopen path-connected} that $A_\pm$ are exactly the two components of $B_r(p) \setminus \alpha$.  

Next, we verify property (2) in Definition~\ref{Def: good comp corner}. It is immediate that $\alpha((-r,r))$ is contained in the topological boundary of $A_-$ so we focus on $A_+$. Let $t \in (0,r)$, so that $\alpha(t) = \gamma_2(t)$. By repeating the same argument as when we checked that $A_+$ is non-empty, we obtain
\[
A_+ \cap B_s(\gamma_2(t)) = (A^+_1 \cap A^+_2) \cap B_s(\gamma_2(t)) \neq \emptyset
\]
for all small $s>0$. This shows that $\gamma_2(t)$ lies in the topological boundary of $A_+$. The same holds for $t \in (-r,0)$. Therefore, $\alpha((-r,r))$ is contained in the topological boundary of $A_+$.

We have shown that $\alpha((-r,r))$ is contained in the topological boundaries of $A_\pm$ in $B_r(p)$. The reverse inclusion is immediate since $A_\pm$ are clopen in $B_r(p) \setminus \alpha$.

Finally, by our choice of $r$ and applying our previous arguments at the scale $10r$, we see that $B_{10r}(p) \setminus \alpha$ also has two components $B_\pm$, verifying all the properties of good components except property~(3). Arguing as in the proof of Theorem~\ref{Thm: good components existence} using $B_\pm$ as auxiliary sets, we obtain property~(3) in Definition~\ref{Def: good comp corner} for $A_\pm$.  
\end{proof}

\subsection{Neighborhoods of extendible corners}\label{Subsec: corner neigh}
In this subsection, we use the good components obtained in Theorems~\ref{Thm: good components existence 2} and \ref{Thm: good components existence corner} to construct path-connected neighborhoods of extendible corners that are disconnected into exactly two components by the corner. The construction follows the same ideas as that of Subsection~\ref{Subsec: geod neigh}.

For the remainder of this subsection, let $(X, \dd_X, \mm_X)$ be an $\RCD(K,N)$ space of essential dimension $2$.

We first outline two lemmas, which are analogous to Lemmas~\ref{Lem: good comp side induct} and \ref{Lem: geod neigh side choice} in Subsection~\ref{Subsec: geod neigh}. The proofs are essentially the same, so we will omit them.

\begin{Lem}\label{Lem: good comp corner side induct}
Let $\alpha:[-1,1] \to X$ be a corner with vertex $\alpha(c)$. Let $r > 0$ and assume that $B_r(\alpha(c))$ has two good components with respect to $\alpha$, denoted by $A_\pm$. Moreover, let $t \in (c,1]$ and $s > 0$, and assume that the ball $B_{10s}(\alpha(t))$ does not intersect the geodesic $\alpha \lvert_{[-1,c]}$ and that $B_{s}(\alpha(t))$ has two good components with respect to the geodesic $\alpha \lvert_{[c,1]}$, denoted by $A'_\pm$. If $B_r(\alpha(c)) \cap B_{s}(\alpha(t)) \neq \emptyset$, then, up to relabeling $A_\pm'$, 
\[
A_+ \cap A_+' \neq \emptyset, \quad A_- \cap A_-' \neq \emptyset, \quad A_+ \cap A_-' = \emptyset, \quad A_- \cap A_+' = \emptyset,
\]
\end{Lem}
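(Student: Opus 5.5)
We will mimic the proof of Lemma~\ref{Lem: good comp side induct}, the key point being that near $\alpha(t)$ the corner coincides with the single geodesic $\alpha|_{[c,1]}$. First we locate a small ball inside the overlap. Since $B_r(\alpha(c)) \cap B_s(\alpha(t)) \neq \emptyset$, we have $\dd_X(\alpha(c),\alpha(t)) < r+s$. Hence there exist $t_0 \in (c,t]$ and $r_0>0$ with $B_{r_0}(\alpha(t_0)) \subset B_r(\alpha(c)) \cap B_s(\alpha(t))$: move along the edge $\alpha|_{[c,1]}$ from $\alpha(c)$ toward $\alpha(t)$, and take $t_0$ strictly greater than $c$ so that $\alpha(t_0)$ is not the vertex. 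Since $B_{10s}(\alpha(t))$ misses $\alpha|_{[-1,c]}$, we get
\[
B_s(\alpha(t)) \setminus \alpha = B_s(\alpha(t)) \setminus \alpha|_{[c,1]},
\]
so $A'_\pm$ are components of the same kind of set as $A_\pm$ (the complement of $\alpha$) inside $B_s(\alpha(t))$. The same identity holds in $B_{10s}(\alpha(t))$. This lets us use property~(3) of Definition~\ref{Def: good comp} for $A'_\pm$ against curves avoiding $\alpha$.

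Next we run the labelling argument. By property~(2) of Definition~\ref{Def: good comp corner}, the point $\alpha(t_0)\in\alpha\cap B_r(\alpha(c))$ lies in the boundary of $A_+$. Pick $x \in B_{r_0}(\alpha(t_0)) \cap A_+$; then $x\in A'_+$ after relabelling. Now suppose some $y \in A_+\cap A'_-$ existed. Since $A_+$ is path-connected, there is a curve from $x$ to $y$ in $A_+$; it avoids $\alpha$, in particular $\alpha|_{[c,1]}$, and joins $A'_+$ to $A'_-$. To contradict property~(3) for $A'_\pm$, we need this curve inside $B_{10s}(\alpha(t))$.

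This containment is the main obstacle. Unlike Lemma~\ref{Lem: good comp side induct}, there is no assumption $r\le s$, so $B_r(\alpha(c))$ need not lie in $B_{10s}(\alpha(t))$. The first thing I would try is to use property~(3) of $A_\pm$ instead, in whichever direction the radii permit. If $r \ge s$, then $B_s(\alpha(t)) \subset B_{10r}(\alpha(c))$ because $\dd_X(\alpha(c),\alpha(t))<r+s$. In that case, connect $x$ and $y$ by a curve in $A'_\pm$: if $x\in A_+\cap A'_+$ and $y\in A_-\cap A'_+$, such a curve would join $A_+$ to $A_-$ inside $B_{10r}(\alpha(c))$ while avoiding $\alpha$, which is a contradiction. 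Thus in each case we choose the ball of larger radius as the ambient $10\times$-ball, exactly as in the case split of Lemma~\ref{Lem: good comp side induct}. Swapping the roles of $A$ and $A'$ is harmless, because the four conclusions are symmetric in the two labellings.

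Finally, for $A_-\cap A'_-\neq\emptyset$ and $A_-\cap A'_+=\emptyset$, we repeat the argument with a point $z\in B_{r_0}(\alpha(t_0))\cap A_-$. If $z$ lay in $A'_+$, then both $A_\pm$ would be disjoint from $A'_-$ near $\alpha(t_0)$. That would contradict $\alpha(t_0)\in\partial A'_-$, using property~(2) of Definition~\ref{Def: good comp} for the geodesic $\alpha|_{[c,1]}$. Hence $z\in A'_-$, and the separation argument above gives the last emptiness statement.
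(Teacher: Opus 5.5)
Your proposal is correct and follows the paper's approach: the paper omits this proof as a direct adaptation of Lemma~\ref{Lem: good comp side induct}, applying property~(3) of the larger ball and using $B_{10s}(\alpha(t))\setminus\alpha = B_{10s}(\alpha(t))\setminus\alpha|_{[c,1]}$, which is what you do. Your case split is unnecessary, however. Since $\alpha(c)\notin B_{10s}(\alpha(t))$ and the two balls meet, $10s\le \dd_X(\alpha(c),\alpha(t))<r+s$, so $r>9s$ always holds, and only the case where you apply property~(3) of the corner components $A_\pm$ inside $B_{10r}(\alpha(c))\supset B_s(\alpha(t))$ actually occurs.
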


\begin{Lem}\label{Lem: corner neigh side choice}
Let $\alpha:[-1,1] \to X$ be a corner with vertex $\alpha(c)$. Suppose that for every $t \in (-1,1)$, there exists $r(t) > 0$ such that the following hold:
\begin{enumerate}
    \item[(i)] If $t = c$, then $B_{r(t)}(\alpha(t))$ has two good components with respect to $\alpha$.
    \item[(ii)] If $t \in (-1,c)$, then $B_{10r(t)}(\alpha(t))$ does not intersect the geodesic $\alpha \lvert_{[c,1]}$ and $B_{r(t)}(\alpha(t))$ has two good components with respect to the geodesic $\alpha \lvert_{[-1,c]}$.
    \item[(iii)] If $t \in (c,1)$, then $B_{10r(t)}(\alpha(t))$ does not intersect the geodesic $\alpha \lvert_{[-1,c]}$ and $B_{r(t)}(\alpha(t))$ has two good components with respect to the geodesic $\alpha \lvert_{[c,1]}$.
\end{enumerate}
Then there exists a labeling of the good components of $B_{r(t)}(\alpha(t))$ (with respect to $\alpha$, $\alpha \lvert_{[-1,c]}$, or $\alpha \lvert_{[c,1]}$, depending on whether $t = c$, $t \in (-1,c)$, or $t \in (c,1)$, respectively) by $A_\pm(t)$ for all $t \in (-1,1)$ such that the following hold:
\begin{enumerate}
    \item For all $t,t' \in (-1,1)$, we have $A_+(t) \cap A_-(t') = \emptyset$ and $A_-(t) \cap A_+(t') = \emptyset$.
    \item If $t, t' \in (-1,1)$ are such that $B_{r(t)}(\alpha(t)) \cap B_{r(t')}(\alpha(t')) \neq \emptyset$, then $ A_+(t) \cap A_+(t') \neq \emptyset$ and $A_-(t) \cap A_-(t') \neq \emptyset$.
\end{enumerate}
Moreover, this labeling is unique up to exchanging $A_\pm(t)$ for all $t \in (-1,1)$.
\end{Lem}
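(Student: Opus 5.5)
I will follow the proof of Lemma~\ref{Lem: geod neigh side choice} almost verbatim, with Lemma~\ref{Lem: good comp corner side induct} (and its mirror version for $t \in (-1,c)$) used at the vertex and Lemma~\ref{Lem: good comp side induct} used away from it. First fix a labeling $A_\pm(c)$ of the two good components of $B_{r(c)}(\alpha(c)) \setminus \alpha$. For $t \in (c,1)$, compactness of $[c,t]$ gives a good sequence $c = t_0 < t_1 < \dots < t_k = t$: consecutive balls $B_{r(t_i)}(\alpha(t_i))$ intersect, and after discarding redundant balls each meets only its predecessor and successor. I then label inductively. At the first step ($t_0 = c$, $t_1 > c$), Lemma~\ref{Lem: good comp corner side induct} gives the unique labeling $A_\pm(t_1)$ with $A_\pm(c) \cap A_\pm(t_1) \neq \emptyset$ and $A_\pm(c) \cap A_\mp(t_1) = \emptyset$. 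At later steps both balls are centered on the geodesic $\alpha|_{[c,1]}$, so Lemma~\ref{Lem: good comp side induct} applies with respect to that geodesic. The case $t \in (-1,c)$ is symmetric.

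For the steps with $t_{i-1}, t_i > c$, I must check that the good components of $B_{r(t_i)}(\alpha(t_i))$ with respect to $\alpha|_{[c,1]}$ are really components relative to $\alpha$. This holds because hypothesis (iii) gives $B_{10r(t_i)}(\alpha(t_i)) \cap \alpha|_{[-1,c]} = \emptyset$, so $B_{r(t_i)}(\alpha(t_i)) \setminus \alpha = B_{r(t_i)}(\alpha(t_i)) \setminus \alpha|_{[c,1]}$. The same observation makes property~(3) of good components usable in the proof of Lemma~\ref{Lem: good comp side induct}: curves inside $B_{10r}$ cannot meet the other edge.

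Next I would show that the labeling does not depend on the chosen good sequence. Given two good sequences from $c$ to $t$, I refine to a common one. Since each step of the induction forces the labeling uniquely via the disjointness relations, the two labelings agree. The same induction yields properties~(1) and~(2). Property~(2) is immediate for consecutive balls. For property~(1) with arbitrary $t,t'$, I argue by contradiction. Suppose $x \in A_+(t) \cap A_-(t')$. Then $B_{r(t)}(\alpha(t)) \cap B_{r(t')}(\alpha(t')) \neq \emptyset$, so the two balls overlap and one of the induction lemmas applies to the pair. Its conclusion, combined with the consistency of the labeling along good sequences through both $t$ and $t'$, then forbids $x$. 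Finally, uniqueness up to a global swap follows exactly as before: any admissible labeling is determined along good sequences by its value at $c$.

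The main obstacle is the independence and consistency step when $t$ and $t'$ lie on opposite sides of $c$ but their balls intersect, for instance two small balls near the vertex on different edges. Here neither Lemma~\ref{Lem: good comp side induct} nor Lemma~\ref{Lem: good comp corner side induct} applies directly to the pair. My plan is to use hypotheses (ii) and (iii), which force both balls to be far from the opposite edge and hence far from $\alpha(c)$. I would then try to show that such balls cannot intersect unless both also meet $B_{r(c)}(\alpha(c))$. In that case I route the comparison through the vertex ball, combining property~(3) of the corner's good components with the triangle inequality. If that fails, I would shrink the $r(t)$ near the vertex so that opposite-side balls are disjoint; this is harmless, since the lemma is only used with such freedom in the subsequent construction.
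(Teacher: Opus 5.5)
Your strategy is the paper's. The paper omits this proof, saying it is essentially that of Lemma~\ref{Lem: geod neigh side choice}, with Lemma~\ref{Lem: good comp corner side induct} (and its mirror image) used at the vertex, which is your plan. The step you flag as the main obstacle is not actually an obstacle. You should also drop the fallback of shrinking $r(t)$ near the vertex: the radii are fixed by the lemma's hypotheses, so shrinking them would prove a different statement.

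If $t<c<t'$, then $\alpha(t')\in\alpha([c,1])$ and $\alpha(t)\in\alpha([-1,c])$, so (ii) and (iii) give
\[
\dd_X(\alpha(t),\alpha(t'))\ \ge\ 10\max\bigl(r(t),r(t')\bigr).
\]
On the other hand, $B_{r(t)}(\alpha(t))\cap B_{r(t')}(\alpha(t'))\neq\emptyset$ would force $\dd_X(\alpha(t),\alpha(t'))<r(t)+r(t')\le 2\max(r(t),r(t'))$. Hence balls centered on opposite sides of the vertex are always disjoint. For such pairs, property~(1) holds trivially and property~(2) is vacuous. Any chain of overlapping balls from one edge to the other must pass through $B_{r(c)}(\alpha(c))$, so no routing argument is needed.

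The same computation, using $\alpha(c)\in\alpha([-1,c])\cap\alpha([c,1])$, shows that every side ball meeting the vertex ball satisfies $9r(t)<r(c)$. Such a ball therefore lies in $B_{2r(c)}(\alpha(c))$. This makes the consistency checks at the vertex immediate.

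For example, let $t,t'>c$ with $A_\pm(t)$ and $A_\pm(t')$ both obtained from $A_\pm(c)$ via Lemma~\ref{Lem: good comp corner side induct}, and suppose $y\in A_+(t)\cap A_-(t')$. Take a point of $A_+(t)\cap A_+(c)$ and a point of $A_-(t')\cap A_-(c)$. Join them to $y$ inside the components $A_+(t)$ and $A_-(t')$, which avoid $\alpha$ by (iii). The result is a curve in $B_{2r(c)}(\alpha(c))\setminus\alpha$ from $A_+(c)$ to $A_-(c)$, contradicting property~(3) of the corner's good components.

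The same argument shows that transporting labels from $c$ directly, or through an intermediate side ball, gives the same result. Away from the vertex, consistency along each edge is then exactly the argument of Lemma~\ref{Lem: geod neigh side choice}.
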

%We note that the first part (ii) and (iii) above ensures that if $t \in (a,c)$ and $t' \in (b,c)$, then
%\[
%B_{r(t)}(\alpha(t)) \cap B_{r(t')}(\alpha(t')) = \emptyset
%\]

We can now construct a path-connected neighborhood of the interior of an extendible corner which is disconnected into two components by the corner, using the same ideas as in the proof of Proposition~\ref{Pro: geod neigh}.
\begin{Pro}\label{Pro: corner neigh}
Let $\alpha:[-1,1] \to X$ be an extendible corner. Then for any open neighborhood $V$ of $\alpha((-1,1))$, there exists an open path-connected neighborhood $U \subset V$ of $\alpha((-1,1))$ such that $U \setminus \alpha$ has exactly two components $U_\pm$. Moreover, the topological boundaries of $U_\pm$ in $U$ coincide with $\alpha((-1,1))$.
\end{Pro}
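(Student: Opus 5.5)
The plan is to imitate the proof of Proposition~\ref{Pro: geod neigh}, replacing Lemma~\ref{Lem: geod neigh side choice} by Lemma~\ref{Lem: corner neigh side choice}. First we choose the radius function $r(t)$ for $t\in(-1,1)$ so that the hypotheses (i)--(iii) of Lemma~\ref{Lem: corner neigh side choice} hold and every ball lies in $V$.

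At the vertex $t=c$, Theorem~\ref{Thm: good components existence corner} gives $\bar r>0$ such that $B_r(\alpha(c))$ has two good components with respect to $\alpha$ for all $r\le\bar r$. We take $r(c)\le \bar r$ small enough that $B_{r(c)}(\alpha(c))\subset V$.

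For $t\in(c,1)$, the edge $\alpha|_{[c,1]}$ is a regular geodesic, and $\alpha(t)$ lies in its interior. So Theorem~\ref{Thm: good components existence 2}, applied on a compact subinterval containing $t$, gives two good components of $B_r(\alpha(t))$ with respect to $\alpha|_{[c,1]}$ for all sufficiently small $r$. Since $\alpha(t)\notin\alpha([-1,c])$ by condition~(3) in Definition~\ref{Def: corner}, and $\alpha([-1,c])$ is compact, we have $\dd_X(\alpha(t),\alpha([-1,c]))>0$. Shrinking $r(t)$ below a tenth of this distance makes $B_{10r(t)}(\alpha(t))$ disjoint from the other edge, and we also require $B_{r(t)}(\alpha(t))\subset V$. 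The case $t\in(-1,c)$ is symmetric. The radii may degenerate as $t\to c$ or $t\to\pm1$, but this is harmless because only pointwise positivity is needed.

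Next we apply Lemma~\ref{Lem: corner neigh side choice} to obtain a consistent labeling $A_\pm(t)$, and define
\[
U:=\bigcup_{t\in(-1,1)}B_{r(t)}(\alpha(t)),\qquad U_\pm:=\bigcup_{t\in(-1,1)}A_\pm(t).
\]
The verification then follows Proposition~\ref{Pro: geod neigh}.

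\begin{itemize}
\item $U$ is open, contained in $V$, and path-connected, since every ball connects radially to the connected curve $\alpha((-1,1))$.
\item $U_\pm$ are open, and they are disjoint by property (1) of the lemma.
\item $U\setminus\alpha=U_+\cup U_-$. For $t\neq c$, a point of $B_{r(t)}(\alpha(t))\setminus\alpha$ avoids the edge through $\alpha(t)$, so it lies in $A_+(t)\sqcup A_-(t)$. For $t=c$ the decomposition is with respect to $\alpha$ itself.
\item $U_\pm$ are therefore clopen in $U\setminus\alpha$.
\item Each $U_\pm$ is path-connected. We use property (2) of the lemma along chains of overlapping balls, obtained from compactness of subintervals $[c,t]$ and $[t,c]$, inductively as in Lemma~\ref{Lem: geod neigh side choice}. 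By Lemma~\ref{Lem: clopen path-connected}, $U_\pm$ are then exactly the two components of $U\setminus\alpha$.
\item The topological boundaries of $U_\pm$ in $U$ equal $\alpha((-1,1))$. One inclusion holds because every $\alpha(t)$ is in the boundary of both $A_\pm(t)$, by property (2) of the respective good-component definitions. The other holds because $U_\pm$ are clopen in $U\setminus\alpha$.
\end{itemize}

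The main obstacle is that the balls are good with respect to different curves: the whole corner $\alpha$ at the vertex, and a single edge elsewhere. We must make sure the edge-balls never see the other edge. Otherwise a ball centered on one edge could meet the other edge, so that its good components would not be components of $B\setminus\alpha$, and the union would not split $U\setminus\alpha$. The choice $B_{10r(t)}(\alpha(t))\cap\alpha|_{\text{other edge}}=\emptyset$ handles this. It is also what allows Lemma~\ref{Lem: good comp corner side induct} (underlying Lemma~\ref{Lem: corner neigh side choice}) to use property~(3) of good components without the other edge interfering.
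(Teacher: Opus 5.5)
Your proposal is correct and follows the paper's proof essentially verbatim. The paper takes radii from Theorem~\ref{Thm: good components existence corner} at the vertex and from Theorem~\ref{Thm: good components existence 2} along the edges, requires $B_{10r(t)}(\alpha(t))$ to avoid the opposite edge and $B_{r(t)}(\alpha(t))\subset V$, labels the components via Lemma~\ref{Lem: corner neigh side choice}, and then repeats the argument of Proposition~\ref{Pro: geod neigh}. One small point: the reverse boundary inclusion implicitly uses $U\cap\alpha=\alpha((-1,1))$, which you can guarantee by also taking $r(t)<\dd_X(\alpha(t),\{\alpha(-1),\alpha(1)\})$ for every $t$; the paper leaves this implicit as well.
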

\begin{proof}
Let $\alpha(c)$ be the vertex of $\alpha$. Since $\alpha$ is extendible, there exists $r(c) > 0$ such that $B_{r(c)}(\alpha(c))$ has two good components with respect to $\alpha$ by Theorem~\ref{Thm: good components existence corner}.

Let $t \in (c,1)$. By Theorem~\ref{Thm: good components existence 2}, there exists $r(t) > 0$ such that $B_{r(t)}(\alpha(t))$ has two good components with respect to the regular geodesic $\alpha \lvert_{[c,1]}$. Since $r(t)$ can be chosen arbitrarily small for each $t$, we may further assume that 
\[
B_{r(t)}(\alpha(t)) \subset V \quad \text{and} \quad
B_{10r(t)}(\alpha(t)) \cap \alpha([-1,c]) = \emptyset. 
\]
We make an analogous choice of $r(t)$ for each $t \in (-1,c)$.

Let $A_\pm(t)$ be a labelling of the good components given by Lemma~\ref{Lem: corner neigh side choice} and set
\[
U := \bigcup_{t \in (-1,1)} B_{r(t)}(\alpha(t)) \subset V \quad \text{and} \quad U_\pm := \bigcup_{t \in (-1,1)} A_\pm(t).
\]
The same argument as in the proof of Proposition~\ref{Pro: geod neigh} shows that $U$ and $U_\pm$ satisfy all the conclusions of the present proposition. 
\end{proof}

\subsection{Mayer--Vietoris argument}\label{Subsec: MV extend corner}
In Theorem~\ref{Thm: good components existence corner}, we proved that an extendible corner $\alpha$ disconnects all sufficiently small balls of radius $r \leq \bar r$ centered at its vertex $p$ into two components, for some $\bar r > 0$. One can trace through the proof to see that $\bar r$ can be chosen depending on the regularity at $p$ and on the quantitative extendability of the corner. In particular, if the edges of $\alpha$ can only be extended a small distance, then $\bar r$ must be correspondingly small. In this subsection, we improve our control on $\bar r$ and relate it to the semi-locally simply connected property (Theorem~\ref{Thm: semi-locally simply connected}) of $X$ at $p$. This will be achieved by a Mayer--Vietoris argument using the path-connected neighborhoods of $\alpha$ obtained in Subsection~\ref{Subsec: corner neigh}. Since such a quantity is more topologically stable, this will allow us to approximate arbitrary corners by extendible corners in Section~\ref{Sec: ext-int points} to show that they must also disconnect sufficiently small ball at their vertices.

\begin{Pro}\label{Pro: comp exist extend corner quant}
Let $(X, \dd_X, \mm_X)$ be an $\RCD(K,N)$ space of essential dimension $2$, and let $\alpha:[-1,1] \to X$ be an extendible corner with vertex $p$. Let $R > 0$ and assume that the two edges of $\alpha$ have length at least $R$. Let $0 < r \leq R$ be such that the inclusion $B_r(p) \hookrightarrow B_R(p)$ induces the trivial map on $\pi_1$. Then $B_r(p) \setminus \alpha$ has exactly two components $A_\pm$. Moreover, the topological boundaries of $A_\pm$ in $B_r(p)$ coincide with $\alpha \cap B_r(p)$.
\end{Pro}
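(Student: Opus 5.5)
We would prove this with a Mayer--Vietoris argument in reduced homology with $\Z/2$ coefficients (equivalently, a count of path-components through $\tilde H_0$), applied to $B_r(p)$ split along $\alpha$. Write $\alpha_r := \alpha \cap B_r(p)$; since both edges have length at least $R\ge r$, the set $\alpha_r$ is a relatively closed arc through $p$ whose two ends leave the ball. By Proposition~\ref{Pro: corner neigh}, applied to the corner $\alpha$ restricted to the parameters whose image lies in $B_r(p)$ and with $V = B_r(p)$, there is an open path-connected neighborhood $U\subset B_r(p)$ of $\alpha_r$ such that $U\setminus\alpha$ has exactly two components $U_\pm$, each having $\alpha_r$ as its topological boundary in $U$. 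The edges are regular geodesics, so away from $p$ Theorem~\ref{Thm: good components existence 2} gives the local good components, and at $p$ Theorem~\ref{Thm: good components existence corner} gives them.

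Set $W := B_r(p)\setminus\alpha$. Then $B_r(p) = U\cup W$ and $U\cap W = U_+\sqcup U_-$. The Mayer--Vietoris sequence
\[
H_1(B_r(p))\to \tilde H_0(U_+\sqcup U_-)\to \tilde H_0(U)\oplus\tilde H_0(W)\to \tilde H_0(B_r(p))
\]
has $\tilde H_0(U)=0$ and $\tilde H_0(B_r(p))=0$, since balls in geodesic spaces are path-connected. Also $\tilde H_0(U_+\sqcup U_-)\cong\Z/2$. So if the first map is zero, then $\tilde H_0(W)\cong \Z/2$, meaning $W$ has exactly two path-components, and $U_+$, $U_-$ lie in different ones.

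\textbf{Main obstacle.} $H_1(B_r(p))$ need not vanish: we only know that $\pi_1(B_r(p))\to\pi_1(B_R(p))$ is trivial. We would therefore run the same sequence for the pair $B_r(p)\subset B_R(p)$ and use naturality, factoring the connecting map through $H_1(B_R(p))$.

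Concretely, suppose $x_+\in U_+$ and $x_-\in U_-$ could be joined by a curve $\sigma$ in $W$. Join them also by a short curve $\tau$ in $U$ that crosses $\alpha$ once, transversally near a point of an edge; this is possible by property (3) of good components together with the local two-sided structure. The loop $\sigma\cdot\tau$ lies in $B_r(p)$, so it is null-homotopic in $B_R(p)$. Now build a neighborhood $U'$ of $\alpha\cap B_R(p)$ inside $B_R(p)$, again by Proposition~\ref{Pro: corner neigh}, chosen so that its two sides restrict to $U_\pm$ near $\alpha_r$ (compare Lemma~\ref{Lem: big set small set comp} and the labeling uniqueness in Lemma~\ref{Lem: corner neigh side choice}). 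The mod $2$ intersection number of a loop with $\alpha$, defined by the change of side in $U'$, is a homotopy invariant in $B_R(p)$. Here one needs that the two ends of $\alpha\cap B_R(p)$ exit $B_R(p)$, which is exactly what the edge-length hypothesis guarantees. Since $\sigma\cdot\tau$ has intersection number $1$ but is null-homotopic, we get a contradiction. Making this invariance rigorous is the delicate step. We would do it through the Mayer--Vietoris connecting map for $B_R(p)=U'\cup(B_R(p)\setminus\alpha)$, checking that it sends $[\sigma\cdot\tau]$ to the generator and that it factors through $\pi_1$ by Hurewicz.

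\textbf{Boundary statement.} Once $W = A_+\sqcup A_-$ with $U_\pm\subset A_\pm$, each $A_\pm$ is clopen in $W$, so its boundary in $B_r(p)$ is contained in $\alpha_r$. Conversely, every point of $\alpha_r$ lies in the boundary of both $U_+$ and $U_-$, and hence in the boundary of both $A_+$ and $A_-$.
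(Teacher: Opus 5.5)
Your proposal is correct and is essentially the paper's proof. It is a Mayer--Vietoris argument for $B_R(p)=U'\cup(B_R(p)\setminus\alpha)$: the loop made of a curve in $B_r(p)\setminus\alpha$ and a curve in the neighborhood is null-homotopic in $B_R(p)$, so the connecting map forces $[x_+]=[x_-]$ in $H_0(U'\setminus\alpha)$, a contradiction.

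The paper streamlines two points. First, it builds the large neighborhood first and the small one inside it. Then Lemma~\ref{Lem: big set small set comp} applies directly, and $\tau$ need not cross $\alpha$ transversally, since $\partial_*[\sigma\cdot\tau]=[\partial\sigma]$ for any $\tau$ in $U'$. Second, it obtains ``at most two components'' by joining points to $p$ along geodesics and using non-branching, whereas you read it off from surjectivity in your reduced sequence on $B_r(p)$.
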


\begin{proof}
By shortening the edges of $\alpha$ and reparameterizing, we may assume that both edges have length exactly $R$ (so $\alpha(0) = p$); this does not change the set $B_r(p) \setminus \alpha.$ By Proposition~\ref{Pro: corner neigh}, there exists a path-connected neighborhood $U \subset B_R(p)$ of $\alpha((-1,1))$ such that the set $U \setminus \alpha$ has exactly two components, denoted by $U_\pm$.
 
Let $\tilde \alpha$ denote the constant-speed reparameterization of $\alpha \lvert_{[-r/R, r/R]}$ over $[-1,1]$. Then $\tilde \alpha((-1,1)) = \alpha \cap B_r(p)$, and $U \cap B_r(p)$ is an open neighborhood of $\tilde \alpha((-1,1))$. Applying Proposition~\ref{Pro: corner neigh} to $\tilde \alpha$, we obtain a path-connected neighborhood $\tilde U \subset U \cap B_r(p)$ of $\tilde \alpha((-1,1))$ such that the set $\tilde U \setminus \tilde \alpha = \tilde U \setminus \alpha$ has exactly two components, denoted by $\tilde U_\pm$.

Since $\tilde \alpha(0) = p$ lies in the topological boundaries of $U_\pm$, both sets must intersect $\tilde U$. By Lemma~\ref{Lem: big set small set comp}, up to permuting the labels $\tilde U_\pm$, we have $\tilde U_\pm \subset U_\pm$.

Fix $x_\pm \in \tilde U_\pm$. We claim that there does not exist a continuous curve from $x_+$ to $x_-$ in $B_r(p) \setminus \alpha$. Suppose for the sake of contradiction that such a curve $\beta_1$ exists. Since $\tilde U$ is path-connected, there exists a continuous curve $\beta_2$ from $x_-$ to $x_+$ contained in $\tilde U$. Since $\tilde U_\pm$ are the components of $\tilde U \setminus \alpha$, the curve $\beta_2$ must intersect $\alpha$. 

Let $\beta$ be the loop obtained by concatenating $\beta_1$ and $\beta_2$. Then $\beta$ is a loop contained in $B_r(p)$, and is therefore contractible in $B_R(p)$ by our assumption. In particular, $\beta$ represents the trivial class in $H_1(B_R(p);\Z)$ via the Hurewicz homomorphism.

Let $V := B_{R}(p) \setminus \alpha$. Then
\[
U \cup V = B_{R}(p) \quad \text{and} \quad U\cap V = U \setminus \alpha([-1,1]) = U_+ \sqcup U_-.
\]
Consider Mayer--Vietoris sequence in singular homology associated to the cover $B_{R}(p) = U \cup V$:
\[
  \dots \longrightarrow H_1(B_{R}(p); \Z) {\longrightarrow} H_0(U \cap V; \Z) {\longrightarrow} H_0(U; \Z)\oplus H_0(V; \Z)\longrightarrow H_0(B_{R}(p); \Z)\longrightarrow 0
\]
with associated boundary map \[\partial_*: H_1(B_{R}(p); \Z) \to H_0(U \cap V; \Z).\] Clearly, since $[\beta]=0$ in $H_1(B_R(p);\Z)$, we have $\partial_*([\beta]) = 0$. On the other hand, writing $\beta = \beta_1 + \beta_2$ as $1$-chains with $\beta_1 \in C_1(V)$ and $\beta_2 \in C_1(U)$, \[\partial_*([\beta]) = [\partial \beta_1] = [x_+] - [x_-],\]
up to a sign convention. This implies that $[x_+] = [x_-]$ in $H_0(U \cap V; \Z)$, i.e., $x_\pm$ lie in the same component of the set $U \cap V = U \setminus \alpha$, contradicting our assumption that $x_\pm \in \tilde U_\pm \subset U_\pm$. Therefore, $x_\pm$ cannot be connected by a continuous curve in $B_r(p) \setminus \alpha$, and we conclude that $B_r(p) \setminus \alpha$ has at least two components. 

Next, we show that $B_{r}(p) \setminus \alpha$ has at most two components. Let $x \in B_r(p) \setminus \alpha$ and let $\rho:[0,1] \to X$ be a geodesic from $x$ to $p$.

We claim that $\rho$ intersects $\alpha$ exactly once, namely at $p$. Suppose that this is not the case. Since $x \notin \alpha$, it then follows from the non-branching property (Theorem~\ref{Thm: non-branching}) that one of the edges of $\alpha$ must be a subsegment of $\rho$. This yields a contradiction, since $\rho \subset B_r(p)$ but $\alpha(1), \alpha(-1) \notin B_{r}(p)$.

Therefore, $\rho([0,1)) \subset B_r(p) \setminus \alpha.$ By continuity, there exists $\veps > 0$ such that $\rho(1-\veps)$ lies in $\tilde U$, and hence in one of $\tilde U_\pm$. The restriction $\rho \lvert_{[0,1-\veps]}$ is then a continuous curve from $x$ to one of $\tilde U_\pm$ contained in $B_{r}(p) \setminus \alpha$. Since $\tilde U_\pm$ are both path-connected and $x$ is arbitrary, it follows that $B_{r}(p) \setminus \alpha$ has at most two components. 

Let $A_\pm$ be the two components of $B_{r}(p) \setminus \alpha$. Up to permuting the labeling $A_\pm$, it follows from the arguments above that $\tilde U_\pm \subset A_\pm$. Since the topological boundaries of $\tilde U_\pm$ in $B_r(p)$ both contain $\tilde \alpha((-1,1))$, the topological boundaries of $A_\pm$ in $B_r(p)$ must also contain $\tilde \alpha((-1,1))$. The reverse inclusions follow since $A_\pm$ are clopen in $B_r(p) \setminus \alpha$. 
\end{proof}

\section{Interior and extremal points}\label{Sec: ext-int points}

In this section, we define interior and extremal points. When we prove our manifold theorem in Section~\ref{Sec: man struct}, we will see that the set of interior points correspond to the interior of the manifold, and the set of extremal points correspond to the boundary of the manifold. 

Let $(X, \dd_X, \mm_X)$ be an $\RCD(K,N)$ space of essential dimension $2$.

\begin{Def}[Extremal and interior points]
\label{Def: ext-int points}
A point $p \in X$ is an \emph{extremal point} if there exists a regular geodesic $\gamma:[0,1] \to X$ such that $\gamma(0) = p$ and $B_r(p) \setminus \gamma$
is not path-connected for some $0< r \leq R$, where $R$ is the length of $\gamma$. Any point $p \in X$ that is not extremal is an \emph{interior point}. The set of extremal points is called the \emph{extremal set} and is denoted by $\Ext(X)$, and the set of interior points is called the \emph{interior set} and is denoted by $\Int(X)$. 
\end{Def}

The following is the main result of this section, and will be proved in Subsection~\ref{Subsec: ext set closed}
\begin{Thm}\label{Thm: int set open}
$\Int(X)$ is open and dense in $X$, and $\Ext(X)$ is closed in $X$. 
\end{Thm}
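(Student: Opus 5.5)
Three things need proving: $\Ext(X)$ is closed, $\Int(X)$ contains $\mathcal R_2$, and $\Int(X)$ is dense. Since $\mm_X(X\setminus\mathcal R_2)=0$ by Theorem~\ref{Thm: constant dim}, and $\mm_X$ charges every open set, $\mathcal R_2$ is dense in $X$. So density follows once we show $\mathcal R_2\subset\Int(X)$.

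\textbf{Step 1: $\mathcal R_2 \subset \Int(X)$.} Let $p\in\mathcal R_2$ and let $\gamma:[0,1]\to X$ be a regular geodesic with $\gamma(0)=p$. Suppose $B_r(p)\setminus\gamma$ is not path-connected for some $r\le R$. By Proposition~\ref{Pro: every pt have extend reg geod}, $\mm_X$-a.e.\ point $q$ is joined to $p$ by a unique geodesic $\rho$, with $\rho$ regular on $(0,1]$ and extendible past $q$.

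The first idea is to reverse $\rho$ so that it passes through $p$ and continues along $\gamma$. This produces a corner or a geodesic through $p$, but $p$ becomes an interior point of the curve only if the extension is a geodesic, which is not automatic. I therefore use a different argument. Since $p$ is regular, balls $B_s(p)$ are $\veps s$-close to Euclidean discs for small $s$. Take $x,y\in B_s(p)\setminus\gamma$ approximating points of $\R^2$ away from the image ray of $\gamma$. They can be joined by a short chain of geodesics that stay at distance $\gtrsim s$ from $\gamma$; here the non-branching property forbids geodesics from touching $\gamma$ twice, and Lemma~\ref{Lem: geod no glance}-type arguments handle the remaining cases. Points very close to $\gamma$ but not on it are joined to this region by a short geodesic toward $p$, which meets $\gamma$ at most at $p$. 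Thus $B_s(p)\setminus\gamma$ is path-connected for all small $s$.

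To pass to larger $r$, connect any point of $B_r(p)\setminus\gamma$ to $B_s(p)\setminus\gamma$ by a geodesic toward $p$; it meets $\gamma$ only at $p$ by non-branching, since otherwise $\gamma$ would be a subsegment of it, which is impossible as $\gamma(1)\notin B_r(p)$ when $r\le R$. Hence $B_r(p)\setminus\gamma$ is path-connected for every $r\le R$, contradicting the assumption, so $p\in\Int(X)$.

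\textbf{Step 2: $\Ext(X)$ is closed.} Let $p_i\in\Ext(X)$ with $p_i\to p$, with witnessing regular geodesics $\gamma_i$ and radii $r_i$. The main obstacle is that the lengths $R_i$ and radii $r_i$ may degenerate, and limits of regular geodesics need not be regular.

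The plan runs in the opposite direction. Assume $p\in\Int(X)$ and show that every point near $p$ is interior. For an interior point $p$, for every regular geodesic $\sigma$ from $p$ and every $r\le R$, the set $B_r(p)\setminus\sigma$ is path-connected. For a nearby point $p_i$ with geodesic $\gamma_i$, extend $\gamma_i$ backward by a regular geodesic from $p$ to $p_i$ given by Proposition~\ref{Pro: every pt have extend reg geod}, perturbing $p$ slightly if needed. This forms a corner with vertex $p_i$, or a geodesic through $p_i$.

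Now apply Proposition~\ref{Pro: comp exist extend corner quant}, using Theorem~\ref{Thm: semi-locally simply connected} to get a radius that is uniform near $p$. It shows that $B_{r}(p_i)$ minus the curve has exactly two components, each having the whole curve as boundary. If $B_{r_i}(p_i)\setminus\gamma_i$ were disconnected, then the half-edge from $p$ to $p_i$ lies in one component, and the two sides produced by the corner would yield at least three local sides around $p_i$. Transporting these back to $p$ through the uniform neighborhood would disconnect $B_r(p)$ by a regular geodesic from $p$, contradicting $p\in\Int(X)$.

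The crux is making this counting rigorous. I expect to use Lemma~\ref{Lem: big set small set comp} together with the Mayer--Vietoris argument of Subsection~\ref{Subsec: MV extend corner}, which gives exactly two components near a corner. Uniformity of the radius, independent of how short $\gamma_i$ is, comes from the $\pi_1$-triviality radius at $p$, which is stable under small perturbation of the center.
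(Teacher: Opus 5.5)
\textbf{Verdict.} There is a genuine gap in Step~2, and Step~1, while not needed, contains a flawed step.

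\textbf{Step~2 fails as written.} Proposition~\ref{Pro: comp exist extend corner quant} applies only to \emph{extendible} corners. Your corner has its vertex at the extremal point $p_i$, and by Lemma~\ref{Lem: geod at ext-pt not extend} the edge $\gamma_i$ is not extendible past $p_i$.

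This is not a technicality: at an extremal vertex, ``exactly two components'' is false in general. Proposition~\ref{Pro: corner at most three comp} only bounds the number of components by three. The example after Proposition~\ref{Pro: comp exist corner quant 2} (a corner at a boundary point of $\R^2_+$) has three. So the ``two versus three'' contradiction cannot be produced at $p_i$. The exactly-two statement (Proposition~\ref{Pro: comp exist corner int quant 2}) requires the vertex to be interior, which is what you are trying to prove.

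The scale problem you flagged is also unresolved. Every quantitative statement requires $r\le R$, with $R$ a lower bound on the edge lengths. Your corner has an edge of length $\dd_X(p,p_i)\to 0$, and $\gamma_i$ may be short too. So $\pi_1$-triviality at $p$ gives no $i$-independent radius, and the ``transport back to $p$'' step has no mechanism behind it.

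\textbf{The missing idea is that extremality does not depend on the witnessing geodesic.} By Proposition~\ref{Pro: ext-pt strong def}, in its quantitative form Proposition~\ref{Pro: comp exist geod ep ext quant}, \emph{every} regular geodesic of length $\ge R$ from an extremal point disconnects $B_r$ whenever $B_r\hookrightarrow B_R$ is trivial on $\pi_1$. The paper therefore discards the witnessing geodesics:
\begin{itemize}
\item It fixes $q$ so that the geodesic $\gamma$ from $p$ to $q$ is unique and regular, and takes geodesics $\gamma_i$ from $p_i$ to the same $q$.
\item These converge uniformly to $\gamma$, have length $>L/2$, and are regular for large $i$ by Lemma~\ref{Lem: R2 persistence} and Theorem~\ref{Thm: geodesic interior tangent}.
\item Proposition~\ref{Pro: comp exist geod ep ext quant} then shows that $B_{r_i}(p_i)\setminus\gamma_i$ has two components. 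Here $r_i=r-\dd_X(p,p_i)$, with $r$ given by Theorem~\ref{Thm: semi-locally simply connected} at $p$.
\item Fixed points $x_\pm$ near $\gamma(t_0)$ lie on opposite local sides of $\gamma_i$ for all large $i$. Lemma~\ref{Lem: big set small set comp} then places them in different components of $B_{r_i}(p_i)\setminus\gamma_i$.
\item A curve joining $x_\pm$ in $B_r(p)\setminus\gamma$ would, by compactness and uniform convergence, lie in $B_{r_i}(p_i)\setminus\gamma_i$. This is impossible, so $p\in\Ext(X)$.
\end{itemize}

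\textbf{Step~1 is unnecessary.} Density is immediate. By Proposition~\ref{Pro: every pt have extend reg geod}, near any $x$ there is $y$ such that the geodesic from $x$ to $y$ is regular and extendible past $y$. The extension is regular by Theorem~\ref{Thm: geodesic interior tangent}, so $y\in\Int(X)$ by Corollary~\ref{Cor: int geod no ext}. This is your abandoned first idea; it works once the point is chosen generically rather than fixed.

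\textbf{Step~1 is also flawed.} A geodesic from a point $z$ near $\gamma$ toward $p$ can stay at small \emph{relative} distance from $\gamma$ at every scale. In $\R^2$, with $z=(t,\delta)$, the point $\lambda z$ is at distance $\lambda\delta$ from the ray and $\lambda|z|$ from $0$. So the geodesic never reaches the region where the Euclidean picture at $p$ separates it from $\gamma$. Repairing this requires the two-sidedness along $\gamma$ (Theorem~\ref{Thm: good components existence 2}, Proposition~\ref{Pro: geod neigh}, Lemma~\ref{Lem: ball minus geod at most two comp}), together with matching the local sides with the approximate half-discs at $p$. Lemma~\ref{Lem: geod no glance} concerns balls centered at interior points of $\gamma$, not at its endpoint, so it does not supply this.
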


For the remainder of this subsection, let $p, q \in X$ with $R := \dd_X(p,q)$, and let $\gamma \colon [0,1] \to X$ be a geodesic from $p$ to $q$. We establish several lemmas concerning $B_R(p) \setminus \gamma$ that will be used to study interior and extremal sets later in the section.

\begin{Lem}\label{Lem: can always connect to point on geod}
For every $x \in B_{R}(p) \setminus \gamma$ and $t \in [0,1)$, there exists a continuous curve $\alpha:[0,1] \to X$ from $x$ to $\gamma(t)$ such that $\alpha([0,1)) \subset B_{R}(p) \setminus \gamma$.
\end{Lem}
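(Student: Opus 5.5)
The plan is to go from $x$ to $\gamma(t)$ in two pieces: first reach some point of $\gamma$ by a geodesic whose interior avoids $\gamma$, then slide along $\gamma$ while staying off it. Everything rests on non-branching (Theorem~\ref{Thm: non-branching}).

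\emph{Step 1: reach $\gamma$ along a geodesic from $q$.} Let $\rho$ be a geodesic from $x$ to $q=\gamma(1)$ and let $s_0$ be the first time at which $\rho$ meets $\gamma$; write $\rho(s_0)=\gamma(t_0)$. If $t_0<1$, then $\rho|_{[s_0,1]}$ and $\gamma|_{[t_0,1]}$ are both geodesics from $\gamma(t_0)$ to $q$. Concatenating $\rho|_{[0,s_0]}$ with $\gamma|_{[t_0,1]}$ gives a geodesic from $x$ to $q$. By non-branching, this concatenation and $\rho$ coincide. In either case $\rho$ contains $\gamma|_{[t_0,1]}$, so $\rho$ and $\gamma$ share a terminal segment.

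\emph{Step 2: reach $\gamma$ along a geodesic from $p$ instead.} Using $q$ alone does not suffice, so I would use a geodesic $\sigma$ from $x$ to $p$. Since $\dd(x,p)<R$, the curve $\sigma$ lies in $B_R(p)$. Let $s_1$ be its first hitting time of $\gamma$, with $\sigma(s_1)=\gamma(t_1)$. If $t_1>0$, then $\sigma|_{[s_1,1]}$ and the reversed segment $\gamma|_{[0,t_1]}$ are both geodesics ending at $p$. Exactly as in Step 1, the concatenation of $\sigma|_{[0,s_1]}$ with $\gamma|_{[0,t_1]}$ (reversed) is a geodesic, so by non-branching $\sigma$ contains $\gamma|_{[0,t_1]}$ reversed. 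Thus $\sigma|_{[0,s_1)}$ is a curve in $B_R(p)\setminus\gamma$ that ends at $\gamma(t_1)$, for some $t_1\in[0,1)$. Here $t_1<1$ because $\gamma(1)=q\notin B_R(p)$.

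\emph{Step 3: move from $\gamma(t_1)$ to $\gamma(t)$ while avoiding $\gamma$.} This is the main obstacle, since a neighbourhood of $\gamma$ need not be split nicely when $\gamma$ is not regular. I would first try to avoid sliding altogether by connecting directly. Take a point $y$ just before $\gamma(t_1)$ on $\sigma$. Any point $z\in B_R(p)\setminus\gamma$ near $\gamma(t)$ has, by Step 2, its own exit point $\gamma(t_z)$ through a geodesic to $p$. The task is to show that the geodesics from points near $\gamma(t)$ hit $\gamma$ exactly at $\gamma(t)$ with $t_z=t$.

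To get this, I would use a point $z=\sigma'(\cdot)$ on a geodesic from $p$ to points approaching $\gamma(t)$ from off $\gamma$. For $t>0$, the concrete candidate is a geodesic from $\gamma(t)$ to a point $w$ near $\gamma(t)$ that is off $\gamma$ and chosen via Lemma~\ref{Lem: geod meas 0}; the choice of $w$ should guarantee that this geodesic meets $\gamma$ only at $\gamma(t)$. By non-branching, any geodesic from $w$ to $\gamma(t)$ meets $\gamma$ only once unless it contains a segment of $\gamma$, and for $\mm$-a.e. $w$ it cannot, since the maximal extensions of $\gamma$ form a null set. Thus for such $w$ the reversed geodesic from $w$ to $\gamma(t)$ stays off $\gamma$ until its endpoint, and $w\in B_R(p)$ provided $t<1$ and $w$ is close to $\gamma(t)$.

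It then remains to join $x$ to $w$ inside $B_R(p)\setminus\gamma$. For this I would run Step 2 for both $x$ and $w$ and concatenate along geodesics to $p$ perturbed off $\gamma$. This connectivity step is the one I expect to need the most care.
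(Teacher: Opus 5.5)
Your proposal has a genuine gap at the step you flagged yourself: joining $x$ to the auxiliary point $w$ inside $B_R(p)\setminus\gamma$. This is not a detail to be filled in later. It is essentially the whole content of the lemma, and for a $w$ chosen only by a genericity condition near $\gamma(t)$ it is false.

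The lemma concerns an arbitrary geodesic $\gamma$ from $p$, including the case $p\in\Ext(X)$, where $B_R(p)\setminus\gamma$ really is disconnected. For instance, take $X=\mathbb{R}^2_+$, $p=0^2$, and $\gamma$ the vertical segment to $q=(0,R)$. Then $B_R(p)\setminus\gamma$ has two components, and your $w$ may lie in the one not containing $x$.

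Your suggested remedy, concatenating the geodesics from $x$ and $w$ to $p$ and ``perturbing them off $\gamma$'', has to get around the point $p\in\gamma$. That is precisely where the separation happens. Choosing $w$ on the correct side would require knowing that the component of $x$ accumulates at $\gamma(t)$. That is exactly what is being proved, and it is how the paper uses this lemma in Lemma~\ref{Lem: ext-pt comp boundary}.

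Step~1 is unnecessary. Step~2 gives more than you state: a geodesic from $x$ to $p$ meets $\gamma$ only at $p$. Otherwise non-branching makes it an initial segment of $\gamma$ (because $\dd_X(x,p)<R$), forcing $x\in\gamma$.

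The missing idea is to aim directly at $\gamma(t)$ from a point already close to $p$, so that you never have to go around $p$. For $t=0$ the geodesic $\rho$ from $x$ to $p$ already works, so assume $t>0$.

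\begin{enumerate}
\item Pick $y\in\rho\setminus\{x,p\}$ with $\dd_X(y,p)<(1-t)R$. The paper treats $t=1/2$ and takes the midpoint.
\item By the triangle inequality, any geodesic $\sigma$ from $y$ to $\gamma(t)$ lies in $B_R(p)$. So $\sigma$ cannot meet $\gamma((t,1])$: non-branching would make $\gamma|_{[t,1]}$ a subsegment of $\sigma$, but $q\notin B_R(p)$.
\item If $\sigma$ meets $\gamma([0,t))$, non-branching makes $\gamma|_{[0,t]}$ a subsegment of $\sigma$. Then $\sigma$ passes through $p$, and its piece from $y$ to $p$ lies on $\rho$.
\item In this bad case, choose $r>0$ with $B_r(y)\subset B_R(p)\setminus\gamma$ and $\dd_X(z,p)<\min\{(1-t)R,\dd_X(x,p)\}$ for all $z\in B_r(y)$. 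Pick $z\in B_r(y)\setminus\rho$; it exists because a space of essential dimension $2$ is not locally isometric to a segment \cite[Theorem 1.1]{KL16}.
\item Let $\eta$ be a geodesic from $z$ to $\gamma(t)$; the same dichotomy applies to it. If $\eta$ contained $\gamma|_{[0,t]}$, comparing it via non-branching first with $\sigma$ and then with $\rho$ would force $z\in\rho$. Hence $\eta$ meets $\gamma$ only at $\gamma(t)$.
\end{enumerate}

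The desired curve is then $\rho$ from $x$ to $y$, followed by a geodesic from $y$ to $z$ inside $B_r(y)$, followed by $\eta$.
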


\begin{proof}
We prove the lemma only for $t = 1/2$; the proof can be adapted to arbitrary $t \in [0,1)$.

Let $x \in B_{R}(p) \setminus \gamma$, and let $\rho:[0,1] \to X$ be a geodesic from $x$ to $p$. We claim that $\rho$ intersects $\gamma$ only at $\rho(1) = \gamma(0) = p$. The two geodesics $\rho$ and $\gamma$ share a common endpoint in $p$. Moreover, the other endpoint of $\gamma$, which is $q$, is further away from $p$ than the other endpoint of $\rho$, which is $x$. Thus if $\rho$ intersects $\gamma$ at a point other than $p$, it would follow from the non-branching property (Theorem~\ref{Thm: non-branching}) that $\rho$ is a subsegment of $\gamma$. This contradicts the assumption that $x \notin \gamma$. Therefore, $\rho([0,1)) \subset B_R(p) \setminus \gamma.$ 

Set $y := \rho(1/2)$. In particular, $y \notin \gamma$. Let $\sigma:[0,1] \to X$ be a geodesic from $y$ to $\gamma(1/2)$. By the triangle inequality, $\sigma \subset B_R(p)$. 

\noindent \textbf{Case 1:} Suppose that $\sigma$ does not intersect $\gamma$ except at $\sigma(1) = \gamma(1/2)$. Then $\sigma([0,1)) \subset B_R(p) \setminus \gamma.$ In this case, the desired curve $\alpha$ can be taken as the concatenation of $\rho$ (from $x = \rho(0)$ to $y = \rho(1/2)$) and $\sigma$ (from $y = \sigma(0)$ to $\gamma(1/2) = \sigma(1)$).

\noindent \textbf{Case 2:} Suppose that $\sigma$ intersects $\gamma$ at some point other than $\sigma(1) = \gamma(1/2)$. 

\noindent \textbf{Subcase 1:} Suppose that $\sigma$ and $\gamma$ intersect at $\gamma(t)$ for some $t > 1/2$. Then $\sigma$ and $\gamma \lvert_{[1/2,1]}$ intersect at least twice and share a common endpoint in $\gamma(1/2) = \sigma(1)$. Moreover, the other endpoint of $\sigma$, which is $y$, does not lie on $\gamma$.  It follows from the non-branching property that $\gamma \lvert_{[1/2,1]}$ is a subsegment of $\sigma$. However, $\sigma \subset B_R(p)$ but $\gamma(1) \notin B_R(p)$, yielding a contradiction. 

\noindent \textbf{Subcase 2:} Suppose that $\sigma$ and $\gamma$ intersect at $\gamma(t)$ for some $t < 1/2$. Arguing as in Subcase~1, we see that $\gamma \lvert_{[0,1/2]}$ must be a subsegment of $\sigma$. In particular, $p = \gamma(0) \in \sigma$. 

Consequently, $\sigma$ and $\rho$ intersect at two points, namely $y$ and $p$. Applying the non-branching property again, it follows that the subsegment of $\sigma$ from $y$ to $p$ must be a subsegment of $\rho$.

Let $0 < r < R/2-\dd_X(y,p)$. We will impose further restrictions on $r$ later. It follows from the triangle inequality that $\dd_X(z,p) < R/2$ for any $z \in B_r(y)$. In particular, any geodesic from $z$ to $\gamma(1/2)$ is contained in $B_{R}(p)$. Since $y \in B_R(p) \setminus \gamma$, we can also choose $r$ sufficiently small so that $B_r(y) \subset  B_R(p) \setminus \gamma$. 

We claim that $B_r(y) \setminus \rho$ is non-empty. Indeed, if $B_r(y) \subset \rho$, then $X$ is locally isometric to an interval near $y$. This implies $y \in \mathcal R_1$, and hence $\dim(X) = 1$ by \cite[Theorem 1.1]{KL16}, contradicting our assumption that $\dim(X) = 2$.

Fix $z \in B_r(y) \setminus \rho$, and let $\eta:[0,1] \to X$ be a geodesic from $z$ to $\gamma(1/2)$. We claim that $\eta$ does not intersect $\gamma$ except at $\eta(1) = \gamma(1/2)$. Suppose for the sake of contradiction that $\eta$ intersects $\gamma$ at another point. Then there are two cases. 

In the case where $\eta$ intersects $\gamma$ at $\gamma(t)$ for some $t > 1/2$, we can argue exactly as we did with $\sigma$ in Subcase~1 to obtain a contradiction.

In the case where $\eta$ intersects $\gamma$ at $\gamma(t)$ for some $t < 1/2$, arguing as in Subcase~1, we conclude that $\gamma \lvert_{[0,1/2]}$ is a subsegment of $\eta$. Since $\eta$ and $\sigma$ share a common subsegment in $\gamma \lvert_{[0,1/2]}$ and a common endpoint in $\gamma(1/2)$, one must be a subsegment of the other by the non-branching property. 

If $\eta$ is a subsegment of $\sigma$, then $z = \eta(0)$ lies on the subsegment of $\sigma$ from $y$ to $p$. Since this subsegment of $\sigma$ is contained in $\rho$, this contradicts the assumption that $z \in B_r(y) \setminus \rho$. 

On the other hand, if $\sigma$ is a subsegment of $\eta$, then $\rho$ and $\eta$ share a common subsegment, namely the subsegment of $\sigma$ from $y$ to $p$. By the non-branching property, one of $\rho$ and the subsegment of $\eta$ from $z$ to $p$ must be a subsegment of the other. Since $\dd_X(y, p) = \dd_X(x, p)/2$, we may choose $r$ sufficiently small so that $\dd_X(z,p) < \dd_X(x,p)$. It follows that the subsegment of $\eta$ from $z$ to $p$ must be a subsegment of $\rho$, contradicting our assumption that $z \in B_r(y) \setminus \rho$.

We conclude that $\eta$ does not intersect $\gamma$ except at $\eta(1) = \gamma(1/2)$. In particular, $\eta([0,1)) \subset B_R(p) \setminus \gamma$. The desired curve $\alpha$ can be obtained by concatenating $\rho$ from $x$ to $y$, a geodesic from $y$ to $z$, and $\eta$ from $z$ to $\gamma(1/2)$. This proves the lemma in Subcase~2. 
\end{proof}

\begin{Lem}\label{Lem: ext-pt comp boundary}
If $B_{R}(p) \setminus \gamma$ has exactly two components $B_\pm$, then the topological boundaries of both $B_\pm$ in $B_R(p)$ coincide with $\gamma([0,1))$.
\end{Lem}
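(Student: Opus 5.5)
We have to show two inclusions: the boundary of each of $B_\pm$ inside $B_R(p)$ contains $\gamma([0,1))$, and is contained in it.

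The second inclusion is formal. Since $\gamma([0,1])$ is closed, $B_R(p)\setminus\gamma$ is open in $B_R(p)$, and $B_R(p)$ is locally path-connected. Hence the components $B_\pm$ are open in $B_R(p)$, and each is closed in $B_R(p)\setminus\gamma$, being the complement of the other. So any boundary point of $B_+$ in $B_R(p)$ must lie in $\gamma\cap B_R(p)$. Because $\dd_X(p,\gamma(t))=tR$, this intersection is exactly $\gamma([0,1))$. The same holds for $B_-$.

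For the first inclusion, fix $t\in[0,1)$. We must show that every neighborhood of $\gamma(t)$ meets both $B_+$ and $B_-$. The tool is Lemma~\ref{Lem: can always connect to point on geod}. Pick any $x_+\in B_+$. That lemma gives a continuous curve $\alpha:[0,1]\to X$ from $x_+$ to $\gamma(t)$ with $\alpha([0,1))\subset B_R(p)\setminus\gamma$. The set $\alpha([0,1))$ is path-connected, lies in $B_R(p)\setminus\gamma$, and contains $x_+$. By Lemma~\ref{Lem: clopen path-connected}, applied to the clopen set $B_+$, it is contained in $B_+$. Since $\alpha(s)\to\gamma(t)$ as $s\to1$, every ball $B_\delta(\gamma(t))$ meets $B_+$. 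Also $\gamma(t)\notin B_+$, because $B_+$ is disjoint from $\gamma$. Thus $\gamma(t)$ lies in the topological boundary of $B_+$ in $B_R(p)$. The identical argument with a point $x_-\in B_-$ handles $B_-$.

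The only non-trivial input is Lemma~\ref{Lem: can always connect to point on geod}, which the paper proves only for $t=1/2$ and says adapts to all $t\in[0,1)$. The main thing to check is therefore that this adaptation is available, including the endpoint $t=0$, where the target is $p$ itself. For $t=0$ the adaptation is immediate. The geodesic $\rho$ from $x$ to $p$ already satisfies $\rho([0,1))\subset B_R(p)\setminus\gamma$ by the non-branching argument in the first step of that proof, so no case analysis is needed. For general $t\in(0,1)$ I would reuse the proof verbatim with $1/2$ replaced by $t$. The only places where $1/2$ was used are the triangle inequality $\sigma\subset B_R(p)$ and the subsegment contradictions, and these persist because $\gamma(1)\notin B_R(p)$ and $\gamma(0)=p$.
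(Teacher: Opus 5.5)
Your proof is correct and is essentially the paper's argument: the reverse inclusion comes from $B_\pm$ being clopen in $B_R(p)\setminus\gamma$, and the forward inclusion comes from Lemma~\ref{Lem: can always connect to point on geod} applied to a point of each component, together with the clopen/path argument. One minor caveat concerns your side check of that lemma for $t$ near $1$: with $y=\rho(1/2)$, the triangle-inequality bound giving $\sigma\subset B_R(p)$ needs $\dd_X(y,p)+tR<R$, which can fail, so you should take $y$ on $\rho$ closer to $p$ (and correspondingly $r<(1-t)R-\dd_X(y,p)$) rather than replacing $1/2$ by $t$ verbatim; this is harmless.
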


\begin{proof}
Let $x \in B_\pm$. By Lemma~\ref{Lem: can always connect to point on geod}, there exists a continuous curve $\alpha:[0,1] \to X$ connecting $x$ to $\gamma(t)$ for any $t \in [0,1)$, such that $\alpha([0,1)) \subset B_R(p) \setminus \gamma.$ Therefore, $\gamma([0,1))$ is contained in the topological boundaries of $B_\pm$ in $B_R(p)$. The reverse inclusion follows since $B_\pm$ are clopen in $B_{R}(p) \setminus \gamma$. 
\end{proof}
 
\begin{Lem}\label{Lem: ball minus geod at most two comp}
If $\gamma$ is a regular geodesic, then $B_{R}(p) \setminus \gamma$ has at most two components.
\end{Lem}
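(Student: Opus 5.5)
The plan is to show that every component of $B_R(p)\setminus\gamma$ meets one of the two good components of a small ball centred at an interior point of $\gamma$. Since that small ball has only two good components, this forces at most two components in total.

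Fix $t_0=1/2$. Since $\gamma$ is regular, Theorem~\ref{Thm: good components existence 2} gives some $s>0$ with the following properties:
\begin{itemize}
\item $B_s(\gamma(1/2))\subset B_R(p)$;
\item $B_s(\gamma(1/2))\setminus\gamma$ has exactly two components $A_\pm$ (its good components);
\item $B_s(\gamma(1/2))$ does not contain $p$ or $q$.
\end{itemize}
Each $A_\pm$ is path-connected and lies inside $B_R(p)\setminus\gamma$. Hence each $A_\pm$ is contained in a single component of $B_R(p)\setminus\gamma$.

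Now let $C$ be any component of $B_R(p)\setminus\gamma$ and pick $x\in C$. Lemma~\ref{Lem: can always connect to point on geod} with $t=1/2$ gives a continuous curve $\alpha:[0,1]\to X$ from $x$ to $\gamma(1/2)$ with $\alpha([0,1))\subset B_R(p)\setminus\gamma$. By continuity there is $\tau<1$ with $\alpha([\tau,1))\subset B_s(\gamma(1/2))\setminus\gamma=A_+\sqcup A_-$. Thus $\alpha(\tau)$ lies in $A_+$ or $A_-$. The sub-curve $\alpha|_{[0,\tau]}$ stays in $B_R(p)\setminus\gamma$, so $C$ contains $A_+$ or $A_-$. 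Every component therefore contains one of the two sets $A_\pm$. Distinct components are disjoint, so there are at most two components.

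The only delicate point is that the sub-curve $\alpha|_{[0,\tau]}$ must avoid $\gamma$ entirely, and this is exactly what Lemma~\ref{Lem: can always connect to point on geod} provides: the curve $\alpha$ touches $\gamma$ only at its final endpoint. The remaining care is in choosing $s$ small enough that the good components are defined relative to all of $\gamma$ rather than a subsegment; this holds because Theorem~\ref{Thm: good components existence 2} is stated for $\gamma$ itself. With these checks the argument is complete.
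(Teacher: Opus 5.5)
Your proof is correct and follows essentially the same route as the paper: fix a small ball at $\gamma(1/2)$ with two good components $A_\pm$, connect each point to $\gamma(1/2)$ via Lemma~\ref{Lem: can always connect to point on geod}, and stop the curve once it enters that ball. This shows every component of $B_R(p)\setminus\gamma$ contains one of $A_\pm$, which gives at most two components.
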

\begin{proof}
Since $\gamma$ is regular, Theorem~\ref{Thm: good components existence} ensures that for sufficiently small $r > 0$, the set $B_{r}(\gamma(1/2)) \setminus \gamma$ has two components $A_\pm$. Letting $r \leq R/2$, we also have $B_{r}(\gamma(1/2)) \subset B_{R}(p)$. 

We claim that any point $x \in B_R(p) \setminus \gamma$ can be connected to at least one of $A_\pm$ by a continuous curve contained entirely in $B_R(p) \setminus \gamma$. This immediately implies the proposition, since $A_\pm$ are both path-connected and contained in $B_R(p) \setminus \gamma$.

Let $x \in B_R(p) \setminus \gamma$. By Lemma~\ref{Lem: can always connect to point on geod}, there exists a continuous curve $\alpha:[0,1] \to X$ connecting $x$ to $\gamma(1/2)$, such that $\alpha([0,1)) \subset B_R(p) \setminus \gamma.$ By continuity, there exists $0 \leq t < 1$ such that $\alpha(t)$ lies in $B_{r}(\gamma(1/2))$, and hence in one of $A_\pm$. The curve $\alpha \lvert_{[0,t]}$ is contained in $B_{R}(p) \setminus \gamma$, and connects $x$ to one of $A_\pm$, as claimed. 
\end{proof}

Clearly, by considering subsegments of $\gamma$, the previous lemma generalizes to any radius $r \leq R$. Thus if $p$ is an extremal point and $\gamma$ is as in Definition~\ref{Def: ext-int points}, then $\gamma$ disconnects $B_r(p)$ into exactly two components. 

\begin{Lem}\label{Lem: ext point disconnect all small ball}
If $\gamma$ is a regular geodesic and $B_{R}(p) \setminus \gamma$ has exactly two components $B_\pm$. Then for any $0 < r \leq R$, the set $B_{r}(p) \setminus \gamma$ has exactly two components, namely $B_\pm' :=B_\pm \cap B_r(p).$
\end{Lem}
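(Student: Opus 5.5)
We need to show that for $0<r\le R$, the set $B_r(p)\setminus\gamma$ has exactly two components, and that these components are $B_\pm\cap B_r(p)$.

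The plan is to argue in three steps: an upper bound on the number of components, a lower bound of two, and then the identification with $B_\pm\cap B_r(p)$.

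\textbf{Upper bound.} Let $t_r\in(0,1]$ be such that $\dd_X(p,\gamma(t_r))=r$, and let $\gamma_r:=\gamma|_{[0,t_r]}$, reparameterized on $[0,1]$. This is a regular geodesic from $p$ of length $r$. Moreover, $\gamma\cap B_r(p)=\gamma_r\cap B_r(p)$, since points $\gamma(t)$ with $t\ge t_r$ have distance at least $r$ from $p$. Hence $B_r(p)\setminus\gamma=B_r(p)\setminus\gamma_r$. Lemma~\ref{Lem: ball minus geod at most two comp}, applied to $\gamma_r$, then gives at most two components.

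\textbf{Lower bound.} This is the main point. Each component of $B_r(p)\setminus\gamma$ is path-connected and lies in $B_R(p)\setminus\gamma$, so it lies in exactly one of $B_\pm$. It therefore suffices to show that both $B_+\cap B_r(p)$ and $B_-\cap B_r(p)$ are nonempty.

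\begin{enumerate}
\item By Lemma~\ref{Lem: ext-pt comp boundary}, the point $p=\gamma(0)$ lies in the topological boundary of both $B_\pm$ in $B_R(p)$.
\item Hence every neighborhood of $p$ meets both $B_+$ and $B_-$. In particular $B_r(p)$ meets both.
\end{enumerate}

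The two sets $B_\pm\cap B_r(p)$ are disjoint, open in $B_r(p)\setminus\gamma$, and cover it. So $B_r(p)\setminus\gamma$ is not connected, and therefore has at least two components.

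\textbf{Identification.} Combining the two bounds, $B_r(p)\setminus\gamma$ has exactly two components; call them $C_1,C_2$. Each $C_i$ lies in one of $B_\pm$. They cannot both lie in $B_+$, since then $B_-\cap B_r(p)=\emptyset$, contradicting the lower-bound step; the same holds for $B_-$. So, up to relabeling, $C_1\subset B_+$ and $C_2\subset B_-$. Since $C_1\sqcup C_2=B_r(p)\setminus\gamma$, we conclude $C_1=B_+\cap B_r(p)$ and $C_2=B_-\cap B_r(p)$. Alternatively, this step follows directly from Lemma~\ref{Lem: big set small set comp} with $U=B_R(p)$, $V=B_r(p)$ and $C=\gamma\cap B_R(p)$, once the lower bound is known.

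The only delicate point is applying Lemma~\ref{Lem: ball minus geod at most two comp} to the truncated geodesic $\gamma_r$. That lemma requires a regular geodesic from $p$ whose length equals the ball radius. The truncation $\gamma_r$ is regular, since its interior is part of the interior of $\gamma$, and its length is exactly $r$, so the hypotheses hold. Everything else is routine bookkeeping with clopen sets.
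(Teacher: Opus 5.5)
Your proof is correct and follows the paper's argument. Nonemptiness of $B_\pm\cap B_r(p)$ from Lemma~\ref{Lem: ext-pt comp boundary} gives at least two components, and Lemma~\ref{Lem: ball minus geod at most two comp} applied to the truncated geodesic $\gamma_r$ (your explicit version of the paper's remark that this lemma extends to any $r\le R$ by passing to subsegments) gives at most two, after which the identification with $B_\pm\cap B_r(p)$ is immediate.
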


\begin{proof}
By Lemma~\ref{Lem: ext-pt comp boundary}, both $B_\pm$ intersect any ball centered at $\gamma(0)$. Therefore, both $B'_\pm$ are non-empty. Since $B_\pm$ are distinct components of $B_{R}(p) \setminus \gamma$, it is immediate that $B_\pm'$ cannot be connected by a continuous path in $B_r(p) \setminus \gamma$. Therefore, $B_{r}(p) \setminus \gamma$ has at least two components. On the other hand, Lemma~\ref{Lem: ball minus geod at most two comp} implies that $B_{r}(p) \setminus \gamma$ has at most two components. Therefore, it has exactly two. Since $B_r(p) \setminus \gamma = B_+' \sqcup B_-'$, and $B_\pm$ are path-disconnected from each other, they must be precisely the two components.
\end{proof}

\begin{Lem}\label{Lem: geod at ext-pt not extend}
If $\gamma$ is a regular geodesic and $B_R(p) \setminus \gamma$ is not path-connected, then $\gamma$ is not extendible past $\gamma(0)$. Moreover, any subsegment of $\gamma$ that contains $\gamma(0)$ is not extendible past $\gamma(0)$. 
\end{Lem}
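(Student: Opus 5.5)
We argue by contradiction. Suppose $\gamma$ is regular, $B_R(p)\setminus\gamma$ is not path-connected, and $\gamma$ extends past $\gamma(0)$. By Lemma~\ref{Lem: ball minus geod at most two comp}, $B_R(p)\setminus\gamma$ then has exactly two components $B_\pm$. By Lemma~\ref{Lem: ext point disconnect all small ball}, for every $0<r\le R$ the set $B_r(p)\setminus\gamma$ has exactly the two components $B_\pm\cap B_r(p)$. The second assertion reduces to the first. If $\gamma'$ is a subsegment of $\gamma$ starting at $p$ with length $R'\le R$, Lemma~\ref{Lem: ext point disconnect all small ball} applied with $r=R'$ shows that $B_{R'}(p)\setminus\gamma = B_{R'}(p)\setminus \gamma'$ is disconnected. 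Here we use that $\gamma\cap B_{R'}(p)=\gamma'\cap B_{R'}(p)$, since $\gamma$ is a geodesic from $p$. So the hypothesis holds for $\gamma'$, and it suffices to prove the first statement for an arbitrary such geodesic.

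Let $\tilde\gamma:[-\veps,1]\to X$ be the extension. Since $p=\tilde\gamma(0)$ is an interior point of $\tilde\gamma$, and $\tilde\gamma((0,1))\subset\mathcal R_2$, Theorem~\ref{Thm: geodesic interior tangent} applied to $\tilde\gamma$ shows that $\tilde\gamma$ is a regular geodesic, so $p\in\mathcal R_2$. Theorem~\ref{Thm: good components existence 2} then says that for all small $r$ the ball $B_r(p)$ has two good components $A_\pm$ with respect to $\tilde\gamma$. The key comparison is between $B_r(p)\setminus\gamma$ and $B_r(p)\setminus\tilde\gamma$. The difference is the short arc $\sigma:=\tilde\gamma((-\min(\veps,r),0))$, which lies in the boundaries of both $A_+$ and $A_-$. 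We will show that $B_r(p)\setminus\gamma = A_+\cup A_-\cup\sigma$ is path-connected, which gives the contradiction.

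To connect $A_+$ to $A_-$ across $\sigma$ without meeting $\gamma$, pick a point $q'=\tilde\gamma(-s)$ on $\sigma$ and a tiny ball $B_\rho(q')$ disjoint from $\gamma$. This is possible since $\gamma$ is compact and $q'\notin\gamma$, by non-branching, because the extension meets $\gamma$ only at $p$. Apply Theorem~\ref{Thm: good components existence 2} to $\tilde\gamma$ at $q'$: for $\rho$ small, $B_\rho(q')\setminus\tilde\gamma$ has two good components. By Lemma~\ref{Lem: big set small set comp}, up to labeling these components sit inside $A_+$ and $A_-$ respectively. Both components have $q'$ in their closure, so they meet in $B_\rho(q')\setminus\gamma$ through points of $\sigma$. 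Concretely, we take a path from a point of the first component to $q'$ and then to a point of the second component, all inside the path-connected ball $B_\rho(q')$, which avoids $\gamma$. This joins $A_+$ to $A_-$ within $B_r(p)\setminus\gamma$.

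Every point of $\sigma$ is likewise path-connected to $A_\pm$. Hence $B_r(p)\setminus\gamma$ is path-connected, contradicting Lemma~\ref{Lem: ext point disconnect all small ball}. The main obstacle I expect is the bookkeeping: making sure the small ball $B_\rho(q')$ indeed avoids $\gamma$ and lies in $B_r(p)$, and that Lemma~\ref{Lem: big set small set comp} applies, which needs both $A_\pm$ to meet $B_\rho(q')$. That last condition follows from property~(2) of good components, since $q'\in\tilde\gamma\cap B_r(p)$.
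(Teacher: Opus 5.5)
Your proof is correct, but it takes a different route from the paper's.

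\textbf{Your route.} You use Theorem~\ref{Thm: geodesic interior tangent} to show the extension $\tilde\gamma$ is regular through $p$. Theorem~\ref{Thm: good components existence 2} then splits $B_r(p)\setminus\tilde\gamma$ into two sides $A_\pm$, both accumulating on the backward arc $\sigma$. Removing only $\gamma$ instead of $\tilde\gamma$ glues the two sides together along $\sigma$, which contradicts Lemma~\ref{Lem: ext point disconnect all small ball}.

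\textbf{The paper's route.} It reaches the same contradiction through a purely non-branching argument showing that $B_r(p)\setminus\tilde\gamma([0,r])$ is path-connected. For any $x$ in this set, take a geodesic $\rho$ from $x$ to $p$; it meets $\tilde\gamma([0,r])$ only at $p$. Then take a geodesic from $\rho(1/2)$ to $\tilde\gamma(-r/2)$. This second geodesic cannot meet $\tilde\gamma([0,r])$: otherwise $\tilde\gamma|_{[-r/2,r]}$ would be a subsegment of it, which is impossible because $\tilde\gamma(r)\notin B_r(p)$. So every point is joined to $\tilde\gamma(-r/2)$.

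\textbf{Comparison.} Both proofs rely on Lemma~\ref{Lem: ext point disconnect all small ball}, and hence indirectly on the local separation theorem. The paper's new step, however, never needs $\tilde\gamma$ to be regular at $p$, nor a fresh application of the good-components theorem to $\tilde\gamma$. Your version is more conceptual, since it explains the contradiction as two sides merging. It pays for this with regularity propagation and an extra use of the heavy Section~\ref{sec:choice_sides_geo} machinery.

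\textbf{Simplifications you can make.}
\begin{enumerate}
\item The second application of Theorem~\ref{Thm: good components existence 2} at $q'$, together with Lemma~\ref{Lem: big set small set comp}, is superfluous. Since $q'$ lies in the closure of both $A_\pm$, any ball $B_\rho(q')\subset B_r(p)$ disjoint from $\gamma$ already meets both $A_\pm$, and it is path-connected because it is star-shaped about $q'$.
\item The fact that $\sigma\cap\gamma=\emptyset$ follows from injectivity of $\tilde\gamma$; non-branching is not needed.
\end{enumerate}

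Your reduction of the ``moreover'' part, via $\gamma\cap B_{R'}(p)=\gamma'\cap B_{R'}(p)$, is a clean way of making precise the paper's remark that the argument applies verbatim.
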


\begin{proof}
Suppose for contradiction that $\gamma$ is extendible past $\gamma(0)$. Then there exists $0 < r \leq R$ and a unit-speed geodesic $\tilde \gamma:[-r,r] \to X$ such that $\tilde \gamma(0) = p$ and $\tilde \gamma \lvert_{[0,r]}$ coincides with a subsegment of $\gamma$. By Lemma~\ref{Lem: ball minus geod at most two comp}, the set $B_R(p) \setminus \gamma$ has exactly two components. Therefore, the set $B_r(p) \setminus \tilde \gamma([0,r]) = B_r(p) \setminus \gamma$ must also have exactly two components by Lemma~\ref{Lem: ext point disconnect all small ball}. We will show that $B_r(p) \setminus \tilde \gamma([0,r])$ is path-connected, thereby giving a contradiction. 

Let $x  \in B_r(p) \setminus \tilde \gamma([0,r])$, it suffices to show that $x$ can be connected to $\tilde \gamma(-r/2)$ by a continuous curve contained in $B_r(p) \setminus \tilde \gamma([0,r])$. Let $\rho:[0,1] \to X$ be a geodesic from $x$ to $p$. By the non-branching property, if $\rho$ intersects $\tilde \gamma([0,r])$ at some point other than $p = \rho(1) = \tilde \gamma(0)$, then $\rho$ must coincide with a subsegment of $\tilde \gamma \lvert_{[0,r]}$. In particular, this would imply $x \in \tilde \gamma([0,r])$, contradicting our assumption on $x$. Therefore, $\rho([0,1)) \subset B_r(p) \setminus \tilde \gamma([0,r]).$ 

Set $y := \rho(1/2)$. In particular $y \notin \tilde \gamma([0,r])$. Let $\sigma:[0,1] \to X$ be a geodesic from $y$ to $\tilde \gamma(-r/2)$. By the triangle inequality, $\sigma \subset B_{r}(p)$. 

We claim that $\sigma$ does not intersect $\tilde \gamma([0,r])$. Assume for contradiction that an intersection occurs at $z \in \tilde \gamma([0,r])$. Then $\sigma$ intersects $\tilde \gamma$ at two points, namely $z$ and $\tilde \gamma(-r/2)$. It follows from the non-branching property that the subsegment of $\tilde \gamma$ from $z$ to $\tilde \gamma(-r/2)$ is a subsegment of $\sigma$. This implies that $\sigma$ and $\tilde \gamma \lvert_{[-r/2,r]}$ share a common subsegment, as well as a common endpoint in $\tilde \gamma(-r/2)$. Since the other endpoint of $\sigma$, namely $y$, does not lie on $\tilde \gamma([0,r])$, it follows from the non-branching property that $\tilde \gamma \lvert_{[-r/2,r]}$ must be a subsegment of $\sigma$. This yields a contradiction since $\sigma \subset B_r(p)$ and $\tilde \gamma(r) \notin B_r(p)$. 

Therefore, $\sigma([0,1]) \subset B_r(p) \setminus \tilde \gamma([0,r]).$ It follows that the concatenation of $\rho$ (from $x = \rho(0)$ to $y = \rho(1/2)$) with $\sigma$ (from $y = \sigma(0)$ to $\tilde\gamma(-r/2) = \sigma(1)$) yields a continuous curve contained in $B_r(p)\setminus \tilde\gamma([0,r])$ connecting $x$ to $\tilde\gamma(-r/2)$. 

We have shown that $\gamma$ is not extendible past $\gamma(0)$. The argument applies verbatim to show the more general statement in the proposition. 
\end{proof}

\subsection{Components of balls at corners}  In this subsection, we show that any corner disconnects all sufficiently small balls of radius $r \leq \bar r$ centered at its vertex $p$, for some $\bar r > 0$. This will be achieved by approximating an arbitrary corner by extendible corners and applying Proposition~\ref{Pro: comp exist extend corner quant}. We will then relate $\bar r$ to the semi-locally simply connected property (see Theorem~\ref{Thm: semi-locally simply connected}) of $X$ at $p$. 

For the remainder of this subsection, let $(X, \dd_X, \mm_X)$ be an $\RCD(K,N)$ space of essential dimension $2$. We establish our approximation scheme for corners. Let $\alpha:[-1,1] \to X$ be a corner with vertex $p$, and set $q := \alpha(1)$ and $\tilde q := \alpha(-1)$. Let $\beta: [-1,1] \to X$ be an extendible corner with vertex $p'$. Set $q' := \beta(1)$ and $\tilde q' := \beta(-1)$. For any $\veps > 0$, we say that $\beta$ is an \emph{$\veps$-extendible approximation} of $\alpha$ if 
\[
\dd_X(p, p') < \veps, \quad \dd_X(q, q') < \veps, \quad \text{and} \quad \dd_X(\tilde q, \tilde q') < \veps. 
\]

\begin{Lem}\label{Lem: eps approx corner exist}
There exists an $\veps$-extendible approximation of $\alpha$ for every $\veps > 0$.
\end{Lem}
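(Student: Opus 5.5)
Given the corner $\alpha$ with vertex $p$, edges $\alpha|_{[-1,0]}$ from $\tilde q$ to $p$ and $\alpha|_{[0,1]}$ from $p$ to $q$, we construct $\beta$ by perturbing the three points. The tool is Proposition~\ref{Pro: geodesic a.e. extend}: for $\mm_X$-a.e.\ $x$, for $\mm_X$-a.e.\ $y$ the geodesic from $x$ to $y$ is unique, strongly regular, and extendible in both directions. Since $\mm_X$ has full support, any ball of positive radius has positive measure. This gives the choice of points.

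\textbf{Step 1 (choose $p'$).} Choose $p' \in B_\veps(p)$ in the full-measure set of Proposition~\ref{Pro: geodesic a.e. extend}.

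\textbf{Step 2 (choose $q'$ and $\tilde q'$).} Choose $q' \in B_\veps(q)$ and $\tilde q' \in B_\veps(\tilde q)$ such that the geodesics $[\tilde q', p']$ and $[p', q']$ are unique, strongly regular, and extendible in both directions. In particular they are extendible past $p'$, so both edges are regular and extendible. Intersecting the relevant full-measure subsets of the two balls gives such points.

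\textbf{Step 3 (corner conditions).} Let $\beta$ be the concatenation of $[\tilde q', p']$ and $[p', q']$, parametrized with equal speeds on $[-1,1]$. Two conditions of Definition~\ref{Def: corner} remain.
\begin{itemize}
\item[(2)] $\beta$ must not be a geodesic near $p'$.
\item[(3)] The edges must meet only at $p'$.
\end{itemize}
By non-branching (Theorem~\ref{Thm: non-branching}), both edges are extendible past $p'$. So if they met at a second point, or if their concatenation were locally geodesic at $p'$, one edge would lie along the maximal extension of the other beyond $p'$ (Lemma~\ref{Lem: max extension}). There are two ways this can happen.
\begin{itemize}
\item[(a)] They overlap on the same side. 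Then one of $q'$, $\tilde q'$ lies on the image of the other edge's extension.
\item[(b)] They continue straight through $p'$. Then $\tilde q'$ lies on the maximal extension of $[q',p']$ past $p'$.
\end{itemize}
In either case the offending point lies on the image of a single geodesic or ray, determined by $p'$ and $q'$ once these are fixed. By Lemma~\ref{Lem: geod meas 0} that image is $\mm_X$-null. So after fixing $p'$ and $q'$, we choose $\tilde q'$ outside these null sets, as well as inside the full-measure set from Step 2.

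The main obstacle is this generic choice. One must check that the measure-zero exclusions and the full-measure conditions are compatible when the three points are chosen sequentially; Fubini-type reasoning handles this. One must also check that degenerate configurations, such as $q'=p'$, are avoided, which holds once $\veps$ is small relative to $\dd_X(p,q)$ and $\dd_X(p,\tilde q)$.
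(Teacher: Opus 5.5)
Your strategy is the same as the paper's. You choose $p'$ generic via Proposition~\ref{Pro: geodesic a.e. extend}, then $q'$, then $\tilde q'$ generic and outside an $\mm_X$-null set made of images of geodesics or rays determined by $p'$ and $q'$. However, the null set you name in case (b) is wrong, and so is the heuristic before it (``one edge would lie along the maximal extension of the other'').

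Let $\bar\gamma$ be the maximal extension of $[q',p']$ from $q'$, ending at $e$. Suppose $\beta$ is locally a geodesic at $p'$. Non-branching then only gives that $[p',\tilde q']$ and $\bar\gamma|_{[p',e]}$ share a nontrivial initial segment, so one of them contains the other. It does not force $\tilde q'\in\bar\gamma$. The reason is that $\bar\gamma$ is maximal \emph{from $q'$}, while the piece $\bar\gamma|_{[p',e]}$ can keep minimizing \emph{from $p'$} well beyond $e$. For example, on a flat cone of total angle $\theta<2\pi$, a straight segment is minimizing exactly as long as it sweeps an angle at most $\theta/2$ about the apex. So the line through $q'$ and $p'$ is still minimizing from $p'$ past the point $e$ where it stops minimizing from $q'$. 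Putting $\tilde q'$ on that continuation beyond $e$ gives a $\beta$ that violates condition~(2) of Definition~\ref{Def: corner}, yet $\tilde q'$ lies outside both of your excluded sets. In that configuration neither edge lies in the other's maximal extension, so the heuristic sentence fails too.

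The fix is what the paper does. Exclude the union of the maximal extensions \emph{from $p'$} of the two pieces into which $p'$ cuts $\bar\gamma$, namely $[p',q']$ and $\bar\gamma|_{[p',e]}$. Each is a single geodesic or ray, hence null by Lemma~\ref{Lem: geod meas 0}. This set catches every bad $\tilde q'$:
\begin{itemize}
\item If (2) or (3) of Definition~\ref{Def: corner} fails, then $[p',\tilde q']$ meets $\bar\gamma$ at some $z\neq p'$.
\item Since $p'$ is interior to $\bar\gamma$, the piece $\bar\gamma|_{[p',z]}$ is extendible past $p'$, so it is the unique geodesic from $p'$ to $z$.
\item Hence $[p',\tilde q']$ starts along one of the two pieces, and by non-branching $\tilde q'$ lies on that piece's maximal extension from $p'$.
\end{itemize}
This also tidies your case (a): the offending point should always be $\tilde q'$ (on the maximal extension of $[p',q']$ from $p'$), so the excluded set depends only on $p'$ and $q'$. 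Finally, no Fubini argument is needed at the last step. Once $p'$ and $q'$ are fixed, the good choices of $\tilde q'$ form a full-measure subset of $B_\veps(\tilde q)$, and removing a null set from it leaves a nonempty set.
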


\begin{proof}
It suffices to prove the lemma for all $\veps$ sufficiently small so that the pairwise intersections of the $\veps$-balls around $p$, $q$ and $\tilde q$ are empty. By Proposition~\ref{Pro: geodesic a.e. extend}, we can choose $p' \in B_\veps(p)$ such that for $\mm_X$-a.e. $x \in X$, there is a unique geodesic from $p'$ to $x$ that is extendible in both directions and contained in $\mathcal R_2$. Therefore, we can choose $q' \in B_\veps(q)$ (resp. $\tilde q' \in B_\veps(\tilde q)$) such that there is a unique geodesic from $q'$ (resp. $\tilde q'$) to $p'$ that is extendible in both directions and contained in $\mathcal R_2$. Let $\gamma$ be the geodesic from $q'$ to $p'$ and let $\bar \gamma$ be its maximal extension from $q'$. We claim that we can choose $\tilde q'$ as above so that, in addition, the geodesic from $\tilde q'$ to $p$ intersects $\bar \gamma$ exactly once, namely at $p$. Indeed, if this is not the case then it follows from the non-branching property (Theorem~\ref{Thm: non-branching}) that for $\mm_X$-a.e. $x \in B_\veps(\tilde q)$, any geodesic from $x$ to $p'$ contains one of the subsegments obtained by subdividing $\bar \gamma$ at $p'$ as a proper subsegment. Therefore, the union of the maximal extensions of these two subsegments from $p'$ has full measure in $B_\veps(\tilde q)$, contradicting Lemma~\ref{Lem: geod meas 0}. Choosing such a $\tilde q'$, the curve $\beta$ obtained by concatenating the geodesic from $\tilde q'$ to $p'$ with the geodesic from $p'$ to $q'$ is an $\veps$-extendible approximation of $\alpha.$
\end{proof}
 
\begin{Pro}\label{Pro: comp exist corner quant 2}
Let $\alpha:[-1,1] \to X$ be a corner with vertex $p$. Let $R > 0$ and assume that the two edges of $\alpha$ both have length strictly greater than $R$. Let $0 < r \leq R$ be such that the inclusion $B_r(p) \hookrightarrow B_R(p)$ induces the trivial map on $\pi_1$. Then $B_r(p) \setminus \alpha$ is not path-connected.
\end{Pro}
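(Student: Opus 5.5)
We will approximate $\alpha$ by extendible corners and use Proposition~\ref{Pro: comp exist extend corner quant} to produce two points of $B_r(p)\setminus\alpha$ that cannot be joined there. Since the edges have length $>R$, fix $R'$ slightly larger than $R$ but smaller than both edge lengths. For small $\veps>0$, Lemma~\ref{Lem: eps approx corner exist} gives an $\veps$-extendible approximation $\beta_\veps$ of $\alpha$ with vertex $p_\veps\to p$ and endpoints converging to those of $\alpha$.

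\textbf{Step 1: local hypotheses for $\beta_\veps$.} For $\veps$ small, the edges of $\beta_\veps$ have length $>R$ by the triangle inequality. Pick $r'<r$ close to $r$ and set $s:=r'-\veps$. Then
\[
B_{s}(p_\veps)\subset B_r(p),\qquad B_R(p)\subset B_{R'}(p_\veps)\ \text{(up to $\veps$ errors)},
\]
so loops in $B_s(p_\veps)$ are contractible in $B_{R'}(p_\veps)$. The geometry is harmless; the only constraint is to pick radii so both inclusions hold simultaneously. If needed, we will shrink $r$ slightly, using that non-path-connectedness for a slightly smaller concentric configuration can be transferred back (see Step 3). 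Proposition~\ref{Pro: comp exist extend corner quant}, applied with $R'$ and $s$, then shows that $B_s(p_\veps)\setminus\beta_\veps$ has exactly two components, each having $\beta_\veps\cap B_s(p_\veps)$ as boundary.

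\textbf{Step 2: pass to the limit.} By Arzel\`a--Ascoli the edges of $\beta_\veps$ converge to geodesics from $p$ to $q$ and $\tilde q$. These limits need not equal the edges of $\alpha$, since geodesics between those endpoints need not be unique. To fix this, we first shorten $\alpha$: we replace $q,\tilde q$ by interior points $\alpha(\pm(1-\eta))$, still at distance $>R$ from $p$. By non-branching, each edge of $\alpha$ is then the unique geodesic between its endpoints, because it extends past its far endpoint. So $\beta_\veps\to\alpha$ uniformly. Our goal is two points $x_\pm\in B_r(p)\setminus\alpha$ and, for a sequence $\veps\to 0$, points in distinct components of $B_s(p_\veps)\setminus\beta_\veps$ converging to them. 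Suppose instead $B_r(p)\setminus\alpha$ were path-connected. Then any such $x_\pm$ could be joined by a compact curve $c\subset B_r(p)\setminus\alpha$. This curve lies at positive distance from $\alpha$ and from $\partial B_{r}(p)$, with $\beta_\veps$ uniformly close to $\alpha$, so $c$ avoids $\beta_\veps$ and lies in $B_s(p_\veps)$ for small $\veps$. Hence $x_\pm$ would lie in the same component, a contradiction.

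\textbf{Step 3: choosing $x_\pm$ (main obstacle).} The hard part is producing $x_\pm$ that stay a definite distance from $\alpha$ while lying on opposite sides of $\beta_\veps$ for all small $\veps$. The components of $B_s(p_\veps)\setminus\beta_\veps$ could a priori collapse onto $\beta_\veps$, with one side becoming a thin sliver. We would first use the regular geodesic structure near an interior point $z=\alpha(t)$ of an edge, with $t$ close to the vertex so that $z\in B_{r/2}(p)$. By Theorem~\ref{Thm: good components existence} there is $\rho>0$ such that $B_{1000\rho}(z)$ is $\veps_0\rho$-close to a Euclidean ball, and the approximate half-balls $H_\pm$ from Lemma~\ref{Lem: approx half balls exist} lie at distance $\ge\rho/100$ from $\alpha$. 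For $\veps$ small, the nearby edge of $\beta_\veps$ passes through $B_{\rho/1000}(z)$ and is a regular geodesic in the same almost-Euclidean ball. By Lemma~\ref{Lem: approx half ball consistent}-type consistency, its good components at scale $\rho$ contain $H_\pm$ respectively. These good components are contained in the two distinct components of $B_s(p_\veps)\setminus\beta_\veps$, since they are distinct by property (3) of good components and the component structure is as in Lemma~\ref{Lem: big set small set comp}. Taking $x_\pm\in H_\pm$ then finishes the argument. The delicate point is checking that the regular realization for $\alpha$'s edge also serves as a regular realization for $\beta_\veps$'s edge with comparable constants, which follows from uniform convergence $\beta_\veps\to\alpha$ near $z$.
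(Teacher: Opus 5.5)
Your proposal is correct and follows essentially the paper's proof: you shorten the edges so they become unique geodesics, take $\veps$-extendible approximations that converge uniformly, apply Proposition~\ref{Pro: comp exist extend corner quant} around $p_\veps$, and choose $x_\pm$ near an edge point through a glued regular realization. The separation of $x_\pm$ then transfers to the big ball by Lemma~\ref{Lem: big set small set comp}; what makes that lemma apply is that this edge point lies in the boundary of both big components, not property~(3) of good components.

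The only fix needed is to take $s:=r-\dd_X(p,p_\veps)$, as the paper does, instead of a fixed $r'<r$. With $r'$ fixed, the compact curve $c$ need not lie in $B_s(p_\veps)$, and disconnection of a smaller concentric ball does not transfer back to $B_r(p)$, so your ``shrink $r$'' fallback would not work.
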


\begin{proof}
Let $\bar R > R$ be strictly less than the lengths of the two edges of $\alpha.$ Let $\bar \alpha:[-1,1] \to X$ be the corner obtained by shortening both edges of $\alpha$ to length $\bar R$, parameterized to have constant speed.
Then $p = \bar \alpha(0)$ is the vertex of $\bar \alpha$. Let $\gamma, \tilde \gamma:[0,1] \to X$ denote the two edges of $\bar \alpha$, parameterized so that $\gamma(t) = \bar \alpha(t)$ and $\tilde \gamma(t) = \bar \alpha(-t)$ for all $t \in [0,1]$.

Since $\gamma$ and $\tilde \gamma$ are obtained by shortening the edges of $\alpha$, both geodesics must be extendible past their respective endpoints $q := \gamma(1)$ and $\tilde q := \tilde \gamma(1)$.

Since $\bar \alpha \cap B_r(p) = \alpha \cap B_r(p)$, it suffices to show that $B_r(p) \setminus \bar \alpha$ is not path-connected.

Fix a sequence $(\veps_i)_{i \in \N}$ such that $\veps_i \downarrow 0$. By Lemma~\ref{Lem: eps approx corner exist}, for each $i$ there exists an $\veps_i$-extendible approximation $\bar \alpha_i:[-1,1] \to X$ of $\bar \alpha$. Let $p_i$ denote the vertex of $\bar \alpha_i$, and let $\gamma_i, \tilde \gamma_i:[0,1] \to X$ be its edges, reparameterized with constant speed on $[0,1]$ so that $p_i = \gamma_i(0) = \tilde \gamma_i(0)$, $\gamma_i(1) = \alpha_i(1)$, and $\tilde \gamma_i(1) = \alpha_i(-1)$. Set $q_i := \gamma_i(1)$ and $\tilde q_i := \tilde \gamma_i(1)$.

The edges $\gamma$ and $\tilde \gamma$ both have length $\bar R$, which implies that $\gamma_i$ and $\tilde \gamma_i$ have lengths at least $\bar R - 2\veps_i$. Therefore, by choosing $\veps_0$ sufficiently small, we can ensure that the lengths of $\gamma_i$ and $\tilde \gamma_i$ are at least $\bar R' := (\bar R + R)/2 > R$ for all $i$. 

Since $\gamma$ is extendible past $q = \gamma(1)$, it must be the unique geodesic from $p$ to $q$ by the non-branching property. Therefore, $\gamma_i$ converges to $\gamma$ uniformly. Similarly, $\tilde \gamma_i$ converges to $\tilde \gamma$ uniformly. 

We claim that, for sufficiently large $i$, the geodesics $\gamma_i$ and $\tilde \gamma_i$ intersect only at $p_i$. If this were not the case, there would exist arbitrarily large $i$ for which $\gamma_i$ and $\tilde \gamma_i$ intersect at some point other than $p_i$. Since $q_i \to q$ and $\tilde q_i \to \tilde q$, we see that $q_i \neq \tilde q_i$ for sufficiently large $i$. In particular, the other point of intersection between $\gamma_i$ and $\tilde \gamma_i$ cannot coincide simultaneously with both endpoints $q_i$ and $\tilde q_i$. The non-branching property would then imply that one of $\gamma_i$ or $\tilde \gamma_i$ is a subsegment of the other. Uniform convergence then forces the same relation for $\gamma$ and $\tilde \gamma$, contradicting the definition of a corner.

Consequently, up to removing finitely many indices, we may assume that $\gamma_i$ and $\tilde \gamma_i$ intersect only at $p_i$ for all $i$. To avoid repeated qualifiers, we adopt the convention that all subsequent statements hold for sufficiently large $i$, even if not explicitly mentioned.

Let $0 < r \leq R$ be as in the proposition. For each $i$, define 
\[r_i := r - \dd_X(p_i,p) \quad \text{and} \quad R_i := R + \dd_X(p_i, p).\] Since $\dd_X(p_i, p) < \veps_i \to 0$, we have $r_i > 0$ and $R_i < \bar R'$ for sufficiently large $i$. The triangle inequality implies $B_{r_i}(p_i) \subset B_r(p)$ and $B_{R}(p) \subset B_{R_i}(p_i)$. Therefore, the inclusion $B_{r_i}(p_i) \rightarrow B_{R_i}(p_i)$ also induces a trivial map on $\pi_1$. By Proposition~\ref{Pro: comp exist extend corner quant}, $B_{r_i}(p_i) \setminus \bar \alpha_i$ has two components. 

Let $t_0 \in (0,1]$ be such that $\dd_X(\gamma(t_0), p) = r/2.$ For each $i$, the length of $\gamma_i$ exceeds $R$. Therefore, by Theorem~\ref{Thm: good components existence}, there exist $\delta, s_0(K, N, R, t_0) > 0$, independent of $i$, with the following property: if for some $0 < s \leq s_0$ there exists a $(\delta s)$-regular realization $(Z, \iota_1, \iota_2)$ of $(\overline B_{1000s}(\gamma_i(t_0)), \gamma_i)$, then $B_s(\gamma_i(t_0))\setminus \gamma_i$ has exactly two components. 

Tracing the construction of these components in Lemma~\ref{Lem: comp exist metric}, we see that the following holds: Let $x_\pm$ be contained in $B_s(\gamma_i(t_0))$ but outside the closed $s/100$ tubular neighborhood of the segment $\gamma_i \cap \overline B_s(\gamma_i(t_0))$. If there exist points $x_\pm' \in \bar B_{1000s}(0^2)$ such that $x_+'$ is contained in the upper half-plane, $x_-'$ is contained in the lower half-plane, and $\dd_Z(\iota_1(x_\pm), \iota_2(x_\pm')) < \delta s$, then $x_\pm$ lie in different components of $B_s(\gamma_i(t_0))\setminus \gamma_i$. 

Since $\gamma$ has length greater than $R$, the previous discussion also applies to $\gamma$. Moreover, because the edges of corners are regular, we have $\gamma(t_0) \in \mathcal R_2$. Therefore, there exists a $(\delta s/2)$-regular realization of $(\overline B_{1000s}(\gamma(t_0)), \gamma)$ for all sufficiently small $s > 0$. Fix such an $s > 0$ so that $s \leq s_0$; we may decrease $s$ as the proof continues. Let $(Z, \iota_1, \iota_2)$ be the corresponding $(\delta s/2)$-regular realization of $(\overline B_{1000s}(\gamma(t_0)), \gamma)$.

Using the realization $Z$, fix points $x_\pm$ in $B_s(\gamma(t_0))$ but outside the closed $s/100$ tubular neighborhood of the segment $\gamma \cap \overline B_s(\gamma(t_0))$, so that they can be approximated in $Z$ by points in the upper and lower half-plane, respectively. By the previous discussion, $B_s(\gamma(t_0)) \setminus \gamma$ has two components with $x_\pm$ belonging to different ones. 

Since $X$ is a geodesic space and $\gamma_i$ converges uniformly to $\gamma$, by gluing $Z$ and $X$ along $\overline B_{1000s}(\gamma(t_0))$, we obtain a $(\delta s)$-regular realization of $(\overline B_{1000s}(\gamma_i(t_0)), \gamma_i)$ for sufficiently large $i$. By the uniform convergence of $\gamma_i$, the points $x_\pm$ are contained in $B_s(\gamma_i(t_0))$ and outside the closed $s/100$ tubular neighborhood of the segment of $\gamma_i \cap \overline B_s(\gamma_i(t_0))$, for sufficiently large $i$. Moreover, they are still approximated in $Z'$ by points in the upper and lower half-plane. Therefore, $x$ and $y$ must lie in different components of $B_s(\gamma_i(t_0)) \setminus \gamma_i$.

%Let $Z'$ be the metric space obtained by gluing $X$ and $Z$ along \[\overline B_{1000s}(\gamma(t_0)) \overset{\mathrm{isom}}{\cong} \iota_1(\overline B_{1000s}(\gamma(t_0))).\]
%Let $\iota_1': \overline B_{1000s}(\gamma_i(t_0)) \to Z'$ denote the isometric embedding obtained by composing the natural inclusion $\overline B_{1000s}(\gamma_i(t_0)) \hookrightarrow X$
%with the canonical inclusion of $X$ inside $Z'$ under the gluing; for notational simplicity, we suppress the dependence on $i$. Similarly, let $\iota_2': \overline B_{1000s}(0^2) \to Z'$ be the isometric embedding obtained by composing $\iota_2$ with the canonical inclusion of $Z$ into $Z'$.

%Since $X$ is a geodesic space and $\gamma_i$ converges to $\gamma$ uniformly, we can argue as in the proof of Proposition~\ref{Pro: GH distance balls center} to show that for sufficiently large $i$, the triple $(Z', \iota_1', \iota_2')$ is a $(\delta s)$-regular realization of $(\overline B_{1000s}(\gamma_i(t_0)), \gamma_i)$.

%By the uniform convergence of $\gamma_i$, the points $x_\pm$ are contained in $B_s(\gamma_i(t_0))$ and outside the closed $s/100$ tubular neighborhood of the segment of $\gamma_i \cap \overline B_s(\gamma_i(t_0))$, for sufficiently large $i$. Moreover, they are still approximated in $Z'$ by points in the upper and lower half-plane. Therefore, $x$ and $y$ must lie in different components of $B_s(\gamma_i(t_0)) \setminus \gamma_i$. 

By possibly decreasing $s$, we may assume that $B_s(\gamma(t_0)) \subset B_r(p) \setminus \tilde \gamma.$
By the uniform convergence of $\gamma_i$ to $\gamma$ and of $\tilde \gamma_i$ to $\tilde \gamma$, it follows that $B_s(\gamma_i(t_0)) \subset B_r(p) \setminus \tilde \gamma_i$ for sufficiently large $i$. In particular, $\bar\alpha_i \cap B_s(\gamma_i(t_0)) = \gamma_i \cap B_s(\gamma_i(t_0))$, and so the set $B_s(\gamma_i(t_0)) \setminus \bar \alpha_i = B_s(\gamma_i(t_0)) \setminus \gamma_i$ has exactly two components, with $x_\pm$ belonging to different ones.

We now show that $x_\pm$ cannot be connected by a continuous curve contained in $B_r(p) \setminus \bar \alpha$. Assume for the sake of contradiction that such a curve $\rho:[0,1] \to X$ exists. Since $\rho([0,1])$ is compact, and $r_i \to r$ and $p_i \to p$, it follows that $\rho \subset B_{r_i}(p_i)$ for sufficiently large $i$. Moreover, since $\rho$ disjoint from $\bar \alpha$, it follows from the uniform convergence of $\bar \alpha_i \to \bar \alpha$ that $\rho$ is also disjoint from $\bar \alpha_i$ for sufficiently large $i$. Therefore, $\rho$ is a continuous curve from $x$ to $y$ contained in $B_{r_i}(p_i) \setminus \alpha_i$. 

Denote by $A_\pm$ be the two components of $B_s(\gamma_i(t_0)) \setminus \bar \alpha_i$ so that $x_\pm \in A_\pm$. Denote by $B_\pm$ the two components of $B_{r_i}(p) \setminus \bar \alpha_i$. By Proposition~\ref{Pro: comp exist extend corner quant}, the point $\gamma_i(t_0)$ lies in the topological boundaries of both $B_\pm$. In particular, $B_\pm \cap B_s(\gamma_i(t_0)) \neq \emptyset.$
Therefore, Lemma~\ref{Lem: big set small set comp} implies that $A_\pm = B_\pm \cap B_s(\gamma_i(t_0))$, up to permuting the labels $B_\pm$. In particular, $x_\pm \in B_\pm$. This is a contradiction, as we have shown that $\rho$ is a curve in $B_{r_i}(p_i) \setminus \bar \alpha_i$ from $x_+$ to $y_+$. Therefore, $B_r(p) \setminus \alpha = B_r(p) \setminus \bar \alpha$ is not path-connected.
\end{proof}

\begin{Exa}
Let $X = \R^2_+$, and let $\alpha$ be a corner whose intersection with $\partial X$ is exactly its vertex. Then $\alpha$ disconnects all small balls centered its vertex into three components. 
\end{Exa}
The previous example shows that, although we have already proved that a corner always disconnects sufficiently small balls around its vertex into at least two components, one cannot in general expect exactly two. As we will see, the number of components is always either two or three if the ball is sufficiently small, with this example illustrating the other case. Later, it will become clear that the case of two components occurs when the vertex is an interior point, whereas the case of three components occurs when the vertex is an extremal point.

\begin{Pro}\label{Pro: corner at most three comp}
Let $\alpha:[-1,1] \to X$ be a corner with vertex $p$. Let $R > 0$ and assume that the two edges of $\alpha$ have length at least $R$. Then $B_R(p) \setminus \alpha$ has at most three components. Moreover, there exists at least one component $Q$ such that its topological boundary in $B_R(p)$ coincides with $\alpha \cap B_R(p)$. 
\end{Pro}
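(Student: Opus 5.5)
We bound the number of components of $B_R(p)\setminus\alpha$ by attaching every point to the two edges, following the scheme of Lemma~\ref{Lem: ball minus geod at most two comp}. Let $\gamma,\tilde\gamma:[0,1]\to X$ be the edges of $\alpha$, shortened so that both have length exactly $R$ and $\gamma(0)=\tilde\gamma(0)=p$. The set $B_R(p)\setminus\alpha$ is unchanged by this shortening. Fix a point $x\in B_R(p)\setminus\alpha$ and a geodesic $\rho$ from $x$ to $p$.

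Both $\gamma$ and $\tilde\gamma$ start at $p$ and reach distance $R$ from $p$, while $x$ is closer to $p$ and lies off $\alpha$. Non-branching therefore gives, exactly as in Lemma~\ref{Lem: can always connect to point on geod}, that $\rho$ meets $\alpha$ only at $p$. We then move $x$ along $\rho$ toward $p$ and reroute it to the interior point $\gamma(1/2)$ or $\tilde\gamma(1/2)$. Set $y=\rho(1/2)$ and let $\sigma$ be a geodesic from $y$ to $\gamma(1/2)$.

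The case analysis of Lemma~\ref{Lem: can always connect to point on geod} applies, with one new possibility: $\sigma$ may hit $\tilde\gamma$. If it does, we consider $\sigma'$ from $y$ to $\tilde\gamma(1/2)$ instead. Suppose $\sigma'$ in turn hits $\gamma$. Then, using non-branching and the perturbation trick of Subcase~2 of that lemma (replace $y$ by a nearby $z\notin\rho$, which exists since $\dim X=2$), we obtain a contradiction. We conclude that every $x$ can be joined inside $B_R(p)\setminus\alpha$ to a point of $B_s(\gamma(1/2))$ or of $B_s(\tilde\gamma(1/2))$ for arbitrarily small $s$, and that this joining curve avoids $\alpha$ except at its final endpoint. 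This rerouting is the step I expect to be the main obstacle, since the geodesics used may cross the other edge in complicated ways. If direct rerouting fails, I would apply Lemma~\ref{Lem: can always connect to point on geod} to each edge separately and use the non-branching property to show that a curve crossing $\tilde\gamma$ can be shortened to end near $\tilde\gamma$.

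By Theorem~\ref{Thm: good components existence 2}, for small $s$ each ball $B_s(\gamma(1/2))$ and $B_s(\tilde\gamma(1/2))$ minus $\alpha$ has exactly two path-connected components. These balls avoid the other edge because the edges are disjoint away from $p$. So $B_R(p)\setminus\alpha$ has at most four components, namely the unions of the classes of $A^\gamma_\pm$ and $A^{\tilde\gamma}_\pm$.

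To cut four down to three, we show that some side of $\gamma$ is connected to some side of $\tilde\gamma$ within $B_R(p)\setminus\alpha$. Take $\gamma(1/2)$ and slide along $\gamma$ toward $p$. Property~(2) of good components and Lemma~\ref{Lem: geod neigh side choice} let us transport the sides consistently along $\gamma((0,1))$ and along $\tilde\gamma((0,1))$. Near $p$, take a point $w\in B_\eta(p)\setminus\alpha$ and its geodesic to $p$. This geodesic meets $\alpha$ only at $p$, and by the rerouting step (applied at a smaller scale) $w$ lies in some side of $\gamma$ and also in some side of $\tilde\gamma$. Hence that side of $\gamma$ and that side of $\tilde\gamma$ coincide, and at most three components remain.

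For the last claim, call this merged component $Q$. Its boundary in $B_R(p)$ contains $\gamma((0,1))$ and $\tilde\gamma((0,1))$ by property~(2), and it contains $p$ by closedness. The reverse inclusion follows because components are clopen in $B_R(p)\setminus\alpha$. Hence the boundary of $Q$ in $B_R(p)$ equals $\alpha\cap B_R(p)$.
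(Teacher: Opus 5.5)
Your skeleton (every point of $B_R(p)\setminus\alpha$ reaches one of four ``sides'', then one side of $\gamma$ is merged with one side of $\tilde\gamma$) matches the paper's. However, both key steps have gaps.

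\textbf{The rerouting to the midpoints.} You assert that if $\sigma$ (from $y$ to $\gamma(1/2)$) meets $\tilde\gamma$ and $\sigma'$ (from $y$ to $\tilde\gamma(1/2)$) meets $\gamma$, then non-branching plus the perturbation trick give a contradiction, but you give no argument. The perturbation in Subcase~2 of Lemma~\ref{Lem: can always connect to point on geod} works only because there $\sigma$ is forced to contain $\gamma|_{[0,1/2]}$, and hence to share the two points $y$ and $p$ with $\rho$. Nothing in your configuration forces two of the geodesics to share two points, so Theorem~\ref{Thm: non-branching} yields nothing.

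The step is also unnecessary. Your fallback is the right idea once you stop insisting on landing near the midpoints. Replace the balls $B_s(\gamma(1/2))$ and $B_s(\tilde\gamma(1/2))$ by the neighborhoods given by Proposition~\ref{Pro: geod neigh}: a path-connected $U\subset B_R(p)\setminus\tilde\gamma$ of $\gamma((0,1))$ with $U\setminus\gamma=U_+\sqcup U_-$, and $\tilde U\subset B_R(p)\setminus\gamma$ of $\tilde\gamma((0,1))$ with $\tilde U\setminus\tilde\gamma=\tilde U_+\sqcup\tilde U_-$. Apply Lemma~\ref{Lem: can always connect to point on geod} to the edge $\gamma$; this is legitimate since $B_R(p)\setminus\alpha\subset B_R(p)\setminus\gamma$. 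The resulting curve avoids $\gamma$, hence $p$, until its endpoint $\gamma(1/2)$. So its first hit of $\alpha$ lies in $\gamma((0,1))\cup\tilde\gamma((0,1))$, and just before that hit it lies in one of $U_\pm,\tilde U_\pm$. Only truncation is needed, no further non-branching; this is the paper's argument.

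\textbf{The merging step fails as written.} A point $w\in B_\eta(p)\setminus\alpha$ need not lie on a side of both edges. In the paper's own example of a corner in $\R^2_+$ meeting $\partial\R^2_+$ only at its vertex, points arbitrarily close to $p$ in the two outer components border only one edge. Your rerouting, moreover, only ever connects $w$ to a side of whichever edge it hits first, never to both.

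What is needed is an explicit curve in $B_R(p)\setminus\alpha$ from near $\tilde\gamma((0,1))$ to near $\gamma((0,1))$. The paper gets it by applying Lemma~\ref{Lem: can always connect to point on geod} to the point $\tilde\gamma(1/2)\in B_R(p)\setminus\gamma$ with target $\gamma(1/2)$. The resulting curve avoids $\gamma$ and hence $p$. After its last visit to $\tilde\gamma$, at a point of $\tilde\gamma((0,1))$, it stays off $\alpha$ until it reaches $\gamma(1/2)$. Trimming it slightly at both ends gives a curve in $B_R(p)\setminus\alpha$ from one of $\tilde U_\pm$ to one of $U_\pm$.

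Take $Q$ to be the component containing these two sets; then your last paragraph goes through. Note that it needs the sides glued along the whole open edges: $U_+\subset Q$ puts all of $\gamma([0,1))$ in the topological boundary of $Q$ in $B_R(p)$ via Proposition~\ref{Pro: geod neigh}. Property~(2) of good components at a single ball does not give this.
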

\begin{proof}
Up to shortening the edges of $\alpha$ and reparameterizing, we may assume that both edges of $\alpha$ have length exactly $R$ and that $p = \alpha(0)$. Clearly, this does not change the set $B_R(p) \setminus \alpha$. Let $\gamma, \tilde \gamma:[0,1] \to X$ denote the two edges of $\alpha$, parameterized so that $\gamma(t) = \alpha(t)$ and $\tilde \gamma(t) = \alpha(-t)$ for all $t \in [0,1]$.

By Proposition~\ref{Pro: geod neigh}, there exists a path-connected neighborhood $U \subset B_R(p) \setminus \tilde \gamma$ of $\gamma((0,1))$ such that $U \setminus \gamma$ has exactly two components $U_\pm$. Similarly, there exists a path-connected neighborhood $\tilde U \subset B_R(p) \setminus \gamma$ of $\tilde \gamma((0,1))$ such that $\tilde U \setminus \tilde \gamma$ has exactly two components $\tilde U_\pm$.

Let $x \in B_R(p) \setminus \alpha \subset B_R(p) \setminus \gamma$. By Lemma~\ref{Lem: can always connect to point on geod}, there exists a continuous curve $\beta:[0,1] \to X$ from $x$ to $\gamma(1/2)$ such that $\beta([0,1)) \subset B_R(p) \setminus \gamma.$ In particular, $p$ does not lie on $\beta$. 

Since $\beta(0) \notin \alpha$ and $\beta(1) \in \alpha$, there is a smallest $s \in (0,1]$ such that $\beta(s) \in \alpha$. It follows that $\beta([0,s)) \subset B_R(p) \setminus \alpha.$ Since $\beta(s) \neq p$, it must be that $\beta(s) \in \gamma((0,1))$ or $\beta(s) \in \tilde \gamma((0,1))$.

If $\beta(s) \in \gamma((0,1))$, then there exists some $\veps > 0$ such that $\beta(s-\veps) \in U_+$ or $U_-$, and so $\beta \lvert_{[0,s-\veps]}$ is a continuous curve in $B_R(p) \setminus \alpha$ from $x$ to one of $U_\pm$. Similarly, if $\beta(s) \in \tilde \gamma((0,1))$, the same argument applies with $\tilde U_\pm$. 

Thus, any $x \in B_R(p) \setminus \alpha$ can be connected by a continuous curve in $B_R(p) \setminus \alpha$ to at least one of $U_\pm$ and $\tilde U_\pm$. This shows that $B_R(p) \setminus \alpha$ has at most four components. 

To see that $B_R(p) \setminus \alpha$ has at most three components, it suffices to show that at least one of $U_\pm$ can be connected to at least one of $\tilde U_\pm$ by a curve in $B_R(p) \setminus \alpha$. The argument is similar to the previous construction so we will only give a sketch: we first use Lemma~\ref{Lem: can always connect to point on geod} to obtain a continuous curve from a point on $\tilde \gamma((0,1))$ to a point on $\gamma((0,1))$ that is contained in $B_R(p) \setminus \{p\}$. Then, by restricting to an appropriate subsegment of this curve, we can ensure that it remains entirely in $B_R(p) \setminus \alpha$ except at the endpoints, which lie on $\tilde \gamma((0,1))$ and $\gamma((0,1))$ respectively. By further shortening this subsegment a little on both ends, we obtain the desired curve from one of $U_\pm$ to one of $\tilde U_\pm$ contained in $B_R(p) \setminus \alpha$. 

%To prove that $B_R(p) \setminus \alpha([-1,1])$ has at most three components, it suffices to show that at least one of $U_\pm$ can be connected to at least one of $\tilde U_\pm$ by a curve contained in $B_R(p) \setminus \alpha([0,1])$. 

%Applying Lemma~\ref{Lem: can always connect to point on geod} again, there exists a continuous curve $\beta:[0,1] \to X$ (where we recycle the notation $\beta$) from $\tilde \gamma(1/2)$ to $\gamma(1/2)$, such that 
%\[\beta([0,1)) \subset B_R(p) \setminus \gamma([0,1]).\] In particular,
%\[
%\beta([0,1]) \subset B_R(p) \setminus \{p\}.
%\]

%Since $\beta(0) \in \tilde \gamma([0,1])$ and $\beta(1) \notin \tilde \gamma([0,1])$, there must be a largest time $s \in [0,1)$ such that $\beta(s) \in \tilde \gamma([0,1])$. It follows that
%\[
%\beta((s,1]) \subset B_R(p) \setminus \tilde \gamma([0,1]).
%\]

%Since $\beta(s) \notin \tilde \gamma([0,1])$ (being in $\gamma([0,1])$ and not equal to $p$) and $\beta(1) \in \tilde \gamma([0,1])$, there must be a smallest time $s' \in (s, 1]$ such that $\beta(s') \in \tilde \gamma([0,1])$. It follows that 
%\[
%\beta((s,t)) \subset B_R(p) \setminus (\tilde \gamma ([0,1]) \cup \gamma ([0,1])) = B_R(p) \setminus \alpha([-1,1]).
%\]
%As before, by taking $\veps > 0$ sufficiently small it follows that $\beta \lvert_{[s+\veps, s'-\veps]}$ is a curve from one of $\tilde U_\pm$ to one of $U_\pm$, and is contained in $B_R(p) \setminus \alpha([0,1])$. 

Therefore, up to relabeling, we may assume that $U_+$ and $\tilde U_+$ lie in the same component of $B_R(p) \setminus \alpha$, denoted by $Q$. Then $\alpha \cap B_R(p)$ is contained in the topological boundary of $Q$ in $B_R(p)$, since $\gamma([0,1))$ and $\tilde \gamma([0,1))$ are contained in the topological boundaries of $U_+$ and $\tilde U_+$, respectively, by Proposition~\ref{Pro: geod neigh}. Since $B_R(p) \setminus \alpha$ is locally path-connected and $Q$ is one of its components, the reverse inclusion of the boundary follows immediately.
\end{proof}

\subsection{Corners with interior vertices} In this subsection, we study corners whose vertices are interior points. We will prove that such corners disconnect small balls centered at their vertices into exactly two components. 

For the remainder of this subsection, let $(X, \dd_X, \mm_X)$ be an $\RCD(K,N)$ space of essential dimension $2$. We begin by establishing a stronger version of Proposition~\ref{Pro: corner at most three comp} under the assumption that the vertex lies in the interior set.

\begin{Pro}\label{Pro: corner int at most two comp}
Let $\alpha:[-1,1] \to X$ be a corner with vertex $p$. Let $R > 0$ and assume that the two edges of $\alpha$ have length at least $R$. If $p \in \Int(X)$, then $B_R(p) \setminus \alpha$ has at most two components. Moreover, the topological boundary of any such component in $B_R(p)$ coincides with $\alpha\cap B_R(p)$. 
\end{Pro}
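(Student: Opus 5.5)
We build on the proof of Proposition~\ref{Pro: corner at most three comp}. Normalize so that both edges $\gamma,\tilde\gamma:[0,1]\to X$ have length exactly $R$ and $p=\gamma(0)=\tilde\gamma(0)$. As before, take path-connected neighborhoods $U\subset B_R(p)\setminus\tilde\gamma$ of $\gamma((0,1))$ and $\tilde U\subset B_R(p)\setminus\gamma$ of $\tilde\gamma((0,1))$, with components $U_\pm$ and $\tilde U_\pm$. That proof shows two things: every component of $B_R(p)\setminus\alpha$ contains one of $U_\pm,\tilde U_\pm$, and, after relabeling, $U_+$ and $\tilde U_+$ lie in a common component $Q$. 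Hence it suffices to show that $U_-$ and $\tilde U_-$ also lie in a single component. Then there are at most two components, $Q$ and $Q'\supset U_-\cup\tilde U_-$. The boundary statement follows as in Proposition~\ref{Pro: corner at most three comp}: $\gamma([0,1))$ lies in the closures of both $U_\pm$ and $\tilde\gamma([0,1))$ in the closures of both $\tilde U_\pm$, so each component has boundary $\alpha\cap B_R(p)$. The reverse inclusion holds by local path-connectedness.

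We argue by contradiction using $p\in\Int(X)$. Suppose $U_-$ and $\tilde U_-$ lie in different components $Q_1\neq Q_2$, so there are exactly three components $Q,Q_1,Q_2$. Consider the geodesic $\gamma$ itself; it is regular because it is an edge of a corner. We will show that $B_R(p)\setminus\gamma$ is not path-connected, which contradicts $p\in\Int(X)$ by Definition~\ref{Def: ext-int points}. Observe that $B_R(p)\setminus\gamma=Q\cup Q_1\cup Q_2\cup\tilde\gamma((0,1))$. Points of $\tilde\gamma((0,1))$ have neighborhoods in $\tilde U$, and $\tilde U\setminus\tilde\gamma=\tilde U_+\sqcup\tilde U_-\subset Q\cup Q_2$. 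So $Q\cup Q_2\cup\tilde\gamma((0,1))$ is open in $B_R(p)\setminus\gamma$. The set $Q_1$ is open as well, it is disjoint from the former, and it is nonempty. Together they give a separation of $B_R(p)\setminus\gamma$ into two nonempty open sets, so $B_R(p)\setminus\gamma$ is not path-connected. This is the desired contradiction.

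One check is needed for this to work: $Q_1$ must be a component of $B_R(p)\setminus\alpha$ different from the one containing $\tilde U_\pm$. If instead $U_-$ and $\tilde U_+$ were in the same component, we would be in the two-component situation already. So the only case to rule out is the three-component one, where $U_-$ is alone, and we may assume $Q_1\supset U_-$ contains neither $\tilde U_+$ nor $\tilde U_-$. The complementary case, where $\tilde U_-$ is alone, is handled symmetrically using $\tilde\gamma$ in place of $\gamma$.

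The main obstacle is justifying the openness claim near $\tilde\gamma((0,1))$. We need $\tilde U$ to avoid $\gamma$, and we need every point of $\tilde\gamma((0,1))$ to be an interior point of $\tilde U$ relative to $B_R(p)\setminus\gamma$; both follow from the construction in Proposition~\ref{Pro: geod neigh}. We also need $\tilde\gamma((0,1))\subset B_R(p)\setminus\gamma$, which holds by condition (3) of Definition~\ref{Def: corner}. With these in place, Definition~\ref{Def: ext-int points} applies to $\gamma$ with radius $R$ equal to its length, giving $p\in\Ext(X)$, a contradiction.
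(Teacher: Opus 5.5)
Your separation mechanism is correct, and it is essentially the paper's argument in contrapositive form. The paper uses path-connectedness of $B_R(p)\setminus\gamma$ to run a curve from $U_+$ to $U_-$. By examining where that curve meets $\tilde\gamma$, it concludes that $U_-$ is joined to $U_+$ or to one of $\tilde U_\pm$. Symmetrically, using $\tilde\gamma$, it gets that $\tilde U_-$ is joined to $U_+$, $U_-$ or $\tilde U_+$, and then it does a case check. Your observation gives the first of these conclusions in an equivalent and slightly tidier way: a component of $B_R(p)\setminus\alpha$ containing neither $\tilde U_+$ nor $\tilde U_-$ would be clopen in $B_R(p)\setminus\gamma$.

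The gap is in your case analysis.
\begin{itemize}
\item ``$U_-$ and $\tilde U_-$ lie in different components'' does not imply that there are three components.
\item The configuration you pass over as ``the two-component situation already'' must in fact be excluded. In it, $U_-$ lies in the same component $Q$ as $U_+$ and $\tilde U_+$, while $\tilde U_-$ lies in a different component $Q_2$.
\item That configuration has two components, but it violates your own reduction and also the boundary assertion. Each $\gamma(t)$, $t\in(0,1)$, has a small ball $B_s(\gamma(t))\subset U$ with $B_s(\gamma(t))\setminus\alpha\subset U_+\sqcup U_-\subset Q$. Hence the boundary of $Q_2$ in $B_R(p)$ is only $\tilde\gamma([0,1))$.
\item Your $\gamma$-argument cannot reach this case, because here $Q_1=Q$ contains $\tilde U_+$.
\item You need the symmetric argument instead. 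The sets $Q_2$ and $Q\cup\gamma((0,1))$ are disjoint, nonempty and open, and they cover $B_R(p)\setminus\tilde\gamma$. This contradicts $p\in\Int(X)$, since the edge $\tilde\gamma$ is a regular geodesic from $p$.
\end{itemize}

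The clean way to organize the proof is to establish two facts:
\begin{itemize}
\item (i) every component of $B_R(p)\setminus\alpha$ contains one of $\tilde U_\pm$, since otherwise $B_R(p)\setminus\gamma$ is disconnected;
\item (ii) every component contains one of $U_\pm$, since otherwise $B_R(p)\setminus\tilde\gamma$ is disconnected.
\end{itemize}
Together with $U_+\cup\tilde U_+\subset Q$, these leave only two possibilities. Either $B_R(p)\setminus\alpha=Q$, or there are exactly two components $Q\supset U_+\cup\tilde U_+$ and $Q'\supset U_-\cup\tilde U_-$. In either case your boundary argument then finishes the proof.
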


\begin{proof}
Let $\gamma$, $\tilde \gamma$, $U$, $\tilde U$, $U_\pm$, and $\tilde U_\pm$ be as in the proof of Proposition~\ref{Pro: corner at most three comp}. As in that proof, we may assume that the edges of $\alpha$ have length exactly $R$ and that $p = \alpha(0)$. To ease exposition, we say that two sets in $\{U_+, U_-, \tilde U_+, \tilde U_-\}$ are \emph{joined} if they are contained in the same component of $B_R(p) \setminus \alpha$. It follows from the proof of Proposition~\ref{Pro: corner at most three comp} that $U_+$ and $\tilde U_+$ are joined.

Fix $x_\pm \in U_+$. Since $p \in \Int(X)$, the set $B_R(p) \setminus \gamma$ is path-connected. Therefore, there exists a continuous curve $\beta:[0,1] \to X$ from $x_+$ to $x_-$ contained in $B_R(p) \setminus \gamma$. If $\beta$ does not intersect $\tilde \gamma$, then $U_+$ and $U_-$ are joined. On the other hand, if $\beta$ intersects $\tilde \gamma$, then there exists a smallest $t \in (0,1]$ such that $\beta(t) \in \tilde \gamma$. By continuity, there exists $\veps > 0$ such that $\beta(t-\veps)$ lies in one of $\tilde U_\pm$. Since $\beta \lvert_{[0, t-\veps]} \subset B_R(p) \setminus \alpha$, it follows that $U_-$ is joined to at least one of $\tilde U_\pm$. A symmetric argument yields the same conclusion for $\tilde U_-$.

%In all, we have shown that the following hold:
%\begin{enumerate}
  %  \item $U_+$ is joined to $U_-$;
  %  \item $U_-$ is joined to at least one of $U_+$, $\tilde U_+$, $\tilde U_-$;
  %  \item $\tilde U_-$ is joined to at least one of $\tilde U_+$, $U_+$, $U_-$.
%\end{enumerate}
By the proof of Proposition~\ref{Pro: corner at most three comp}, any component of $B_R(p) \setminus \alpha$ contains at least one of the four sets $U_\pm$ and $\tilde U_\pm$. A simple case check using that $U_+$ and $\tilde U_+$ are joined along with the conclusions of the previous paragraph shows that $B_R(p) \setminus \alpha$ has either one or two components. 

In the case where there are two components, it is not difficult to see that one component contains $U_+$ and $\tilde U_+$, and the other contains $U_-$ and $\tilde U_-$. The claim concerning the boundary of the components follows exactly as in the proof of Proposition~\ref{Pro: corner at most three comp}. 
\end{proof}

The following proposition follows immediately from Propositions~\ref{Pro: comp exist corner quant 2}~and~\ref{Pro: corner int at most two comp}.
\begin{Pro}\label{Pro: comp exist corner int quant}
Let $\alpha:[-1,1] \to X$ be a corner with vertex $p$. Let $R > 0$ and assume that the two edges of $\alpha$ have length strictly greater than $R$. Let $0 < r \leq R$ be such that the inclusion $B_r(p) \hookrightarrow B_R(p)$ induces the trivial map on $\pi_1$. If $p$ is an interior point, then $B_r(p) \setminus \alpha$ has exactly two components $A_\pm$. Moreover, the topological boundaries of $A_\pm$ in $B_r(p)$ coincide with $\alpha \cap B_r(p)$.
\end{Pro}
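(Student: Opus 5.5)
The plan is to combine the two preceding propositions. The only subtlety is that Proposition~\ref{Pro: corner int at most two comp} is stated for the ball $B_R(p)$ with edges of length at least $R$, whereas we need it for the smaller ball $B_r(p)$.

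First, lower bound. Proposition~\ref{Pro: comp exist corner quant 2} applies verbatim under the present hypotheses: the edges have length strictly greater than $R$, and $B_r(p)\hookrightarrow B_R(p)$ is trivial on $\pi_1$. It gives that $B_r(p)\setminus\alpha$ is not path-connected, so it has at least two components.

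Second, upper bound and boundary. Shorten both edges of $\alpha$ to length exactly $r$ to obtain a corner $\alpha_r$ with the same vertex $p$. This is still a corner:
\begin{itemize}
\item its edges are subsegments of regular geodesics, hence regular;
\item condition (2) of Definition~\ref{Def: corner} is local at $p$, so it is unchanged;
\item the edges still meet only at $p$.
\end{itemize}
Since $\alpha\cap B_r(p)=\alpha_r\cap B_r(p)$, we have $B_r(p)\setminus\alpha=B_r(p)\setminus\alpha_r$. Now apply Proposition~\ref{Pro: corner int at most two comp} with $R$ replaced by $r$; this is legitimate because $p\in\Int(X)$ and the edges of $\alpha_r$ have length at least $r$. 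It gives that $B_r(p)\setminus\alpha$ has at most two components, and that the topological boundary of each component in $B_r(p)$ equals $\alpha\cap B_r(p)$.

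Combining the two steps, there are exactly two components $A_\pm$, and their boundaries are as claimed.

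The only point needing care is the reduction in the second step: checking that the truncated curve satisfies Definition~\ref{Def: corner} and that removing the truncated or the full corner from $B_r(p)$ gives the same set. Both are immediate from the definitions, so I expect no real obstacle here.
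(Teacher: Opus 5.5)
Your proof is correct and matches the paper's, which also combines Proposition~\ref{Pro: comp exist corner quant 2} (at least two components) with Proposition~\ref{Pro: corner int at most two comp} applied at radius $r$ (at most two components, plus the description of the boundaries). Your explicit truncation to $\alpha_r$ is harmless but unnecessary: Proposition~\ref{Pro: corner int at most two comp} only asks that the edges have length at least the radius, and it does that shortening itself.
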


By Theorem~\ref{Thm: semi-locally simply connected}, we obtain the following proposition from the previous one. 
\begin{Pro}\label{Pro: good comp exist corner int}
Let $\alpha:[-1,1] \to X$ be a corner with vertex $p$. If $p \in \Int(X)$ then there exists $\bar r > 0$ such that for every $0 < r \leq \bar r$, the ball $B_r(p)$ has two good components with respect to $\alpha$.
\end{Pro}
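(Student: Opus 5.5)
The plan is to combine Proposition~\ref{Pro: comp exist corner int quant} with the semi-locally simply connected property (Theorem~\ref{Thm: semi-locally simply connected}), applied at two scales. The point is that Definition~\ref{Def: good comp corner} asks for more than two components with the right boundary. It also asks for property~(3): separation of $A_\pm$ inside the larger ball $B_{10r}(p)$. So we need the two-component conclusion both at scale $r$ and at scale $10r$, and then a comparison between them.

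First fix $R>0$ strictly smaller than the lengths of both edges of $\alpha$. By Theorem~\ref{Thm: semi-locally simply connected}, there is $r_0>0$ such that the inclusion $B_{r_0}(p)\hookrightarrow B_R(p)$ is trivial on $\pi_1$. Set $\bar r := r_0/10$ and fix $0<r\le\bar r$. Then both $B_r(p)$ and $B_{10r}(p)$ lie in $B_{r_0}(p)$, so both inclusions into $B_R(p)$ are trivial on $\pi_1$.

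Proposition~\ref{Pro: comp exist corner int quant}, applied with these radii, shows that $B_r(p)\setminus\alpha$ has exactly two components $A_\pm$, and that their topological boundaries in $B_r(p)$ both equal $\alpha\cap B_r(p)$. The same proposition shows that $B_{10r}(p)\setminus\alpha$ has exactly two components $B_\pm$, with boundaries equal to $\alpha\cap B_{10r}(p)$. Since components are clopen in the complement, this already gives properties~(1) and~(2) of Definition~\ref{Def: good comp corner} for $A_\pm$.

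For property~(3), argue as in the proof of Theorem~\ref{Thm: good components existence}. The point $p$ lies on the boundary of both $B_\pm$, so each of $B_\pm$ meets $B_r(p)$. Lemma~\ref{Lem: big set small set comp}, with $U=B_{10r}(p)$, $V=B_r(p)$ and $C=\alpha\cap B_{10r}(p)$ (closed in $U$), then gives $A_\pm=B_\pm\cap B_r(p)$ after relabeling. Now take a continuous curve in $B_{10r}(p)$ from a point of $A_+$ to a point of $A_-$. If it avoided $\alpha$, it would be a path in $B_{10r}(p)\setminus\alpha$ joining $B_+$ to $B_-$, which is impossible. Hence it meets $\alpha$, which is property~(3).

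The only delicate points are bookkeeping. The edges must have length strictly greater than $R$, which is why $R$ is chosen strictly smaller than the edge lengths. The scale $10r$ must still sit inside the simply connected scale $r_0$, which is why we set $\bar r=r_0/10$. I do not expect a genuine obstacle.
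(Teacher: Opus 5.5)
Your proposal is correct and matches the paper's proof. Like the paper, you choose a radius below the edge lengths and use Theorem~\ref{Thm: semi-locally simply connected} to get a scale $10\bar r$ where Proposition~\ref{Pro: comp exist corner int quant} applies; you then obtain property~(3) by comparing the components at scales $r$ and $10r$ via Lemma~\ref{Lem: big set small set comp}, as in the proof of Theorem~\ref{Thm: good components existence} (the paper takes $R$ to be the minimal edge length and works in $B_{R/2}(p)$, a purely cosmetic difference).
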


\begin{proof}
Let $R$ be the minimum of edge lengths of $\alpha$. By Theorem~\ref{Thm: semi-locally simply connected}, there exists $0 < \bar r \leq R/2$ such that the inclusion $B_{10\bar r}(p) \hookrightarrow B_{R/2}(p)$ induces the trivial map on $\pi_1$. Proposition~\ref{Pro: comp exist corner int quant} implies that, for every $0 < r \leq 10\bar r$, the set $B_r(p) \setminus \alpha$ has two components. In particular, for every $0<r\le \bar r$, the ball $B_r(p)$ has two good components with respect to $\alpha$. This follows by considering the components of $B_{10r}(p)\setminus \alpha$; see the proof of Theorem~\ref{Thm: good components existence}.
\end{proof}

We can now remove the assumption that the edges of $\alpha$ have length \emph{strictly} greater than $R$ in Proposition~\ref{Pro: comp exist corner int quant}.
\begin{Pro}\label{Pro: comp exist corner int quant 2}
Let $\alpha:[-1,1] \to X$ be a corner with vertex $p$. Let $R > 0$ and assume that the two edges of $\alpha$ have length at least $R$. Let $0 < r \leq R$ be such that the inclusion $B_r(p) \hookrightarrow B_R(p)$ induces the trivial map on $\pi_1$. If $p \in \Int(X)$, then $B_r(p) \setminus \alpha$ has exactly two components $A_\pm$. Moreover, the topological boundaries of $A_\pm$ in $B_r(p)$ coincide with $\alpha \cap B_r(p)$.
\end{Pro}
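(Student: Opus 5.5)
The plan is to reduce to Proposition~\ref{Pro: comp exist corner int quant} by a slight enlargement of the outer radius, using that the conclusion only concerns $B_r(p)$. If $r<R$, the edges have length at least $R> r$, so I can pick $R'\in(r,R]$... but the issue is that the $\pi_1$-triviality is assumed for $B_r\hookrightarrow B_R$, and Proposition~\ref{Pro: comp exist corner int quant} needs edges strictly longer than the outer radius. Since edges have length $\ge R$, they are strictly longer than any $R'<R$, but we cannot shrink the outer ball without losing $\pi_1$-triviality. So instead I will exploit that $\pi_1(B_r(p))$ is generated by loops in compact subsets: it suffices to prove the conclusion for $B_{r'}(p)$ for all $r'<r$ with a suitable modification, and then pass to the union.

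Concretely, the upper bound ``at most two components'' and the boundary statement come for free from Proposition~\ref{Pro: corner int at most two comp} applied with $R$ replaced by $r$ (edges have length $\ge R\ge r$, and $p\in\Int(X)$). So only ``at least two components'' needs proof, i.e.\ that $B_r(p)\setminus\alpha$ is not path-connected. For this I will rerun the proof of Proposition~\ref{Pro: comp exist corner quant 2}, whose only use of strict length was to make room for $\varepsilon$-extendible approximations $\bar\alpha_i$ with edge lengths exceeding $R_i=R+\dd_X(p_i,p)$. The fix: the relevant loops in the Mayer--Vietoris argument of Proposition~\ref{Pro: comp exist extend corner quant} are a curve $\beta_1\subset B_r(p)$ (compact, hence in $B_{r''}(p)$ for some $r''<r$) concatenated with $\beta_2$ inside a small neighborhood $\tilde U$ of the corner inside $B_{r''}(p)$; its null-homotopy lives in $B_R(p)$, hence in some compact subset, hence in $B_{R''}(p)$ for some $R''<R$. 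Thus I apply the extendible-corner argument with outer radius $R''<R$ (edges of approximations have length $>R''+\dd(p_i,p)$ for large $i$ since original edges have length $\ge R>R''$), working with the fixed curve rather than uniform $\pi_1$-triviality.

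The order: (1) invoke Proposition~\ref{Pro: corner int at most two comp} on $B_r(p)$ for the upper bound and boundary; (2) suppose for contradiction $B_r(p)\setminus\alpha$ is path-connected, choose $x_\pm$ near $\gamma(t_0)$ on opposite local sides as in Proposition~\ref{Pro: comp exist corner quant 2}, and a connecting curve $\rho$ in $B_r(p)\setminus\alpha$; (3) close $\rho$ into a loop $\beta$ via a short path near $\gamma(t_0)$ crossing $\gamma$ once, contract it in $B_R(p)$, and find $R''<R$ containing the homotopy image; (4) approximate by extendible corners $\bar\alpha_i$ with vertex $p_i\to p$, edges longer than $R''+\veps_i$, so $\rho,\beta$ and the homotopy avoid $\bar\alpha_i$ appropriately and sit in $B_{R''+\veps_i}(p_i)$; (5) run the Mayer--Vietoris argument of Proposition~\ref{Pro: comp exist extend corner quant} for $\bar\alpha_i$ with this single null-homologous loop to show $x_\pm$ lie in different components of $U_i\setminus\bar\alpha_i$, contradicting $\rho$. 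The main obstacle is step (5): checking that the Mayer--Vietoris contradiction only needs the specific loop $\beta$ to be null-homologous in $B_{R''+\veps_i}(p_i)$ and that $x_\pm$ lie in distinct components $U^i_\pm$ of the neighborhood of $\bar\alpha_i$, which I would handle via the local regular-realization argument at $\gamma_i(t_0)$ as in Proposition~\ref{Pro: comp exist corner quant 2} together with Lemma~\ref{Lem: big set small set comp}.
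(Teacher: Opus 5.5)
Your proposal is correct, but it takes a longer route than the paper.

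\textbf{The paper's route.} The paper never approximates $\alpha$. Since $p\in\Int(X)$, Proposition~\ref{Pro: good comp exist corner int} already gives good components at the vertex of the possibly non-extendible corner $\alpha$. That proposition is itself just Proposition~\ref{Pro: comp exist corner int quant} applied at small scales, where strict edge length is automatic, combined with Theorem~\ref{Thm: semi-locally simply connected}. Consequently the construction of Proposition~\ref{Pro: corner neigh} works for $\alpha$ itself. It yields a neighborhood $U\subset B_R(p)$ of $\alpha((-1,1))$ with $U\setminus\alpha=U_+\sqcup U_-$. The Mayer--Vietoris argument of Proposition~\ref{Pro: comp exist extend corner quant} then runs verbatim on $\alpha$ with the given $r\le R$. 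Strict edge length was only needed to make room for extendible approximations, so the issue never arises.

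\textbf{Your route.} You take the upper bound and the boundary statement from Proposition~\ref{Pro: corner int at most two comp} applied at radius $r$, which is fine. For the lower bound you redo the approximation of Proposition~\ref{Pro: comp exist corner quant 2}. You replace uniform $\pi_1$-triviality by two observations: only the single loop $\beta=\rho*\beta_2$ needs to be null-homologous, and its null-homotopy has compact image, so it lies in some $B_{R''}(p)$ with $R''<R$. This works, and it gives something the paper's route does not. Your lower bound never uses $p\in\Int(X)$, so it proves the non-strict version of Proposition~\ref{Pro: comp exist corner quant 2} for arbitrary corners.

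\textbf{Details you must build in.} Because $\beta$ is fixed before the index $i$ is chosen, the Mayer--Vietoris neighborhoods $U_i$ of $\bar\alpha_i$ must contain the closing arc $\beta_2$ for all large $i$.
\begin{enumerate}
\item Build a ball $B_s(\gamma_i(t_0))$, with $s$ independent of $i$, into the construction of Proposition~\ref{Pro: corner neigh}. The radii there are free. Theorem~\ref{Thm: good components existence} supplies good components at this uniform scale, exactly as in the proof of Proposition~\ref{Pro: comp exist corner quant 2}. For large $i$, $B_{10s}(\gamma_i(t_0))$ misses the other edge.
\item Lemma~\ref{Lem: big set small set comp} then places $x_\pm$ in distinct components of $U_i\setminus\bar\alpha_i$.
\item Shorten $\alpha$ to edge length $\bar R\in(R'',R)$, so that its edges are extendible at their far ends and the approximations converge uniformly.
\item Enlarge $R''<R$ if needed so that $B_s(\gamma_i(t_0))\subset B_{R_i}(p_i)$, where $R_i:=R''+\dd_X(p,p_i)$.
\item Cut $\bar\alpha_i$ to edge length exactly $R_i$, so that $U_i\cup(B_{R_i}(p_i)\setminus\bar\alpha_i)=B_{R_i}(p_i)$.
\end{enumerate}
With these in place your steps (4)--(5) close the argument.
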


\begin{proof}
Up to shortening the edges of $\alpha$ and reparameterizing, we may assume that both edges of $\alpha$ have length exactly $R$ and that $p = \alpha(0)$; this does not change the set $B_r(p) \setminus \alpha$. By Proposition~\ref{Pro: corner neigh}, Theorem~\ref{Thm: good components existence 2}, and Proposition~\ref{Pro: good comp exist corner int}, there exists an open path-connected neighborhood $U \subset B_R(p)$ of $\alpha((-1,1))$ such that $U \setminus \alpha$ has exactly two components $U_\pm$. Set $V := B_R(p) \setminus \alpha.$ The proposition follows by considering the Mayer--Vietoris sequence for the cover $B_R(p) = U \cup V$ and arguing as in the proof of Proposition~\ref{Pro: comp exist extend corner quant}.
\end{proof}

\subsection{Equivalent definition of extremal points}
In this subsection, we show that the definition of extremal point is equivalent to the a priori stronger condition that \emph{every} regular geodesic $\gamma$ with an endpoint at $p$ disconnects $B_r(p)$ for all $0 < r \leq \bar r$, where $\bar r$ depends only on the length of $\gamma$ and the semi-locally simply connected property at $p$. 

For the remainder of this subsection, let $(X, \dd_X, \mm_X)$ be an $\RCD(K,N)$ space of essential dimension $2$.

\begin{Pro}\label{Pro: ext-pt strong def}
Let $p \in X$. Then $p \in \Ext(X)$ if and only if for every regular geodesic $\gamma:[0,1] \to X$ with $\gamma(0) = p$, there exists $0 < r \leq R$, where $R$ is the length of $\gamma$, such that $B_r(p)\setminus \gamma$ has exactly two components.
\end{Pro}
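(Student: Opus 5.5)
The ``if'' direction is immediate from the definition, provided some regular geodesic starting at $p$ exists. By Proposition~\ref{Pro: every pt have extend reg geod}, for $\mm_X$-a.e.\ $y$ the geodesic $\rho$ from $p$ to $y$ satisfies $\rho((0,1])\subset\mathcal R_2$, so it is regular. The hypothesis applied to $\rho$ gives some $r$ with $B_r(p)\setminus\rho$ disconnected, so $p\in\Ext(X)$. By Lemma~\ref{Lem: ball minus geod at most two comp}, ``not path-connected'' is the same as ``exactly two components'', so the two formulations agree.

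For ``only if'', fix $p\in\Ext(X)$ with witness $\gamma_0$ and radius $r_0$, so $B_{r_0}(p)\setminus\gamma_0$ has exactly two components. Let $\gamma$ be an arbitrary regular geodesic from $p$ of length $R$; we want some $r\le R$ with $B_r(p)\setminus\gamma$ disconnected. If $\gamma$ and $\gamma_0$ share an initial subsegment, then by non-branching one is a subsegment of the other near $p$, and Lemma~\ref{Lem: ext point disconnect all small ball} gives the claim directly. Otherwise, non-branching (Theorem~\ref{Thm: non-branching}) forces $\gamma\cap\gamma_0=\{p\}$ near $p$. Shortening both to a common length $R'$, the concatenation $\alpha$ of $\gamma$ (reversed) and $\gamma_0$ is a corner with vertex $p$.

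\textbf{Key step.} Argue by contradiction: suppose $B_r(p)\setminus\gamma$ is path-connected for all small $r$. Choose $r\le R'/2$ small enough that $B_r(p)\hookrightarrow B_{R'}(p)$ is trivial on $\pi_1$ (Theorem~\ref{Thm: semi-locally simply connected}). By Proposition~\ref{Pro: corner at most three comp}, $B_r(p)\setminus\alpha$ has at most three components. Run the same joining bookkeeping as in Proposition~\ref{Pro: corner int at most two comp}, with the neighborhoods $U_\pm$ of $\gamma$ and $\tilde U_\pm$ of $\gamma_0$, but swap which geodesic is known to be non-disconnecting. Path-connectedness of $B_r(p)\setminus\gamma$ joins $\tilde U_+$ to $\tilde U_-$ through curves avoiding $\gamma$, and these curves either avoid $\gamma_0$ or hit it from one side. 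Combined with the fact that $U_+$ and $\tilde U_+$ are joined, this should force $B_r(p)\setminus\alpha$ to have at most two components, each with boundary all of $\alpha\cap B_r(p)$. Then the Mayer--Vietoris argument of Proposition~\ref{Pro: comp exist extend corner quant}, applied through Proposition~\ref{Pro: comp exist corner quant 2}, shows $B_r(p)\setminus\alpha$ has exactly two components $A_\pm$, each bordering both edges.

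Next, recover $B_r(p)\setminus\gamma_0$ from these components: it is $A_+\cup A_-\cup\gamma((0,r))$ minus nothing further. Because both $A_\pm$ border $\gamma((0,r))$ from opposite sides (via good components of $\gamma$, Theorem~\ref{Thm: good components existence 2}), points of $A_+$ and $A_-$ can be connected across $\gamma$ near an interior point of $\gamma$. Hence $B_r(p)\setminus\gamma_0$ is path-connected. This contradicts Lemma~\ref{Lem: ext point disconnect all small ball} applied to $\gamma_0$, which says $B_r(p)\setminus\gamma_0$ has two components for every $r\le r_0$.

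The main obstacle is this combinatorial step: showing that if $\gamma$ does not disconnect, then $\alpha$ has exactly two components, each bordering both edges. The delicate part is ruling out the three-component configuration of the half-plane example using only the non-disconnection of $\gamma$. I would first attempt it by replicating the case analysis of Proposition~\ref{Pro: corner int at most two comp} with the roles of $\gamma$ and $\gamma_0$ swapped.
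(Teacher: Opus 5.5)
Your ``if'' direction and the reduction to a corner are fine (to know $\alpha$ is a corner you also need that $\gamma_0$ is not extendible past $p$, which is Lemma~\ref{Lem: geod at ext-pt not extend}). The gap is in the key step, and it is not where you locate it.

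For $r\le r_0$, write $B_r(p)\setminus\gamma_0=D_+\sqcup D_-$ with $\gamma((0,r))\subset D_+$, and label the sides of $\gamma_0$ so that $\tilde U_\pm\subset D_\pm$; this agrees with your convention that $\tilde U_+$ is joined to $U_+$. Then $D_-$ is open, path-connected and disjoint from $\alpha$. Its complement $D_+\setminus\gamma$ in $B_r(p)\setminus\alpha$ is open, so $D_-$ is itself a component of $B_r(p)\setminus\alpha$. It contains $\tilde U_-$, and its boundary $\gamma_0\cap B_r(p)$ misses $\gamma((0,r))$.

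So ``two components, each bordering both edges'' is never the actual configuration, and your bookkeeping cannot produce it. A path in $B_r(p)\setminus\gamma$ from $\tilde U_-$ to $\tilde U_+$ must cross $\gamma_0$, and it reaches $\gamma_0$ from the $D_-$ side, i.e.\ from $\tilde U_-$ itself. It therefore joins $\tilde U_-$ to nothing new.

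Under your hypothesis, what one actually gets is exactly two components, $D_-$ and $D_+\setminus\gamma$. This is perfectly consistent with Proposition~\ref{Pro: comp exist corner quant 2} as stated, which only asserts that $B_r(p)\setminus\alpha$ is not path-connected. Component counting thus yields no contradiction. Your final step, gluing two components across $\gamma$ to connect $B_r(p)\setminus\gamma_0$, rests on the unproved claim that the two components lie on opposite sides of $\gamma$.

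The missing ingredient is a sharper separation statement, established \emph{inside} the proof of Proposition~\ref{Pro: comp exist corner quant 2} via extendible approximations. It says that for an interior point $\gamma(t_0)$ of the edge $\gamma$ and small $s$, the two components of $B_s(\gamma(t_0))\setminus\gamma$ lie in different components of $B_r(p)\setminus\alpha$. With it you can argue directly, without a contradiction hypothesis over all small $r$:
\begin{enumerate}
\item Pick $x_\pm$ in these two local components, so $x_\pm\in B_s(\gamma(t_0))\subset D_+$. Suppose a path $\beta\subset B_r(p)\setminus\gamma$ joins them.
\item By the sharper statement, $\beta$ must meet $\gamma_0$, necessarily away from $p\in\gamma$.
\item Starting from either end, $\beta$ stays in $D_+$ until its first hit of $\gamma_0$. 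Just before that hit it lies in $\tilde U_+\subset D_+$ rather than in $\tilde U_-\subset D_-$.
\item Hence both $x_+$ and $x_-$ are joined to $\tilde U_+$ within $B_r(p)\setminus\alpha$, contradicting the sharper statement.
\end{enumerate}
So $B_r(p)\setminus\gamma$ is disconnected, and Lemma~\ref{Lem: ball minus geod at most two comp} gives exactly two components. This is essentially the paper's argument.
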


\begin{proof}
The ``if'' direction is immediate. Indeed, it suffices to show that there exists $x \in X$ such that a geodesic from $p$ to $x$ is regular, and this follows from Proposition~\ref{Pro: every pt have extend reg geod}.

We now prove the ``only if'' direction. Assume that $p$ is extremal. Then there exists a regular geodesic $\bar \gamma_0:[0,1] \to X$ with $\bar \gamma_0(0) = p$ and length $\bar R_0 > 0$ such that $\bar \gamma_0$ disconnects $B_{\bar r_0}(p)$ for some $\bar r_0 \leq \bar R_0$. By Lemma~\ref{Lem: ball minus geod at most two comp}, $B_{\bar r_0}(p) \setminus \bar \gamma_0$ has exactly two components. It then follows from Lemma~\ref{Lem: ext point disconnect all small ball} that $B_{r}(p) \setminus \bar \gamma_0$ has exactly two components for all $0 < r \leq \bar r_0.$

Let $\bar \gamma_1:[0,1] \to X$ be a regular geodesic with $\bar \gamma_1(0) = p$. We must show that $\bar \gamma_1$ disconnects a small ball around $p$ into exactly two components. Up to shortening $\bar \gamma_1$, we may assume that $\bar \gamma_1(0)$ has length $R < \bar R_0$. If $\bar \gamma_1$ intersects $\bar \gamma_0$ at some point other than $p$, then the non-branching property (Theorem~\ref{Thm: non-branching}) implies that $\bar \gamma_1$ is a subsegment of $\bar \gamma_0$. In particular, $\bar \gamma_1$ disconnects all small balls around $p$. Thus assume that $\bar \gamma_1$ and $\bar \gamma_0$ intersect only at $p$. 

Let $\bar \alpha$ be the curve obtained by concatenating $\bar \gamma_0$ (reversed) with $\bar \gamma_1$. Since $\bar\gamma_0$ disconnects all sufficiently small balls centered at $p$, Lemma~\ref{Lem: geod at ext-pt not extend} implies that any subsegment of the geodesic $\bar\gamma_0$ containing $p$ is not extendible past $p$. It follows that any restriction of $\bar \alpha$ to a neighborhood of $p$ cannot be a geodesic. Therefore, $\bar\alpha$ is a corner. 

By Theorem~\ref{Thm: semi-locally simply connected}, for all sufficiently small $0 < r \leq R/2$, the inclusion $B_r(p) \hookrightarrow B_{R/2}(p)$ induces the trivial map on $\pi_1$. Proposition~\ref{Pro: comp exist corner quant 2} then implies that $B_r(p) \setminus \bar \alpha$ has at least two components. By further impose that $r \leq \bar r_0$, we may assume that $B_r(p) \setminus \bar \gamma_0$ has exactly two components as well. Since $\bar \gamma_1((0,1]) \cap B_r(p)$ is disjoint from $\bar \gamma_0$, it must be contained in one of the components of $B_r(p) \setminus \bar \gamma_0$. We denote this component by $A_+$, and the other by $A_-$. 

Shorten $\bar \gamma_0$ and $\bar \gamma_1$ to geodesics $\gamma_0, \gamma_1:[0,1] \to X$ such that $p = \gamma_0(0) = \gamma_1(0)$, and $\gamma_0$ and $\gamma_1$ have the same length $r$. Consider the curve $\alpha:[-1,1] \to X$ 
define by
\[
\alpha(t) =
\begin{cases}
\gamma_0(-t) & \text{if $t \in [-1,0]$} \\
\gamma_1(t) & \text{if $t \in [0,1]$}. 
\end{cases}
\] 
Since $B_r(p) \setminus \bar \gamma_1 = B_r(p) \setminus \gamma_1$, it suffices to show that the latter has exactly two components. 

By Proposition~\ref{Pro: geod neigh}, there exists a path-connected neighborhood $U_1 \subset B_r(p) \setminus \gamma_0$ of $\gamma_1((0,1))$ such that $U_1 \setminus \gamma_1$ has exactly two components $U^1_\pm$. Define $U_0$ analogously for $\gamma_0$ and denote by $U^0_\pm$ the components of $U_0 \setminus \gamma_0$. 

By the proof of Proposition~\ref{Pro: corner at most three comp} and up to relabeling, we may assume that $U_+^0$ and $U_+^1$ lie in the same component of $B_r(p) \setminus \alpha$. Since $\gamma_1((0,1)) \subset A_+$ and $U_1$ is path-connected, we have $U_1 \subset A_+$. This implies that $U_-^1, U_+^1, U_+^0 \subset A_+$. Since $\gamma_0((0,1))$ lies in the topological boundaries of $A_\pm$, both $A_\pm$ intersect $U_0$. Thus it follows from Lemma~\ref{Lem: big set small set comp} that $U_-^0 \subset A_-$.

Next, we claim that the following hold for all sufficiently small $s > 0$:
\begin{enumerate}
    \item $B_s(\gamma_1(1/2)) \subset U_1 \subset  B_r(p) \setminus \gamma_0$;
    \item $B_s(\gamma_1(1/2)) \setminus \gamma_1$ has exactly two components $B_\pm$;
    \item $B_\pm$ are contained in different components of $B_r(p) \setminus \alpha$. 
\end{enumerate}
Indeed, (1) follows from triangle inequality, (2) follows from Theorem~\ref{Thm: good components existence 2}, and (3) follows from the proof of Proposition~\ref{Pro: comp exist corner int quant 2}. 

By Proposition~\ref{Pro: geod neigh}, $\gamma_1(1/2)$ lies in the topological boundaries of $U^1_\pm$. Therefore, $U^1_\pm$ intersect $B_s(\gamma_1(1/2))$. By Lemma~\ref{Lem: big set small set comp} and (1), up to relabeling $B_\pm$, we may assume that $B_\pm \subset U^1_\pm$. Fix $x_\pm \in B_\pm$, we claim that there does not exist a curve in $B_r(p) \setminus \gamma_1$ joining $x_\pm$.

Suppose for the sake of contradiction that such a curve $\beta:[0,1] \to X$ exists. By (3), $\beta$ must intersect $\alpha$. Since $\beta$ does not intersect $\gamma_1$, it must intersect $\gamma_0$. Thus there exists a smallest $t \in (0,1)$ such that $\beta(t) \in \gamma_2$. By continuity, there exists $0 < t' < t$ such that $\beta(t')$ lies in $U_0$, and therefore in one of $U^0_\pm$. If $\beta(t') \in U^0_+$, then $\beta \lvert_{[0,t']}$ is a curve in $B_r(p) \setminus \alpha$ from $U^1_-$ to $U^0_+$. Since $U^0_+$ and $U^1_+$ are contained in the same component of $B_r(p) \setminus \alpha$, this contradicts (3). On the other hand, if $\beta(t') \in U^0_-$, then $\beta \lvert_{[0,t']}$ is a curve in $B_r(p) \setminus \alpha$ (and thus in $B_r(p) \setminus \gamma_0$) from $U^1_-$ to $U^0_-$. However, we have already shown that $U_-^1 \subset A_+$ and $U^0_- \subset A_-$, yielding a contradiction.  

Therefore, $x_\pm$ lie in different components of $B_r(p) \setminus \gamma_1$. In particular, $B_r(p) \setminus \gamma_1$ has at least two components. By Lemma~\ref{Lem: ball minus geod at most two comp}, it has exactly two. 
\end{proof}

The following corollary is immediate from Lemma~\ref{Lem: geod at ext-pt not extend} and Proposition~\ref{Pro: ext-pt strong def}.
\begin{Cor}\label{Cor: int geod no ext}
If $x$ lies in the interior of a regular geodesic $\gamma$, then $x \in \Int(X)$. 
\end{Cor}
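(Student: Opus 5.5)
The plan is to argue by contradiction. Suppose $x = \gamma(t_0)$ with $t_0$ in the open parameter interval of a regular geodesic $\gamma:[a,b]\to X$, and suppose $x \in \Ext(X)$. The strategy is to produce a regular geodesic starting at $x$ that is extendible past $x$, and then to confront this with Proposition~\ref{Pro: ext-pt strong def} and Lemma~\ref{Lem: geod at ext-pt not extend}.

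The natural candidate is the subsegment $\sigma := \gamma|_{[t_0,b]}$, reparameterized on $[0,1]$ so that $\sigma(0) = x$. This is a geodesic with $\sigma(0)=x$ and $\sigma((0,1))\subset\gamma((a,b))\subset\mathcal R_2$, so $\sigma$ is regular in the sense of Definition~\ref{Def: reg geod}. It is also extendible past $\sigma(0)=x$: the geodesic $\gamma|_{[t_0-\veps,b]}$, for small $\veps>0$, restricts to $\sigma$. Note that regularity only concerns the open interior of $\sigma$, and $x$ itself lies in $\mathcal R_2$ anyway, being an interior point of $\gamma$.

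Since $x \in \Ext(X)$, the ``only if'' direction of Proposition~\ref{Pro: ext-pt strong def}, applied to the regular geodesic $\sigma$, gives some $0<r\le R$ (with $R$ the length of $\sigma$) such that $B_r(x)\setminus\sigma$ has exactly two components, and in particular is not path-connected. Now apply Lemma~\ref{Lem: geod at ext-pt not extend} to the shortened geodesic $\sigma' := \sigma$ restricted to length $r$, with $p = x$ and $R$ there replaced by $r$. We have $B_r(x)\setminus\sigma' = B_r(x)\setminus\sigma$, because the part of $\sigma$ beyond length $r$ lies outside $B_r(x)$. The lemma then says that $\sigma'$ is not extendible past $x$. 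Alternatively, one can apply the lemma's ``moreover'' clause directly: any subsegment of $\sigma$ containing $x$ is not extendible past $x$.

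This contradicts the extendibility obtained from $\gamma$, since $\gamma|_{[t_0-\veps,t_0+r]}$ extends $\sigma'$ past $x$. Hence $x\in\Int(X)$. The only point needing care is matching the hypotheses of Lemma~\ref{Lem: geod at ext-pt not extend}, which is stated for a geodesic whose length equals the radius of the ball. This is handled either by the shortening step just described or by the lemma's subsegment clause, so I expect no real obstacle beyond this bookkeeping.
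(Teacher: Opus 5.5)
Your proof is correct and is the argument the paper intends: the paper simply records the corollary as immediate from Lemma~\ref{Lem: geod at ext-pt not extend} and Proposition~\ref{Pro: ext-pt strong def}. The reasoning is the same as yours: if $x$ were extremal, the regular subsegment of $\gamma$ issuing from $x$ would disconnect small balls at $x$ and hence not be extendible past $x$, while $\gamma$ itself extends it. Your shortening step, which matches the lemma's convention that the ball radius equals the geodesic's length, is exactly the bookkeeping needed.
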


It follows from the Proposition~\ref{Pro: ext-pt strong def} that any regular geodesic $\gamma$ with an endpoint $p \in \Ext(X)$ disconnects $B_r(p)$ into two components for some $r > 0$ depending on $\gamma$. At the moment, we do not have any quantitative control over $r$. This will present issues for our eventual goal of proving that the $\Ext(X)$ is closed. For example, it might be the case that we have a sequence of points $p_i \in \Ext(X)$ converging to some point $p \in X$ and a sequence $r_i \to 0$, such that $B_{r_i}(p_i)$ is never disconnected by any regular geodesic with an end point in $p_i$. This would make it impossible to pass any disconnection properties to the limit point $p$.

In the next part of this subsection, we remedy this by showing that if $\gamma$ has length $R$ then it must disconnect $B_r(p)$ for any $0 < r \leq R$ such that the inclusion $B_r(p) \hookrightarrow B_R(p)$ induces the trivial map on $\pi_1$. The strategy is the same as that of Subsection~\ref{Subsec: MV extend corner}.

\begin{Lem}\label{Lem: geod endpoint ext good comp}
Let $\gamma:[0,1] \to X$ be a regular geodesic such that $p := \gamma(0)$ is extremal. Then there exists $\bar r>0$ such that, for every $0<r<\bar r$, the ball $B_r(p)$ has two good components with respect to $\gamma$.
\end{Lem}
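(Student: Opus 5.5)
The proof will follow the one for the analogous interior statements (Theorem~\ref{Thm: good components existence} and Proposition~\ref{Pro: good comp exist corner int}). Good components are characterized by three properties (Definition~\ref{Def: good comp}): a splitting into two clopen components, the boundary condition, and separation inside $B_{10r}(p)$. The first two come from the extremal hypothesis. The third comes from comparing with a ball ten times larger, via Lemma~\ref{Lem: big set small set comp}.

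\textbf{Step 1.} Let $R$ be the length of $\gamma$. Since $p\in\Ext(X)$ and $\gamma$ is regular with $\gamma(0)=p$, Proposition~\ref{Pro: ext-pt strong def} gives some $0<r_0\le R$ such that $B_{r_0}(p)\setminus\gamma$ has exactly two components. Applying Lemma~\ref{Lem: ext point disconnect all small ball} to the subsegment of $\gamma$ of length $r_0$, we get that for every $0<r\le r_0$ the set $B_r(p)\setminus\gamma$ has exactly two components $B_\pm\cap B_r(p)$. Here $B_\pm$ are the components at scale $r_0$.

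Set $\bar r:=r_0/10$. For $0<r<\bar r$, put $A_\pm:=B_\pm\cap B_r(p)$. These are the two components of $B_r(p)\setminus\gamma$. They are disjoint and their union is the whole set, and they are clopen because components are clopen in locally path-connected spaces. This gives property~(1).

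\textbf{Step 2 (boundary).} Apply Lemma~\ref{Lem: ext-pt comp boundary} to the subsegment of $\gamma$ of length exactly $r$, which ends on $\partial B_r(p)$. It shows that the topological boundaries of $A_\pm$ in $B_r(p)$ coincide with $\gamma\cap B_r(p)$. This gives property~(2). We must take care here that $\gamma\cap B_r(p)=\gamma([0,t))$ with $\dd_X(p,\gamma(t))=r$, which holds because $\gamma$ is a minimizing geodesic.

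\textbf{Step 3 (separation in $B_{10r}(p)$).} Since $10r<r_0$, Step~1 applied at scale $10r$ shows that $B_{10r}(p)\setminus\gamma$ has exactly two components $C_\pm=B_\pm\cap B_{10r}(p)$. By Step~1 we also have $A_\pm=C_\pm\cap B_r(p)$; alternatively this follows from Lemma~\ref{Lem: big set small set comp}, since both $C_\pm$ meet $B_r(p)$ by Step~2 at scale $10r$. Now let $\beta$ be a curve in $B_{10r}(p)$ from a point of $A_+$ to a point of $A_-$. If $\beta$ avoided $\gamma$, it would be a path in $B_{10r}(p)\setminus\gamma$ joining $C_+$ to $C_-$, which is impossible by Lemma~\ref{Lem: clopen path-connected}. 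Hence $\beta$ meets $\gamma$, which is property~(3).

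\textbf{Main obstacle.} The only delicate point is the radius bookkeeping. The ten-times-larger ball $B_{10r}(p)$ must still lie within the range where the disconnection is known, namely radii at most $r_0\le R$, so that the curves of property~(3) are controlled. This is why we choose $\bar r=r_0/10$. We also need Lemmas~\ref{Lem: ext point disconnect all small ball} and~\ref{Lem: ext-pt comp boundary} to apply to the shortened subsegments, which holds because subsegments of regular geodesics are regular. No quantitative control on $\bar r$ is claimed here; that is deferred to the later Mayer--Vietoris argument.
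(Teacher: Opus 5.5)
Your proof is correct and takes essentially the same route as the paper. You get a scale $r_0$ from Proposition~\ref{Pro: ext-pt strong def}, pass the two-component splitting and the boundary property down to all smaller radii via Lemmas~\ref{Lem: ext point disconnect all small ball} and~\ref{Lem: ext-pt comp boundary}, and obtain property~(3) with $\bar r=r_0/10$ by using the components of $B_{10r}(p)\setminus\gamma$ as auxiliary sets. The only cosmetic difference is that you read off $A_\pm=C_\pm\cap B_r(p)$ directly from Lemma~\ref{Lem: ext point disconnect all small ball}, whereas the paper's cited argument uses Lemma~\ref{Lem: big set small set comp}; both are valid.
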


\begin{proof}
Let $R$ be the length of $\gamma$. By Proposition~\ref{Pro: ext-pt strong def}, there exists $0 < \bar r \leq R/10$ such that $B_{10 \bar r}(p) \setminus \gamma$ has exactly two components. It follows from Lemmas~\ref{Lem: ext-pt comp boundary}~and~\ref{Lem: ext point disconnect all small ball} that for every $0<r\le 10\bar r$, the set $B_r(p)\setminus \gamma$ has exactly two components, and the topological boundary of each component in $B_r(p)$ coincides with $\gamma\cap B_r(p)$. Therefore, for every $0<r\le \bar r$, the ball $B_r(p)$ has two good components with respect to $\gamma$, where property~(3) in the definition of good components (Definition~\ref{Def: good comp}) follows by applying the same argument as in the proof of Theorem~\ref{Thm: good components existence}, using the components of $B_{10r}(p)\setminus \gamma$ as auxiliary sets. 
\end{proof}

Applying Lemma~\ref{Lem: geod endpoint ext good comp} and Lemma~\ref{Lem: geod neigh side choice} (with $I = [0,1)$) and arguing as in the proof of Proposition~\ref{Pro: geod neigh}, we obtain the following proposition.
\begin{Pro}\label{Pro: geod neigh endpoint ext}
Let $\gamma:[0,1] \to X$ be a regular geodesic such that $p := \gamma(0)$ is extremal. Then for any open neighborhood $V$ of $\gamma([0,1))$, there exists an open path-connected neighborhood $U \subset V$ of $\gamma([0,1))$ such that $U \setminus \gamma$ has exactly two components $U_\pm$. Moreover, the topological boundaries of $U_\pm$ in $U$ coincide with $\gamma([0,1))$.
\end{Pro}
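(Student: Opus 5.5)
The plan is to mirror the proof of Proposition~\ref{Pro: geod neigh}, replacing Lemma~\ref{Lem: geod neigh side choice} on the open interval by its version with $I = [0,1)$. The extra input is at the endpoint $t=0$, where Lemma~\ref{Lem: geod endpoint ext good comp} supplies good components.

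\textbf{Step 1: choose the radii.} By Lemma~\ref{Lem: geod endpoint ext good comp}, there is $r(0)>0$ such that $B_{r(0)}(p)$ has two good components with respect to $\gamma$. Shrinking $r(0)$ (this is allowed, since the lemma holds for all $0<r<\bar r$), we may also assume $B_{r(0)}(p)\subset V$. For each $t\in(0,1)$, Theorem~\ref{Thm: good components existence 2} applied to $\gamma$ gives $r(t)>0$ such that $B_{r(t)}(\gamma(t))$ has two good components with respect to $\gamma$. Since these radii may be taken arbitrarily small, we again impose $B_{r(t)}(\gamma(t))\subset V$.

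\textbf{Step 2: label the sides.} Lemma~\ref{Lem: geod neigh side choice} with $I=[0,1)$ labels the good components $A_\pm(t)$ for $t\in[0,1)$ consistently, meaning properties~(1) and~(2) of that lemma hold. This is the only place where care is needed. The chaining argument behind that lemma relies only on Lemma~\ref{Lem: good comp side induct} and on good sequences $0=t_0<\dots<t_k=t$, obtained by compactness of $[0,t]$. Lemma~\ref{Lem: good comp side induct} is purely topological and allows either center to be an endpoint of $\gamma$. Property~(2) in Definition~\ref{Def: good comp} at $t=0$ says the boundary of $A_\pm(0)$ in $B_{r(0)}(p)$ is $\gamma\cap B_{r(0)}(p)$, which is exactly what that lemma uses. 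So the labeling goes through with $I=[0,1)$.

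\textbf{Step 3: assemble and verify.} Define
\[
U := \bigcup_{t\in[0,1)} B_{r(t)}(\gamma(t)), \qquad U_\pm := \bigcup_{t\in[0,1)} A_\pm(t).
\]
Then $U\subset V$ is open and contains $\gamma([0,1))$. It is path-connected, because each ball is path-connected and meets $\gamma([0,1))$.

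\emph{Decomposition.} Property~(1) of the labeling makes $U_+$ and $U_-$ disjoint. Since each ball satisfies $B_{r(t)}(\gamma(t))\setminus\gamma = A_+(t)\sqcup A_-(t)$, we get $U\setminus\gamma=U_+\sqcup U_-$. Both sets are open, so each is clopen in $U\setminus\gamma$.

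\emph{Path-connectedness of $U_\pm$.} Take $x\in A_+(t)$ and $y\in A_+(t')$, and use a good sequence from $t$ to $t'$. Property~(2) of the labeling gives $A_+(t_i)\cap A_+(t_{i+1})\neq\emptyset$ for consecutive terms, and each $A_+(t_i)$ is path-connected, so $x$ and $y$ are joined inside $U_+$. The same argument works for $U_-$. Lemma~\ref{Lem: clopen path-connected} then shows that $U_\pm$ are exactly the two components of $U\setminus\gamma$.

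\emph{Boundary.} Each $\gamma(t)$ with $t\in[0,1)$ lies in the boundary of both $A_\pm(t)$, hence in the boundary of both $U_\pm$ in $U$. In particular this covers $p=\gamma(0)$, thanks to the good components at the endpoint. The reverse inclusion holds because $U_\pm$ are clopen in $U\setminus\gamma$.
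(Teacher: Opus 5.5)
Your proposal is correct and matches the paper's proof: the paper also combines Lemma~\ref{Lem: geod endpoint ext good comp} at $t=0$ with Lemma~\ref{Lem: geod neigh side choice} for $I=[0,1)$, and then repeats the assembly argument of Proposition~\ref{Pro: geod neigh}, exactly as in your Steps 1--3. Your final boundary inclusion also uses, implicitly, that $\gamma(1)\notin U$, so that $U\cap\gamma=\gamma([0,1))$; this holds because the radii supplied by Theorem~\ref{Thm: good components existence 2} and Lemma~\ref{Lem: geod endpoint ext good comp} are small compared to the distance from $\gamma(t)$ to $\gamma(1)$.
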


We can now prove the following proposition (cf. Proposition~\ref{Pro: comp exist extend corner quant}). 
\begin{Pro}\label{Pro: comp exist geod ep ext quant}
Let $\gamma:[0,1] \to X$ be a regular geodesic such that $p := \gamma(0)$ is extremal.. Let $R > 0$ and assume that $\gamma$ has length at least $R$. Let $0 < r \leq R$ be such that the inclusion $B_r(p) \hookrightarrow B_R(p)$ induces the trivial map on $\pi_1$. Then $B_r(p) \setminus \gamma$ has exactly two components $A_\pm$. Moreover, the topological boundaries of $A_\pm$ in $B_r(p)$ coincide with $\gamma \cap B_r(p)$.
\end{Pro}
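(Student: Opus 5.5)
The plan is to copy the Mayer--Vietoris argument of Proposition~\ref{Pro: comp exist extend corner quant}. The neighborhoods of the extendible corner are replaced by the neighborhoods of $\gamma([0,1))$ given by Proposition~\ref{Pro: geod neigh endpoint ext}.

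First I shorten $\gamma$ to length exactly $R$; this does not change $B_r(p)\setminus\gamma$. Applying Proposition~\ref{Pro: geod neigh endpoint ext} with $V=B_R(p)$ gives an open path-connected $U\subset B_R(p)$ containing $\gamma([0,1))$. The set $U\setminus\gamma$ has exactly two components $U_\pm$, and each has boundary $\gamma([0,1))$ in $U$.

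Next let $\tilde\gamma$ be the constant-speed reparametrization of the subsegment of $\gamma$ inside $\overline B_r(p)$. It is still regular with extremal endpoint $p$. Applying the same proposition to $\tilde\gamma$ inside $U\cap B_r(p)$ gives a path-connected $\tilde U$ with components $\tilde U_\pm$. Since $p$ lies in the boundary of both $U_\pm$, Lemma~\ref{Lem: big set small set comp} gives $\tilde U_\pm\subset U_\pm$ after relabeling. Near $p$ I want to use $\gamma([0,1))$ rather than the open interior. This is why the neighborhood must contain the endpoint $p$, and it is exactly what Proposition~\ref{Pro: geod neigh endpoint ext} supplies.

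For \emph{at least two components}, I fix $x_\pm\in\tilde U_\pm$ and suppose a curve $\beta_1$ joins them in $B_r(p)\setminus\gamma$. I close it up with a curve $\beta_2\subset\tilde U$ to get a loop $\beta\subset B_r(p)$. By hypothesis $\beta$ is null-homotopic in $B_R(p)$, so $[\beta]=0$ in $H_1(B_R(p);\Z)$. I then take the cover $B_R(p)=U\cup V$ with $V:=B_R(p)\setminus\gamma$, so that $U\cap V=U_+\sqcup U_-$. One must check $U\cup V=B_R(p)$; this holds because $U\supset\gamma([0,1))=\gamma\cap B_R(p)$. The Mayer--Vietoris boundary map then gives $\partial_*[\beta]=[x_+]-[x_-]\neq 0$ in $H_0(U\cap V)$, which is a contradiction.

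For \emph{at most two components}, Lemma~\ref{Lem: ball minus geod at most two comp} applied to the subsegment of length $r$ gives the bound directly. Alternatively, each $x$ joins $\tilde U$ along a geodesic to $p$ that meets $\gamma$ only at $p$, by non-branching. For the boundary statement, I use that $\tilde U_\pm\subset A_\pm$ have boundary $\tilde\gamma([0,1))=\gamma\cap B_r(p)$. The reverse inclusion holds because $A_\pm$ are clopen in $B_r(p)\setminus\gamma$; alternatively it follows from Lemma~\ref{Lem: ext-pt comp boundary}.
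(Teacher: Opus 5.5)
Your proposal is correct and matches the paper's proof. The paper likewise shortens $\gamma$ to length $R$, takes the neighborhood $U$ of $\gamma([0,1))$ from Proposition~\ref{Pro: geod neigh endpoint ext}, sets $V := B_R(p)\setminus\gamma$, and runs the Mayer--Vietoris argument of Proposition~\ref{Pro: comp exist extend corner quant}. You have written that argument out in full, including the point that $U$ must contain the endpoint $p$ so that $U\cup V = B_R(p)$.
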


\begin{proof}
It suffices to prove the proposition for all $\gamma$ of length $R$. By Proposition~\ref{Pro: geod neigh endpoint ext}, there exists a path-connected neighborhood $U \subset B_R(p)$ of $\gamma([0,1))$ such that $U \setminus \gamma$ has exactly two components. Let $V := B_R(p) \setminus \gamma.$ The proposition follows by considering the Mayer--Vietoris sequence for the cover $B_R(p) = U \cup V$ and arguing as in the proof of Proposition~\ref{Pro: comp exist extend corner quant}.
\end{proof}

\subsection{Proof of Theorem~\ref{Thm: int set open}}\label{Subsec: ext set closed}

We first show that $\Ext(X)$ is sequentially closed. Let $p_i \in \Ext(X)$ be a sequence converging to some $p \in X$.  By Proposition~\ref{Pro: every pt have extend reg geod}, there exists $q \in \mathcal{R}_2$ such that the geodesic $\gamma:[0,1] \to X$ from $p$ to $q$ is unique and regular. Let $L := \dd_X(p,q)$. It follows from the triangle inequality that $L_i := \dd_X(p_i, q) > L/2$ for all sufficiently large $i$. To avoid repeated qualifiers, we adopt the convention that all subsequent statements hold for sufficiently large $i$, even if not explicitly mentioned.

For each $i \in \N$, fix a geodesic $\gamma_i:[0,1] \to X$ from $p_i$ to $q$. Since $\gamma$ is the unique geodesic from $p$ to $q$, the sequence $\gamma_i$ converges uniformly to $\gamma$. Since the length of each $\gamma_i$ exceeds $L/2$, it follows from Lemma~\ref{Lem: R2 persistence} that there exist $\delta, s_0(K, N, L) > 0$, independent of $i$, with the following property: if there exists $0 < s \leq s_0$ such that
\begin{equation}\label{Eq: ext set closed 1}
\dd_{GH}(\overline B_s(\gamma_i(1/2)), \overline B_s(0^2)) < \delta s,
\end{equation}
then 
\[
\frac{\dd_{GH}(\overline B_{s'}(\gamma_i(1/2)), \overline B_{s'}(0^2))}{s'} \to 0 \quad \text{as } s' \to 0.
\]
In particular, this would imply that $\gamma_i(1/2) \in \mathcal R_2$. 

Since $\gamma$ is regular, there exists $0<s \leq s_0$ such that 
\[
\dd_{GH}(\overline B_s(\gamma(1/2)), \overline B_s(0^2)) < \frac{\delta s}{2}.
\]
Since $\gamma_i(1/2)$ converges to $\gamma(1/2)$, Proposition~\ref{Pro: GH distance balls center} implies that, for sufficiently large $i$,
\[
\dd_{GH}(\overline B_s(\gamma(1/2)), \overline B_s(\gamma_i(1/2))) < \frac{\delta s}{2}.
\]
By triangle inequality for the Gromov--Hausdorff distance, we obtain \eqref{Eq: ext set closed 1}, and so $\gamma_i(1/2) \in \mathcal{R}_2$. By Theorem~\ref{Thm: geodesic interior tangent}, the geodesics $\gamma_i$ must all be regular for sufficiently large $i$. 

Let $R := L/4$. By Theorem~\ref{Thm: semi-locally simply connected}, there exists $0< r \leq R$ such the inclusion $B_r(p) \hookrightarrow B_R(p)$ induces the trivial map on $\pi_1$. For each $i$, define
\[
r_i := r-\dd_X(p_i, p) \quad \text{and} \quad R_i := R + \dd_X(p_i, p).
\]
Since $\dd_X(p_i ,p) \to 0$, we have $r_i > 0$ and $R_i < L/2$ for sufficiently large $i$. The triangle inequality implies $B_{r_i}(p_i) \subset B_r(p)$ and $B_R(p) \subset B_{R_i}(p_i)$.
Therefore, the inclusion $B_{r_i}(p_i) \hookrightarrow B_{R_i}(p_i)$ must also induce a trivial map on $\pi_1$. The point $p_i$ is extremal, and each $\gamma_i$ is regular with length $L_i > L/2 > R_i.$ Thus it follows from Proposition~\ref{Pro: comp exist geod ep ext quant} that the set $B_{r_i}(p_i) \setminus \gamma_i$ has exactly two components.

Let $t_0 \in (0,1]$ be such that $\dd_X(\gamma(t_0), p) = r/2$. Arguing as in the proof of Proposition~\ref{Pro: comp exist corner quant 2}, there exists $s > 0$ such that the following hold for all sufficiently large $i$:
\begin{enumerate}
    \item $B_s(\gamma(t_0)) \subset B_r(p)$ and $B_s(\gamma_i(t_0)) \subset B_{r_i}(p_i)$.
    \item $B_s(\gamma(t_0)) \setminus \gamma$ and $B_s(\gamma_i(t_0)) \setminus \gamma_i$ have exactly two components.
    \item There exist points $x_\pm \in X$ such that
    \[
    x_\pm \in B_s(\gamma(t_0)) \setminus \gamma \subset B_r(p) \setminus \gamma, \quad x_\pm \in B_s(\gamma_i(t_0)) \setminus \gamma_i \subset B_{r_i}(p_i)\setminus \gamma_i,
    \]
    and $x_\pm$ lie in different components of $B_s(\gamma(t_0)) \setminus \gamma$ and $B_s(\gamma_i(t_0)) \setminus \gamma_i$.
\end{enumerate}
Continuing the argument as in the proof of Proposition~\ref{Pro: comp exist corner quant 2}, we conclude that $x_+$ and $x_-$ cannot be connected by a continuous curve in $B_r(p) \setminus \gamma$. Therefore, $B_r(p) \setminus \gamma$ is not path-connected, and it follows by definition that $p \in \Ext(X)$. Therefore, $\Ext(X)$ is closed.

We now show that $\Int(X)$ is dense in $X$. Let $x \in X$ and $r > 0$. By Proposition~\ref{Pro: every pt have extend reg geod}, there exists $y \in B_r(x)$ such that the geodesic $\gamma:[0,1] \to X$ from $x$ to $y$ is unique, regular, and extendible past $y$. Any extension $\tilde \gamma$ of $\gamma$ past $y$ is also a regular geodesic by Theorem~\ref{Thm: geodesic interior tangent}. Since $y$ lies in the interior of $\tilde \gamma$, it follows from Corollary~\ref{Cor: int geod no ext} that $y \in \Int(X)$.  \qed

\section{Jordan Theorem for polygonal loops}\label{Sec: poly loop}

Let $(X,\dd_X,\mm_X)$ be an $\RCD(K,N)$ space of essential dimension $2$. In
this section, we introduce the notion of polygonal loops and prove a Jordan curve theorem for such loops.

\begin{Def}[Polygonal curve]\label{Def: poly curve}
A continuous curve $\alpha:[a,b]\to X$ is called \emph{polygonal} if there exist
$a=t_0<t_1<\cdots<t_m=b$ such that, for each $i=1,\dots,m$, the restriction
$\alpha|_{[t_{i-1},t_i]}$ is a regular geodesic. A polygonal curve is \emph{simple} if it is injective and \emph{interior} if $\alpha \subset \Int(X)$. 
\end{Def}

For any polygonal curve $\alpha:[a,b] \to X$, there are many different partitions $\{t_i\}_{i=0}^{m}$ of $[a,b]$ for which the condition in Definition~\ref{Def: poly curve} holds. To avoid ambiguity, we always fix an associated partition for a given polygonal curve. Several notions and constructions introduced later will depend implicitly on this choice, and we suppress this dependence in the notation.

The points $\alpha(t_0)$ and $\alpha(t_m)$ are called the endpoints of $\alpha$, and the points $\alpha(t_0),\dots,\alpha(t_{m})$ are called the
\emph{vertices} of $\alpha$. For each $i = 1, \dots, m$, the geodesic $\alpha \lvert_{[t_{i-1} ,t_i]}$ is called an \emph{edge} of $\alpha$, and its image $\alpha([t_{i-1}, t_i])$ is called an \emph{edge}. In particular, a corner is a polygonal curve, and the notions of vertices, edges, and edges agree with the corresponding terminology for corners (see Definition~\ref{Def: corner}).

A vertex is said to be \emph{incident} to an edge if it is one of the two endpoints of the edge. Clearly, each edge has two incident vertices. Each vertex of the polygonal curve that is not an endpoint has two incident edges, while each endpoint has exactly one incident edge. Two distinct vertices are \emph{adjacent} if they are the endpoints of an edge, and two distinct edges are \emph{adjacent} if they share a common endpoint.

We say that a polygonal curve is \emph{constant-speed} if there exists some $c > 0$ such that every edge of $\alpha$ has speed $c$. Any polygonal curve can be reparameterized to be constant-speed, and hence in the sequel we always take polygonal curves to be constant-speed. 

\begin{Def}[Polygonal loop]\label{Def: poly loop}
A polygonal curve $\alpha:[a,b]\to X$ is a \emph{polygonal loop} if $\alpha(a) = \alpha(b)$. A polygonal loop is \emph{simple} if $\alpha \lvert_{[a,b)}$ is injective and \emph{interior} if $\alpha \subset \Int(X)$. 
\end{Def}

We note that, for a polygonal loop, any partition $\{t_i\}_{i=0}^m$ of $[a,b]$ for which Definition~\ref{Def: poly curve} holds must have $m \geq 2$. The notions of incidence and adjacency are defined accordingly so that every vertex of a polygonal loop has exactly two incident edges and two adjacent vertices, and every edge has two incident vertices. 

\begin{Rem}\label{Rem: vertex corner or geod}
Any simple polygonal loop is locally either a corner or a geodesic at a vertex $\alpha(t_i)$. More precisely, for each $i = 0, \dots, m$, there exists $\bar r > 0$ such that for any $0 < r \leq \bar r$,
\begin{enumerate}
    \item $\overline B_r(\alpha(t_i))$ intersects $\alpha$ only along the two incident edges of $\alpha(t_i)$, so that
    \[\alpha \cap \overline B_{r}(\alpha(t_i)) = \alpha([t_i - r/c, t_i + r/c]);\]
    \item $\alpha\lvert_{[t_i - r/c, t_i + r/c]}$ is either a geodesic, or a corner with vertex $\alpha(t_i)$ and edges $\alpha \lvert_{[t_i - r/c, t_i]}$ and $\alpha \lvert_{[t_i, t_i + r/c]}$,
\end{enumerate}
where $c$ is the speed of $\alpha$. Here we extend $\alpha$ periodically to $\R$ so that neighborhoods of the vertices $\alpha(t_0) = \alpha(a)$ and $\alpha(t_m) = \alpha(b)$ can be described consistently without worrying about the parameter exceeding the interval $[a,b]$.
\end{Rem}

\begin{Thm}[Jordan theorem]\label{Thm: Jordan theorem}
Let $(X,\dd_X,\mm_X)$ be an $\RCD(K,N)$ space of essential dimension $2$. Let $\alpha \subset B_R(p)\cap \Int(X)$ be a simple polygonal loop. If the inclusion $B_R(p) \hookrightarrow X$ induces the trivial map on $\pi_1$, then $B_R(p) \setminus \alpha$ has exactly two components, and their topological boundaries in $B_R(p)$ both coincide with $\alpha$.

If, in addition, there exist $0 < s \le r < R$ such that $\alpha \subset B_s(p)$ and the inclusion $B_s(p) \hookrightarrow B_r(p)$ induces the trivial map on $\pi_1$, then one of the components of $B_R(p) \setminus \alpha$ is contained in $\overline B_r(p)$.
In this case, $X \setminus \alpha$ also has exactly two components, and at least one of them coincides with a component of $B_R(p) \setminus \alpha$ contained in $\overline B_r(p)$. Moreover, the topological boundaries of both components coincide with $\alpha$.
% {\color{red}Let $(X,\dd_X,\mm_X)$ be an $\RCD(K,N)$ space of essential
% dimension $2$, and let $\alpha:[0,1] \to X$ be a simple interior polygonal loop. Assume that $\alpha([0,1]) \subset B_R(p)$ for some $p \in X$ and $R > 0$. If every loop in $B_R(p)$ is contractible in $X$, then $B_R(p) \setminus \alpha([0,1])$ has exactly two components, and the topological boundary in $B_R(p)$ of each component coincides with $\alpha([0,1])$.

% If in addition there exist $0 < s \leq r < R$ such that
% $\alpha([0,1]) \subset B_s(p)$ and every loop in $B_s(p)$ is contractible in
% $B_r(p)$, then at least one of the components of
% $B_R(p) \setminus \alpha([0,1])$ is contained in $\overline B_r(p)$.
% In this case, $X \setminus \alpha([0,1])$ has exactly two path-connected
% components and at least one of these components coincides with a component of
% $B_R(p) \setminus \alpha([0,1])$ contained in $\overline B_r(p)$. Moreover, the topological boundaries of both components coincide with $\alpha([0,1])$.
% }
\end{Thm}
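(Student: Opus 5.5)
The argument proceeds in three stages: build a thin two-sided neighborhood of $\alpha$, use Mayer--Vietoris to pass from that neighborhood to $B_R(p)\setminus\alpha$, and then use the second contractibility hypothesis to identify a bounded component and extend to $X$.

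\textbf{Step 1: a neighborhood of $\alpha$ with two sides.} I build an open path-connected $U\subset B_R(p)$ containing $\alpha$ such that $U\setminus\alpha$ has exactly two components $U_\pm$, each with topological boundary $\alpha$ in $U$. By Remark~\ref{Rem: vertex corner or geod}, near each vertex $\alpha$ is either a geodesic or a corner. Since $\alpha\subset\Int(X)$, Proposition~\ref{Pro: good comp exist corner int} gives good components at each corner vertex. At geodesic vertices and at interior points of edges, Theorem~\ref{Thm: good components existence 2} gives good components; at a geodesic vertex I apply it to the merged geodesic. By compactness I cover $\alpha$ by finitely many such balls, shrunk so that each ball meets only the incident edges (Remark~\ref{Rem: vertex corner or geod}).

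I then label the sides along the loop, as in Lemmas~\ref{Lem: geod neigh side choice} and \ref{Lem: corner neigh side choice}. Going once around the chain of overlapping balls, the labelling either closes up consistently or comes back swapped (a Möbius-type monodromy). I expect ruling out the swap to be the main obstacle, and I would handle it by contradiction. If the sides swap, $U\setminus\alpha$ is connected, so there is a loop $\beta$ in $U$ that crosses $\alpha$ exactly once transversally: start in $U_+$ near a point $x$ of an edge, travel around the chain of balls, and return to the opposite side near $x$. Then $\beta\subset B_R(p)$ is null-homotopic in $X$ by hypothesis. I would obtain the contradiction through a mod $2$ intersection number with $\alpha$. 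Concretely, apply Mayer--Vietoris with $\Z/2$ coefficients to the cover $X=U\cup(X\setminus\alpha)$. The boundary map $H_1(X;\Z/2)\to H_0(U\setminus\alpha;\Z/2)$ sends $[\beta]$ to the class of $x_+ - x_-$. I would pass to a small loop in $U$ homotopic within $U$ and compare it with a loop parallel to $\alpha$, showing that this class is nonzero. If this becomes delicate, the alternative is to lift to the universal cover (Theorem~\ref{Thm: simply connected univ cover}), where $\alpha$ lifts homeomorphically and the same local construction applies.

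\textbf{Step 2: two components in $B_R(p)$.} With $U$ in hand, I cover $B_R(p)=U\cup V$ with $V=B_R(p)\setminus\alpha$ and argue as in Proposition~\ref{Pro: comp exist extend corner quant}. Any loop in $B_R(p)$ is trivial in $H_1(X)$. Hence the boundary map into $H_0(U_+\sqcup U_-)$ forces $U_+$ and $U_-$ to lie in different components of $V$; for this step I use the cover of $X$ rather than of $B_R(p)$, so that only contractibility in $X$ is needed. Conversely, every point of $V$ can be joined inside $V$ to $U_+$ or $U_-$ by following a geodesic to a point of $\alpha$ and stopping just before the first hit, as in Proposition~\ref{Pro: corner at most three comp}. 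So $V$ has exactly two components, and the boundary statement is inherited from $U_\pm$.

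\textbf{Step 3: a bounded component and the global statement.} Now assume $\alpha\subset B_s(p)$ and that $B_s(p)\hookrightarrow B_r(p)$ is trivial on $\pi_1$. Rerunning Step 2 inside $B_r(p)$ shows that $B_r(p)\setminus\alpha$ also has exactly two components. Since $\partial B_r(p)$ is connected to the outside, only one of these can meet points near $\partial B_r(p)$. More precisely, points of $B_R(p)\setminus\overline B_r(p)$ all lie in a single component $C_{\mathrm{out}}$ of $B_R(p)\setminus\alpha$: each can be pushed along a geodesic from $p$ outward without touching $\alpha\subset B_s(p)$. By Lemma~\ref{Lem: big set small set comp}, the other component is then contained in $\overline B_r(p)$. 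For $X\setminus\alpha$, the bounded component is open and closed in $X\setminus\alpha$, because its closure meets only $\alpha$. Every remaining point of $X\setminus\alpha$ connects to $C_{\mathrm{out}}$ by a geodesic toward $p$ truncated before entering $B_s(p)$. This gives exactly two components, and their boundaries are $\alpha$ by Step 2.
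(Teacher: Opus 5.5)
Your Steps 1--2 follow the paper's architecture: a two-sided neighborhood $U$ built from good components, then Mayer--Vietoris for $X=U\cup(X\setminus\alpha)$. Step 2 is essentially the paper's argument. But the decisive part of Step 1, ruling out the twist, is not carried out.

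Your loop $\beta$ lies in $U$, so its class comes from $H_1(U)$ and therefore has zero Mayer--Vietoris boundary in $H_0(U\setminus\alpha)$. Concretely, $[x_+]-[x_-]=0$, because in the twisted case $x_\pm$ are joined inside $U\setminus\alpha$ by the long arc of $\beta$ itself. So the $H_0$-level computation you propose cannot give a contradiction. A ``mod~$2$ intersection number with $\alpha$'' is not an available homotopy invariant in $X$ without the very two-sidedness you are trying to prove. Passing to the universal cover does not help either: you are then in a simply connected space facing the same question.

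What is missing is an $H_1$-level statement.
\begin{itemize}
\item Split the chain of balls into two arcs $A,B$ with $A\cap B$ equal to two disjoint balls. In the twisted case $(A\cap B)\setminus\alpha$ then has four components with ``crossed'' incidences.
\item A second Mayer--Vietoris sequence on $U\setminus\alpha=(A\setminus\alpha)\cup(B\setminus\alpha)$ shows that the class $[\gamma]\in H_1(U;\Z)$ with $\partial_*[\gamma]=[p]-[q]$ is not in the image of $H_1(U\setminus\alpha;\Z)$. The coefficient count forces $m-n=0=m+n+1$ (Lemma~\ref{Lem: mobius orientability}).
\item Only then does Mayer--Vietoris for $X=(X\setminus\alpha)\cup U$ show that $(0,[\gamma])\notin\mathrm{Im}(\Phi)$. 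Hence $[\gamma]\neq 0$ in $H_1(X)$, contradicting that loops in $U\subset B_R(p)$ are null-homotopic in $X$ (Proposition~\ref{Pro: mobius nbd no embed}).
\end{itemize}

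Step 3 has a genuine gap at its key claim, that all points of $B_R(p)\setminus\overline B_r(p)$ lie in one component of $B_R(p)\setminus\alpha$. Pushing each such point outward along a geodesic from $p$ moves it, but never joins two different far points. The region outside $\overline B_s(p)$ need not be connected.

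Tellingly, your argument never uses the hypothesis that $B_s(p)\hookrightarrow B_r(p)$ is trivial on $\pi_1$, and that hypothesis cannot be dropped. Take a capsule: a cylinder of radius $\epsilon$ and length $L$ capped by hemispheres, so $X\cong S^2$. Put $p$ on the middle circle $\alpha$ and choose $\pi\epsilon<s\le r<R<L/2$. Then $\alpha\subset B_s(p)$ and $B_R(p)\hookrightarrow X$ is $\pi_1$-trivial, yet both components of $B_R(p)\setminus\alpha$ (the two half-bands) leave $\overline B_r(p)$.

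The paper handles this step as follows.
\begin{itemize}
\item It takes far points $x_\pm$, one in each component, pulled back to distance exactly $r$, and joins them by a broken geodesic $\beta$ through $p$. Lemma~\ref{Lem: perturb generic int} makes $\beta$ generic, so it meets $\alpha$ finitely often.
\item After reducing to $p\in\Int(X)$ by density, it uses the $\pi_1$-hypothesis and Proposition~\ref{Pro: comp exist corner int quant} to show that $\beta$ cuts $B_s(p)$ into exactly two sides.
\item A parity argument then finishes. Since $\alpha$ is a loop in $B_s(p)$, it changes sides of $\beta$ an even number of times.
\item By Lemma~\ref{Lem: local config} (Claim~\ref{Cla: side change}), side changes are symmetric at each crossing. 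So $\beta$ changes sides of $\alpha$ an even number of times, and $x_\pm$ lie in the same component, a contradiction.
\end{itemize}
Once this claim is established, your passage to $X\setminus\alpha$ is fine.
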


\begin{Rem}\label{Rem:simply-connectedness}
The assumption that every loop in $B_R(p)$ is contractible in $X$ is necessary. For instance, let
$X=\mathbb T^2$ be the flat two--torus, and let $p \in X$ and $R > 0$ be such that $X = B_R(p)$. There exist simple interior polygonal loops
$\alpha:[0,1] \to X$ which are not null-homotopic (for example, a closed geodesic loop in
a primitive homology class). Such a loop does not disconnect $X$; indeed,
$X\setminus \alpha$ is path-connected.
\end{Rem}

%\begin{Rem}\label{Rem: canon inside}
%In the second part of Theorem~\ref{Thm: Jordan theorem}, if there exists a point $x \in X \setminus \overline B_r(p)$, then there is a unique component of $B_R(p) \setminus \alpha([0,1])$ that is contained in $\overline B_{r}(p)$, and it coincides with the unique component of $X \setminus \alpha([0,1])$ contained in $\overline B_{r}(p)$. In particular,
%this allows one to canonically think of that component of
%$X\setminus \alpha([0,1])$ as the ``inside'' component, and the other as the ``outside'' component.
%\end{Rem}

Theorems~\ref{Thm: Jordan theorem}~and~\ref{Thm: semi-locally simply connected} immediately imply the following corollary.

\begin{Cor}\label{Cor: Jordan theorem}
Let $(X,\dd_X,\mm_X)$ be an $\RCD(K,N)$ space of essential dimension $2$. For every $p\in X$ and $R>0$, there exists $0<r(p,R)<R$ such that the following holds for every $0<r<r(p,R)$: If $\alpha\subset B_r(p)\cap \Int(X)$ is a simple polygonal loop, then $X \setminus \alpha$ has two components, and at least one of them is contained in $B_R(p)$. Moreover, the topological boundaries of both components coincide with $\alpha$.
\end{Cor}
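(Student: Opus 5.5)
The corollary should follow by choosing radii through Theorem~\ref{Thm: semi-locally simply connected} and then applying the second part of Theorem~\ref{Thm: Jordan theorem}.

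Fix $p\in X$ and $R>0$. First apply Theorem~\ref{Thm: semi-locally simply connected} with outer radius $R/2$. This gives $0<r_1<R/2$ such that the inclusion $B_{r_1}(p)\hookrightarrow B_{R/2}(p)$ induces the trivial map on $\pi_1$. Apply the theorem again with outer radius $r_1$. This gives $0<r_0\le r_1$ such that $B_{r_0}(p)\hookrightarrow B_{r_1}(p)$ is trivial on $\pi_1$. Set $r(p,R):=r_0$.

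Fix $0<r<r(p,R)$ and a simple polygonal loop $\alpha\subset B_r(p)\cap\Int(X)$. Use Theorem~\ref{Thm: Jordan theorem} with ambient radius $r_1$ in place of $R$. Then $\alpha\subset B_r(p)\subset B_{r_1}(p)\cap\Int(X)$. The inclusion $B_{r_1}(p)\hookrightarrow X$ kills $\pi_1$, because it factors through $B_{R/2}(p)$ and the first inclusion is already trivial. For the second part of the theorem, take $s:=r$ and the intermediate radius $r_0$. We have $\alpha\subset B_s(p)$, the inequalities $0<s\le r_0<r_1$ hold, and $B_s(p)\hookrightarrow B_{r_0}(p)$ is trivial on $\pi_1$ since it factors through $B_{r_0}(p)\hookrightarrow B_{r_1}(p)$. (A triviality statement for an inclusion of balls passes to smaller inner balls and larger outer balls by composition.)

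Theorem~\ref{Thm: Jordan theorem} then gives three things. $X\setminus\alpha$ has exactly two components. One of them coincides with a component of $B_{r_1}(p)\setminus\alpha$ contained in $\overline B_{r_0}(p)$. The topological boundaries of both components equal $\alpha$. Since $\overline B_{r_0}(p)\subset B_{R}(p)$ (because $r_0<R/2<R$), that component lies in $B_R(p)$, which is the claim.

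The only step needing care is matching the hypotheses exactly. The Jordan theorem requires the strict inequality $r<R$ in its notation, which is why $r_1$ is taken strictly inside $R/2$. It also requires the containment $\alpha\subset B_s(p)$ with $s\le r_0$, which holds here because $r<r_0$. Otherwise the corollary is a direct corollary.
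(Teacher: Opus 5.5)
Your overall route is the paper's: two applications of Theorem~\ref{Thm: semi-locally simply connected} followed by the second part of Theorem~\ref{Thm: Jordan theorem}. However, the radii are mis-indexed, and one hypothesis is not actually verified.

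In the second part of Theorem~\ref{Thm: Jordan theorem} you take $s:=r$, middle radius $r_0$ and outer radius $r_1$. So you need the inclusion $B_s(p)\hookrightarrow B_{r_0}(p)$ to be trivial on $\pi_1$. What you have is triviality of $B_{r_0}(p)\hookrightarrow B_{r_1}(p)$. Composing it with $B_s(p)\hookrightarrow B_{r_0}(p)$ only yields triviality of $B_s(p)\hookrightarrow B_{r_1}(p)$. Your parenthetical principle is correct: triviality survives shrinking the inner ball or enlarging the outer one. But here you would need to shrink the \emph{outer} ball from $r_1$ to $r_0$, which is the wrong direction. The inclusion $B_s(p)\hookrightarrow B_{r_0}(p)$ does not factor through $B_{r_0}(p)\hookrightarrow B_{r_1}(p)$. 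Relatedly, you assert $r_0<r_1$ but only arranged $r_0\le r_1$. The strict inequality the theorem needs is between its middle and outer radii, and taking $r_1<R/2$ does not address it.

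The repair is a reshuffling of radii.
\begin{itemize}
\item Keep $r_1$ with $B_{r_1}(p)\hookrightarrow B_{R/2}(p)$ trivial on $\pi_1$, hence $B_{r_1}(p)\hookrightarrow X$ trivial.
\item Fix the middle radius $r_1/2$. Apply Theorem~\ref{Thm: semi-locally simply connected} with outer radius $r_1/2$ to get $0<r_0\le r_1/2$ such that $B_{r_0}(p)\hookrightarrow B_{r_1/2}(p)$ is trivial on $\pi_1$. Set $r(p,R):=r_0$.
\item For $0<r<r_0$, apply Theorem~\ref{Thm: Jordan theorem} with $s:=r$, middle radius $r_1/2$ and outer radius $r_1$.
\end{itemize}
Now $\alpha\subset B_r(p)\subset B_{r_1}(p)$. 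The inclusion $B_r(p)\hookrightarrow B_{r_1/2}(p)$ is trivial by composition in the correct direction. Also $r\le r_1/2<r_1$ holds strictly. The component produced lies in $\overline B_{r_1/2}(p)\subset B_R(p)$. The rest of your argument (two components of $X\setminus\alpha$, both with topological boundary $\alpha$) then goes through unchanged.
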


The strategy is as follows. First, using the results developed in Sections~\ref{sec:choice_sides_geo} and \ref{Sec: corner}, we will construct a path-connected tubular neighborhood $U$ of $\alpha$, as a union of small balls centered along $\alpha$, such that $U \setminus \alpha$ has exactly two components. To do so, we will need to control the topology of small balls centered at points along $\alpha$. Proposition~\ref{Pro: good comp exist corner int} gives us control for small balls centered at the vertices that are locally corners, while Theorem~\ref{Thm: good components existence 2} gives us control for small balls centered at any other point along $\alpha$. 

By arguing as in Subsections~\ref{Subsec: geod neigh}~and~\ref{Subsec: corner neigh}, starting from the base point $\alpha(0)$ of $\alpha$ we can keep track of the local sides of each ball with respect to $\alpha$ that makes up the tubular neighborhood $U$ as we move along the loop until we return to the starting point. A priori, it could happen that the two local sides are exchanged after one turn, producing a M\"obius-type
twist. In that case, the set $U \setminus \alpha$ could be connected. We exclude this scenario by using the assumption that every loop in $B_R(p)$ is null-homotopic in $X$. This reduction amounts to a homological obstruction, carried out in
Section~\ref{subsec:Mobius}.

Once the local disconnection of $U\setminus \alpha$ is established, a Mayer--Vietoris argument as in Subsection~\ref{Subsec: MV extend corner} upgrades it to a disconnection statement for $B_R(p)\setminus \alpha$, thus proving the first part of Theorem~\ref{Thm: Jordan theorem}. Here again the assumption on the contractibility in $X$ of loops in $B_R(p)$ is used. This argument will be carried out in Subsection~\ref{Subsec: Jordan proof}. Finally, in Subsection~\ref{Subsec: Jordan bounded comp}, we prove the second part of Theorem~\ref{Thm: Jordan theorem}. 

\subsection{M\"obius-type twist}\label{subsec:Mobius}

Let $U$ be a path-connected topological space. Consider an open cover
$U = A\cup B$, where $A$ and $B$ are path-connected. Assume that $A\cap B$ has
exactly two components, denoted by $C_1$ and $C_2$.

\begin{Exa}
A basic example is $U=S^1$, where $A$ and $B$ are arcs whose intersection is the
union of two disjoint open intervals.
\end{Exa}

This configuration forces a nontrivial class in $H_1(U;\Z)$. Indeed, the
Mayer--Vietoris sequence for $U=A\cup B$ contains
\[
H_1(U) \xrightarrow{\;\partial_*\;} H_0(A\cap B)
\longrightarrow H_0(A)\oplus H_0(B)\longrightarrow H_0(U)\to 0.
\]
Since $A,B,U$ are path connected, $H_0(A)\simeq H_0(B)\simeq H_0(U)\simeq \Z$.
Moreover, $A\cap B=C_1\sqcup C_2$, hence $H_0(A\cap B)\simeq \Z^2$. The map
$H_0(A\cap B)\to H_0(A)\oplus H_0(B)$ sends both generators to $(1,1)$, so its
kernel is generated by $[p]-[q]$ for any $p\in C_1$, $q\in C_2$. By exactness,
$[p]-[q]$ lies in the image of $\partial_*$, hence there exists $[\gamma]\in
H_1(U;\Z)$ with $\partial_*([\gamma])=[p]-[q]$.

\begin{Lem}\label{Lem: mobius orientability}
Let $U=A\cup B$ and let $[\gamma]\in H_1(U;\Z)$ be as above. Let $V\subset U$ be
an open path-connected subset such that:
\begin{enumerate}[label=(\arabic*),leftmargin=2.2em]
\item $A':=A\cap V$ and $B':=B\cap V$ each have exactly two components,
say $A'=A_1'\sqcup A_2'$ and $B'=B_1'\sqcup B_2'$;
\item $A'\cap B'$ has exactly four components, namely
$A_i'\cap C_j$ for $i=1,2$ and $j=1,2$ (equivalently, the same four components can be
written as $B_i'\cap C_j$ in some order).
\end{enumerate}
Then the inclusion-induced map $\iota_*:H_1(V;\Z)\to H_1(U;\Z)$ is not surjective.
More precisely, $[\gamma]\notin \iota_*(H_1(V;\Z))$.
\end{Lem}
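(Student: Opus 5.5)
Argue by naturality of Mayer--Vietoris. The open cover $V=A'\cup B'$ of $V$ has its own Mayer--Vietoris sequence, and inclusion $V\hookrightarrow U$ gives a commutative ladder. In particular
\[
\partial_*\circ\iota_* \;=\; j_*\circ\partial'_*,
\]
where $\partial'_*:H_1(V;\Z)\to H_0(A'\cap B';\Z)$ is the boundary map for $V$ and $j_*:H_0(A'\cap B';\Z)\to H_0(A\cap B;\Z)$ is induced by inclusion. Suppose $[\gamma]=\iota_*(c)$ for some $c\in H_1(V;\Z)$. Then
\[
[p]-[q] \;=\; \partial_*[\gamma] \;=\; j_*(\partial'_* c).
\]
We will show that every element of $j_*(\operatorname{im}\partial'_*)$ has coefficient sum on $C_1$ that is even. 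The element $[p]-[q]=[C_1]-[C_2]$ has coefficient $1$ there, so this is a contradiction.

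To compute $\operatorname{im}\partial'_*$, use exactness: it equals the kernel of
\[
H_0(A'\cap B')\cong\Z^4 \;\longrightarrow\; H_0(A')\oplus H_0(B')\cong \Z^2\oplus\Z^2 .
\]
Label the four components $D_{ij}:=A_i'\cap C_j$. Each $D_{ij}$ maps to $A_i'$ in $H_0(A')$. In $H_0(B')$ it maps to the component $B'_{\sigma(i,j)}$ containing it. The key combinatorial input is hypothesis (2) together with the fact that $D_{ij}\subset C_j$: each $B_k'\cap C_j$ is a single component. So for fixed $j$, the two sets $D_{1j},D_{2j}$ are exactly $B_1'\cap C_j$ and $B_2'\cap C_j$ in some order, and hence $\sigma(\cdot,j)$ is a bijection $\{1,2\}\to\{1,2\}$.

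Now take $x=\sum a_{ij}[D_{ij}]$ in the kernel. The $A'$-part vanishing gives
\[
a_{i1}+a_{i2}=0 \quad\text{for } i=1,2.
\]
The $B'$-part vanishing gives, for each $k$, that the sum of $a_{ij}$ over the pairs with $\sigma(i,j)=k$ is zero; each such sum has exactly one term with $j=1$ and one with $j=2$. Finally, $j_*x$ has $C_1$-coefficient $a_{11}+a_{21}$. We need to show this is even, and in fact it is $0$.

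The plan is a case check on $\sigma$. If $\sigma(\cdot,1)=\sigma(\cdot,2)$ (say both the identity), the $B'$ equations become $a_{i1}+a_{i2}=0$, which duplicate the $A'$ equations. Then $a_{11}+a_{21}$ can be nonzero, for instance $x=D_{11}-D_{12}$, so the claim would fail. Hence the argument needs this case to be ruled out. I expect this to be the main obstacle: reading hypothesis (2) carefully to see that the relabeling "in some order" must be a genuine swap. Indeed, if the labeling of $B'$ matched that of $A'$ on both $C_1$ and $C_2$, then $A_1'\cup B_1'$ would be disconnected from $A_2'\cup B_2'$, contradicting that $V$ is path-connected. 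So $\sigma(\cdot,2)$ is the transposition of $\sigma(\cdot,1)$. The $B'$ equations then read $a_{11}+a_{22}=0$ and $a_{21}+a_{12}=0$. Combining with the $A'$ equations gives $a_{11}=a_{22}$, $a_{12}=-a_{11}$ and $a_{21}=-a_{22}=-a_{11}$, so $a_{11}+a_{21}=0$. Thus $j_*(\operatorname{im}\partial'_*)$ has zero $C_1$-coefficient, while $[p]-[q]$ has coefficient $1$. This contradiction shows $[\gamma]\notin\iota_*(H_1(V;\Z))$.
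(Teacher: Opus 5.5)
Your route is the same as the paper's: Mayer--Vietoris naturality for $V\hookrightarrow U$, then path-connectedness of $V$ to force the ``twisted'' incidence, then a kernel computation in $H_0(A'\cap B';\Z)$. The final computation, however, contains an algebra error, and the intermediate claim it produces is false.

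From the $B'$-equation $a_{11}+a_{22}=0$ you get $a_{22}=-a_{11}$, not $a_{22}=a_{11}$. Combine this with the $A'$-equations $a_{12}=-a_{11}$ and $a_{21}=-a_{22}$. The result is $a_{21}=a_{11}$, so the kernel is
\[
\Z\cdot\bigl([D_{11}]+[D_{21}]-[D_{12}]-[D_{22}]\bigr),
\]
and its image under $j_*$ is $2a_{11}([p]-[q])$. The $C_1$-coefficient is therefore $2a_{11}$, not $0$.

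So the sentence ``in fact it is $0$'' is wrong. It would also contradict the Möbius model: there $V$ is an annulus wrapping twice around the core loop, and $\iota_*(H_1(V;\Z))=2H_1(U;\Z)\neq 0$.

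The repair is immediate, and you had already stated it at the outset. The $C_1$-coefficient of anything in $j_*(\operatorname{im}\partial'_*)$ is even, while that of $[p]-[q]$ is $1$. This parity obstruction is exactly how the paper concludes: writing $\partial^V_*[\tilde\gamma]=(m+1)[p_1]-m[p_2]-(n+1)[q_1]+n[q_2]$, it derives $m-n=0$ and $m+n+1=0$, which are incompatible over $\Z$. Replace your last two sentences by the parity conclusion and the proof is correct.
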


\begin{Exa}\label{Rem: mobius}
The motivating example for Lemma~\ref{Lem: mobius orientability} is the open
M\"obius strip. Let
\[
U:=\bigl([-1,1]\times(-1,1)\bigr)\big/\!\sim,
\qquad\text{where}\qquad
(-1,y)\sim(1,-y)\ \ \text{for all }y\in(-1,1).
\]
Set $A:=\bigl([-1,1]\setminus\{3/4\}\bigr)\times(-1,1)\big/\!\sim$ and
$B:=\bigl([-1,1]\setminus\{1/4\}\bigr)\times(-1,1)\big/\!\sim$. Then $U=A\cup B$,
and $A\cap B$ has exactly two components.

Let $\gamma$ be the center loop of $U$, and let $V:=U\setminus \gamma$.
One checks that $A':=A\cap V$ and $B':=B\cap V$ each have two path-connected
components, while $A'\cap B'$ has four components, as in
Lemma~\ref{Lem: mobius orientability}. Moreover, $[\gamma]\in H_1(U;\Z)$ is the generator, and $[\gamma]\notin \iota_*\bigl(H_1(V;\Z)\bigr)$.
\end{Exa}

\begin{proof}[Proof of Lemma~\ref{Lem: mobius orientability}]
Up to permuting $B_1'$ and $B_2'$, we may assume that $A_1'\cap C_1 \;=\; B_1'\cap C_1$.
Next, $B_1'\cap C_2$ is a component of $(A'\cap B')\cap C_2$.
Since $(A'\cap B')\cap C_2$ has exactly two components, namely $A_1'\cap C_2$
and $A_2'\cap C_2$, we must have
\[
B_1'\cap C_2 = A_1'\cap C_2 \qquad \text{or} \qquad B_1'\cap C_2 = A_2'\cap C_2.
\]

Suppose first that $B_1'\cap C_2=A_1'\cap C_2$. Then $B_1'$ meets $A'$ only
through $A_1'$, and similarly $B_2'$ meets $A'$ only through $A_2'$. Consequently,
\[
(A_1'\cup B_1')\cap (A_2'\cup B_2')=\emptyset
\quad\text{and}\quad
V=(A_1'\cup B_1')\ \cup\ (A_2'\cup B_2').
\]
In particular, $V$ is disconnected which contradicts our assumptions.
Therefore, we must be in the remaining alternative
$B_1'\cap C_2=A_2'\cap C_2$, and hence the incidences are necessarily
\begin{enumerate}[label=(\arabic*),leftmargin=2.2em]
\item $A_1' \cap C_1 = B_1' \cap C_1$;
\item $B_1' \cap C_2 = A_2' \cap C_2$;
\item $A_2' \cap C_1 = B_2' \cap C_1$;
\item $B_2' \cap C_2 = A_1' \cap C_2$.
\end{enumerate}
Now, consider the Mayer--Vietoris sequence for $V = A' \cup B'$:
\begin{equation*}
  \dots \longrightarrow H_1(V) {\longrightarrow} H_0(A' \cap B') {\longrightarrow} H_0(A')\oplus H_0(B')\longrightarrow H_0(V)\longrightarrow 0 .
\end{equation*}
Choose points
\[
p_1\in A_1'\cap C_1,\qquad p_2\in A_2'\cap C_1,\qquad
q_1\in A_1'\cap C_2,\qquad q_2\in A_2'\cap C_2,
\]
one in each of the four components of $A'\cap B'$. Suppose, toward
a contradiction, that there exists $[\tilde\gamma]\in H_1(V;\Z)$ such that
$\iota_*([\tilde\gamma])=[\gamma]\in H_1(U;\Z)$.
Consider the Mayer--Vietoris boundary maps $\partial_*^V:H_1(V;\Z)\to H_0(A'\cap
B';\Z)$ and $\partial_*^U:H_1(U;\Z)\to H_0(A\cap B;\Z)$. By naturality of the
Mayer--Vietoris sequence with respect to the inclusion $V\hookrightarrow U$, the
diagram with boundary maps commutes, hence
\[
j_*\bigl(\partial_*^V([\tilde\gamma])\bigr)=\partial_*^U\bigl(\iota_*([\tilde\gamma])\bigr)
=\partial_*^U([\gamma])=[p]-[q],
\]
where $j_*:H_0(A'\cap B';\Z)\to H_0(A\cap B;\Z)$ is induced by inclusion and
$p\in C_1$, $q\in C_2$ are fixed basepoints as above.

Since $A'\cap B'$ has four components, the group $H_0(A'\cap B';\Z)$
is free abelian on the classes $[p_1],[p_2],[q_1],[q_2]$. Moreover,
$j_*([p_1])=j_*([p_2])=[p]$ and $j_*([q_1])=j_*([q_2])=[q]$. Therefore
$\partial_*^V([\tilde\gamma])$ must have total coefficient $+1$ on $C_1$ and total
coefficient $-1$ on $C_2$, i.e.\ it must be of the form
\[
\partial_*^V([\tilde\gamma])
=(m+1)[p_1]-m[p_2]-(n+1)[q_1]+n[q_2]
\]
for some $m,n\in\Z$.

Let $j_1':A'\cap B'\hookrightarrow A'$ and $j_2':A'\cap B'\hookrightarrow B'$ be
the inclusion maps. Using the incidence relations \emph{(1)--(4)} above, we have:
\begin{enumerate}[label=(\arabic*),leftmargin=2.2em]
\item $(j_1')_*([p_1])=(j_1')_*([q_1])$ and $(j_1')_*([p_2])=(j_1')_*([q_2])$,
since $p_1,q_1\in A_1'$ and $p_2,q_2\in A_2'$;
\item $(j_2')_*([p_1])=(j_2')_*([q_2])$ and $(j_2')_*([p_2])=(j_2')_*([q_1])$,
since $p_1,q_2\in B_1'$ and $p_2,q_1\in B_2'$.
\end{enumerate}
Therefore,
\begin{align*}
(j_1')_*\bigl(\partial_*([\tilde\gamma])\bigr)
&=(m+1)(j_1')_*([p_1]) - m(j_1')_*([p_2]) -(n+1)(j_1')_*([q_1]) + n(j_1')_*([q_2])\\
&=(m-n)\Bigl((j_1')_*([p_1])-(j_1')_*([p_2])\Bigr),
\end{align*}
and
\begin{align*}
(j_2')_*\bigl(\partial_*([\tilde\gamma])\bigr)
&=(m+1)(j_2')_*([p_1]) - m(j_2')_*([p_2]) -(n+1)(j_2')_*([q_1]) + n(j_2')_*([q_2])\\
&=(m+n+1)(j_2')_*([p_1])-(m+n+1)(j_2')_*([p_2])\\
&=(m+n+1)\Bigl((j_2')_*([p_1])-(j_2')_*([p_2])\Bigr).
\end{align*}

On the other hand, by exactness of the Mayer--Vietoris sequence for $V=A'\cup B'$,
we have $(j_1')_*\!\circ\partial_* = 0$ and $(j_2')_*\!\circ\partial_* = 0$.
Since $A'$ has two components, $(j_1')_*([p_1])\neq (j_1')_*([p_2])$,
and similarly $(j_2')_*([p_1])\neq (j_2')_*([p_2])$. Hence the two identities
above force $m-n=0$ and $m+n+1=0$, which is impossible for integers $m,n$. This completes the argument. 
\end{proof}

It is well known that the open M\"obius strip cannot be homeomorphic to an open
subset of a simply connected $2$-manifold, by orientability. The next
proposition generalizes this observation: any space $U$ which ``looks like a
M\"obius strip'' in the sense of Lemma~\ref{Lem: mobius orientability} (see
Remark~\ref{Rem: mobius}) cannot be homeomorphic to open subset of a simply connected
space. More precisely, we will show that if $U$ is homeomorphic to an open subset of some space $X$, then there exists a loop in $U$ that is not contractible in $X$. 

In the sequel, we will apply this proposition to a neighborhood $U$ of a polygonal loop $\alpha$ and $V=U\setminus \alpha$, thereby excluding a M\"obius-type twist under the assumption that every loop in $U$ is contractible in $X$. 

\begin{Pro}\label{Pro: mobius nbd no embed}
Let $X$ be a path-connected topological space and let $V\subset U\subset X$ be
open subsets. Assume that $U\setminus V$ is closed in $X$, and that $U$ and $V$
satisfy the assumptions of Lemma~\ref{Lem: mobius orientability}. Then there exists a loop in $U$ that is not contractible in $X$.  
\end{Pro}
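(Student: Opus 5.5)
The plan is to take the class $[\gamma]\in H_1(U;\Z)$ produced by the Mayer--Vietoris discussion preceding Lemma~\ref{Lem: mobius orientability}, with $\partial^U_*([\gamma])=[p]-[q]$ for $p\in C_1$, $q\in C_2$, and to show that it is nonzero in $H_1(X;\Z)$. A loop representing a class that is nonzero in $H_1(X;\Z)$ cannot be null-homotopic in $X$, since by Hurewicz a contractible loop is null-homologous. Concretely, $[\gamma]$ is represented by a loop: choose a path in $A$ from $q$ to $p$ and a path in $B$ from $p$ to $q$. Their concatenation is a loop in $U$ whose Mayer--Vietoris boundary is $\pm([p]-[q])$. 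This loop is our candidate.

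The key step is to compare $U$ with the open cover $X = U \cup W$, where $W := X\setminus (U\setminus V)$. This $W$ is open because $U\setminus V$ is closed in $X$. Moreover $U\cap W = V$ and $U\cup W=X$. The Mayer--Vietoris sequence for this cover reads
\[
H_1(V)\xrightarrow{(\iota_*,\,\cdot)} H_1(U)\oplus H_1(W)\longrightarrow H_1(X).
\]
So if $[\gamma]$ maps to $0$ in $H_1(X)$, then $([\gamma],0)$ lies in the image of $H_1(V)$. In particular $[\gamma]\in \iota_*(H_1(V;\Z))$, which contradicts Lemma~\ref{Lem: mobius orientability}. Hence the image of $[\gamma]$ in $H_1(X;\Z)$ is nonzero, and the representing loop is not contractible in $X$.

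Formally, the exactness we need is at $H_1(U)\oplus H_1(W)$: the kernel of the difference map $(a,b)\mapsto i_*a - k_*b$ equals the image of $H_1(V)$. Taking $b=0$ gives exactly what we need. Mayer--Vietoris in singular homology holds for any cover by two open sets, so no further hypotheses on $X$ are required. Path-connectedness of $X$ is not even essential here.

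The main obstacle is a bookkeeping one. We must make sure the class furnished by Lemma~\ref{Lem: mobius orientability} is the same one represented by an honest loop, and that the sign conventions match. I would handle this by defining $[\gamma]$ directly as the class of the explicit loop above and checking $\partial^U_*[\gamma]=\pm([p]-[q])$. Lemma~\ref{Lem: mobius orientability} is insensitive to this sign, because its argument applies equally to $[q]-[p]$ by symmetry of the roles of $C_1$ and $C_2$.
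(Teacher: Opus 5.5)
Your proposal is correct and follows essentially the same route as the paper. The paper also covers $X=(X\setminus F)\cup U$ with $F=U\setminus V$ and intersection $V$, and uses exactness of Mayer--Vietoris at $H_1(X\setminus F;\Z)\oplus H_1(U;\Z)$ together with the non-surjectivity in Lemma~\ref{Lem: mobius orientability} to get $j_*([\gamma])\neq 0$ in $H_1(X;\Z)$, then concludes via Hurewicz. Your explicit realization of $[\gamma]$ by a loop built from paths in $A$ and $B$, and your remark that only ``null-homotopic implies null-homologous'' is needed, are small clarifications the paper leaves implicit.
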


\begin{proof}
Set $F:=U\setminus V$. By assumption, $F$ is closed in $X$, hence $X\setminus F$
is open. Since $V=U\cap (X\setminus F)$, we obtain an open cover
\[
X=(X\setminus F)\ \cup\ U,
\qquad\text{with}\qquad
(X\setminus F)\cap U = V.
\]
Applying Mayer--Vietoris to this cover yields the segment
\[
H_1(V;\Z)\xrightarrow{\ \Phi\ } H_1(X\setminus F;\Z)\oplus H_1(U;\Z)
\longrightarrow H_1(X;\Z),
\]
where $\Phi$ is induced by the inclusions $V\hookrightarrow X\setminus F$ and
$V\hookrightarrow U$.

By Lemma~\ref{Lem: mobius orientability}, the inclusion-induced map
$\iota_*:H_1(V;\Z)\to H_1(U;\Z)$ is not surjective. Choose $[\gamma]\in H_1(U;\Z)$
with $[\gamma]\notin \iota_*(H_1(V;\Z))$, and consider the element $(0,[\gamma])$
in $H_1(X\setminus F;\Z)\oplus H_1(U;\Z)$. Then $(0,[\gamma])\notin \mathrm{Im}(\Phi)$,
because its second component is not in $\iota_*(H_1(V;\Z))$. Therefore, the map
$H_1(X\setminus F;\Z)\oplus H_1(U;\Z)\to H_1(X;\Z)$ cannot be the zero map, and
in particular $H_1(X;\Z)\neq 0$. 

More precisely, denoting by $j: U \hookrightarrow X$ the inclusion map, the above argument shows that $j_*([\gamma]) \neq 0 \in H_1(X; \Z)$, where $j_*: H_1(U; \Z) \to H_1(X; \Z)$ is map induced by $j$. Since $H_1(X;\Z)$ is the
abelianization of $\pi_1(X)$ (by the Hurewicz theorem), it follows that $\gamma$
represents a nontrivial element of $\pi_1(X)$. In particular, $\gamma$ is not
contractible in $X$.
\end{proof}

\subsection{Proof of the Jordan Theorem \ref{Thm: Jordan theorem}: local separation}\label{Subsec: Jordan proof}
In this section, we prove the following theorem, which corresponds to the first part of Theorem~\ref{Thm: Jordan theorem}. 

\begin{Thm}[Jordan theorem I.]\label{Thm: Jordan theorem 1}
Let $(X,\dd_X,\mm_X)$ be an $\RCD(K,N)$ space of essential dimension $2$. Let $\alpha \subset B_R(p)\cap \Int(X)$ be a simple polygonal loop. If the inclusion $B_R(p) \hookrightarrow X$ induces the trivial map on $\pi_1$, then $B_R(p) \setminus \alpha$ has two components, whose topological boundaries in $B_R(p)$ both coincide with $\alpha$.
\end{Thm}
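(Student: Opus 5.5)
The plan is the two-stage route announced before Subsection~\ref{subsec:Mobius}: first a tubular neighborhood of $\alpha$ that $\alpha$ cuts into two sides, then a Mayer--Vietoris upgrade to all of $B_R(p)$.

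\textbf{Step 1: local sides along $\alpha$.} Parameterize $\alpha$ on $[0,1]$ with vertices $\alpha(t_i)$, extended periodically. At each vertex, Remark~\ref{Rem: vertex corner or geod} says $\alpha$ is locally either a geodesic or a corner.
\begin{itemize}
\item If it is locally a geodesic, Theorem~\ref{Thm: good components existence 2} applies after slightly extending the edge. Regularity at the vertex follows from Theorem~\ref{Thm: geodesic interior tangent}, since the vertex is interior to a geodesic.
\item If it is locally a corner, the vertex lies in $\Int(X)$, so Proposition~\ref{Pro: good comp exist corner int} gives a radius $r_i$ for which small balls have two good components.
\end{itemize}
For interior points of edges, Theorem~\ref{Thm: good components existence 2} gives radii $r(t)$. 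I choose all these radii so small that $B_{10r(t)}(\alpha(t))$ meets only the edges incident to $\alpha(t)$, and $B_{r(t)}(\alpha(t))\subset B_R(p)$.

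Set $U:=\bigcup_t B_{r(t)}(\alpha(t))$. This set is open and path-connected, and $U\subset B_R(p)$.

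\textbf{Step 2: two components of $U\setminus\alpha$, and excluding a twist.} Cut the loop into two overlapping open arcs $\alpha_A,\alpha_B$, each a polygonal curve (a concatenation of corners and geodesics). Let $A,B$ be the unions of the balls centred on $\alpha_A$ and on $\alpha_B$ respectively. Choose the arcs so that $A\cap B$ consists of two disjoint pieces $C_1,C_2$, one around each overlap region. This can be arranged by choosing the radii small relative to the separation of the overlap arcs.

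Along each arc, Lemma~\ref{Lem: corner neigh side choice}, iterated vertex by vertex together with Lemma~\ref{Lem: good comp side induct}, gives a consistent labeling. Arguing as in Proposition~\ref{Pro: geod neigh}, $A\setminus\alpha$ and $B\setminus\alpha$ each have exactly two components, and each meets both sides of every $C_j\setminus\alpha$. Hence $(A\cap B)\setminus\alpha$ has exactly four components $A_i'\cap C_j$.

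Now take $V:=U\setminus\alpha$. Suppose $V$ were path-connected, i.e.\ the labels are swapped after one turn. Then $U,V$ satisfy the hypotheses of Lemma~\ref{Lem: mobius orientability}, and $U\setminus V=\alpha$ is closed in $X$. Proposition~\ref{Pro: mobius nbd no embed} then produces a loop in $U\subset B_R(p)$ that is not contractible in $X$, which contradicts the hypothesis on $\pi_1$.

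So the labelings agree around the loop, and $U\setminus\alpha=U_+\sqcup U_-$ with both pieces path-connected. Their boundaries in $U$ equal $\alpha$, by property (2) of good components.

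\textbf{Step 3: upgrade to $B_R(p)$ via Mayer--Vietoris.} Put $W:=B_R(p)\setminus\alpha$, so $B_R(p)=U\cup W$ and $U\cap W=U_+\sqcup U_-$.

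\emph{At least two components.} Pick $x_\pm\in U_\pm$ and suppose a path $\beta_1\subset W$ joins them. Close it up with a path $\beta_2\subset U$. The resulting loop lies in $B_R(p)$, hence is trivial in $H_1(X)$. The boundary computation from Proposition~\ref{Pro: comp exist extend corner quant} then gives $[x_+]=[x_-]$ in $H_0(U\cap W)$, a contradiction.

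Here the triviality is only in $X$, not in $B_R(p)$, so I instead run Mayer--Vietoris for the cover $X=U\cup(X\setminus\alpha)$, whose intersection is again $U_+\sqcup U_-$. The same computation applies to a path $\beta_1\subset W\subset X\setminus\alpha$ and yields the contradiction.

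\emph{At most two components.} Any $x\in W$ is joined to $U_\pm$ inside $W$ by one of two routes: a geodesic to a vertex, stopped at its first hit of $\alpha$, or the construction of Lemma~\ref{Lem: can always connect to point on geod} aimed at a nearby point of $\alpha$. Either way the path enters some ball $B_{r(t)}(\alpha(t))$ before touching $\alpha$. Since the geodesic stays in the ball, the path remains in $B_R(p)$.

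\emph{Boundaries.} The components of $W$ contain $U_\pm$, whose boundaries are $\alpha$. The reverse inclusion holds because the components are clopen in $W$.

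\textbf{Main obstacle.} I expect the hard part to be Step 2: arranging the cover so that $C_1,C_2$ and the four-component structure hold exactly, and verifying that each side genuinely meets both pieces $C_1$ and $C_2$.
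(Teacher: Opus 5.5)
Your route is the paper's. Your Steps 1--2, the tubular neighbourhood $U$ with the M\"obius twist excluded via Lemma~\ref{Lem: mobius orientability} and Proposition~\ref{Pro: mobius nbd no embed}, are exactly Proposition~\ref{Pro: poly loop neigh}. Your corrected Mayer--Vietoris step on the cover $X=U\cup(X\setminus\alpha)$ is exactly how the paper deduces Theorem~\ref{Thm: Jordan theorem 1}.

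On the obstacle you anticipate in Step 2, the paper avoids it with a specific cover:
\begin{itemize}
\item $A$ is the union of the balls centred on the closed first edge $\alpha([0,t_1])$, and $B$ is the union of those centred on $\alpha([t_1,1])$.
\item The requirement that $B_{10r(t)}(\alpha(t))$ meets only the incident edges then forces $A\cap B=B_{r(0)}(\alpha(0))\sqcup B_{r(t_1)}(\alpha(t_1))$.
\item The four components of $(A\cap B)\setminus\alpha$ are then just the good components of these two balls, so nothing further needs checking.
\end{itemize}

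The one step that fails as written is ``at most two components''.
\begin{itemize}
\item A geodesic from $x\in B_R(p)$ to a vertex of $\alpha$ need not stay in $B_R(p)$, because metric balls are not convex. Your sentence ``since the geodesic stays in the ball'' is unjustified.
\item Lemma~\ref{Lem: can always connect to point on geod} does not help either. It concerns $B_R(p)\setminus\gamma$ for a geodesic $\gamma$ issuing from the centre $p$ with length exactly $R$, not a polygonal loop in a ball with arbitrary centre.
\end{itemize}

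The fix is immediate. Given $x\in B_R(p)\setminus\alpha$, concatenate a geodesic from $x$ to $p$ with a geodesic from $p$ to $\alpha(0)$. Both stay in $B_R(p)$, since along them $\dd_X(\cdot,p)$ is bounded by $\dd_X(x,p)<R$ and $\dd_X(\alpha(0),p)<R$ respectively. Stop at the first time $\tau$ this path meets $\alpha$, say at $\alpha(t)$. Before $\tau$ the path lies in $B_R(p)\setminus\alpha$. By continuity, just before $\tau$ it lies in $B_{r(t)}(\alpha(t))\setminus\alpha\subset U_+\sqcup U_-$. So every component of $B_R(p)\setminus\alpha$ meets $U_+$ or $U_-$, which gives at most two.

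With this repair, the proof is complete.
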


As anticipated, we will prove Theorem~\ref{Thm: Jordan theorem 1} by first constructing a path-connected neighborhood of $\alpha$ that is disconnected into exactly two components by $\alpha$. 
\begin{Pro}\label{Pro: poly loop neigh}
Let $(X,\dd_X,\mm_X)$ be an $\RCD(K,N)$ space of essential
dimension $2$. Let $\alpha \subset B_R(p)\cap \Int(X)$ be a simple polygonal loop. If the inclusion $B_R(p) \hookrightarrow X$ induces the trivial map on $\pi_1$, then there exists an open path-connected neighborhood $U \subset B_R(p)$ of $\alpha$ such that $U \setminus \alpha$ has exactly two components $U_\pm$. Moreover, the topological boundaries of $U_\pm$ in $U$ coincide with $\alpha$.
\end{Pro}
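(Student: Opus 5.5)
We build the neighborhood $U$ as a union of small balls along $\alpha$, as in Propositions~\ref{Pro: geod neigh} and \ref{Pro: corner neigh}, and then use Proposition~\ref{Pro: mobius nbd no embed} to exclude a Möbius-type twist.

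\emph{Step 1: good components along $\alpha$.} For each $t$ in the parameter interval of $\alpha$ we choose a radius $r(t)>0$ such that $B_{r(t)}(\alpha(t))$ has two good components with respect to $\alpha$. We also require $B_{10r(t)}(\alpha(t))\subset B_R(p)$, and that this ball meets $\alpha$ only along the incident edge(s) of $\alpha(t)$, which is possible by Remark~\ref{Rem: vertex corner or geod}.
\begin{itemize}
\item If $\alpha(t)$ is not a vertex, or is a vertex at which $\alpha$ is locally a geodesic, we use Theorem~\ref{Thm: good components existence 2}.
\item If $\alpha(t)$ is a vertex at which $\alpha$ is locally a corner, we use Proposition~\ref{Pro: good comp exist corner int}. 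This applies because $\alpha\subset\Int(X)$.
\end{itemize}
By compactness we extract finitely many parameters $t_1<\dots<t_k$ (cyclically ordered) whose balls $B_i:=B_{r(t_i)}(\alpha(t_i))$ cover $\alpha$. We arrange that each $B_i$ meets only $B_{i\pm1}$, and that every vertex is one of the centers.

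\emph{Step 2: labeling the sides.} We split the loop into two arcs $\alpha_1,\alpha_2$, each a polygonal curve made of finitely many corners and geodesic pieces. We set
\[
A:=\bigcup_{i\in I_1}B_i,\qquad B:=\bigcup_{i\in I_2}B_i,
\]
where $I_1,I_2$ are overlapping consecutive index sets whose union is all indices and whose overlap consists of two disjoint runs near the two junction points. Along each arc we propagate the labeling of the good components exactly as in Lemmas~\ref{Lem: good comp side induct} and \ref{Lem: corner neigh side choice}. Arguing as in Proposition~\ref{Pro: geod neigh}, this shows:
\begin{itemize}
\item $A$ and $B$ are path-connected;
\item $A\setminus\alpha$ and $B\setminus\alpha$ each have exactly two components;
\item $A\cap B=C_1\sqcup C_2$ with $C_1,C_2$ path-connected;
\item $(A\cap B)\setminus\alpha$ has exactly four components $A'_i\cap C_j$.
\end{itemize}
Set $U:=A\cup B$ and $V:=U\setminus\alpha$.

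\emph{Step 3: excluding the twist.} Two cases arise. If the labelings of $A$ and of $B$ agree on both $C_1$ and $C_2$, then $U_\pm:=A_\pm\cup B_\pm$ are disjoint open path-connected sets with $U\setminus\alpha=U_+\sqcup U_-$. The boundary statement then follows as in Proposition~\ref{Pro: geod neigh}.

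Otherwise $V$ is path-connected: the sides $A'_1$ and $A'_2$ are joined through $B$ across the mismatch. In that case $U$ and $V$ satisfy the hypotheses of Lemma~\ref{Lem: mobius orientability}. Moreover $U\setminus V=\alpha$ is closed in $X$. Proposition~\ref{Pro: mobius nbd no embed} then gives a loop in $U\subset B_R(p)$ that is not contractible in $X$. This contradicts the assumption that $B_R(p)\hookrightarrow X$ induces the trivial map on $\pi_1$.

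We expect the main obstacle to be the bookkeeping in Step 2. We must verify that $A\cap B$ has exactly two components and that its intersection with $V$ has exactly four. This needs a careful choice of the covering balls, small and separated enough that the only overlaps are between consecutive balls, together with the property (3) of good components ($10r$-separation), to ensure that different local components never merge through non-consecutive balls.
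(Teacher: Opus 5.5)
Your proposal is correct and follows essentially the same route as the paper. You use balls with good components along $\alpha$ (Theorem~\ref{Thm: good components existence 2} and Proposition~\ref{Pro: good comp exist corner int}), split $U=A\cup B$ so that the overlap is two disjoint balls, and invoke Proposition~\ref{Pro: mobius nbd no embed} to exclude the twist; the paper takes $A$ to be the balls along the first edge $\alpha|_{[0,t_1]}$ and gets $A\cap B=B_{r(0)}(\alpha(0))\sqcup B_{r(t_1)}(\alpha(t_1))$ from the $10r(t)$-separation condition. One thing to tidy: state your ``otherwise'' case only after normalizing the labels of $B$ to agree with those of $A$ on one overlap piece (the paper sets $H_\pm(t_1)=G_\pm(t_1)$), since disagreement on both pieces is removed by a global relabeling and does not give a connected $V$.
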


\begin{proof}
By rescaling, we may assume that $\alpha$ is parameterized on $[0,1]$ and unit-speed. Let $0 = t_0 < \dots < t_m = 1$ be a partition of $[0,1]$ such that $\alpha \lvert_{[t_{i-1},t_{i}]}$ is a regular geodesic for each $i = 1, \dots, m$. In what follows, we extend $\alpha$ periodically to $\R$ as needed. Similarly, we set $t_{-1} := t_{m-1}-1$ and $t_{m+1} := t_{1}+1$, so that $\alpha(t_{-1})$ and $\alpha(t_{m+1})$ are adjacent to $\alpha(t_0) = \alpha(t_m)$. 

For each vertex $\alpha(t_i)$, either the concatenation of its two incident edges $\alpha \lvert_{[t_{i-1}, t_{i+1}]}$ is a corner, or $\alpha \lvert_{[t_i-\veps, t_i+\veps]}$ is a geodesic for some $\veps > 0$. By adding the points $t_i \pm \veps$ to our partition $\{t_i\}_{i=0}^m$ of $[0,1]$ for each $i$ in the second case, we may assume that, for each vertex $\alpha(t_i)$, the concatenation of its two incident edges $\alpha \lvert_{[t_{i-1}, t_{i+1}]}$ is either a corner or a geodesic.  

For each $t \in [0,1]$, we choose $r(t) > 0$ as follows:
\begin{enumerate}
    \item If $\alpha(t)$ is in the interior of an edge then $B_{10r(t)}(\alpha(t))$ intersects $\alpha$ only along that edge, and the intersection is a geodesic of length $20r(t)$. If $\alpha(t)$ is a vertex then $B_{10r(t)}(\alpha(t))$ intersects $\alpha$ along the two edges incident to $\alpha(t)$, and the intersection is either a geodesic of length $20r(t)$ or a corner with vertex $\alpha(t)$ and two edges of length $10r(t)$.
    \item $B_{r(t)}(\alpha(t))$ has two good components with respect to the edge containing $\alpha(t)$, in the case where $\alpha(t)$ is in the interior of an edge, or to the geodesic or corner formed by the two edges incident to $\alpha(t)$, in the case where $\alpha(t)$ is a vertex. 
    \item  $B_{r(t)}(\alpha(t)) \subset B_R(p)$.
\end{enumerate}
It follows from the simpleness of $\alpha$, Theorem~\ref{Thm: good components existence 2}, Proposition~\ref{Pro: good comp exist corner int} that such an $r(t)$ exists for each $t$. We take $r(0) = r(1)$.

Combining (1) and (2), we see that each $B_{r(t)}(\alpha(t)) \setminus \alpha$ has exactly two components. Arguing as in the proof of Lemma~\ref{Lem: geod neigh side choice}, using Lemmas~\ref{Lem: good comp side induct} and \ref{Lem: good comp corner side induct}, we can label the two components of $B_{r(t)}(\alpha(t)) \setminus \alpha$ by $G_\pm(t)$ for each $t \in [0, t_1]$ so that the following hold:
\begin{enumerate}
    \item[(a)] For all $t,t' \in [0, t_1]$, \[G_+(t) \cap G_-(t') = \emptyset \quad \text{and} \quad G_-(t) \cap G_+(t') = \emptyset.\]
    \item[(b)] If $t, t' \in [0,t_1]$ are such that $B_{r(t)}(\alpha(t)) \cap B_{r(t')}(\alpha(t')) \neq \emptyset$,
    then
    \[
    G_+(t) \cap G_+(t') \neq \emptyset \quad \text{and} \quad G_-(t) \cap G_-(t') \neq \emptyset.
    \]
\end{enumerate}
Set 
\[
A := \bigcup_{t \in [0, t_1]} B_{r(t)}(\alpha(t)) \quad \text{and} \quad A_\pm := \bigcup_{t \in [0,t_1]} G_\pm(t).
\]
Arguing as in the proof of Proposition~\ref{Pro: geod neigh}, we see that $A$ is path-connected. Moreover, $A \setminus \alpha$ has exactly two components, namely $A_\pm$.

Similarly label the two components of $B_{r(t)}(\alpha(t)) \setminus \alpha([0,1])$ by $H_\pm(t)$ for $t \in [t_1,1]$ so that properties analogous to (a) and (b) hold. Up to permuting $H_\pm(t)$ for all $t \in [t_1,1]$, we may in addition assume that $H_\pm(t_1) = G_\pm(t_1)$. Set
\[
B := \bigcup_{t \in [t_1,1]} B_{r(t)}(\alpha(t)) \quad \text{and} \quad B_\pm := \bigcup_{t \in [t_1,1]} H_\pm(t).
\]
Then $B$ is path-connected and $B \setminus \alpha$ has exactly two components, namely $B_\pm$.

By property~(1) in the definition of $r(t)$, the balls $B_{r(0)}(\alpha(0))$ and $B_{r(t_1)}(\alpha(t_1))$ are disjoint, and for every $t \in (0, t_1)$ and $t' \in (t_1, 1)$, the balls $B_{r(t)}(\alpha(t))$ and $B_{r(t')}(\alpha(t'))$ are disjoint. Therefore,
\[
A \cap B = B_{r(0)}(\alpha(0)) \sqcup B_{r(t_1)}(\alpha(t_1)).
\]
It follows that
\[
\big(A \cap B\big) \setminus \alpha = \big(B_{r(0)}(\alpha(0)) \sqcup B_{r(t_1)}(\alpha(t_1))\big) \setminus \alpha
\]
has exactly four components, namely $G_\pm(0)$ and $G_\pm(t_1)$.

We claim that $G_\pm(0) = H_\pm(1)$. Assume that this is not the case. Then $G_+(0) = H_-(1)$ and $G_-(0) = H_+(1)$. It follows that
\begin{align}\label{Eq: poly loop neigh 1}
    \begin{split}
    &A_+ \cap B_+ = G_+(t_1) \neq \emptyset, \quad B_+ \cap A_- = G_-(0) \neq \emptyset,\\
    &A_- \cap B_- = G_-(t_1) \neq \emptyset, \quad B_- \cap A_+ = G_+(0) \neq \emptyset.
    \end{split}
\end{align}

Set 
\[
U := \bigcup_{t \in [0, 1]} B_{r(t)}(\alpha(t)).
\]
Then $U \subset B_R(p)$ by our choice of $r(t)$, and $U = A \cup B$
is path-connected, being the union of the two path-connected sets with non-empty intersection. Moreover,
\[U \setminus \alpha = \big(A \setminus \alpha\big)  \cup \big(B \setminus \alpha\big) = A_+ \cup A_- \cup B_+ \cup B_-\]
is also path-connected, since $A_\pm$ and $B_\pm$ are path-connected and the relations \eqref{Eq: poly loop neigh 1} hold. 

It follows that $U$, $A$, and $B$ satisfy the conditions of Lemma~\ref{Lem: mobius orientability}, with $U \setminus \alpha$ playing the role of $V$ in the lemma. By Proposition~\ref{Pro: mobius nbd no embed}, there exists a loop in $U$ that is not contractible in $X$. Since $U \subset B_R(p)$, this contradicts our assumption. 

Therefore, $G_\pm(0) = H_\pm(1)$. This implies that
\begin{align}\label{Eq: poly loop neigh 2}
    \begin{split}
        &A_+ \cap B_+ = G_+(0) \sqcup G_+(t_{1}), \quad A_+ \cap B_- = \emptyset\\
        &A_- \cap B_- = G_-(0) \sqcup G_-(t_{1}), \quad A_- \cap B_+ = \emptyset.
    \end{split}
\end{align}
Set $U_\pm:= A_\pm \cup B_\pm$. Since $A_\pm$ and $B_\pm$ are open and path-connected, and have non-empty intersection by \eqref{Eq: poly loop neigh 2}, the sets $U_\pm$ are open and path-connected. Moreover, \eqref{Eq: poly loop neigh 2} implies that $U \setminus \alpha = U_+ \sqcup U_-$. Therefore, $U_\pm$ are also closed in $U \setminus \alpha$. By Lemma~\ref{Lem: clopen path-connected}, the set $U \setminus \alpha$ has exactly two components, namely $U_\pm$.

Finally, we check the claim on the topological boundaries of $U_\pm$ in $U$. It is immediate from our choice of $r(t)$ and the construction that $\alpha$ is contained in the topological boundaries of $U_\pm$ in $U$. Since $U_\pm$ are clopen in $U \setminus \alpha$, the reverse inclusion follows. 
\end{proof}

%\begin{Rem}\label{Rem: poly loop neigh}
%Under the hypotheses of Proposition~\ref{Pro: poly loop neigh}, suppose that we are given an open set $V$ with $\alpha \subset V \subset B_R(p).$ Then we can construct $U$ satisfying the conclusions of the proposition such that, in addition, $U \subset V$. This can be achieved by choosing $r(t)$ sufficiently small so that $B_{r(t)}(\alpha(t)) \subset V$ for all $t \in [0,1]$.
%\end{Rem}

\begin{proof}[Proof of Theorem~\ref{Thm: Jordan theorem 1}]
Let $U$ be a neighborhood of $\alpha$ as in Proposition~\ref{Pro: poly loop neigh} and set
$V := X \setminus \alpha$. The theorem follows by considering the Mayer--Vietoris sequence for the cover $X = U \cup V$ and arguing as in the proof of Proposition~\ref{Pro: comp exist extend corner quant}.
\end{proof}

\subsection{Proof of the Jordan Theorem \ref{Thm: Jordan theorem}: global separation}\label{Subsec: Jordan bounded comp} 
Let us consider the following configuration: Let $(X, \dd_X, \mm_X)$ be an $\RCD(K,N)$ space of essential dimension $2$. Let $p \in \Int(X)$ and let $r > 0$. Let $\alpha, \beta: [-1,1] \to X$ be curves with $\alpha(0) = \beta(0) = p$, each of which is either a geodesic of length $2r$ or a corner with vertex $p$ and two edges of length $r$. 
Suppose further that $\alpha$ and $\beta$ intersect only at $p$, and that the following hold:
\begin{enumerate}
    \item[(i)] $B_r(p) \setminus \alpha$ has exactly two components $A_\pm$, and the topological boundaries of $A_\pm$ in $B_r(p)$ coincide with $\alpha((-1,1))$.
    \item[(ii)] $B_r(p) \setminus \beta$ has exactly two components $B_\pm$, and the topological boundaries of $B_\pm$ in $B_r(p)$ coincide with $\beta((-1,1))$.  
\end{enumerate}
Since $\alpha((-1,0))$ and $\alpha((0,1))$ do not intersect $\beta$, each of them must be contained in one of $B_\pm$. Similarly, $\beta((-1,0))$ and $\beta((0,1))$ must each be contained in one of $A_\pm$. 

\begin{Lem}\label{Lem: local config}
The sets $\alpha((-1,0))$ and $\alpha((0,1))$ lie in the same component of $B_r(p) \setminus \beta$ if and only if the sets $\beta((-1,0))$ and $\beta((0,1))$ lie in the same component of $B_r(p) \setminus \alpha$.
\end{Lem}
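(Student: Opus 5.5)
The plan is to prove one implication by contradiction and obtain the other by exchanging the roles of $\alpha$ and $\beta$. We use only the topology of the four open arcs $\alpha((-1,0))$, $\alpha((0,1))$, $\beta((-1,0))$, $\beta((0,1))$, together with properties (i) and (ii) of the configuration. No further metric input should be needed.

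We show that if $\alpha((-1,0))$ and $\alpha((0,1))$ lie in the same component of $B_r(p)\setminus\beta$, then $\beta((-1,0))$ and $\beta((0,1))$ lie in the same component of $B_r(p)\setminus\alpha$.

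\begin{enumerate}
\item Label the components so that $\alpha((-1,0))\cup\alpha((0,1))\subset B_+$. Suppose, for contradiction, that $\beta((-1,0))\subset A_+$ and $\beta((0,1))\subset A_-$.
\item Since $\alpha\cap B_r(p)\setminus\{p\}$ lies in $B_+$, the component $B_-$ does not meet $\alpha$. Indeed, $p\in\beta$, so $p\notin B_-$.
\item Hence $B_-$ is a nonempty path-connected subset of $B_r(p)\setminus\alpha$. It is therefore contained entirely in $A_+$ or entirely in $A_-$, by Lemma~\ref{Lem: clopen path-connected} applied to the clopen sets $A_\pm$.
\item By (ii), every point of $\beta((-1,1))$ lies in the topological boundary of $B_-$. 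Pick $x\in\beta((-1,0))\subset A_+$. Since $A_+$ is open in $B_r(p)$, a small ball around $x$ lies in $A_+$, and this ball meets $B_-$. So $B_-\cap A_+\neq\emptyset$.
\item Repeating this at a point of $\beta((0,1))\subset A_-$ gives $B_-\cap A_-\neq\emptyset$. This contradicts step 3.
\end{enumerate}

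For the reverse implication, the hypotheses (i) and (ii) are symmetric in $\alpha$ and $\beta$. The same argument with the letters swapped shows that if $\beta((-1,0))$ and $\beta((0,1))$ lie in one component of $B_r(p)\setminus\alpha$, then $\alpha((-1,0))$ and $\alpha((0,1))$ lie in one component of $B_r(p)\setminus\beta$. The two implications together give the stated equivalence.

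The only point that needs care is step 2. We have to make sure that $\alpha$ meets $B_r(p)\setminus\beta$ only along the two open arcs, so that $B_-$ really avoids all of $\alpha\cap B_r(p)$. This follows from the assumption that $\alpha$ and $\beta$ intersect only at $p$, and from the fact that $\alpha\cap B_r(p)=\alpha((-1,1))$, since the edges have length $r$. We should also check that $A_+$ is open in $B_r(p)$ and not merely in $B_r(p)\setminus\alpha$. This holds because $\alpha$ is closed, so $B_r(p)\setminus\alpha$ is open in $B_r(p)$.
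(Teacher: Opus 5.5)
Your proof is correct and is essentially the paper's argument. The paper also observes that $B_-$ avoids $\alpha$ and so lies in a single component, say $A_-$, and then deduces directly that $\beta((-1,0)\cup(0,1))\subset A_-$ because these arcs lie in the closure of $B_-$ and $A_-$ is closed in $B_r(p)\setminus\alpha$. You reach the same conclusion by contradiction, using that $A_\pm$ are open instead of closed.
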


\begin{proof}
Assume that $\alpha((-1,0))$ and $\alpha((0,1))$ are both contained in the same component of $B_r(p) \setminus \beta$, say $B_+$. Since $B_-$ is path-connected and does not intersect $\alpha$, it must contained in one of the components $A_\pm$. Up to relabeling $A_\pm$, we may assume that $B_- \subset A_-$.

By (ii), $\beta((-1,1))$ is contained in the closure of $B_-$ in $B_r(p)$. Therefore, the set
\[\beta((-1,0) \cup (0,1)) = \beta((-1,1)) \cap \big(B_r(p) \setminus \alpha\big)\]
is contained in the closure of $B_-$ in $B_r(p) \setminus \alpha$. Since $A_-$ is closed in $B_r(p) \setminus \alpha$ and $B_- \subset A_-$, we have $\beta((-1,0) \cup (0,1)) \subset A_-.$ The same argument works with $\alpha$ and $\beta$ reversed. 
\end{proof}

%\begin{Rem}\label{Rem: geod geod config}
%Using Lemma~\ref{Lem: geod no glance}, it is not difficult to check that in the case where $\alpha$ and $\beta$ are both geodesics, then $\alpha((-1,0))$ and $\alpha((0,1))$ \emph{must} lie in different components of $B_r(p) \setminus \beta$ and $\beta((-1,0))$ and $\beta((0,1))$ \emph{must} lie in different components of $B_r(p) \setminus \alpha$. Since we will not need this stronger conclusion in the upcoming application of Lemma~\ref{Lem: local config}, we do not emphasize this point in the lemma. 
%\end{Rem}

The following theorem corresponds to the second part of Theorem~\ref{Thm: Jordan theorem}.

\begin{Thm}[Jordan theorem II.]\label{Thm: Jordan theorem 2}
Under the assumptions of Theorem~\ref{Thm: Jordan theorem 1}, if in addition there exist $0 < s \leq r < R$ such that $\alpha \subset B_s(p)$ and the inclusion $B_s(p) \hookrightarrow B_r(p)$ induces the trivial map on $\pi_1$, then at least one of the components of $B_R(p) \setminus \alpha$ is contained in $\overline B_r(p)$. In this case, $X \setminus \alpha$ has exactly two components and at least one of these components coincides with a component of $B_R(p) \setminus \alpha$ contained in $\overline B_r(p)$. Moreover, the topological boundaries of both components coincide with $\alpha$.
\end{Thm}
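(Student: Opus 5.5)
The plan has three steps: first show that one of the two components of $B_R(p)\setminus\alpha$ from Theorem~\ref{Thm: Jordan theorem 1} stays inside $\overline B_r(p)$; then use that component to upgrade the statement to all of $X\setminus\alpha$; finally read off the boundaries. Throughout, let $C_\pm$ denote the two components of $B_R(p)\setminus\alpha$, each with boundary $\alpha$ in $B_R(p)$.

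\emph{Step 1: a bounded component.} Since $\alpha\subset B_s(p)$ and the inclusion $B_s(p)\hookrightarrow B_r(p)$ is trivial on $\pi_1$, apply Theorem~\ref{Thm: Jordan theorem 1} with $B_r(p)$ in place of $B_R(p)$. This gives exactly two components $D_\pm$ of $B_r(p)\setminus\alpha$, each with boundary $\alpha$ in $B_r(p)$. By Lemma~\ref{Lem: big set small set comp} (with $U=B_R(p)$, $V=B_r(p)$, $C=\alpha$), and because both $C_\pm$ accumulate on $\alpha\subset B_r(p)$, we may label so that $D_\pm=C_\pm\cap B_r(p)$.

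Suppose, for contradiction, that both $C_+$ and $C_-$ contain points outside $\overline B_r(p)$. To reach a contradiction I would run the Mayer--Vietoris argument of Proposition~\ref{Pro: comp exist extend corner quant} again. Take $U$ to be the tubular neighborhood of $\alpha$ from Proposition~\ref{Pro: poly loop neigh}, built inside $B_s(p)$; this is allowed because the $r(t)$ in that proof can be shrunk so that each ball lies in $B_s(p)$. The aim is to show that the two sides $U_\pm$ lie in different components of $B_r(p)\setminus\alpha$ and also "see" different unbounded pieces. Concretely, pick points $x_\pm\in C_\pm\setminus\overline B_r(p)$, and join $x_\pm$ to $U_\pm$ by curves $\sigma_\pm$ in $C_\pm$. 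Join $x_+$ to $x_-$ by a curve in $X\setminus\overline B_s(p)$; this needs path-connectedness of the outside, which I expect to extract from the geodesic-from-$p$ retraction argument used in Lemma~\ref{Lem: can always connect to point on geod}. Finally close the loop through $U$, crossing $\alpha$ once.

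The resulting loop meets $\alpha$ transversally once. A Mayer--Vietoris boundary computation for the cover $X=U\cup(X\setminus\alpha)$, as in Proposition~\ref{Pro: comp exist extend corner quant}, then shows that this loop has nonzero boundary class, i.e. it is not null-homologous. What remains is to contradict this using the triviality of $B_s(p)\hookrightarrow B_r(p)$ and of $B_R(p)\hookrightarrow X$. A loop crossing $\alpha$ once is not contained in $B_r(p)$, so these hypotheses cannot be applied to it directly. This is the \textbf{main obstacle}: one must cut the loop into a part in $B_r(p)$ and a part outside, and exploit that $\alpha\subset B_s(p)$ is null-homotopic in $B_r(p)$. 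My first attempt would be to use the homotopy contracting $\alpha$ inside $B_r(p)$ to show that the intersection number of loops with $\alpha$ vanishes for loops in $B_R(p)$ that cross $\partial B_r(p)$. I would try to make this rigorous via Mayer--Vietoris applied to $B_R(p)=B_r(p)\cup\bigl(B_R(p)\setminus\overline B_s(p)\bigr)$.

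\emph{Step 2: global statement.} Let $C_+\subset\overline B_r(p)$ be the bounded component. Since $r<R$, we have $C_+\subset B_R(p)$, and $C_+$ is closed in $X\setminus\alpha$: its closure in $X$ lies in $\overline B_r(p)\subset B_R(p)$, where its boundary is $\alpha$. It is also open, hence it is a component of $X\setminus\alpha$. Every other point of $X\setminus\alpha$ can be joined to $C_-$ by a curve avoiding $\alpha$. To see this, join the point to $p$; the first time the curve enters $B_R(p)$ it lands in $C_-$, since any point of $B_R(p)\setminus \overline B_r(p)$ lies in $C_-$. So $X\setminus\alpha$ has exactly two components, $C_+$ and the one containing $C_-$.

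\emph{Step 3: boundaries.} Both boundaries contain $\alpha$, because $C_\pm$ already accumulate on every point of $\alpha$. Conversely, each boundary is contained in $\alpha$ because the components are clopen in $X\setminus\alpha$.
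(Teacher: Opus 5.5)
Your Steps 2 and 3 are fine and essentially match the paper. Step 1, however, is the real content of the theorem, and there you never derive a contradiction; the route you sketch cannot work as stated.

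\begin{itemize}
\item \textbf{The loop leaves $B_R(p)$.} Since $C_+$ and $C_-$ are distinct components of $B_R(p)\setminus\alpha$, any curve from $x_+$ to $x_-$ avoiding $\alpha$ must leave $B_R(p)$. So your loop $\ell$ is never contained in $B_R(p)$, and a Mayer--Vietoris decomposition $B_R(p)=B_r(p)\cup\bigl(B_R(p)\setminus\overline B_s(p)\bigr)$ cannot see it.
\item \textbf{The homology computation contradicts nothing.} The computation for $X=U\cup(X\setminus\alpha)$ only gives $[\ell]\neq 0$ in $H_1(X)$. The hypotheses control only loops in $B_R(p)$ and in $B_s(p)$, so this is not a contradiction.
\item \textbf{The principle you hope for is false in this generality.} You want: a loop null-homotopic in $B_r(p)$ has zero intersection number with every closed curve. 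This cannot come from two-sidedness of $\alpha$ plus the null-homotopy alone. Glue a $2$-cell to a torus along a meridian $\alpha'$ parallel to a meridian $\alpha$. Then $\alpha$ is null-homotopic and has an annular neighborhood, yet the longitude crosses it exactly once. So you must use the two-dimensional structure of $X$ away from $\alpha$.
\item \textbf{The outside need not be connected.} The path-connectedness of $X\setminus\overline B_s(p)$ that you plan to extract is false in general. In the flat strip $\mathbb{R}\times[0,1]$ with $p=(0,1/2)$, $s=1$ and $\alpha$ a small polygonal loop around $p$, all hypotheses hold, but $X\setminus\overline B_s(p)$ has two components.
\item \textbf{Minor point.} Applying Theorem~\ref{Thm: Jordan theorem 1} on $B_r(p)$ needs $B_r(p)\hookrightarrow X$ to be trivial on $\pi_1$. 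That follows from $B_R(p)\hookrightarrow X$, not from $B_s(p)\hookrightarrow B_r(p)$.
\end{itemize}

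The missing idea is to join $x_+$ and $x_-$ \emph{through $p$} rather than around the outside, and to argue locally by parity. The paper's argument runs as follows.
\begin{enumerate}
\item Reduce to $p\in\Int(X)$ by density of the interior set.
\item Perturb $x_\pm$ via Lemma~\ref{Lem: perturb generic int}, so that the geodesics from $x_\pm$ to $p$ are unique, regular, and meet $\alpha$ only finitely often.
\item Slide $x_\pm$ along these geodesics to distance exactly $r$ from $p$. They stay in $C_\pm$, because the discarded outer pieces miss $\alpha\subset B_s(p)$.
\item Let $\beta$ be the broken geodesic from $x_+$ through $p$ to $x_-$. It is a corner at the interior point $p$ (or locally a geodesic there) with edges of length $r$.
\item This is exactly where triviality of $B_s(p)\hookrightarrow B_r(p)$ enters: by Proposition~\ref{Pro: comp exist corner int quant} (or its geodesic variant), $B_s(p)\setminus\beta$ has exactly two components.
\item The closed loop $\alpha\subset B_s(p)$ meets $\beta$ finitely often, so it switches sides of $\beta$ an even number of times.
\item By the local symmetry of side changes (Lemma~\ref{Lem: local config}, Claim~\ref{Cla: side change}), $\beta$ then switches sides of $\alpha$ an even number of times. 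This forces $x_+$ and $x_-$ into the same component of $B_R(p)\setminus\alpha$, which is the contradiction.
\end{enumerate}
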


\begin{proof}
We prove the theorem under the additional assumption that $p \in \Int(X)$. The general case follows by an approximation argument, since $\Int(X)$ is dense in $X$ by Theorem~\ref{Thm: int set open}.

Let $\tilde A_\pm$ the two components of $B_R(p) \setminus \alpha$, as given by Theorem~\ref{Thm: Jordan theorem 1}. Since $\alpha \subset B_s(p)$ and the inclusion $B_s(p) \hookrightarrow X$ also induces the trivial map on $\pi_1$, Theorem~\ref{Thm: Jordan theorem 1} implies that $B_{s}(p) \setminus \alpha$ has two components $A_\pm$. Moreover, $\alpha(0)$ lies in the topological boundary in $B_R(p)$ of $\tilde A_\pm$, and so $\tilde A_\pm$ intersect $B_s(p)$. By Lemma~\ref{Lem: big set small set comp}, up to relabeling $A_\pm$, we have $A_\pm \subset \tilde A_\pm$. 

Suppose for the sake of contradiction that there exist $x_\pm \in A_\pm$ such that $\dd_X(x_\pm, p) > r$. By perturbing $x_\pm$ using Lemma~\ref{Lem: perturb generic int}, we may assume that the geodesics from $x_\pm$ to $p$ are unique, regular, and intersect $\alpha$ only at finitely many points. 

Let $x_+'$ be the point on the geodesic from $x_+$ to $p$ such that $\dd_X(x_+', p) = r.$ Since the segment from $x_+$ to $x_+'$ is outside of $B_r(p)$, it does not intersect $\alpha$. Therefore, $x_+$ and $x_+'$ are in the same component of $B_R(p) \setminus \alpha$, namely $\tilde A_+$. Similarly, let $x_-'$ be the point on the geodesic from $x_-$ to $p$ such that $\dd_X(x_-', p) = r$. Then $x_-' \in \tilde A_-$. We replace $x_\pm$ by $x_\pm'$.  

Since $x_\pm \in \tilde A_\pm$, we have $x_+ \neq x_-$. Since $\dd_X(x_\pm, p) = r$, it follows by the non-branching property (Theorem~\ref{Thm: non-branching}) that the geodesics from $x_\pm$ to $p$ intersect only at $p$. 

Let $\beta:[-1,1] \to X$ be the polygonal curve with vertex $\beta(0) = p$ obtained by concatenating the geodesic from $x_+$ to $p$ with the geodesic from $p$ to $x_-$. Then either $\beta$ is a corner, or there exists $\veps > 0$ such that $\beta \lvert_{[-\veps, \veps]}$ is a geodesic. In the first case, Proposition~\ref{Pro: comp exist corner int quant} implies that $B_s(p) \setminus \beta$ has exactly two components. In the second case, the same conclusion follows by minor variants of Propositions~\ref{Pro: corner neigh}~and~\ref{Pro: comp exist extend corner quant}. In either case, we denote the two components by $B_\pm$, and note that the topological boundaries of $B_\pm$ in $B_s(p)$ coincide with $\beta \cap B_s(p)$. 

To facilitate the next part of the argument, we introduce the following terminology. We say that \emph{$\beta$ changes sides with respect to $\alpha$ (in $B_s(p)$) at $\beta(t)$}, for $t \in [-1,1]$, if there exists $\veps > 0$ such that $\beta((t-\veps, t))$ is contained in one of $A_\pm$ and $\beta((t, t+\veps))$ in the other. We also define the analogous notion for $\alpha$ with respect to $\beta$. It is evident that $\alpha$ can only change sides with respect to $\beta$, and vice versa, at an intersection point of $\alpha$ and $\beta$. 

%Similarly, we say that \emph{$\alpha$ changes sides with respect to $\beta$ (in $B_s(p)$) at $\alpha(t)$}, for some $t \in [0,1]$, if there exists $\veps > 0$ such that either
%\begin{enumerate}
      %  \item $\alpha((t-\veps, t)) \subset B_-$ and $\alpha((t, t+\veps)) \subset B_+$, or
      %  \item $\alpha((t-\veps, t)) \subset B_+$ and $\alpha((t, t+\veps)) \subset B_-$.
%\end{enumerate}

\begin{Cla}\label{Cla: side change}
Let $q := \alpha(t) = \beta(t')$ be an intersection point of $\alpha$ and $\beta$. Then $\alpha$ changes sides with respect to $\beta$ at $\alpha(t)$ if and only if $\beta$ changes sides with respect to $\alpha$ at $\beta(t')$.
\end{Cla}

\begin{proof}
There are four cases, depending on whether each of $\alpha$ and $\beta$ is
locally a geodesic or a corner at $q$. We consider only the case where $\alpha$ is locally a geodesic at $q$ and $\beta$ is locally a corner at $q$; the other cases are analogous. 

In this case, for all $s' > 0$ sufficiently small, $\gamma := \alpha \cap \overline B_{s'}(q)$ is a geodesic of length $2s'$ passing through $q$, and $\nu:=\beta \cap \overline B_{s'}(q)$ is a corner with vertex $q$ and two edges of length $s'$. Parameterize $\gamma$ and $\nu$ with constant speed on on $[-1,1]$ so that $\gamma(0) = \nu(0) = q$.

By Theorem~\ref{Thm: good components existence 2}, if $s' > 0$ sufficiently small, then the set $B_{s'}(q) \setminus \alpha = B_{s'}(q) \setminus \gamma$ has exactly two components, and the topological boundaries of both components in $B_{s'}(q)$ coincide with $\gamma((-1,1))$. Let $G_\pm$ denote these two components. 

Since $q = \alpha(t)$ lies in the topological boundaries of $A_\pm$ in $B_s(p)$, both $A_\pm$ intersect $B_{s'}(q)$. By choosing $s' > 0$ sufficiently small, we may assume that $B_{s'}(q) \subset B_{s}(p)$. Thus Lemma~\ref{Lem: big set small set comp} implies $G_\pm \subset A_\pm$, up to relabeling $G_\pm$.

Similarly, Theorem~\ref{Thm: good components existence corner} implies that if $s' > 0$ is sufficiently small, then the set $B_{s'}(q) \setminus \beta = B_{s'}(q) \setminus \nu$ has exactly two components, and the topological boundaries of both components in $B_{s'}(q)$ coincide with $\nu((-1,1))$. Let $H_\pm$ denote these components. Arguing as above, up to relabeling $H_\pm$, we have $H_\pm \subset B_\pm$.

Since $\alpha$ and $\beta$ have finite intersection, by choosing $s' > 0$ sufficiently small, we may assume that $\gamma$ and $\nu$ intersect only at $q$. By Lemma~\ref{Lem: local config}, $\gamma((-1,0))$ and $\gamma((0,1))$ are contained in the same one of $H_\pm$ if and only if $\nu((-1,0))$ and $\nu((0,1))$ are contained in the same one of $G_\pm$.  In view of the inclusion relations above, this is equivalent to saying that $\alpha$ changes sides with respect to $\beta$ at $\alpha(t)$ if and only if $\beta$ changes sides with respect to $\alpha$ at $\beta(t')$.
\end{proof}

Since $\alpha$ is a loop and intersects $\beta$ only finitely many times, it must change sides with respect to $\beta$ an even number of times. By Claim~\ref{Cla: side change}, $\beta$ must also change sides with respect to $\alpha$ an even number of times. Since $A_\pm \subset \tilde A_\pm$, this means $x_\pm = \beta(\pm 1)$ must both be contained in the same one of $\tilde A_\pm$, contradicting our assumption that $x_\pm \in A_\pm$.

Therefore, all points $x \in B_R(p)$ such that $\dd_X(x, p) > r$ lie in the same component of $B_R(p) \setminus \alpha$. Up to relabeling $\tilde A_\pm$, we may assume that this component is $\tilde A_-.$ Then $\tilde A_+ \subset \overline B_r(p)$.

We now turn the topology of $X \setminus \alpha$. We first show that $X \setminus \alpha$ has at most two components. Let $x \in X \setminus \alpha$, and let $\rho:[0,1] \to X$ be a geodesic from $x$ to $\alpha(0)$. There is a smallest $t \in (0,1]$ such that $\rho(t) \in \alpha \subset B_R(p)$. By continuity, there exists $t' \in (0,t)$ such that $\rho(t') \in B_R(p)$. Since $\rho(t') \in B_R(p) \setminus  \alpha$, it lies in one of $\tilde A_\pm$. Therefore, $\rho \lvert_{[0,t']}$ is a curve in $X \setminus \alpha$ from $x$ to one of $\tilde A_\pm$. Since $x$ is arbitrary and each of $\tilde A_\pm$ is path-connected, $X \setminus \alpha$ has at most two components. Moreover, if it has two components, then $\tilde A_\pm$ lie in different ones. 

Next, we show that $X \setminus \alpha$ has at least two components. Fix any two points $x_\pm \in \tilde A_\pm$. We claim that every curve from $x_+$ to $x_-$ must intersect $\alpha$. Assume for the sake of contradiction that there is a curve $\sigma:[0,1] \to X$ from $x_+$ to $x_-$ that does not intersect $\alpha$. Since $\tilde A_+$ is open, there exists a smallest $t \in (0,1]$ such that $\sigma(t) \notin \tilde A_+$. Therefore, $\sigma([0,t)) \subset \tilde A_+ \subset \overline B_{r}(p)$. 

By the definition of $t$, we have $\sigma(t)\notin \tilde A_+$. Moreover,
$\sigma(t)\notin \alpha$ since $\sigma$ does not intersect $\alpha$. By the continuity of $\sigma$, we have $\sigma(t) \in \overline B_r(p) \subset B_R(p)$. 
Since $B_R(p) = \tilde A_+ \sqcup \tilde A_- \sqcup \alpha$, it follows that $\sigma(t)$ must lie in $\tilde A_-$. Thus $\sigma \lvert_{[0,t]}$ is a curve in $B_R(p) \setminus \alpha$ from a point in $\tilde A_+$ to a point in $\tilde A_-$, yielding a contradiction. This proves the previous claim. 

Therefore, $X \setminus \alpha$ has at least two components. Since we have already shown that $X \setminus \alpha$ has at most two components, it has exactly two. Denote by $X_\pm$ the component containing $\tilde A_\pm$. We claim that 
\[
X_+ = \tilde A_+ \quad \text{and} \quad X_- = \tilde A_- \sqcup (X \setminus B_R(p)).
\]
Let $x \in X \setminus B_R(p)$, and let $\rho:[0,1] \to X$ be a geodesic from $x$ to $p$. Then there exists $t > 0$ such that $\rho(t) \in B_R(p)$ and $\rho([0,t]) \cap \overline B_r(p) = \emptyset$. 
Since $\tilde A_- \cup \alpha \subset \overline B_r(p)$ and $\rho(t) \notin \overline B_r(p)$, it must be that that $\rho(t) \in \tilde A_-$. Thus $\rho \lvert_{[0,t]}$ is a curve from $x$ to $\tilde A_-$ that does not intersect $\alpha$, and so $x \in X_-$. The claim follows. 

Finally, since the topological boundaries of $\tilde A_\pm$ in $B_R(p)$ coincide with $\alpha$, and $\tilde A_\pm \subset X_\pm$, the topological boundaries of $X_\pm$ must also contain $\alpha$. Since $X_\pm$ are clopen in $X \setminus \alpha$, the reverse inclusion follows.
\end{proof}

\section{Polygonal Domains}\label{Sec: poly dom}
In this section, we introduce the notion of an interior polygonal domain (see Definition~\ref{Def: poly dom}) and show that every interior point admits a neighborhood basis consisting of these domains. In Section~\ref{Sec: man struct}, we will show that interior polygonal domains are homeomorphic to open disks, which will allow us to deduce that the interior set has the structure of a topological manifold.

For the remainder of the section, let $(X,\dd_X,\mm_X)$ be an $\RCD(K,N)$ space of essential dimension $2$.

\begin{Def}[Interior polygonal domain]\label{Def: poly dom}
We say that a subset $Q \subset X$ is an \emph{interior polygonal domain} if the following conditions hold:
\begin{enumerate}
    \item $Q$ is open, connected, and precompact.
    
    \item The topological boundary $\partial Q:= \overline{Q}\setminus Q$ is the support of a simple polygonal loop $\alpha_Q$.

    \item The closure $\overline Q$ is contained in the interior set $\Int(X)$ (see Definition~\ref{Def: ext-int points}).
\end{enumerate}
\end{Def}

Since $\RCD$ spaces are locally path-connected, interior polygonal domains are automatically path-connected.

The main tool for proving the existence of interior polygonal domains is Jordan Theorem~\ref{Thm: Jordan theorem}. Let $\alpha\subset B_r(p)\cap \Int(X)$ be a simple polygonal loop. If the inclusions $B_{R} \hookrightarrow X$ and $B_r(p)\hookrightarrow B_{R/2}(p)$ induce trivial maps on $\pi_1$ for some $R\geq 2r$, and there exists a point $q\in X\setminus B_R(p)$, then, by Jordan Theorem~\ref{Thm: Jordan theorem}, the complement $X\setminus \alpha$ has exactly two components. The component contained in $B_R(p)$ is an interior polygonal domain $Q$ with $\alpha_Q=\alpha$.

\begin{Rem}\label{Rem: polydom consistency1}
Conversely, if $Q$ is an interior polygonal domain such that $\overline Q\subset B_r(p)$ and the geometric assumptions above are satisfied, then $\alpha_Q$ disconnects the space into two connected components, and $Q$ is precisely the component contained in $B_R(p)$. Indeed, it is clear that $Q$ is contained in one of the two components of $X\setminus \alpha_Q$. If it were strictly smaller, then we could find a continuous curve in the same connected component joining a point of $Q$ to a point in $X\setminus Q$. Such a curve would have to intersect $\partial Q=\alpha_Q$, which is impossible.
\end{Rem}

\begin{Thm}[Interior polygonal domain neighborhood basis]
\label{Thm: poly dom neigh basis}
Let $(X,\dd_X,\mm_X)$ be an $\RCD(K,N)$ space of essential dimension $2$. Then, for every interior point $p\in \Int(X)$ and every $R>0$, there exists a simple polygonal loop $\alpha$ such that:
\begin{enumerate}
    \item $\alpha \subset B_R(p) \cap \mathcal R_2$;

    \item $X \setminus \alpha$ has exactly two components, and the topological boundaries of both components coincide with $\alpha$. Moreover, one of these components is an interior polygonal domain $Q$ satisfying $p \in Q \subset B_R(p)$.
\end{enumerate}
In particular, every interior point $p\in \Int(X)$ admits a neighborhood basis consisting of interior polygonal domains.
\end{Thm}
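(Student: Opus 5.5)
The plan is to build $\alpha$ as a "discrete circle" around $p$ made of strongly regular geodesic edges, and then use the Jordan theorem (Theorem~\ref{Thm: Jordan theorem}) together with a parity count to show that $p$ lies in the bounded component.

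\textbf{Choice of scales.} Fix $R>0$, and shrink it if necessary so that $R$ is small. By Theorem~\ref{Thm: semi-locally simply connected}, choose $0<s\le r<R/4$ such that the inclusions $B_r(p)\hookrightarrow B_{R/4}(p)$ and $B_s(p)\hookrightarrow B_r(p)$ induce the trivial map on $\pi_1$; in particular $B_{R/4}(p)\hookrightarrow X$ is trivial on $\pi_1$ for loops coming from $B_r(p)$. We will build $\alpha\subset B_s(p)$. Theorem~\ref{Thm: Jordan theorem}, applied with $R/4$ in place of $R$, then gives that $X\setminus\alpha$ has exactly two components, with boundaries equal to $\alpha$, and that one of them, $Q$, is contained in $\overline B_r(p)\subset B_R(p)$. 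Since $\alpha\subset\mathcal R_2$, Corollary~\ref{Cor: int geod no ext} (every point of $\mathcal R_2$ is the interior point of a regular geodesic) gives $\alpha\subset\Int(X)$. We will also need $\overline Q\subset\Int(X)$. The plan is to argue that $Q$ cannot contain extremal points, using Proposition~\ref{Pro: comp exist geod ep ext quant} and the fact that $\Ext(X)$ is closed while $\Int(X)$ is open (Theorem~\ref{Thm: int set open}), after first shrinking $R$ so that $B_R(p)\subset\Int(X)$. This shrinking is legitimate precisely because $\Int(X)$ is open. The rest of the work is to construct $\alpha$ and to show $p\in Q$.

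\textbf{Construction of the loop.} Use Proposition~\ref{Pro: every pt have extend reg geod} to choose $k$ regular geodesics $\rho_1,\dots,\rho_k$ issuing from $p$, with length about $s/2$. Choose points $z_j\in\mathcal R_2$ in the annulus $\{s/4<\dd_X(\cdot,p)<3s/4\}$, one near each $\rho_j(s/2)$ and taken in cyclic order, with $\dd_X(z_j,z_{j+1})\ll s$. Select the $z_j$ one after another from full-measure sets. By Proposition~\ref{Pro: geodesic a.e. extend} and Lemma~\ref{Lem: perturb generic int}, we can arrange that:
\begin{itemize}[leftmargin=2em]
\item the geodesic $[z_j,z_{j+1}]$ is unique and strongly regular;
\item it meets each previously chosen edge, and a fixed generic geodesic $\sigma$ from $p$ to a point outside $B_r(p)$, in only finitely many points;
\item it avoids the maximal extensions of earlier edges (Lemma~\ref{Lem: geod meas 0}), so that no edge is a subsegment of another.
\end{itemize}
Since every edge is short, it stays inside the annulus, so $p\notin\alpha$ and the whole closed curve lies in $B_s(p)$. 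The curve obtained may have finitely many self-intersections. We remove them by repeatedly cutting at a double point and discarding one of the two sub-loops, always keeping the sub-loop that crosses $\sigma$ an odd number of times counted with side changes. Each such cut produces new vertices at transversal intersection points of two regular geodesics. These new vertices lie in $\mathcal R_2$, since they are interior points of regular edges, and at them the curve is a corner or a geodesic, as required in Remark~\ref{Rem: vertex corner or geod}.

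\textbf{Enclosing $p$ and the main obstacle.} The crucial point, and the step I expect to be hardest, is to guarantee that $\alpha$ separates $p$ from the far endpoint of $\sigma$, i.e.\ that $\sigma$ changes sides with respect to $\alpha$ an odd number of times, in the sense of Claim~\ref{Cla: side change}. My first attempt is to arrange that $\sigma$ crosses the original discrete circle exactly once and transversally. The idea is to take $\sigma$ to be one of the $\rho_j$, extended past $\rho_j(s/2)$, and to use the sector structure: consecutive radial geodesics $\rho_j,\rho_{j+1}$ form corners at the interior point $p$. By Proposition~\ref{Pro: comp exist corner int quant}, each such corner cuts $B_s(p)$ into two components, and the short edge $[z_j,z_{j+1}]$ stays in the sector it starts in, so it avoids all other $\rho_i$. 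Closing the loop around all $k$ sectors then forces a single crossing of each $\rho_j$ with a side change. That side change can be checked locally, because at the crossing point the curve is locally a pair of transversal regular geodesics, which is the situation of Theorem~\ref{Thm: good components existence}. Parity is preserved when sub-loops are discarded in the cutting step, since a discarded sub-loop contributes an even number of side changes. Then Claim~\ref{Cla: side change} and the argument of Theorem~\ref{Thm: Jordan theorem 2} show that $p$ and the far endpoint of $\sigma$ lie in different components of $X\setminus\alpha$. The far endpoint lies outside $\overline B_r(p)\supset Q$, so it is in the other component, and hence $p\in Q$. Finally, since $R$ was arbitrary, the domains $Q$ form a neighborhood basis at $p$.
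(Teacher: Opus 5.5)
\textbf{There is a genuine gap in the step you flag as the hardest: producing a loop that encloses $p$.} Your discrete circle presupposes a cyclic ``angular'' order on the radial geodesics $\rho_1,\dots,\rho_k$. In that order, consecutive points $\rho_j(s/2),\rho_{j+1}(s/2)$ would have to be close, and the sectors would have to tile a neighborhood of $p$ exactly once. Nothing available at this stage provides this. The point $p$ may be a singular interior point of a collapsed space: there is no Gromov--Hausdorff control on $B_s(p)$ and no information that the distance sphere around $p$ is circle-like. So there is no reason the components of $B_s(p)\setminus(\rho_j\cup\rho_{j+1})$ are arranged cyclically, that one of them is thin, or that a short geodesic $[z_j,z_{j+1}]$ lies in a single one of them. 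Such a star-shaped separation picture is essentially a chart at $p$, which is the content of the theorem.

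There are also concrete errors inside this step.
\begin{itemize}
\item Proposition~\ref{Pro: comp exist corner int quant} requires the corner's edges to be longer than the outer radius. A corner at $p$ with edges of length about $s/2$ does not disconnect $B_s(p)$; this already fails in $\R^2$.
\item You never say where the crossings of the $\rho_j$ occur. The $z_j$ are generic, so they do not lie on the $\rho_j$. If each edge stays in the sector containing its initial vertex, the loop never leaves that sector.
\item Your $\sigma$ is $\rho_j$ extended past radius $s/2$, so it runs through the annulus containing the other edges. Its crossings with those edges are uncontrolled.
\end{itemize}
Consequently the odd crossing count, on which all of your parity bookkeeping rests, is never established.

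\textbf{The missing idea is to use the definition of interior point directly.} This removes any need for cyclic structure or parity counts over self-intersections. The paper's construction goes as follows.
\begin{itemize}
\item Take a single regular geodesic $\gamma$ from $p$ with $\dd_X(p,\gamma(1))\ge R$.
\item At $\gamma(t_0)$ with $\dd_X(p,\gamma(t_0))=r/4$, pick $x,y\in\mathcal R_2$ in the two good components of a small $B_s(\gamma(t_0))\setminus\gamma$ (Theorem~\ref{Thm: good components existence 2}). Choose them so that they are joined by a unique strongly regular geodesic $\rho$. By non-branching, $\rho$ meets $\gamma$ exactly once.
\item Because $p\in\Int(X)$, the set $B_{r/2}(p)\setminus\gamma$ is path-connected. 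So $x$ and $y$ are joined by a path avoiding $\gamma$.
\item Polygonalize this path with strongly regular edges that still avoid $\gamma$, trim it so it meets $\rho$ only at $x$ and $y$, and excise overlaps.
\end{itemize}
The resulting simple loop $\alpha\subset B_r(p)\cap\mathcal R_2$ meets $\gamma$ exactly once, in the interior of $\rho$. Taking $r<r(p,R)$ from Corollary~\ref{Cor: Jordan theorem} puts $\gamma(1)$ in the component not contained in $B_R(p)$. Lemma~\ref{Lem: geod no glance} at the crossing point then forces $\gamma([0,t'))$, and hence $p$, into the bounded component $Q$.

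\textbf{Your other points are minor.}
\begin{itemize}
\item To apply Theorem~\ref{Thm: Jordan theorem} you need $B_{R'}(p)\hookrightarrow X$ to be trivial on $\pi_1$ and $r<R'$ strictly. Your scale choice does not quite give this; citing Corollary~\ref{Cor: Jordan theorem} handles it.
\item Once $B_R(p)\subset\Int(X)$, the inclusion $\overline Q\subset\Int(X)$ follows immediately from $Q\subset\overline B_r(p)$. No extremal-point argument is needed.
\item Your claim that every point of $\mathcal R_2$ lies in the interior of a regular geodesic is not proved in the paper, and it is not needed.
\end{itemize}
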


As a corollary, we obtain two important consequences. The first asserts that geodesics starting from an interior point are regular.

\begin{Cor}\label{Cor: int geod reg}
Let $(X, \dd_X, \mm_X)$ be an $\RCD(K,N)$ space of essential dimension $2$, and let $\gamma:[0,1] \to X$ be a geodesic such that $\gamma(0)$ is an interior point. Then $\gamma$ is regular.
\end{Cor}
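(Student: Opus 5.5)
The plan is to combine Theorem~\ref{Thm: poly dom neigh basis} with the dichotomy of Theorem~\ref{Thm: geodesic interior tangent}. The key feature is that the loop $\alpha$ produced by Theorem~\ref{Thm: poly dom neigh basis} lies in $\mathcal R_2$, not merely in $\Int(X)$. If $\gamma$ leaves a small polygonal domain around $p=\gamma(0)$, then it must cross $\alpha$ at an interior point of $\gamma$. There the tangent space is $\R^2$, and this forces the regular alternative along all of $\gamma((0,1))$.

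\textbf{Step 1 (choose the domain).} Let $L:=\dd_X(\gamma(0),\gamma(1))>0$ and fix $0<R<L$. Apply Theorem~\ref{Thm: poly dom neigh basis} at $p=\gamma(0)\in\Int(X)$ with this $R$. This gives a simple polygonal loop $\alpha\subset B_R(p)\cap\mathcal R_2$ and an interior polygonal domain $Q$ with $p\in Q\subset B_R(p)$ and $\partial Q=\overline Q\setminus Q=\alpha$.

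\textbf{Step 2 (locate a crossing).} Since $\overline Q\subset\overline B_R(p)$ and $\dd_X(p,\gamma(1))=L>R$, we have $\gamma(1)\notin\overline Q$. Let
\[
t_0:=\sup\{t\in[0,1]:\gamma([0,t])\subset Q\}.
\]
Because $Q$ is open and $p\in Q$, we get $t_0>0$. Because $\gamma(1)\notin\overline Q$ and $\overline Q$ is closed, we get $t_0<1$. By construction $\gamma(t_0)\in\overline Q$. Since $Q$ is open, $\gamma(t_0)\notin Q$, for otherwise the supremum could be increased. Hence $\gamma(t_0)\in\partial Q=\alpha\subset\mathcal R_2$ with $t_0\in(0,1)$.

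\textbf{Step 3 (conclude).} By Theorem~\ref{Thm: geodesic interior tangent}, applied after reparameterizing $\gamma$ on $[-1,1]$, one of two cases holds. Either every tangent space at every point of $\gamma((0,1))$ is $\R^2$, or every one is $(\R^2_+,0^2)$. The point $\gamma(t_0)$ is interior to $\gamma$ and lies in $\mathcal R_2$, so its tangent space is $\R^2$, which rules out the second case. Hence $\gamma((0,1))\subset\mathcal R_2$ (using Remark~\ref{Rem: metric Rk rigidity} to pass from isometric tangents to $\mathcal R_2$), i.e.\ $\gamma$ is regular.

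There is no substantial obstacle; all the real work is contained in Theorem~\ref{Thm: poly dom neigh basis}. The only point requiring care is Step~2, namely that the exit point $\gamma(t_0)$ is a genuine interior point of $\gamma$, which follows from $p\in Q$ open and $R<L$.
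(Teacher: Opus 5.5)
Your proposal is correct and follows essentially the same route as the paper: apply Theorem~\ref{Thm: poly dom neigh basis} with a radius smaller than $\dd_X(\gamma(0),\gamma(1))$, observe that $\gamma$ must meet the loop $\alpha\subset\mathcal R_2$, and conclude with the dichotomy of Theorem~\ref{Thm: geodesic interior tangent}. The only cosmetic difference is how you locate the crossing: you use a first-exit time from the open set $Q$, which also makes explicit that the crossing is an interior point of $\gamma$, whereas the paper argues that $\gamma(1)$ lies in the other component of $X\setminus\alpha$.
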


\begin{proof}
The proof is immediate. Apply Theorem~\ref{Thm: poly dom neigh basis} with $p:=\gamma(0)$ and $R:=\frac{1}{2}\dd_X(p,\gamma(1))$. We obtain a simple closed loop $\alpha$ and an interior polygonal domain $Q$ as in the statement. Since $\dd_X(p,\gamma(1))=2R$, we have $\gamma(1)\notin \overline{Q}$. Therefore, $\gamma(1)$ lies in the other connected component of $X\setminus \alpha$. It follows that $\gamma$ must intersect $\alpha$. Since $\alpha$ is contained in the regular set, Theorem~\ref{Thm: geodesic interior tangent} implies that $\gamma$ is a regular geodesic.
\end{proof}

The second corollary shows that the extremal set does not disconnect metric balls.

\begin{Cor}\label{Lem: ext set no disconnect}
Let $q\in X$ and let $r>0$. If $S\subset \Ext(X)$, then $B_r(q)\setminus S$ is path-connected.
\end{Cor}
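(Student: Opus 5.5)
Given two points $x,y\in B_r(q)\setminus S$, we will join them by a curve in $B_r(q)\setminus \Ext(X)$, modulo endpoint adjustments. This suffices because $S\subset\Ext(X)$.

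First we reduce to interior endpoints. If $x\in\Ext(X)\setminus S$, we cannot move it, so we connect $x$ to a nearby interior point by a curve that avoids $\Ext(X)$ except at $x$ itself. By Proposition~\ref{Pro: every pt have extend reg geod}, there is $x'\in B_r(q)$ arbitrarily close to $x$ such that the geodesic $\rho$ from $x$ to $x'$ is unique and satisfies $\rho((0,1])\subset\mathcal R_2$. Every point of $\rho((0,1))$ lies in the interior of a regular geodesic, so Corollary~\ref{Cor: int geod no ext} puts it in $\Int(X)$. The same holds for $x'$, using the extendibility past $x'$ together with Theorem~\ref{Thm: geodesic interior tangent}, exactly as in the density part of the proof of Theorem~\ref{Thm: int set open}. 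Choosing $x'$ close enough keeps $\rho\subset B_r(q)$ by the triangle inequality. Hence $\rho$ meets $\Ext(X)$ only possibly at $x$, which is not in $S$. We do the same for $y$, so we may assume $x,y\in\Int(X)\cap B_r(q)$.

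Next we connect interior points inside $B_r(q)$. For $\mm_X$-a.e. choice of perturbed points, geodesics have regular interiors by Proposition~\ref{Pro: geodesic a.e. extend}, so again their interiors avoid $\Ext(X)$. The difficulty is staying inside $B_r(q)$: a geodesic from $x$ to $y$ need not lie in the ball. We therefore route through $q$, or through a point $q'$ near $q$ chosen generic by Proposition~\ref{Pro: geodesic a.e. extend}. Geodesics from $x$ to $q'$ and from $q'$ to $y$ stay in $B_r(q)$ whenever $d(q,q')$ is small relative to $r-\max(d(x,q),d(y,q))$.

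The main obstacle is that we need the geodesic from a fixed interior point $x$ to a generic $q'$ to have interior in the regular set. By Proposition~\ref{Pro: every pt have extend reg geod}, for $\mm_X$-a.e. $q'$ the geodesic from $x$ to $q'$ is unique with $(0,1]$ portion in $\mathcal R_2$. Thus its interior lies in $\Int(X)$ by Corollary~\ref{Cor: int geod no ext}, and its endpoint $x$ is interior by assumption. The same argument applies with $y$ in place of $x$. Since the intersection of two full-measure sets in $B_\delta(q)$ is nonempty, we can choose a common $q'$. We must also check that $q'$ itself is interior. This follows because the geodesic from $x$ is extendible past $q'$, as in the density argument of Theorem~\ref{Thm: int set open}. (Alternatively, Corollary~\ref{Cor: int geod reg} applied at the interior point $x$ directly gives that every geodesic from $x$ is regular.)

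Concatenating the pieces gives a curve in $B_r(q)\setminus\Ext(X)\subset B_r(q)\setminus S$ from $x$ to $y$. The only points of $\Ext(X)$ it might touch are the original endpoints, and those lie outside $S$.
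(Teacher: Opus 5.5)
Your proof is correct, and it follows a genuinely different route from the paper's.

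\textbf{The paper's route.} It picks interior points $x',y'$ near $x,y$ using only the density of $\Int(X)$. It gets regularity of every geodesic it uses from Corollary~\ref{Cor: int geod reg}: a geodesic issuing from an interior point is regular. To avoid passing through $q$, which may lie in $S$, it takes the midpoints $x'',y''$ of geodesics from $x',y'$ to $q$. These lie in $B_{r/2}(q)$, so any geodesic joining them stays in $B_r(q)$.

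\textbf{Your route.} You obtain regularity by genericity instead. Proposition~\ref{Pro: every pt have extend reg geod} gives, for $\mm_X$-a.e.\ endpoint, a unique geodesic with $(0,1]$ in $\mathcal R_2$ that is extendible past that endpoint. Theorem~\ref{Thm: geodesic interior tangent} and Corollary~\ref{Cor: int geod no ext} then put everything except the starting point in $\Int(X)$. A common generic hub $q'$ near $q$ replaces the midpoint trick; this uses that $\mm_X$ charges every ball.

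\textbf{What each route buys.}
\begin{itemize}
\item Your dependency is lighter. Your argument uses nothing beyond Section~\ref{Sec: ext-int points}. Corollary~\ref{Cor: int geod reg} rests on Theorem~\ref{Thm: poly dom neigh basis}, and hence on the Jordan theorem. So your proof shows the statement could be placed right after Corollary~\ref{Cor: int geod no ext}.
\item The paper's route needs no measure-theoretic choices: any nearby interior points work.
\end{itemize}

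\textbf{Two small remarks.}
\begin{itemize}
\item Your Step~1 is superfluous. The Step~2 argument never uses that $x,y$ are interior: a geodesic from any $x\notin S$ to the generic $q'$ meets $\Ext(X)$ at most at $x$.
\item The parenthetical appeal to Corollary~\ref{Cor: int geod reg} would give regularity of the geodesic, but not that $q'$ itself is interior. It would also reintroduce the heavy dependency. The extendibility-past-$q'$ argument is the one you actually need.
\end{itemize}
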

\begin{proof}
Let $x, y \in B_r(q) \setminus S$. Since $\Int(X)$ is dense in $X$ by Theorem~\ref{Thm: int set open}, we may choose a point $x' \in \Int(X)$ sufficiently close to $x$, together with a geodesic $\gamma_1:[0,1]\to X$ joining $x$ to $x'$ and entirely contained in $B_r(q)$. It follows from Corollary~\ref{Cor: int geod reg} that $\gamma_1$ is a regular geodesic. By Corollary~\ref{Cor: int geod no ext}, we have $\gamma_1 \subset B_r(q) \setminus S$.
Similarly, we may choose $y' \in \Int(X)$ close to $y$ and a regular geodesic $\gamma_2$ from $y$ to $y'$ such that $\gamma_2 \subset B_r(q) \setminus S$.

Fix geodesics $\rho_1, \rho_2:[0,1] \to X$ from $x'$ and $y'$ to $q$, respectively. Since $x', y' \in \Int(X)$, by arguing as above, we obtain $\rho_1([0,1)), \rho_2([0,1)) \subset B_r(q) \setminus S$. Set $x'' := \rho_1(1/2)$ and $y'' := \rho_2(1/2)$. Then $\dd_X(x'',q), \dd_X(y'', q) < r/2$. Fix a geodesic $\sigma:[0,1] \to X$ from $x''$ to $y''$. Then $\sigma \subset B_r(q)$. Since $x'', y'' \in \Int(X)$, we obtain $\sigma \subset B_r(q) \setminus S$ by arguing as above.

By concatenating the curves obtained above between $x$, $x'$, $x''$, $y''$, $y'$, and $y$, we obtain a continuous curve in $B_{r}(q) \setminus S$ connecting $x$ to $y$. 
\end{proof}

%{\color{red}
%\begin{proof}
%Applying Theorem~\ref{Thm: poly dom neigh basis} for $p := \gamma(0)$ and $R := \dd_X(p, \gamma(1))$, we obtain a simple polygonal loop $\alpha:[0,1] \to X$ such that the following hold:
%\begin{enumerate}
   % \item $\alpha([0,1]) \subset B_R(p) \cap \mathcal R_2$.
   % \item $X \setminus \alpha([0,1])$ has exactly two components, and their topological boundaries coincide with $\alpha([0,1])$. Moreover, one of the components, denoted by $Q$, satisfies \[p \in Q \subset B_R(p).\]
%\end{enumerate}

%Since $\dd_X(\gamma(1),p)=R$, we have $\gamma(1)\notin Q \cup \alpha([0,1])$,
%so $\gamma(1)\in Q'$, where $Q'$ denotes the other component of
%$X\setminus \alpha([0,1])$.

%Since $\gamma$ is continuous curve connecting a point in $Q$ and a point in $Q'$, it must intersect %$\alpha([0,1])$. Let $\gamma(t)$, where $t \in (0,1)$, be such an intersection point. 

%By construction, $\alpha([0,1]) \subset \mathcal R_2$, and so $\gamma(t) \in \mathcal R_2$. It follows by Theorem~\ref{Thm: geodesic interior tangent} that $\gamma$ is a regular geodesic.
%\end{proof}
%}

\subsection{Proof of Theorem~\ref{Thm: poly dom neigh basis}}
\label{sec:proofplydom1}
Let $p \in \Int(X)$. By Proposition~\ref{Pro: every pt have extend reg geod}, there exists a regular geodesic $\gamma:[0,1] \to X$ with $\gamma(0) = p$. Let $\bar R := \dd_X(p, \gamma(1))$. Since the interior set is open by Theorem~\ref{Thm: int set open}, there exists $\bar R' > 0$ such that $B_{10\bar R'}(p)$ is contained in the interior set. 

Let $0 < R \leq \min(\bar R, \bar R')$. It suffices to prove the theorem for all such $R$. Let $r=r(p,R) \leq R$ be as in Corollary~\ref{Cor: Jordan theorem}. Our goal is to construct a polygonal loop $\alpha \subset B_r(p) \cap \mathcal R_2$ enclosing an interior polygonal domain containing $p$. 

Fix $t_0 \in [0,1]$ such that $\dd_X(\gamma(t_0), p)=r/4$. By Theorem~\ref{Thm: good components existence 2}, there exists $\bar s > 0$ such that for every $0 < s \leq \bar s$, the set $B_s(\gamma(t_0)) \setminus \gamma$ has two good components. Fix $s \leq \bar s$ so that in addition $B_s(\gamma(t_0)) \subset B_{r/2}(p)$, and denote the two good components by $B_\pm$.

Since $\gamma(t_0)$ lies in the topological boundaries of $B_\pm$, each of $B_\pm$ must intersect $B_{s/2}(\gamma(t_0))$. By our choice of $s$, the set $B_{s/2}(\gamma(t_0)) \setminus \gamma$ also has exactly two components. It follows from Lemma~\ref{Lem: big set small set comp} that these components are precisely $B_\pm \cap B_{s/2}(\gamma(t_0))$.

Fix $x \in B_+ \cap B_{s/2}(\gamma(t_0)) \cap \mathcal R_2$. By Proposition~\ref{Pro: every pt have extend reg geod}, we can choose $y \in B_- \cap B_{s/2}(\gamma(t_0)) \cap \mathcal R_2$ such that the geodesic $\rho:[0,1] \to X$ from $x$ to $y$ is unique and strongly regular, i.e., $\rho \subset \mathcal R_2$. By the triangle inequality, $\rho \subset B_s(\gamma(t_0))$. Since $\rho$ connects $B_\pm$, it must intersect $\gamma$.

We claim that $\rho$ does not intersect $\gamma$ more than once. Indeed, the endpoints of $\rho$ are contained in $B_s(\gamma(t_0)) \setminus \gamma$ and therefore do not lie on $\gamma$. If $\rho$ intersects $\gamma$ more than once, then it would follow from the non-branching property (Theorem~\ref{Thm: non-branching}) that $\gamma$ is a subsegment of $\rho$. Since $\rho \subset B_{s}(\gamma(t_0))$ and $\gamma(1) \notin B_{s}(\gamma(t_0))$, this would yield a contradiction. 

Therefore, $\rho$ intersects $\gamma$ exactly once. Let $s_0 \in [0,1]$ be such that $\rho(s_0) \in \gamma$. Then $\rho([0,s_0)) \subset B_+$ and $\rho((s_0, 1]) \subset B_-$.

Since $p \in \Int(X)$, the set $B_{r/2}(p) \setminus \gamma$ is path-connected. In particular, there exists a curve $\sigma:[0,1] \to B_{r/2}(p) \setminus \gamma$ from $x$ to $y$. Since $\sigma([0,1])$ and $\gamma([0,1])$ are compact and disjoint, we may fix $\veps > 0$ such that $\dd_X(\sigma([0,1]), \gamma([0,1])) > \veps$. 

Since $\sigma$ is uniformly continuous, for $m$
sufficiently large, we have
\[\dd_X(\sigma((i-1)/m), \sigma(i/m)) < \veps/10\quad \text{for every $i = 1,\dots,m$.}\]
For $i = 0, \dots, m$, set $x_i := \sigma(i/m)$. By Proposition~\ref{Pro: every pt have extend reg geod}, we may (inductively) choose approximations $\tilde x_i$ of $x_i$ such that the following properties hold:
\begin{enumerate}
    \item $\tilde x_0 = x_0 = x$ and $\tilde x_m = x_m = y$;
    \item $\dd_X(\tilde x_i, x_i) < \veps/10$, for all $i = 0, \dots, m$;
    \item for all $i = 1, \dots, m$, the geodesic from $\tilde x_{i-1}$ to $\tilde x_i$ is unique, strongly regular, and contained in $B_r(p) \setminus \gamma$.
\end{enumerate}

%Our construction will be inductive. Set \[\tilde x_0 := x_0 = x \in B_{r/2}(p) \cap \mathcal R_2 \quad \text{and} \quad \tilde x_m := x_m = y \in B_{r/2}(p) \cap \mathcal R_2.\]
%For each $i = 1, \dots, m-2$, assume that $\tilde x_{i-1} \in B_{\veps/10}(x_{i-1}) \cap \mathcal R_2$ has been chosen. By Proposition~\ref{Pro: every pt have extend reg geod}, we can choose $\tilde x_i \in B_{\veps/10}(x_i) \cap \mathcal R_2$ such that the geodesic from $\tilde x_{i-1}$ to $\tilde x_i$ is unique and strongly regular. Finally, once $\tilde x_0, \dots, \tilde x_{m-2}, \tilde x_m$ have been chosen, by applying Proposition~\ref{Pro: every pt have extend reg geod} simultaneously to $\tilde x_{m-2}$ and $\tilde x_m$, we can choose $\tilde x_{m-1} \in B_{\veps/10}(x_{m-1}) \cap \mathcal R_2$ so that the geodesics from $\tilde x_{m-1}$ to both $\tilde x_{m-2}$ and $\tilde x_{m}$ are unique and strongly regular. This completes the inductive construction. 

For each $i = 1, \dots m$, we have $\dd_X(\tilde x_{i-1}, x_i) < \veps/5$ by the triangle inequality. Therefore, the geodesic from $\tilde x_{i-1}$ to $\tilde x_i$ lies in $B_{3\veps/10}(x_i) \subset B_{3\veps/10}(\sigma([0,1]))$, and thus does not intersect $\gamma$. Moreover, this geodesic lies in $B_r(p)$, since $x_i \in B_{r/2}(p)$ and $\veps$ is arbitrarily small. 

Let $\tilde \sigma:[0,1] \to X$ be such that, for each $i = 1, \dots, m$, $\sigma \lvert_{[(i-1)/m, i/m]}$ is equal to the geodesic from $\tilde x_{i-1}$ to $\tilde x_{i}$. Then $\tilde \sigma$ is a (possibly non-simple) polygonal curve contained in $B_r(p)$. 

Since $\tilde \sigma(0) = \rho(0)$, $\tilde \sigma(1) = \rho(1)$, and $\tilde \sigma$ does not intersect $\rho(s_0) \in \gamma([0,1])$, there is a largest $t_1 \in [0,1)$ such that $\tilde \sigma(t_1) \in \rho([0,s_0))$. Repeating this argument, there is a smallest $t_2 \in (t_1, 1]$ such that $\tilde \sigma(t_2) \in \rho((s_0,1])$. We replace 
\begin{enumerate}
    \item $x$ by $\tilde \sigma(t_1)$, $y$ by $\tilde \sigma(t_2)$;
    \item $\rho$ by the subsegment of itself from $\tilde \sigma(t_1)$ to $\tilde \sigma(t_2)$, reparameterized to be constant-speed on $[0,1]$;
    \item $\tilde \sigma$ by $\tilde \sigma \lvert_{[t_1,t_2]}$, reparameterized to be constant-speed on $[0,1]$.
\end{enumerate}
With these choices, $\tilde \sigma$ is a polygonal curve from $x$ to $y$ contained in $B_r(p) \setminus \gamma$, and it intersects $\rho$ exactly twice, namely at $x$ and $y$.

Since $\tilde \sigma$ is a polygonal curve, there exists $0 = s_0 < \dots < s_m = 1$, such that $\tilde \sigma \lvert_{[s_{i-1}, s_i]}$ are geodesics. In particular, $\tilde \sigma$ is a concatenation of finitely many simple curves. It is not difficult to construct a simple polygonal curve from $\sigma$, retaining all the properties established previously, by a finite process that excises any overlap. Replace $\sigma$ with this curve. 

%We start by checking if any point in $\tilde \sigma((0,s_1))$ coincides with a point in $\tilde \sigma([s_1, 1))$ (note that $\tilde \sigma(0)$ and $\tilde \sigma(1)$ do not intersect with any other point in $\tilde \sigma$ by our construction). If there is an intersection, then we take the smallest time $t \in (0, s_1)$ such that $\tilde \sigma(t)$ coincides with a point in $\sigma([s_1, 1))$. This point is of the form $\tilde \sigma(t')$ for some $t' \in [s_{i-1}, s_i)$, for some $i > 1$. We replace $\tilde \sigma \lvert_{(t,t')}$ with the constant map $\tilde \sigma(t)$, and check if a point in $\tilde \sigma([t', s_i))$ coincides with a point in $\tilde \sigma ([s_i, 1))$ in the next step. If there is no intersection, then we move on to $\tilde \sigma_{[s_1, s_2)}$ and check if any point on it coincides with a point in $\sigma([s_2, 1])$, and so on. It is not difficult to see that this process terminates after at most $m$ steps, producing a simple polygonal curve. Replacing $\tilde \sigma$ by this curve, we see that $\tilde \sigma$ retains all of the properties established in the previous paragraph.

Let $\alpha:[0,1]\to X$ be the simple polygonal loop obtained by concatenating $\rho$ from $x$ to $y$ with $\sigma$ from $y$ to $x$. Then $\alpha \subset B_r(p)$. By our choice of $r$, this implies that $X \setminus \alpha$ has two components, at least one of which is contained in $B_R(p)$. Since $\dd_X(\gamma(1), p) = R$, it follows that exactly one of the components, denoted by $Q$, is supported in $B_R(p)$. Let $Q'$ be the other component. Then $Q$ is an interior polygonal domain and $\gamma(1) \in Q'$. 

To finish the proof, we only have to show that $p \in Q$.

By construction, $\gamma$ intersects $\alpha$ exactly once, in the interior of the edge $\rho$. Since $\gamma(0)$ and $\gamma(1)$ do not lie on $\alpha$, this intersection also occurs in the interior of $\gamma$. Thus there exist $t,t'\in(0,1)$ such that $\rho(t)=\gamma(t')$.

By Theorem~\ref{Thm: good components existence 2}, for all sufficiently small $s > 0$, the set $B_s(\rho(t)) \setminus \rho$ has two components $A_\pm$. Fix such an $s$ so that, in addition, $\alpha \cap B_s(\rho(t)) = \rho \cap B_s(\rho(t))$. Since the topological boundaries of $Q$ and $Q'$ coincide with $\alpha$, both $Q$ and $Q'$ intersect $B_s(\rho(t))$. By Lemma~\ref{Lem: big set small set comp}, up to relabeling $A_\pm$, we may assume that $A_+= Q \cap B_s(\rho(t))$ and $A_- = Q' \cap B_s(\rho(t))$. 

Since $\gamma(1) \in Q'$ and $\gamma((t', 1])$ is disjoint $\alpha$, we have $\gamma((t',1]) \subset Q'$. In particular, $\gamma((t',1]) \cap B_s(\rho(t)) \subset A_-$. By Lemma~\ref{Lem: geod no glance}, this implies that $\gamma([0,t')) \cap B_s(\rho(t)) \subset A_+$. Since $A_+ \subset Q$ and $\gamma([0,t'))$ is disjoint from $\alpha$, we conclude that $\gamma([0,t')) \subset Q$. Therefore, $p \in Q$. \qed

\section{Parameterization of the extremal set}\label{Sec: ext set top}

Let $(X, \dd_X, \mm_X)$ be an $\RCD(K,N)$ space of essential dimension $2$. In this section, we make the first step toward understanding neighborhoods of extremal points by proving the following parametrization-type property.

\begin{Pro}\label{Pro: ext pt curve}
Let $p \in \Ext(X)$ be an extremal point. Then there exists a simple continuous curve $\mathcal E:[-1,1] \to X$ such that $\mathcal E([-1,1]) \subset \Ext(X)$ and $\mathcal E(0) = p$.
\end{Pro}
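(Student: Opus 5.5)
The plan is to follow the cut-locus heuristic from the introduction. Fix a nearby point and study how the extremal set near $p$ is swept out by maximal extensions of geodesics emanating from it.

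\textbf{Step 1: setup.} Choose $q \in \Int(X)$ close to $p$; this is possible since $\Int(X)$ is dense by Theorem~\ref{Thm: int set open}. Using Proposition~\ref{Pro: every pt have extend reg geod}, arrange that the geodesic $\gamma$ from $q$ to $p$ is unique. By Corollary~\ref{Cor: int geod reg}, $\gamma$ is regular, and by Lemma~\ref{Lem: geod at ext-pt not extend} and Proposition~\ref{Pro: ext-pt strong def}, its maximal extension from $q$ (Definition~\ref{Def: max extension}) terminates exactly at $p$.

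Take a small interior polygonal domain $Q$ around $q$ with boundary loop $\alpha_Q \subset \mathcal R_2$, given by Theorem~\ref{Thm: poly dom neigh basis}. Parametrize $\alpha_Q$ by $\theta \in S^1$. For each $\theta$, let $\rho_\theta$ be a geodesic from $q$ through $\alpha_Q(\theta)$, extended maximally from $q$. The geodesic through $q$ and $\alpha_Q(\theta)$ is regular and, by non-branching (Theorem~\ref{Thm: non-branching}), unique past $\alpha_Q(\theta)$. Let $e(\theta)$ denote its endpoint, or $\infty$ if it is a ray.

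\textbf{Step 2: key claim.} Near $p$, the endpoints $e(\theta)$ that are not "enclosed" are extremal. Concretely, suppose an endpoint $y = e(\theta)$ is not of the enclosed type, i.e. there are no two distinct geodesics $\gamma_1, \gamma_2$ from $q$ to a common point $z$ such that $y$ lies in the bounded component of $X \setminus (\gamma_1 \cup \gamma_2)$. Then $y \in \Ext(X)$.

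To prove this, argue by contradiction. If $y$ is interior, pick a polygonal domain around $y$ (Theorem~\ref{Thm: poly dom neigh basis}). The geodesics from $q$ to nearby interior points are regular and extendible, hence unique. A Jordan-type separation argument with Theorem~\ref{Thm: Jordan theorem} then shows that $\rho_\theta$ can be continued past $y$ unless two geodesics from $q$ meet and enclose $y$. The enclosure uses the loop formed by $\gamma_1 \cup \gamma_2$, which is a corner-type polygonal loop in a small simply connected ball.

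\textbf{Step 3: building the curve.} Restrict to an arc of parameters $\theta$ near the direction $\theta_0$ of $\gamma$, so that the geodesics $\rho_\theta$ end near $p$. I would show that the map $\theta \mapsto e(\theta)$, after collapsing enclosed pieces, is monotone and continuous in the sense of sweeping the extremal set. Continuity comes from uniform convergence of geodesics and uniqueness of limits via non-branching.

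Define $\mathcal E$ via the order induced by $\theta$: a point $x \in \Ext(X) \cap B_\delta(p)$ is reached by a unique maximal geodesic direction $\theta(x)$. Geodesics from $q$ to distinct extremal points cannot cross, by non-branching together with the no-enclosure property. Side comparisons from Lemma~\ref{Lem: local config} should give that $x \mapsto \theta(x)$ is injective, continuous, and monotone. Since $\Ext(X)$ is closed (Theorem~\ref{Thm: int set open}), the image is closed. Its inverse restricted to a closed subarc then gives a simple curve through $p$ once we show the image has no gaps.

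\textbf{Step 4: no gaps (main obstacle).} I expect the gap-filling and the injectivity of $\theta(x)$ to be the hardest part. One must exclude an interval of directions whose geodesics all end at interior or enclosed points while extremal points lie on both sides.

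To handle a gap, I would form the region bounded by the two extremal geodesics at its ends together with a short polygonal arc. Then apply Corollary~\ref{Lem: ext set no disconnect} and Proposition~\ref{Pro: comp exist geod ep ext quant}, which says geodesics ending at extremal points disconnect small balls. The goal is to show the boundary arc between them must contain extremal points: otherwise one of them would fail the disconnection property, contradicting Proposition~\ref{Pro: ext-pt strong def}.

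Finally, take $\mathcal E$ defined on a small arc $[\theta_0 - \eta, \theta_0 + \eta]$ and reparametrize it to $[-1,1]$ with $\mathcal E(0) = p$. This requires $p$ to be an interior point of the arc, i.e. extremal points exist on both sides of $\theta_0$. I would prove this by applying the same disconnection argument to the two components of $B_r(p) \setminus \gamma$.
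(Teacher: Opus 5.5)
Your overall idea is the cut-locus strategy the paper uses: sweep maximal geodesics from an interior point $q$, and show that non-enclosed endpoints are extremal. The way you assemble the curve in Steps~3--4, however, does not work. You mention collapsing enclosed pieces, but you then define $\mathcal E$ by inverting $x\mapsto\theta(x)$ and try to show the extremal directions have no gaps. Both steps fail, because an extremal point can be the common endpoint of two maximal geodesics from $q$ that enclose a whole interval of directions.

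For example, let $X$ be the closed ball of radius $R$ around the vertex $o$ of a flat cone of angle less than $2\pi$. This is a noncollapsed $\RCD(0,2)$ space whose extremal set is the boundary circle. Let $\dd_X(q,o)=r<R$. The boundary point $x$ opposite to $q$ is joined to $q$ by two minimizing geodesics passing on either side of $o$. Every direction between them ends at an interior cut point on the segment from $o$ to $x$ (excluding $x$). Taking $p$ a boundary point near $x$, this sits inside your sweep. So $\theta(x)$ is not single-valued, and the configuration you want to exclude in Step~4 genuinely occurs.

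What has to be shown instead is that enclosed intervals are nested and that both ends of a maximal one reach the same extremal point. Then $\theta\mapsto e(m(\theta))$, with $m(\theta)$ the smallest parameter enclosing $\theta$, is continuous, takes values in $\Ext(X)$, and has closed intervals as level sets. An arc is then extracted via \cite[Proposition 8.22]{N92}. Besides your key claim, this needs three ingredients:
\begin{itemize}
\item An extremal endpoint is never strictly enclosed (Lemma~\ref{Lem: ext pt sim}). The regular geodesic into it disconnects a small ball by Proposition~\ref{Pro: comp exist geod ep ext quant}, while the tails of the two enclosing geodesics join points on opposite sides of it without meeting it.
\item Enclosures with an interior endpoint cannot interleave (Lemma~\ref{Lem: int pt sim}), via Proposition~\ref{Pro: comp exist corner int quant 2} for the corner at that endpoint.
\item Continuity at jumps (Lemma~\ref{Lem: F lim ext pt}). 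This uses that $\Ext(X)$ is closed and that maximal geodesics from the interior point $q$ are regular, so an extremal limit must be the endpoint of the limiting extension.
\end{itemize}

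Your justification of Step~2 is also off. Geodesics from $q$ to interior points need not be extendible: the points on the segment from $o$ to $x$ above are interior cut points of $q$. So the contradiction cannot come from continuing $\rho_\theta$ past $y$. The paper's argument (Lemma~\ref{Lem: ext pt sim conv}) uses the definition of interior point: $B_s(y)\setminus\tilde\sigma$ is path-connected. One can therefore join points of the transversal on both sides of the crossing by a curve avoiding $\tilde\sigma$. A connectedness argument on the crossing parameter of geodesics from this curve to $q$ then produces a point joined to $q$ by two geodesics crossing on opposite sides, i.e.\ $y$ is enclosed.

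Finally, $\alpha_Q$ is a poor transversal. A geodesic from $q$ can cross it several times, so $\theta$ does not order the geodesics. Your global notion of enclosure also needs a Jordan theorem for loops through a possibly extremal vertex, which Theorem~\ref{Thm: Jordan theorem} does not give. The paper instead takes as transversal a regular geodesic $\rho$ issuing from $p$ itself, on one side of $\gamma$, which every relevant geodesic from $q$ meets at most once (Lemma~\ref{Lem: extend no intersect}). It defines enclosure combinatorially along $\rho$, and runs the construction for $\rho_\pm$ in the two components of $B_r(p)\setminus\gamma$ to obtain the two halves of $\mathcal E$.
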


For the remainder of this section, we consider the following framework. Let $\gamma:[0,10] \to X$ be a unit-speed regular geodesic with $\gamma(0)=p$, and assume that the following hold:
\begin{enumerate}
    \item $B_{10}(p) \setminus \gamma$ has exactly two components $A_\pm$.
    \item $p$ is an extremal point.
    \item The topological boundaries of $A_\pm$ in $B_{10}(p)$ coincide with $\gamma \cap B_{10}(p) = \gamma([0,10))$.
    \item For every $0 < r \leq 10$, $B_r(p) \setminus \gamma$ has exactly two components, and they are precisely $A_\pm \cap B_r(p)$. Moreover, the topological boundaries of both components in $B_r(p)$ coincide with $\gamma \cap B_r(p) = \gamma([0,r))$.
\end{enumerate}
By Definition~\ref{Def: ext-int points}, Lemma~\ref{Lem: ext-pt comp boundary}, and Lemma~\ref{Lem: ext point disconnect all small ball}, (2)--(4) are direct consequences of (1).

Suppose further that there exists another unit-speed regular geodesic $\rho:[0,10] \to X$ with $\rho(0) = p$, which intersects $\gamma$ only at $p$, and such that the following hold:
\begin{enumerate}
    \item[(5)] $B_{10}(p) \setminus \rho$ has exactly two components $B_\pm$.
    \item[(6)] The topological boundaries of $B_\pm$ in $B_{10}(p)$ coincide with $\rho \cap B_{10}(p) = \rho([0,10))$.
    \item[(7)] For any $0 < r \leq 10$, $B_r(p) \setminus \rho$ has exactly two components, and they are precisely $B_\pm \cap B_r(p)$. Moreover, the topological boundaries of both components in $B_r(p)$ coincide with $\rho \cap B_r(p) = \rho([0,r))$.
\end{enumerate}
Set $q:=\gamma(1)$. We consider geodesics $\sigma$ from $q$ to points on the support of $\rho$, with the aim of extending them until they stop being minimized. Under suitable assumptions, we will show that the resulting cut-locus point is extremal.

\begin{Lem}\label{Lem: extend no intersect}
Every geodesic $\sigma$ emanating from $q$ intersects $\rho([0,1])$ at most once. 
\end{Lem}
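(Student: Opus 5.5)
The plan is a case analysis on the order in which $\sigma$, starting at $q$, meets two putative intersection points. Each case is reduced to a contradiction using the non-branching property (Theorem~\ref{Thm: non-branching}), the fact that $\gamma$ and $\rho$ meet only at $p$, the inequality $\dd_X(q,p)=1$, and Lemma~\ref{Lem: geod at ext-pt not extend}.

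\emph{Setup.} Suppose $\sigma$, a geodesic starting at $q$, meets $\rho([0,1])$ at two distinct points $\rho(a)$ and $\rho(b)$ with $0\le a<b\le 1$. The sub-arcs of $\sigma$ and of $\rho$ between these two points are both geodesics joining the same endpoints. Non-branching (in the form used repeatedly in the paper: two geodesics meeting twice share the segment between the meeting points) gives that $\sigma$ contains $\rho([a,b])$ as a subsegment. Now trace $\sigma$ backward from whichever of $\rho(a),\rho(b)$ it reaches first, toward $q$. Non-branching forces this backward continuation to coincide with the continuation of $\rho$ past the near point, in the direction away from the far point. This coincidence lasts until either $q$ is reached or $\rho$ ends. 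The first alternative is impossible: it would put $q=\gamma(1)$ on $\rho$, but $\rho$ meets $\gamma$ only at $p\neq q$.

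\emph{Case 1: $\sigma$ meets $\rho(b)$ first.} Then $\sigma$ traverses $\rho$ backward, so its initial part contains all of $\rho([b,10])$, followed by $\rho([a,b])$. Hence
\[
\dd_X(q,\rho(a))\ \ge\ 10-a\ \ge\ 9 .
\]
On the other hand, the triangle inequality gives
\[
\dd_X(q,\rho(a))\ \le\ \dd_X(q,p)+a\ \le\ 2,
\]
a contradiction. This case also covers $a=0$ with $p$ as the far point.

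\emph{Case 2: $\sigma$ meets $\rho(a)$ first.} Tracing backward from $\rho(a)$ along $\rho$ reaches $p$ before $\rho$ ends. So $\sigma$ passes through $p$ and then continues along $\rho$ (if $a=0$, directly from $p$). The sub-arc $\sigma_{qp}$ of $\sigma$ from $q$ to $p$ is then a geodesic from $q$ to $p$ that is extendible past $p$, namely by $\sigma$ itself.
\begin{itemize}
\item By non-branching, an extendible geodesic is the unique geodesic between its endpoints. Therefore $\sigma_{qp}$ coincides with $\gamma|_{[0,1]}$ reversed.
\item Consequently $\gamma|_{[0,1]}$, a subsegment of $\gamma$ containing $p$, is extendible past $p$.
\item But $p$ is extremal, $\gamma$ is regular, and $B_{10}(p)\setminus\gamma$ is disconnected. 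This contradicts Lemma~\ref{Lem: geod at ext-pt not extend}.
\end{itemize}

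\emph{Main obstacle.} The delicate step is justifying the backward-continuation claim rigorously from the non-branching property. Concretely, one applies Theorem~\ref{Thm: non-branching} to the two geodesics consisting of $\sigma$ restricted from $q$ to the far point and of $\rho$ restricted from $\rho(10)$ (respectively $p$) to the far point. These agree on a nondegenerate interval, so they must agree up to the first endpoint reached. Once this is set up carefully with matching parametrizations, both cases are short.
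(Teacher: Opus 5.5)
Your proof is correct and is essentially the paper's argument: non-branching forces $\sigma$ to contain either $\rho|_{[0,t]}$, which would make $\gamma|_{[0,1]}$ extendible past $p$ and contradict Lemma~\ref{Lem: geod at ext-pt not extend}, or $\rho|_{[t,10]}$, which contradicts the length comparison ($\ge 9$ versus $\le 1+t\le 2$). One small caution: the parenthetical ``two geodesics meeting twice share the segment between the meeting points'' is false as stated, since geodesics with common endpoints can differ. Your final paragraph nonetheless supplies the correct justification, because the nearer intersection point is an interior point of both geodesics being compared.
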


\begin{proof}
Suppose for the sake of contradiction that $\sigma$ intersects $\rho([0,1])$ at more than one point. Let $t \in [0,1]$ be such that $\rho(t)$ is a point of intersection. Since $q \notin \rho$, it follows from the non-branching property (Theorem~\ref{Thm: non-branching}) that one of $\rho \lvert_{[0,t]}$ or $\rho \lvert_{[t,10]}$ is a subsegment of $\sigma$. 

If $\rho \lvert_{[0,t]}$ is a subsegment of $\sigma$, then $\sigma$ must coincide with the concatenation of the reverse of $\gamma \lvert_{[0,1]}$ and $\rho \lvert_{[0,t]}$. On the other hand, Lemma~\ref{Lem: geod at ext-pt not extend} and property~(1) above imply that $\gamma \lvert_{[0,1]}$ is not extendible past $\gamma(0) = p$, yielding a contradiction. 

If $\rho \lvert_{[t,10]}$ is a subsegment of $\sigma$, then $\sigma$ must have length at least $10-t \geq 9$. On the other hand, the concatenation of the reverse of $\gamma \lvert_{[0,1]}$ with $\rho \lvert_{[0,t]}$ is a continuous curve from $q$ to $\rho(t)$ with length $1+t \leq 2$, which contradicts the assumption that $\sigma$ is a geodesic. 
\end{proof}

Since $\gamma((0,10))$ does not intersect $\rho$, it must be contained entirely in one of the components $B_\pm$. Similarly, $\rho((0,10))$ must be contained entirely in one of the components $A_\pm$. Therefore, up to relabeling $A_\pm$ and $B_\pm$, we may assume that
\begin{equation}\label{Eq: Apm Bpm assumption}
\gamma((0,10)) \subset B_- \quad \text{and} \quad \rho((0,10)) \subset A_+.
\end{equation}

\begin{Lem}\label{Lem: exp map 1}
For every $0 < \eps \le 10$, there exists $\delta > 0$ such that the following holds: For every $x \in B_+ \cap B_\delta(p)$ and every geodesic $\sigma$ from $q$ to $x$, the curve $\sigma$ intersects $\rho$ exactly once. Moreover, this intersection point belongs to $\rho([0,\eps))$, and the subsegment of $\sigma$ joining it to $x$ is contained in $B_\eps(p)$.
\end{Lem}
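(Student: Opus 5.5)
We argue in three steps: a compactness argument locates the crossing point near $p$, a separation argument in $B_{10}(p)$ shows that a crossing must exist, and Lemma~\ref{Lem: extend no intersect} gives uniqueness.

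\emph{Step 1: the crossing point is close to $p$.} First observe that $q=\gamma(1)\in B_-$ by \eqref{Eq: Apm Bpm assumption}. Let $\sigma$ be a geodesic from $q$ to $x$ with $x\in B_\delta(p)$. Its length is $\dd_X(q,x)<1+\delta$. By the triangle inequality every point of $\sigma$ lies in $B_{2+\delta}(p)\subset B_{10}(p)$. Now suppose that along some sequence $x_i\to p$, $x_i\in B_+$, the geodesics $\sigma_i$ from $q$ to $x_i$ meet $\rho$ at a point $\rho(t_i)$ with $t_i\ge\eps$. Pass to a subsequence so that $\sigma_i\to\sigma_\infty$ uniformly, where $\sigma_\infty$ is a geodesic from $q$ to $p$, and $t_i\to t_\infty\ge\eps$. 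Then $\sigma_\infty$ passes through $\rho(t_\infty)\neq p$ and ends at $p=\rho(0)$, so it meets $\rho$ at two points. By non-branching, $\rho|_{[0,t_\infty]}$ is a subsegment of $\sigma_\infty$. Since $\dd_X(q,p)=1$, this forces $t_\infty\le 1$. We then have two geodesics from $q$ to $p$: the reverse of $\gamma|_{[0,1]}$ and $\sigma_\infty$. The latter contains $\rho|_{[0,t_\infty]}$ near $p$, while $\rho$ and $\gamma$ meet only at $p$. Non-branching, combined with the fact that $\gamma|_{[0,1]}$ is not extendible past $p$ (Lemma~\ref{Lem: geod at ext-pt not extend}), should rule this out. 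More simply, the final segment of $\sigma_\infty$ arriving at $p$ along $\rho$ can be compared with $\gamma$: concatenating gives a curve from $q$ to $\rho(t_\infty)$ of length $1+t_\infty$, and the lengths must match. I expect some care is needed here, and I would phrase the whole argument as a contradiction with uniform convergence. The same compactness argument gives the last claim: if the subsegment of $\sigma_i$ from the crossing point to $x_i$ left $B_\eps(p)$, then in the limit $\sigma_\infty$ would contain a nondegenerate loop-like portion after touching $\rho$ near $p$. This is impossible, because that tail lies on a geodesic ending at $p$ of length $\to 0$. In fact, if the crossing is at $\rho(t)$ with $t<\eps/2$, the tail has length $\dd_X(\rho(t),x)\le t+\delta<\eps$, so choosing $\eps/2$ in place of $\eps$ settles it directly.

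\emph{Step 2: existence of a crossing.} The geodesic $\sigma$ lies in $B_{10}(p)$ and joins $q\in B_-$ to $x\in B_+$. Since $B_\pm$ are the two components of $B_{10}(p)\setminus\rho$, the curve $\sigma$ must intersect $\rho$.

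\emph{Step 3: uniqueness.} From Step 1, every intersection point lies in $\rho([0,\eps))\subset\rho([0,1])$. Lemma~\ref{Lem: extend no intersect} says $\sigma$ meets $\rho([0,1])$ at most once, so the intersection is unique.

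The main obstacle is Step 1: excluding limiting geodesics from $q$ to $p$ that run along $\rho$. I would handle it by the length comparison above. If $\sigma$ crosses at $\rho(t)$, then $1+\delta>\dd_X(q,x)\ge \dd_X(q,\rho(t))$. In the limit, $\sigma_\infty$ passes through $\rho(t_\infty)$ and then reaches $p$ along $\rho$. Combined with the extremality of $p$ via Lemma~\ref{Lem: geod at ext-pt not extend} and non-branching, this should force $t_\infty=0$, contradicting $t_\infty\ge\eps$.
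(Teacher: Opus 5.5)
Your overall architecture matches the paper. You use compactness to localize the crossing near $p$, the separation of $B_{10}(p)$ by $\rho$ to force a crossing, Lemma~\ref{Lem: extend no intersect} for uniqueness, and the $\veps/2$ triangle-inequality estimate for the tail. Steps~2 and~3 are fine, and so is the tail estimate.

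The gap is in Step~1, at the point you yourself flag. You never actually exclude a limit geodesic $\sigma_\infty$ from $q$ to $p$ that passes through $\rho(t_\infty)$ with $t_\infty\ge\veps$, and neither tool you propose does it.
\begin{itemize}
\item Non-extendibility of $\gamma|_{[0,1]}$ past $p$ (Lemma~\ref{Lem: geod at ext-pt not extend}) constrains geodesics that run \emph{through} $p$, as in the first case of Lemma~\ref{Lem: extend no intersect}. Here $\sigma_\infty$ only \emph{ends} at $p$, so that lemma says nothing about it.
\item Your length comparison gives no inconsistency. The curve $q\to p\to\rho(t_\infty)$ has length $1+t_\infty$, while $\dd_X(q,\rho(t_\infty))=1-t_\infty$. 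This is unlike the second case of Lemma~\ref{Lem: extend no intersect}, where a geodesic of length at least $9$ was compared with a curve of length at most $2$.
\item Non-branching by itself does not forbid two distinct geodesics with the common endpoints $q$ and $p$.
\end{itemize}

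The missing idea is one you already use implicitly when you assert $\rho|_{[0,t_\infty]}\subset\sigma_\infty$: a geodesic that extends past one of its endpoints is the unique geodesic between its endpoints. Apply this to $\gamma$. Since $\gamma|_{[0,1]}$ is a proper subsegment of $\gamma:[0,10]\to X$, it is the unique geodesic from $p$ to $q$.

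Concretely, take the reverse of $\gamma|_{[1,10]}$ followed by $\sigma_\infty$. This curve has length $10=\dd_X(\gamma(10),p)$, so it is a geodesic. It agrees with the reverse of $\gamma$ on a nondegenerate interval, so Theorem~\ref{Thm: non-branching} forces $\sigma_\infty$ to be the reverse of $\gamma|_{[0,1]}$.

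Hence every subsequential uniform limit of the $\sigma_i$ is the reverse of $\gamma|_{[0,1]}$. That curve meets $\rho$ only at $p$, which contradicts $\rho(t_\infty)\in\sigma_\infty$ with $t_\infty\ge\veps>0$; equivalently, the limit would have to meet the closed set $\rho([\veps,10])$. This is exactly the paper's argument. With it, your detour through $\rho|_{[0,t_\infty]}\subset\sigma_\infty$ and $t_\infty\le 1$ becomes unnecessary.
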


\begin{proof}
It suffices to prove the lemma for $0 < \veps \leq 1$. Suppose for the sake of contradiction that there does not exist $\delta > 0$ such that for every $x \in B_+ \cap B_\delta(p)$, every geodesic from $q$ to $x$ intersects $\rho([0,\veps))$. Then there exist a sequence $\delta_i \downarrow 0$ and for every $i \in \N$, a points $x_i \in B_+ \cap B_{\delta_i}(p)$ such that a geodesic $\sigma_i:[0,1] \to X$ from $q$ to $x_i$ does not intersect $\rho([0, \veps))$. 

Since $\dd_X(p,q) = 1$ and $\dd_X(p,x_i) < \delta_i$, the triangle inequality implies that $\sigma_i \subset B_{1+\delta_i}(p)$. Since $\delta_i \downarrow 0$, by passing to a subsequence we may assume that $\sigma_i \subset B_{10}(p)$ for all $i$. 

By our assumptions, $q \in \gamma((0,10)) \subset B_-$ and $x_i \in B_+$. Hence $\sigma_i$ is a curve in $B_{10}(p)$ joining the two components of $B_{10}(p) \setminus \rho$. Therefore, $\sigma_i$ must intersect $\rho$. Since $\sigma_i$ does not intersect $\rho([0,\veps))$ by assumption, it must intersect $\rho([\veps, 10])$. 

By the non-branching property, $\gamma \lvert_{[0,1]}$ is the unique geodesic from $p$ to $q$, since it is a proper subsegment of a geodesic. It follows by compactness that $\sigma_i$ converges uniformly to the reverse of $\gamma \lvert_{[0,1]}$. On the other hand, any uniform limit of $\sigma_i$ must intersect $\rho([\veps,10])$. This yields a contradiction, since $\gamma$ and $\rho$ intersect only at $p = \rho(0)$. 

Therefore, for every $0 < \veps \leq 1$, there exists $\delta > 0$ such that for every $x \in B_+ \cap B_\delta(p)$, every geodesic from $q$ to $x$ intersects $\rho([0,\veps))$. By Lemma~\ref{Lem: extend no intersect}, such geodesics intersect $\rho$ exactly once. This proves the first part of the lemma. 

Applying the first part of the lemma for $\veps/2$, it follows that for $\delta > 0$ sufficiently small, every geodesic $\sigma$ from $q$ to any $x \in B_+ \cap B_{\delta}(p)$ intersects $\rho$ at $\rho(t)$ for some $t \in [0, \veps/2)$. By further imposing that $\delta \leq \veps/2$, the triangle inequality then implies that the subsegment of $\sigma$ from $\rho(t)$ to $x$ must be contained in $B_{\veps}(p)$. This proves the second part of the lemma. 
\end{proof}

For every constant-speed geodesic $\sigma$ emanating from $q$, its maximal extension in the sense of Definition~\ref{Def: max extension} is either a geodesic or a ray. In the former case, we denote it by $\tilde \sigma:[0,1]\to X$, parameterized with constant speed and satisfying $\tilde \sigma(0)=q$.

\begin{Lem}\label{Lem: exp map 2}
For every $\eps > 0$, there exists $0 < \delta \le 10$ such that, for every $t \in (0,\delta]$ and every geodesic $\sigma$ from $q$ to $\rho(t)$, the maximal extension of $\sigma$ from $q$ is a geodesic. Moreover, the following hold:
\begin{enumerate}
    \item[(i)] $1-\eps < \dd_X(\tilde \sigma(0), \tilde \sigma(1)) < 1+\eps$;

    \item[(ii)] the subsegment of $\tilde \sigma$ from $\rho(t)$ to $\tilde \sigma(1)$ is contained in $B_{\eps}(p)$.
\end{enumerate}
\end{Lem}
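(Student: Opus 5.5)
We argue by contradiction and compactness, in the same spirit as the proof of Lemma~\ref{Lem: exp map 1}. Fix $\eps>0$; we may assume $\eps\le 1/10$. The first step is to show that, for $t$ small, every geodesic $\sigma$ from $q$ to $\rho(t)$ converges to the reverse of $\gamma|_{[0,1]}$. By the non-branching property, $\gamma|_{[0,1]}$ is the unique geodesic from $q$ to $p$, since it is a proper subsegment of $\gamma$. Hence, for any sequence $t_i\downarrow 0$ and geodesics $\sigma_i$ from $q$ to $\rho(t_i)$, the $\sigma_i$ converge uniformly to the reverse of $\gamma|_{[0,1]}$. Note that $\sigma$ ends at $\rho(t)$ and does not contain $\rho|_{[0,t]}$ as a subsegment: the argument of Lemma~\ref{Lem: extend no intersect} rules this out, because $\gamma|_{[0,1]}$ is not extendible past $p$ by Lemma~\ref{Lem: geod at ext-pt not extend}.

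\textbf{Step 1: the maximal extension is a geodesic, and its length is at most $1+\eps$.} Suppose instead that there are $t_i\downarrow 0$ and geodesics $\sigma_i$ from $q$ to $\rho(t_i)$ whose maximal extensions are rays, or are geodesics $\tilde\sigma_i$ with $\dd_X(q,\tilde\sigma_i(1))\ge 1+\eps$. In either case we get geodesics $\hat\sigma_i$ from $q$ of length exactly $1+\eps$ extending $\sigma_i$. By Arzel\`a--Ascoli these subconverge to a geodesic $\hat\sigma$ of length $1+\eps$ starting at $q$. Since $\sigma_i$ converges to the reverse of $\gamma|_{[0,1]}$, the limit $\hat\sigma$ restricted to its initial length-$1$ piece is the reverse of $\gamma|_{[0,1]}$. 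Thus $\hat\sigma$ extends $\gamma|_{[0,1]}$ past $p$, contradicting Lemma~\ref{Lem: geod at ext-pt not extend} (with property (1)). This gives the upper bound in (i). The lower bound is immediate: $\dd_X(q,\tilde\sigma(1))\ge \dd_X(q,\rho(t))\ge 1-t>1-\eps$ once $\delta<\eps$.

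\textbf{Step 2: property (ii).} The subsegment of $\tilde\sigma$ from $\rho(t)$ to $\tilde\sigma(1)$ has length $\dd_X(q,\tilde\sigma(1))-\dd_X(q,\rho(t))<(1+\eps)-(1-t)=\eps+t$. Every point on it is therefore within distance $t+\eps+t$ of $p$. To get the sharp radius $\eps$, we run Steps 1--2 with $\eps/3$ in place of $\eps$ and take $\delta\le\eps/3$. This yields containment in $B_\eps(p)$.

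The main obstacle is Step 1: ensuring that the limit of the extended geodesics genuinely extends $\gamma|_{[0,1]}$ past $p$. This requires that limits of geodesics are geodesics (clear), and that uniqueness of the geodesic from $q$ to $p$ forces the limit to pass through $p$ at time corresponding to length $1$. The latter follows from the uniform convergence of the initial pieces, since $\dd_X(q,\rho(t_i))\to 1$.
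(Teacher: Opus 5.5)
Your proposal is correct and follows essentially the same route as the paper. Both arguments run the compactness contradiction that produces a length-$(1+\eps)$ geodesic from $q$ through $p$, extending the reverse of $\gamma|_{[0,1]}$ past $p$ and contradicting Lemma~\ref{Lem: geod at ext-pt not extend}; both then obtain (ii) from a triangle-inequality estimate on the length of the subsegment, where you shrink to $\eps/3$ and the paper uses $\eps/10$. You also make explicit, via uniqueness of the geodesic from $q$ to $p$, why the limit geodesic passes through $p$; the paper leaves this step implicit.
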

\begin{proof}
Fix $0 < \veps \leq 10$. Since $\dd_X(q, \rho(t)) \to \dd_X(q, p) = 1$ as $t \downarrow 0$, for $t$ sufficiently small, the length of any geodesic $\sigma$ from $q$ to $\rho(t)$ is greater than $1 -\veps$. In particular, the length of the maximal extension $\tilde \sigma$ must also be greater than $1-\veps$. 

Next, suppose for the sake of contradiction that $\dd_X(\tilde \sigma(0), \tilde \sigma(1))<1+\eps$ does not hold for $t$ sufficiently small. Then there exists a sequence $t_i\downarrow 0$, and geodesics $\sigma_i:[0,1] \to X$ from $q$ to $\rho(t_i)$ such that, for each $i$, the maximal extension of $\sigma_i$ from $q$ is either a ray or a geodesic of length at least $1+\veps$. For each $i$, by taking a subsegment of this maximal extension, we obtain a geodesic $\tilde \sigma_i:[0,1] \to X$ of length $1+\veps$ such that $\tilde \sigma(0) = q$. Since $\dd_X(q, \rho(t_i)) \to \dd_X(q, p) = 1$, the point $\rho(t_i)$ lies in $\tilde \sigma$ for $i$ sufficiently large.

Passing to a convergent subsequence and taking a limit, we obtain a geodesic $\tilde \sigma:[0,1] \to X$ of length $1 + \veps$ such that $\tilde \sigma(0) = q$ and $p \in \tilde \sigma((0,1))$. In particular, $\tilde \sigma$ extends the reverse of $\gamma \lvert_{[0,1]}$ past the point $p$. On the other hand, Lemma~\ref{Lem: geod at ext-pt not extend} together with property~(1) imply that $\gamma \lvert_{[0,1]}$ is not extendible past $p$, yielding a contradiction. 

Finally, it follows from the previous steps that, 
\[
\dd_X(q, \rho(t)) > 1-\veps/10, \quad \dd_X(q, \tilde \sigma(1)) < 1+\veps/10, \quad \dd_X(\rho(t), p) < \veps/10.
\]
for $t$ sufficiently small. For any such $t$,
\begin{align*}
\dd_X(p, \tilde \sigma(1)) &\leq \dd_X(p, \rho(t)) + \dd_X(\rho(t), \tilde \sigma(1))\\
&= \dd_X(p, \rho(t)) + \dd_X(\tilde \sigma(1),q) - 
\dd_X(\rho(t), q)\\
&< \veps/10 + (1+\veps/10) - (1-\veps/10) < \veps/2.
\end{align*}
Since $\tilde \sigma(1)$ and $\rho(t)$ are both within $\veps/2$ of $p$, it follows from the triangle inequality that the subsegment of $\tilde \sigma$ from $\rho(t)$ to $\tilde \sigma(1)$ is contained in $B_{\veps}(p)$. 
\end{proof}

\subsection{Cut-locus map}
By Theorem~\ref{Thm: semi-locally simply connected}, we may fix $0 < r_0 \le 1/100$ such that the inclusion $B_{10r_0}(p)\hookrightarrow B_{1/10}(p)$ induces the trivial map on $\pi_1$. Let $r_1<r_0$ be smaller than the $\delta$ given by Lemma~\ref{Lem: exp map 1} for $\eps=r_0$ and $r_2<r_1/10$ smaller than the $\delta$ given by Lemma~\ref{Lem: exp map 2} for $\eps=r_1$.

For each $t \in [0,r_2]$, choose a geodesic $\sigma_t:[0,1] \to X$ from $q$ to $\rho(t)$. By the choice of $r_2$ and Lemma~\ref{Lem: exp map 2}, the maximal extension of $\sigma_t$ from $q$ is a geodesic, which we denote by $\tilde \sigma_t:[0,1] \to X$. Here $\tilde \sigma_t$ is parametrized with constant speed on $[0,1]$ and satisfies $\tilde \sigma_t(0)=q$.

\begin{Def}\label{Def: cut locus}
The \emph{cut-locus map} $F:[0,r_2]\to X$ is defined by $F(t):=\tilde \sigma_t(1)$.
\end{Def}

Notice that $F(t)$ is well defined, namely it does not depend on the choice of the geodesic $\sigma_t$. Indeed, if $\sigma_t=\tilde \sigma_t$, then necessarily $F(t)=\rho(t)$. Otherwise, $\sigma_t$ is a proper subsegment of $\tilde \sigma_t$, and in this case $\sigma_t$ is the unique geodesic from $q$ to $\rho(t)$ by the non-branching property.

\begin{Lem}\label{Lem: exp image inclusion}
It holds that
\[
F([0,r_2]) \subset \rho([0,r_2]) \sqcup \big(B_+ \cap B_{r_1}(p)\big).
\]
Moreover, $F(0) = p$, and if $F(t) \in \rho([0, r_2])$, then $F(t) = \rho(t)$.
\end{Lem}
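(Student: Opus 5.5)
The proof has three parts: locating $F(0)$, then the case $F(t)\in\rho$, then the case $F(t)\notin\rho$.

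\textbf{The value $F(0)$.} For $t=0$, $\sigma_0$ is a geodesic from $q$ to $\rho(0)=p$. By non-branching, $\gamma|_{[0,1]}$ is the unique geodesic from $p$ to $q$, since it is a proper subsegment of $\gamma$. Hence $\sigma_0$ is the reverse of $\gamma|_{[0,1]}$. By Lemma~\ref{Lem: geod at ext-pt not extend} together with property~(1), this geodesic cannot be extended past $p$. Therefore $\tilde\sigma_0=\sigma_0$ and $F(0)=p$.

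\textbf{The case $F(t)\in\rho$.} Fix $t\in(0,r_2]$. The curve $\tilde\sigma_t$ is a geodesic from $q$ containing $\rho(t)$. By Lemma~\ref{Lem: extend no intersect}, $\tilde\sigma_t$ meets $\rho([0,1])$ at most once, and we know it meets it at $\rho(t)$. Suppose $F(t)=\tilde\sigma_t(1)\in\rho([0,r_2])$. Then $F(t)$ is an intersection point of $\tilde\sigma_t$ with $\rho([0,1])$, so uniqueness forces $F(t)=\rho(t)$. This gives the final assertion.

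\textbf{The case $F(t)\notin\rho$: first confinement.} Now suppose $F(t)\notin\rho([0,r_2])$. Since $\tilde\sigma_t$ meets $\rho([0,1])$ only at $\rho(t)$ and $F(t)$ is the endpoint, $F(t)\ne\rho(t)$. By Lemma~\ref{Lem: exp map 2}(ii) with $\eps=r_1$, the subsegment $\tau$ of $\tilde\sigma_t$ from $\rho(t)$ to $F(t)$ lies in $B_{r_1}(p)\subset B_{10}(p)$. The half-open piece $\tau\setminus\{\rho(t)\}$ does not meet $\rho([0,1])$. It also avoids $\rho([1,10])$, because the latter lies outside $B_1(p)$ while $r_1<1$. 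So $\tau\setminus\{\rho(t)\}$ is a connected subset of $B_{10}(p)\setminus\rho$, and therefore lies entirely in one of the components $B_\pm$.

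\textbf{Ruling out $B_-$ (main obstacle).} The remaining step, which I expect to be the hardest, is to show that this component is $B_+$ and not $B_-$. Just before $\rho(t)$, the geodesic $\sigma_t$ arrives from $q\in B_-$ and stays in $B_{10}(p)\setminus\rho$. Indeed, $\sigma_t$ meets $\rho$ only at $\rho(t)$: it meets $\rho([0,1])$ once, and it has length less than $2$, so it stays inside $B_{10}(p)$ and does not reach $\rho([1,10])$. Hence $\sigma_t$ minus its endpoint lies in $B_-$.

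If the continuation $\tau$ also lay in $B_-$, then $\tilde\sigma_t$ would be a geodesic passing through the interior point $\rho(t)$ of the regular geodesic $\rho$, with both of its sides near $\rho(t)$ in the same component. Pass to a small ball $B_s(\rho(t))$ whose good components with respect to $\rho$ are $B_\pm\cap B_s(\rho(t))$; this identification uses Theorem~\ref{Thm: good components existence 2} and Lemma~\ref{Lem: big set small set comp}. Then $\tilde\sigma_t$ joins two points of the same good component inside $B_{s/10}(\rho(t))$ and meets $\rho$. This contradicts the ``moreover'' clause of Theorem~\ref{Thm: good components existence 2} (equivalently, the Key Lemma~\ref{Lem: geod no glance}).

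Consequently $\tau\setminus\{\rho(t)\}\subset B_+$. In particular $F(t)\in B_+\cap B_{r_1}(p)$, and this union with the previous case is disjoint. This completes the proof of the inclusion.
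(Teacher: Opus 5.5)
Your proof is correct and follows the paper's argument essentially step for step: you get $F(0)=p$ from the non-extendibility of $\gamma|_{[0,1]}$ past $p$. Otherwise, $\tilde\sigma_t$ meets $\rho$ only at the interior point $\rho(t)$ and arrives from $B_-$, so the ``moreover'' clause of Theorem~\ref{Thm: good components existence 2} forces the continuation into $B_+$; this uses the identification of the good components of $B_s(\rho(t))$ with $B_\pm\cap B_s(\rho(t))$ via Lemma~\ref{Lem: big set small set comp}. The differences are cosmetic: you split cases on $F(t)\in\rho$ rather than on $\sigma_t=\tilde\sigma_t$, and you derive the final assertion directly from Lemma~\ref{Lem: extend no intersect}. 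Your claim that $\sigma_t$ avoids $\rho([1,10])$ actually needs non-branching together with the length bound, as in the proof of Lemma~\ref{Lem: extend no intersect}; the paper leaves this point implicit as well.
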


\begin{proof}
Let $t \in [0,r_2]$. To shorten notation we set $\sigma:=\sigma_t$ and $\tilde \sigma:=\tilde \sigma_t$. 

If $\sigma = \tilde \sigma$, then $F(t) = \rho(t)$, and so $F(t) \in \rho([0,r_2])$. This occurs in particular when $t = 0$ as $\sigma$ is the reverse of $\gamma \lvert_{[0,1]}$, which is not extendible past $p$. 

Suppose that $\sigma \neq \tilde \sigma$. By our choice of $r_2$ and Lemma~\ref{Lem: exp map 2}, we know that $\tilde \sigma(1) \in B_{r_1}(p)$, so it suffices to show that $\tilde \sigma(1) \in B_+$. 

Since $\rho$ is regular and $t \in (0, r_2] \subset (0,10)$, it follows from Theorem~\ref{Thm: good components existence 2} that for sufficiently small $s > 0$, the set
$B_s(\rho(t)) \setminus \rho$ has two components $G_\pm$. Fix such an $s > 0$, chosen small enough that also $B_s(\rho(t))\subset B_{10}(p)$.

By property~(6) of $\rho$, the topological boundaries of $B_\pm$ in $B_{10}(p)$ coincide with $\rho([0,10))$. Therefore, both $B_\pm$ intersect $B_{s}(\rho(t))$. By~Lemma~\ref{Lem: big set small set comp}, up to relabeling $G_\pm$, we have
\begin{equation}\label{Eq: exp image inclusion 1}
G_\pm = B_\pm \cap B_{s}(\rho(t)).
\end{equation}

By our choice of $r_2$ and Lemma~\ref{Lem: exp map 2}, the maximal extension $\tilde \sigma$ has length at most $1+r_1 < 2$. Since $\dd_X(p,q) = 1$, the triangle inequality implies $\tilde \sigma \subset B_{10}(p)$. By Lemma~\ref{Lem: extend no intersect}, $\tilde \sigma$ intersects $\rho$ exactly once, namely at $\rho(t)$. Since $\tilde \sigma \neq \sigma$, this intersection occurs in the interior of $\tilde \sigma$. Let $t' \in (0,1)$ be such that $\tilde \sigma(t') = \rho(t)$. Therefore, the sets $\tilde \sigma([0, t'))$ and $\tilde \sigma((t',1])$ are each contained in one of the components $B_\pm$. 

Since $\tilde \sigma(0) = q \in B_-$, we have $\tilde \sigma([0,t')) \subset B_-$. By continuity, there exists $\veps > 0$ such that $\tilde \sigma([t'-\veps, t'+\veps]) \subset B_{s/10}(\rho(t))$.
The inclusion relation \eqref{Eq: exp image inclusion 1} then implies that $\tilde \sigma((t' -\veps, t')) \subset G_- \cap B_{s/10}(\rho(t))$.
By Theorem~\ref{Thm: good components existence 2}, no geodesic joining two points in $G_- \cap B_{s/10}(\rho(t))$ can intersect $\rho$, and so 
$\tilde \sigma(t' + \veps) \in G_+ \cap B_{s/10}(\rho(t))$.
Applying \eqref{Eq: exp image inclusion 1} again, we obtain $\tilde \sigma(t'+\veps) \in B_+$. This implies that $\tilde \sigma((t',1])$, and so in particular $\tilde \sigma(1)$, lies in $B_+$, as required. 

Since $B_+$ does not intersect $\rho([0,r_2])$, if $F(t) \in \rho([0,r_2])$, then $\sigma = \tilde \sigma$ and $F(t) = \rho(t)$. 
\end{proof}

\begin{Def}\label{Def: foc equiv}
Let $t,t' \in [0,1]$. We write $t\sim t'$ if either $t=t'$, or $t\neq t'$ and there exist geodesics $\sigma$ and $\sigma'$ from $q$ to $\rho(t)$ and $\rho(t')$, respectively, such that
\begin{enumerate}
    \item[(i)] the maximal extensions of $\sigma$ and $\sigma'$ from $q$ are geodesics;

    \item[(ii)] these maximal extensions have the same endpoints.
\end{enumerate}
\end{Def}

\begin{Rem}
If $t \sim t'$ and $t \neq t'$, then it is not difficult to check from Lemma~\ref{Lem: extend no intersect} that $\sigma$ and $\sigma'$ (as in Definition~\ref{Def: foc equiv}) must be proper subsegments of their respective maximal extensions from $q$. The non-branching property then implies that $\sigma$ and $\sigma'$ are the unique geodesics from $q$ to $\rho(t)$ and $\rho(t')$, respectively. It follows immediately that $\sim$ is an equivalence relation. 
\end{Rem}

\begin{Lem}\label{Lem: foc time bound}
Let $t \in [0,r_2]$, and let $t' \in [0,1]$ with $t' \neq t$ and $t \sim t'$. Then:
\begin{enumerate}
    \item[(i)] $t \in (0,r_2]$ and $t' \in (0, r_0]$.
    \item[(ii)] $F(t) \in B_+ \cap B_{r_1}(p)$.
\end{enumerate}
\end{Lem}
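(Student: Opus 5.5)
The plan is to exploit the Remark after Definition~\ref{Def: foc equiv}. Since $t\sim t'$ and $t\neq t'$, the geodesics $\sigma$ and $\sigma'$ from $q$ to $\rho(t)$ and $\rho(t')$ are proper subsegments of their maximal extensions $\tilde\sigma$ and $\tilde\sigma'$. Moreover, $\sigma$ and $\sigma'$ are the unique geodesics to their endpoints. In particular $\sigma=\sigma_t$ and $\tilde\sigma=\tilde\sigma_t$, so $F(t)=\tilde\sigma(1)\neq\rho(t)$.

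By Lemma~\ref{Lem: exp image inclusion}, a value $F(t)\in\rho([0,r_2])$ would force $F(t)=\rho(t)$, which is now excluded. Hence $F(t)\in B_+\cap B_{r_1}(p)$, which is (ii). For $t=0$, $\sigma_0$ is the reverse of $\gamma|_{[0,1]}$, which is not extendible past $p$ by Lemma~\ref{Lem: geod at ext-pt not extend}. That contradicts properness of $\sigma\subsetneq\tilde\sigma$, so $t\in(0,r_2]$.

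It remains to show $t'\in(0,r_0]$. First, $t'\neq 0$ by the same non-extendibility argument applied to $\sigma'$. For the upper bound, set $x:=F(t)=\tilde\sigma'(1)$, the common endpoint of the two maximal extensions. By (ii), $x\in B_+\cap B_{r_1}(p)$. Since $r_1$ is smaller than the $\delta$ of Lemma~\ref{Lem: exp map 1} with $\eps=r_0$, every geodesic from $q$ to $x$ meets $\rho$ exactly once, and at a point of $\rho([0,r_0))$. Now $\tilde\sigma'$ is a geodesic from $q$ to $x$ passing through $\rho(t')$. If $t'\le 1$, then $\rho(t')\in\rho([0,1])$, and Lemma~\ref{Lem: extend no intersect} shows this is its unique intersection with $\rho([0,1])$. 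It must therefore be the intersection point given by Lemma~\ref{Lem: exp map 1}, so $t'<r_0$. Since $t'\in[0,1]$ by definition of $\sim$, this case suffices.

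The step I expect to require the most care is identifying the uniqueness and properness of $\sigma,\sigma'$ from the Remark, and checking that $\tilde\sigma'$ genuinely passes through $\rho(t')$ before reaching $x$. Both points follow because $\sigma'$ is an initial subsegment of $\tilde\sigma'$. The intersection could a priori also lie in $\rho((1,10])$, but Lemma~\ref{Lem: exp map 1} rules this out, since it asserts that $\rho$ is met exactly once in total.
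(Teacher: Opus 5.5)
Your proof is correct and follows essentially the same route as the paper. Both arguments show $F(t)\neq\rho(t)$ and deduce (ii) from Lemma~\ref{Lem: exp image inclusion}. You get $F(t)\neq\rho(t)$ from the Remark after Definition~\ref{Def: foc equiv}, whose justification is exactly the paper's appeal to Lemma~\ref{Lem: extend no intersect}. Both then exclude $t,t'=0$ using the non-extendibility of $\gamma|_{[0,1]}$ past $p$, and bound $t'$ via the choice of $r_1$ and Lemma~\ref{Lem: exp map 1}. The differences are cosmetic: the paper rules out $t=0$ via $F(0)=p$ and uniqueness of the geodesic from $p$ to $q$, and your extra use of Lemma~\ref{Lem: extend no intersect} in the last step is redundant, since Lemma~\ref{Lem: exp map 1} already gives exactly one intersection with $\rho$.
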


\begin{proof}
Let $\sigma$ and $\sigma'$ be geodesics from $q$ to $\rho(t)$ and $\rho(t')$, respectively, and let $\tilde \sigma$ and $\tilde \sigma'$ denote their maximal extension from $q$. By Definition~\ref{Def: foc equiv}, both $\tilde \sigma$ and $\tilde \sigma'$ are geodesics, so we may parameterize them with constant speed on $[0,1]$ such that $\tilde \sigma(0) = \tilde \sigma'(0) = q$.

We first show that $t \neq 0$. Suppose that $t = 0$. Then by Lemma~\ref{Lem: exp image inclusion}, we have $F(0) = \tilde \sigma(1) = p$. Since $\gamma \lvert_{[0,1]}$ is a proper subsegment of a geodesic, it is the unique geodesic between $p$ and $q$ (by the non-branching property). Hence $\tilde\sigma'$ must coincide with the reverse of $\gamma|_{[0,1]}$. However, $\gamma$ does not intersect $\rho$ other than at $p$, so $\tilde \sigma'$ cannot pass through $\rho(t')$, yielding a contradiction. A similar argument shows that $t' \neq 0$.

Next, we show that $F(t) \neq  \rho(t)$. Suppose otherwise. Then $\tilde \sigma(1) = \rho(t)$, and so $\tilde \sigma'(1) = \rho(t)$ since $t \sim t'$. Therefore, $\tilde \sigma'$ is a geodesic from $q$ that intersects $\rho$ at two distinct points, namely $\rho(t)$ and $\rho(t')$. Since $t \in [0,r_2] \subset [0,1]$, this contradicts Lemma~\ref{Lem: extend no intersect}. Therefore, $F(t) \neq \rho(t)$. It follows immediately from Lemma~\ref{Lem: exp image inclusion} that $F(t) = \tilde \sigma(1) \in B_+ \cap B_{r_1}(p)$.

Finally, we show that $t' \in (0, r_0]$. Since $\tilde \sigma'(1) = \tilde \sigma(1)$, we have $\tilde \sigma'(1) \in B_+ \cap B_{r_1}(p)$. By our choice of $r_1$ and Lemma~\ref{Lem: exp map 1}, it follows that $\tilde \sigma'$ intersects $\rho$ exactly once, and this intersection lies in $\rho([0, r_0])$. Since we already know that $\tilde \sigma'$ intersects $\rho$ at $\rho(t')$, we conclude that $t' \in [0, r_0]$. We have already shown that $t' \neq 0$. Therefore, $t' \in (0, r_0]$. 
\end{proof}

\begin{Lem}\label{Lem: ext pt sim}
Let $t_0 \in [0,r_2]$. Let $t_1, t_1' \in [0, 1]$ with $t_1 \leq t_0 \leq t_1'$ be such that $t_1 \sim t_1'$. If $F(t_0) \in \Ext(X)$, then $t_0 \sim t_1$. 
\end{Lem}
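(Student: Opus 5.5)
We argue by contradiction: assume $F(t_0)\in\Ext(X)$ but $t_0\not\sim t_1$. The idea is to trap $F(t_0)$ inside a region bounded by two geodesic branches from $q$ that share an endpoint, and then show that such a point must be interior.

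\textbf{Step 1 (setup).} If $t_1=t_1'$, then $t_1=t_0=t_1'$ and there is nothing to prove. So assume $t_1<t_1'$ and, by the contradiction hypothesis, $t_0\neq t_1$. Lemma~\ref{Lem: foc time bound} then gives $t_1,t_1'\in(0,r_0]$ and $z:=F(t_1)=F(t_1')\in B_+\cap B_{r_1}(p)$; in particular, as explained after Definition~\ref{Def: cut locus}, the geodesics $\sigma_{t_1},\sigma_{t_1'}$ are unique. Their maximal extensions $\tilde\sigma_{t_1},\tilde\sigma_{t_1'}$ are distinct geodesics from $q$ to $z$. By non-branching they meet only at $q$ and $z$.

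\textbf{Step 2 (the enclosed region).} Consider the loop $\beta$ formed by the subsegment of $\tilde\sigma_{t_1}$ from $\rho(t_1)$ to $z$, the reverse of the subsegment of $\tilde\sigma_{t_1'}$ from $z$ to $\rho(t_1')$, and $\rho|_{[t_1,t_1']}$. By Lemmas~\ref{Lem: exp map 1} and~\ref{Lem: exp map 2}, $\beta$ lies in $B_{10r_0}(p)$, and by Lemma~\ref{Lem: extend no intersect} it is simple.

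The boundary-closing segment $\rho|_{[t_1,t_1']}$ touches $\Ext(X)$ possibly only at its endpoints, since the interior of a regular geodesic lies in $\Int(X)$ by Corollary~\ref{Cor: int geod no ext}. Moreover, the interiors of $\tilde\sigma$ are interior points. The only vertex possibly in $\Ext(X)$ is $z$. This is the main obstacle: the Jordan theorem (Theorem~\ref{Thm: Jordan theorem}) requires an interior loop. My first attempt is to avoid it: since $\pi_1$ is trivial from $B_{10r_0}(p)$ into $B_{1/10}(p)$, I would use instead the corner and geodesic local disconnection results of Sections~\ref{Sec: corner}--\ref{Sec: ext-int points}, run the Mayer--Vietoris argument of Proposition~\ref{Pro: poly loop neigh} on a neighbourhood of $\beta$ minus a small ball at $z$, and deduce that $B_{1/10}(p)\setminus\beta$ has a bounded component $D\subset B_{10r_0}(p)\cap B_+$. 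Alternatively, I would approximate $z$ by interior points, as in Lemma~\ref{Lem: eps approx corner exist}.

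\textbf{Step 3 (locating $F(t_0)$ and contradiction).} Take $t_0\in(t_1,t_1')$ with $t_0\not\sim t_1$. By monotonicity, $\sigma_{t_0}$ enters $D$ through $\rho(t_0)$ from the side $B_-$. Non-branching prevents $\tilde\sigma_{t_0}$ from crossing the branches $\tilde\sigma_{t_1},\tilde\sigma_{t_1'}$ except at $q$. It also cannot end at $z$, since otherwise $t_0\sim t_1$. Hence $F(t_0)\in D$, and $F(t_0)\notin\beta$. Now $F(t_0)$ lies on the regular geodesic $\tilde\sigma_{t_0}$ (regular because it passes through $\rho(t_0)\in\mathcal R_2$, by Theorem~\ref{Thm: geodesic interior tangent}), as an endpoint inside the open set $D$.

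To conclude, I would show that every point of $D$ is interior: for $x\in D$ and a regular geodesic $\eta$ from $x$, the ball $B_r(x)\setminus\eta$ stays connected. Components of $B_r(x)\setminus\eta$ would propagate along $\eta$ until they hit $\beta$, which consists of interior points away from $z$, and there the two sides join. Making this "propagation" rigorous, likely via Lemma~\ref{Lem: can always connect to point on geod}, is the second delicate point.
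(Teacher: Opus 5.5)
There is a genuine gap, and it sits exactly at the two places you flag as delicate. In Step~2 you need a Jordan-type separation for $\beta$, but $\beta$ may have the extremal vertex $z=F(t_1)$, and there Theorem~\ref{Thm: Jordan theorem} is not available. The local two-sidedness on which Proposition~\ref{Pro: poly loop neigh} rests can genuinely fail at an extremal corner vertex: a small ball minus the corner can have three components (cf.\ Proposition~\ref{Pro: corner at most three comp} and the example preceding it). Neither of your workarounds is carried out.

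More seriously, the claim in Step~3 that every point of $D$ is interior is the actual content of the lemma, and the mechanism you suggest does not prove it. At a point where $\eta$ crosses $\beta$, $\eta$ still has two local sides, so nothing joins there. The two sides of a regular geodesic $\eta$ issuing from $x$ can only be joined by going \emph{around} $x$ inside the ball. A contradiction therefore needs two things: (i) a path around $x$ meeting $\eta$ exactly once, transversally; and (ii) the two-component property of $B_r(x)\setminus\eta$ at a radius $r$ large enough to contain that path. Proposition~\ref{Pro: ext-pt strong def} only gives disconnection at an uncontrolled small radius, so (ii) is not automatic.

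Both ingredients are in fact available, and no enclosed domain or Jordan theorem is needed.

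For (ii), take $x=F(t_0)$ and $\eta=\tilde\sigma_{t_0}$. This geodesic is regular, since it contains $\rho(t_0)\in\mathcal R_2$ in its interior; note $F(t_0)\neq\rho(t_0)$ because $\rho(t_0)\in\Int(X)$. Since $\dd_X(F(t_0),p)<r_1$, you have $B_{5r_0}(F(t_0))\subset B_{10r_0}(p)$ and $B_{1/10}(p)\subset B_{1/5}(F(t_0))$. Hence $B_{5r_0}(F(t_0))\hookrightarrow B_{1/5}(F(t_0))$ is trivial on $\pi_1$, while $\tilde\sigma_{t_0}$ has length $>1-r_1>1/5$. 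Proposition~\ref{Pro: comp exist geod ep ext quant} then gives: if $F(t_0)\in\Ext(X)$, the set $B_{5r_0}(F(t_0))\setminus\tilde\sigma_{t_0}$ has exactly two components.

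For (i), use your own $\beta$.
\begin{itemize}
\item By Lemma~\ref{Lem: extend no intersect}, $\rho$ meets $\tilde\sigma_{t_0}$ only at $\rho(t_0)$.
\item It switches sides there, by the good-component argument of Lemma~\ref{Lem: exp image inclusion} (via Theorem~\ref{Thm: good components existence 2} and Lemma~\ref{Lem: big set small set comp}). So $\rho(t_1)$ and $\rho(t_1')$ lie in different components.
\item Unless $F(t_0)=z$, which gives $t_0\sim t_1$ directly, $\tilde\sigma_{t_0}$ meets the two maximal extensions through $\rho(t_1)$ and $\rho(t_1')$ only at $q$.
\item Hence the detour $\rho(t_1)\to z\to\rho(t_1')$ along them lies in $B_{r_0}(p)\subset B_{5r_0}(F(t_0))$ and misses $\tilde\sigma_{t_0}$.
\end{itemize}
This is the contradiction, and it is how the paper argues. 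Your Step~1 and your non-crossing observations are exactly the right ingredients; they should be used in the complement of $\tilde\sigma_{t_0}$ rather than to build and analyse the domain $D$.
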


\begin{proof}
If $t_0 = 0$, then the statement is immediate, since in this case $t_1 = 0$, and so $t_0 \sim t_1$. 

Assume now that $t_0 \neq 0$ and that $F(t_0) \in \Ext(X)$. Set $\sigma:=\sigma_{t_0}$ and $\tilde \sigma:=\tilde\sigma_{t_0}$. 

Since $t_0 \in (0, r_2]$, the point $\rho(t_0)$ lies in the interior of the regular geodesic $\rho$, and hence is not extremal by Corollary~\ref{Cor: int geod no ext}. Therefore, $F(t_0) \neq \rho(t_0)$, and $\sigma$ contains $\rho(t_0)$ in its interior. Since $\rho(t_0) \in \mathcal R_2$, it follows from Theorem~\ref{Thm: geodesic interior tangent} that $\tilde \sigma$ is a regular geodesic. 

By our choice of $r_2$ and Lemma~\ref{Lem: exp map 2}, we have $\dd_X(F(t_0), p) < r_1$.  Since $r_1 \leq r_0 < 1/100$, the triangle inequality yields
\[
B_{5r_0}(F(t_0)) \subset B_{10r_0}(p) \quad \text{and} \quad B_{1/10}(p) \subset B_{1/5}(F(t_0)).
\]
It follows from our choice of $r_0$ that the inclusion $B_{5r_0}(F(t_0)) \hookrightarrow B_{1/5}(F(t_0))$ induces the trivial map on $\pi_1$. 

By our choice of $r_2$ and Lemma~\ref{Lem: exp map 2}, we also have $\dd_X(q,F(t_0)) > 1-r_1 > 1/5$. Since $F(t_0) \in \Ext(X)$ and $\tilde\sigma$ is a regular geodesic, it follows from Proposition~\ref{Pro: comp exist geod ep ext quant} that $B_{5r_0}(F(t_0)) \setminus \tilde \sigma$ has exactly two components.

Let $t_1, t_1' \in [0,1]$ with $t_1 \leq t_0 \leq t_1'$ and $t_1 \sim t_1'$. If either $t_1 = t_0$ or $t_1' = t_0$, then it is immediate that $t_0 \sim t_1 \sim t_1'$. Thus assume that $t_1 < t_0 < t_1'$.  

Let $\nu$ and $\nu'$ be geodesics from $q$ to $\rho(t_1)$ and $\rho(t_1')$, respectively, and let $\tilde \nu$ and $\tilde \nu'$ denote their maximal extensions from $q$. Parameterize $\tilde \nu$ and $\tilde \nu'$ with constant speed on $[0,1]$ so that
$\tilde \nu(0) = \tilde \nu'(0) = q$. 
Since $t_1 < t_0$, we have $t_1 \in [0,r_2]$. Lemma~\ref{Lem: foc time bound} implies
\[
\tilde \nu'(1) = \tilde \nu(1) = F(t_1) \in B_+ \cap B_{r_1}(p). 
\]
It follows from our choice of $r_1$ and Lemma~\ref{Lem: exp map 1} that the subsegment of $\tilde \nu$ from $\rho(t_1)$ to $\tilde \nu(1)$ and the subsegment of $\tilde \nu'$ from $\rho(t_1')$ to $\tilde \nu'(1)$ both lie in $B_{r_0}(p)$. 

If $\tilde \sigma(1) = \tilde \nu(1)$, then $t_0 \sim t_1$ and the lemma follows. Thus assume that this is not the case. 
We claim that $\tilde \sigma$ and $\tilde \nu$ intersect only at $\tilde \sigma(0) = \tilde \nu(0) = q$. Suppose that they intersect at a point other than $q$. Since $\tilde \sigma(1) \neq \tilde \nu(1)$, this other point of intersection cannot coincide simultaneously with both endpoints $\tilde \sigma(1)$ and $\tilde \nu(1)$. By the non-branching property, it follows that one of $\tilde \sigma$ and $\tilde \nu$ must be a subsegment of the other. In either case, one of the geodesics must intersect $\rho$ at least twice. Since $t_0, t_1 \in [0, r_2] \subset [0,1]$, this contradicts Lemma~\ref{Lem: extend no intersect}. Since $t_1' \in [0,1]$ as well, the same argument shows that $\tilde \sigma$ and $\tilde \nu'$ intersect only at $q$.

Therefore, the concatenation of $\tilde \nu'$ from $\rho(t_1')$ to $\tilde \nu'(1)$ with the reverse of $\tilde \nu$ from $\tilde \nu(1)$ to $\rho(t_1)$ is a continuous curve contained in $B_{r_0}(p) \subset B_{5r_0}(F(t))$, and disjoint from $\tilde \sigma$. It follows that $\rho(t_1)$ and $\rho(t_1')$ must be contained in the same component of $B_{5r_0}(F(t_0)) \setminus \tilde \sigma$.

We now show that $\rho(t_1)$ and $\rho(t_1')$ lie in different components of $B_{5r_0}(F(t_0)) \setminus \tilde \sigma$ for a contradiction. As observed above, $\rho$ intersects $\tilde \sigma$ exactly once, at $\rho(t_0)$. By considering the good components of $B_{s}(\rho(t_0))$ with respect to $\tilde \sigma$ for $s$ sufficiently small and applying the same argument as in the proof of Lemma~\ref{Lem: exp image inclusion}, we conclude that $\rho([t_1,t_0))$ must be contained in one component of $B_{5r_0}(F(t_0)) \setminus \tilde \sigma$ and $\rho((t_0, t_1'])$ in the other. In particular,  $\rho(t_1)$ and $\rho(t_1')$ lie in different components.
\end{proof}

\begin{Lem}\label{Lem: int pt sim}
Let $t_0, t_1 \in [0, r_2]$. Let $t_0', t_1' \in [0,1]$ be such that $t_0 \sim t_0'$ and $t_1 \sim t_1'$. If $t_1 \leq t_0 \leq t_1' \leq t_0'$ and $F(t_0) \in \Int(X)$, then $t_0 \sim t_1$. 
\end{Lem}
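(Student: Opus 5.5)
The plan is to argue by contradiction, mirroring the proof of Lemma~\ref{Lem: ext pt sim}, with the interior-point Jordan machinery (Proposition~\ref{Pro: comp exist corner int quant 2}) replacing Proposition~\ref{Pro: comp exist geod ep ext quant}. Degenerate cases are disposed of first. If $t_0=t_1$, there is nothing to prove. If $t_0 = t_0'$, then $t_1\le t_0\le t_1'\le t_0'$ forces $t_1'=t_0$, so $t_1\sim t_0$. We may therefore assume $t_1<t_0<t_0'$ with $t_0\sim t_0'$ and $t_0\ne t_0'$. By Lemma~\ref{Lem: foc time bound}, $t_0>0$ and $F(t_0)\in B_+\cap B_{r_1}(p)$. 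Write $\tilde\sigma,\tilde\sigma'$ for the maximal extensions through $\rho(t_0),\rho(t_0')$, which share the common endpoint $z:=F(t_0)$, and write $\tilde\nu,\tilde\nu'$ for those through $\rho(t_1),\rho(t_1')$ (if $t_1=t_1'$, take $\tilde\nu=\tilde\nu'$). Suppose, for contradiction, that $F(t_1)\neq z$.

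The first step builds a separating corner at $z$. The geodesics $\tilde\sigma$ and $\tilde\sigma'$ are regular, by Theorem~\ref{Thm: geodesic interior tangent}, since they contain regular points of $\rho$ in their interiors. They meet only at $q$ and $z$: any further intersection would, by non-branching, make one a subsegment of the other and force a double intersection with $\rho$, contradicting Lemma~\ref{Lem: extend no intersect}. Near $z$, the concatenation $\alpha$ of the final pieces of $\tilde\sigma$ and $\tilde\sigma'$ is therefore a corner with vertex $z\in\Int(X)$; it is not a geodesic near $z$ because both pieces end at $z$ as maximal extensions. Since $z\in B_{r_1}(p)$, the choice of $r_0$ gives that $B_{5r_0}(z)\hookrightarrow B_{1/5}(z)$ is trivial on $\pi_1$, exactly as in Lemma~\ref{Lem: ext pt sim}. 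Proposition~\ref{Pro: comp exist corner int quant 2} then shows that $B_{5r_0}(z)\setminus\alpha$ has exactly two components, with boundary $\alpha\cap B_{5r_0}(z)$.

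The second step places $\rho(t_1)$ and $\rho(t_1')$ relative to this corner. The curve $\rho$ crosses $\alpha$ transversally at $\rho(t_0)$ and at $\rho(t_0')$ and nowhere else within the relevant range. Using good components at these crossing points, as in Lemma~\ref{Lem: exp image inclusion} via Lemma~\ref{Lem: geod no glance}, $\rho$ changes sides at each crossing. Hence $\rho(t_1)$, which lies before $t_0$, and a point $\rho(\tau)$ with $\tau\in(t_0,t_0')$ lie in different components of $B_{5r_0}(z)\setminus\alpha$. The point $\rho(t_1')$ with $t_1'\in[t_0,t_0']$ lies on the middle arc $\rho([t_0,t_0'])$. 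If $t_1'$ is an endpoint of that arc, then $t_1'\sim t_0$ and hence $t_1\sim t_0$ by transitivity, and we are done; so assume it is interior. The key point is that $\rho([t_0,t_0'])$ together with the two pieces of $\tilde\sigma,\tilde\sigma'$ bounds the inner side, which I expect to be identified using Lemma~\ref{Lem: exp map 1} and the fact that $z\in B_+$.

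The third step joins the two points and derives the contradiction. As in Lemma~\ref{Lem: ext pt sim}, the tails of $\tilde\nu$ and $\tilde\nu'$ from $\rho(t_1)$ and $\rho(t_1')$ to the common point $F(t_1)$ lie in $B_{r_0}(p)\subset B_{5r_0}(z)$, by Lemma~\ref{Lem: exp map 1}. By non-branching and Lemma~\ref{Lem: extend no intersect}, these tails avoid $\tilde\sigma$ and $\tilde\sigma'$, since $F(t_1)\neq z$. This gives a path from $\rho(t_1)$ to $\rho(t_1')$ in $B_{5r_0}(z)\setminus\alpha$, contradicting the side separation from the second step. The main obstacle is that separation claim: showing that the arc $\rho((t_0,t_0'))$, and hence $\rho(t_1')$, lies on the side opposite to $\rho(t_1)$. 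A subtlety is that $\rho$ may wander outside $B_{5r_0}(z)$; to control it I would first try the bounds $t_0'\le r_0$ from Lemma~\ref{Lem: foc time bound} together with the parity argument of Claim~\ref{Cla: side change}.
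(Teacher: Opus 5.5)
Your overall strategy matches the paper's. You form a curve $\alpha$ from the ends of the two maximal extensions $\tilde\sigma,\tilde\sigma'$ meeting at $z=F(t_0)$. You use the triviality of $B_{5r_0}(z)\hookrightarrow B_{1/5}(z)$ on $\pi_1$ to get two components of $B_{5r_0}(z)\setminus\alpha$. Then you rerun the argument of Lemma~\ref{Lem: ext pt sim}.

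The genuine gap is in your first step, where you assert that $\alpha$ is a corner at $z$ ``because both pieces end at $z$ as maximal extensions.'' That inference is false. Maximality only says $\tilde\sigma$ cannot be prolonged past $z$ as a minimizing geodesic from $q$. It does not prevent the end of $\tilde\sigma$, glued to the reversed end of $\tilde\sigma'$, from being locally minimizing through $z$. On a flat cylinder, the two minimizing geodesics from $q$ to its cut point arrive from opposite directions and glue to a straight segment. In that case condition~(2) of Definition~\ref{Def: corner} fails and Proposition~\ref{Pro: comp exist corner int quant 2} does not apply.

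The paper splits into the two cases explicitly. If $\alpha|_{[-\veps,\veps]}$ is a geodesic, it is regular by Theorem~\ref{Thm: geodesic interior tangent}, so Theorem~\ref{Thm: good components existence 2} gives good components at $z$. Minor variants of Propositions~\ref{Pro: corner neigh} and~\ref{Pro: comp exist extend corner quant} then apply: a separating neighborhood of $\alpha$ plus Mayer--Vietoris, under the same $\pi_1$ hypothesis, again give two components. You need to add this case. You should also say explicitly that the two edges are obtained by cutting off an initial piece near $q$ (the paper removes length $r_1$). This ensures they meet only at $z$, have length $>1/5$, and still contain $\rho(t_0)$ and $\rho(t_0')$.

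Separately, the ``main obstacle'' you flag at the end is not one, and you need neither an inner-side identification nor a parity count. After the degenerate cases you have $t_1<t_0<t_1'<t_0'$.
\begin{itemize}
\item By Lemma~\ref{Lem: extend no intersect}, $\tilde\sigma$ meets $\rho([0,1])$ only at $\rho(t_0)$ and $\tilde\sigma'$ only at $\rho(t_0')$. So the arc $\rho([t_1,t_1'])$ meets $\alpha$ only at $\rho(t_0)$.
\item By Lemma~\ref{Lem: foc time bound}, $t_1'\le r_0$, so this arc lies in $\overline B_{r_0}(p)\subset B_{5r_0}(z)$.
\item The single crossing at $\rho(t_0)$, handled with good components and Lemma~\ref{Lem: geod no glance} as in Lemma~\ref{Lem: exp image inclusion}, already puts $\rho(t_1)$ and $\rho(t_1')$ in different components.
\end{itemize}
Your own sentence about $\rho(\tau)$ with $\tau\in(t_0,t_0')$ says exactly this, and the crossing at $\rho(t_0')$ plays no role. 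With the corner/geodesic case split added, your Step~3 gives the contradiction as in the paper.
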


\begin{proof}
If $t_0 = 0$, then the statement is immediate, since in this case $t_1 = 0$ and so $t_0 \sim t_1$. 

Assume that $t_0 \neq 0$ and $F(t_0) \in \Int(X)$. Let $\sigma, \sigma'$ be geodesics from $q$ to $\rho(t_0)$ and $\rho(t_0')$, respectively, and let $\tilde \sigma, \tilde \sigma'$ be their maximal extensions from $q$. Parameterize $\tilde\sigma$ and $\tilde\sigma'$ with constant speed on $[0,1]$ so that $\tilde \sigma(0) = \tilde \sigma'(0) = q $ and $\tilde \sigma(1) = \tilde \sigma'(1) = F(t_0)$. 

If any of the inequalities in $t_1 \leq t_0 \leq t_1' \leq t_0'$ is an equality, then the statement is immediate. Thus assume that $t_1 < t_0 < t_1' < t_0'$. 

By arguing as in the proof of Lemma~\ref{Lem: ext pt sim}, we see that $\tilde \sigma$ and $\tilde \sigma'$ intersect $\rho$ exactly once, at $\rho(t_0)$ and $\rho(t_0')$, respectively. Moreover, these intersections occur in the interiors of $\tilde \sigma$ and $\tilde \sigma'$. Since $\rho(t_0), \rho(t_0') \in \mathcal R_2$, it follows from Theorem~\ref{Thm: geodesic interior tangent} that $\tilde \sigma$ and $\tilde \sigma'$ are regular geodesics. 

By our choice of $r_2$ and Lemma~\ref{Lem: exp map 2}, we have $\dd_X(q, F(t_0)) > 1-r_1$. Since $\tilde \sigma$ and $\tilde \sigma'$ are both geodesics from $q$ to $F(t_0)$, it follows that both of their lengths are greater than $1-r_1$. 

Let $\hat\sigma$ and $\hat\sigma'$ denote the geodesic subsegments obtained by removing from $\tilde\sigma$ and $\tilde\sigma'$ an initial subsegment of length $r_1$ starting at $q$. Since $r_1 < 1/100$, both $\hat \sigma$ and $\hat \sigma'$ have length at least $1-2r_1 > 1/5$. Since $\tilde \sigma$ and $\tilde \sigma'$ share the same endpoints but do not coincide (as they intersect $\rho$ at different points), it follows from the non-branching property that they intersect only at their endpoints. Therefore, $\hat \sigma$ and $\hat \sigma'$ intersect exactly once, namely at $F(t_0)$. 

Let $\alpha:[-1,1] \to X$ be the simple polygonal curve obtained by concatenating $\hat \sigma$ with $\hat \sigma'$ at $F(t_0)$, parameterized with constant speed and satisfying $\alpha(0)=F(t_0)$. There are two cases: either $\alpha$ is a corner, or there exists $\veps > 0$ such that $\alpha \lvert_{[-\veps, \veps]}$ is a geodesic.

In the first case, as in the proof of Lemma~\ref{Lem: ext pt sim}, the inclusion $B_{5r_0}(F(t_0)) \hookrightarrow B_{1/5}(F(t_0))$ induces the trivial map on $\pi_1$. Since $\alpha$ is a corner with an interior vertex $F(t_0)$ and two edges of length at least $1/5$, it follows from Proposition~\ref{Pro: comp exist corner int quant 2} that $B_{5r_0}(F(t_0)) \setminus \alpha$ has exactly two components. In the second case, it is not difficult to prove minor variants of Propositions~\ref{Pro: corner neigh} and \ref{Pro: comp exist extend corner quant} to show that $B_{5r_0}(F(t_0)) \setminus \alpha$ again has exactly two components. 

The argument then proceeds exactly as in the proof of Lemma~\ref{Lem: ext pt sim}. 
\end{proof}

The next lemma is the converse of Lemma~\ref{Lem: ext pt sim}. 

\begin{Lem}\label{Lem: ext pt sim conv}
Let $t_0 \in [0,r_2]$. Suppose that for every $t_1, t_1' \in [0,1]$ with $t_1 \leq t_0 \leq t_1'$ and $t_1 \sim t_1'$, it holds that $t_0 \sim t_1$. Then $F(t_0)$ is an extremal point.  
\end{Lem}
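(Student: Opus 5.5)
We argue by contraposition: assume $F(t_0)\in\Int(X)$ and produce $t_1\le t_0\le t_1'$ with $t_1\sim t_1'$ but $t_0\not\sim t_1$. The case $t_0=0$ is excluded at once, since $F(0)=p\in\Ext(X)$ by Lemma~\ref{Lem: exp image inclusion}; so $t_0\in(0,r_2]$. If $F(t_0)=\rho(t_0)$, then $\sigma_{t_0}$ is not extendible past $\rho(t_0)$; but $\rho(t_0)$ lies in the interior of the regular geodesic $\rho$. We expect to show this is impossible: $\rho(t_0)\in\mathcal R_2$, and geodesics into the interior of a regular geodesic should be locally extendible. This extendibility claim needs verification, and we would try the almost-splitting/density argument of \cite[Theorem 5.10]{D25} used in Lemma~\ref{Lem: geod no glance}. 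Alternatively one can treat this case with the same loop argument below, using $\rho$ near $t_0$. Thus we may assume $F(t_0)\in B_+\cap B_{r_1}(p)$ and that $\tilde\sigma:=\tilde\sigma_{t_0}$ is a regular geodesic (Theorem~\ref{Thm: geodesic interior tangent}), ending at the interior point $F(t_0)$ and not extendible past it.

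The key step is to find a second geodesic from $q$ through $F(t_0)$. Since $F(t_0)\in\Int(X)$, Corollary~\ref{Cor: int geod reg} and Theorem~\ref{Thm: poly dom neigh basis} give small interior polygonal domains around $F(t_0)$. Consider points $x$ slightly beyond $F(t_0)$, in a small ball $B_s(F(t_0))$. By the quantitative Proposition~\ref{Pro: comp exist geod ep ext quant}, applied contrapositively (its hypothesis on $\pi_1$ holds by the choice of $r_0$), $B_s(F(t_0))\setminus\tilde\sigma$ is path-connected. Geodesics from $q$ to points $x\in B_+\cap B_{s}(F(t_0))$ cross $\rho$ exactly once, at some $\rho(\tau(x))$ with $\tau(x)\in(0,r_0]$, by Lemma~\ref{Lem: exp map 1}. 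Choose a path in $B_s(F(t_0))\setminus\tilde\sigma$ between points on the two ``sides'' near $\rho(t_0)$, namely points reached through parameters $<t_0$ and through parameters $>t_0$; these sides are defined via the good components at $\rho(t_0)$ as in Lemma~\ref{Lem: exp image inclusion}. Along this path, $\tau$ must jump over $t_0$, since a continuous crossing would force the geodesic to pass through $\tilde\sigma$, contradicting non-branching. At the jump point $y$ we get, by compactness of geodesics, two geodesics from $q$ to $y$ crossing $\rho$ at $t_1<t_0<t_1'$. Their maximal extensions therefore end at $y$, and both are non-extendible: two distinct geodesics with common endpoints cannot be extended by non-branching. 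Hence $t_1\sim t_1'$.

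Finally, $t_0\not\sim t_1$: the maximal extension for $t_1$ ends at $y$, which is not on $\tilde\sigma$, so $F(t_1)=y\ne F(t_0)$. This contradicts the hypothesis, so $F(t_0)$ is extremal.

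The main obstacle is the jump step. We must make precise that $\tau$ is well defined up to the equivalence $\sim$ and upper/lower semicontinuous along the path, and that the limiting geodesics really cross $\rho$ on opposite sides of $t_0$. We would handle this via Lemma~\ref{Lem: extend no intersect}, uniform convergence of geodesics, and the side-separation of $B_{5r_0}(F(t_0))\setminus\tilde\sigma$ near $\rho(t_0)$.
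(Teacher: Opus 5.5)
Your overall architecture matches the paper's, and your final step is right. You assume $F(t_0)\in\Int(X)$, build a curve avoiding $\tilde\sigma$ along which the crossing parameter with $\rho$ passes from below $t_0$ to above it, and take limit geodesics from $q$ at a switching point to get $t_1\sim t_1'$ with $t_1\not\sim t_0$. There are, however, two genuine problems.

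\textbf{The case $F(t_0)=\rho(t_0)$.} Your claim that this case is impossible is false. Lying in $\mathcal R_2$, or in the interior of a regular geodesic, does not stop $\rho(t_0)$ from being a cut point of $q$. If there are two geodesics from $q$ to $\rho(t_0)$, non-branching forbids extending either, so $F(t_0)=\rho(t_0)$. On a flat torus every point is regular, yet such points lie in the interior of many regular geodesics. So no density or almost-splitting argument can give the extendibility you want. You must use your fallback, and it is easy:
\begin{itemize}
\item $q\in B_-$ and $\sigma_{t_0}$ meets $\rho$ only at its endpoint, so $\tilde\sigma([0,1))\subset B_-$.
\item Take $x$ in the component $G_+:=B_+\cap B_s(\rho(t_0))$ of $B_s(\rho(t_0))\setminus\rho$, and geodesics from $x$ to $\rho(t_0\mp\delta)$.
\item These stay in $G_+$ apart from their endpoints on $\rho$, so their concatenation is the required curve and avoids $\tilde\sigma$.
\end{itemize}
This case does not use $F(t_0)\in\Int(X)$ at all.

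\textbf{The case $F(t_0)\neq\rho(t_0)$.} The curve you describe does not make sense as stated. A path inside a small ball $B_s(F(t_0))$ cannot reach points ``near $\rho(t_0)$''. You also never produce points of $B_s(F(t_0))\setminus\tilde\sigma$ whose geodesics from $q$ cross $\rho$ on each side of $t_0$.

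What is missing is a bridge: curves from $\rho(t_0-\delta)$ and $\rho(t_0+\delta)$ to $B_s(F(t_0))$ that avoid $\tilde\sigma$ and stay in $B_+\cap B_{r_1}(p)$, so that Lemma~\ref{Lem: exp map 1} applies along them. The paper builds them as follows.
\begin{itemize}
\item Let $\hat\sigma$ be the piece of $\tilde\sigma$ from $\rho(t_0)$ to $F(t_0)$. Take a tubular neighborhood $U\subset B_+\cap B_{r_1}(p)$ of $\hat\sigma((0,1))$, split into $U_\pm$ by Proposition~\ref{Pro: geod neigh}.
\item A truncated geodesic from $\rho(t_0\mp\delta)$ towards $\hat\sigma$ lands in one of $U_\pm$. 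One then travels inside that set up to $B_s(F(t_0))$.
\item Only there does one use that $B_s(F(t_0))\setminus\tilde\sigma$ is path-connected, to join the two arrival points.
\end{itemize}
That path-connectedness is simply the definition of $\Int(X)$ applied to the regular geodesic $\tilde\sigma$ ending at $F(t_0)$. It is not a contrapositive of Proposition~\ref{Pro: comp exist geod ep ext quant}, which gives the converse implication. For the same reason, the separation you invoke at the end can only be the local one at $\rho(t_0)$, since $B_{5r_0}(F(t_0))\setminus\tilde\sigma$ is path-connected under your assumption.

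\textbf{The jump step.} Once the curve has its endpoints on $\rho$, the step you flagged as the main obstacle is immediate:
\begin{itemize}
\item Pick any geodesic from $q$ to each point of the curve and let $g$ be its crossing parameter with $\rho$, which is unique by Lemma~\ref{Lem: extend no intersect}.
\item $g\neq t_0$ everywhere. A geodesic from $q$ through $\rho(t_0)$ to a point off $\tilde\sigma$ would extend $\sigma_{t_0}$, which is then the unique geodesic to $\rho(t_0)$, and so would lie in $\tilde\sigma$.
\item Hence $\{g<t_0\}$ and $\{g>t_0\}$ partition the parameter interval, each containing one endpoint, and have a common closure point.
\item Limits of geodesics at that point cross $\rho$ strictly on either side of $t_0$, because the point is off $\tilde\sigma$.
\end{itemize}
No semicontinuity of $g$ is needed.
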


\begin{proof}
If $t_0 = 0$ then the statement is immediate, since Lemma~\ref{Lem: exp image inclusion} implies that $F(0) = p$, which is an extremal point. Thus assume that $t_0 \in (0, r_2]$.
To shorten the notation, set $\sigma:=\sigma_{t_0}$ and $\tilde \sigma:=\tilde \sigma_{t_0}$

Suppose for the sake of contradiction that $F(t_0)$ is an interior point. 
First, assume that $F(t_0) = \rho(t_0)$. By the proof of Lemma~\ref{Lem: exp image inclusion}, this implies that $\sigma = \tilde\sigma$.
Since $\rho(t_0)$ lies in the interior of $\rho$, it follows from Theorem~\ref{Thm: good components existence 2} that for all sufficiently small $s > 0$, the set $B_s(\rho(t_0)) \setminus \rho$ has exactly two components $G_\pm$. Assume that $B_s(\rho(t_0)) \subset B_{r_1}(p)$.

By property~(6) of $\rho$, the point $\rho(t_0)$ lies in the topological boundaries of $B_\pm$, where $B_\pm$ are the two components of $B_{10}(p) \setminus \rho$. Therefore, both $B_\pm$ intersect $B_s(\rho(t_0))$. It follows from Lemma~\ref{Lem: big set small set comp} that, up to relabeling $G_\pm$, we have $G_\pm = B_\pm\cap B_{s}(\rho(t_0))$

Since $\sigma(0) = q \in B_-$ and $\sigma$ does not intersect $\rho$ other than at $\sigma(1) = \rho(t_0)$, it follows that $\tilde \sigma([0, 1)) = \sigma([0,1)) \subset B_-$. Therefore, 
\begin{equation}\label{Eq: ext pt sim conv 1}
G_+ \cap \tilde \sigma([0,1]) = B_+ \cap \tilde \sigma([0,1]) = \emptyset.
\end{equation}

Set $t_2:= t_0 - s/20$ and $t_2' = t_0+s/20$. Then $t_2 < t_0 < t_2'$ and, since $\rho$ is unit-speed, $\rho(t_2), \rho(t_2') \in B_{s/10}(\rho(t_0))$.
By Theorem~\ref{Thm: good components existence 2}, the point $\rho(t_0)$ lies in the topological boundary of $G_+$, hence $G_+ \cap B_{s/10}(\rho(t_0)) \neq \emptyset$, and we can fix a point $x \in G_+ \cap B_{s/10}(\rho(t_0))$. 

Let $\nu:[0,1] \to X$ be a geodesic from $x$ to $\rho(t_2)$. By the triangle inequality, we have $\nu \subset B_s(\rho(t_0))$. Moreover, it follows from the non-branching property that $\nu$ does not intersect $\rho$ other than at $\nu(1) = \rho(t_2)$. Since $\nu(0) = x \in G_+$, it follows that $\nu([0,1)) \subset G_+$. Combined with \eqref{Eq: ext pt sim conv 1}, this implies that $\nu([0,1))$ and $\tilde \sigma$ are disjoint. Since $\nu(1) = \rho(t_2) \notin \tilde \sigma$, we conclude that $\nu$ and $\tilde \sigma$ are disjoint. Similarly, let $\nu':[0,1] \to X$ be a geodesic from $x$ to $\rho(t_2')$. Then $\nu'$ and $\tilde \sigma$ are disjoint.  
Let $\alpha:[-1,1] \to X$ be a continuous curve from $\rho(t_2)$ to $\rho(t_2')$ obtained by joining the reverse of $\nu$  to $\nu'$.
Notice that $\alpha$ is disjoint from $\tilde \sigma$ and 
\[
\alpha((-1,1)) \subset G_+ \cap B_{s}(\rho(t_0)) \subset B_+ \cap B_{r_1}(p).
\]

We define a map $g:[-1,1] \to [0, r_0]$ as follows: Set $g(-1) := t_2$ and $g(1) := t_2'$. For each $t \in (-1,1)$, choose a geodesic from $\alpha(t)$ to $q$. Since $\alpha(t) \in B_+ \cap B_{r_1}(p)$, it follows from our choice of $r_1$ and Lemma~\ref{Lem: exp map 1} that this geodesic intersects $\rho$ exactly once, at some point $\rho(t')$ with $t' \in [0, r_0]$. Define $g(t) := t'$.
We emphasize that $g(t)$ depends on the choice of the geodesic from $\alpha(t)$ to $q$. Since $\alpha \cap \tilde \sigma = \emptyset$, we have $g(t) \neq t_0$ for all $t \in [-1,1]$. 

Set
\[
S_- := \{t \in [-1,1] : g(t) < t_0\} \quad \text{and} \quad S_+ := \{t \in [-1,1]: g(t) > t_0\}.
\]
Then $S_+ \sqcup S_- = [-1,1]$ with $\pm 1 \in S_\pm$. By the connectedness of $[-1,1]$, there exists $a \in [-1,1]$ such that $a \in \overline S_+ \cap \overline S_-$.

Let $a_i \in S_-$ be a sequence such that $a_i \to a$, and let $\omega_i:[0,1] \to X$ be the geodesic from $\alpha(a_i)$ to $q$ used in the definition of $g$ (if $a_i=0$, we take $\omega_i$ to be any geodesic from $\rho(t_2)$ to $q$). Passing to a subsequence, $\omega_i$ converges uniformly to a geodesic $\omega$ from $\alpha(a)$ to $q$. Since each $\omega_i$ intersects $\rho$ at $\rho(g(a_i)) \in \rho([0,t_0])$, it follows that $\omega$ must also intersect $\rho$ at $\rho(t_1)$ for some $t_1 \in [0,t_0]$.  Since $\alpha(a) \notin \tilde \sigma$, this intersection cannot occur at $\rho(t_0)$. Therefore, $t_1 \in [0, t_0)$.

By considering a sequence $a_i' \in  S_+$ such that $a_i' \to a$, we also obtain a geodesic $\omega'$ from $\alpha(a)$ to $q$, such that $\omega'$ intersects $\rho$ at $\rho(t_1')$ for some $t_1' \in (t_0, r_0].$ In particular, $t_1' > t_0 > t_1$. 

Since $t_1, t_1' \in [0, r_0] \subset [0, 1]$, it follows from Lemma~\ref{Lem: extend no intersect} that both $\omega$ and $\omega'$ intersect $\rho$ exactly once, at $\rho(t_1)$ and $\rho(t_1')$ respectively. Since $t_1' > t_1$, we have $\rho(t_1') \neq \rho(t_1)$, and so the geodesics $\omega$ and $\omega'$ are distinct geodesics from $q$ to $\alpha(a)$. By the non-branching property, neither $\omega$ nor $\omega'$ is extendible past $\alpha(a)$. Therefore, $\omega$ (resp. $\omega'$) is the maximal extension from $q$ of its subsegment from $q$ to $\rho(t_1)$ (resp. $\rho(t_1')$). By definition, this implies $t_1 \sim t_1'$. 

By construction, $\omega(1) = \alpha(a) \neq \tilde \sigma(1)$. Therefore, $t_1 \nsim t_0$. Since $t_1' > t_0 > t_1$, this contradicts our initial assumptions. This concludes the proof in the case where $F(t_0) = \rho(t_0)$. 

Next assume that $F(t_0) \neq \rho(t_0)$. Then $\sigma$ is a proper subsegment of $\tilde \sigma$ by the proof of Lemma~\ref{Lem: exp image inclusion}. As in the previous case, let $s > 0$ be such that $B_s(\rho(t_0)) \subset B_{r_1}(p)$ and $B_s(\rho(t_0)) \setminus \rho$ has two components $G_\pm$. Set $t_2$ and $t_2'$ as before. We would like to again construct a curve $\alpha$ from $\rho(t_2)$ to $\rho(t_2')$ that is contained in $B_+ \cap B_{r_1}(p) \setminus \tilde \sigma$. However, the previous construction no longer works, since $\tilde \sigma$ intersects $G_-$. 

Let $\hat \sigma:[0,1] \to X$ be the subsegment of $\tilde \sigma$ from $\rho(t_0)$ to $\tilde \sigma(1)$, parameterized with constant speed and satisfying $\hat \sigma(0) = \rho(t_0)$. It is immediate from the proof of Lemma~\ref{Lem: exp image inclusion} that $\hat \sigma((0,1]) \subset B_+ \cap B_{r_1}(p)$.
Since $\rho(t) \in \mathcal{R}_2$ and lies in the interior of $\tilde \sigma$, it follows from Theorem~\ref{Thm: geodesic interior tangent} that $\tilde \sigma$ is a regular geodesic. In particular, $\hat \sigma$ is also a regular geodesic. By Proposition~\ref{Pro: geod neigh}, there exists a neighborhood $U$ of $\hat \sigma((0,1))$ such that $U \setminus \hat\sigma$ has exactly two components $U_\pm$, and the topological boundaries of both components in $U$ coincide with $\hat \sigma((0,1))$. Moreover, $U$ may be chosen so that $U\subset B_+\cap B_{r_1}(p)$ and so that $U$ does not intersect $\tilde \sigma$ outside of $\hat \sigma((0,1))$. In particular, $U_\pm$ do not intersect $\tilde \sigma$. 

Consider a geodesic from $\rho(t_2)$ to a point in $\hat \sigma((0,1)) \cap B_{s/10}(\rho(t_0))$. By the triangle inequality, this geodesic is contained in $B_{s}(\rho(t_0))$. By removing a suitable subsegment, we obtain a geodesic from $\rho(t_2)$ to a point $x_1$ in one of $U_\pm \cap B_{s/10}(\rho(t_0))$, which is disjoint from $\tilde \sigma$. Assume without loss of generality that $x_1 \in U_+$. 

Since $F(t_0) \in B_+ \cap B_{r_1}(p)$, we may choose $s' > 0$ such that $B_{s'}(F(t_0)) \subset B_+\cap B_{r_1}(p)$. Since the point $F(t_0) = \hat \sigma(1)$ lies in the topological boundary of $U_+$, we have $B_{s'}(F(t_0)) \cap U_+ \neq \emptyset$. Fix $x_2 \in B_{s'}(F(t_0)) \cap U_+$. Then $x_1$ can be connected to $x_2$ by a curve in $U_+$. 

Concatenating the two curves obtained above, we obtain a curve in $B_+ \cap B_{r_1}(p) \setminus \tilde \sigma$ from $\rho(t_2)$ to $x_2$. Repeating the same argument starting from $\rho(t_2')$, we obtain a point $x_2' \in B_{s'}(F(t_0))$ that can be connected to $\rho(t_2')$ by a curve contained in $B_+ \cap B_{r_1}(p) \setminus \tilde \sigma$. 

Since $F(t_0)$ is an interior point, the set $B_{s'}(F(t_0)) \setminus \tilde \sigma$ is path-connected. Therefore, the points $x_2$ and $x_2'$ can be joined by a curve in $B_{s'}(F(t_0)) \setminus \tilde \sigma \subset B_+ \cap B_{r_1}(p) \setminus \tilde \sigma$. 

The desired curve $\alpha$ from $\rho(t_2)$ to $\rho(t_2')$ can now be obtained by concatenating the curves constructed above: from $\rho(t_2)$ to $x_2$, from $x_2$ to $x_2'$, and finally from $x_2'$ to $\rho(t_2')$. Once the curve $\alpha$ has been constructed, the proof proceeds as in the previous case.
\end{proof}

\begin{Lem}\label{Lem: F lim ext pt}
Let $(t_i)_i$ be a sequence in $[0,r_2]$ converging to $t$. If $F(t_i) \in \Ext(X)$ for every $i \in \N$, then $F(t_i) \to F(t)$ as $i \to \infty$. Moreover, $F(t) \in \Ext(X)$. 
\end{Lem}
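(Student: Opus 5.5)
We argue by compactness and uniqueness of limits: every subsequential limit of $F(t_i)$ must equal $F(t)$, and since $\Ext(X)$ is closed (Theorem~\ref{Thm: int set open}), this limit is also extremal. By Lemma~\ref{Lem: exp map 2} all points $F(t_i)$ lie in $B_{r_1}(p)$, so subsequences converge. Fix a subsequence with $F(t_i)\to y$; the goal is to show $y=F(t)$.

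\textbf{Step 1: limiting geodesics.} Write $\sigma_i:=\sigma_{t_i}$ and $\tilde\sigma_i:=\tilde\sigma_{t_i}$, parameterized on $[0,1]$ with $\tilde\sigma_i(0)=q$ and $\tilde\sigma_i(1)=F(t_i)$. After passing to a further subsequence, $\tilde\sigma_i$ converges uniformly to a geodesic $\tilde\sigma_\infty$ from $q$ to $y$. Since $\rho(t_i)\in\tilde\sigma_i$ and $\rho(t_i)\to\rho(t)$, the limit passes through $\rho(t)$. Hence the subsegment of $\tilde\sigma_\infty$ from $q$ to $\rho(t)$ is a geodesic $\sigma'$ from $q$ to $\rho(t)$, and $\tilde\sigma_\infty$ is an extension of it.

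\textbf{Step 2: $y$ is the endpoint of the maximal extension.} Let $\hat\sigma$ be the maximal extension of $\sigma'$ from $q$. By Lemma~\ref{Lem: exp map 2}, $\hat\sigma$ is a geodesic, and $\tilde\sigma_\infty$ is an initial part of it. Suppose $\hat\sigma$ extended strictly past $y$. I would rule this out using extremality of $F(t_i)$ and Lemma~\ref{Lem: geod at ext-pt not extend} as follows. When $t>0$, the point $\rho(t)$ lies in the interior of $\tilde\sigma_\infty$, so $\hat\sigma$ is regular by Theorem~\ref{Thm: geodesic interior tangent}. Then $y=\tilde\sigma_\infty(1)$ would be an interior point of the regular geodesic $\hat\sigma$, so $y\in\Int(X)$ by Corollary~\ref{Cor: int geod no ext}. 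This contradicts $y\in\Ext(X)$, which holds because $\Ext(X)$ is closed. Therefore $y$ equals the endpoint of $\hat\sigma$. When $t=0$ the same reasoning applies: here $\sigma'$ is the reverse of $\gamma|_{[0,1]}$, which is not extendible past $p$, so $y=p=F(0)$.

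\textbf{Step 3: identifying $y$ with $F(t)$.} The remaining issue is that $F(t)$ is defined via some $\sigma_t$, which may differ from $\sigma'$. If $\sigma'$ is a proper subsegment of $\hat\sigma$, then by the non-branching remark after Definition~\ref{Def: cut locus} it is the unique geodesic from $q$ to $\rho(t)$, so $\sigma_t=\sigma'$ and $y=F(t)$. If instead $\sigma'=\hat\sigma$, then $y=\rho(t)$. Then $\rho(t)$ would be extremal, and for $t>0$ this contradicts Corollary~\ref{Cor: int geod no ext}, since $\rho(t)$ is interior to the regular geodesic $\rho$; for $t=0$ we get $y=p=F(0)$ directly. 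In every case $y=F(t)$, so the full sequence converges to $F(t)$, and $F(t)=y\in\Ext(X)$.

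\textbf{Main obstacle.} I expect Step 3 to be the delicate point, in particular when the geodesic from $q$ to $\rho(t)$ is not unique. If several geodesics exist, different subsequences might converge to different limits. The argument above shows that any limit must be extremal, and an extremal limit forces the limiting geodesic $\sigma'$ to be a proper subsegment of its maximal extension, hence the unique geodesic from $q$ to $\rho(t)$. This resolves the ambiguity, but the exclusion of the case $y=\rho(t)$ must be checked carefully.
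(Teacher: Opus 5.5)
Your proof is correct, and its skeleton matches the paper's: compactness, a limiting geodesic through $\rho(t)$, its maximal extension, and the observation that an extremal limit cannot lie in the interior of a regular geodesic.

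The real difference is how you obtain regularity of the maximal extension $\hat\sigma$.
\begin{itemize}
\item The paper notes that $q$ is an interior point and invokes Corollary~\ref{Cor: int geod reg}. Hence every geodesic from $q$ is regular, the whole interior of $\hat\sigma$ lies in $\Int(X)$, and $y=\hat\sigma(1)=F(t)$ follows at once. There is no split into $t=0$, $t>0$, or $\sigma'=\hat\sigma$, and well-definedness of $F$ enters only implicitly.
\item You instead apply Theorem~\ref{Thm: geodesic interior tangent} at the regular point $\rho(t)$. This is more elementary and local: Corollary~\ref{Cor: int geod reg} rests on the neighborhood basis of Theorem~\ref{Thm: poly dom neigh basis}, and hence on the Jordan theorem. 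The cost is your case analysis, since $\rho(t)$ need not be interior to the relevant geodesic when $t=0$ or $y=\rho(t)$.
\end{itemize}

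There is one small imprecision in Step 2. When $y=\rho(t)$, the point $\rho(t)$ is not in the interior of $\tilde\sigma_\infty$. What you actually need is that it lies in the interior of $\hat\sigma$, and you do have this under your contradiction hypothesis that $\hat\sigma$ extends past $y$.

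Step 3 is also easier than you expect. If $\sigma'=\hat\sigma$ is non-extendible, then $F(t)=\rho(t)=y$ whatever geodesic $\sigma_t$ is chosen, by the well-definedness remark after Definition~\ref{Def: cut locus}. So the appeal to Corollary~\ref{Cor: int geod no ext} in that case is not needed.
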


\begin{proof}
Since $F(t_i) \in \overline B_{r_1}(p)$ for all $i$ by Lemma~\ref{Lem: exp image inclusion}, and this set is compact, it suffices to show that every convergent subsequence of $(F(t_i))_i$ converges to $F(t)$. 

Pass to a subsequence such that $F(t_i) \to x$. For each $i \in \N$, set $\sigma_i:=\sigma_{t_i}$ and $\tilde \sigma_i:=\tilde \sigma_{t_i}$. After passing to a further subsequence, we may assume that $\tilde \sigma_i$ converges uniformly to some geodesic $\sigma:[0,1] \to X$ from $q$ to $x$. Since each $\sigma_i$ passes through $\rho(t_i)$, it follows by continuity that $\sigma$ passes through $\rho(t)$. 

Let $\tilde \sigma$ be the maximal extension of $\sigma$ from $q$. Since $\sigma$ passes through $\rho(t)$ and $t \in [0,r_2]$, it follows from Lemma~\ref{Lem: exp map 2} that $\tilde \sigma$ is a geodesic. Parameterize $\tilde \sigma$ with constant speed on $[0,1]$ so that $\tilde \sigma(0) = q$. Then $F(t) = \tilde \sigma(1)$.

Since $q$ lies in the interior of the regular geodesic $\gamma$, it is an interior point of $X$ by Corollary~\ref{Cor: int geod no ext}. Since $q$ is an endpoint of $\tilde \sigma$, Corollary~\ref{Cor: int geod reg} implies that $\tilde \sigma$ is a regular geodesic. Applying Corollary~\ref{Cor: int geod no ext} again, we conclude that the interior of $\tilde \sigma$ is contained in $\Int(X)$.

On the other hand, since $F(t_i) \in \Ext(X)$ and $\Ext(X)$ is closed by Theorem~\ref{Thm: int set open}, the limit point $x$ also lies in $\Ext(X)$. Therefore, $x$ cannot lie in the interior of $\tilde \sigma$. Moreover, since $\dd_X(q, F(t_i)) \geq 1 - r_1$ by our choice of $r_2$ and Lemma~\ref{Lem: exp map 2}, we also have $x \neq q = \tilde\sigma(0)$. Hence $x = \tilde\sigma(1) = F(t)$. In particular, $F(t_i) \to F(t) \in \Ext(X)$, as required. 
\end{proof}

For every $t \in [0,r_2]$, define 
\[
\mathcal T_t := \{s \in [0,t]: \text{there exists $s' \in [t,1]$ such that $s \sim s'$}\}.
\]
Clearly, $t \in \mathcal T_t$ and $\mathcal T_t \subset [0,t]$. 

\begin{Lem}\label{Lem: Tt closed}
For every $t \in [0,r_2]$, the set $\mathcal T_t$ is closed. Moreover, 
\begin{enumerate}
    \item[(i)] $\mathcal T_0 = \{0\}$;
    \item[(ii)] if $t \neq 0$, then $0 \notin \mathcal T_t$.
\end{enumerate}
\end{Lem}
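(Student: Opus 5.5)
The statement has three parts, and I would handle them in the order (i), (ii), closedness. Items (i) and (ii) follow directly from Lemma~\ref{Lem: foc time bound}. For (i), note that $\mathcal T_0\subset[0,0]=\{0\}$ and $0\in\mathcal T_0$. For (ii), suppose $t\neq 0$ and $0\in\mathcal T_t$. Then there is $s'\in[t,1]$ with $0\sim s'$, and $s'\neq 0$ because $s'\ge t>0$. Applying Lemma~\ref{Lem: foc time bound} with the first argument $0\in[0,r_2]$ gives $0\in(0,r_2]$, a contradiction.

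For closedness, I would take $s_i\in\mathcal T_t$ with $s_i\to s\in[0,t]$, and choose $s_i'\in[t,1]$ with $s_i\sim s_i'$. If $s_i=s_i'$ for infinitely many $i$, then $s_i\ge t$ and $s_i\le t$ force $s_i=t$, so $s=t\in\mathcal T_t$. Otherwise, I pass to a subsequence with $s_i\neq s_i'$. Lemma~\ref{Lem: foc time bound} (applicable since $s_i\le t\le r_2$) gives $s_i'\in(0,r_0]$, and after a further subsequence $s_i'\to s'\in[t,r_0]$. The remark following Definition~\ref{Def: foc equiv} says the geodesics $\sigma_i$ from $q$ to $\rho(s_i)$ and $\sigma_i'$ from $q$ to $\rho(s_i')$ are unique, and their maximal extensions $\tilde\sigma_i,\tilde\sigma_i'$ are geodesics sharing the endpoint $F(s_i)$.

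I would then pass to uniform limits $\tilde\sigma_i\to\omega$ and $\tilde\sigma_i'\to\omega'$. These are geodesics from $q$ to a common point $x$, passing through $\rho(s)$ and $\rho(s')$ respectively. If $s=s'$, then $s=t\in\mathcal T_t$ and we are done, so assume $s<s'$. Since $s,s'\le 1$, Lemma~\ref{Lem: extend no intersect} shows that $\omega$ and $\omega'$ are distinct: each meets $\rho([0,1])$ exactly once, at different points. Two distinct geodesics from $q$ ending at $x$ cannot be extended past $x$, by non-branching. Hence $\omega$ is the maximal extension from $q$ of its subsegment $q\to\rho(s)$, and likewise $\omega'$ for $q\to\rho(s')$. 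That subsegment is a geodesic from $q$ to $\rho(s)$, and the same holds for $\rho(s')$. By Definition~\ref{Def: foc equiv}, $s\sim s'$ with $s'\in[t,1]$, so $s\in\mathcal T_t$.

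The main obstacle is the degenerate case where the limit point $\rho(s)$ might coincide with the endpoint $x$ of $\omega$; for example, when $s=0$ the limit geodesic is the reversed $\gamma|_{[0,1]}$. This is still harmless. Nothing above needs $\rho(s)$ to lie in the interior of $\omega$, because distinctness of $\omega$ and $\omega'$ alone yields non-extendibility past $x$. So $\omega$ is itself the maximal extension, and condition (i) of Definition~\ref{Def: foc equiv} holds.
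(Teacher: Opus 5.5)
Your proof is correct and follows the paper's argument. You pass to uniform limits of the paired maximal extensions, which share an endpoint and have length bounded via Lemma~\ref{Lem: exp map 2}; Lemma~\ref{Lem: extend no intersect} shows the two limits are distinct; non-branching then makes both non-extendible past their common endpoint, giving $s\sim s'$, and items (i)--(ii) come from Lemma~\ref{Lem: foc time bound} exactly as in the paper.
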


\begin{proof}
Let $t \in [0,r_2]$, and let $s_i \in \mathcal T_t$ be a sequence converging to $s \in [0,t]$. We show that $s \in \mathcal T_t$. If $s = t$, then the statement is immediate since $t \in \mathcal T_t$. Thus assume that $s \neq t$. 

By definition, for every $i \in \N$, there exists $s_i' \in [t, 1]$ such that $s_i \sim s_i'$. 
To shorten notation, we write $\sigma_i:=\sigma_{s_i}$, $\sigma'_i:=\sigma_{s_i'}$, $\tilde \sigma_i:=\tilde \sigma_{s_i}$, and $\tilde \sigma_i':=\tilde \sigma_{s_i'}$.

By our choice of $r_2$ and Lemma~\ref{Lem: exp map 2}, we have $\tilde \sigma_i'(1) = \tilde \sigma_i(1) \in B_{r_1}(p)$ for all $i$. Passing to a subsequence, we may assume that $\tilde \sigma_i$ and $\tilde \sigma_i'$ converge uniformly to geodesics $\tilde \sigma, \tilde \sigma': [0,1] \to X$, respectively, with $\tilde \sigma(0) = \tilde \sigma'(0) = q$. 

It is immediate that $\tilde \sigma$ passes through $\rho(s)$, and $\tilde \sigma'$ passes through $\rho(s')$ for some $s' \in [t,1]$.  Moreover, since $\tilde \sigma_i$ and $\tilde \sigma_i'$ have the same endpoints for every $i$, the geodesics $\tilde \sigma$ and $\tilde \sigma'$ have the same endpoint as well. By the non-branching property, either $\tilde \sigma$ and $\tilde \sigma'$ are both not extendible past their common endpoint $\tilde \sigma(1) = \tilde \sigma'(1)$, or $\tilde \sigma = \tilde \sigma'$.

In the first case,  $\tilde \sigma$ and $\tilde \sigma'$ are maximal extensions of geodesics from $q$ to $\rho(s)$ and $\rho(s')$, respectively. Therefore, $s \sim s'$, and so $s \in \mathcal T_t$. In the second case, $\tilde \sigma$ passes through both $\rho(s)$ and $\rho(s')$. Since $s \in [0,t)$ and $s' \in [t, 1]$, we have $s \neq s'$.  Therefore, $\tilde \sigma$ intersects $\rho$ at least twice, contradicting Lemma~\ref{Lem: extend no intersect}. This finishes the proof that $\mathcal T_t$ is closed.

Statement~(i) is obvious, and Statement~(ii) follows immediately from Lemma~\ref{Lem: foc time bound}~(i).
\end{proof}

By Lemma~\ref{Lem: Tt closed}, for every $t \in [0,r_2]$, the set $\mathcal T_t \subset [0,r_2]$ admits a minimum. We define the function $m: [0,r_2] \to [0,r_2]$ by
\[
m(t) := \min(\mathcal T_t).
\]
The following lemma is immediate from the definition and Lemma~\ref{Lem: Tt closed}.
\begin{Lem}\label{Lem: m prop}
The following hold:
\begin{enumerate}
    \item[(i)] $m(0) = 0$.
    \item[(ii)] For every $t \in (0, r_2]$, $m(t) > 0$.
    \item[(iii)] For every $t \in [0,r_2]$, $m(t) \leq t$.
\end{enumerate}
\end{Lem}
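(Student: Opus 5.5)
The three items are direct consequences of the definition $m(t) = \min(\mathcal T_t)$ together with Lemma~\ref{Lem: Tt closed}. By that lemma, each $\mathcal T_t$ is a closed subset of $[0,t]$. It is also non-empty, since $t \in \mathcal T_t$ via the trivial relation $t \sim t$. So the minimum exists and $m$ is well defined on $[0,r_2]$. I would check the items in the order (iii), (i), (ii).

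For (iii), I would note that $\mathcal T_t \subset [0,t]$ by definition, so $m(t) = \min \mathcal T_t \le t$. Equivalently, $t \in \mathcal T_t$ already forces the minimum to be at most $t$.

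For (i), I would use Lemma~\ref{Lem: Tt closed}(i), which gives $\mathcal T_0 = \{0\}$, and hence $m(0) = 0$. As a consistency check, this is visible directly: $\mathcal T_0 \subset [0,0]$ and $0 \in \mathcal T_0$.

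For (ii), fix $t \in (0,r_2]$. Lemma~\ref{Lem: Tt closed}(ii) gives $0 \notin \mathcal T_t$. Since $\mathcal T_t$ is closed, its minimum belongs to $\mathcal T_t$. Hence $m(t) \ne 0$, and because $m(t) \ge 0$ we get $m(t) > 0$.

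The only substantive point is that $0 \notin \mathcal T_t$ for $t>0$. This is Lemma~\ref{Lem: foc time bound}(i): any relation $s \sim s'$ with $s \ne s'$ forces $s,s' > 0$. In our situation, if $0 \sim s'$ with $s' \in [t,1]$, then $s' \ne 0$, so that lemma would give $0 > 0$, a contradiction. This is already packaged into Lemma~\ref{Lem: Tt closed}, so there is no real obstacle.
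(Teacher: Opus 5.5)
Your proposal is correct and matches the paper's approach: the paper states that this lemma is immediate from the definition $m(t)=\min(\mathcal T_t)$ and Lemma~\ref{Lem: Tt closed}, which is exactly how you derive (i)--(iii). Your observation that attaining the minimum in the closed set $\mathcal T_t$, together with $0\notin\mathcal T_t$, gives $m(t)>0$ is the right justification for (ii).
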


\begin{Lem}\label{Lem: m ext}
Let $t \in [0,r_2]$. Then $m(m(t)) = m(t)$ and $F(m(t)) \in \Ext(X)$. 
\end{Lem}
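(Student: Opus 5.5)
Set $s_0 := m(t)$. The plan is to prove both claims at once by showing that $s_0$ satisfies the hypothesis of Lemma~\ref{Lem: ext pt sim conv}. That hypothesis reads: for all $t_1 \le s_0 \le t_1'$ in $[0,1]$ with $t_1 \sim t_1'$, we have $s_0 \sim t_1$.

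Granting this, Lemma~\ref{Lem: ext pt sim conv} gives $F(s_0) \in \Ext(X)$. The identity $m(s_0) = s_0$ follows from the same property. Indeed, $m(s_0) \le s_0$ by Lemma~\ref{Lem: m prop}. Conversely, if $s \in \mathcal T_{s_0}$, there is some $s' \ge s_0$ with $s \sim s'$. The property then gives $s_0 \sim s$, so $s \sim s_0 \sim s'_t$, where $s'_t \ge t$ is a witness for $s_0 \in \mathcal T_t$. Since $\sim$ is an equivalence relation, $s \in \mathcal T_t$, and therefore $s \ge m(t) = s_0$. The case $t = 0$ is trivial: $m(0) = 0$ and $F(0) = p \in \Ext(X)$.

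To verify the property, fix $t_1 \le s_0 \le t_1'$ with $t_1 \sim t_1'$, and choose $s_0' \in [t,1]$ with $s_0 \sim s_0'$. If $t_1' \ge t$, then $t_1 \in \mathcal T_t$, so $t_1 \ge s_0$. Hence $t_1 = s_0$ and we are done. Otherwise $t_1 < s_0 \le t_1' < t \le s_0'$. This is exactly the interlacing configuration of Lemma~\ref{Lem: int pt sim}, with the roles $t_1 \le s_0 \le t_1' \le s_0'$. So if $F(s_0) \in \Int(X)$, that lemma yields $s_0 \sim t_1$.

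It remains to handle $F(s_0) \in \Ext(X)$. There Lemma~\ref{Lem: ext pt sim}, applied to $t_1 \le s_0 \le t_1'$, gives $s_0 \sim t_1$ directly. Note that $s_0 \le t \le r_2$ and $t_1 \le r_2$, so both lemmas apply. Every case therefore yields $s_0 \sim t_1$, which is the hypothesis of Lemma~\ref{Lem: ext pt sim conv}.

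The main obstacle I expect is bookkeeping rather than geometry. First, one must check that the ranges required by the earlier lemmas are met: $t_0, t_1 \in [0,r_2]$ and the primed times in $[0,1]$, which holds by Lemma~\ref{Lem: foc time bound}. Second, the dichotomy $\Int/\Ext$ for $F(s_0)$ must cover all cases, which it does since $\Int(X)$ is by definition the complement of $\Ext(X)$. Finally, the transitivity of $\sim$ used above must be justified by the Remark following Definition~\ref{Def: foc equiv}.
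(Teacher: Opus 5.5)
Your proof is correct and uses the same ingredients as the paper: the minimality of $m(t)$ forces the partner time below $t$, the resulting interlacing configuration is fed into Lemmas~\ref{Lem: ext pt sim} and~\ref{Lem: int pt sim}, transitivity of $\sim$ closes the loop, and Lemma~\ref{Lem: ext pt sim conv} gives extremality. The difference is only organizational: the paper argues by contradiction on the specific pair $(m(m(t)),s_2)$ to get $\mathcal T_{m(t)}=\{m(t)\}$ and then reads off the hypothesis of Lemma~\ref{Lem: ext pt sim conv}, whereas you verify that hypothesis directly for arbitrary pairs and then deduce $\mathcal T_{m(t)}=\{m(t)\}$; also, the appeal to Lemma~\ref{Lem: foc time bound} for the ranges is unnecessary, since $t_1\le m(t)\le t\le r_2$ holds directly.
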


\begin{proof}
Suppose for the sake of contradiction that $m(m(t)) \neq m(t)$. Then $m(m(t)) < m(t)$ by Lemma~\ref{Lem: m prop}~(iii). By definition of $m$, there exists $s_1 \in [t, 1]$ such that $m(t) \sim s_1$. Similarly, there exists $s_2 \in [m(t), 1]$ such that $m(m(t)) \sim s_2$. 

If $s_2 \geq t$, then $m(m(t)) < t \leq s_2$. Since $m(m(t)) \sim s_2$, it follows that $m(m(t)) \in \mathcal T_t$, contradicting the minimality of $m(t) = \min (\mathcal T_t)$. Therefore, we may assume that $s_2 < t$. Thus we have
$m(m(t)) < m(t) \leq s_2 < s_1$.

There are two cases, depending on whether $F(m(t))$ is an extremal point or an interior point. 
Lemmas~\ref{Lem: ext pt sim} and~\ref{Lem: int pt sim} imply in each case that $m(m(t)) \sim m(t) \sim s_1$. Since $m(m(t)) < t \leq s_1$, this implies that $m(m(t)) \in \mathcal T_t$, again contradicting the minimality of $m(t) = \min (\mathcal T_t)$. 

Since $\mathcal T_{m(t)} \subset [0,m(t)]$, it follows that $\mathcal T_{m(t)} = \{m(t)\}$. Lemma~\ref{Lem: ext pt sim conv} immediately implies that $F(m(t)) \in \Ext(X)$. 
\end{proof}

\begin{Lem}\label{Lem: m dec}
Let $t_1, t_2 \in [0,r_2]$ with $t_1 < t_2$. Then exactly one of the following holds:
\begin{enumerate}
    \item[(i)] $m(t_1) = m(t_2)$;    
    \item[(ii)] $m(t_1) \leq t_1 < m(t_2)$.
\end{enumerate}
In particular, $m$ is non-decreasing. 
\end{Lem}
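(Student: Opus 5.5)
We argue directly from the definitions of $\mathcal T_t$ and $m$, combined with Lemma~\ref{Lem: m ext} and Lemmas~\ref{Lem: ext pt sim} and~\ref{Lem: int pt sim}. The two alternatives are mutually exclusive: (i) and (ii) cannot both hold, since (ii) gives $m(t_1)\le t_1<m(t_2)$, so $m(t_1)\neq m(t_2)$. Hence it suffices to show that if $m(t_2)\le t_1$, then $m(t_1)=m(t_2)$. Monotonicity then follows at once: in case (i) it is equality, and in case (ii) it is $m(t_1)\le t_1<m(t_2)$.

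So assume $m(t_2)\le t_1<t_2$. By definition there is $s_2\in[t_2,1]$ with $m(t_2)\sim s_2$. Since $m(t_2)\le t_1\le t_2\le s_2$, the point $m(t_2)$ lies in $[0,t_1]$ and is equivalent to a point of $[t_1,1]$, so $m(t_2)\in\mathcal T_{t_1}$. This gives $m(t_1)\le m(t_2)$.

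For the reverse inequality, suppose toward a contradiction that $m(t_1)<m(t_2)$. Choose $s_1\in[t_1,1]$ with $m(t_1)\sim s_1$. If $s_1\ge t_2$, then $m(t_1)\in\mathcal T_{t_2}$, contradicting the minimality of $m(t_2)$. Hence $s_1<t_2\le s_2$, and we have the chain
\[
m(t_1)<m(t_2)\le t_1\le s_1<t_2\le s_2 .
\]
In particular $m(t_1)\le m(t_2)\le s_1$. By Lemma~\ref{Lem: m ext}, $F(m(t_2))\in\Ext(X)$. We apply Lemma~\ref{Lem: ext pt sim} with $t_0=m(t_2)\in[0,r_2]$ and the pair $m(t_1)\sim s_1$ satisfying $m(t_1)\le t_0\le s_1$. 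This yields $m(t_2)\sim m(t_1)$. By transitivity, $m(t_1)\sim m(t_2)\sim s_2$ with $s_2\ge t_2$, so $m(t_1)\in\mathcal T_{t_2}$. This contradicts $m(t_1)<m(t_2)=\min\mathcal T_{t_2}$. Therefore $m(t_1)=m(t_2)$.

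The main point requiring care is the orientation of the interval in Lemma~\ref{Lem: ext pt sim}: the equivalent pair must straddle the extremal point $F(t_0)$. Choosing $t_0=m(t_2)$, which is extremal by Lemma~\ref{Lem: m ext}, ensures that the pair $m(t_1)\sim s_1$ does straddle it. Once this is arranged, the equivalence relation property (transitivity, from the Remark after Definition~\ref{Def: foc equiv}) closes the argument.
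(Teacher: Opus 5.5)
Your proof is correct. Its first half, showing $m(t_2)\in\mathcal T_{t_1}$ and hence $m(t_1)\le m(t_2)$, is exactly the paper's. The two arguments differ only in how the reverse inequality is obtained.

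The paper argues directly. Since $m(t_1)\le m(t_2)\le t_1\le s_1$, it observes that $m(t_1)\in\mathcal T_{m(t_2)}$. It then uses the idempotence $m(m(t_2))=m(t_2)$ from Lemma~\ref{Lem: m ext}, i.e.\ $\mathcal T_{m(t_2)}=\{m(t_2)\}$, to get $m(t_1)=m(t_2)$. This step is purely order-theoretic and needs no further geometric input.

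You instead use the other half of Lemma~\ref{Lem: m ext}, namely $F(m(t_2))\in\Ext(X)$. You feed this into Lemma~\ref{Lem: ext pt sim} to get $m(t_1)\sim m(t_2)$, and then use transitivity of $\sim$ to place $m(t_1)$ in $\mathcal T_{t_2}$. This is valid. However, it re-runs a geometric argument that Lemma~\ref{Lem: m ext} has already packaged: in the paper, the extremality of $F(m(t))$ is itself derived from the idempotence together with Lemma~\ref{Lem: ext pt sim conv}. So the paper's route is shorter.

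Finally, your case split on whether $s_1\ge t_2$ is never used. The inequality $m(t_1)\le m(t_2)\le s_1$ that you need holds regardless, so that step can be dropped.
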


\begin{proof}
First, assume that $m(t_2) \leq t_1$. By definition of $m(t_2)$, there exists $s_2 \in [t_2,1]$ such that $m(t_2) \sim s_2$. Since
$m(t_2) \leq t_1 < t_2 \leq s_2$,
it follows that $m(t_2) \in \mathcal T_{t_1}$. By the minimality of $m(t_1) = \min(\mathcal T_{t_1})$, we obtain $m(t_1) \leq m(t_2)$. Similarly, by definition of $m(t_1)$, there exists $s_1 \in [t_1, 1]$ such that $m(t_1) \sim s_1$. Since
$m(t_1) \leq m(t_2) \leq t_1 \leq s_1$,
it follows that $m(t_1) \in \mathcal T_{m(t_2)}$. By Lemma~\ref{Lem: m prop}~(iii), we have
$m(m(t_2)) \leq m(t_1) \leq m(t_2)$. Lemma~\ref{Lem: m ext} implies the previous inequalities are equalities, and so $m(t_1) = m(t_2)$, which is statement~(i).

If instead $m(t_2) > t_1$, then statement~(ii) follows immediately by Lemma~\ref{Lem: m prop}~(iii). Since statements~(i) and (ii) are mutually exclusive, we are finished.
\end{proof}

Define the map $\mathcal E:[0,r_2] \to X$ by \[\mathcal E(t) := F(m(t)).\]

\begin{Lem}\label{Lem: E prop}
The following hold:
\begin{enumerate}
    \item[(i)] $\mathcal E$ is non-constant;
    \item[(ii)] $\mathcal E((0,r_2]) \subset A_+$.
\end{enumerate}
\end{Lem}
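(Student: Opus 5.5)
We treat the two claims separately, using throughout that $\mathcal E(t) = F(m(t))$ and the basic structure of $F$ recorded in Lemma~\ref{Lem: exp image inclusion}.

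\textbf{Claim (ii).} Fix $t \in (0,r_2]$ and set $t_0 := m(t)$. By Lemma~\ref{Lem: m prop}(ii), $t_0 > 0$, and by Lemma~\ref{Lem: m ext}, $F(t_0) \in \Ext(X)$. Since $\rho(t_0)$ lies in the interior of the regular geodesic $\rho$, Corollary~\ref{Cor: int geod no ext} shows that $\rho(t_0)$ is an interior point. Hence $F(t_0) \neq \rho(t_0)$, and Lemma~\ref{Lem: exp image inclusion} gives
\[
F(t_0) \in B_+ \cap B_{r_1}(p).
\]
It remains to upgrade this to $F(t_0) \in A_+$. For this we use the geodesic $\tilde\sigma_{t_0}$. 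The subsegment of $\tilde\sigma_{t_0}$ from $\rho(t_0)$ to $F(t_0)$ stays inside $B_{r_1}(p) \subset B_{10}(p)$ by Lemma~\ref{Lem: exp map 2}. We then check that this subsegment avoids $\gamma$:
\begin{itemize}
\item By the proof of Lemma~\ref{Lem: exp image inclusion}, the part after $\rho(t_0)$ lies in $B_+$.
\item By \eqref{Eq: Apm Bpm assumption}, $\gamma((0,10)) \subset B_-$, so that part cannot meet $\gamma((0,10))$.
\item It also cannot pass through $p$: the point $p$ lies on $\rho$, and $\tilde\sigma_{t_0}$ meets $\rho$ only at $\rho(t_0)$ by Lemma~\ref{Lem: extend no intersect}.
\end{itemize}
So the subsegment is a curve in $B_{10}(p) \setminus \gamma$ from $\rho(t_0)$ to $F(t_0)$. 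Since $\rho(t_0) \in \rho((0,10)) \subset A_+$, Lemma~\ref{Lem: clopen path-connected} yields $F(t_0) \in A_+$.

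\textbf{Claim (i).} We have $\mathcal E(0) = F(m(0)) = F(0) = p$, and $p \notin A_+$ because $p \in \gamma$. So it suffices to produce a single $t \in (0,r_2]$, for instance $t = r_2$. By Lemma~\ref{Lem: m prop}(ii), $m(r_2) > 0$, and then Claim (ii) gives $\mathcal E(r_2) \in A_+$, so $\mathcal E(r_2) \neq p = \mathcal E(0)$.

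\textbf{Main obstacle.} The delicate point is the side bookkeeping in Claim (ii): confirming that the tail of $\tilde\sigma_{t_0}$ past $\rho(t_0)$ really avoids both $\gamma$ and $p$. This rests on the $B_\pm$ labelling in \eqref{Eq: Apm Bpm assumption} together with the single-intersection property of Lemma~\ref{Lem: extend no intersect}.
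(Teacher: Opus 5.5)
Your proof is correct. For (ii) it takes a somewhat different and heavier route than the paper.

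\textbf{The paper's route for (ii).} The paper first establishes the global inclusion $B_+\subset A_+$. The set $B_+$ is path-connected and misses $\gamma$, because $\gamma((0,10))\subset B_-$ and $p\in\rho$. Hence $B_+$ lies in one of $A_\pm$. It meets the open set $A_+$ near points of $\rho((0,10))$, which lie in its boundary by property~(6). The statement of Lemma~\ref{Lem: exp image inclusion} then gives $\mathcal E((0,r_2])\subset \rho((0,r_2])\cup (B_+\cap B_{r_1}(p))\subset A_+$ directly, with no case distinction.

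\textbf{Your route for (ii).} You work point by point, in two steps.
\begin{enumerate}
\item You invoke Lemma~\ref{Lem: m ext} and Corollary~\ref{Cor: int geod no ext} to rule out $F(m(t))=\rho(m(t))$. This is unnecessary, since $\rho((0,r_2])\subset A_+$ already.
\item You transport $F(t_0)$ back to $\rho(t_0)$ along the tail of $\tilde\sigma_{t_0}$. This relies on the internals of the proof of Lemma~\ref{Lem: exp image inclusion} rather than on its statement.
\end{enumerate}
Each step you take is valid. Your third bullet is redundant, since the tail already lies in $B_+$, which cannot contain $p\in\rho$.

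\textbf{Comparison.} The paper's argument is lighter and produces the reusable structural fact $B_+\subset A_+$. Yours gives an explicit connecting path and, as a by-product, shows that $\mathcal E(t)\notin\rho$ for $t>0$.

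\textbf{Part (i).} The paper argues directly from Lemma~\ref{Lem: exp image inclusion}: $F(m(t))$ is either $\rho(m(t))\neq p$ or lies in $B_+$, which does not contain $p$. You deduce (i) from (ii). Both are fine.
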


\begin{proof}
By Lemma~\ref{Lem: m prop}, we have $m(0) = 0$ and $m(t) > 0$ for every $t \in (0,r_2]$. Let $t \in (0,r_2]$. By Lemma~\ref{Lem: exp image inclusion}, we have $F(m(0)) = p$, and either $F(m(t)) = \rho(m(t))$ or $F(m(t)) \in B_+ \cap B_{r_0}(p)$. In both cases, $F(m(t)) \neq F(m(0))$, and so $\mathcal E(t) \neq \mathcal E(0)$. Therefore, $\mathcal E$ is non-constant. 

Next, we prove statement~(ii). By \eqref{Eq: Apm Bpm assumption}, we have $\gamma((0,10)) \subset B_-$ and $\rho((0,10)) \subset A_+$.
The former implies that $\gamma$ does not intersect $B_+$. Since $B_+$ is path-connected, it must then be entirely contained in one of the components of $B_{10}(p) \setminus \gamma$, namely one of $A_\pm$. Since $\rho((0,10))$ coincides with the topological boundary of $B_+$ in $B_{10}(p)$ by property~(6) of $\rho$, and is contained in $A_+$, it is not difficult to show that $B_+ \subset A_+$. By Lemma~\ref{Lem: exp image inclusion}, we have  
\[\mathcal E((0,r_2]) \subset \rho((0,r_2]) \cup \big(B_+ \cap B_{r_0}(p)\big) \subset A_+. \qedhere\]
\end{proof}

\begin{Lem}\label{Lem: E ext}
The map $\mathcal E$ is continuous, and its image $\mathcal E([0,r_2]) \subset \Ext(X)$. Moreover, the level sets of $\mathcal E$ are closed intervals (a singleton in $\R$ is considered a closed interval). 
\end{Lem}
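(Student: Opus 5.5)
The image inclusion comes for free: by Lemma~\ref{Lem: m ext}, $F(m(t)) \in \Ext(X)$ for every $t \in [0,r_2]$, so $\mathcal E([0,r_2]) \subset \Ext(X)$. The remaining work concerns continuity and the structure of the level sets. Both will be read off from the monotone function $m$ and the closedness of extremal limits in Lemma~\ref{Lem: F lim ext pt}.

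\emph{Level sets of $m$.} For $s$ in the image of $m$, consider $m^{-1}(s)$. By Lemma~\ref{Lem: m dec}, $m$ is non-decreasing, so $m^{-1}(s)$ is an interval. It contains $s$, since $m(s) = m(m(t)) = m(t) = s$ by Lemma~\ref{Lem: m ext}. Its infimum is therefore $s$, and that infimum is attained.

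To see that the supremum $b$ is attained, take $t_i \uparrow b$ with $m(t_i) = s$. By definition there are $s_i' \in [t_i,1]$ with $s \sim s_i'$. Pass to limits of the maximal extensions exactly as in the proof of Lemma~\ref{Lem: Tt closed}. This gives some $s' \ge b$ with $s \sim s'$ (or $s = b$), so $s \in \mathcal T_b$ and hence $m(b) \le s$. Monotonicity gives $m(b) \ge s$, so $m(b) = s$ and $m^{-1}(s)$ is closed.

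\emph{Level sets of $\mathcal E$.} Next I show that distinct values of $m$ give distinct values of $\mathcal E$. Suppose $s_1 < s_2$ are both values of $m$ with $F(s_1) = F(s_2)$. If the common point is $\rho(s_1)$ or $\rho(s_2)$, this contradicts Lemma~\ref{Lem: exp image inclusion}. Otherwise the two maximal extensions share an endpoint, so by definition $s_1 \sim s_2$. Then $s_1 \in \mathcal T_{s_2}$, giving $m(s_2) \le s_1 < s_2 = m(s_2)$, a contradiction. Hence the level sets of $\mathcal E$ are exactly the level sets of $m$, which are closed intervals.

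\emph{Continuity.} Let $t_i \to t$. Since $m$ is monotone, I only need to treat monotone sequences, and $m(t_i)$ converges to some limit $u$. If $u = m(t)$, then Lemma~\ref{Lem: F lim ext pt} applies, because $F(m(t_i)) \in \Ext(X)$; it gives $\mathcal E(t_i) = F(m(t_i)) \to F(u) = \mathcal E(t)$.

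\emph{Main obstacle: ruling out jumps of $m$.} The case $u \neq m(t)$ is where I expect the real difficulty. It is the case of a jump, where $\lim_{i} m(t_i) \ne m(t)$. I must show that $F$ nevertheless takes equal values at the two ends of the jump.

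Take the left limit at $t$, with $t_i \uparrow t$ and $u = \lim m(t_i) \le m(t)$. By Lemma~\ref{Lem: F lim ext pt}, $F(u) \in \Ext(X)$ and $F(m(t_i)) \to F(u)$. On the other hand, by Lemma~\ref{Lem: m dec}, $m(t_i) \le t_i < m(t)$, so $u \le m(t)$. If $u < m(t)$, then no point of $(u, m(t))$ lies in the image of $m$.

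My plan is to use $m(t_i) \sim s_i'$ with $s_i' \ge t_i$. Passing to limits as in Lemma~\ref{Lem: Tt closed}, I get either $u \sim s'$ with $s' \ge t$, or a collapse $u = s'$. In the first case $u \in \mathcal T_t$, so $m(t) \le u$; hence $u = m(t)$ and there is no jump. In the collapse case $u = s' \ge t \ge m(t)$, which again forces $u = m(t)$.

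Right limits are handled symmetrically. Take $t_i \downarrow t$ and suppose $u = \lim m(t_i) > m(t)$. Then $m(t_i) \ge m(t)$, and by Lemma~\ref{Lem: m dec}(ii) we have $t < m(t_i) \le t_i$, so $m(t_i) \to t$. Then $u = t$, and $m(t) \le t = u$. It remains to show $F(t) = F(m(t))$ when $t$ is not in the image of $m$. I would get this from the relation $m(t) \sim s'$ with $s' \ge t$ and squeezing via Lemma~\ref{Lem: ext pt sim}, applied at the extremal point $F(u)$: this gives $u \sim m(t)$, hence $F(u) = F(m(t))$.
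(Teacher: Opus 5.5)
Your proof is correct, but it is organized differently from the paper's in each of its three parts.

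\emph{Level sets.} You show that each $m^{-1}(s)$ is a closed interval, attaining the supremum by the limiting argument of Lemma~\ref{Lem: Tt closed}. You then show that $F$ is injective on $m([0,r_2])$, using $m\circ m=m$ and the minimality of $m$. Hence $\mathcal E$ and $m$ have the same level sets. The paper instead writes $\mathcal E^{-1}(x)=[m(t),t']\cap[0,r_2]$ with $t'=\max\{s\in[m(t),1]: s\sim m(t)\}$ and leaves the verification to monotonicity. Your version makes explicit the injectivity the paper uses tacitly.

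\emph{Left limits.} You prove that $m$ is left-continuous by a new compactness argument that places $\lim m(t_i)$ in $\mathcal T_t$. The paper needs no new compactness. If $m(t)<t$, then $m$ is eventually constant. If $m(t)=t$, it picks $s_i\in[t_i,t)$ with $s_i\sim m(t_i)$, so that $F(s_i)=\mathcal E(t_i)\in\Ext(X)$ and $s_i\to t$, and applies Lemma~\ref{Lem: F lim ext pt} directly to $(s_i)$. (A small slip: $m(t_i)\le t_i<m(t)$ holds only in alternative (ii) of Lemma~\ref{Lem: m dec}; in alternative (i) one has $m(t_i)=m(t)$. Either way $u\le m(t)$.)

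\emph{Right limits.} You obtain $t\sim m(t)$ from Lemma~\ref{Lem: ext pt sim} at the extremal point $F(t)$. The paper gets this from the definition of $\mathcal T$ alone. If $m(t)\sim s$ for some $s>t$, then $m(t)\in\mathcal T_{t_i}$ for large $i$, which forces $m(t_i)=m(t)$. So a jump can occur only when the sole partner is $s=t$, that is, $m(t)\sim t$.

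Your route spends a heavier geometric lemma where the combinatorics of $\mathcal T$ suffices. In exchange it gives a sharper picture: $m$ is left-continuous and jumps only from the right, at the largest element of a nontrivial $\sim$-class, where $F$ takes equal values at both ends of the jump.
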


\begin{proof}
It is immediate from Lemma~\ref{Lem: m ext} that $\mathcal E([0,r_2]) = F(m([0,r_2])) \subset \Ext(X)$. 

Let $t \in [0,r_2]$. We will prove that $\mathcal E$ is continuous at $t$. It suffices to show that for any sequence $\{t_i\}$ in $[0,r_2]$, with $t_i \to t$, we have $\mathcal E(t_i) \to \mathcal E(t)$. 

First, assume that $t_i \downarrow t$. If there exist $s \in (t,1]$ such that $m(t) \sim s$, then for sufficiently large $i$, we have $m(t) \leq t < t_i \leq s$. Therefore, $m(t) \in \mathcal T_{t_i}$ and so $m(t) \geq \min(T_{t_i}) = m(t_i)$. Since $m$ is non-decreasing by Lemma~\ref{Lem: m dec}, we have $m(t_i) = m(t)$, which implies $\mathcal E(t_i) = \mathcal E(t)$. 

On the other hand, if there does not exist $s \in (t,1]$ such that $m(t) \sim s$, then $m(t) \sim t$.  We claim that for every $i \in \N$, $m(t_i) > t$. Indeed, if there exists $i$ such that $m(t_i) \leq t$, then Lemma~\ref{Lem: m dec} implies $m(t_i) = m(t)$. By definition of $m(t_i)$, there exists $s \in [t_i, 1] \subset (t, 1]$ such that $s \sim m(t_i) = m(t)$, contradicting our assumption. Therefore, $t_i \geq m(t_i) > t$ for each $i$. In particular, $m(t_i) \to t$. Since each $F(m(t_i)) \in \Ext(X)$ by Lemma~\ref{Lem: m ext}, Lemma~\ref{Lem: F lim ext pt} implies $F(m(t_i)) \to F(t).$ Since $m(t) \sim t$, this is exactly the statement that $\mathcal E(t_i) \to \mathcal E(t)$. 

Next, assume that $t_i \uparrow t$. If $m(t) < t$, then $m(t) < t_i$ for sufficiently large $i$. It follows from Lemma~\ref{Lem: m dec} that $m(t) = m(t_i)$, and so $\mathcal E(t_i) = \mathcal E(t)$. In particular, $\mathcal E(t_i) \to \mathcal E(t)$. 

If $m(t)=t$, then we claim that for every $i \in \N$, there does not exist $s \geq t$ such that $s \sim m(t_i)$. Indeed, suppose for some $i$ there exists $s \in [t,1]$ with $s \sim m(t_i)$. Since $m(t_i) \leq t_i < t$, this implies that $m(t_i) \in \mathcal T_t$, contradicting the minimality of $t = m(t) = \min(\mathcal T_t)$. Therefore, for each $i$, there exists $s_i \in [t_i, t)$ such that $s_i \sim m(t_i)$. In particular, $s_i \to t$ as $i \to \infty$. By Lemma~\ref{Lem: m ext}, we have $F(s_i) = F(m(t_i)) \in \Ext(X)$. Applying Lemma~\ref{Lem: F lim ext pt} to the sequence $s_i \to t$, we obtain $F(s_i) \to F(t)$. Since $F(s_i) = F(m(t_i)) = \mathcal E(t_i)$ and $F(t) = \mathcal E(t)$ (because $m(t) = t$), this is exactly the statement that $\mathcal E(t_i) \to \mathcal E(t)$.

Finally, let $x \in \mathcal E([0,r_2])$. Then there exists $t \in [0,r_2]$ such that $x = F(m(t))$. Arguing as in the proof of Lemma~\ref{Lem: Tt closed}, we can show that the set $\{s \in [m(t),1]: s \sim m(t)\}$ is closed and non-empty, and therefore admits a maximum $t'$. Using the monotonicity of $m(t)$, it is not difficult to see that  $\mathcal E^{-1}(x) = [m(t), t'] \cap [0, r_2]$.
\end{proof}

\subsection{Proof of Proposition~\ref{Pro: ext pt curve}}
By Proposition~\ref{Pro: every pt have extend reg geod}, there exists a regular geodesic $\gamma:[0,1] \to X$ such that $\gamma(0) = p$. Let $R > 0$ be the length of $\gamma$. 

By Theorem~\ref{Thm: semi-locally simply connected} and Proposition~\ref{Pro: comp exist geod ep ext quant}, there exists $0 < r \leq R$ such that for every $0 < s \leq r$, the set $B_{s}(p) \setminus \gamma$ has exactly two components. Denote by $\tilde A_\pm$ the components of $B_{r/2}(p) \setminus \gamma$. By Proposition~\ref{Pro: comp exist geod ep ext quant}, there exist points $x_\pm \in \tilde A_\pm$ such that the geodesics from $p$ to $x_\pm$ are unique and regular. Let $\rho_\pm: [0,1] \to X$ denote these geodesics, with $\rho_\pm(0) = p$. It is not difficult to check from the non-branching property that $\rho_\pm$ intersect $\gamma$ only at $p = \gamma(0)$. Therefore, $\rho_\pm((0,1]) \subset \tilde A_\pm$.

Set $R' := \min(\dd_X(p, x_+), \dd_X(p,x_-)) < r$. By Theorem~\ref{Thm: semi-locally simply connected}
and Proposition~\ref{Pro: comp exist geod ep ext quant}, there exists $0 < r' \leq R'$ such that for every $0< s \leq r'$, the sets $B_{s}(p) \setminus \rho_\pm([0,1])$ each have exactly two components. Denote by $A_\pm$ the components of $B_{r'}(p) \setminus \gamma$. By Lemma~\ref{Lem: ext point disconnect all small ball} and up to relabeling $A_\pm$, we have $A_\pm = \tilde A_\pm \cap B_{r'}(p)$.
In particular, $\rho_\pm((0,1]) \cap B_{r'}(p) \subset A_\pm$. 

Replace $\gamma$ and $\rho_\pm$ with their subsegments of length $r'$ starting from $p$. It is immediate that conditions (1), (2) and (5) in the initial setup of this section are satisfied for the pairs $(\gamma, \rho_+)$ and $(\gamma, \rho_-)$ (after rescaling by a factor of $1/(10r')$). By Lemmas~\ref{Lem: ext-pt comp boundary} and \ref{Lem: ext point disconnect all small ball}, all remaining conditions (3), (4), (6), and (7) are also satisfied. 

Applying Lemma~\ref{Lem: E ext} to the pair $(\gamma, \rho_+)$, we obtain a continuous map $\mathcal E_+:[0,1] \to X$ such that $\mathcal E_+ \subset \Ext(X)$ and $\mathcal E_+(0) = p$. Since the level sets of $\mathcal E_+$ are closed intervals and $\mathcal E_+$ is not a singleton by Lemma~\ref{Lem: E prop}~(i), it is a standard fact (see for instance \cite[Proposition 8.22]{N92}) that $\mathcal E_+$ is an arc, i.e. homeomorphic to the closed interval $[0,1]$. More precisely, there exists a continuous injective map $\tilde {\mathcal E}_+:[0,1] \to X$ with $\tilde{\mathcal E}_+(0) = p$ such that $\tilde {\mathcal E}_+([0,1]) = \mathcal E_+([0,1])$. By replacing $\mathcal E_+$ with $\tilde{\mathcal E}_+$, we may assume that $\mathcal E_+$ is injective.

Applying the previous argument to the pair $(\gamma, \rho_-)$, we also obtain a simple curve $\mathcal E_-:[0,1] \to X$ such that $\mathcal E_- \subset \Ext(X)$ and $\mathcal E_-(0) = p$. By Lemma~\ref{Lem: E prop}~(ii), we have $\mathcal E_\pm((0,1]) \subset A_\pm$. In particular, $\mathcal E_+$ and $\mathcal E_-$ intersect only at $p$. 

Finally, the continuous curve $\mathcal E:[-1,1] \to X$ defined by
\[
\mathcal E(t) :=
\begin{cases}
    \mathcal E_+(t) &\text{if $t\in[0,1]$,}\\
	\mathcal E_-(-t) &\text{if $t \in [-1,0)$}
	\end{cases}
\]
is simple and contained in the extremal set, with $\mathcal E(0) = p$. \qed

\medskip
The previous proof in fact gives the following stronger statement.
\begin{Cor}\label{Cor: ext pt curve}
Let $(X, \dd_X, \mm_X)$ be an $\RCD(K,N)$ space of essential dimension $2$, and let $\gamma:[0,1] \to X$ be a regular geodesic such that that $p := \gamma(0) \in \Ext(X)$. Let $R$ be the length of $\gamma$ and let $0 < r \leq R$ be such that the inclusion $B_r(p) \hookrightarrow B_R(p)$ induces the trivial map on $\pi_1$. Then the set $B_{r}(p) \setminus \gamma$ has exactly two components $A_\pm$, and there exists a simple continuous curve $\mathcal E:[-1,1] \to X$ with $\mathcal E(0) = p$ such that
\[
\mathcal E \subset \Ext(X), \quad \mathcal E((0,1]) \subset A_+, \quad \text{and} \quad \mathcal E([-1,0)) \subset A_-.
\]
\end{Cor}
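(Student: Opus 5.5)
The corollary is the proof of Proposition~\ref{Pro: ext pt curve}, run again with the given $\gamma$ and $r$ in place of the auxiliary ones chosen there. The plan is to re-trace that proof, keeping track of which statements depend only on the quantitative hypothesis on $(R,r)$.

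\textbf{Step 1: the two components.} Since $p\in\Ext(X)$ and $\gamma$ is regular of length $R$, Proposition~\ref{Pro: comp exist geod ep ext quant} gives that $B_r(p)\setminus\gamma$ has exactly two components $A_\pm$, with topological boundaries $\gamma\cap B_r(p)$. By Lemma~\ref{Lem: ext point disconnect all small ball}, for every $0<s\le r$ the components of $B_s(p)\setminus\gamma$ are $A_\pm\cap B_s(p)$.

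\textbf{Step 2: auxiliary geodesics.} Using Proposition~\ref{Pro: every pt have extend reg geod}, I choose $x_\pm\in A_\pm\cap B_{r/2}(p)$ such that the geodesics $\rho_\pm$ from $p$ to $x_\pm$ are unique and regular. This is possible because $A_\pm$ are open and nonempty, and a.e.\ endpoint works. By non-branching, and since $\gamma$ is not extendible past $p$ (Lemma~\ref{Lem: geod at ext-pt not extend}), each $\rho_\pm$ meets $\gamma$ only at $p$; hence $\rho_\pm((0,1])\subset A_\pm$. Next, by Theorem~\ref{Thm: semi-locally simply connected} I pick $r'\le \min \dd_X(p,x_\pm)$ such that $B_{r'}(p)\hookrightarrow B_{\min \dd_X(p,x_\pm)}(p)$ is trivial on $\pi_1$. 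Proposition~\ref{Pro: comp exist geod ep ext quant}, applied to $\rho_\pm$ with the extremal endpoint $p$, then shows that every $B_s(p)\setminus\rho_\pm$ with $s\le r'$ has exactly two components. I truncate $\gamma$ and $\rho_\pm$ to length $r'$ and rescale by $1/(10r')$. Using Lemmas~\ref{Lem: ext-pt comp boundary} and~\ref{Lem: ext point disconnect all small ball}, the pairs $(\gamma,\rho_+)$ and $(\gamma,\rho_-)$ then satisfy the standing framework (1)--(7) of this section, with the label $A_+$ of the framework equal to $A_\pm\cap B_{r'}(p)$ respectively.

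\textbf{Step 3: the cut-locus curves.} For each pair I apply the cut-locus construction. Lemma~\ref{Lem: E ext} gives a continuous $\mathcal E_\pm$ whose image lies in $\Ext(X)$, with $\mathcal E_\pm(0)=p$ and level sets that are closed intervals. Lemma~\ref{Lem: E prop} gives that $\mathcal E_\pm$ is nonconstant and that $\mathcal E_\pm((0,r_2])\subset A_\pm\cap B_{r'}(p)\subset A_\pm$. By the standard fact \cite[Proposition 8.22]{N92}, the image of each $\mathcal E_\pm$ is an arc, which I reparametrize injectively starting at $p$. Since $A_+\cap A_-=\emptyset$, the two arcs meet only at $p$, so concatenating them gives the required simple curve $\mathcal E$.

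\textbf{Main obstacle.} The delicate point is the label bookkeeping in Step 3. In the framework the side $A_+$ is fixed by the normalization \eqref{Eq: Apm Bpm assumption}, namely $\rho((0,10))\subset A_+$. For the pair $(\gamma,\rho_-)$ this normalization forces the framework's $A_+$ to be our $A_-$, so the conclusion $\mathcal E_-((0,r_2])\subset A_-$ is what we need. This requires checking that the ball $B_{r'}(p)$ used in the framework sits inside $B_r(p)$, so that the framework components really are restrictions of the $A_\pm$ from Step 1; Lemma~\ref{Lem: ext point disconnect all small ball} guarantees this.
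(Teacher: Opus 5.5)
Your proposal is correct and matches the paper's own proof. The paper obtains the corollary by rerunning the proof of Proposition~\ref{Pro: ext pt curve} with the given $\gamma$ and $r$, using the same ingredients as you: Proposition~\ref{Pro: comp exist geod ep ext quant} for the two components, auxiliary regular geodesics $\rho_\pm$ into $A_\pm$ so that the pairs $(\gamma,\rho_\pm)$ fit the framework, Lemmas~\ref{Lem: E ext} and~\ref{Lem: E prop} for the cut-locus curves, and the monotone-map-to-arc fact, with exactly your label bookkeeping. (To see that $\rho_\pm$ meets $\gamma$ only at $p$, you do not need non-extendibility of $\gamma$ past $p$; non-branching together with $\dd_X(p,x_\pm)<r/2<R$ already suffices.)
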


\section{Boundary polygonal domains}\label{Sec: bd poly dom} 

In this section, we introduce the notion of a boundary polygonal domain (see Definition~\ref{Def: bd poly dom}; cf. Definition~\ref{Def: poly dom}) and show that every extremal point admits a neighborhood basis consisting of boundary polygonal domains. In Section~\ref{Sec: man struct}, we will prove that boundary polygonal domains are homeomorphic to open half-disks. This will allow us to deduce a manifold-with-boundary structure in a neighborhood of every extremal point.

For the rest of the section, let $(X,\dd_X,\mm_X)$ be an $\RCD(K,N)$ space of essential dimension~$2$.

\begin{Def}[Boundary polygonal loop]\label{Def: bd poly loop}
A continuous curve $\alpha:[-1,1]\to X$ is called a \emph{simple boundary polygonal loop} if the following conditions hold:
\begin{enumerate}
    \item $\alpha(-1) = \alpha(1)$ and $\alpha \lvert_{[-1,1)}$ is injective.  
    \item The restriction $\alpha \lvert_{[0,1]}$ is a simple polygonal curve.
    \item $\alpha((0,1)) \subset \Int(X)$ and $\alpha([-1,0]) \subset \Ext(X)$.
\end{enumerate}
\end{Def}
The term ``boundary'' reflects the fact that the portion $\alpha([-1,0])$, called the \emph{boundary curve} of $\alpha$, is contained in the extremal set. The notions of \emph{speed}, \emph{interior vertices}, \emph{extremal vertices}, \emph{edges}, and \emph{edges} of $\alpha$ are defined to be those of its restriction $\alpha|_{[0,1]}$.

The first result of this section is a version of the Jordan Theorem~\ref{Thm: Jordan theorem} for boundary loops. It will be the main tool to construct boundary polygonal domain.

\begin{Thm}[Jordan theorem, boundary]\label{Thm: Jordan theorem boundary}
Let $(X,\dd_X,\mm_X)$ be an $\RCD(K,N)$ space of essential dimension $2$. Let $\alpha \subset B_R(p)$ be a simple boundary polygonal loop. If the inclusion $B_R(p) \hookrightarrow X$ induces the trivial map on $\pi_1$, then $B_R(p) \setminus \alpha$ has exactly two components, and their topological boundaries in $B_R(p)$ are contained in $\alpha$.

If, in addition, there exist $0 < s \le r < R$ such that $\alpha \subset B_s(p)$ and the inclusion $B_s(p) \hookrightarrow B_r(p)$ induces the trivial map on $\pi_1$, then one of the connected components of $B_R(p) \setminus \alpha$ is contained in $\overline B_r(p)$, and its topological boundary is $\alpha$. In this case, $X \setminus \alpha$ also has exactly two components, and at least one of them coincides with a component of $B_R(p) \setminus \alpha$ contained in $\overline B_r(p)$. Moreover, the topological boundary of both components contain $\alpha([0,1])$.
\end{Thm}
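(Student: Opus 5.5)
The plan mirrors the proof of Theorem~\ref{Thm: Jordan theorem}, with the boundary curve $\alpha([-1,0])$ playing the role of a ``free'' part that cannot be crossed. The first step is to build a path-connected open neighborhood $U\subset B_R(p)$ of $\alpha$ such that $U\setminus\alpha$ has exactly two components. This is more delicate than before, because near the boundary curve a ball is not cut into two sides by $\alpha$.

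Along the open polygonal part $\alpha((0,1))$ I proceed as in Proposition~\ref{Pro: poly loop neigh}. At non-vertex points I use good components from Theorem~\ref{Thm: good components existence 2}, and at interior vertices I use Proposition~\ref{Pro: good comp exist corner int}. The labels are propagated consistently with Lemmas~\ref{Lem: good comp side induct} and \ref{Lem: good comp corner side induct}.

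At the two junction points $\alpha(0),\alpha(1)\in\Ext(X)$, the incident edge $\gamma$ is a regular geodesic ending at an extremal point. Here I use Proposition~\ref{Pro: comp exist geod ep ext quant} and Lemma~\ref{Lem: geod endpoint ext good comp} to get two good components of small balls $B_r(\alpha(0))\setminus\gamma$. I then need to check that $\alpha([-1,0])\cap B_r(\alpha(0))$ lies entirely in one of them, say the ``$-$'' side. For this I use Corollary~\ref{Cor: ext pt curve} together with Corollary~\ref{Lem: ext set no disconnect}, the latter saying that extremal sets do not disconnect balls. Balls centered on the boundary curve itself contribute only one component each: their complement of $\alpha$ is just the complement of an extremal set, hence connected by Corollary~\ref{Lem: ext set no disconnect}. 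I would then aim to show that $U\setminus\alpha=U_+\sqcup U_-$, where $U_+$ is the union of the $+$ sides along the polygonal part and $U_-$ collects the $-$ sides together with the neighborhoods of the boundary curve.

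I expect the main obstacle to be the disjointness of $U_+$ and $U_-$, which amounts to ruling out a Möbius-type identification. Unlike the interior case, this is not a genuinely two-sided loop: the boundary curve is a ``wall''. I would first try to show directly that the $+$ side chosen at $\alpha(0)$ matches the $+$ side at $\alpha(1)$, using the cover by $A=$ (neighborhood of the polygonal part) and $B=$ (neighborhood of the boundary curve). If they did not match, the $-$ sides at both ends would be joined through $B$ and the $+$ sides through $A$ to the $-$ sides, making $U\setminus\alpha$ connected. The configuration would then fit Lemma~\ref{Lem: mobius orientability} with $B'$ connected rather than two-component, so that lemma has to be adapted. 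Alternatively, I would use Proposition~\ref{Pro: mobius nbd no embed} with the contractibility of loops in $B_R(p)$: a Mayer--Vietoris computation for $X=(X\setminus\alpha)\cup U$ would produce a loop crossing $\alpha$ once that is homologically nontrivial in $X$, which is a contradiction. The fact that $\alpha$ need not separate is exactly why the conclusion only claims that the boundaries are contained in $\alpha$.

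With $U$ in hand, I run the Mayer--Vietoris argument of Proposition~\ref{Pro: comp exist extend corner quant} for $B_R(p)=U\cup(B_R(p)\setminus\alpha)$. This gives at least two components of $B_R(p)\setminus\alpha$, separating $U_\pm$. For at most two, I join every point to $U$ by a geodesic towards $\alpha(1/2)$, as in Lemma~\ref{Lem: can always connect to point on geod}. Boundaries of components lie in $\alpha$ because the components are clopen in $B_R(p)\setminus\alpha$.

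The second part follows Theorem~\ref{Thm: Jordan theorem 2} verbatim. I reduce to $p\in\Int(X)$ by density, and join two far points $x_\pm$ to $p$ by geodesics via Lemma~\ref{Lem: perturb generic int}. The parity argument counts side changes only at crossings with the polygonal part, since these geodesics avoid $\Ext(X)$ in their interiors by Corollaries~\ref{Cor: int geod reg} and \ref{Cor: int geod no ext}. For the bounded component, its boundary contains all of $\alpha$: the boundary curve lies in its closure because the other component contains the points outside $\overline B_r(p)$, and points of $\Ext(X)$ are approximated by the interior set.
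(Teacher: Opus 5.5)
For the first half you take a different, workable route. The paper never builds a neighbourhood of the whole loop. It first proves a separation lemma for the arc $\alpha|_{[0,1]}$ alone (Lemma~\ref{Lem: sep lem}). The two-sided neighbourhood of that arc is assembled along an interval, as in Proposition~\ref{Pro: corner neigh}, with Proposition~\ref{Pro: comp exist geod ep ext quant} at the extremal endpoints, so no M\"obius question arises. It then shows that the boundary curve does not disconnect the component $P$ of $X\setminus\alpha([0,1])$ containing it (Lemma~\ref{Lem: bd poly loop no disconnect}, via Corollary~\ref{Lem: ext set no disconnect}).

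Your twist can be excluded as you propose. In the twisted configuration, $\partial^V_*[\tilde\gamma]$ has total coefficient $+1$ on the part of $A'\cap B'$ near $\alpha(0)$ and $-1$ on the part near $\alpha(1)$. These parts lie in different components of $A'$, so $(j_1')_*\partial^V_*[\tilde\gamma]\neq 0$, and the proof of Proposition~\ref{Pro: mobius nbd no embed} goes through. But the step is redundant once the arc separates: a twist would let the connected boundary curve join the two sides of $\alpha([0,1])$.

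There are two slips in your Mayer--Vietoris step. First, the hypothesis only makes loops in $B_R(p)$ contractible in $X$, so the cover must be $X=U\cup(X\setminus\alpha)$, not $B_R(p)=U\cup(B_R(p)\setminus\alpha)$. Second, a geodesic from $x$ to $\alpha(1/2)$ may leave $B_R(p)$. Instead, follow geodesics from $x$ to $p$ and from $p$ to a point of $\alpha$, both inside $B_R(p)$, and stop just before the first point of $\alpha$.

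For the bounded component, on the other hand, running the parity argument of Theorem~\ref{Thm: Jordan theorem 2} on the whole loop is the right choice. The test path $\beta$ through $p$ cannot meet the boundary curve, so $\alpha(0)$ and $\alpha(1)$ stay on the same side of $\beta$. The paper instead relies on the second half of Lemma~\ref{Lem: sep lem}, which as stated cannot hold for an arc alone. A segment across a flat strip $\R\times[0,1]$ cuts $B_R(p)$ into two pieces, neither contained in $\overline B_r(p)$.

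The genuine gap is the final claim that the component inside $\overline B_r(p)$ has boundary all of $\alpha$. Your reason, that the other component contains the points outside $\overline B_r(p)$, says nothing about the side from which the boundary curve is approached. By your own construction, only the component containing $U_-$ accumulates on $\alpha((-1,0))$, since small punctured balls there are connected and lie in $U_-$. So you must show that, when $B_R(p)\not\subset\overline B_r(p)$, the far points lie in the component containing $U_+$.

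This is not automatic and must use $B_s(p)\hookrightarrow B_r(p)$. Take a flat half-cylinder of small circumference capped far away by a hemisphere, with $\alpha([-1,0])$ the long arc of the boundary circle. Every other hypothesis holds, yet the bounded component is the thin region cut off by $\alpha([0,1])$, whose frontier is only $\alpha([0,1])$.

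Here is a fix within your framework. Corollary~\ref{Cor: ext pt curve}, applied to the first edge of $\alpha$ at $\alpha(0)$, gives extremal points $e\notin\alpha$ on the side of that edge not containing $\alpha([-\veps,0))$; hence $e\in U_+$. Run your parity argument with $\beta$ the geodesic from $e$ to $p$ followed by one from $p$ to a far point. A Mayer--Vietoris argument as in Proposition~\ref{Pro: comp exist extend corner quant}, now using $B_s(p)\hookrightarrow B_r(p)$ and Lemma~\ref{Lem: geod endpoint ext good comp} at $e$, shows that the two local sides of $\beta$ lie in different components of $B_s(p)\setminus\beta$. Since $\beta$ again misses the boundary curve, the even count puts $e$ and the far point in the same component. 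The paper's sketch is silent on this point too: it names $P$ without checking that it is the bounded component.
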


The main difference with the Jordan theorem for interior domains is that the two connected components have different topological boundaries: the interior component is bounded by $\alpha$, while the exterior component is bounded only by the non-boundary part of $\alpha$. This is already clear in elementary examples, such as a half-disk in the Euclidean upper half-plane, where the boundary loop consists of an arc in the boundary line together with a curve in the interior. In that case, the interior component has boundary equal to the whole loop, whereas the exterior component has boundary given only by the interior part of the loop. Thus, in the presence of boundary, the statement above is optimal.

\begin{Def}[Boundary polygonal domain]
\label{Def: bd poly dom}
We say that a subset $Q \subset X$ is a \emph{boundary polygonal domain} if the following conditions hold:
\begin{enumerate}
    \item $Q$ is open, connected, and precompact.

    \item There exists a boundary polygonal loop $\alpha_Q$ (see Definition~\ref{Def: bd poly loop}) such that $\alpha_Q((-1,0))\subset Q$ and the topological boundary $\partial Q:=\overline Q\setminus Q$ coincides with $\alpha_Q([0,1])$.

    \item $\overline Q \cap \Ext(X) = \alpha_Q([-1,0])$ (see Definition~\ref{Def: ext-int points}).
\end{enumerate}
\end{Def}

Boundary polygonal domains play the role of neighborhoods of boundary points in manifolds with boundary. In our framework, they arise as the interior components appearing in the second conclusion of Theorem~\ref{Thm: Jordan theorem boundary}, that is, as connected components of the complement of a simple boundary polygonal loop, together with the corresponding boundary portion.

\begin{Rem}\label{Rem: polydom consistency2}
Assume that $Q\subset B_r(p)$ is a boundary polygonal domain. If the inclusions $B_R(p)\hookrightarrow X$ and $B_r(p)\hookrightarrow B_{R/2}(p)$ induce the trivial map on $\pi_1$ for some $R\ge 2r$, and there exists a point $q\in X\setminus B_R(p)$, then $\alpha_Q$ disconnects $X$ into two connected components, and $Q\cap \Int(X)$ is precisely the component contained in $B_R(p)$. Compare with Remark \ref{Rem: polydom consistency1}.
% Indeed, it is clear that $Q\cap \Int(X)$ is contained in one of the two components of $X\setminus \alpha_Q$. If it were strictly smaller, then we could find a continuous curve in the same connected component joining a point of $Q\cap \Int(X)$ to a point in the complement of $Q$. Such a curve would have to intersect the topological boundary of $Q$, namely $\partial Q=\alpha_Q([0,1])$, which is impossible since $\partial Q$ is disjoint from the connected component of $X\setminus \alpha_Q$ under consideration.
\end{Rem}

Conversely, if $Q$ is a boundary polygonal domain such that $\overline Q\subset B_r(p)$ and the geometric assumptions above are satisfied, then $\alpha_Q$ disconnects the space into two connected components, and $Q$ is precisely the component contained in $B_R(p)$.

\begin{Thm}[cf. Theorem~\ref{Thm: poly dom neigh basis}]\label{Thm: bd poly dom neigh basis}
Let $(X,\dd_X,\mm_X)$ be an $\RCD(K,N)$ space of essential dimension $2$. Then for every $p \in \Ext(X)$ and every $R>0$, there exists a simple boundary polygonal loop $\alpha:[-1,1] \to X$ such that the following hold:
\begin{enumerate}
\item $\alpha\subset B_R(p)$, $\alpha((0,1))\subset \mathcal{R}_2$, and $p \in \alpha((-1,0))$;

\item $X\setminus \alpha([0,1])$ has exactly two connected components. Moreover, one of these components $Q$ is a boundary polygonal domain contained in $B_R(p)$.
\end{enumerate}
In particular, every extremal point $p\in \Ext(X)$ admits a neighborhood basis consisting of boundary polygonal domains.
\end{Thm}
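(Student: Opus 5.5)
The plan is to mimic the proof of Theorem~\ref{Thm: poly dom neigh basis}, replacing the interior arc crossing $\gamma$ by an arc of the extremal curve $\mathcal E$ from Corollary~\ref{Cor: ext pt curve}. The curve $\mathcal E$ will be closed off by a polygonal arc through $\Int(X)$ that joins its two ends, and the Jordan theorem for boundary loops (Theorem~\ref{Thm: Jordan theorem boundary}) will then produce the domain.

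\emph{Setup.} Fix $p\in\Ext(X)$ and $R>0$, shrinking $R$ as convenient. By Proposition~\ref{Pro: every pt have extend reg geod}, choose a regular geodesic $\gamma$ from $p$ of length $\bar R \ge R$. By Theorem~\ref{Thm: semi-locally simply connected}, choose scales $s\le r\ll R$ such that the inclusions $B_R(p)\hookrightarrow X$ (after shrinking) and $B_s(p)\hookrightarrow B_r(p)$ are trivial on $\pi_1$, and such that $B_s(p)\hookrightarrow B_{\bar R}(p)$ is trivial on $\pi_1$. Corollary~\ref{Cor: ext pt curve} then gives the two components $A_\pm$ of $B_{s}(p)\setminus\gamma$ and a simple curve $\mathcal E\subset\Ext(X)$ through $p$ with $\mathcal E((0,1])\subset A_+$ and $\mathcal E([-1,0))\subset A_-$. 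Reparametrizing and truncating, I may assume $\mathcal E\subset B_{s/2}(p)$.

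\emph{Construction of the interior arc.} Pick endpoints $e_\pm=\mathcal E(\pm\tau)\in A_\pm$ with $\tau>0$ small. By density of $\mathcal R_2$ and Proposition~\ref{Pro: every pt have extend reg geod}, choose points $x_\pm\in A_\pm\cap\mathcal R_2$ very close to $e_\pm$ such that the geodesics $[e_\pm,x_\pm]$ are unique and regular. These geodesics start at extremal points, and by Corollary~\ref{Cor: int geod no ext} their interiors lie in $\Int(X)$. Since $A_\pm$ are open, I can arrange that $[e_\pm,x_\pm]$ meets $\mathcal E$ only at $e_\pm$. This disjointness is not automatic and needs care; I would argue by choosing $x_\pm$ generically, using Lemma~\ref{Lem: geod meas 0} together with non-branching against the closed set $\mathcal E$.

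Next join $x_+$ to $x_-$ inside $B_{s}(p)\setminus \mathcal E$. Such a curve exists because $B_s(p)\setminus\mathcal E$ is path-connected by Corollary~\ref{Lem: ext set no disconnect}. I then polygonalize it exactly as in Section~\ref{sec:proofplydom1}: approximate by concatenated strongly regular geodesics between nearby generic points. These stay in $\mathcal R_2\subset\Int(X)$, and since each is at distance $>\veps$ from the compact set $\mathcal E$, they avoid $\mathcal E$. I then excise self-intersections to obtain a simple polygonal curve $\sigma$ from $x_+$ to $x_-$, and also excise any intersections with $[e_\pm,x_\pm]$ by shortcutting. Concatenating $\mathcal E|_{[-\tau,\tau]}$ with $[e_+,x_+]$, $\sigma$ and $[x_-,e_-]$ gives a simple boundary polygonal loop $\alpha\subset B_s(p)$ with $\alpha((0,1))\subset\Int(X)$. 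A slight adjustment of the first and last edges, whose interiors are regular by Theorem~\ref{Thm: geodesic interior tangent}, gives $\alpha((0,1))\subset\mathcal R_2$, and $p\in\alpha((-1,0))$ holds by construction.

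\emph{Conclusion and main obstacle.} Theorem~\ref{Thm: Jordan theorem boundary} now gives that $X\setminus\alpha$ has two components, one of which, $Q'$, lies in $\overline B_r(p)\subset B_R(p)$. I then set $Q:=Q'\cup\alpha((-1,0))$, or equivalently the component of $X\setminus\alpha([0,1])$ containing $p$. Verifying Definition~\ref{Def: bd poly dom} requires three checks. First, $\partial Q=\alpha([0,1])$: this comes from the boundary statement in Theorem~\ref{Thm: Jordan theorem boundary}, and the fact that $\alpha((-1,0))$ is not in the closure of the outer component. Second, $Q$ is open near points of $\mathcal E$. Third, $\overline Q\cap\Ext(X)=\alpha([-1,0])$. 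I expect the last two to be the main obstacle, since they require showing that no other extremal points lie in $Q$ and that near each $\mathcal E(t)$ only one side is inside. My plan for this is to use Corollary~\ref{Cor: ext pt curve} at each $\mathcal E(t)$: $\gamma$-type geodesics separate small balls, so extremal points near $\mathcal E(t)$ off $\mathcal E$ would produce a disconnection contradicting the two-component count. If that fails, the fallback is to choose $\tau$ so small that $\Ext(X)\cap B_s(p)$ is, by the cut-locus parametrization, exactly the arc $\mathcal E$.
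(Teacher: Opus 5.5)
Your construction of the loop works, and some of your worries are unnecessary. The edges $[e_\pm,x_\pm]$ meet $\mathcal E$ only at $e_\pm$ automatically, because interior points of regular geodesics are interior points (Corollary~\ref{Cor: int geod no ext}) and $x_\pm$ can be taken interior. Openness of $Q$ near $\mathcal E$ is also automatic, because $Q$ is a component of the open set $X\setminus\alpha([0,1])$.

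The genuine gap is the step you flag as the main obstacle: condition (3) of Definition~\ref{Def: bd poly dom}, i.e.\ $Q\cap\Ext(X)=\alpha((-1,0))$. This is the real content of the theorem, and neither of your suggestions establishes it.
\begin{itemize}
\item Corollary~\ref{Cor: ext pt curve} at a point $\mathcal E(t)$ only gives an extremal arc through that point and a two-component splitting of a small ball by a regular geodesic. Another extremal arc passing nearby contradicts nothing.
\item Extremal points of $Q$ need not lie near $\mathcal E$ at all. A priori $Q$ could enclose a separate piece of $\Ext(X)$ (think of a small boundary circle), and this can only be excluded by using the $\pi_1$-hypothesis globally through a separation theorem.
\item Your fallback is circular. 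The cut-locus construction gives the existence of one extremal arc through $p$, not that $\Ext(X)\cap B_s(p)$ equals that arc; shrinking $\tau$ does not even change $B_s(p)$. Such local uniqueness is essentially what this theorem is meant to prove.
\end{itemize}

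The missing idea in the paper is to build $\alpha([0,1])$ from two geodesics $\rho_\pm$ issuing from a single interior ``hub'' $q=\gamma(r/2)$ and ending at $\mathcal E(\pm\delta)$.

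First, $\gamma$ then crosses $\alpha([0,1])$ exactly once, at $q$. This places $p$, hence $\alpha((-1,0))$, in the bounded component $Q$. You get this instead from the boundary clause of Theorem~\ref{Thm: Jordan theorem boundary}, which is a legitimate citation.

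More importantly, the hub lets the paper rule out extra extremal points. Suppose $x\in Q\cap\Ext(X)\setminus\alpha((-1,0))$.
\begin{itemize}
\item The geodesic $\nu$ from $q$ to $x$ meets $\rho_\pm$ only at $q$, by non-branching and regularity of geodesics from interior points.
\item So the concatenation $\beta$ of the reversed $\rho_+$ with $\nu$ is a simple polygonal arc with extremal endpoints in a small ball. Corollary~\ref{Cor: sep lem} splits $X\setminus\beta$ into a bounded component $P$ and an unbounded one $P'\ni\gamma(1)$.
\item The outer component $Q'$ of $X\setminus\alpha([0,1])$ misses $\beta$, so $Q'\subset P'$. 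Hence $\rho_-((0,1])\subset\partial Q'\setminus\beta$ lies in $P'$, and so does the connected set $\alpha([-1,0))$.
\item But $B_s(\mathcal E(\delta))\setminus\rho_+$ has two components, one in $Q\cap P$ and the other in $Q'\subset P'$. This forces $\alpha((-\veps,0))$ into $P$, a contradiction.
\end{itemize}

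With your arbitrary polygonal $\sigma$ there is no point from which a geodesic to $x$ is guaranteed to avoid $\alpha([0,1])$. To close the gap you must either restructure the interior arc around such a hub or find another way to produce the auxiliary separating arc $\beta$.
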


\subsection{Proof of the Jordan Theorem~\ref{Thm: Jordan theorem boundary}}

The proof is divided into two parts. The first, which contains the core of the argument, is a separation lemma for simple polygonal curves whose endpoints lie in the extremal set. The model example to keep in mind is the standard two-dimensional disk, where an arc joining two boundary points disconnects the disk. We now describe the precise setting.

Let $(X, \dd_X, \mm_X)$ be an $\RCD(K,N)$ space of essential dimension~$2$. Let $\alpha:[0,1] \to X$ be a simple polygonal curve such that
\begin{equation}\label{Eq: poly curve ext ep}
\alpha((0,1)) \subset \Int(X) \quad \text{and} \quad \alpha(\{0,1\}) \subset \Ext(X).
\end{equation}
We have the following separation lemma for $\alpha$; compare with Theorems~\ref{Thm: Jordan theorem 1} and~\ref{Thm: Jordan theorem 2}.

\begin{Lem}[Separation lemma]\label{Lem: sep lem}
Assume that $\alpha \subset B_R(p)$ for some $p \in X$ and $R > 0$. If the inclusion $B_R(p) \hookrightarrow X$ induces the trivial map on $\pi_1$, then $B_R(p) \setminus \alpha$ has exactly two components, and the topological boundary in $B_R(p)$ of each component coincides with $\alpha$.

If in addition there exist $0 < s \leq r < R$ such that $\alpha \subset B_s(p)$ and the inclusion $B_s(p) \hookrightarrow B_r(p)$ induces the trivial map on $\pi_1$, then at least one of the components of
$B_R(p) \setminus \alpha$ is contained in $\overline B_r(p)$.
In this case, $X \setminus \alpha$ has exactly two path-connected
components and at least one of these components coincides with a component of
$B_R(p) \setminus \alpha$ contained in $\overline B_r(p)$. Moreover, the topological boundaries of both components coincide with $\alpha$.
\end{Lem}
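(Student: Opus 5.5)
The plan follows the proof of the Jordan theorem for interior polygonal loops (Theorems~\ref{Thm: Jordan theorem 1} and~\ref{Thm: Jordan theorem 2}), with the closed loop replaced by the arc $\alpha$. The main new input is local control near the two extremal endpoints.

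\textbf{Step 1: local good components along $\alpha$.} At interior points of edges we use Theorem~\ref{Thm: good components existence 2}. At interior vertices, which lie in $\Int(X)$, we use Proposition~\ref{Pro: good comp exist corner int} (or the geodesic case). At the endpoints $\alpha(0),\alpha(1)\in\Ext(X)$, the incident edge is a regular geodesic starting at an extremal point. Hence Lemma~\ref{Lem: geod endpoint ext good comp} gives small balls $B_{r}(\alpha(0))$ and $B_r(\alpha(1))$ with two good components with respect to that edge. We shrink the radii so that each such ball meets $\alpha$ only along the incident edge.

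\textbf{Step 2: a tubular neighborhood with exactly two sides.} Arguing as in Lemma~\ref{Lem: geod neigh side choice} (with $I=[0,1]$) and Proposition~\ref{Pro: geod neigh endpoint ext}, we label the sides $G_\pm(t)$ consistently along $t\in[0,1]$. We set $U=\bigcup_t B_{r(t)}(\alpha(t))\subset B_R(p)$, so that $U\setminus\alpha=U_+\sqcup U_-$ has exactly two components, whose boundaries in $U$ are $\alpha$. No Möbius twist can occur, since $[0,1]$ is an interval rather than a loop, so Subsection~\ref{subsec:Mobius} is not needed here.

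\textbf{Step 3: separation in $B_R(p)$.} We then run the Mayer--Vietoris argument of Proposition~\ref{Pro: comp exist extend corner quant} with the cover $B_R(p)=U\cup(B_R(p)\setminus\alpha)$. Suppose $x_\pm\in U_\pm$ were joined by a curve in $B_R(p)\setminus\alpha$. Closing it up with a curve through $U$ would give a loop that is trivial in $H_1(X)$. That alone is not enough, because the argument needs the class to vanish in $H_1(B_R(p))$, whereas we only know the loop is contractible in $X$. So, exactly as in the proof of Theorem~\ref{Thm: Jordan theorem 1}, we use the cover $X=U\cup(X\setminus\alpha)$ instead. This yields that $x_\pm$ lie in different components of $X\setminus\alpha$, hence also of $B_R(p)\setminus\alpha$.

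\textbf{Step 4: the main obstacle, an endpoint on $\Ext(X)$.} Here the situation genuinely differs from the closed-loop case. Near an extremal endpoint, one might worry that the two sides connect by going around $\alpha(0)$. In the model half-disk this is impossible, because $\alpha(0)$ lies on the boundary. The Mayer--Vietoris argument handles this automatically once $U$ contains a full neighborhood of $\alpha(0)$. That is exactly what Lemma~\ref{Lem: geod endpoint ext good comp} provides, since $B_r(\alpha(0))\setminus\alpha$ has two components with boundaries equal to $\alpha\cap B_r$. I expect this verification to be the delicate point: checking that the endpoint balls glue correctly into the side labelling, and that the boundaries of both components in $B_R(p)$ contain the endpoints.

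\textbf{Step 5: at most two components, and boundaries.} For $x\in B_R(p)\setminus\alpha$, take a geodesic from $x$ to a fixed interior point $\alpha(1/2)$ of an edge, chosen via Lemma~\ref{Lem: perturb generic int}/Lemma~\ref{Lem: can always connect to point on geod}-type arguments to stay in $B_R(p)$. Stopping it at its first hit of $\alpha$ and backing off slightly lands it in $U_\pm$. This gives at most two components, and the boundary claim follows as before.

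\textbf{Step 6: the second part.} This repeats the proof of Theorem~\ref{Thm: Jordan theorem 2} verbatim. Take $x_\pm$ outside $\overline B_r(p)$ in different components, join them through $p$ (approximated by an interior point via density of $\Int(X)$) by a generic polygonal curve $\beta$, and count side changes. Since $\beta$ and $\alpha$ now have ends, the parity count needs care. The ends of $\alpha$ lie on $\Ext(X)$, and $\beta\subset\Int(X)$ except possibly at finitely many points, so $\beta$ crosses $\alpha$ only at interior points of $\alpha$. Claim~\ref{Cla: side change} applies there, and a parity contradiction follows from the small-ball disconnection of $B_s(p)\setminus\alpha$ obtained in Step 3. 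The statements about $X\setminus\alpha$ then follow word for word.
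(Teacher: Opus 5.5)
Your Steps~1--5 follow the paper's route for the first part, which the paper only sketches: good components at the extremal endpoints, a two-sided neighbourhood $U$ as in Proposition~\ref{Pro: bd poly loop neigh}, and Mayer--Vietoris for $X=U\cup(X\setminus\alpha)$. This part is fine up to two small corrections. In Step~3 the contradiction only arises for connecting curves that stay in $B_R(p)$, because only then is the closed-up loop null-homotopic in $X$. So $x_\pm$ lie in different components of $B_R(p)\setminus\alpha$, but not necessarily of $X\setminus\alpha$. For instance, for the segment $\{0\}\times[0,\veps]$ in a long flat annulus $(\R/L\Z)\times[0,\veps]$ with $L\gg R$, the set $X\setminus\alpha$ is connected. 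In Step~5 a geodesic from $x$ to $\alpha(1/2)$ need not stay in $B_R(p)$. Instead, take any path inside the path-connected set $B_R(p)$ and stop at its first hit of $\alpha$.

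The genuine gap is Step~6. In Theorem~\ref{Thm: Jordan theorem 2}, the evenness of the number of side changes of $\beta$ comes from two facts: Claim~\ref{Cla: side change}, and the fact that the \emph{closed} loop $\alpha$ changes sides with respect to $\beta$ an even number of times. For an arc, that number is even if and only if $\alpha(0)$ and $\alpha(1)$ lie in the same component of $B_s(p)\setminus\beta$. Observing that all crossings occur at interior points of $\alpha$ only makes Claim~\ref{Cla: side change} applicable; it gives no parity information.

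Nothing in the hypotheses forces the endpoints onto the same side, and in fact the second part of the statement fails as written. Take the flat strip $X=\R\times[0,\veps]$ with $\alpha(t)=(0,t\veps)$, $p=(0,\veps/2)$, $s=\veps$, $r=2\veps$, $R=10\veps$. Then $\alpha$ is a regular geodesic with extremal endpoints, and $X$ is convex, so every $\pi_1$-hypothesis holds trivially. Yet both components of $B_R(p)\setminus\alpha$ contain the points $(\pm 9\veps,\veps/2)$, at distance $9\veps>r$ from $p$, and both components of $X\setminus\alpha$ are unbounded. Correspondingly, for $\beta$ the horizontal segment through $p$, the two endpoints of $\alpha$ lie on opposite sides of $\beta$. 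The paper gives no argument for this part beyond the analogy with Theorem~\ref{Thm: Jordan theorem}.

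Your parity argument does close under an extra hypothesis, for example that $\alpha(0)$ and $\alpha(1)$ are joined by a curve in $\Ext(X)\cap B_s(p)$, as happens for the boundary polygonal loops of Theorem~\ref{Thm: Jordan theorem boundary}. With $p\in\Int(X)$, Corollaries~\ref{Cor: int geod reg} and~\ref{Cor: int geod no ext} show that $\beta$ meets $\Ext(X)$ at most at $x_\pm$, and these lie outside $B_s(p)$. So that extremal curve lies in a single component of $B_s(p)\setminus\beta$, and evenness is restored.
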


This result is analogous to Jordan Theorem~\ref{Thm: Jordan theorem}, and the arguments closely follow those in Section~\ref{Sec: poly loop}. Accordingly, we only list the main ingredients and omit the full proof.

Since $\alpha$ is simple, there exists $r_0>0$ such that, for all $0<r<r_0$, we have
\begin{equation}\label{Eq: ext pt neigh 1}
B_{r}(\alpha(0)) \setminus \alpha = B_{r}(\alpha(0)) \setminus \alpha([0,t_1]).
\end{equation}
Since $\alpha|_{[0,t_1]}$ is a regular geodesic with endpoint $\alpha(0)$, and $\alpha(0)$ is extremal, Proposition~\ref{Pro: comp exist geod ep ext quant} implies that, for all sufficiently small $0 < r \leq r_0$, the set $B_r(\alpha(0))\setminus \alpha([0,t_1])$
has exactly two connected components, and the topological boundary of each of them in $B_r(\alpha(0))$ coincides with $\alpha([0,t_1]) \cap B_r(\alpha(0))$. By \eqref{Eq: ext pt neigh 1}, the same conclusion holds with $\alpha$ in place of $\alpha([0,t_1])$. By an analogous argument, the same conclusion also holds for the set $B_{r}(\alpha(1)) \setminus \alpha$.

Arguing as in Proposition~\ref{Pro: corner neigh} from Subsection~\ref{Subsec: corner neigh}, together with the auxiliary lemmas leading up to it, we obtain the following variant of Proposition~\ref{Pro: corner neigh} (see also the proof of Proposition~\ref{Pro: poly loop neigh}). This proposition allows us to prove Lemma~\ref{Lem: sep lem} by following the same strategy as in the proof of Theorem~\ref{Thm: Jordan theorem}.

\begin{Pro}\label{Pro: bd poly loop neigh}
Let $V$ be an open neighborhood of $\alpha$. Then there exists an open path-connected neighborhood $U \subset V$ of $\alpha$ such that $U \setminus \alpha$ has exactly two connected components $U_\pm$. Moreover, the topological boundaries of $U_\pm$ in $U$ both coincide with $\alpha$.
\end{Pro}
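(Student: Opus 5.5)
I will follow the construction of Proposition~\ref{Pro: poly loop neigh}, replacing the closing-up step (which needed the M\"obius obstruction) by the two endpoint neighborhoods, where no twist can occur because $\alpha$ is an arc rather than a loop.

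Let $0=t_0<t_1<\dots<t_m=1$ be the partition of $\alpha$, refined as in Proposition~\ref{Pro: poly loop neigh} so that at each interior vertex the two incident edges form either a corner or a geodesic. For each $t\in[0,1]$ I choose $r(t)>0$ with $B_{r(t)}(\alpha(t))\subset V$, satisfying the following conditions.
\begin{enumerate}
\item $B_{10r(t)}(\alpha(t))$ meets $\alpha$ only along the edge(s) incident to $\alpha(t)$.
\item $B_{r(t)}(\alpha(t))$ has two good components, relative to the appropriate piece of $\alpha$.
\end{enumerate}
The good components in (2) are supplied by different results depending on the position of $\alpha(t)$. For points in the interior of an edge, Theorem~\ref{Thm: good components existence 2} applies. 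At interior vertices, which lie in $\Int(X)$ by \eqref{Eq: poly curve ext ep}, Proposition~\ref{Pro: good comp exist corner int} applies, or Theorem~\ref{Thm: good components existence 2} if the two incident edges concatenate to a geodesic. At the endpoints $\alpha(0),\alpha(1)\in\Ext(X)$, Lemma~\ref{Lem: geod endpoint ext good comp} applies to the regular geodesics $\alpha|_{[0,t_1]}$ and the reverse of $\alpha|_{[t_{m-1},1]}$. In the endpoint case, the discussion preceding the proposition via \eqref{Eq: ext pt neigh 1} shows that for small $r$ the two good components relative to the edge are exactly the two components of $B_r(\alpha(0))\setminus\alpha$, with boundaries equal to $\alpha\cap B_r(\alpha(0))$. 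Condition (1) is where simplicity of $\alpha$ and compactness are used.

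Next I label the components. Lemmas~\ref{Lem: good comp side induct} and~\ref{Lem: good comp corner side induct} hold verbatim at the endpoint balls, since their proofs only use properties (1)--(3) of good components together with the fact that the center lies on the boundary of both components. With this, I run the good-sequence argument of Lemma~\ref{Lem: geod neigh side choice} and Lemma~\ref{Lem: corner neigh side choice} along the closed interval $I=[0,1]$, starting from a fixed labeling at $t=0$. This produces labels $G_\pm(t)$ for all $t\in[0,1]$ with the following two properties.
\begin{enumerate}
\item[(a)] $G_+(t)\cap G_-(t')=\emptyset$ for all $t,t'$.
\item[(b)] Overlapping balls have intersecting $+$ sides and intersecting $-$ sides.
\end{enumerate}
The main point to verify carefully is well-definedness, i.e. independence of the chosen good sequence. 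Since $[0,1]$ is an interval and not a circle, any two good sequences from $0$ to $t$ can be compared through a common refinement using only Lemma~\ref{Lem: good comp side induct}-type steps. So no monodromy question arises, and neither Lemma~\ref{Lem: mobius orientability} nor any $\pi_1$ hypothesis is needed.

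Finally, I set $U:=\bigcup_t B_{r(t)}(\alpha(t))\subset V$ and $U_\pm:=\bigcup_t G_\pm(t)$, and argue exactly as in Proposition~\ref{Pro: geod neigh}. The set $U$ is open and path-connected, since every point joins $\alpha$ inside its ball. By (a), $U\setminus\alpha=U_+\sqcup U_-$ with both pieces open, hence clopen in $U\setminus\alpha$. By (b) and a chaining induction over a finite good sequence, each $U_\pm$ is path-connected, so by Lemma~\ref{Lem: clopen path-connected} they are exactly the two components. Every point of $\alpha$, including the endpoints (by the boundary statement of Proposition~\ref{Pro: comp exist geod ep ext quant}), lies in the closure of both $G_\pm(t)$ for the relevant $t$. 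Therefore $\alpha$ is contained in the topological boundaries of $U_\pm$ in $U$. The reverse inclusion holds because $U_\pm$ are clopen in $U\setminus\alpha$.

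I expect the main obstacle to be the endpoint balls: checking that the good components there are compatible with those of nearby edge balls via the analogue of Lemma~\ref{Lem: good comp corner side induct}. This is where extremality of $\alpha(0),\alpha(1)$ is essential, since at an interior endpoint the geodesic would not disconnect the ball.
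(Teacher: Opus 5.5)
Your proposal is correct and follows the paper's own argument. You cover $\alpha$ by small balls with two good components, using Lemma~\ref{Lem: geod endpoint ext good comp} (equivalently Proposition~\ref{Pro: comp exist geod ep ext quant} together with \eqref{Eq: ext pt neigh 1}) at the extremal endpoints, propagate the side labels along the arc as in Lemmas~\ref{Lem: geod neigh side choice} and~\ref{Lem: corner neigh side choice}, and take unions as in Propositions~\ref{Pro: geod neigh}, \ref{Pro: corner neigh} and~\ref{Pro: poly loop neigh}. Your point that the parameter domain is an interval rather than a circle, so that no M\"obius-type obstruction or $\pi_1$ hypothesis enters, is exactly why the statement carries no such assumption.
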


We record a corollary of Lemma~\ref{Lem: sep lem} and Theorem~\ref{Thm: semi-locally simply connected}. Although it is not used in the proof of Theorem~\ref{Thm: Jordan theorem boundary}, it will play a role later in Section~\ref{sec:proof_bdpolydomneigh}.

\begin{Cor}[cf. Corollary~\ref{Cor: Jordan theorem}]\label{Cor: sep lem}
Let $R > 0$ and $p \in X$. Then there exists $0 < \bar r(p,R) < R$ such that for every $0 < r \leq \bar r$, if $\alpha \subset B_r(p)$, then $X \setminus \alpha$ has two components and at least one of them is contained in $B_R(p)$. Moreover, the topological boundaries of both components coincide with $\alpha$.
\end{Cor}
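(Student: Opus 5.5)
The plan is to mirror the derivation of Corollary~\ref{Cor: Jordan theorem} from Theorem~\ref{Thm: Jordan theorem}: all the topology is already contained in the second part of Lemma~\ref{Lem: sep lem}, so the work consists only of producing the radii. Here $\alpha$ is understood to be a simple polygonal curve satisfying \eqref{Eq: poly curve ext ep}.

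\emph{Step 1: choose the radii.} Fix $p$ and $R>0$. By Theorem~\ref{Thm: semi-locally simply connected} we can pick $0<R_1<R$ such that the inclusion $B_{R_1}(p)\hookrightarrow B_R(p)$ is trivial on $\pi_1$. However, Lemma~\ref{Lem: sep lem} needs the inclusion $B_{R'}(p)\hookrightarrow X$ to be trivial on $\pi_1$ for its outer radius $R'$. We therefore take a two-level choice.
\begin{itemize}
\item First choose $0<R'<R$ such that $B_{R'}(p)\hookrightarrow X$ is trivial on $\pi_1$. This is possible because Theorem~\ref{Thm: semi-locally simply connected} gives some $R'$ with $B_{R'}(p)\hookrightarrow B_{R}(p)$ trivial, and composing with $B_R(p)\hookrightarrow X$ gives trivial inclusion into $X$.
\item Then fix any $r'<R'$, and by Theorem~\ref{Thm: semi-locally simply connected} again choose $0<\bar r\le r'$ such that $B_{\bar r}(p)\hookrightarrow B_{r'}(p)$ is trivial on $\pi_1$.
\end{itemize}

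\emph{Step 2: apply the lemma.} Let $0<r\le\bar r$ and $\alpha\subset B_r(p)$. Set $s:=r$. Since $B_s(p)\subset B_{\bar r}(p)$, the inclusion $B_s(p)\hookrightarrow B_{r'}(p)$ factors through $B_{\bar r}(p)$ and is therefore trivial on $\pi_1$. Also $\alpha\subset B_s(p)$, $s\le r'<R'$, and $B_{R'}(p)\hookrightarrow X$ is trivial on $\pi_1$.

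The second part of Lemma~\ref{Lem: sep lem}, with $(s,r',R')$ in the roles of $(s,r,R)$, then gives three conclusions:
\begin{itemize}
\item $X\setminus\alpha$ has exactly two components;
\item one of them coincides with a component of $B_{R'}(p)\setminus\alpha$ contained in $\overline B_{r'}(p)$;
\item the topological boundaries of both components coincide with $\alpha$.
\end{itemize}
Since $\overline B_{r'}(p)\subset B_{R'}(p)\subset B_R(p)$, that component lies in $B_R(p)$, which is the claim.

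\emph{Main obstacle.} The only point requiring care is the quantifier order. The radius $\bar r$ must depend only on $p$ and $R$, not on $\alpha$. This holds because every choice above is made before $\alpha$ is given, and Lemma~\ref{Lem: sep lem} imposes no further conditions on $\alpha$ beyond \eqref{Eq: poly curve ext ep} and containment in $B_s(p)$.

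The genuine difficulty lies entirely in Lemma~\ref{Lem: sep lem} itself, which we take as given. Its proof would run through Proposition~\ref{Pro: bd poly loop neigh} and the Mayer--Vietoris and side-change arguments of Theorems~\ref{Thm: Jordan theorem 1} and \ref{Thm: Jordan theorem 2}. In that setting the endpoints are extremal, so no M\"obius-type twist can occur at the ends, and the corresponding argument is if anything simpler.
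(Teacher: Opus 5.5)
Your proposal is correct and follows the paper's route. The paper records this corollary as a direct consequence of the second part of Lemma~\ref{Lem: sep lem} combined with Theorem~\ref{Thm: semi-locally simply connected}, and your nested choice of radii $\bar r\le r'<R'<R$, with $B_{R'}(p)\hookrightarrow X$ and $B_{\bar r}(p)\hookrightarrow B_{r'}(p)$ trivial on $\pi_1$, is exactly the bookkeeping needed to spell that out.
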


Let us now consider a simple boundary polygonal loop $\alpha\subset B_R(p)$ as in the statement of Theorem~\ref{Thm: Jordan theorem boundary}. Then the hypotheses of Lemma~\ref{Lem: sep lem} are satisfied by the restriction $\alpha|_{[0,1]}$. In particular, the set $X \setminus \alpha([0,1])$ has exactly two connected components. Since $\alpha((-1,0))$ is disjoint from $\alpha([0,1])$, it is contained in one of these two components, which we denote by $P$. The following lemma completes the proof of Theorem~\ref{Thm: Jordan theorem boundary}.

\begin{Lem}\label{Lem: bd poly loop no disconnect}
The set $P \setminus \alpha = P \setminus \alpha((-1,0))$ is path-connected.
\end{Lem}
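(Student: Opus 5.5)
The equality $P\setminus\alpha = P\setminus \alpha((-1,0))$ is immediate, since $P$ is disjoint from $\alpha([0,1])$. So the content is path-connectedness of $P\setminus\alpha((-1,0))$. The natural tool is Corollary~\ref{Lem: ext set no disconnect}: removing a subset of $\Ext(X)$ never disconnects a metric ball. We will combine it with local path-connectedness of $P$ (open in $X$) and a chaining argument along a curve in $P$.

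\textbf{Setup.} Let $x,y\in P\setminus\alpha((-1,0))$. Since $P$ is open and path-connected, choose a continuous curve $\beta:[0,1]\to P$ from $x$ to $y$. The curve $\beta$ may cross $\alpha((-1,0))$; we modify it locally. By compactness of $\beta([0,1])$ and openness of $P$, we fix a Lebesgue-type radius $\delta>0$ such that $B_{2\delta}(\beta(t))\subset P$ for all $t$. Subdivide $[0,1]$ as $0=s_0<\dots<s_k=1$ so that each $\beta([s_{j-1},s_j])$ lies in $B_\delta(\beta(s_j))$.

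\textbf{Perturbing the nodes.} The nodes $\beta(s_j)$ may lie on $\alpha((-1,0))$. Since $\Int(X)$ is dense by Theorem~\ref{Thm: int set open} and $\alpha([-1,0])\subset\Ext(X)$, replace each interior node by a point $z_j\in \Int(X)\cap B_{\delta/2}(\beta(s_j))$, keeping $z_0=x$ and $z_k=y$. Consecutive points $z_{j-1},z_j$ then lie in a common ball $B:=B_{2\delta}(\beta(s_j))\subset P$, and neither belongs to $S:=\alpha((-1,0))\cap B\subset\Ext(X)$.

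\textbf{Joining consecutive points.} By Corollary~\ref{Lem: ext set no disconnect}, $B\setminus S$ is path-connected, so there is a curve from $z_{j-1}$ to $z_j$ in $B\setminus S$. This curve avoids $\alpha([0,1])$ because $B\subset P$, and it avoids $\alpha((-1,0))$ by construction. It may meet $\alpha(\{-1,0\})$, but these are points of $\alpha([0,1])$, already excluded since $B\subset P$. Concatenating these curves gives a path from $x$ to $y$ in $P\setminus\alpha$.

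\textbf{Main point to check.} The only delicate issue is whether the balls $B$ genuinely lie inside $P$, i.e.\ whether $P$ is open in $X$. This holds because $P$ is a component of the open, locally path-connected set $X\setminus\alpha([0,1])$. Since $P\subset X\setminus\alpha([0,1])$, the endpoints $\alpha(-1)=\alpha(1)$ and $\alpha(0)$ do not lie in $P$, so the corollary applies with $S\subset\Ext(X)$ as required. No quantitative control is needed.
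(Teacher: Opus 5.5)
Your proof is correct and is essentially the paper's argument. Both rest on Corollary~\ref{Lem: ext set no disconnect} applied to small balls contained in $P$, together with the density of $\Int(X)$ to make consecutive pieces meet away from $\alpha$. The only difference is bookkeeping: you cover the connecting curve by balls and perturb the nodes into $\Int(X)$, whereas the paper covers $\alpha((-1,0))$ by balls to form a neighborhood $U\subset P$ with $U\setminus\alpha$ path-connected, and then reroutes any curve in $P$ through $U\setminus\alpha$ between its first and last hitting times of $\alpha$.
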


\begin{proof}
For each $t \in (-1,0)$, let $r(t) > 0$ be such that $B_{r(t)}(\alpha(t)) \subset P$. Set 
\[
U := \bigcup_{t \in (-1,0)} B_{r(t)}(\alpha(t)) \subset P.
\]
We first show that $U \setminus \alpha$ is path-connected.

Let $t, t' \in (-1,0)$. By compactness, there exists a finite sequence $t = t_0 < \dots < t_m = t'$ such that for each $i = 1, \dots, m$, we have $B_{r(t_{i-1})}(\alpha(t_{i-1})) \cap B_{r(t_{i})}(\alpha(t_{i})) \neq \emptyset$. 
Since $\Int(X)$ is dense in $X$ (Theorem~\ref{Thm: int set open}), any such intersection contains an interior point, which in particular does not lie on $\alpha$. By Corollary~\ref{Lem: ext set no disconnect}, $B_{r(t)}(\alpha(t)) \setminus \alpha$
is path-connected. Since the union of two path-connected sets with non-empty intersection is path-connected, the set
\[
\bigcup_{i=1}^m B_{r(t_i)}(\alpha(t_{i})) \setminus \alpha \subset U \setminus \alpha
\]
is path-connected. It follows easily that $U \setminus \alpha$ is path-connected. 

Let $x, y \in P \setminus \alpha$. We now show that $x$ and $y$ can be connected by a curve in $P \setminus \alpha$. 

Since $P$ is path-connected, there exists a curve $\rho:[0,1] \to X$ from $x$ to $y$ contained in $P$. If $\rho$ does not intersect $\alpha$, then we are done. Otherwise, there must a smallest $t_1 \in (0,1]$ and a largest $t_2 \in [0,1)$ such that $\rho(t_1), \rho(t_2) \in \alpha$. By continuity, there exists $\veps > 0$ such that $\rho(t_1 - \veps), \rho(t_2 + \veps) \in U \setminus \alpha$. Since $U \setminus \alpha$ is path-connected, there exists a continuous curve $\sigma:[0,1] \to X$ from $\rho(t_1 - \veps)$ to $\rho(t_2 + \veps)$ that is contained in $U \setminus \alpha \subset P\setminus \alpha$. The union of $\rho$ (from $x$ to $\rho(t_1 - \varepsilon)$), $\sigma$, and $\rho$ (from $\rho(t_2 + \varepsilon)$ to $y$) is a curve in $P \setminus \alpha$ joining $x$ and $y$. 
\end{proof}

\subsection{Proof of Theorem~\ref{Thm: bd poly dom neigh basis}}\label{sec:proof_bdpolydomneigh}

Let $p \in X$ be an extremal point. By Proposition~\ref{Pro: every pt have extend reg geod}, there exists a regular geodesic $\gamma:[0,1] \to X$ with $\gamma(0) = p$. By rescaling $X$, we may assume that $\gamma$ is unit-speed. 
By Corollary~\ref{Cor: sep lem}, there exists $0 < \bar R \leq 1/2$ such that for every simple polygonal curve $\alpha \subset B_{2\bar R}(p)$ and satisfying \eqref{Eq: poly curve ext ep}, the set $X \setminus \alpha$ has exactly two components, at least one of which is contained in $B_1(p)$. 

Fix $0 < R \leq \bar R$. By Proposition~\ref{Pro: comp exist geod ep ext quant} and Corollary~\ref{Cor: sep lem}, there exists $0 < r \leq R$ such that the following hold:
\begin{enumerate}
    \item The set $B_r(p) \setminus \gamma$ has exactly two components, denoted by $A_\pm$.
    \item For every simple polygonal curve $\alpha \subset B_r(p)$ satisfying $\eqref{Eq: poly curve ext ep}$, the set $X \setminus \alpha$ has exactly two components, at least one of which is contained in $B_R(p)$. 
\end{enumerate}

By Corollary~\ref{Cor: ext pt curve}, there exists a simple continuous curve $\mathcal E:[-1,1] \to X$ such that
\[
\mathcal E(0) = p, \quad \mathcal E([-1,1]) \subset \Ext(X), \quad \mathcal E((0,1]) \subset A_+, \quad \mathcal E([-1,0)) \subset A_-.  
\]
By continuity, we may choose $0< \delta \leq 1$ such that
\[
\mathcal E((0, \delta]) \subset A_+ \cap B_{r/2}(p) \quad \text{and} \quad \mathcal E([-\delta, 0)) \subset A_- \cap B_{r/2}(p).
\]

Set $q := \gamma(r/2) \in \mathcal R_2$. Then $\dd_X(p,q) = r/2$, and $q \in \Int(X)$ by Corollary~\ref{Cor: int geod no ext}. Let $\rho_\pm$ be geodesics from $q$ to $\mathcal E(\pm\delta)$ parametrized on $[0,1]$. By the triangle inequality, $\rho_\pm \subset B_r(p)$.

\begin{Cla}\label{Cla: bd poly dom neigh basis}
The geodesics $\rho_+$, $\rho_-$, and $\gamma$ intersect only at $q$. Moreover, $\rho_\pm((0,1]) \subset A_\pm$.
\end{Cla}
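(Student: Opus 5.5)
We need to prove: $\rho_+$, $\rho_-$, $\gamma$ pairwise intersect only at $q$, and $\rho_\pm((0,1]) \subset A_\pm$. The plan has three steps: the $\rho$'s versus $\gamma$, the $\rho$'s versus each other, and then the side statement.

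\emph{Step 1: $\rho_\pm$ versus $\gamma$.} Suppose $\rho_+$ meets $\gamma$ at a point other than $q=\rho_+(0)=\gamma(r/2)$. Its other endpoint $\mathcal E(\delta)$ lies in $A_+$, hence off $\gamma$. The non-branching property (Theorem~\ref{Thm: non-branching}) then forces one of the two halves of $\gamma$ split at $q$ to be a subsegment of $\rho_+$.
\begin{itemize}
\item If $\gamma|_{[0,r/2]}$ is a subsegment of $\rho_+$, then $\rho_+$ extends $\gamma|_{[0,r/2]}$ past $p=\gamma(0)$. Since $p$ is extremal and $B_r(p)\setminus\gamma$ is disconnected, Lemma~\ref{Lem: geod at ext-pt not extend} forbids this. (Alternatively, $p$ would lie in the interior of the regular geodesic $\rho_+$ and would then be interior by Corollary~\ref{Cor: int geod no ext}.)
\item If $\gamma|_{[r/2,1]}$ is a subsegment of $\rho_+$, then $\rho_+$ has length at least $1-r/2$. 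But $\rho_+\subset B_r(p)$ has length less than $r$, a contradiction since $r\le 1/2$.
\end{itemize}
The same argument applies to $\rho_-$.

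\emph{Step 2: $\rho_+$ versus $\rho_-$.} Suppose they meet at a point other than $q$. If the two geodesics coincide near $q$, non-branching makes one a subsegment of the other. Then $\mathcal E(\delta)$ or $\mathcal E(-\delta)$ lies in the interior of a geodesic from the interior point $q$. That geodesic is regular by Corollary~\ref{Cor: int geod reg}, so the endpoint would be interior by Corollary~\ref{Cor: int geod no ext}, contradicting $\mathcal E(\pm\delta)\in\Ext(X)$. Otherwise the two geodesics share the endpoint $q$ and an additional point $z$. Non-branching gives that their pieces from $q$ to $z$ coincide, so again one branches off the other. This still forces a subsegment relation, with the same contradiction. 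The main care here is in handling whether $z$ is an endpoint; when $z=\mathcal E(\delta)=\mathcal E(-\delta)$, this is excluded because $A_+\cap A_-=\emptyset$.

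\emph{Step 3: the sides.} By Step~1, $\rho_+((0,1])$ is a connected set in $B_r(p)\setminus\gamma$. Its endpoint $\mathcal E(\delta)$ lies in $A_+$, so Lemma~\ref{Lem: clopen path-connected} gives $\rho_+((0,1])\subset A_+$, and symmetrically $\rho_-((0,1])\subset A_-$. This separation also re-confirms Step~2 away from $q$. The only delicate point overall is Step~1's exclusion of extending $\gamma$ past $p$, which is exactly where extremality enters through Lemma~\ref{Lem: geod at ext-pt not extend}.
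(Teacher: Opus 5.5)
Your proof is correct and follows the paper's route. Non-branching forces a subsegment relation between $\rho_\pm$ and one half of $\gamma$. The half toward $p$ is excluded by the extremality of $p$ (your Corollary~\ref{Cor: int geod no ext} alternative is exactly the paper's argument), the far half is excluded by a size comparison, and the side statement then follows from Lemma~\ref{Lem: clopen path-connected}.

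Your separate Step~2 is valid but unnecessary. The fact $\rho_+\cap\rho_-=\{q\}$ already follows from $\rho_\pm((0,1])\subset A_\pm$ and $A_+\cap A_-=\emptyset$, which is how the paper handles it implicitly.
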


\begin{proof}
We argue only for $\rho_+$, since the proof for $\rho_-$ is identical. Suppose, for contradiction, that $\rho_+$ and $\gamma$ intersect at a point $\gamma(t)$ with $t\neq r/2$.

First, consider the case $t\in [0,r/2)$. Since the geodesics $\rho_+$ and $\gamma|_{[0,r/2]}$ share exactly one endpoint, namely $q$, the non-branching property implies that one of them must be a proper subsegment of the other. On the other hand, both $\rho_+$ and $\gamma|_{[0,r/2]}$ are regular by Corollary~\ref{Cor: int geod reg}, since $q\in \Int(X)$. In particular, neither geodesic contains an extremal point in its interior by Corollary~\ref{Cor: int geod no ext}. Since the endpoints $\rho_+(1)= \mathcal E(\delta)$ and $\gamma(0)=p$ are both extremal, we obtain a contradiction.

Next, suppose that $t \in (r/2,1]$. Since $\gamma(1) \in X \setminus B_r(p)$ ($\dd_X(\gamma(1), p) = 1 \geq r$) and $\rho_+(1) \in B_r(p)$, we have $\gamma(1) \neq \rho_+(1)$. It follows again from the non-branching property that one of $\rho$ and $\gamma \lvert_{[r/2,1]}$ must be a proper subsegment of the other. On the other hand, $\gamma \lvert_{[r/2,1]}$ cannot be a subsegment of $\rho_+$, since $\rho_+ \subset B_r(p)$ while $\gamma(1) \in X \setminus B_r(p)$, and $\rho$ cannot be a proper subsegment of $\gamma \lvert_{[r/2,1]}$ by arguing as in the previous case, yielding a contradiction. 

It follows that $\rho_+$ and $\gamma$ intersect only at $q$. In particular $\rho_+((0,1]) \subset B_r(p) \setminus \gamma$. Since $\rho_+(1) \in A_+$, we conclude that $\rho_+((0,1]) \subset A_+$.
\end{proof}

Let $\alpha$ be the loop obtained by concatenating the reverse of $\rho_+$ from $\mathcal E(\delta)$ to $q$, $\rho_-$ from $q$ to $\mathcal E(-\delta)$, and $\mathcal E$ from $\mathcal E(-\delta)$ to $\mathcal E(\delta)$. It is straightforward to check that, after an appropriate reparameterization on $[-1,1]$, $\alpha$ is a simple boundary polygonal loop such that
\[
p \in \alpha((-1,0)), \quad \alpha(0) = \mathcal E(\delta), \quad \alpha(1) = \alpha(-1) = \mathcal E(-\delta), \quad  \alpha \subset B_{r}(p), \quad \alpha((0,1)) \subset \mathcal R_2.
\] 
In particular, $\alpha$ satisfies Theorem~\ref{Thm: bd poly dom neigh basis}~(1). 

By our choice $r$, the set $X \setminus \alpha([0,1])$ has exactly two components, and at least one is contained in $B_R(p)$. Since $\gamma(1) \in X \setminus B_R(p)$, exactly one of these components, denoted by $Q$, is contained in $B_R(p)$. Denote the other component by $Q'$. Then $\gamma(1) \in Q'$. 

\begin{Cla}
$Q \cap \Ext(X) = \alpha((-1,0))$.
\end{Cla}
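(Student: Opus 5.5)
We must show $Q\cap\Ext(X)=\alpha((-1,0))$. Note that $Q$ is a component of $X\setminus\alpha([0,1])$, so $Q$ contains neither $\alpha(0)=\mathcal E(\delta)$ nor $\alpha(1)=\mathcal E(-\delta)$. The argument splits into the two inclusions.

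\emph{Inclusion $\alpha((-1,0))\subset Q$.} The arc $\alpha((-1,0))=\mathcal E((-\delta,\delta))$ is connected and disjoint from $\alpha([0,1])$. Hence it lies in a single component of $X\setminus\alpha([0,1])$, and it suffices to show that this component is $Q$. The path $\mathcal E$ on $[-\delta,\delta]$ passes through $p$, and $p\in\overline B_{r/2}(p)$ is at distance $r/2$ from $q$. I will argue by contradiction. Suppose $\mathcal E((-\delta,\delta))\subset Q'$. Take a short geodesic from $p$ toward $\gamma(1)$, namely $\gamma$ itself. It meets $\alpha([0,1])=\rho_+\cup\rho_-$ only at $q$, by Claim~\ref{Cla: bd poly dom neigh basis}, and $\gamma|_{[0,r/2)}$ avoids $\rho_\pm$. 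So $p$ and $\gamma([0,r/2))$ lie in the same component, and likewise $\gamma((r/2,1])\subset Q'$.

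To get a contradiction I will use the local picture at $q$. The point $q$ is interior, and $\rho_+$ followed by $\rho_-$ is a corner or a geodesic at $q$ with interior vertex. By Proposition~\ref{Pro: comp exist corner int quant 2}, a small ball $B_s(q)$ minus $\alpha$ has exactly two components. Since both $Q$ and $Q'$ have boundary equal to $\alpha([0,1])$ (Corollary~\ref{Cor: sep lem}), these two components are $Q\cap B_s(q)$ and $Q'\cap B_s(q)$, by Lemma~\ref{Lem: big set small set comp}. The two halves $\gamma((r/2-s,r/2))$ and $\gamma((r/2,r/2+s))$ must then lie in different components. This is the analogue of Lemma~\ref{Lem: local config}, applied with $\alpha$ against the geodesic $\gamma$ through $q$. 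Here $\gamma$ sits in $A$-sides: $\rho_+$ starts into $A_+$ and $\rho_-$ into $A_-$, so $\gamma$ separates the two edges. By Lemma~\ref{Lem: local config}, $\gamma$ then also crosses sides with respect to $\alpha$. Therefore $\gamma([0,r/2))\subset Q$, and so $p\in Q$, giving $\alpha((-1,0))\subset Q$. Making this side-crossing rigorous is the main obstacle.

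\emph{Inclusion $Q\cap\Ext(X)\subset\alpha((-1,0))$.} Let $x\in Q\cap\Ext(X)$ and suppose $x\notin\mathcal E([-\delta,\delta])$. Since $Q\subset B_R(p)\subset B_r$-scale neighborhoods controlled by the setup, I apply Proposition~\ref{Pro: ext pt curve} / Corollary~\ref{Cor: ext pt curve} at $x$ to get an extremal arc through $x$. The key tool is Corollary~\ref{Lem: ext set no disconnect}: extremal sets do not disconnect balls, while $\alpha([0,1])$ consists of interior points away from its endpoints. The idea is that $\alpha([0,1])\cup\mathcal E([-\delta,\delta])$ bounds $Q$.

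Pick an interior regular geodesic from $q$ to $x$. Using Corollary~\ref{Cor: int geod reg}, it is regular, so its interior avoids $\Ext(X)$. It then reaches $x$ inside $Q$, passing by Lemma~\ref{Lem: geod at ext-pt not extend}, so it is not extendible past $x$. I then compare with $\gamma$ via the separation property of Proposition~\ref{Pro: comp exist geod ep ext quant} at $x$ to produce a second regular geodesic from $x$ leaving $Q$. Such a geodesic would have to cross $\rho_\pm$ in a way that contradicts $\rho_\pm((0,1])\subset A_\pm$ and the side structure of $A_\pm$ near $\mathcal E$.

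An alternative route, which I would try first, uses uniqueness of the extremal arc. Near $p$ the extremal set inside $B_{r}(p)$ must be contained in $\overline{A_+}$-arc $\cup$ $\overline{A_-}$-arc. This would follow by rerunning the cut-locus construction of Section~\ref{Sec: ext set top}: every extremal point $y$ of $B_+\cap B_{r_1}(p)$ equals $F(t)$ for some $t$, since the geodesic from $q$ to $y$ meets $\rho$ (Lemma~\ref{Lem: exp map 1}) and cannot extend past $y$. Then $y=F(m(t))\in\mathcal E$. Restricting $\delta,r$ accordingly gives the inclusion.
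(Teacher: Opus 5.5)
Your first inclusion is correct and is essentially the paper's argument. The geodesic $\gamma$ meets $\alpha([0,1])$ only at $q$, and $\gamma(1)\in Q'$. Because $\rho_\pm((0,1])\subset A_\pm$, the two edges at $q$ lie in the different local components $A_\pm\cap B_s(q)$ of $B_s(q)\setminus\gamma$ (Lemma~\ref{Lem: big set small set comp}). Lemma~\ref{Lem: local config} then puts the two halves of $\gamma$ near $q$ on different sides of $\alpha([0,1])$, so $p\in Q$. The only correction: when $\rho_+$ reversed followed by $\rho_-$ is locally a geodesic at $q$, you need Theorem~\ref{Thm: good components existence 2} rather than Proposition~\ref{Pro: comp exist corner int quant 2}.

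The reverse inclusion $Q\cap\Ext(X)\subset\alpha((-1,0))$ has a genuine gap. Your main route stops after the correct first step, a geodesic $\nu$ from $q$ to $x$. Nothing produces a \emph{second regular geodesic from $x$ leaving $Q$}, and a curve leaving $Q$ merely crosses $\alpha([0,1])$, which contradicts nothing.

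Your cut-locus route does not cover $Q$ either. The machinery (Lemma~\ref{Lem: exp map 1}, the map $F$, Lemma~\ref{Lem: ext pt sim}) only identifies extremal points on the far side $B_+$ of an auxiliary geodesic from $p$. It does so only inside a ball whose radius is a tiny fraction of the scale at which $\mathcal E$ was built, hence $\ll r$. It says nothing about:
\begin{itemize}
\item the wedge between $\gamma$ and that auxiliary geodesic, which meets every neighborhood of $p$;
\item the part of $Q$ at distance comparable to $r$ from $p$ (recall $q=\gamma(r/2)\in\partial Q$).
\end{itemize}
Shrinking $\delta$ does not shrink $Q$ below scale $r$, and shrinking $r$ shrinks the controlled region along with it. Even if every extremal point near $p$ lay on $\mathcal E([-1,1])$, you would still have to show that $\mathcal E$ outside $[-\delta,\delta]$ stays out of $Q$.

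The missing idea is to apply the separation lemma to an auxiliary curve through $x$.
\begin{enumerate}
\item Let $\beta$ be $\rho_+$ reversed followed by $\nu$. By non-branching, as in Claim~\ref{Cla: bd poly dom neigh basis}, $\nu$ meets $\rho_\pm$ only at $q$. So $\beta$ is a simple polygonal curve with extremal endpoints $\mathcal E(\delta)$ and $x$, interior in $\Int(X)$, and $\beta\subset B_{2R}(p)$. This is exactly why $\bar R$ was chosen via Corollary~\ref{Cor: sep lem} at radius $2\bar R$. Hence $X\setminus\beta=P\sqcup P'$ with $\gamma(1)\in P'$.
\item The set $\nu((0,1])$ is connected, misses $\alpha([0,1])$, and ends at $x\in Q$, so it lies in $Q$. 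Hence $\beta$ misses $Q'$, and since $\gamma(1)\in Q'$ we get $Q'\subset P'$.
\item Then $\rho_-((0,1])\subset\overline{Q'}\setminus\beta\subset P'$. The connected set $\alpha([-1,0))$ misses $\beta$, because $x\notin\alpha([-1,0])$, so it also lies in $P'$.
\item For small $s$, $B_s(\mathcal E(\delta))\setminus\beta=B_s(\mathcal E(\delta))\setminus\rho_+$ has exactly two components (Proposition~\ref{Pro: comp exist geod ep ext quant}). By Lemma~\ref{Lem: big set small set comp}, one lies in $Q$ and the other in $Q'\subset P'$. The same lemma applied to $X\setminus\beta$ shows they lie in different components of $X\setminus\beta$, so the one in $Q$ lies in $P$.
\item By your first inclusion, that component contains $\alpha((-\varepsilon,0))$. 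This contradicts $\alpha([-1,0))\subset P'$.
\end{enumerate}
The argument is purely topological and never needs to locate $x$ inside $Q$, which is what neither of your routes can do.
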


\begin{proof}
We first show that $\alpha((-1,0)) \subset Q$. We know $\gamma$ intersects $\alpha$ exactly once, namely at $q$. Let $t_0 \in [-1,1]$ be such that $\alpha(t_0) = q$. Since $q \in \Int(X)$, we have $t_0 \in (0,1)$. 

There are two cases: either $\alpha$ is locally a geodesic at $\alpha(t_0)$, or $\alpha$ is locally a corner at $\alpha(t_0)$. In both cases, one argues exactly as in the proof of Theorem~\ref{Thm: poly dom neigh basis}, using Lemma~\ref{Lem: local config} in the corner case, to show that the sets $\gamma([0,r/2))$ and $\gamma((r/2,1])$ lie in different connected components of $X \setminus \alpha([0,1])$. Since $\gamma(1) \in Q'$, it follows that $p=\gamma(0)$ belongs to $Q$.
Since $p \in \alpha((-1,0))$ and the set $\alpha((-1,0))$ is disjoint from $\alpha([0,1])$, it follows that $\alpha((-1,0)) \subset Q$.

It remains to show that $\Ext(X) \cap Q \subset \alpha((-1,0))$. Suppose for the sake of contradiction that there exists $x \in \Ext(X) \cap Q$ such that $x \notin \alpha((-1,0))$. 

Let $\nu:[0,1] \to X$ be a geodesic from $q$ to $x$. Since $x \in Q \subset B_R(p)$ and $q \in B_r(p) \subset B_R(p)$, the triangle inequality implies that $\nu \subset B_{2R}(p)$. Let $\beta:[0,1] \to X$ be the simple polygonal curve satisfying \eqref{Eq: poly curve ext ep} obtained by concatenating the reverse of $\rho_+$ from $\mathcal E(\delta)$ to $q$ and $\nu$ from $q$ to $x$. Then $\beta \subset B_{2R}(p)$, and hence, by our choice of $R$, the set $X \setminus \beta$ has exactly two connected components. Since $\gamma(1) \in X \setminus B_1(p)$, exactly one of these components, denoted $P$, is contained in $B_1(p)$. We denote the other by $P'$, so $\gamma(1) \in P'$.

Arguing as in Claim~\ref{Cla: bd poly dom neigh basis} using the non-branching property, we see that $\nu$ does not intersect $\rho_+$ and $\rho_-$ other than at $q$. This implies that $\nu((0,1]) \subset X \setminus \alpha([0,1])$, and is therefore contained entirely in either $Q$ or $Q'$. Since $\nu(1) = x \in Q$, it follows that $\nu((0,1]) \subset Q$. Consequently, $\beta$ is disjoint from $Q'$. 
Since $Q'$ is path-connected, it must then be contained entirely in either $P$ or $P'$. Since $\gamma(1) \in P' \cap Q'$, we conclude that $Q' \subset P'$. 
Moreover, $\rho_-((0,1])$ lies in the topological boundary of $Q'$ by Corollary~\ref{Cor: sep lem}, and it does not intersect $\beta$. Therefore, $\rho_-((0,1]) \subset P'$. In particular, $\rho_-(1) = \alpha(-1) \in P'$. Since $\alpha([-1,0))$ is disjoint from $\beta$, we conclude that $\alpha([-1,0)) \subset P'$.

On the other hand, since $\rho_+(1) = \mathcal E(\delta) \in \Ext(X)$, it follows from Proposition~\ref{Pro: comp exist geod ep ext quant} that for sufficiently small $s > 0$, the set $B_{s}(\rho_+(1)) \setminus \rho_+$ has exactly two components. Choose such an $s > 0$ sufficiently small so that, in addition,
\[
B_s(\rho_+(1)) \cap \rho_- = \emptyset \quad \text{and} \quad B_s(\rho_+(1)) \cap \nu = \emptyset.
\]
Then 
\[
B_{s}(\rho_+(1)) \setminus \alpha([0,1]) = B_{s}(\rho_+(1)) \setminus \beta =  B_{s}(\rho_+(1)) \setminus \rho_+.
\]
We denote the components of the above set by $C_\pm$. 

Since the point $\rho_+(1) = \alpha(0)$ lies in the topological boundaries of both $Q$ and $Q'$ by Corollary~\ref{Cor: sep lem}, both $Q$ and $Q'$ intersect $B_s(\rho_+(1))$. By Lemma~\ref{Lem: big set small set comp}, up to relabeling $C_\pm$, we have $C_+ \subset Q$ and $C_- \subset Q'$. Similarly, since the point $\rho_+(1) = \beta(0)$ lies in the topological boundaries of both $P$ and $P'$, we see that $C_+$ must be contained in one of $P$ and $P'$, and $C_-$ in the other. Since we already know that $C_- \subset Q' \subset P'$, we obtain $C_+ \subset P$.

By continuity, there exists $\veps > 0$ such that $\alpha((-\veps, 0)) \subset B_{s}(\rho_+(1)) \setminus \rho_+$. Thus $\alpha((-\veps, 0))$ is contained entirely in one of $C_\pm$. We have already shown that $\alpha((-1,0)) \subset Q$, and so it follows from the above inclusion relations that  $\alpha((-\veps, 0)) \subset C_+ \subset P$. This contradicts the previous conclusion that $\alpha([-1,0)) \subset P'$. 
% Therefore, there cannot exist a point $x \in \Ext(X) \cap Q$ such that $x \notin \alpha((-1,0))$, and so $Q \cap \Ext(X) \subset \alpha((-1,0))$. 
\end{proof}

%By Lemma~\ref{Lem: bd poly loop no disconnect}, the sets $Q \setminus \alpha((-1,0))$ and $Q'$ are exactly the two components of $X \setminus \alpha$. It is immediate from the previous claim that $Q \setminus \alpha((-1,0))$ is a boundary polygonal domain satisfying Theorem~\ref{Thm: bd poly dom neigh basis}~(2). This completes the proof. 

\section{Intersections and general position}\label{Sec: general}

As usual, $(X,\dd_X,\mm_X)$ denotes an $\RCD(K,N)$ space of essential dimension $2$. In this section, we study intersections of polygonal curves and polygonal domains.

\begin{Def}[General position]
\label{Def: general position}
Let $\alpha,\beta$ be two simple polygonal curves contained in the interior except possibly at their endpoints. We say that $\alpha$ and $\beta$ are in \emph{general position} if they intersect in at most finitely many points, all contained in the interiors of their edges.
\end{Def}

We next prove a perturbation result, which makes precise the idea that general position is generic and can be achieved by a controlled perturbation. To this end, we work in the following setting.
Let $Q$ be either an interior polygonal domain (see Definition~\ref{Def: poly dom}) or a boundary polygonal domain (see Definition~\ref{Def: bd poly dom}). We denote by $\alpha_Q:[-1,1]\to X$ the associated polygonal loop. In order to treat these two cases in a unified way, we adopt the following conventions. If $Q$ is a boundary polygonal domain, then $\alpha_Q|_{[-1,0]}$ parametrizes the extremal set in $\overline Q$, while $\alpha_Q|_{(0,1)}$ is contained in the interior set (compare with Definition~\ref{Def: bd poly loop}). If $Q$ is an interior polygonal domain, then $\alpha_Q|_{[-1,0]}$ is constant, while $\alpha_Q|_{[0,1]}$ parametrizes the boundary of $Q$, which is contained in the interior. 

Assume that for some $p\in X$ and $0<r<R/2$ we have
\begin{enumerate}
    \item[(a)] $\overline Q\subset B_r(p)$;
    \item[(b)] the inclusions $B_R(p)\hookrightarrow X$ and $B_r(p)\hookrightarrow B_{R/2}(p)$ induce the trivial map on $\pi_1$;
    \item[(c)] there exists a point $q \in X$ such that $\dd_X(p,q) \ge R$.
\end{enumerate}

It follows from Jordan Theorems~\ref{Thm: Jordan theorem}, \ref{Thm: Jordan theorem boundary} and the Separation Lemma~\ref{Lem: sep lem}, together with assumptions (a)--(c), that every simple polygonal loop in $\overline Q$ (possibly a boundary polygonal loop) disconnects $X$ into two connected components, exactly one of which is entirely contained in $B_R(p)$.

\begin{Pro}[General Position]\label{Prop: general_pos}
Let $Q$ satisfy {\rm(a)--(c)}, and let $\beta$ be a polygonal curve contained in $\Int(X)$ except possibly at its endpoints. Then, for every $\eps>0$, there exists a polygonal domain $Q_\eps\subset Q$, which is a boundary polygonal domain if and only if $Q$ is a boundary polygonal domain, such that:
\begin{enumerate}
    \item the polygonal curves $\alpha_{Q_\eps}|_{[0,1]}$ and $\beta$ are in general position;

    \item  $Q_\eps=Q$ outside $B_\eps(\alpha_Q([0,1]))$ and $\overline Q_\veps \subset Q$.
    %$\dd_H(\alpha_{Q_\eps}([0,1]), \alpha_Q([0,1]))\le \eps$.
\end{enumerate}
\end{Pro}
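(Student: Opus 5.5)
The plan is to replace the polygonal part $\alpha_Q|_{[0,1]}$ by a nearby polygonal curve $\alpha'$, lying in $Q$ (except at its two extremal endpoints in the boundary case), whose vertices are chosen generically. Let the vertices of $\alpha_Q|_{[0,1]}$ be $v_0,\dots,v_m$. In the boundary case I keep $v_0,v_m\in\Ext(X)$ fixed but slide them slightly along $\alpha_Q|_{[-1,0]}$ into the boundary curve, so that the new endpoints lie in $\alpha_Q((-1,0))\subset Q$. I then choose new points $v_i'\in Q\cap B_{\eps'}(v_i)$ inductively for $i=1,\dots,m-1$, with $\eps'\ll\eps$. The choice uses Proposition~\ref{Pro: every pt have extend reg geod} and Lemma~\ref{Lem: perturb generic int}, applied to the polygonal curve $\beta$ in place of a loop; the proof of that lemma works verbatim for polygonal curves. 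This gives a full-measure set of admissible $v_i'$ with the following properties:
\begin{enumerate}
\item the geodesic from $v_{i-1}'$ to $v_i'$ is unique and regular;
\item it meets $\beta$ in finitely many points;
\item $v_i'$ avoids $\beta$, the finitely many maximal extensions of the edges of $\beta$ (a null set by Lemma~\ref{Lem: geod meas 0}), and the maximal extensions from $v_{i-1}'$ through the vertices of $\beta$.
\end{enumerate}
The last condition ensures that no vertex of $\beta$ lies on a new edge and that no new vertex lies on $\beta$. In this way every intersection point lies in the interior of edges of both curves. For the final edge, I choose $v_{m-1}'$ so that both adjacent edges are generic simultaneously, exactly as in the construction of $\tilde\sigma$ in the proof of Theorem~\ref{Thm: poly dom neigh basis}.

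Next I check that the new edges stay in $Q\cap B_\eps(\alpha_Q([0,1]))$ and do not cross $\alpha_Q([0,1])$. Since the old edges are regular geodesics through interior points, and geodesics with nearby endpoints converge uniformly (non-branching), the new edges are uniformly close to the old ones. To keep them inside $Q$, I use the local two-sided structure along $\alpha_Q$: the good components of Theorem~\ref{Thm: good components existence 2} and Proposition~\ref{Pro: good comp exist corner int} near interior vertices, and Proposition~\ref{Pro: comp exist geod ep ext quant} near extremal endpoints. With these, the tube $U$ of Proposition~\ref{Pro: poly loop neigh} and Proposition~\ref{Pro: bd poly loop neigh} splits into $U_+\subset Q$ and $U_-\subset X\setminus\overline Q$.

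The main obstacle is that a generic geodesic between two points of $U_+$ may a priori dip into $U_-$ or touch $\alpha_Q$. I plan to exclude this with Lemma~\ref{Lem: geod no glance}, pushing each $v_i'$ a definite small distance $\eta$ into $U_+$, with $\eta\ll$ the good-component radii, before making the measure-theoretic generic choice. Where an old edge is long compared with these radii, I first subdivide it by inserting extra vertices so that consecutive points lie in a common ball governed by Lemma~\ref{Lem: geod no glance}. At corners, I use the corner good components from Theorem~\ref{Thm: good components existence corner} in the same way. The resulting curve $\alpha'$ is then simple after the excision procedure used in Theorem~\ref{Thm: poly dom neigh basis}, is interior, and has $\alpha'((0,1))\subset Q$.

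Finally I define $Q_\eps$. In the interior case, $\alpha'$ is a simple interior polygonal loop in $Q\subset B_r(p)$. By (a)--(c) and Theorem~\ref{Thm: Jordan theorem}, $X\setminus\alpha'$ has exactly one component inside $B_R(p)$; I take $Q_\eps$ to be this component, which is an interior polygonal domain by Remark~\ref{Rem: polydom consistency1}. In the boundary case, I close up $\alpha'$ with the shortened boundary arc and define $Q_\eps$ via Theorem~\ref{Thm: Jordan theorem boundary}. It remains to show $\overline{Q_\eps}\subset Q$ and that $Q_\eps=Q$ off $B_\eps(\alpha_Q([0,1]))$. Points of $Q$ far from $\alpha_Q$ connect to $p$-side points without crossing either loop, while $\alpha'$ and the thin region between $\alpha'$ and $\alpha_Q$ lie in $U_+\cup\alpha'$. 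The component of $Q\setminus\alpha'$ adjacent to $\alpha_Q$ lies in $U_+$, and this follows from the side-change parity argument of Claim~\ref{Cla: side change}.
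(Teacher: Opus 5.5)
Your proposal has two genuine gaps: at corner vertices the new edges are not shown to stay in $Q$, and the localization $Q_\eps=Q$ outside $B_\eps(\alpha_Q([0,1]))$ is asserted rather than proved.

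\textbf{Corners.} You keep the new edges in $Q$ via Lemma~\ref{Lem: geod no glance}. That lemma is only available in a ball centred at an interior point of a single geodesic, with $\alpha_Q\cap B_r=\gamma\cap B_r$, and such balls degenerate as you approach a vertex of $\alpha_Q$. At a corner vertex, Theorem~\ref{Thm: good components existence corner} and Proposition~\ref{Pro: good comp exist corner int} only give the two-component structure. There is no no-glance statement for corners, and on the reflex side it is false. Take in $\R^2$ the corner made of the segments $[0,(1,0)]$ and $[0,(0,1)]$, with $Q$ locally the complement of the closed first quadrant. The points $(\delta,-\eta)$ and $(-\eta,\delta)$, with $0<\eta<\delta$, lie in the $Q$-side component. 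Yet the segment joining them crosses both edges, at $(\delta-\eta,0)$ and $(0,\delta-\eta)$. Pushing $v_i'$ a distance $\eta$ into $U_+$ does not help, since in a general $\RCD$ space there is no canonical inward direction. So your new geodesic edges can leave $Q$ there.

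The paper avoids geodesic convexity entirely:
\begin{enumerate}
\item It covers $\alpha_Q$ by a cyclic chain of small balls $B_{r_i}(x_i)$ and takes their $Q$-side components $A_i$.
\item It picks $y_i'\in A_{i-1}\cap A_i$.
\item It joins consecutive $y_i'$ inside $A_i$ using Lemma~\ref{Lem: poly curve connect}. That lemma chains small balls contained in an open path-connected set, so the curve stays in $A_i\subset Q$ and is in general position with $\beta$ automatically.
\end{enumerate}

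\textbf{Localization.} This is the more serious gap. The inclusion $\overline{Q_\eps}\subset Q$ is easy: $X\setminus\overline Q$ is connected, misses $\alpha'$ and contains $q$, so together with $\alpha_Q([0,1])$ it lies in the component of $X\setminus\alpha'$ containing $q$. The other claim is not established.
\begin{itemize}
\item Your sentence ``points of $Q$ far from $\alpha_Q$ connect to $p$-side points without crossing either loop'' presupposes that every point of $Q\setminus B_\eps(\alpha_Q)$ reaches a fixed reference point of $Q_\eps$ while avoiding $B_\eps(\alpha_Q)$. This fails as soon as $Q$ has a neck of width less than $2\eps$.
\item ``The thin region between $\alpha'$ and $\alpha_Q$ lies in $U_+$'' is exactly what has to be proved.
\item The parity argument of Claim~\ref{Cla: side change} counts crossings of one curve with one loop. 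It does not show that $\alpha'$ separates the tube $U_+$ into a collar of $\alpha_Q$ and the rest.
\end{itemize}

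What is missing is topological control at scale $\eps$. The paper supplies it as follows:
\begin{enumerate}
\item It chooses $r_\eps\le\eps/2$ so that $B_{r_\eps}(x)\hookrightarrow B_{\eps/2}(x)$ is trivial on $\pi_1$ for all $x\in\alpha_Q$; also $B_\eps(x)\hookrightarrow X$ is trivial by (b).
\item It passes from $\alpha_Q$ to $\alpha_{Q_\eps}$ through finitely many loops $\alpha_j$. Each $\alpha_j$ is obtained from $\alpha_{j-1}$ by swapping one sub-arc for a new arc, so that old and new arc together form a simple loop $\sigma_j\subset B_{r_i}(x_i)$.
\item By the second part of Theorem~\ref{Thm: Jordan theorem}, $\sigma_j$ bounds a domain inside $B_\eps(x_i)$.
\item By Lemma~\ref{lemma:two_polygonal_dom}, the domain bounded by $\alpha_j$ differs from that bounded by $\alpha_{j-1}$ exactly by this small piece.
\end{enumerate}
You need some argument of this kind, localizing each change to a small ball via local $\pi_1$-triviality; the tube $U$ alone does not provide it.
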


Condition~(1) states that the topological boundary of $Q_\eps$ is in general position with respect to $\beta$. Condition~(2) ensures that $Q_\eps$ is a small perturbation of $Q$, concentrated in the $\eps$-tubular neighborhood of its topological boundary.

\subsection{Cutting polygonal domains}

In this section, we isolate the following lemma. It will play a key role in the perturbation argument leading to Proposition~\ref{Prop: general_pos}, and it will also be used in Section~\ref{Sec: man struct} to prove the manifold structure.

\begin{Lem}\label{lemma:two_polygonal_dom}
Let $Q$ be a polygonal domain satisfying {\rm(a)--(c)}. Let $\beta$ be a simple polygonal curve connecting two distinct points $x,y$ in the support of $\alpha_Q$, and contained in $Q\cap \Int(X)$ except possibly at its endpoints. If $Q$ is a boundary polygonal domain, assume in addition that at least one of $x$ and $y$ belongs to $\alpha_Q([0,1])$. Then $\beta$ disconnects $Q$ into two polygonal domains $Q_1$ and $Q_2$. Moreover, each of $\alpha_{Q_1}$ and $\alpha_{Q_2}$ is, up to reparameterization, the union of $\beta$ with exactly one of the two connected components of $\alpha_Q\setminus \{x,y\}$.
\end{Lem}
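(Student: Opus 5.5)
The plan is to split $\alpha_Q$ at $x,y$ into two arcs $a_1,a_2$ (the two components of $\alpha_Q\setminus\{x,y\}$, closed up), and to form the loops $\alpha_i := a_i\cup\beta$. Since $\beta\setminus\{x,y\}\subset Q$ is disjoint from $\partial Q$, each $\alpha_i$ is a simple loop. Its nature depends on where $a_i$ lies:
\begin{itemize}
\item if $a_i\subset\alpha_Q([0,1])$, then $\alpha_i$ is a simple interior polygonal loop;
\item if $a_i$ contains a portion of the extremal arc $\alpha_Q([-1,0])$, then $\alpha_i$ is a simple boundary polygonal loop, by the hypothesis that one of $x,y$ lies in $\alpha_Q([0,1])$.
\end{itemize}
The second case needs checking: the extremal part of $\alpha_i$ must be a single arc, and the rest must be a simple polygonal curve contained in $\Int(X)$ away from its endpoints. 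If both $x,y$ were on the extremal arc, one $\alpha_i$ would have two extremal pieces; this is exactly why the hypothesis excludes that case.

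Since $\alpha_i\subset\overline Q\subset B_r(p)$, assumptions (a)--(c) let us apply Theorem~\ref{Thm: Jordan theorem}, Theorem~\ref{Thm: Jordan theorem boundary}, or Lemma~\ref{Lem: sep lem}. Each gives that $X\setminus\alpha_i$ has exactly two components, exactly one of which, $P_i$, lies in $B_R(p)$. We then set $Q_i:=P_i$ in the interior case. In the boundary case we set $Q_i := P_i\cup(\text{open extremal arc of }\alpha_i)$, following Definition~\ref{Def: bd poly dom}.

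The core claim is
\[
Q\setminus\beta = Q_1\sqcup Q_2 ,
\]
with the $Q_i$ open, connected and nonempty. For the inclusion $Q_i\subset Q$: $Q_i$ is connected, lies in $B_R(p)$, and its boundary $\alpha_i$ lies in $\overline Q$. Hence $Q_i$ cannot meet the unbounded component of $X\setminus\alpha_Q$ (or of $X\setminus\alpha_Q([0,1])$) without crossing $\partial Q$. The reason is that $Q_i\setminus\overline Q$ would then be nonempty and bounded, while its boundary lies in $\alpha_Q\setminus\alpha_i$, which is not closed-separating. Concretely, I would argue with Remark~\ref{Rem: polydom consistency1} and Remark~\ref{Rem: polydom consistency2}, checking membership via short paths.

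For the decomposition, the approach mirrors the proof of Theorem~\ref{Thm: poly dom neigh basis}. Pick a point $z$ in the interior of an edge of $\beta$. By Theorem~\ref{Thm: good components existence 2}, a small ball $B_s(z)\setminus\beta$ has two components $G_\pm$. Both $Q_1$ and $Q_2$ have $z$ in their boundary. Using Lemma~\ref{Lem: big set small set comp} together with Lemma~\ref{Lem: local config}, we show that $G_+\subset Q_1$ and $G_-\subset Q_2$ after relabeling, and in particular that $Q_1\neq Q_2$ and $Q_1\cap Q_2=\emptyset$. The sides cannot coincide, because both $Q_i\subset Q$ and $Q$ itself lies on one side of $a_1$ near points of $a_1$. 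Moving near a point of $a_1$, the same local picture shows $Q_1$ is the side of $a_1$ inside $Q$, while $Q_2$ does not approach $a_1$.

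Covering, i.e.\ every $w\in Q\setminus\beta$ lies in some $Q_i$, is done by connecting $w$ by a path in $Q$ to a point near $z$ (or near $\partial Q$). We take the first hitting time of $\beta\cup\partial Q$ and conclude via the local components as in Lemma~\ref{Lem: ball minus geod at most two comp}. Finally, $\partial Q_i=\alpha_i$ (or $\alpha_i$ restricted to its non-extremal part) comes from the Jordan theorems. In the boundary case, $\overline{Q_i}\cap\Ext(X)$ equals the extremal arc of $\alpha_i$ because $\overline{Q_i}\subset\overline Q$ and $\beta\setminus\{x,y\}\subset\Int(X)$.

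The main obstacle I expect is the inclusion $Q_i\subset Q$ together with the disjointness of $Q_1$ and $Q_2$. Both require careful bookkeeping of sides along the shared arc $\beta$ and near the vertices $x,y$, which may be corners or extremal points. There I would use Proposition~\ref{Pro: corner at most three comp} and Proposition~\ref{Pro: comp exist geod ep ext quant} for local control.
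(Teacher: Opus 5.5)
Your plan is correct and is essentially the paper's argument. It uses the same three loops $\alpha_Q$, $a_1\cup\beta$, $a_2\cup\beta$, applies the Jordan theorems to each, and proves $(Q\cap\Int(X))\setminus\beta=P_1\sqcup P_2$ by inclusion, disjointness, and a first-hitting argument along $\beta$ using the local two-sided structure; this is Claim~\ref{Claim:A+}, after which the extremal arcs are glued back via Corollary~\ref{Lem: ext set no disconnect}. The steps you flag as the main obstacle, $Q_i\subset Q$ and $Q_1\cap Q_2=\emptyset$, need no local analysis at $x,y$ and no Lemma~\ref{Lem: local config}, which does not apply because the two loops share all of $\beta$. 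Your ``not closed-separating'' heuristic should be replaced by the paper's boundary-point argument:
\begin{itemize}
\item Label the components of $X\setminus(a_i\cup\beta)$ by whether they contain the complementary arc $a_{3-i}$.
\item The other component is then disjoint from $\alpha_Q$ and has $\beta(1/2)\in Q$ in its boundary, so it lies in $Q$.
\item Its boundary also contains points of $a_i$, which by the labeling lie in the outer component of $a_{3-i}\cup\beta$, so it is disjoint from the inner component of that loop.
\end{itemize}
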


Set $\sigma_1:=\beta$ and let $\sigma_2$ and $\sigma_3$ be parametrizations on $[0,1]$ of the two connected components of $\alpha_Q\setminus \{x,y\}$. We then define the corresponding simple loops as follows:
\begin{enumerate}
    \item $\alpha_1$ is the concatenation of $\sigma_2$, from $x$ to $y$, and $\sigma_3$ from $y$ to $x$;
    \item $\alpha_2$ is the concatenation of $\sigma_3$, from $x$ to $y$, and $\sigma_1$ from $y$ to $x$;
    \item $\alpha_3$ is the concatenation of $\sigma_1$, from $x$ to $y$, and $\sigma_2$ from $y$ to $x$. 
\end{enumerate}
Notice that $\alpha_1$ coincides with $\alpha_Q$ up to reparametrization, while each of $\alpha_2$ and $\alpha_3$ is either a simple polygonal loop (Definition~\ref{Def: poly loop}) or, up to reparametrization, a simple boundary polygonal loop (Definition~\ref{Def: bd poly loop}). Here we use the assumption that at least one of the points $x$ and $y$ belongs to the interior set.

By assumptions {\rm(a)--(c)} and Jordan Theorems~\ref{Thm: Jordan theorem} and~\ref{Thm: Jordan theorem boundary}, we deduce that $X\setminus \alpha_i$ has exactly two connected components. Since $\sigma_i((0,1))$ is connected and disjoint from $\alpha_i$, it must be contained in one of them, which we denote by $A_+^i$. We denote the other component by $A_-^i$. From Remarks \ref{Rem: polydom consistency1} and \ref{Rem: polydom consistency2}, it follows that $Q\cap \Int(X) = A_+^1$ and $A_-^1 = X\setminus \overline Q$.

\begin{Cla}\label{Claim:A+}
\begin{equation}
    A_+^1 = A_-^2 \sqcup A_-^3 \sqcup \sigma_1((0,1)).
\end{equation}
\end{Cla}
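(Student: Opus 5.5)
The plan is to establish the decomposition in three steps: place $\sigma_1((0,1))$, $A_-^2$ and $A_-^3$ inside $A_+^1$; show they are pairwise disjoint; and show they cover $A_+^1$. Throughout I will use the facts recorded above. $A_+^1=Q\cap\Int(X)$ and $A_-^1=X\setminus\overline Q$. Each $X\setminus\alpha_i$ has exactly two components. The topological boundaries of these components are $\alpha_i$, or contain the non-extremal part of $\alpha_i$ in the boundary case, by Theorems~\ref{Thm: Jordan theorem}, \ref{Thm: Jordan theorem boundary} and Lemma~\ref{Lem: sep lem}. Finally, exactly one component of each complement lies in $B_R(p)$, namely the bounded one.

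The first step is to identify which component is bounded. Since $\sigma_1((0,1))\subset Q\cap\Int(X)=A_+^1$, the bounded component of $X\setminus\alpha_1$ is $A_+^1$. For $i=2,3$ I claim the bounded component is $A_-^i$. The reason is that $\sigma_i$ is part of $\alpha_Q$, so every point of $\sigma_i((0,1))$ lies in the closure of $X\setminus\overline Q$, which is unbounded and path-connected. Moreover, $X\setminus\overline Q$ is disjoint from $\alpha_i\subset\overline Q$. Hence $X\setminus\overline Q$ lies in a single component of $X\setminus\alpha_i$, and this component is unbounded since it contains the point $q$ from (c). To see that this component is $A_+^i$, I will take a point $z\in\sigma_i((0,1))$ lying in $\Int(X)$ (an interior edge point of $\alpha_Q$ not on $\alpha_i$) and a small ball $B_s(z)$ disjoint from $\alpha_i$. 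This ball is path-connected and meets $X\setminus\overline Q$, so it lies in the unbounded component; since it also contains $z\in A_+^i$, that component is $A_+^i$. The same ball shows $X\setminus\overline Q\subset A_+^i$, and therefore $A_-^i\subset\overline Q\setminus\alpha_i$. As $A_-^i$ avoids $\alpha_Q\setminus\alpha_i=\sigma_{i'}((0,1))$ (where $i'\neq i$ is the other index in $\{2,3\}$) except in the boundary case, I get $A_-^i\subset Q$, and removing extremal points, which lie only on $\alpha_Q$, gives $A_-^i\subset A_+^1$.

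For disjointness, $\sigma_1\subset\alpha_2\cap\alpha_3$, so $\sigma_1((0,1))$ misses both $A_-^2$ and $A_-^3$. To see $A_-^2\cap A_-^3=\emptyset$, I will use that $\sigma_3((0,1))\subset A_+^3$, and in fact $\sigma_3((0,1))$ lies in the closure of $X\setminus\overline Q\subset A_+^3$. The set $A_-^2$ is path-connected and disjoint from $\alpha_3\cap\overline{A_-^2}$ except along $\sigma_1$. Working near an interior point of $\sigma_3$, where the good components of Theorem~\ref{Thm: good components existence 2} and Lemma~\ref{Lem: big set small set comp} apply, one side of $\sigma_3$ is $A_-^2$ and the other is $X\setminus\overline Q$. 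This puts $A_-^2$ in the component of $X\setminus\alpha_3$ adjacent to $\sigma_3$ from inside, which must be $A_+^3$ because both sides of $\sigma_3$ in $X\setminus\alpha_3$ lie in the same component. Hence $A_-^2\subset A_+^3$, which gives disjointness.

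For covering, take $x\in A_+^1\setminus\sigma_1$ with $x\notin A_-^3$. Then $x\in A_+^3$, and I must show $x\in A_-^2$. Suppose instead $x\in A_+^2$. Then $x$ can be joined in $X\setminus\alpha_2$ to the unbounded region, by a curve that necessarily crosses $\alpha_1$ through $\sigma_2$. Truncating this curve at its first hit of $\alpha_Q$ gives a path in $Q$ ending on $\sigma_2$. Near that endpoint the good-component picture applies, so points just before the hit lie on the $\overline Q$-side of $\sigma_2$, and the truncated path avoids $\sigma_1\cup\sigma_3=\alpha_3$. The path therefore stays in one component of $X\setminus\alpha_3$, placing $x$ in the component adjacent to $\sigma_2$ from inside $Q$, which is $A_-^3$ by the analogue of the previous step. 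This contradicts $x\notin A_-^3$.

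The main obstacle is the local side-matching along $\sigma_2,\sigma_3$ in the two previous steps. It is delicate at vertices and at the points $x,y$, which may be corners or extremal. I will avoid these points by always working at generic interior edge points and invoking local two-component results there. The boundary-domain case also requires replacing \emph{boundary equals $\alpha_i$} with \emph{boundary contains $\alpha_i([0,1])$} from Theorem~\ref{Thm: Jordan theorem boundary}.
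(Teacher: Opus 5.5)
\textbf{Comparison with the paper.} Your first two steps are the paper's argument, reorganized. The paper gets $A_-^1\subset A_+^2$ and $A_-^1\subset A_+^3$ ``by symmetry'' from the boundary-point argument; you get them from a ball around a non-extremal point of $\sigma_i((0,1))$. Both of you get $A_-^2\subset A_+^3$ from the same fact: a point of $\sigma_3((0,1))$ lies in $\partial A_-^2$, while a small ball around it misses $\alpha_3$.

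There are index slips. With the paper's convention, $\alpha_i$ omits $\sigma_i$. So $\alpha_Q\setminus\alpha_i=\sigma_i((0,1))$, not $\sigma_{i'}((0,1))$, and $\alpha_3=\sigma_1\cup\sigma_2$. Also, $A_-^i$ misses $\sigma_i((0,1))$ simply because of the labeling $\sigma_i((0,1))\subset A_+^i$. This holds in the boundary case too; there is no exception there.

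The covering step is where you depart from the paper. The paper runs a path inside $A_+^1$ to $\sigma_1(1/2)$. It reads off sides at the first hit of $\sigma_1((0,1))$, using $A_-^2\subset A_+^3$. You instead push a point of $A_+^1\cap A_+^2$ out of $\overline Q$ through $\sigma_2$, and use $A_-^1\subset A_+^3$.

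\textbf{The gap.} The point $w$ where your curve first meets $\alpha_Q$ is dictated by the curve, so you cannot take it to be a generic edge point. For a boundary domain, $\sigma_2((0,1))$ may contain points of the extremal arc $\alpha_Q([-1,0])$, and possibly the junction points $\alpha_Q(0)$ and $\alpha_Q(1)$. These points lie in $A_+^2$, so a curve in $A_+^2$ can touch them before it exits $\overline Q$.

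At an extremal point there is no two-sided local picture. For small $s$, $B_s(w)\setminus\alpha_Q$ is path-connected by Corollary~\ref{Lem: ext set no disconnect}. So your sentence ``points just before the hit lie on the $\overline Q$-side of $\sigma_2$, which is $A_-^3$'' is not supported by the good-component results you cite. Your stated remedy of working only at generic points does not apply here.

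For interior domains your argument is fine. There, vertices of $\alpha_Q$ are covered by Theorem~\ref{Thm: good components existence 2} or Proposition~\ref{Pro: good comp exist corner int}.

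\textbf{Repairs.} Any of the following closes the gap.
\begin{itemize}
\item Build the curve with Lemma~\ref{Lem: poly curve connect}, taking $U=A_+^2$. Make it a polygonal curve contained in $\Int(X)$ and in general position with $\alpha_Q|_{[0,1]}$, ending at an interior point of $X\setminus\overline Q$. Then $w$ is an interior point of an edge of $\alpha_Q$ lying in $\Int(X)$. Your matching via Theorem~\ref{Thm: good components existence 2} and Lemma~\ref{Lem: big set small set comp} then goes through.
\item Treat the extremal case directly. For $w$ on the open extremal arc, $B_s(w)\setminus\alpha_3$ is path-connected. It also meets $A_-^3$, because the bounded component has topological boundary all of $\alpha_3$. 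So the points just before the hit still lie in $A_-^3$. The junction points need a separate argument, via Proposition~\ref{Pro: comp exist geod ep ext quant}.
\item Use the paper's route. There the hit point lies on $\sigma_1((0,1))\subset\Int(X)$, away from $\alpha_Q$, so extremal points never occur.
\end{itemize}
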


\begin{proof}[Proof of Lemma \ref{lemma:two_polygonal_dom} given Claim \ref{Claim:A+}]

When $Q$ is an interior polygonal domain, the conclusion follows immediately from the fact that $Q=Q\cap \Int(X)=A_+^1$ together with Claim~\ref{Claim:A+}.

Assume now that $Q$ is a boundary polygonal domain. Then
$Q\setminus \sigma_1$ is the disjoint union of $(Q\cap \Int(X))\setminus \sigma_1 = A_-^2 \sqcup A_-^3$ and $\sigma_Q([-1,0])\setminus \sigma_1$. We distinguish two cases.
First, suppose that at least one of the points $x$ and $y$ belongs to $\sigma_Q([-1,0])$. Then $\sigma_Q([-1,0])\setminus \sigma_1$ has two connected components, and these are precisely the boundary parts of the loops $\alpha_2$ and $\alpha_3$. It follows that each of them combines with the corresponding set $A_-^2$ or $A_-^3$ to form an open connected set. This can be seen by using that the extremal set does not disconnects locally, see Corollary \ref{Lem: ext set no disconnect}. These are exactly the two connected components of $Q\setminus \sigma_1$.

Second, suppose that neither $x$ nor $y$ belongs to $\sigma_Q([-1,0])$. Then the whole boundary part $\sigma_Q([-1,0])$ is contained in one of the two loops, say $\alpha_2$. Hence $\sigma_Q([-1,0])$ joins naturally with $A_-^2$, and their union is again an open connected set. Together with $A_-^3$, this gives the two connected components of $Q\setminus \sigma_1$.
\end{proof}

\begin{proof}[Proof of Claim \ref{Claim:A+}]

Since $\sigma_2((0,1)) \subset A_+^2$, it follows that $\sigma_2((0,1))$ is disjoint from $A_-^2$. Moreover, $\sigma_3$ is also disjoint from $A_-^2$, since it is contained in $\alpha_2$. 
Therefore, $A_-^2$ is disjoint from the support of $\alpha_1$.
Being connected, $A_-^2$ must be contained entirely in one of the components of $X \setminus \alpha_1$, namely $A_+^1$ or $A_-^1$. We show that $A_-^2 \subset A_+^1$.

By Theorem~\ref{Thm: Jordan theorem} (or Theorem~\ref{Thm: Jordan theorem boundary} in the boundary case), the topological boundary of $A_-^2$ coincides with $\alpha_2$, in particular it contains $\sigma_1(1/2)$.  On the other hand, $\sigma_1(1/2) \in A_+^1$. Since $A_+^1$ is open (being a component of  $X\setminus \alpha_1$), it contains a neighborhood of $\sigma_1(1/2)$. 
In particular, $A_-^2$ intersects $A_+^1$ and hence $A_-^2 \subset A_+^1$.

Arguing in the same way for $A_-^3$, we obtain $A_-^3 \subset A_+^1$. Hence,
\begin{equation}\label{Eq: component relation 1}
  A_-^2 \cup A_-^3 \cup \sigma_1((0,1)) \subset A_+^1.
\end{equation}
By symmetry, we also have
\begin{equation}\label{Eq: component relation 2}
  A_-^1 \cup A_-^3 \cup \sigma_2((0,1)) \subset A_+^2 \quad \text{and} \quad A_-^1 \cup A_-^2 \cup \sigma_3((0,1)) \subset A_+^3.
\end{equation}

Next, we show that the unions in \eqref{Eq: component relation 1} are disjoint unions. Clearly, $\sigma_1((0,1))$ is disjoint from $A_-^2 \cup A^3_-$, since it belongs to the support of $\alpha_2$ and $\alpha_3$.
Moreover, we have $A_-^3 \subset A_+^2$ from \eqref{Eq: component relation 2}, which implies that $A_-^3 \cap A_-^2 = \emptyset$.

% {\color{red}Together, these show that the unions in \eqref{Eq: component relation 1} are disjoint unions. By symmetry, the unions in \eqref{Eq: component relation 2} are also disjoint unions. 
% }

It remains to prove the reverse inclusion. Let $x\in A_+^1\setminus \sigma_1$. We claim that $x\in A_-^2 \sqcup A_-^3$.
Since $A_+^1$ is path-connected and $\sigma_1((0,1)) \subset A_+^1$, there exists a continuous curve $\nu:[0,1] \to A_+^1$ joining $x$ to $\sigma_1(1/2)$. Let $t\in (0,1]$ be the smallest time such that $\nu(t) \in \sigma_1((0,1))$, which is well defined since $\sigma_1((0,1))$ is closed in $A_+^1$.

Choose $r>0$ small enough so that $B_r(\nu(t)) \subset A_+^1$ and $B_r(\nu(t))\setminus \sigma_1$ has exactly two connected components, denoted by $P_+$ and $P_-$. This follows from either Theorem~\ref{Thm: good components existence 2} or Proposition~\ref{Pro: good comp exist corner int}, according to whether $\nu(t)$ lies on an edge or at a vertex of the polygonal curve $\sigma_1$. Since $\sigma_1$ is contained in both $\alpha_2$ and $\alpha_3$, the two local components $P_+$ and $P_-$ lie in different connected components of $X\setminus \alpha_2$ and of $X\setminus \alpha_3$. Without loss of generality, assume that $P_+\subset A_+^2$ and $P_-\subset A_-^2$. By \eqref{Eq: component relation 2}, we have $P_-\subset A_-^2 \subset A_+^3$, and therefore $P_+\subset A_-^3$.
It follows that, for $\varepsilon>0$ sufficiently small, the point $\nu(t-\varepsilon)$ belongs either to $A_-^2$ or to $A_-^3$. Since $\nu([0,t))\subset A_+^1$ is connected and disjoint from both $\alpha_2$ and $\alpha_3$, the whole curve $\nu([0,t))$ must remain in the same connected component of $X\setminus \alpha_2$ and of $X\setminus \alpha_3$ as $\nu(t-\varepsilon)$. In particular, $x=\nu(0)$ belongs either to $A_-^2$ or to $A_-^3$, as claimed.
\end{proof}

The following corollary of Lemma~\ref{lemma:two_polygonal_dom} will be used in the proof of Theorem~\ref{thm:subordinated_grids}.

\begin{Cor}\label{cor:intersection}
Let $Q$ be a polygonal domain satisfying {\rm(a)--(c)}. Let $a,b,c,d\in \alpha_Q$ be distinct points, viewed as vertices of a quadrilateral and listed in clockwise order, with the convention that $\alpha_Q(-1)=d$ and $\alpha_Q(0)=a$ when $Q$ is a boundary polygonal domain. Let $\sigma_{ac}$ and $\sigma_{bd}$ be simple polygonal curves contained in $Q$ except at their endpoints, which are $a,c$ and $b,d$, respectively. Then $\sigma_{ac}$ and $\sigma_{bd}$ intersect.
\end{Cor}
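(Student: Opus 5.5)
We argue by contradiction: suppose $\sigma_{ac}$ and $\sigma_{bd}$ are disjoint. The idea is to apply Lemma~\ref{lemma:two_polygonal_dom} to cut $Q$ along $\sigma_{ac}$, and then observe that $b$ and $d$ end up on the boundaries of different pieces, so that $\sigma_{bd}$ would have to cross the cut.

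\textbf{Applying the cutting lemma.} The curve $\sigma_{ac}$ is a simple polygonal curve joining two distinct points of $\alpha_Q$, and it lies in $Q\cap\Int(X)$ away from its endpoints. In the boundary case, $\sigma_{ac}$ is contained in $Q$ except at $a,c$, and $Q\cap\Ext(X)=\alpha_Q((-1,0))$. Hence the interior of $\sigma_{ac}$ could only meet $\Ext(X)$ along $\alpha_Q((-1,0))$. This is excluded because interior points of regular geodesics are interior points (Corollary~\ref{Cor: int geod no ext}), and $\sigma_{ac}$ consists of regular geodesics. We also need one of $a,c$ to lie in $\alpha_Q([0,1])$. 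This holds under the stated convention $\alpha_Q(0)=a$.

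Lemma~\ref{lemma:two_polygonal_dom} then gives $Q\setminus\sigma_{ac}=Q_1\sqcup Q_2$. Here $\alpha_{Q_1}$ is $\sigma_{ac}$ together with the arc of $\alpha_Q\setminus\{a,c\}$ containing $b$, and $\alpha_{Q_2}$ is $\sigma_{ac}$ together with the arc containing $d$. The clockwise ordering $a,b,c,d$ guarantees that $b$ and $d$ lie on different components of $\alpha_Q\setminus\{a,c\}$.

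\textbf{Locating $\sigma_{bd}$.} Since $\sigma_{bd}((0,1))$ is connected, contained in $Q$, and disjoint from $\sigma_{ac}$, it lies entirely in one piece, say $Q_1$. Then $d=\lim_{t\to1}\sigma_{bd}(t)$ lies in $\overline{Q_1}$. We will show this contradicts $d$ lying on the open arc of $\alpha_Q$ belonging to $\alpha_{Q_2}$ and not to $\alpha_{Q_1}$.

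\textbf{Main obstacle: $d\notin\overline{Q_1}$.} By Claim~\ref{Claim:A+}, $Q\cap\Int(X)$ decomposes into $A_-^2$, $A_-^3$ and $\sigma_1((0,1))$, where $A_-^i$ is the inner component of $X\setminus\alpha_i$. The set $Q_1$ is essentially $A_-^i$, plus possibly part of $\alpha_Q([-1,0])$, for the loop $\alpha_i$ not passing through $d$. Since $d\notin\alpha_i$, the Jordan theorems (Theorem~\ref{Thm: Jordan theorem} and Theorem~\ref{Thm: Jordan theorem boundary}) put $d$ in the open outer component $A_+^i$ of $X\setminus\alpha_i$. Points of $A_-^i$ near $d$ are then impossible, because $A_+^i$ is an open neighbourhood of $d$ disjoint from $A_-^i$. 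So $d\notin\overline{A_-^i}$.

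In the boundary case we must also exclude $d$ being a limit of points of $\alpha_Q([-1,0])\cap\overline{Q_1}$. The relevant extremal arc is a compact sub-arc of $\alpha_Q([-1,0])$ not containing $d$, so its closure stays away from $d$. The degenerate configuration $d=\alpha_Q(-1)$ needs a short separate check, again using simplicity of the loop.

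This gives the contradiction, so $\sigma_{ac}\cap\sigma_{bd}\neq\emptyset$.
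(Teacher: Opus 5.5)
Your argument is the paper's. You cut $Q$ along $\sigma_{ac}$ with Lemma~\ref{lemma:two_polygonal_dom}, note that $\sigma_{bd}$ minus its endpoints lies in one piece, and conclude that $b$ and $d$ would both lie in the closure of that piece. The paper simply asserts that the two closures keep $b$ and $d$ apart. Your use of Claim~\ref{Claim:A+} is the right way to fill this in. The precise reason is that the far endpoint lies on the arc of $\alpha_Q$ complementary to the loop bounding the piece, so it lies in $A^i_+$ by definition. Since $A^i_+$ is open and disjoint from $A^i_-$, the endpoint is not in $\overline{A^i_-}$. The Jordan theorems plus $d\notin\alpha_i$ alone only give $d\in A^i_+\sqcup A^i_-$.

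Two steps of your write-up need correcting.

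First, your check that $\sigma_{ac}$ lies in $Q\cap\Int(X)$ away from $a,c$ only covers interior points of edges. A \emph{vertex} of $\sigma_{ac}$ may lie on $\alpha_Q((-1,0))\subset Q$. For instance, in $\R^2_+$ a polygonal path from $a$ to $c$ can dip down and touch the boundary line at a vertex. Then Lemma~\ref{lemma:two_polygonal_dom} does not apply, and $Q\setminus\sigma_{ac}$ has more than two components. The paper's two-line proof also passes over this, since in its applications the curves lie in $\Int(X)$. You can repair it in either of two ways:
\begin{itemize}
\item read the hypothesis that way; or
\item replace $a$ by the last point $v$ of $\sigma_{ac}$ (from $a$ to $c$) lying in $\alpha_Q([-1,0])$. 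Such points are $a$ or vertices, by Corollary~\ref{Cor: int geod no ext}. Then cut along the sub-curve from $v$ to $c$: it satisfies the lemma's hypotheses and still separates $b$ from $d$.
\end{itemize}

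Second, you have misread the stated convention $\alpha_Q(-1)=d$, $\alpha_Q(0)=a$. The configuration $d=\alpha_Q(-1)$ is not degenerate; it is the only boundary configuration. Moreover, the entire extremal arc belongs to the $d$-side. With $\sigma_{ac}$ interior, $Q_1$ lies in $\Int(X)$, while $Q_2$ contains all of $\alpha_Q((-1,0))$, whose closure contains $d$. So the case $\sigma_{bd}((0,1))\subset Q_2$ is not symmetric to the one you treat (``say $Q_1$''). It must be handled by showing $b\notin\overline{Q_2}$. This holds because $b$ lies on the arc complementary to the loop bounding $Q_2$, hence in its open outer component, and $b\notin\alpha_Q([-1,0])$. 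With these two repairs the proof is complete.
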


\begin{proof}
By Lemma~\ref{lemma:two_polygonal_dom}, the set $Q\setminus \sigma_{ac}$ has exactly two components. Moreover, the closure of one component contains the boundary arcs $ab$ and $bc$, while the closure of the other contains the boundary arcs $cd$ and $da$.

If, by contradiction, $\alpha_{ac}$ and $\alpha_{bd}$ were disjoint, then $\alpha_{bd} \setminus \{b,d\}$ would be contained in a single connected component of $Q\setminus \alpha_{ac}$. In particular, the points $b$ and $d$ would both be contained in the closure that component, yielding a contradiction. 
\end{proof}

\subsection{Proof of Proposition \ref{Prop: general_pos}}
Fix a parameter $\eps>0$ sufficiently small so that $B_{\eps}(x) \subset B_r(p)$ for all $x \in \alpha_Q$. It follows from assumption~{\rm (b)} that $B_{\eps}(x) \hookrightarrow X$ induces the trivial map on $\pi_1$. By Theorem~\ref{Thm: semi-locally simply connected}, exists $0<r_\eps\le \veps/2$ such that, for every $x\in \alpha_Q$, the inclusion $B_{r_\eps}(x)\hookrightarrow B_{\eps/2}(x)$ induces the trivial map on $\pi_1$.

Assume first that $Q$ is an interior polygonal domain. By Theorem~\ref{Thm: good components existence 2} and Proposition~\ref{Pro: comp exist corner int quant 2}, we may choose points $x_i \in \alpha_Q$ and $0 < r_i \leq r_\eps$, $i = 1, \dots , k$, such that the following hold:
\begin{enumerate}
    \item[(i)] $B_{r_i}(x_i)$ covers $\alpha_Q$;
    \item[(ii)] $B_{r_i}(x_i)$ intersects only its predecessor $B_{r_{i-1}}(x_{i-1})$ and its successor $B_{r_{i+1}}(x_{i+1})$;
    \item[(iii)] $\alpha_Q \cap \overline B_{r_i}(x_i)$ is either a geodesic with length $2r_i$ or a corner with vertex $x_i$ and two edges of length $r_i$;
    \item[(iv)] $B_{r_i}(x_i) \setminus \alpha_Q$ has exactly components, and their topological boundaries in $B_{r_i}(x_i)$ coincide with $\alpha_Q \cap B_{r_i}(x_i)$;
\end{enumerate}
where $k + 1 = 1$ and $0 = k$ in (ii). 

Since each $x_i$ is contained in the topological boundary of $Q$, it follows from Lemma~\ref{Lem: big set small set comp} that exactly one of the components of $B_{r_i}(x_i) \setminus \alpha_Q$, denoted by $A_i$, is contained in $Q$, while the other is disjoint from $Q$. By (ii), we see that $\alpha_Q \cap B_{r_i}(x_i) \cap B_{r_{i+1}}(x_i)$ is non-empty for each $i$. Combining with (iv), it is not difficult to see that $A_i$ intersects only $A_{i-1}$ and $A_{i+1}$. 

For each $i = 1, \dots, k$, choose a point $y_i \in \alpha_Q \cap B_{r_{i-1}}(x_i) \cap B_{r_{i}}(x_i)$. Choose $y_i' \in A_{i-1} \cap A_{i}$ close to $y_i$. It follows from the non-branching property (Theorem~\ref{Thm: non-branching}) that if $y_i'$ is sufficiently close to $y_i$, then any geodesic from $y_i$ to $y_i'$ does not intersect $\alpha_Q$ except at $y_i$, and therefore lies in $A_{i-1} \cap A_{i}$. Since $y_i, y_i' \in \overline Q \subset \Int(X)$, it follows from Corollaries~\ref{Cor: int geod no ext} and \ref{Cor: int geod reg} that any such geodesic is regular and contained in $\Int(X)$. 

Applying Lemma~\ref{Lem: poly curve connect} below, we may join each $y_i'$ and $y_{i_1}'$ by a polygonal curve $\rho_i$ contained in $A_i$, which is in general position with respect to $\beta$. Let $\alpha_{Q_\veps}$ be a simple polygonal loop obtained by concatenating all the curves $\rho_i$ and excising any overlap. Then $\alpha_{Q_\veps}$ is in general position with respect to $\beta$ and thus satisfies claim~{\rm (1)} of the proposition. By excising carefully, it is not difficult to see that there exists finitely many polygonal loops $\alpha_j \subset B_{r}(p)$, $j = 0, \dots, n$, such that the following hold:
\begin{enumerate}
    \item[(i)] $\alpha_0 = \alpha_Q$ and $\alpha_n= \alpha_{Q_\veps}$.
    \item[(ii)] For each $j = 1, \dots, n$, the loop $\alpha_j$ is obtained from $\alpha_{j-1}$ by removing a polygonal curve, and adding in another polygonal curve with the same endpoints and contained in the interior polygonal domain bounded by $\alpha_n$. Moreover, the union of these two curves is a simple polygonal loop $\sigma_j$ contained in $B_{r_i}(x_i)$ for some $i$.  
\end{enumerate} 
Since $B_{r_i}(x_i) \subset B_{r_\veps}(x_i)$, and $B_{r_\eps}(x_i)\hookrightarrow B_{\eps/2}(x_i)$ and $B_{\eps}(x_i) \hookrightarrow X$ induce trivial maps on $\pi_1$, Jordan Theorem \ref{Thm: Jordan theorem} implies $\sigma_j$ bounds an interior polygonal domain contained in $B_\eps(x_i)$ and not containing $q$ (as in assumption~{\rm (c)}). It follows from induction and Lemma~\ref{lemma:two_polygonal_dom} (see also Claim \ref{Claim:A+}) that the interior polygonal domain (not containing $q$) bounded by $\alpha_j$ is contained in $Q$, and equal to $Q$ outside of $B_\veps(\alpha_Q([0,1]))$. In particular, this holds for the domain $Q_\veps$ bounded by $\alpha_{Q_\veps} = \alpha_n$. Since $\alpha_{Q_\veps} \subset Q$ by construction, we obtain $\overline Q_\veps = Q_\veps \sqcup \alpha_{Q_\veps} \subset Q$. Therefore, claim~{\rm(2)} of the proposition holds for $Q_\veps$.

Next, assume that $Q$ is a boundary polygonal domain. By applying in addition Proposition~\ref{Pro: comp exist geod ep ext quant}, we may choose points $x_i \in \alpha_Q([0,1])$ and $0 < r_i \leq r_\eps$, $i = 1, \dots, k$, with $x_1 = \alpha_Q(0)$ and $x_k = \alpha_Q(1)$, such that the following hold:
\begin{enumerate}
    \item[(i)] $B_{r_i}(x_i)$ covers $\alpha_Q([0,1])$;
    \item[(ii)] $B_{r_i}(x_i)$ intersects only its predecessor $B_{r_{i-1}}(x_{i-1})$ and its successor $B_{r_{i+1}}(x_{i+1})$;
    \item[(iii)] $\alpha_Q([0,1]) \cap \overline B_{r_i}(x_i)$ is either a geodesic with length $2r_i$ or a corner with vertex $x_i$ and two edges of length $r_i$ for $i \neq 1, k$, and is a geodesic of length $r_i$ with an endpoint $x_i$ for $i = 1, k$;
    \item[(iv)] $B_{r_i}(x_i) \setminus \alpha_Q([0,1])$ has exactly components, and their topological boundaries in $B_{r_i}(x_i)$ coincide with $\alpha_Q([0,1]) \cap B_{r_i}(x_i)$;
    \item[(v)] $B_{r_i}(x_i) \subset \Int(X)$ for $i \neq 1, k$. 
 \end{enumerate}

We choose $y_i$ and $y_i'$ as before for $i = 2, \dots, k$. We also choose $y_1 = x_0$ and $y_{k+1} = x_k$, and $y_1'$ and $y_{k+1}'$ to be points on $\alpha_Q((-1,0))$ close to $y_1$ and $y_{k+1}$. Arguing as in the previous case, making the obvious adjustments and replacing Jordan Theorem~\ref{Thm: Jordan theorem} by the Separation Lemma~\ref{Lem: sep lem} where needed, we arrive at the desired conclusion.

\begin{Lem}\label{Lem: poly curve connect}
Let $U \subset X$ be open and path-connected, and let $x, y \in U$. Then $x$ and $y$ can be joined by a simple polygonal curve $\rho \subset U$ contained in the interior set except possibly at its endpoints. Moreover, if $\beta$ is a simple polygonal curve and $x$ and $y$ do not lie on vertices of $\beta$, then $\rho$ can be chosen to be in general position with respect to $\beta$. 
\end{Lem}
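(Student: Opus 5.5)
The first claim is a discretization of a path, done exactly as the curve $\tilde\sigma$ was built in the proof of Theorem~\ref{Thm: poly dom neigh basis}. Since $U$ is open and path-connected, fix a continuous curve $\sigma:[0,1]\to U$ from $x$ to $y$. By compactness, choose $\veps>0$ with $B_{2\veps}(\sigma([0,1]))\subset U$. By uniform continuity, pick $m$ so that consecutive points $x_i:=\sigma(i/m)$ are within $\veps/10$ of each other.

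Next, replace the $x_i$ by approximations. Using Proposition~\ref{Pro: every pt have extend reg geod} and Lemma~\ref{Lem: perturb generic int}, choose inductively $\tilde x_i\in B_{\veps/10}(x_i)\cap\mathcal R_2\cap\Int(X)$ for $0<i<m$, with $\tilde x_0=x$ and $\tilde x_m=y$, such that the following hold:
\begin{enumerate}
    \item the geodesic from $\tilde x_{i-1}$ to $\tilde x_i$ is unique and strongly regular away from the fixed endpoints $x,y$;
    \item it meets $\beta$ in at most finitely many points.
\end{enumerate}
For the last step $i=m-1$, apply the two almost-everywhere conditions simultaneously with respect to $\tilde x_{m-2}$ and $y$, which is possible since both are full-measure conditions. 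Openness and density of $\Int(X)$ (Theorem~\ref{Thm: int set open}) guarantee that such interior choices exist. All these geodesics lie in $B_{\veps}(\sigma)\subset U$.

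Each such geodesic has an endpoint in $\Int(X)$, so by Corollaries~\ref{Cor: int geod reg} and~\ref{Cor: int geod no ext} its interior lies in $\Int(X)$. Hence the concatenation is a polygonal curve in $U$ lying in $\Int(X)$ except possibly at $x,y$. Finally, excise self-intersections: scan the finitely many edges in order and, whenever a later edge meets an earlier portion, cut out the loop between the first and last meeting points. A cut point in the interior of an edge simply becomes a new vertex, where the curve is locally a corner or a geodesic. This yields a simple polygonal curve $\rho$, and excision only removes points, so all the properties above are preserved.

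For general position, two things must be arranged. First, finiteness of $\rho\cap\beta$ comes from item (2), together with the fact that excision introduces no new intersections. Second, every intersection point must lie in the interior of edges of both curves. This is where the work lies, and I expect it to be the main obstacle. The vertices of $\beta$ form a finite set $S$, and vertices of $\rho$ must avoid $\beta$. For the $\tilde x_i$ this is immediate, since $\beta$ is $\mm_X$-null by Lemma~\ref{Lem: geod meas 0}, so we may take $\tilde x_i\notin\beta$. Edges of $\rho$ avoiding $S$ require that, for $\mm_X$-a.e. choice of $\tilde x_i$, the geodesic from $\tilde x_{i-1}$ to $\tilde x_i$ misses each point $v\in S$. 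If it passed through $v$, then $\tilde x_i$ would lie on the maximal extension from $\tilde x_{i-1}$ of the unique geodesic from $\tilde x_{i-1}$ to $v$, by non-branching (Theorem~\ref{Thm: non-branching}). That extension is a single geodesic or ray, hence null by Lemma~\ref{Lem: geod meas 0}. Since $x,y$ are not vertices of $\beta$ by hypothesis, the endpoints cause no trouble.

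The excision step, however, creates new vertices at self-intersection points of the concatenated curve, and these could a priori lie on $\beta$. To prevent this, I would additionally require, generically, that distinct edges of the concatenated curve meet only at points off $\beta$. Alternatively, after excising, I would perturb a new vertex lying on $\beta$ slightly along one of its edges using Lemma~\ref{Lem: perturb generic int} again. Checking that this perturbation keeps the curve simple and inside $U$ is the delicate bookkeeping I would expect to take the most care.
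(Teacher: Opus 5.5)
Your proof is correct, but it takes a different route from the paper.

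\emph{The paper's route.} The paper never fixes a path. It removes the finite vertex set $C$ of $\beta$ and picks an interior base point in $U\setminus C$. It then shows that the set of points of $U\setminus C$ reachable from this base point by admissible (not necessarily simple) polygonal curves is open and closed in the connected open set $U\setminus C$. Openness is checked in three local cases: the point is interior and off $\beta$, extremal and off $\beta$, or on an edge of $\beta$. Closedness follows by a similar short argument. Each time, the curve is prolonged by one or two geodesics inside a small ball where $\beta$ is either absent or a single geodesic. Non-branching alone then shows that each added piece meets $\beta$ only possibly at its endpoints. So finiteness of $\rho\cap\beta$ is automatic, and no measure-theoretic genericity is needed: neither Lemma~\ref{Lem: perturb generic int} nor full-measure choices of vertices.

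\emph{What your route costs and gains.} Your discretize-and-perturb argument instead obtains regularity, finiteness, and avoidance of the vertices of $\beta$ from almost-everywhere choices of the $\tilde x_i$. This costs you the coupled last step and the control of the vertices created by excision. In return it gives something the paper leaves implicit: it keeps all vertices of $\rho$ other than $x,y$ off $\beta$, which is what ``all intersections lie in the interiors of edges'' actually requires. The paper's case $y\in\beta$ appends a geodesic starting at $y$, which makes $y$ a vertex of the new curve lying on $\beta$.

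\emph{Closing your remaining step.} Your first option, requiring that distinct edges meet only off $\beta$, does go through.
\begin{enumerate}
\item When $\tilde x_j$ is chosen, the points where earlier edges meet $\beta$ form a fixed finite set not containing $\tilde x_{j-1}$. Your maximal-extension argument (Theorem~\ref{Thm: non-branching} together with Lemma~\ref{Lem: geod meas 0}) shows that a.e.\ $\tilde x_j$ gives an edge avoiding this set. For the last edge, run the same argument from the base point $y$.
\item The only coupled case is the last two edges, which both depend on $\tilde x_{m-1}$. If they met away from $\tilde x_{m-1}$, non-branching would place $\tilde x_{m-1}$ on a maximal extension of a geodesic joining $\tilde x_{m-2}$ and $y$, which is again a null set.
\end{enumerate}
Hence every excision point lies off $\beta$. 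Both arguments tacitly read general position as allowing $x$ or $y$ themselves to lie on $\beta$.
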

\begin{proof}
We only prove the more general claim (i.e. with $\beta$); the same argument works without $\beta$. We will use Corollaries~\ref{Cor: int geod no ext} and \ref{Cor: int geod reg} repeatedly in the proof without further mention. 

Let $C \subset X$ be the set of vertices of $\beta$. Since $\Int(X)$ is dense (by Theorem~\ref{Thm: int set open}), there exists $x \in \Int(X) \cap U \setminus C$. Let $V$ be the set of all point $y \in U \setminus C$ such that $x$ can be connected to $y$ by a polygonal curve as in the lemma, except we do not require it to be simple, since such a curve can always be made simple by excision. It suffices to show that $V = U \setminus C$. Since $C$ is a finite and $X$ is locally path-connected, $U \setminus C$ is open and path-connected. Therefore, it suffices to show that $V$ is clopen in $U \setminus C$. 

We first show that $V$ is open. Let $y \in V$ and let $\nu$ be a polygonal curve from $x$ to $y$ as in the definition of $V$. We show that $B_r(y) \subset V$ for some $r > 0 $. There are three cases:

\noindent \textbf{Case 1:} $y \in \Int(X) \setminus \beta$. There exists $r > 0$ such that $B_r(y) \subset U \setminus \beta$. Since $y \in \Int(X)$, any geodesic from $y$ to any $z \in B_r(y)$ is regular and lies in $\Int(X)$, except possibly at $z$. The union of $\nu$ and such a geodesic gives a polygonal curve from $x$ to $z$ as in the definition of $V$.

\noindent \textbf{Case 2:} $y \in \Ext(X) \setminus \beta$. Let $r$ and $z$ be as in case 1. Let $y'$ be a point close to $y$ that lies in the interior of an edge of $\nu$. Then $y' \in \Int(X)$. Arguing as in case~1, the union of the restriction $\nu$ from $x$ to $y'$ and a geodesic from $y'$ to $z$ gives a polygonal curve from $x$ to $z$ as in the definition of $V$. 

\noindent \textbf{Case 3:} $y \in \beta$. Since $y \notin C$, there exists $r > 0$ such that $B_{2r}(y) \subset U \setminus C$ and $\beta \cap B_{2r}(y)$ is a geodesic of length $4r$. Moreover, $y \in \Int(X)$, as it lies in the interior of an edge of $\beta$. Since $\Int(X)$ is open (by Theorem~\ref{Thm: int set open}), we may choose $r$ so that $B_r(y) \subset \Int(X)$. Let $z \in B_r(y)$. 
    
In the subcase where $z \in B_r(y) \setminus \beta$: any geodesic from $y$ to $z$ is regular and lies in $B_r(y) \cap \Int(X)$. Moreover, it does not intersect $\beta$, except at $y$, by the non-branching property. 
    
In the subcase where $z \in \beta \cap B_r(y)$: Since $\dim(X) = 2$, it follows from \cite[Theorem 1.1]{KL16} that $X$ is not locally isometric to a geodesic at any point. In particular, $B_r(y) \setminus \beta$ is non-empty. By the density of $\Int(X)$, we may choose $z' \in B_r(y) \cap \Int(X)$. Then geodesics from $y$ to $z'$ and from $z'$ to $z$ are regular, contained in $B_{2r}(y) \cap \Int(X)$, and do not intersect $\beta$, except at $y$ and $z$ respectively, by the non-branching property. 

In both subcases, the union of $\nu$ and the obtained geodesics gives a polygonal curve from $x$ to $z$ as in the definition of $V$. This finishes the proof that $V$ is open. 

We now show that $V$ is closed in $U \setminus C$. Let $y \in U \setminus C$ and suppose that $y \in \overline V$. Let $r > 0$ be such that $B_r(y) \subset U \setminus C$, and so that $\beta \cap B_r(y)$ is either empty or a geodesic of length $2r$ (depending on whether $y \in \beta$). Since $\Int(X)$ is dense, we may choose $z \in B_r(y) \cap V$ such that $z \in \Int(X)$. Moreover, since $X$ is not locally isometric to a geodesic, we may choose $z \notin \beta$. By the non-branching property, any geodesic from $z$ to $y$ does not intersect $\beta$, except possibly at $y$. Since $z \in \Int(X)$, any such geodesic is regular, and contained in $B_r(y) \cap \Int(X)$, except possibly at $y$. The union of the polygonal curve from $x$ to $z$ as in the definition of $V$ and such a geodesic gives a polygonal curve from $x$ to $y$ as in the definition of $V$. In particular, $y \in V$. Therefore, $V$ is closed in $U \setminus C$. 
\end{proof}

\section{Manifold Structure}\label{Sec: man struct}

Let $(X,\dd_X,\mm_X)$ be an $\RCD(K,N)$ space of essential dimension $2$, and let $Q$ be either an interior polygonal domain (see Definition~\ref{Def: poly dom}) or a boundary polygonal domain (see Definition~\ref{Def: bd poly dom}). We denote by $\alpha_Q:[-1,1]\to X$ the associated polygonal loop. In order to treat these two cases in a unified way, we adopt the same convention as in Section \ref{Sec: general}. If $Q$ is a boundary polygonal domain, then $\alpha_Q|_{[-1,0]}$ parametrizes the extremal set in $\overline Q$, while $\alpha_Q|_{(0,1)}$ is contained in the interior set (compare with Definition~\ref{Def: bd poly loop}). If $Q$ is an interior polygonal domain, then $\alpha_Q|_{[-1,0]}$ is constant, while $\alpha_Q|_{[0,1]}$ parametrizes the topological boundary of $Q$, which is contained in the interior.

\begin{Thm}\label{thm:top1}
Assume that $Q$ satisfies {\rm(a)--(c)}. Then $\overline{Q}$ is homeomorphic to the $2$-disk $D^2$, via a homeomorphism that maps the loop $\alpha_Q$ onto the boundary $\partial D^2$.
\end{Thm}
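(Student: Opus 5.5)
We plan to build a sequence of finer and finer grids inside $\overline Q$, modelled on the two-sphere recognition argument, and to use them to define the homeomorphism with $D^2$ cell by cell.

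\textbf{Step 1: the first grid.} Mark four distinct points $a,b,c,d$ on $\alpha_Q$ in cyclic order; in the boundary case take $a=\alpha_Q(0)$ and $d=\alpha_Q(-1)$. Using Lemma~\ref{Lem: poly curve connect}, join points on the arcs $ab$ and $cd$ by simple polygonal curves through $Q\cap\Int(X)$, and do the same for points on $bc$ and $da$. This gives ``vertical'' and ``horizontal'' polygonal curves. Proposition~\ref{Prop: general_pos} lets us arrange that each horizontal curve is in general position with each vertical one. Corollary~\ref{cor:intersection} guarantees that they intersect. After excision we want each pair to meet in exactly one point, and we try to achieve this by building the curves inductively and cutting along already-constructed curves.

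Applying Lemma~\ref{lemma:two_polygonal_dom} repeatedly, every cut along such a curve splits a polygonal domain into two polygonal domains. The complement of the grid is then a finite union of polygonal domains (cells). Each cell is an interior polygonal domain, or a boundary polygonal domain when it touches $\alpha_Q([-1,0])$. The adjacency pattern of the cells is that of a standard rectangular grid in a square. Assumptions (a)--(c) pass to every cell automatically, since each cell lies in $\overline Q\subset B_r(p)$.

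\textbf{Step 2: refinement with small mesh.} Subdivide each cell in the same way, so that every grid is subordinate to the previous one. Fix a countable dense set of points and pairs of points. At stage $k$, choose the subdivision so that every cell has diameter less than $2^{-k}$.

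This is where we expect the main obstacle. Small diameter cannot simply be read off from small balls, because a polygonal domain inside $B_\eps(x)$ only exists near $x$ by Theorems~\ref{Thm: poly dom neigh basis} and~\ref{Thm: bd poly dom neigh basis}. The plan is as follows. First cover $\overline Q$ by finitely many small polygonal domains from these neighborhood bases. Put them in general position with respect to the current grid using Proposition~\ref{Prop: general_pos}. Then cut the cells by the boundary arcs of these domains, using Lemma~\ref{lemma:two_polygonal_dom} and excision. Each resulting piece then lies in a single small domain. Finally, re-regularize the pieces into a grid pattern, by adding extra arcs inside each piece so that each cell is a quadrilateral with the correct combinatorics. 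The delicate point is to make these pieces quadrilaterals while keeping their diameter small.

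\textbf{Step 3: the homeomorphism.} Build matching grids in $D^2$ with the same combinatorics, using dyadic subdivision of a square with boundary mapped to $\partial D^2$. Send each vertex, edge, and cell to its counterpart. A point $x\in\overline Q$ lies in a nested sequence of closed cells whose diameters tend to $0$. Define $h(x)$ as the intersection of the corresponding closed cells in $D^2$, whose diameters also tend to $0$.

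The map $h$ is well defined and continuous, because adjacent cells correspond to adjacent cells. It is injective and surjective by the symmetric argument: cell diameters go to $0$ on both sides, and cells are closed and nonempty. Since $\overline Q$ is compact, $h$ is a homeomorphism. By construction $h$ maps $\alpha_Q$ onto $\partial D^2$, since boundary cells contain arcs of $\alpha_Q$ exactly where the model cells contain arcs of $\partial D^2$.
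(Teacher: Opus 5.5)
Your architecture matches the paper's: grids whose cells are polygonal domains (Lemma~\ref{lemma:two_polygonal_dom}, Proposition~\ref{prop:cells}), refinement subordinated to covers by domains from Theorems~\ref{Thm: poly dom neigh basis} and~\ref{Thm: bd poly dom neigh basis}, and the nested-cell map to the square. However, the step you call ``the delicate point'' is exactly the content of Theorem~\ref{thm:subordinated_grids}. Your proposal gives no argument for it, and the mechanism you sketch does not work with the available tools.

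Suppose you cut the current cells simultaneously by the boundaries of all the small domains $P_1,\dots,P_n$.
\begin{itemize}
\item Any $P_i$ with $\overline{P_i}$ inside a single cell contributes a closed loop and leaves an annular piece, which is not a polygonal domain.
\item Lemma~\ref{lemma:two_polygonal_dom} only cuts a polygonal domain along one arc joining two of its boundary points, and Lemma~\ref{thm:extension} only produces arcs inside polygonal domains, so neither handles such pieces.
\item Even without loops, arcs of different $P_i$ cross each other, so you get an arbitrary planar subdivision.
\end{itemize}
Turning such a subdivision into a grid is not a matter of ``adding extra arcs inside each piece.'' The existing arcs must themselves become unions of pieces of horizontal and vertical curves. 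That means redrawing the whole arrangement orthogonally in the model square and then realizing it back in $Q$. Nothing in the proposal does this.

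The missing idea is to treat one domain at a time. The paper inducts on the number of sets in the cover (Proposition~\ref{prop:intermediate1}).
\begin{enumerate}
\item Let $P$ be a domain containing the corner $a$ and $V$ the union of the others. The goal is first a grid subordinated to $\{P,V\}$.
\item Applying Proposition~\ref{Prop: general_pos} twice, $P$ is shrunk to $R$ with $\overline R\subset P$, $a\in R$, $\{R,V\}$ still covering $\overline Q$, and $\alpha_R|_{[0,1]}$ in general position with $\alpha_Q|_{[0,1]}$ away from $a,b,c,d$.
\item Then $\alpha_R\cap\overline Q$ is a finite family of pairwise disjoint simple polygonal arcs with endpoints on $\alpha_Q$: no loops and no crossings. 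Their endpoint pattern is non-interleaving by Corollary~\ref{cor:intersection}.
\item So they can be drawn in $[0,1]^2$ as disjoint axis-parallel staircases. Their segments are extended to a model grid, which is realized in $Q$ via Lemma~\ref{thm:extension} and Proposition~\ref{prop:cells}.
\item Since the arcs lie on grid curves, every cell lies in $\overline R\subset P$ or in $\overline Q\setminus R\subset V$, and this property survives refinement.
\item The cells in $V$ are refined by the induction hypothesis and reassembled into a global grid with Lemma~\ref{thm:extension}.
\end{enumerate}

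Two smaller points. In Step~1, arcs with endpoints on $\alpha_Q$ come from Lemma~\ref{thm:extension}, not Lemma~\ref{Lem: poly curve connect}; also, Proposition~\ref{Prop: general_pos} perturbs domain boundaries, not arbitrary curves. In Step~3 the model grids cannot be dyadic: they must reproduce the combinatorics of $\mathcal G_k$, be nested, and have mesh tending to $0$, so their lines must be placed inductively.
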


The construction follows closely the classical two-sphere recognition theorem \cite{M16} (see also \cite{T10}) and proceeds as follows. We fix four distinguished points $a,b,c,d$ on $\alpha_Q$, which we regard as the vertices of a quadrilateral, listed in clockwise order. When $Q$ is a boundary polygonal domain, following the notation of Definition~\ref{Def: bd poly dom}, we always choose $\alpha(-1)=b$ and $\alpha(0)=a$. These four points decompose the simple polygonal loop $\alpha_Q$ into four edges. We refer to $ab$ and $cd$ as the \emph{horizontal} edges, and to $bc$ and $da$ as the \emph{vertical} edges.

We say that a polygonal curve in $\overline Q$ is \emph{horizontal} if it is simple, it is contained in $Q$ except for its endpoints, and its endpoints lie one on $ad$ and the other on $bc$. Similarly, we say that a polygonal curve is \emph{vertical} if it is simple, it is contained in $Q$ except for its endpoints, and its endpoints lie one on $ab$ and the other on $cd$.

\begin{Def}[Polygonal grid]
An $m\times n$ polygonal grid $\mathcal{G}$ on $Q$ is a collection of $m$ horizontal curves and $n$ vertical curves such that any two distinct horizontal curves are disjoint, any two distinct vertical curves are disjoint, and each horizontal curve intersects each vertical curve in exactly one point.
\end{Def}

As a model space, we consider the square $[0,1]^2$, whose vertices are labeled clockwise, starting from $a=(1,0)$. We denote by $\mathcal{G}_{m\times n}$ the $m\times n$ {\it regular grid} in $[0,1]^2$, consisting of equally spaced horizontal and vertical segments, all parallel to the coordinate axes.

\begin{Pro}[Cells of a grid]\label{prop:cells}
Assume that $Q$ satisfies {\rm(a)--(c)}, and let $\mathcal{G}$ be an $m\times n$ grid in $Q$. The following hold:
\begin{enumerate}
    \item $\mathcal{G}$ is {\it combinatorially equivalent} to $\mathcal{G}_{m\times n}$, in the sense that there exists a bijection between the vertices, edges, horizontal curves, and vertical curves of the two grids which preserves incidences and the order of intersection points along each horizontal and each vertical curve.

    \item The complement in $Q$ of the horizontal and vertical curves of $\mathcal{G}$ is the disjoint union of $(m+1)(n+1)$ polygonal domains (possibly boundary polygonal domains). Moreover, these domains are in bijection with the cells of $\mathcal{G}_{m\times n}$, in such a way that corresponding domains are bounded by corresponding horizontal and vertical edges.
\end{enumerate}
\end{Pro}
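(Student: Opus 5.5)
The plan is to argue by induction on $m+n$, cutting $Q$ one curve at a time with Lemma~\ref{lemma:two_polygonal_dom} and tracking the combinatorics with Corollary~\ref{cor:intersection}.

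\textbf{Base case.} For $m=n=0$ there is nothing to prove.

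\textbf{Adding one horizontal curve.} Suppose a horizontal curve $h$ joins a point of $ad$ to a point of $bc$. Lemma~\ref{lemma:two_polygonal_dom} applies: both endpoints of $h$ lie on vertical edges, so in the boundary case at least one of them lies in $\alpha_Q([0,1])$, after choosing $a,b$ appropriately. Hence $Q\setminus h$ splits into two polygonal domains $Q_1,Q_2$. Their boundary loops are $h$ together with one of the two arcs of $\alpha_Q$ minus the endpoints of $h$. One arc contains $ab$ and the other contains $cd$. Each $Q_i$ is again a quadrilateral domain with four marked points: the endpoints of $h$ together with $a,b$ (respectively $c,d$). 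It still satisfies (a)--(c), since $\overline{Q_i}\subset\overline Q\subset B_r(p)$.

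\textbf{Ordering the horizontal curves.} Distinct horizontal curves are disjoint. So every other horizontal curve $h'$ lies, apart from its endpoints, in exactly one of $Q_1,Q_2$, and its endpoints lie on the corresponding arcs. This makes "lies below $h$" a total order on the horizontal curves. Corollary~\ref{cor:intersection} together with Lemma~\ref{lemma:two_polygonal_dom} shows that the endpoints of the $h_i$ appear along $ad$ and along $bc$ in this same order. Consequently, cutting successively along $h_1,\dots,h_m$ in order yields $m+1$ strips $S_0,\dots,S_m$. Each strip is a quadrilateral polygonal domain with horizontal edges $h_{j}$ and $h_{j+1}$ (or $ab$, $cd$ at the ends) and vertical edges given by subarcs of $ad$ and $bc$.

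\textbf{Cutting the strips by vertical curves.} Each vertical curve $v$ meets each $h_j$ in exactly one point. Therefore $v$ decomposes into $m+1$ consecutive subarcs $v\cap\overline{S_j}$. Each such subarc is a simple polygonal curve inside $S_j$ joining its two horizontal edges. The one-point intersections force these subarcs to occur in the order $S_0,S_1,\dots,S_m$ along $v$: going back into an earlier strip would require crossing some $h_j$ a second time. Ordering the vertical curves inside $S_0$ by the same argument, and comparing via the disjointness of vertical curves, the order is the same in every strip.

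\textbf{Finishing.} Applying the horizontal step inside each strip $S_j$, with the roles of horizontal and vertical swapped, cuts $S_j$ into $n+1$ polygonal domains. The result is $(m+1)(n+1)$ cells, each bounded by the expected pieces of $h_j,h_{j+1},v_k,v_{k+1}$ (or of $\alpha_Q$). This yields both the bijection with the cells of $\mathcal G_{m\times n}$ and the combinatorial equivalence of part (1): intersection points are ordered along each curve exactly as in the regular grid.

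\textbf{Main obstacle.} The step I expect to be most delicate is checking that Lemma~\ref{lemma:two_polygonal_dom} applies at every stage. Two things need care. First, the pieces $v\cap\overline{S_j}$ must be genuinely polygonal curves whose endpoints are not extremal, apart from the allowed boundary arc. Second, in the boundary case the new cells must inherit the convention $\alpha(-1)=b$, $\alpha(0)=a$ correctly. This only affects cells adjacent to $ab$, where one endpoint of the cut lies in $\alpha_Q([0,1])$, as the lemma requires.
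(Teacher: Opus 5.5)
Your proof is correct. It relies on the same engine as the paper: repeated cuts with Lemma~\ref{lemma:two_polygonal_dom}, with the one-point intersection property forcing each transversal curve to cross a cut exactly once. The difference lies in the bookkeeping.

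The paper removes a single vertical curve $\beta$. It notes that the other vertical curves fall into $Q_1$ or $Q_2$, while each horizontal curve is split by $\beta$ into horizontal curves of $Q_1$ and $Q_2$. It then applies the inductive hypothesis to the resulting $m\times n_1$ and $m\times n_2$ grids and glues the two combinatorial equivalences along $\beta$. The only compatibility needed is the order of the points $\beta\cap h_j$. In that recursion, the ordering facts you establish by hand come for free from the inductive hypothesis. These are the total order on horizontal curves, the monotone passage of each vertical curve through $S_0,\dots,S_m$, and the agreement of the order of the vertical pieces across strips.

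Your rows-then-columns scheme has to prove these facts explicitly. In return it gives a direct, non-recursive description of every cell as the part of $S_j$ between $v_k\cap\overline{S_j}$ and $v_{k+1}\cap\overline{S_j}$. Both the cell count and part (1) can be read off directly from this description.

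Two small remarks.

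\begin{itemize}
\item You do not need Corollary~\ref{cor:intersection} for the ordering. You already observed that both endpoints of $h'$ lie on the arc of $\alpha_Q$ bounding the component containing $h'$; this alone forces the orders along $ad$ and $bc$ to agree.
\item The obstacle you anticipate in the boundary case disappears under the paper's convention $\alpha_Q(-1)=b$, $\alpha_Q(0)=a$. The extremal arc is then the horizontal side $ab$. Every horizontal cut has both endpoints on $bc\cup da\subset\alpha_Q([0,1])$, and only $S_0$ is a boundary domain. Each vertical piece in $S_0$ has its upper endpoint on $h_1\subset\alpha_{S_0}([0,1])$. Hence the hypothesis of Lemma~\ref{lemma:two_polygonal_dom} holds at every stage.
\end{itemize}
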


We call the closures of the polygonal domains appearing in item~{\rm(2)} of Proposition~\ref{prop:cells} the \emph{cells} of the grid.

The goal is to construct arbitrarily fine grids on $\overline Q$ and use them to define a homeomorphism onto $[0,1]^2$ by mapping each such grid to the corresponding combinatorially equivalent regular grid in $[0,1]^2$. The following extension lemma will be the technical tool for inductively refining grids.

\begin{Lem}[Extension property]\label{thm:extension}
Assume that $Q$ satisfies (a)--(c). Then, for every $a\neq b$ in the support of $\alpha_Q$, there exists a simple polygonal curve that is contained in $Q$ except possibly for its endpoints, which are $a$ and $b$. 
\end{Lem}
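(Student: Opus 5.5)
The plan is to build the curve in three pieces: two short geodesic spokes leaving $a$ and $b$ into $Q$, joined inside $Q$ by a polygonal path from Lemma~\ref{Lem: poly curve connect}. First observe that $Q$ is open and path-connected. This holds in both cases: for an interior domain by Definition~\ref{Def: poly dom}, and for a boundary domain because $Q\cap\Int(X)$ is a component of $X\setminus\alpha_Q$ by Remark~\ref{Rem: polydom consistency2}. So the whole task reduces to leaving $a$ and $b$ into $Q$ along short regular geodesics that meet $\alpha_Q$ only at their starting points.

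\textbf{Local step at an endpoint.} Fix $a$; the argument for $b$ is identical. If $a\in\alpha_Q((0,1))$, or $a$ lies on $\alpha_Q$ for an interior domain, then $a\in\Int(X)$. The paper's proofs only invoke Lemma~\ref{thm:extension} at such points, where Proposition~\ref{Pro: good comp exist corner int} and Theorem~\ref{Thm: good components existence 2} apply. They show that for small $r$ the ball $B_r(a)\setminus\alpha_Q$ has exactly two good components. By Lemma~\ref{Lem: big set small set comp}, and because $a$ lies in the boundaries of both $Q$ and the complementary component (Jordan Theorems~\ref{Thm: Jordan theorem} and~\ref{Thm: Jordan theorem boundary}), exactly one of these components, call it $A$, is contained in $Q$.

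Next pick $a'\in A\cap\Int(X)$ close to $a$. Let $\sigma_a$ be a geodesic from $a$ to $a'$. By Corollaries~\ref{Cor: int geod reg} and~\ref{Cor: int geod no ext}, $\sigma_a$ is regular and lies in $\Int(X)$. By the non-branching property (Theorem~\ref{Thm: non-branching}), $\sigma_a$ meets $\alpha_Q$ only at $a$, since otherwise an edge of $\alpha_Q$ would be a subsegment of $\sigma_a$, which is too short. Hence $\sigma_a\setminus\{a\}\subset A\subset Q$.

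If instead $a$ lies on the boundary curve $\alpha_Q([-1,0])\subset\Ext(X)$, the same construction works. Take $a'\in Q\cap\Int(X)$ near $a$, using the density of $\Int(X)$ and Corollary~\ref{Lem: ext set no disconnect}. The geodesic from $a'$ to $a$ is regular by Corollary~\ref{Cor: int geod reg}, and its interior lies in $\Int(X)$. For $a'$ close enough it stays inside a ball meeting $\alpha_Q$ only along the boundary curve, so it cannot cross $\alpha_Q([0,1])$. The remaining intersections with the extremal curve must be excluded: by non-branching and the shortness of the geodesic, its interior avoids $\Ext(X)$ altogether.

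\textbf{Joining step.} Choose $a'$ and $b'$ as above, away from each other's spokes. Apply Lemma~\ref{Lem: poly curve connect} with $U=Q$ and $\beta$ the polygonal curve $\sigma_a\cup\sigma_b$ (or, more simply, $\beta=\emptyset$). This gives a simple polygonal curve $\rho\subset Q\cap\Int(X)$ from $a'$ to $b'$. The concatenation $\sigma_a\ast\rho\ast\sigma_b^{-1}$ is polygonal and lies in $Q$ except at $a$ and $b$. Finally, excise loops by cutting at the first intersection of $\sigma_a$ with $\rho$ and the last intersection of $\rho$ with $\sigma_b$. Since each piece is a finite union of geodesics, this makes the curve simple while keeping it polygonal.

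The main obstacle is the local step when an endpoint is a vertex of $\alpha_Q$ or is extremal. There one must make sure the chosen side $A$ is really the $Q$-side, and that the spoke neither grazes an adjacent edge nor runs into the extremal curve. I would handle this by shrinking $r$ below the distance from $a$ to all non-incident edges, and then relying on the good-component structure together with Lemma~\ref{Lem: geod no glance}.
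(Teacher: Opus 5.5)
Your route is the paper's: short geodesic spokes from $a$ and $b$ to nearby points of $Q\cap\Int(X)$, kept off $\alpha_Q$ by regularity (Corollaries~\ref{Cor: int geod reg} and~\ref{Cor: int geod no ext}) and non-branching, then joined inside $Q$ via Lemma~\ref{Lem: poly curve connect}.

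One step fails as written. In the extremal case you argue that the spoke stays in a ball meeting $\alpha_Q$ only along the boundary curve. This is false at the junction points $a=\alpha_Q(0)$ and $a=\alpha_Q(1)$ of a boundary domain: every ball around such an $a$ contains points of the polygonal edge issuing from $a$. So you must separately rule out that the spoke meets that edge. The fix is exactly the non-branching argument you already gave in the interior case, and it is how the paper treats its configuration~(4). Good components and Lemma~\ref{Lem: geod no glance} are not the right tools here.

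Identifying the $Q$-side via good components and Lemma~\ref{Lem: big set small set comp} is also unnecessary. Pick any $a'\in Q\cap\Int(X)$ close to $a$. Then $\sigma_a\setminus\{a\}$ is connected, contains a point of $Q$, and misses $\partial Q\subset\alpha_Q$, so it lies in $Q$.
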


Let $\mathcal{U}$ be an open cover of $\overline Q$. We say that a polygonal grid is \emph{subordinated} to $\mathcal{U}$ if each cell of the grid is contained in some $U\in\mathcal{U}$. The next result is the key ingredient for constructing, inductively, finer and finer grids.

\begin{Thm}[Subordinated grid]\label{thm:subordinated_grids}
Assume that $Q$ satisfies (a)--(c). Let $\mathcal{U}$ be an open cover of $\overline Q$. Then there exists a polygonal grid in $Q$ subordinated to $\mathcal{U}$.
\end{Thm}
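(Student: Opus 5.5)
We argue by a Lebesgue-number refinement: first cut $\overline Q$ into a coarse grid, then refine each cell recursively by inserting new horizontal and vertical curves until every cell has small diameter. Fix a Lebesgue number $\lambda>0$ of the cover $\mathcal U$ of the compact set $\overline Q$. It then suffices to build a grid all of whose cells have diameter less than $\lambda$.

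The basic building block is Lemma~\ref{thm:extension}, together with Lemma~\ref{lemma:two_polygonal_dom}. If we join a point of $ad$ to a point of $bc$ by a simple polygonal curve in $Q$, this splits $Q$ into two polygonal domains, each again satisfying (a)--(c), since they are contained in $Q$. Iterating, we obtain disjoint horizontal curves $h_1,\dots,h_m$ that cut $Q$ into horizontal strips $S_0,\dots,S_m$. Each $h_{j+1}$ is taken in the strip below $h_j$, with endpoints chosen to respect the order along $ad$ and $bc$. We then add vertical curves. A vertical curve cannot simply be drawn in $Q$, because it must meet each $h_j$ exactly once. So we build it strip by strip: in each $S_j$ we choose a point $v_j$ on the common boundary curve $h_j$ and use Lemma~\ref{thm:extension} inside the polygonal domain $S_j$ to join $v_j$ to $v_{j+1}$. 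Concatenating these pieces gives a simple vertical curve meeting each $h_j$ exactly once. Proposition~\ref{Prop: general_pos} is used to perturb new pieces into general position, so that intersections stay finite and lie in the interiors of edges. Corollary~\ref{cor:intersection} guarantees the needed crossings, and Proposition~\ref{prop:cells} identifies the resulting cells as polygonal domains.

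The main obstacle is forcing the diameters of the cells to go to zero. Adding curves one at a time gives no control on cell size, because the curves supplied by Lemma~\ref{thm:extension} may wander. We plan to run a subdivision-with-control argument in the spirit of the two-sphere recognition proof of \cite{M16}. Around each point $x\in\overline Q$ there are small neighborhoods: by Theorems~\ref{Thm: poly dom neigh basis} and~\ref{Thm: bd poly dom neigh basis} we get interior or boundary polygonal domains $Q_x\subset B_{\lambda/4}(x)$, and by compactness finitely many of them cover $\overline Q$. We then make every grid curve pass through these domains in a controlled way. Concretely, each horizontal and vertical curve is routed, inside each $Q_x$ it meets, along polygonal arcs given by Lemma~\ref{lemma:two_polygonal_dom} applied to $Q_x$. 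We take enough curves that every cell is trapped inside a single $Q_x$. This trapping uses the Jordan theorems (Theorems~\ref{Thm: Jordan theorem} and~\ref{Thm: Jordan theorem boundary}): a cell whose boundary loop lies in some $B_{\lambda/4}(x)$ is the bounded component of the complement of that loop, and hence lies in $\overline B_{\lambda/2}(x)$, which is contained in a single element of $\mathcal U$.

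If direct routing is too rigid, we would instead proceed inductively. Starting from a coarse grid, for each cell $C$ that is not yet small we apply the construction inside $C$ itself, again a polygonal domain satisfying (a)--(c), to cover it by small subdomains. We then extend the new curves across neighboring cells to full horizontal and vertical curves using Lemma~\ref{thm:extension} in the adjacent cells, which are also polygonal domains. The combinatorial bookkeeping (disjointness and single intersections) follows from Proposition~\ref{prop:cells}, and since there are finitely many cells the process terminates.
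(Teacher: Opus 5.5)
There is a genuine gap, and it sits exactly at the step you identify as the main obstacle. The sentence ``we take enough curves that every cell is trapped inside a single $Q_x$'' carries the whole content of the theorem, and nothing in your plan produces it.

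\begin{itemize}
\item The curves supplied by Lemma~\ref{thm:extension} are uncontrolled, so adding more of them need not shrink any cell.
\item You cannot freely ``route'' a horizontal curve through prescribed regions. It must meet every vertical curve exactly once, and Lemma~\ref{lemma:two_polygonal_dom} only says that a cross-cut splits a domain in two; it says nothing about where a curve goes.
\item The Jordan trapping step presupposes what is missing, namely that each cell's boundary loop already lies in a small ball. As stated it would also need $B_{\lambda/4}(x)\hookrightarrow B_{\lambda/2}(x)$ to be trivial on $\pi_1$, which is not available at a fixed ratio.
\item Your inductive fallback is circular. ``Apply the construction inside $C$ to cover it by small subdomains'' is again the statement being proved, and no quantity decreases along the recursion.
\item Termination does not follow from the finiteness of the cell count. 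Extending new curves across neighbouring cells creates new cells.
\item The recursive refinement to diameter $2^{-k}$ in Section~\ref{sec:grid_con} uses this theorem as input, so it cannot be used to prove it.
\end{itemize}

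The missing idea is to use the boundaries of the covering sets themselves as grid lines, and to induct on the number of covering sets rather than on scale.

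\begin{enumerate}
\item Refine $\mathcal U$ to a finite cover by interior or boundary polygonal domains. It then suffices to prove Proposition~\ref{prop:intermediate1} by induction on the cardinality $n$ of this cover.
\item Take $P\ni a$ in the cover and let $V$ be the union of the other sets. Shrink $P$ to a domain $R$ with $\overline R\subset P$, with $\{R,V\}$ still covering $\overline Q$, and with $\alpha_R$ in general position with $\alpha_Q$ (Proposition~\ref{Prop: general_pos}).
\item Then $\alpha_R\cap\overline Q$ is a finite family of disjoint cross-cuts, and by Lemma~\ref{lemma:two_polygonal_dom} each complementary region lies in $P$ or in $V$.
\item Build a grid containing these cross-cuts among its curves. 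Draw a disjoint axis-parallel model of them in $[0,1]^2$ (possible by Corollary~\ref{cor:intersection}), complete it to a grid there, and realize the added segments in $Q$ with Lemma~\ref{thm:extension}.
\item Cells lying in $V$ are covered by $n-1$ sets, so the induction hypothesis applies inside them.
\item Stitching the resulting pieces into a global grid with Lemma~\ref{thm:extension} is harmless. Subdividing a cell that is contained in some $U$ keeps every piece inside $U$.
\end{enumerate}

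This supplies the decreasing parameter and the actual control on cells that your plan lacks. No Lebesgue number is needed.
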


In the rest of this section, we prove the three key ingredients: Proposition~\ref{prop:cells}, the extension lemma~\ref{thm:extension}, and the grid construction theorem~\ref{thm:subordinated_grids}. In Section~\ref{sec:grid_con}, we then explain how to combine these ingredients to prove Theorem~\ref{thm:top1}. Finally, in Section~\ref{sec:proofmain}, we prove our main manifold structure theorem, Theorem~\ref{thm:mainRCD}.

\subsection{Proof of Proposition~\ref{prop:cells}}

We argue by induction on $n+m$, using Lemma~\ref{lemma:two_polygonal_dom} as the key step. When $n+m=0$, there is nothing to prove, so we may assume that there exists at least one edge $\beta$. We describe the construction only in the case where $\beta$ is vertical, since the horizontal case is completely analogous.

Denote by $x$ the intersection of $\beta$ with $ab$, and by $y$ its intersection with $cd$. By our convention, $y$ always belongs to the interior set. Hence Lemma~\ref{lemma:two_polygonal_dom} applies and shows that $Q\setminus \beta$ disconnects into two polygonal domains (possibly boundary polygonal domains), namely $Q_1$, with vertices $x,b,c,y$, and $Q_2$, with vertices $a,x,y,d$.

Every vertical curve of $Q$ distinct from $\beta$ is connected and contained in $Q\setminus \beta$ except at its endpoints. Hence it is contained in either $Q_1$ or $Q_2$, and remains a vertical curve with respect to the corresponding choice of vertices. We denote by $n_1$ the number of vertical curves in $Q_1$, and by $n_2$ the number of those in $Q_2$. Notice that $n_1+n_2=n-1$.

Since every horizontal curve in $Q$ meets every vertical curve in exactly one point, its intersections with $Q_1$ and $Q_2$ are again horizontal curves with respect to the chosen vertices. We are therefore in a position to apply the induction hypothesis to the induced grids in $Q_1$ and $Q_2$. This yields combinatorial equivalences with the regular grids $\mathcal{G}_{m\times n_1}$ and $\mathcal{G}_{m\times n_2}$. Moreover, $Q_1$ contains exactly $(m+1)(n_1+1)$ cells and $Q_2$ contains exactly $(m+1)(n_2+1)$ cells. It follows that $Q$ is divided by $\mathcal{G}$ into exactly $(m+1)(n+1)$ cells.

It remains to construct the combinatorial equivalence between $\mathcal{G}$ and the regular grid $\mathcal{G}_{m\times n}$. To this end, we view $\mathcal{G}_{m\times n}$ as the union of two subgrids, combinatorially equivalent to $\mathcal{G}_{m\times n_1}$ and $\mathcal{G}_{m\times n_2}$, separated by one additional vertical segment corresponding to $\beta$. The induction hypothesis provides bijections on $Q_1$ and $Q_2$, and these bijections are compatible along the common boundary determined by $\beta$, since they preserve the order of the intersection points of $\beta$ with the horizontal curves. We may therefore glue them together and obtain the desired combinatorial equivalence between $\mathcal{G}$ and $\mathcal{G}_{m\times n}$.

\subsection{Proof of Lemma~\ref{thm:extension}}

For sufficiently small $r > 0$, the set $\alpha_Q \cap \overline B_r(a)$ is one of the following configurations. The last two occur only when $Q$ is a boundary polygonal domain and $a\in \alpha_Q([-1,0])$ is an extremal point:
\begin{enumerate}
    \item a geodesic of length $2r$;

    \item a corner with vertex $a$ and two edges of length $r$;

    \item a subset of the extremal set $\Ext(X)$. More precisely, $\alpha_Q$ is a boundary loop and $\alpha_Q\cap \overline B_r(a) \subset \Ext(X)$;

    % a segment contained in the extremal set $\Ext(X)$. More precisely, $\alpha_Q$ is a boundary loop and $\alpha_Q\cap \overline B_r(a)= \alpha_Q(I)$ for some closed interval $I\subset (-1,0)$;

    \item a corner meeting the extremal set. More precisely, $\alpha_Q$ is a boundary loop, $a=\alpha_Q(0)$, $\alpha_Q([0,1])\cap \overline B_r(a)$ is a geodesic at $a$.

    % More precisely, $\alpha_Q$ is a boundary loop, $a=\alpha_Q(0)$, $\alpha_Q\cap \overline B_r(a)= \alpha_Q([-\eta,\eta])$ for some $\eta>0$, and $\alpha|_{[0,\eta]}$ is a geodesic.
    
\end{enumerate}

In all cases, there exists a point $x \in B_{r/4}(a) \cap Q$.
Let $\sigma_1$ be a geodesic from $a$ to $x$. We claim that $\sigma_1$ is contained in $Q$ except at the endpoint $a$. It is enough to show that $\sigma_1$ does not intersect $B_r(a)\cap \alpha_Q$ except at the endpoint $a$.
Since $x\in Q$ is an interior point, Corollary~\ref{Cor: int geod reg} implies that $\sigma_1$ is a regular geodesic, and therefore it cannot intersect the extremal set except at the endpoint $a$. This already settles configuration~(3). In the remaining cases, we use the non-branching property. Suppose that $\sigma_1$ intersects $\alpha_Q\cap B_r(a)$ at some point other than $a$. Then the intersection must occur along one of the geodesics of $\alpha_Q\cap B_r(a)$ issuing from $a$. Replacing the final portion of $\sigma_1$ after the first intersection point with the corresponding boundary geodesic joining that point to $a$, we obtain a minimizing geodesic from $a$ to $x$ which, in a neighborhood of $a$, coincides with a segment of $\alpha_Q$. This yields branching of geodesics, contradicting Theorem~\ref{Thm: non-branching}.

Similarly, we can choose a point $y \in Q$ and a geodesic $\sigma_2$ from $y$ to $b$ that is contained in $Q$ except at the endpoint $b$. Since $x,y\in Q$ and $Q$ is path-connected, there exists a continuous curve $\nu$ joining $x$ to $y$ and contained in $Q$. Let $U\subset Q$ be a connected tubular neighborhood of $\nu$. By Lemma~\ref{Lem: poly curve connect}, there exists a simple polygonal curve $\rho$ in $U$ connecting $x$ to $y$. Concatenating $\sigma_1$, $\rho$, and $\sigma_2$, we obtain a polygonal curve from $a$ to $b$ contained in $Q$ except at its endpoints.

\subsection{Proof of Theorem~\ref{thm:subordinated_grids}}\label{sec:proof_grid}

Let $\mathcal U$ be an open cover of $\overline Q$. We first refine it. For every interior point $x\in \overline Q$, we apply Theorem~\ref{Thm: poly dom neigh basis} to find a polygonal domain $Q_x$ such that $x \in Q_x \subset U$ for some $U \in \mathcal U$. For every extremal point $x\in \overline Q$ (equivalently, when $Q$ is a boundary polygonal domain and $x\in \alpha_Q([-1,0])$), we apply Theorem~\ref{Thm: bd poly dom neigh basis} to find a boundary polygonal domain $Q_x$ such that $x \in Q_x \subset U$ for some $U \in \mathcal U$. By compactness of $\overline Q$, we may extract a finite subcover. This reduces the proof to the following statement.

\begin{Pro}\label{prop:intermediate1}
For every polygonal domain $Q$ satisfying {\rm(a)--(c)}, and every finite open cover $\mathcal U$ of $\overline Q$ consisting of polygonal domains (possibly boundary polygonal domains), there exists a polygonal grid in $Q$ subordinated to $\mathcal U$.
\end{Pro}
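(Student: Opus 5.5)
I plan to prove Proposition~\ref{prop:intermediate1} by induction on the number $k$ of domains in the finite cover $\mathcal U=\{Q_1,\dots,Q_k\}$. The idea is to use the boundary loops $\alpha_{Q_j}$ to cut $Q$ into pieces each lying inside a single $Q_j$, and then to refine those pieces into a grid.

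First I would put everything in general position. By Proposition~\ref{Prop: general_pos}, applied repeatedly, I can shrink each $Q_j$ slightly to $Q_j'\subset Q_j$, keeping a cover of $\overline Q$, so that the loops $\alpha_{Q_j'}|_{[0,1]}$ are pairwise in general position and also in general position with $\alpha_Q$. To keep the cover property, I would first replace $\mathcal U$ by a Lebesgue-number-type cover: each point has a domain containing a neighborhood of it, so the small $\eps$-perturbations concentrated near the boundaries still cover. After this step, the union $\Gamma$ of $\alpha_Q$ with all the $\alpha_{Q_j'}\cap\overline Q$ is a finite planar-like graph. Its edges are polygonal arcs, and its vertices are finitely many transverse intersection points lying in $\Int(X)$.

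Next I would show that the complement of $\Gamma$ in $Q$ consists of finitely many polygonal domains, each contained in some $Q_j'$. To do this I cut $Q$ successively along the arcs of $\alpha_{Q_j'}$ that cross $Q$, using Lemma~\ref{lemma:two_polygonal_dom} at each step. Each arc joins two points of the current boundary, so every cut produces two polygonal domains. Once all cuts are done, each resulting piece $P$ is disjoint from every $\alpha_{Q_j'}$. Hence $P$ lies entirely inside or entirely outside each $Q_j'$, and since the $Q_j'$ cover, $P\subset Q_j'$ for some $j$.

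Finally I would assemble a grid from this decomposition. I fix vertical curves from $ab$ to $cd$ and horizontal curves from $ad$ to $bc$; these exist by Lemma~\ref{thm:extension}, and I put them in general position with $\Gamma$ using Lemma~\ref{Lem: poly curve connect}. I then refine them so that every cell lies in a single piece $P$. Concretely, I would choose the vertical curves so that they pass through every vertex region and follow the edges of $\Gamma$ closely. This works in the spirit of the two-sphere recognition theorem \cite{M16}: a cellular subdivision by polygonal arcs can be transformed into a grid subordinate to it. Corollary~\ref{cor:intersection} guarantees that each horizontal curve meets each vertical curve. Modifying curves locally inside cells so that they cross exactly once uses Lemma~\ref{lemma:two_polygonal_dom} again.

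I expect this last step to be the main obstacle: converting an arbitrary finite polygonal cell decomposition into a genuine $m\times n$ grid in which distinct horizontal curves stay disjoint and each horizontal curve meets each vertical curve exactly once. The approach I would try first is to build the grid inductively. I would add one vertical curve at a time, each crossing the previously chosen horizontal curves exactly once, using the extension lemma inside the cells already created, which are polygonal domains by Proposition~\ref{prop:cells}. I would add enough curves that every cell is contained in a piece of the decomposition. If this proves difficult, my fallback is induction on $k$: handle $Q\cap Q_1'$ directly, apply the inductive hypothesis to the remaining region, and glue the two partial grids along $\alpha_{Q_1'}$.
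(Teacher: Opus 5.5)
Your preliminary steps are reasonable: shrinking the cover into general position, and cutting $Q$ into pieces that each lie in one cover element via Lemma~\ref{lemma:two_polygonal_dom}. The genuine gap is exactly where you anticipated it: nothing in your plan actually produces an $m\times n$ grid whose cells lie in those pieces.

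\begin{itemize}
\item Lemma~\ref{thm:extension} and Lemma~\ref{Lem: poly curve connect} only give \emph{some} simple polygonal curve in a prescribed domain, with no control on its position. So ``follow the edges of $\Gamma$ closely'' and ``add curves until every cell lies in a piece'' have no mechanism behind them and no reason to terminate.
\item No metric fineness is available at this stage; producing fineness is the purpose of this proposition.
\item The only way to force each cell into a piece is to make the arcs of $\Gamma\cap Q$ part of the grid lines. You do not explain how to do that, and your simultaneous general-position setup makes it harder, since $\Gamma$ acquires transverse crossing vertices.
\item Corollary~\ref{cor:intersection} gives at least one intersection, not exactly one.
\item Your fallback of gluing two partial grids along $\alpha_{Q_1'}$ fails. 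Grids built independently in adjacent pieces do not match along the common boundary, so their union is not a grid.
\end{itemize}

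The paper supplies two ideas you are missing.

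First, the induction incorporates only one cover element at a time. Take $P\ni a$ and let $V$ be the union of the other elements. Using Proposition~\ref{Prop: general_pos}, shrink $P$ to $R$ so that $\{R,V\}$ still covers $\overline Q$ and $\alpha_R$ meets $\alpha_Q$ in general position away from $a,b,c,d$. Then $\alpha_R\cap\overline Q$ is a finite family of \emph{pairwise disjoint} chords, whose endpoints do not interleave along $\alpha_Q$. There are no crossings to handle.

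Second, the grid is designed in the model square $[0,1]^2$ and then realized in $Q$:
\begin{enumerate}
\item Place the chord endpoints on $\partial[0,1]^2$ in the same order and on the same sides.
\item Join them by disjoint axis-parallel staircase paths.
\item Extend these paths to a regular grid in the square.
\item Realize each additional segment in $Q$ by Lemma~\ref{thm:extension}, inside the corresponding cell. That cell is a polygonal domain with the matching boundary pattern, by Lemma~\ref{lemma:two_polygonal_dom} and Proposition~\ref{prop:cells}.
\end{enumerate}
This yields a grid subordinated to $\{P,V\}$. The induction hypothesis is then applied in each cell contained in $V$, using that cell's grid corners as its four vertices. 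The same square-blueprint and extension-lemma procedure then prolongs the resulting local curves across neighbouring cells, producing a single global grid subordinated to $\mathcal U$.

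Without this model-square mechanism, or an equivalent combinatorial device, your plan does not yield a grid.
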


To prove Proposition~\ref{prop:intermediate1}, we argue by induction on the cardinality $n$ of the cover. If $n=1$, there is nothing to prove. Let $n\ge 2$, and assume that the statement holds for every polygonal domain $Q$ satisfying {\rm(a)--(c)} and every finite open cover of $\overline Q$ consisting of $n-1$ polygonal domains.

Fix a polygonal domain $Q$ and a cover $\mathcal U$ of $\overline Q$ consisting of $n$ polygonal domains. Let $P\in \mathcal U$ be an element containing $a$. If $\mathcal U\setminus\{P\}$ still covers $\overline Q$, then we may discard $P$ and conclude by the inductive hypothesis. Hence, we may assume that $\overline Q$ is not contained in
\begin{equation}
    V:=\bigcup_{U\in\mathcal U\setminus\{P\}} U.
\end{equation}

\begin{Cla}\label{claim:cell1}
 There exists a polygonal grid $\mathcal{G}$ subordinated to the two-set cover $\{P, V\}$. 
\end{Cla}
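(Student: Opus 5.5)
The strategy is to cut $Q$ along the boundary of $P$, use the pieces away from $P$ as domains covered by fewer sets, and assemble the resulting grids into one grid. First I would put things in general position. By Proposition~\ref{Prop: general_pos}, shrinking $P$ slightly to $P_\eps$ with $\overline{P_\eps}\subset P$, I may assume that $\alpha_P|_{[0,1]}$ meets $\alpha_Q$ in finitely many points, all interior to edges. Since $a\in P$, I can take $\eps$ small enough that still $a\in P_\eps$. The sets $P_\eps$ and the elements of $\mathcal U\setminus\{P\}$ still cover $\overline Q$ once $\eps$ is small: $\overline Q\setminus V$ is compact and lies in $P$, hence away from $\alpha_P([0,1])$, and outside $B_\eps(\alpha_P([0,1]))$ the domains $P_\eps$ and $P$ agree. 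So I may replace $P$ by $P_\eps$.

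Next I decompose $Q$. Each arc of $\alpha_P([0,1])\cap Q$ joins two points of $\alpha_Q$ and is a simple polygonal curve in $Q\cap\Int(X)$ except at its endpoints. I would cut $Q$ along these arcs one at a time, applying Lemma~\ref{lemma:two_polygonal_dom} at each step. Its hypothesis needs an endpoint in $\alpha_Q([0,1])$, which holds because the endpoints of $\alpha_P([0,1])$ lie on the extremal set, while crossing points with $\alpha_Q([0,1])$ are interior points. This yields finitely many polygonal domains $Q_1,\dots,Q_k$, each either contained in $\overline{P}$ or disjoint from $P$. Those disjoint from $P$ have closures covered by $V$, that is, by $n-1$ domains, so the inductive hypothesis gives each of them a subordinated grid. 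Those inside $\overline P$ are already subordinated to $\{P\}$; I give them a trivial grid, or a crude one from Lemma~\ref{thm:extension}.

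The main obstacle is gluing these local grids into a single global grid on $Q$ whose cells are unions of sub-cells. Each cell must still lie in $P$ or in $V$. To arrange this, I would first pick coarse horizontal and vertical curves through $Q$ whose cells refine the decomposition $\{Q_j\}$, in the Moore-style fashion. Concretely, I would approximate each cutting arc by horizontal and vertical pieces: choose finitely many disjoint horizontal curves and disjoint vertical curves using Lemma~\ref{thm:extension} and Lemma~\ref{Lem: poly curve connect}, routed along the cutting arcs and kept in general position via Proposition~\ref{Prop: general_pos}. These curves form a grid $\mathcal G_0$ such that each cell of $\mathcal G_0$ lies in $P$ or meets $Q\setminus P$ only inside some $Q_j$ disjoint from $P$.

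Then I refine each cell of $\mathcal G_0$ meeting $Q\setminus P$ by the inductive hypothesis applied to that cell. The cell is a polygonal domain satisfying (a)--(c), with the cover restricted to it, which lacks $P$ except possibly near its border. The refinements must then be extended across neighbouring cells so the result is again a grid. I would do this by continuing every new horizontal or vertical endpoint across adjacent cells with Lemma~\ref{thm:extension}, and restoring single intersections by the cutting Lemma~\ref{lemma:two_polygonal_dom} together with Corollary~\ref{cor:intersection}, which forces crossings. Proposition~\ref{prop:cells} then identifies the cells and confirms that each lies in a sub-cell, hence in $P$ or in $V$.

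I expect this extension and consistency step to be the hardest part, because extending curves in one cell may break the one-intersection property in another. I would handle it by extending row by row, each time cutting along the newly added curve.
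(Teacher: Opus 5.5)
Your first step is sound. A single application of Proposition~\ref{Prop: general_pos} with $\beta=\alpha_Q|_{[0,1]}$ is enough, since its conclusion~(2) already gives $\overline{P_\eps}\subset P$. You should also declare $a,b,c,d$ to be vertices of $\beta$, as the paper does, so that no crossing point falls on a corner.

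The genuine gap comes afterwards. The Claim asks precisely that every cell lie in $P$ or in $V$, but your grid $\mathcal G_0$ only ``approximates'' the cutting arcs. The property you assert for its cells is that each lies in $P$ or meets $Q\setminus P$ only inside some $Q_j$ disjoint from $P$. This does not give subordination: a cell may contain points of $\overline Q\setminus V$ (which lie deep in $P$) together with points outside $P$. Your fallback, refining such cells by the inductive hypothesis, also fails. Such a cell is not covered by $\mathcal U\setminus\{P\}$ (you note yourself that $P$ may be needed near its border), so the restricted cover still has $n$ members and the induction on $n-1$ does not apply. The induction and gluing in your last two paragraphs belong to deducing Proposition~\ref{prop:intermediate1} from the Claim; the Claim itself needs no induction.

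The missing idea is to make the cutting arcs lie exactly on the grid curves, rather than approximately.
\begin{itemize}
\item Let $\alpha_1,\dots,\alpha_m$ be the components of $\alpha_{P_\eps}([0,1])\cap\overline Q$. By Lemma~\ref{lemma:two_polygonal_dom}, they cut $Q$ into pieces, each contained in $\overline{P_\eps}\subset P$ or in $\overline Q\setminus P_\eps\subset V$.
\item In the model square, place $2m$ points on $\partial[0,1]^2$ in the same cyclic order and on the same sides as the endpoints of the $\alpha_i$ on $\partial Q$. This is where avoiding the corners is needed.
\item Since the $\alpha_i$ are pairwise disjoint, Corollary~\ref{cor:intersection} shows their endpoint pairs are not interleaved. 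So they can be joined in the square by pairwise disjoint staircase paths of axis-parallel segments.
\item Complete these staircases to a grid of $[0,1]^2$. Realize a combinatorially equivalent grid in $Q$ by repeated use of Lemma~\ref{thm:extension}, taking each staircase to be the corresponding $\alpha_i$ subdivided at the points matching its corners.
\end{itemize}
Then $\bigcup_i\alpha_i$ lies in the union of the grid curves, so every cell lies in a single piece, hence in $P$ or in $V$.

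An approximation scheme could only substitute for this with quantitative control. You would need each cell to stay within a small neighbourhood of one piece, using the slack $\overline{P_\eps}\subset P$ and an open neighbourhood of $\alpha_{P_\eps}\cap\overline Q$ inside $V$. Your proposal gives no mechanism for that.
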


\begin{proof}[Proof of Proposition~\ref{prop:intermediate1} given Claim \ref{claim:cell1}]

Each cell of $\mathcal{G}$ is the closure of a polygonal domain, possibly a boundary polygonal domain, contained either in $P$ or in $V$. By Proposition~\ref{prop:cells}, each such cell comes with distinguished vertices, given by the intersections of its horizontal and vertical edges. 
For each cell of $\mathcal{G}$ contained in $V$, we can apply the induction hypothesis, since $V$ is covered by the $n-1$ elements of $\mathcal{U}\setminus\{P\}$. Thus, on each such cell we construct a polygonal grid subordinated to the restricted cover $\mathcal{U}\setminus\{P\}$. In this way, we obtain a subdivision of $Q$ subordinated to $\mathcal{U}$, although it is not yet a grid. Indeed, a horizontal (or vertical) curve in one cell may terminate on a common boundary without continuing as a horizontal (or vertical) curve in the adjacent cell. 

We remedy this by applying the extension property, Lemma~\ref{thm:extension}, to prolong such boundary arcs across neighboring cells and connect the corresponding endpoints by simple polygonal curves inside the union of the relevant cells. 
More precisely, let $\mathcal{G}_{m\times n}$ be the regular grid combinatorially equivalent to $\mathcal{G}$ (see Proposition~\ref{prop:cells}). For each cell $\overline C$ of $\mathcal{G}$, where $C\subset Q$ is a polygonal domain, let $\mathcal{G}_C$ be a grid in $C$ subordinated to $\mathcal{U}\setminus \{P\}$, obtained by the induction hypothesis. Let $\overline C'\subset [0,1]^2$ be the cell of $\mathcal{G}_{m\times n}$ corresponding to $\overline C$.

Inside $C'$, we now draw a grid with the same combinatorial pattern as $\mathcal{G}_C$. More precisely, we place the endpoints of its horizontal and vertical curves on the sides of $C'$ so that their order agrees with the order of the corresponding endpoints on the sides of $C$, taking into account both the curves coming from $\mathcal{G}_C$ and the edges already belonging to $\mathcal{G}_{m\times n}$. We then connect these endpoints inside $C'$ so as to reproduce the same incidence relations and intersection pattern as in $\mathcal{G}_C$.

This procedure yields a subdivision of $[0,1]^2$, combinatorially equivalent to the one in $Q$, which can be completed to a grid by adding finitely many further segments. For each segment added in this way, we construct a corresponding simple polygonal curve in the associated cell of $Q$ by applying the extension lemma~\ref{thm:extension}.
\end{proof}

\begin{proof}[Proof of Claim \ref{claim:cell1}]

To construct a grid $\mathcal{G}$ subordinated to the two-set cover $\{P, V\}$, we will proceed in two steps. 

\medskip
\noindent{\bf Step 1.} There exists a polygonal domain $R$, which is a boundary polygonal domain if and only if $P$ is a boundary polygonal domain, such that 
\begin{enumerate}
\item $\overline R\subset P$;

\item $a\in R$ and $\{R, V\}$ is still a covering of $\overline Q$;

    \item $\alpha_R|_{[0,1]}$ and $\alpha_Q|_{[0,1]}$ are in general position (see Definition \ref{Def: general position}) and the intersection set does not contain the vertices $a,b,c,d\in \alpha_Q$.
\end{enumerate}

\medskip

Fix $\eps>0$ small enough so that $V$ contains the $10\eps$-neighborhood of $\partial P$. We first shrink the polygonal domain $P$ to a new one whose closure is contained in $P$ and which, together with $V$, still covers $\overline Q$.
To this end, we apply Proposition~\ref{Prop: general_pos} to the polygonal domain $P$, with $\beta=\alpha_P([0,1])$, and with parameter $\eps>0$. This yields a polygonal domain $P_\eps\subset P$ such that: (1) $\alpha_{P_\eps}([0,1])$ and $\alpha_P([0,1])$ are in general position; (2) $P_\eps=P$ outside $B_\eps(\partial P)$.

Since $P_\eps\subset P$, condition~(1) implies that $\alpha_{P_\eps}([0,1])$ and $\alpha_P([0,1])$ are in fact disjoint, hence $\overline P_\eps\subset P$.
Indeed, being in general position with $\alpha_P$, the curve $\alpha_{P_\eps}$ switches side with respect to $P$ at each intersection point. This follows from the local structure of the intersection: if $\gamma_1$ and $\gamma_2$ are regular geodesics intersecting at a point $x$ contained in the interior of both, then, for $r>0$ sufficiently small, the set $B_r(x)\setminus \gamma_1$ has exactly two connected components, and $\gamma_2$ passes from one component to the other as it crosses $x$; see Theorem~\ref{Thm: good components existence 2} and the proof of Lemma~\ref{Lem: exp image inclusion}.

By condition~(2), $\{P_\eps,V\}$ still covers $\overline Q$.

We now perturb $\alpha_{P_\eps}|_{[0,1]}$ so as to put it in general position with respect to $\alpha_Q|_{[0,1]}$, and at the same time ensure that the intersection does not contain  $a,b,c,d$. To this end, we apply Proposition~\ref{Prop: general_pos} to the polygonal domain $P_\eps$ with $\beta=\alpha_Q([0,1])$, regarding $a,b,c,d$ as vertices of the polygonal curve $\alpha_Q|_{[0,1]}$.

\bigskip
\noindent{\bf Step 2.} There exists a grid $\mathcal{G}$ such that $\alpha_R|_{[0,1]}\cap \overline Q$ is contained in the union of all horizontal and vertical polygonal curves of the grid. In particular, $\mathcal{G}$ is subordinated to $\{P,V\}$.

\medskip

Since $\alpha_R|_{[0,1]}$ and $\alpha_Q|_{[0,1]}$ are in general position by Step~1, the curve $\alpha_R$ switches side with respect to $Q$ at each intersection point. Since $\alpha_R|_{[0,1]}$ crosses $Q$ only finitely many times, the set $\alpha_P([0,1])\cap \overline Q$ is the union of finitely many simple polygonal curves $\alpha_1,\ldots,\alpha_m\subset \overline Q$, whose supports are pairwise disjoint. Moreover, the endpoints of each $\alpha_\ell$ are distinct from the vertices $a,b,c,d$.

By Lemma~\ref{lemma:two_polygonal_dom}, these curves subdivide $Q$ into $m+1$ polygonal domains, at most one of which is a boundary polygonal domain, and each of which is contained either in $P$ or in $V$. Thus, to conclude the proof, it remains to refine this subdivision into an actual grid. More precisely, we need to construct a grid $\mathcal{G}$ whose collection of horizontal and vertical curves contains $\alpha_1,\ldots,\alpha_m$. To this end, we follow closely the argument of \cite[Proposition 4.1]{T10}.

Consider the standard square $[0,1]^2$ with the usual labeling of its vertices. Choose $2m$ points in $\partial[0,1]^2$ corresponding to the endpoints of $\alpha_1,\ldots,\alpha_m$ on $\partial Q$, preserving both their order along the boundary and the side of the square on which they lie. These points are distinct from the vertices of $[0,1]^2$, since the endpoints of $\alpha_\ell$ avoid the vertices of $Q$.

For each curve $\alpha_i$, draw in $[0,1]^2$ a corresponding polygonal path joining the associated endpoints and made of finitely many segments parallel to the coordinate axes. By Corollary~\ref{cor:intersection}, these paths can be chosen pairwise disjoint.

We then extend this collection of paths to a grid in $[0,1]^2$. Finally, by applying the extension property (Lemma~\ref{thm:extension}) finitely many times, we construct a polygonal grid in $Q$ which is combinatorially equivalent to the one obtained in the square and contains $\alpha_1,\ldots,\alpha_m$ among its horizontal and vertical curves.
\end{proof}

\subsection{Grid construction and proof of Theorem~\ref{thm:top1}}\label{sec:grid_con}

By repeatedly applying Theorem~\ref{thm:subordinated_grids} together with the extension Lemma~\ref{thm:extension}, we construct a family of polygonal grids $\mathcal{G}_k$ on $Q$ satisfying the following properties:
\begin{itemize}
    \item[(i)] the family is nested, meaning that each cell of $\mathcal{G}_{k+1}$ is contained in a cell of $\mathcal{G}_k$;

    \item[(ii)] every cell of $\mathcal{G}_k$ has diameter at most $2^{-k}$.
\end{itemize}

We first construct $\mathcal{G}_1$ by applying Theorem~\ref{thm:subordinated_grids} to an open cover $\mathcal{U}$ of $\overline Q$ by balls of radius at most $1/2$. We then refine inductively each cell to obtain finer and finer grids.

Assume that $\mathcal{G}_k$ has already been constructed. Since each cell is the closure of a polygonal domain, we may cover it by balls of radius at most $2^{-k-1}$ and apply Theorem~\ref{thm:subordinated_grids} to obtain a grid inside that cell subordinated to this cover. Repeating this construction independently on each cell produces a subdivision of $\overline Q$, but not necessarily a global grid. We then use the extension property, Lemma~\ref{thm:extension}, exactly as in Section~\ref{sec:proof_grid}, to complete the subdivision into a genuine grid. This yields $\mathcal{G}_{k+1}$ and concludes the inductive step.

\medskip
We now construct the homeomorphism $f:\overline Q\to [0,1]^2$.
For each $x\in \overline Q$, let $\{\mathcal{G}_k(x)\}_k$ be a nested family of cells, with $\mathcal{G}_k(x)$ a cell of $\mathcal{G}_k$ containing $x$. This choice need not be unique when $x$ lies on one of the horizontal or vertical curves, but this will not cause any difficulty.

Let $\{\mathcal{G}_k(x)'\}_k$ be the corresponding cells in the regular grid $\mathcal{G}_k'$ of $[0,1]^2$ combinatorially equivalent to $\mathcal{G}_k$; see Proposition~\ref{prop:cells}. We claim that $\bigcap_k \mathcal{G}_k(x)'$ consists of a single point, which we denote by $f(x)\in [0,1]^2$. Indeed, the intersection is nonempty because the cells are compact and nested, and it contains at most one point because the diameters of the corresponding cells in $[0,1]^2$ tend to zero as $k\to\infty$.

It is now straightforward to check that the resulting map $f:\overline Q\to [0,1]^2$ is continuous, injective, and surjective. In particular, $f$ is a homeomorphism. Moreover, $f$ maps $\alpha_Q$ to the boundary of $[0,1]^2$.

\subsection{Proof of Theorem~\ref{thm:mainRCD}}\label{sec:proofmain}

For every $x\in \Int(X)$, Theorem~\ref{Thm: poly dom neigh basis} provides a polygonal domain $Q$ which is a neighborhood of $x$. By Theorem~\ref{thm:top1}, $Q$ is homeomorphic to the open unit disk in $\mathbb{R}^2$.

For every $x\in \Ext(X)$, Theorem~\ref{Thm: bd poly dom neigh basis} provides a boundary polygonal domain $Q$ which is a neighborhood of $x$. Again by Theorem~\ref{thm:top1}, $\overline Q$ is homeomorphic to $D^2$. In particular, $Q$ is homeomorphic to the open ball of the two-dimensional half-space.

Combining these two conclusions, we deduce that $(X,\dd_X)$ is a surface with boundary, and that its boundary coincides with $\Ext(X)$.

\subsection{Proof of Theorem \ref{Thm:top_characterization}}

Let $(X,\dd_X,\mm_X)$ be an $\RCD(K,N)$ space of essential dimension $2$. By Theorem~\ref{thm:mainRCD}, the metric space $(X,\dd_X)$ is a topological surface with boundary. Let $(\widehat X,\dd_{\widehat X},\mm_{\widehat X})$ be its universal cover. By \cite{MW19,W24a}, the lifted space is again a simply connected $\RCD(K,N)$ space of essential dimension $2$, and $\pi_1(X)$ acts on $\widehat X$ by deck transformations. If $\widehat X$ is compact, then the classification of compact surfaces implies that $\widehat X$ is homeomorphic either to $S^2$ or to $D^2$. When $\widehat X$ is noncompact, we use the following topological classification. 

\begin{Pro}[Classification of noncompact simply connected surfaces \cite{CKS12}]\label{Pro: class noncompact surf}
Let $M$ be a connected, Hausdorff, second-countable, noncompact topological surface, possibly with boundary, and assume that $M$ is simply connected. Then either $M$ is homeomorphic to $\mathbb{R}^2$, or $M$ is homeomorphic to $D^2\setminus E$ for some nonempty closed totally disconnected set $E\subset \partial D^2$.
\end{Pro}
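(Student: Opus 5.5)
The statement is the classification result cited from \cite{CKS12}, and the plan is to deduce it from the Ker\'ekj\'art\'o--Richards classification of noncompact surfaces. We first treat the case without boundary and then reduce the case with boundary to it.

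\textbf{Boundaryless case.} By Richards' theorem, a noncompact boundaryless surface is determined by its genus, its orientability type and the nested triple of its end spaces. Simple connectivity forces the following.
\begin{enumerate}
\item $M$ is orientable, since a one-sided loop is not null-homotopic.
\item $M$ has genus $0$, since a handle carries an essential loop.
\item $M$ has exactly one end. Indeed, a planar surface with at least two ends contains a compact planar subsurface separating two ends, and a boundary circle of that subsurface is essential; equivalently, $\pi_1$ of a planar surface with $k\ge 2$ ends is free of rank $\ge 1$.
\end{enumerate}
Hence $M$ is homeomorphic to $\mathbb R^2$.

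\textbf{Boundary components.} Now let $\partial M\neq\emptyset$. We first show that every boundary component is a properly embedded line. If some component $C$ were a circle, we would glue a disk $D$ along $C$. By van Kampen the result is still simply connected, and it is noncompact. Then $C$ is null-homotopic in $M$, because the retraction of $M\cup D$ collapsing $D$ onto $C$ is not available but a collar argument shows the null-homotopy can be pushed off $D$. An essentially embedded boundary circle that is null-homotopic must bound a disk in $M$, which would make $M$ compact, a contradiction. Thus $\partial M$ is a disjoint, locally finite family of lines $\{L_j\}$.

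\textbf{Reduction to the boundaryless case.} We attach a closed half-plane $H_j$ to $M$ along each $L_j$ and call the result $N$. Then $N$ is a boundaryless noncompact surface. It is simply connected by van Kampen, since it is a tree of spaces glued along the contractible sets $L_j$, each piece $M$ and $H_j$ being simply connected. By the boundaryless case, $N\cong\mathbb R^2$. Thus $M$ is a closed subset of $\mathbb R^2$ whose complement is the disjoint union of the open half-planes $\operatorname{int} H_j$. Each of these is bounded by the proper line $L_j$, so each $L_j$ separates the plane and $M$ lies on one side of every $L_j$.

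\textbf{Main step: $M\cong D^2\setminus E$.} Identify $\mathbb R^2$ with the open disk. Take an exhaustion of $M$ by compact disks $K_1\subset K_2\subset\cdots$, each meeting $\partial M$ in finitely many arcs. Using Schoenflies, build homeomorphisms of $K_n$ onto closed regions of $D^2$ that are compatible as $n$ increases, with each arc $K_n\cap\partial M$ sent into $\partial D^2$ and the rest of $\partial K_n$ sent into the interior of $D^2$. Arrange the diameters so that the closure of the image is all of $D^2$. In the limit, $M$ maps homeomorphically onto $D^2\setminus E$, where $E\subset\partial D^2$ is the complement of the union of the images of the lines $L_j$. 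The set $E$ is closed. It is totally disconnected because the $L_j$ accumulate only at ends: an arc in $E$ would give an end of $M$ containing no boundary, which would force an extra planar end. It is nonempty because $M$ is noncompact.

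I expect this last step to be the main obstacle. It requires controlling the ends of $M$ relative to the boundary lines, in the spirit of the Brown--Messer classification. If the direct construction proves awkward, the fallback is to invoke Brown--Messer directly and match invariants: genus $0$, orientable, all boundary components noncompact, and end data equal to that of $D^2\setminus E$.
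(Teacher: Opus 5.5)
The paper gives no proof of this proposition; it quotes the classification from the literature. Your fallback of invoking Brown--Messer and matching invariants is therefore what the paper effectively does. The earlier parts of your self-contained route are sound:
\begin{itemize}
\item the boundaryless case via Richards is fine;
\item excluding compact boundary components is fine, though no gluing of $D$ is needed: $C$ is null-homotopic simply because $\pi_1(M)=1$, and Epstein's theorem applied to an interior push-off of $C$ gives an embedded disk which, together with the collar, makes $M$ compact;
\item gluing half-planes to get $N\cong\mathbb{R}^2$ is fine.
\end{itemize}
The genuine gap is in the main step, at the point you flagged yourself.

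Your reason for $E$ being totally disconnected is that an arc in $E$ would give an end of $M$ without boundary and so force an extra planar end. This is not valid. Total disconnectedness of $E$ is not forced by the topology of $M$; it is a property of the particular embedding you build. For example, $v\mapsto v/(1+|v|)$ maps the closed half-plane $\{y\ge 0\}$ homeomorphically onto $\{|v|<1,\ y\ge 0\}$. That set is the closed half-disk, itself a $2$-disk, minus its closed upper semicircle. So the uncovered part of the boundary circle is a whole arc. Yet $M$ has a single end, and every neighbourhood of that end meets $\partial M$. No contradiction can be extracted, and an exhaustion construction like yours can produce an $E$ containing arcs unless you build in a mechanism that prevents it.

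The missing ingredient is a shrinking condition in the inductive step. Because $K_n\subset \mathrm{int}_M K_{n+1}$, the boundary lines continue past both endpoints of each frontier arc $\beta$ of $K_n$ inside $K_{n+1}$. Hence, on the piece of $K_{n+1}$ beyond $\beta$, consecutive new frontier arcs are separated by nondegenerate arcs of $\partial M$. You can then choose the extension so that:
\begin{itemize}
\item each new frontier arc goes to a chord lying in the $2^{-n}$-collar of $\partial D^2$;
\item each such chord cuts off an arc of $\partial D^2$ of length at most $2^{-n}$, with the $\partial M$-arcs taking the remaining length.
\end{itemize}
With this choice, every point of $E$ lies in nested cut-off arcs whose lengths tend to $0$. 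Distinct points of $E$ are eventually separated by images of $\partial M$, so $E$ is totally disconnected. The collar condition makes the image contain the whole open disk. Invariance of domain, which makes each image of $\mathrm{int}_M K_{n+1}$ open in $D^2$, shows the limit map is a homeomorphism onto $D^2\setminus E$; you only asserted this.

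Alternatively, accept whatever $E'$ your construction yields and post-compose with the quotient of $D^2$ that collapses each nondegenerate component of $E'$. These components form a null sequence of disjoint boundary arcs, so the quotient is again a disk (Moore's theorem after doubling). The quotient map is a homeomorphism off $E'$, and the image of $E'$ is totally disconnected. Either way, this step still needs an actual construction. So does the existence of an exhaustion by disks meeting $\partial M$ in finitely many nondegenerate arcs, which you also assert without proof.
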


In the model $D^2\setminus E$, the boundary may have an arbitrary totally disconnected end structure, and in particular may have many connected components. In our setting, however, the splitting theorem rules out this general behavior and forces the boundary structure to be much simpler: in fact, at most two boundary components can occur.

\begin{Pro}\label{prop:open-classification-rcd0}
Let $(X,\dd_X,\mm_X)$ be an $\RCD(0,N)$ space of essential dimension $2$, and assume that $(X,\dd_X)$ is homeomorphic to $D^2\setminus E$, where $E\subset \partial D^2$ is a nonempty closed totally disconnected subset. Then $E$ has either one or two elements. Moreover, if $E$ has two elements, then $(X,\dd_X)$ is isometric to a flat strip $\mathbb{R}\times [a,b]$.
\end{Pro}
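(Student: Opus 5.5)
We want: if $X$ is $\RCD(0,N)$, essential dimension $2$, and homeomorphic to $D^2\setminus E$ with $E\subset\partial D^2$ nonempty, closed and totally disconnected, then $|E|\in\{1,2\}$, and $|E|=2$ forces a flat strip. The plan is to turn ends of $X$ into lines and apply the splitting theorem (Theorem~\ref{Thm: splitting thm}).

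\textbf{Step 1: lines from pairs of ends.} Since $E$ is nonempty, $X$ is noncompact, since $\overline{D^2}$ minus a nonempty set of boundary points is not compact. Suppose $|E|\ge 2$ and pick distinct $e_1,e_2\in E$. Since $E$ is closed and totally disconnected in the circle, we can choose two points $z_1,z_2\in\partial D^2\setminus E$ separating $e_1$ from $e_2$ on $\partial D^2$. Let $\tau$ be the chord joining $z_1$ and $z_2$; it is a properly embedded arc in $X$ and separates $X$ into two pieces $X_1\ni$ (ends near $e_1$) and $X_2$.

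Now choose $x_i\in X_1$ converging to $e_1$ and $y_i\in X_2$ converging to $e_2$; both sequences leave every compact set. Any geodesic from $x_i$ to $y_i$ must cross the compact set $\tau$. Fixing $o\in\tau$ and reparametrizing these geodesics from their crossing point, Arzelà--Ascoli gives a limit curve $\ell$. Since both ends of each geodesic go to infinity, $\ell$ is a line. Theorem~\ref{Thm: splitting thm} then gives $X\cong\R\times Y$, where $Y$ is $\RCD(0,N-1)$ of essential dimension $1$. By Theorem~\ref{Thm: RCD dim 1 class}, $Y$ is isometric to $\R$, $\R_+$, a closed interval, or a circle.

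\textbf{Step 2: case analysis.} We compare each possibility with the topology of $D^2\setminus E$.
\begin{itemize}
\item $Y=S^1$ is impossible, since then $X$ would not be simply connected.
\item $Y=\R$ is impossible, since then $X$ would have no boundary, whereas $D^2\setminus E$ has nonempty boundary $\partial D^2\setminus E$.
\item $Y=\R_+$ gives a half-plane, which is $D^2$ minus exactly one boundary point. This contradicts $|E|\ge 2$.
\item $Y=[a,b]$ gives a strip, which is $D^2$ minus exactly two boundary points. Hence $|E|=2$, and $X$ is isometric to the flat strip $\R\times[a,b]$. The measure is also a product, since the splitting is measured.
\end{itemize}
So $|E|\ge 2$ forces $|E|=2$ and the strip. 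Otherwise $|E|=1$, which is the claimed dichotomy.

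\textbf{Main obstacle.} The hard step is making rigorous that the limit $\ell$ is a genuine line, i.e.\ that its two halves diverge. We need $\dd_X(o,x_i)\to\infty$ and $\dd_X(o,y_i)\to\infty$, which holds because $X$ is proper and the sequences leave every compact set.

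We also need the crossing point on $\tau$ to stay bounded. This holds because $\tau$ is compact.

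Finally, we need the separation by $\tau$ to really force each geodesic through $\tau$. That is purely topological in $D^2\setminus E$, since the chord disconnects the disk with $e_1,e_2$ on opposite sides.
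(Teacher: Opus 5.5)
Your proof is correct and follows essentially the same route as the paper. You separate two ends, extract a line by a compactness argument, split off $\R$, and rule out $Y=\R,\R_+,S^1$ using the boundary and simple connectivity of $D^2\setminus E$. The one difference is that you separate with a topological chord between two points of $\partial D^2\setminus E$, whereas the paper uses a minimizing geodesic between two boundary components; your choice makes the separation step purely topological. Your final identification $|E|=2$ tacitly uses the fact, made explicit in the paper, that $D^2\setminus E$ has exactly $|E|$ boundary components when $E$ is finite and infinitely many otherwise.
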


\begin{proof}
The boundary of $D^2\setminus E$ is $\partial D^2\setminus E$, so its connected components are exactly the connected components of $\partial D^2\setminus E$. In particular, if $E$ consists of a single point, then $\partial X$ is connected, while if $E$ has more than one point, then $\partial X$ has at least two connected components.

Assume now that $E$ has more than one point. 
Choose two distinct boundary components of $X$, and let $\gamma$ be a minimizing geodesic joining them.
Then $\gamma$ divides the space into two non-compact path-connected components. Since any geodesic between two points from different components must intersect $\gamma$, it follows immediately from a compactness argument that there exists a line in $X$. By the splitting theorem (Theorem~\ref{Thm: splitting thm}), $X$ splits isometrically as $\mathbb{R}\times Y$, where $Y$ is an $\RCD(0,N-1)$ space of essential dimension $1$. Since $D^2\setminus E$ is simply connected, it follows from Theorem~\ref{Thm: RCD dim 1 class} that $Y$ is isometric to one of $\mathbb{R}$, $[0,\infty)$, $[a,b]$. The case $Y=\mathbb{R}$ is impossible because $X$ has nonempty boundary, while $Y=[0,\infty)$ is impossible because then $X$ would have connected boundary. Therefore $Y=[a,b]$, and $(X,\dd_X)$ is isometric to the flat strip $\mathbb{R}\times [a,b]$.
\end{proof}

\begin{proof}[Proof of {\rm(i)} in Theorem~\ref{Thm:top_characterization}]
When $K>0$, the universal cover is compact. Hence by Theorem~\ref{thm:mainRCD}, $\widehat X$ is homeomorphic either to $S^2$ or to the disk $D^2$. The conclusion then follows from the classification of effective, discrete, free group actions on these two models.
\end{proof}

\begin{proof}[Proof of {\rm(ii)} in Theorem~\ref{Thm:top_characterization}]
By the splitting theorem, either the universal cover $\widehat X$ is compact, or it splits isometrically off a Euclidean factor. The compact case was already treated in {\rm(i)}. In the noncompact case, it follows from Theorem~\ref{Thm: RCD dim 1 class} that only the following models, endowed with the Euclidean metric, can occur: $\mathbb{R}^2$, the half-plane $\mathbb{R}^2_+$, or a strip $\mathbb{R}\times [a,b]$.

We claim that the half-plane cannot occur. Indeed, every deck transformation is an isometry of $\mathbb{R}^2_+$ preserving the boundary line. Since the action is free and properly discontinuous, such an isometry must be a translation parallel to the boundary. In particular, the quotient cannot be compact in the transverse direction, so the action is not cocompact. 

Therefore, the only possibilities are $\widehat X\cong \mathbb{R}^2$ or $\widehat X\cong \mathbb{R}\times [a,b]$. The conclusion then follows from the classification of effective, discrete, free, cocompact isometric group actions on these two models.
\end{proof}

\begin{proof}[Proof of {\rm(iii)} in Theorem~\ref{Thm:top_characterization}]
By Proposition~\ref{prop:open-classification-rcd0}, the universal cover $\widehat X$ is either homeomorphic to $\mathbb{R}^2$, or homeomorphic to the half-plane $\mathbb{R}^2_+$, or isometric to the strip $\mathbb{R}\times [a,b]$.

\medskip
\noindent{\bf Case 1:} $\widehat X$ is isometric to $\mathbb{R}\times [a,b]$.

Any nontrivial discrete free group of isometries of the strip is infinite cyclic, generated either by a translation along the $\mathbb{R}$-factor or by the composition of such a translation with the reflection exchanging the two boundary components. In the first case, the quotient is a compact cylinder, and in the second case it is a compact M\"obius strip. Since $X$ is noncompact, neither can occur. Hence this case is impossible when $X$ is not simply connected.

\medskip
\noindent{\bf Case 2:} $\widehat X$ is homeomorphic to $\mathbb{R}^2_+$.

Every deck transformation preserves the boundary $\partial \widehat X$, which is homeomorphic to $\mathbb{R}$. Hence the deck group acts freely and properly discontinuously on $\partial \widehat X\cong \mathbb{R}$. Since the quotient of a connected one-dimensional manifold by such an action is again a connected one-dimensional manifold, the deck group is either trivial or infinite cyclic. If it is nontrivial, the quotient of the boundary is a circle, and therefore $X$ has exactly one boundary component. It follows from Proposition~\ref{Pro: class noncompact surf} that $X$ is homeomorphic to the half-cylinder $S^1\times [0,\infty)$.

\medskip
\noindent{\bf Case 3:} $\widehat X$ is homeomorphic to $\mathbb{R}^2$.

By a classical result of Johansson \cite{J31} (see also \cite[pp.~142--144]{S93}), the fundamental group of an open surface is free. On the other hand, by \cite[Corollary 4.9]{MW19} (see also \cite{M68, G81}), every finitely generated subgroup of the fundamental group is virtually nilpotent. Therefore, it must be either trivial or isomorphic to $\mathbb{Z}$. It follows that $X$ is either simply connected, or homeomorphic to the cylinder $\mathbb{R}\times S^1$, or to the open M\"obius strip.

In the latter two cases, we claim that $X$ is in fact flat. Indeed, after passing to a double cover if necessary, we may reduce to the cylinder case. Since the cylinder has two ends, one can construct a line in its universal cover. The splitting theorem then implies that $\widehat X$ is isometric to $\mathbb{R}^2$. Consequently, $X$ is flat as well.
\end{proof}

\end{document}